\DeclareSymbolFont{CMlargesymbols}{OMX}{cmex}{m}{n} 
\DeclareMathDelimiter{(}{\mathopen} {operators}{"28}{CMlargesymbols}{"00}
\DeclareMathDelimiter{)}{\mathclose}{operators}{"29}{CMlargesymbols}{"01}
\DeclareMathAlphabet\mathcal{OMS}{cmsy}{m}{n} 
\SetMathAlphabet\mathcal{bold}{OMS}{cmsy}{b}{n} 
\numberwithin{figure}{section}
\numberwithin{table}{section}
\newcommand{\ignore}[1]{}
\newtheorem{theorem}{Theorem}[section]
\newtheorem{informaltheorem}[theorem]{Informal Theorem}
\newtheorem{informallemma}[theorem]{Informal Lemma}
\newtheorem{lemma}[theorem]{Lemma}
\newtheorem{proposition}[theorem]{Proposition}
\newtheorem{corollary}[theorem]{Corollary}
\newtheorem{assumption}[theorem]{Assumption}
\newtheorem{example}[theorem]{Example}
\newtheorem{definition}[theorem]{Definition}
\newtheorem{remark}[theorem]{Remark}
\newcommand{\beq}{\begin{equation}}
\newcommand{\eeq}{\end{equation}}
\newcommand{\beqs}{\begin{equation*}}
\newcommand{\eeqs}{\end{equation*}}
\newcommand{\bit}{\begin{itemize}}
\newcommand{\eit}{\end{itemize}}
\newcommand{\ben}{\begin{enumerate}}
\newcommand{\een}{\end{enumerate}}
\newcommand{\bal}{\begin{align}}
\newcommand{\eal}{\end{align}}
\newcommand{\bals}{\begin{align*}}
\newcommand{\eals}{\end{align*}}
\newcommand{\bse}{\begin{subequations}}
\newcommand{\ese}{\end{subequations}}
\newcommand{\bpr}{\begin{proposition}}
\newcommand{\epr}{\end{proposition}}
\newcommand{\bre}{\begin{remark}}
\newcommand{\ere}{\end{remark}}
\newcommand{\bpf}{\begin{proof}}
\newcommand{\epf}{\end{proof}}
\newcommand{\ble}{\begin{lemma}}
\newcommand{\ele}{\end{lemma}}
\tikzset{>=Stealth} 
\newcommand{\supp}{\operatorname{supp}}
\newcommand{\Op}{\operatorname{Op}_{\hbar}}
\newcommand{\Opk}{\operatorname{Op}_{k}}
\newcommand{\loc}{\operatorname{loc}}
\newcommand{\comp}{\operatorname{comp}}
\newcommand{\tr}{\rm tr}
\definecolor{escol}{rgb}{0,0,0.8}
\definecolor{estcol}{rgb}{0,0.5,0}
\definecolor{esnewcol}{rgb}{0,0.5,0}
\newcommand{\cR}{\mathcal{R}}
\newcommand{\cP}{\mathcal{P}}
\newcommand{\cV}{\mathcal{V}}
\newcommand{\cJ}{\mathcal{J}}
\newcommand{\cH}{\mathcal{H}}
\newcommand{\cZ}{\mathcal{Z}}
\newcommand{\cY}{\mathcal{Y}}
\newcommand{\cT}{\mathcal{T}}
\newcommand{\cF}{{\cal F}}
\newcommand{\tand}{ \text{ and } }
\newcommand*{\N}[1]{\left\|#1\right\|}
\newcommand{\pdiff}[2]{\frac{\partial #1}{\partial #2}}
\newcommand{\diff}[2]{\frac{d #1}{d #2}}
\newcommand{\fdspace}{\mathcal{V}_h}
\newcommand{\Rscat}{R_{\rm scat}}
\newcommand{\RPMLo}{R_{\rm PML}}
\newcommand{\uout}{u_{\rm out}}
\newcommand{\uin}{u_{\rm in}}
\newcommand{\tfa}{\text{ for all }}
\newcommand{\tfor}{\text{ for }}
\newcommand{\tin}{\text{ in }}
\newcommand{\ton}{\text{ on }}
\newcommand{\tas}{\text{ as }}
\newcommand{\re}{e}
\newcommand{\ri}{i}
\newcommand{\Rea}{\mathbb{R}}
\newcommand{\Com}{\mathbb{C}}
\newcommand{\PMLs}{g}
\newcommand{\e}{\epsilon}
\newcommand{\newm}{n}
\newcommand{\mythmname}[1]{\textbf{{(#1)}}}
 \newcommand{\cavity}{\mathcal{K}}
 \newcommand{\visible}{\mathcal{V}}
 \newcommand{\invisible}{\mathcal{I}}
 \newcommand{\pml}{\mathcal{P}}
 \newcommand{\mc}[1]{\mathcal{#1}}
 \newcommand{\bestt}[2]{\pi_{_{\!#1\to #2}}}
 \newcommand{\domainnumber}{M}
 \newcommand{\oldT}{W}
 \definecolor{light-blue}{rgb}{0.53,.8,98}
 \definecolor{verde}{RGB}{50,180,50}
 \newcommand{\diam}{\operatorname{diam}}
 \newcommand{\transferIntro}{\mathscr{T}}
 \newcommand{\Oursubset}[1]{\Omega_{#1}'}
 \newcommand{\newell}{m}
 \newcommand{\operator}{\mathcal{A}}
 \newcommand{\pert}{\mathcal{L}}
 \newcommand{\Breg}{B_{\rm reg}}
\newcommand{\DL}{K}
\newcommand{\Reg}{V_{\ri k}}
\newcommand{\best}{\pi}
\newcommand{\lot}{{\rm l.o.t.}}
\newcommand{\rhoB}{\rho_{\!_\mathcal{A}}}
\newcommand{\mapF}{\mathscr{F}}
\newcommand{\vertiii}[1]{{\left\vert\kern-0.25ex\left\vert\kern-0.25ex\left\vert #1
    \right\vert\kern-0.25ex\right\vert\kern-0.25ex\right\vert}} 
\newcommand{\width}{\kappa}
\newcommand{\fPML}{f_{\newtheta}}
\newcommand{\DeltaPML}{\Delta_{\newtheta}}
\newcommand{\DeltaPMLj}{\Delta_{\newtheta, j}}
\newcommand{\newtheta}{{\rm s}}
\newcommand{\parallele}{e}
\newcommand{\bparallelepsilon}{\boldsymbol \epsilon}
\newcommand{\parallelepsilon}{\epsilon}
\newcommand{\parallelejn}{e_{j}^n}
\newcommand{\parallelujn}{u_{j}^n}
\newcommand{\parallelepsilonjn}{\epsilon_{j}^n}
\newcommand{\parallelu}{u}
\newcommand{\Tchi}{\chi^{>}}
\newcommand{\TTchi}{\Tchi}
\newcommand{\divergence}{\operatorname{div}}
\newcommand{\WF}{\operatorname{WF}_\hbar}
\newcommand{\LtG}{L^2(\Gamma_-)}
\newcommand{\tempIT}{}
\title[Numerical analysis of the Helmholtz equation using semiclassical analysis]{Numerical analysis of the high-frequency Helmholtz equation using semiclassical analysis 
}
\author[J. Galkowski and E. A. Spence]{Jeffrey Galkowski\\
Department of Mathematics, University College London,\\ London, WC1H 0AY, United Kingdom\\
E-mail: {j.galkowski@ucl.ac.uk}\\
\and
Euan A.~Spence\\
Department of Mathematics, University of Bath,\\ Bath, BA2 7AY, United Kingdom\\
E-mail: {e.a.spence@bath.ac.uk}}
\begin{document}

\maketitle
\label{firstpage}
\vspace{-1cm}
\begin{abstract}
We consider the numerical solution of high-frequency scattering problems modeled by the Helmholtz equation with a bounded obstacle.  
Although the analysis of this problem dates back at least 50 years,
over the past decade or so, tools and techniques from \emph{semiclassical analysis} have provided a new perspective and been used to settle several long-standing open problems in this area. 
Semiclassical analysis works in phase space (i.e., position and frequency) and describes rigorously the extent to which solutions of high-frequency PDEs are dictated by the properties of the corresponding geometric-optic rays.

The goals of the article are to (i) give a introduction to semiclassical analysis aimed at non-experts and (ii) showcase some of the numerical-analysis results about 
finite-element methods, boundary-element methods, and domain-decomposition methods obtained using semiclassical techniques.




\end{abstract}

\tableofcontents 



\section{Introduction:~numerical solution of the Helmholtz equation}

The Helmholtz equation 
\begin{equation}\label{e:Helmholtz}
    (-\Delta -k^2) u= 0
\end{equation}
with wavenumber $k>0$ is the simplest model of time-harmonic wave propagation. 
Indeed, if $u(x)$ satisfies \eqref{e:Helmholtz}, then 
\begin{equation}\label{e:wave1}
U(x,t) =u(x) e^{-i k t}
\end{equation}
is a time-harmonic solution of the wave equation (with speed one),
\beq\label{e:wave}
\pdiff{^2 U}{t^2} - \Delta U =0.
\eeq

This article is focused on the Helmholtz equation posed outside some compact scatterer (e.g., a penetrable or impenetrable obstacle, or a combination of the two) with data corresponding to an incident wave. 
(For precise definitions of these boundary-value problems, see \S\ref{s:scat}.)
We are specifically interested in the case when the wavenumber $k\gg1$; because $k$ dictates the frequency of the waves in the time domain via \eqref{e:wave1}, this limit is known as the \emph{high-frequency limit}.

\

\noindent\textbf{The main computational goal considered in this article:}
Given a specified accuracy and a fixed $k\gg1$, compute solutions of the Helmholtz equation \eqref{e:Helmholtz} (and its variable-coefficient generalisation) to that specified accuracy.

\

We highlight immediately that the $k\to \infty$ limit of a Helmholtz solution is governed by the behaviour of the geometric-optic rays (which are straight lines for constant wave speed). Furthermore, there is a large body of research into methods that directly compute the high-frequency limit of the Helmholtz equation -- methods based on ray-tracing and/or solving the Eikonal and transport equations; see, e.g., the Acta Numerica article \cite{EnRu:03}. The accuracy of these methods -- by design -- increases with $k$, but for any fixed $k$ the error is bounded below; i.e., it cannot be decreased arbitrarily. 
 In addition, because asymptotic methods rely on a well-chosen asymptotic ansatz for the solution, 
 these methods are intractable in complicated geometries.
 
 

Therefore, while for Helmholtz problems with extremely large $k$ and simple geometries  asymptotic methods give solutions accurate enough for some practical applications, there are a large class of Helmholtz problems arising from applications where achieving the goal above 
is necessary. 

To compute an approximation to the solution of the Helmholtz equation, broadly speaking one has to make the following three choices.
\begin{itemize}
\item[Choice 1:]~\emph{Choice of formulation}.

When the wave speed is (piecewise) constant, one can reduce solving the PDE \eqref{e:Helmholtz} 
to solving an integral equation on the boundary of the scatterer. When the wave speed is variable, one usually truncates the domain exterior to the scatterer (with some form of local \emph{absorbing boundary condition} \cite{EnMa:77,EnMa:77a} or \emph{perfectly matched layer (PML)} \cite{Be:94}) to minimise reflections of the waves from this artificial boundary) and considers some variational formulation of \eqref{e:Helmholtz}  posed in this truncated domain.

\item[Choice 2:]~\emph{Choice of finite-dimensional approximation space.} 

Both the finite-element method (FEM) and the boundary-element method (BEM) (i.e., solving boundary integral equations using the FEM) usually seek the solution as a  piecewise polynomial -- a mesh is imposed on the domain, and the solution is represented on each mesh element as a polynomial. 
Alternatively, one can try to tailor the space to the wave nature of the solution -- a classic choice here is to use plane waves on each element, i.e., $e^{i k x \cdot a_j}$ for some directions $a_j$, with such methods that use problem-adapted basis functions called \emph{Trefftz methods}.
We highlight that Choices 1 and 2 cannot always be made independently; e.g., since continuity across element interfaces cannot be imposed directly for many Trefftz basis functions (such as piecewise plane waves), they cannot be used in the standard Helmholtz variational formulation (see the discussion of variational formulations suitable for Trefftz methods in  \cite[\S2]{HiMoPe:16})

\item[Choice 3:]~\emph{Choice of how to solve the linear system.}

Choices 1 and 2 result in a linear system $A {\bf x} = {\bf b}$, where 
${\bf x}\in \mathbb{C}^N$ contains the coefficients of the approximation to the solution in the space of Choice 2. 
This linear system is large, indefinite, non-self-adjoint, and generally non-normal; solving such a system is a very challenging problem in numerical linear algebra.
The linear system is large because 
accurately approximating an arbitrary function oscillating with frequency $\lesssim k$ 
in $d$ dimensions requires $\sim k^d$ degrees of freedom (DoF)
(in other words, the space of functions oscillating at frequency $\lesssim k$ in $d$ dimensions has dimension $\sim k^d$).
Furthermore, the \emph{pollution effect} means that, when the polynomial degree is fixed, having $k^d$ DoF in the FEM is \emph{not} enough to maintain accuracy as $k\to \infty$ -- we discuss this in much more detail below. 
For the BEM, the linear systems require at least $\sim k^{d-1}$ DoF (due to the dimension reduction of working on the boundary), but are dense, as opposed to the sparse FEM matrices. 
The linear system is non-self-adjoint because of the condition that the scattered wave propagates away from the scatterer (i.e., the radiation condition). This non-self-adjointness means that the conjugate gradient method cannot be used to solve the system, and the method of choice is the generalised minimum residual method (GMRES) \cite{SaSc:86}.
The linear system of the standard variational formulation 
is indefinite (roughly speaking, has eigenvalues with both positive and negative real parts),
and iterative solvers famously struggle with this feature (see, e.g., \cite{ErGa:12}).
On top of all these difficulties, any ill-conditioning at the continuous level (e.g., due to trapped waves) is inherited by the discrete linear system 
(see, e.g., \cite{MaGaSpSp:22} for an investigation of this in the context of BEM).
Because the linear system is difficult to solve, one seeks a \emph{preconditioner}, i.e., a matrix $B^{-1}$ 
whose action is relatively cheap to compute such that $B^{-1} A {\bf x} = B^{-1}{\bf b}$ is easier to solve than $A {\bf x} = {\bf b}$.
Popular choices of preconditioners are those based on \emph{domain decomposition (DD)}, where an approximate inverse for $A$ is obtained by dividing the computational domain into subdomains and solving Helmholtz problems on these subdomains. 
\end{itemize}




\section{Summary of the contents of this article}

\subsection{The results}

This paper contains five main results. 

\begin{enumerate}[label=\textbf{R\arabic*}, ref=\textbf{R\arabic*}]
    \item \label{R1} 
    Upper bounds on the error for the $h$-version of the 
    FEM (i.e., with piecewise polynomials of fixed degree $p$)
    showing how the $k$-dependence of the meshwidth $h$ 
    dictates the accuracy as $k\to\infty$
    (extending the results in  \cite{GS3}). 
    (See Informal Theorem~\ref{t:informalR1}.)
    These bounds are observed empirically to be optimal when the mesh is uniform. 
    \item \label{R2} 
    Upper bounds on the error for the $hp$-version of the FEM showing how the $k$-dependence of $h$ and $p$ dictate the accuracy as $k\to\infty$ 
    (extending the results in \cite{LSW3, LSW4, GLSW1}). In particular, these estimates show that $k$-uniform accuracy can be maintained with $\sim k^d$ degrees of freedom; i.e., the $hp$-FEM \emph{does not suffer from the pollution effect} 
     (see Informal Theorem~\ref{t:informalR2}.)
    As a component of this analysis, Appendix~\ref{a:poly} gives a self-contained treatment of $p$-and $h$-explicit polynomial approximation 
    %
with novel aspects including precise Sobolev-space estimates for polynomial approximation of high-regularity functions.
    \item \label{R3} Upper and lower bounds on the error for the $h$-version of the BEM showing how $h$ must depend on $k$ to maintain accuracy as $k\to\infty$ \cite{GS2, GaRaSp:25}. 
    In particular, these prove that, in many situations, $\sim k^{d-1}$ degrees of freedom is \emph{not} enough to maintain accuracy as $k\to\infty$; i.e., the $h$-BEM \emph{suffers from the pollution effect}. (See Informal Theorems~\ref{t:informalR3a} and~\ref{t:informalR3b}.)
    \item \label{R4} Non-uniform $h$-FEM meshes for scattering problems with PML truncation, where the meshes are defined by ray dynamics \cite{AGS2}. 
    These meshes violate the conditions on $h$ given by \ref{R1} and involve coarser meshes away from trapping and in the PML while still maintaining accuracy as $k\to \infty$. 
    (See Table \ref{tab:regimes}.)
    The meshes are designed using bounds on the Galerkin error showing how varying the meshwidth in one region affects errors both in that region and elsewhere.    
    \item \label{R5} Analysis of overlapping Schwarz methods for the Helmholtz problem, truncated by a PML, where the subdomains have an absorbing layer at their boundaries, such as a PML, and the number of iterations in the convergence results is specified in terms of the interaction between the ray dynamics and the geometry of the subdomains \cite{GGGLS2}. 
    (See Informal Theorem~\ref{t:informalR5}.)
\end{enumerate}
 These results are stated informally (i.e., with minimum technicalities) in \S\ref{s:informal}, along with a discussion of the history and context of each result. Later in the article, we give the precise statements, along with complete proofs of \ref{R1}, \ref{R2}, and the upper bounds in \ref{R3}, and sketch proofs of the lower bounds in \ref{R3} and the results \ref{R4} and \ref{R5}.
 
Therefore, in terms of the three choices in the previous subsection, the results of this paper are mainly concerned with the standard variational formulation posed in a truncated domain exterior to the obstacle, with piecewise-polynomial approximation spaces, and (one-level) domain decomposition used as a solver.






\subsection{How these results were obtained: semiclassical analysis}\label{s:how}

The unifying feature of the above five results is that they were obtained using \emph{semiclassical analysis (SCA)}. SCA is a branch of PDE theory that, in particular, 
%
%
describes rigorously the extent to which Helmholtz solutions in the $k\to \infty$ limit 
are governed by 
the rays. 
In our context, the adjective ``semiclassical'' 
essentially means ``high frequency'',
 but 
comes from the fact that 
these techniques can be used to show how particle motion in classical mechanics 
arises from wave functions in quantum mechanics; see \cite[\S1.2]{Zw:12}. 
SCA is part of the wider field of \emph{microlocal analysis}, a generalisation of Fourier analysis:
Fourier analysis splits functions up into components oscillating at different frequencies, whereas 
 microlocal analysis localises in both frequency and space (to the extent allowed by 
 the uncertainty principle).
 The 
combination of physical and Fourier space is then known as \emph{phase space}.

Two key concepts from SCA used to obtain results \ref{R1}-\ref{R5} above are \emph{ellipticity} and \emph{propagation}. We now illustrate these in the constant-coefficient case using the Fourier transform. Given $u:\mathbb{R}^d\to \mathbb{C}$, let
\beq\label{e:Fourier}
\mathcal{F}_k(u)(\xi) := \int_{\mathbb{R}^d} e^{-i k\langle\xi, x\rangle} u(x) d x;
\eeq
i.e., $\mathcal{F}_k(u)$ is the Fourier transform of $u$, with frequency variable scaled by $k$ (so that frequency $k$ corresponds to $|\xi|=1$). If $u$ solves the Helmholtz equation \eqref{e:Helmholtz} then 
\beq\label{e:FourierMultHelmholtz}
(|\xi|^2 -1)\mathcal{F}_k(u)(\xi)=0.
\eeq
When $||\xi| -1| \geq c>0$ (i.e., at unscaled frequencies far from $k$) the symbol $|\xi|^2 -1$ is invertible; thus the inverse of the Helmholtz operator acts like the inverse of an elliptic operator (e.g., the inverse of the Laplacian) on such frequencies. 
However, when $|\xi|$ is close to $1$ (i.e., the unscaled frequency is close to $k$), SCA shows that the inverse of the Helmholtz operator with $k\gg1$  encodes propagation along geometric-optic rays.


We highlight that the results \ref{R1}, \ref{R2}  -- the analyses of the $h$-FEM and $hp$-FEM -- \emph{only use ellipticity at high frequencies} (i.e., no information about propagation is required). 
The results \ref{R3}, \ref{R4}, and \ref{R5} then use information about \emph{both} ellipticity \emph{and} propagation.

Results not included in this article, but also using SCA ideas, are the following four results, 
with the first one using propagation, and the last three using ellipticity.
\bit
\item Sharp upper and lower bounds on the error incurred by truncating the unbounded domain exterior to an obstacle by a local absorbing boundary condition \cite{GLS1}, with these bounds showing that the relative error is bounded away from zero, independent of the frequency, and regardless of the geometry of the artificial boundary.
\item Bounds showing that, for general Helmholtz scattering problems (specifically, those in the so-called ``black-box scattering framework" \cite{SjZw:91}) the error incurred by truncation by a PML is exponentially small in frequency, the width of the PML, and the strength of the scaling in the PML region 
\cite{GLS2} (these results are summarised in \S\ref{s:PMLaccuracy} below).
\item New finite-dimensional approximation spaces for the high-frequency Helmholtz problems with smooth variable coefficients based on coherent states using $O(k^{d-\frac{1}{2}})$ degrees of freedom
\cite{ChDoIn:24}.
\item Convergence theory for two-level hybrid Schwarz domain-decomposition preconditioners for high-frequency Helmholtz problems \cite{GS4}.
\eit

\bre[Hybrid numerical-asymptotic methods]
When one knows the asymptotic form of the solution as $k\to \infty$, up to amplitudes that have a $k$-independent rate of oscillation, one can build this information into the approximation space. These spaces allow for methods that require only $O(1)$ degrees of freedom as $k\to \infty$.

There is a long history of such methods, going back to the 1970s -- see the Acta Numerica review article \cite{ChGrLaSp:12} (and, in particular, the literature review in Section 1) -- and in recent years these methods have come to be known as 
\emph{hybrid numerical-asymptotic methods}. 

These approximation spaces have mostly been developed for boundary integral methods (because the dimension reduction makes the asymptotic form of solutions more tractable). Two situations where these methods can be rigorously analysed are scattering of highly-structured incident fields
 (e.g. plane waves or point sources) by
\begin{enumerate}
\item smooth, strictly convex obstacles, where the asymptotic form of the solution was obtained in \cite{MeTa:85} using microlocal methods, and 
\item certain nontrapping polygons
where asymptotic form of the solution was obtained by the method of images together with 
representations of the solution near corners using separation of variables \cite[Theorems 3.2 and 3.3]{ChLa:07}, \cite[Theorem 3.6]{ChHeLaTw:15}. (The form of the diffracted wave can also be obtained from a microlocal approach; see~\cite{CheTay82,CheTay82a}.)
\end{enumerate}
While hybrid numerical-asymptotic methods therefore fall into the category of numerical methods/analyses that (at least morally) use semiclassical analysis, they require detailed a priori knowledge of both the data and the scatterer. In this article, we focus on methods/analyses using semiclassical ideas that are applicable without such detailed a priori knowledge (with the resulting methods then requiring more degrees of freedom than hybrid numerical-asymptotic methods).
\ere


\subsection{From Fourier multipliers to pseudodifferential operators}\label{s:Fouriertopseudo}

In the previous section, we saw that the constant-coefficient Helmholtz operator is a Fourier multiplier; 
i.e., 
$$
(-k^{-2}\Delta -1)u(x):=\cF_k^{-1}\big(
(|\bullet|^2-1)
\cF_k(u)(\bullet) \big)(x).
$$
In motivating the use of pseudodifferential operators to study more-general Helmholtz operators, 
it is instructive to first consider the PDE
\beq\label{e:modH1}
(-k^{-2}\Delta + 1)v =f \quad\ton \Rea^d
\eeq
(sometimes called the \emph{modified Helmholtz equation})
which becomes 
\beqs
(|\xi|^2 +1) \cF_k(v)(\xi) =\cF_k(f)(\xi)
\eeqs
after taking the ($k$-weighted) Fourier transform \eqref{e:Fourier}. Since $|\xi|^2+1 \geq 1>0$ for all $\xi$, both the partial differential operator $(-k^{-2}\Delta + 1)$ and its inverse $(-k^{-2}\Delta + 1)^{-1}$ are Fourier multipliers and the solution $v$ to \eqref{e:modH1} can be written in terms of the multiplier via
\beq\label{e:modH2}
v(x) = \cF_k^{-1}\big((|\bullet|^2+1)^{-1} \cF_k(f)(\bullet) \big)(x).
\eeq
One can understand how the Fourier transform arrives at \eqref{e:modH2} in two ways:
\begin{enumerate}
\item Since the partial differential operator $(-k^{-2}\Delta + 1)$ is translation invariant (and hence is a convolution operator), the Fourier transform diagonalises $(-k^{-2}\Delta + 1)$ and thus gives the full description of how $(-k^{-2}\Delta + 1)$ and its inverse act in a (generalised) basis of $L^2$. 
\item The Fourier transform decomposes functions into its different frequencies, and
$(-k^{-2}\Delta + 1)$ acts by multiplication by  $\sigma(\xi):=|\xi|^2+1$ on each frequency. Thus, since $|\sigma(\xi)|\geq 1>0$, multiplication by $1/\sigma(\xi)$ gives the inverse operator.
\end{enumerate}
For constant coefficient PDEs on $\mathbb{R}^d$, Points 1 and 2 are actually the same phenomenon. However, their generalisations to more complicated operators, e.g., the variable-coefficient generalisation of \eqref{e:modH1}, namely
\beq\label{e:modH3}
-k^{-2}\divergence(A\nabla v) + n v =f \quad\ton \Rea^d,
\eeq
are different.
\begin{enumerate}
\item The analogue of the first approach 
is to expand in eigenfunctions of a self-adjoint operator and use the functional calculus/spectral theorem to decompose the operator. For this to be effective, one needs an appropriate self-adjoint operator, and information about its eigenfunctions.

\item The analogue of the second approach is to use microlocal analysis; i.e., to decompose in both space and frequency
and observe that the PDE \eqref{e:modH3} approximately acts by multiplication by $\sigma(x,\xi):= \langle A(x)\xi, \xi\rangle + n(x)$. Multiplication by $(\sigma(x,\xi))^{-1}$ is then a good approximation to the action of the inverse of the PDE. At a technical level, \emph{semiclassical pseudodifferential operators} enable us to make this second approach work. 
\end{enumerate}

The result \ref{R1} is, in fact, obtained using only Approach 1 (i.e., expanding in eigenfunctions of a self-adjoint operator, whilst still ``thinking semiclassically").  \ref{R2} and \ref{R4} intertwine Approaches 1 and 2, using the both expansions in eigenfunctions and the tools of semiclassical analysis,
while \ref{R3} and \ref{R5} use only semiclassical tools.




The standard (i.e., non-semiclassical) calculus of pseudodifferential operators was introduced in the 1960's to obtain approximate inverses, modulo smooth errors, for elliptic variable coefficient PDEs. 
The semiclassical pseudodfferential calculus then builds in the large parameter $k$ to produce approximate inverses to operators such as \eqref{e:modH1} and \eqref{e:modH3} (i.e., 
\emph{semiclassically} elliptic operators), modulo an error that is both smoothing \emph{and} smaller than any algebraic power of $k$. 
Semiclassical pseudodifferential operators take the following form:~given a function $a(x,\xi)$ on phase space, 
\beq\label{e:quantIntro}
(\widetilde{\Opk}(a)u)(x):=\frac{k^d}{(2\pi)^d}\int_{\Rea^d}\int_{\Rea^d} e^{ik\langle x-y,\xi\rangle}a(x,\xi)u(y)dyd\xi
\eeq
is a semiclassical pseudodifferential operator with symbol $a$ (the reason for our use of the tilde is explained in \S\ref{s:symbol}). Observe that if $a(x,\xi)=a(\xi)$, then $(\widetilde{\Opk}(a)v)(x)= \mathcal{F}_k^{-1} \big(a(\bullet)(\mathcal{F}_k v)(\bullet)\big)(x)$, i.e., $\widetilde{\Opk}(a)$ is a Fourier multiplier.

As discussed in \S\ref{s:how} (after \eqref{e:FourierMultHelmholtz}), 
the Helmholtz operator is semiclassically elliptic only at high frequencies (i.e., at frequencies $>k$), and thus its inverse is \emph{not} a semiclassical pseudodifferential operator. 
In this article, we obtain information about the inverse of the Helmholtz operator via propagation estimates (see \S\ref{s:propagate}), which describe how the phase-space structure of the Helmholtz solution is governed by the ray dynamics. 


The discussion above described how results relying on semiclassical ellipticity (such as \ref{R1} and \ref{R2}) can sometimes be obtained without explicit use of SCA.
Furthermore, for some special geometries and coefficients (e.g., star-shaped obstacles and coefficients satisfying certain monotonicity properties) bounds on the solution operator (which implicitly contain information about propagation) can also be obtained without explicit use of SCA -- see the discussion in \S\ref{s:energy} and Remark \ref{r:nt}. However, 
we know of no other way of obtaining the results 
relying on propagation (and depending on ray dynamics) in \ref{R3}, \ref{R4}, and \ref{R5} 
without using the semiclassical machinery.

\subsection{The technical overhead required to use SCA, and how we deal with this in this article}\label{s:SCAintro}

One of the main obstacles in using semiclassical/microlocal techniques is the technical overhead in stating and proving results about pseudodifferential operators. 
In this article we deal with this in the following ways. 

Section \ref{s:SCA} contains the results from SCA used in the article. While these results are stated rigorously, their proofs are sketched using informal statements of the fundamental properties of semiclassical pseudodifferential operators; see Informal Theorem \ref{t:rules} below. 

The precise statement of Informal Theorem \ref{t:rules} is then given in an appendix (see Theorem \ref{t:rules2}), referencing the complete treatments of SCA in~\cite{DiSj:99}, \cite{Zw:12}, and \cite[Appendix E]{DyZw:19}.
The sketch proofs can then be made rigorous by repeating the arguments with Informal Theorem \ref{t:rules} replaced by Theorem \ref{t:rules2} (although we do not do this here). 




Our hope is that this approach provides an accessible 
introduction to the ideas behind semiclassical analysis and the 
``phase-space point of view". 
We emphasise that it is this phase-space point of view that gives 
the conceptual/heuristic ideas underpinning the results in this article, with the technical details of the pseudodifferential calculus then used in the rigorous implementation of these ideas. 
Furthermore, even in rigorous proofs, it is often possible to use
Theorem \ref{t:rules2} 
as a ``black box" without needing the details of how these fundamental properties of pseudodifferential operators  are obtained.

\section{Informal statement of the main results}\label{s:informal}

The scattering problem is stated precisely in \S\ref{s:scat}; we state it informally here as:~given a bounded open set $\Omega_-\subset\Rea^d$, $d\geq 1$, with connected open complement $\Omega_+$, variable coefficients $A$ and $n$ such that $A\equiv I$ and $n\equiv 1$ outside a compact set, and a compactly-supported source $f$, find $u$ satisfying the Sommerfeld radiation condition and
such that 
\beq\label{e:informalPDE}
Pu:= -k^{-2}{\operatorname{div}}
( A\nabla u) -n u=f\text{ in }\Omega_+, \quad Bu|_{\partial\Omega_+}=0,
\eeq
where $B=I$ or $B=\partial_{\nu,A}$ (the conormal derivative).

We work in norms where each derivative is scaled by $k^{-1}$; i.e., for an open set $\Omega\subset\Rea^d$, we define for $m\in\mathbb{N}$
\beq\label{e:norm}
\| u\|_{H^m_k(\Omega)}^2:= 
\sum_{\substack{|\alpha|\leq m\\\alpha\in\mathbb{N}^d}}
\frac{1}{|\alpha|!\alpha!}\|(k^{-1}\partial)^{\alpha}u\|_{L^2(\Omega)}^2.
\eeq
(In this section, the $\alpha$ dependence of the constants in \eqref{e:norm} is not important, but it \emph{is} important  for the results on $p$-explicit polynomial approximation in \S\ref{a:poly} and hence also for obtaining the $hp$-FEM result \ref{R2}.)
For $m\in\mathbb{N}$ and $\Omega\subset \mathbb{R}^d$ open, we also define 
$$
H_0^m(\Omega):=\overline{C_c^\infty(\Omega)}^{\|\cdot \|_{H_k^m(\Omega)}},
$$
and $H_k^{-m}(\Omega):= (H_k^m(\Omega)\cap H_0^m(\Omega))^*$ (where $^*$ denotes the dual space, and the 
norm is defined as a linear functional on $H_k^m(\Omega)$).

Let $\mathcal{R}$ be the solution operator for the problem \eqref{e:informalPDE} (i.e., the map $f\mapsto u$, aka the resolvent).
Given $\chi \in C_c^\infty(\mathbb{R}^d)$ with $\chi \equiv 1$ on 
$\supp(A-I) \cup \supp(n-1) \cup \Omega_-$, 
let
\begin{equation}\label{e:rho}
    \rho(k):= \|\chi \mathcal{R}(k)\chi\|_{L^2\to L^2};
\end{equation}
i.e., $\rho(k)$ is the $L^2\to L^2$ norm of the cutoff resolvent. (One can show that, up to a $k$-independent constant, $\rho(k)$ does not depend on the choice of $\chi$.)
With this definition, $\rho(k)\sim k$ for nontrapping problems (i.e., when all geometric-optic rays escape a neighbourhood of the scatterer)
and $\rho(k)\gg k $ for trapping problems (see the references in \S\ref{s:rho} below). Furthermore, $\rho(k)$ is polynomially bounded in $k$ for ``most" frequencies (see \S\ref{s:rho}).

The advantage of scaling each derivative with $k^{-1}$ -- both in the Helmholtz equation \eqref{e:informalPDE} and in the norm \eqref{e:norm} -- is that the norm of $\mathcal{R}(k)$ has the same $k$-dependence between any spaces on which it is defined; e.g., given $\widetilde{\chi}\in C^\infty_c(\Rea^d)$ with $\supp \widetilde{\chi} \cap \supp(1-\chi)=\emptyset$, $\|\widetilde{\chi} \cR(k)\chi\|_{L^2 \to H^1_k}\leq C (1+\rho(k))$ 
-- this can be proved by Green's identity; see, e.g., \cite[Lemma 2.2]{Sp:14}.

\subsection{\ref{R1}: $k$-explicit error analysis of the $h$-FEM}\label{s:informalR1}



\begin{informaltheorem}
\mythmname{Preasymptotic error estimates for the $h$-FEM}
\label{t:informalR1} 
Suppose that the Helmholtz problem \eqref{e:informalPDE} -- with the Sommerfeld radiation condition approximated by \emph{either} a radial perfectly matched layer \emph{or} the exact Dirichlet-to-Neumann map on the boundary of a ball, \emph{or} an impedance boundary condition -- 
is solved using the Galerkin FEM  with degree $p$ polynomials (with approximation space denoted by $\cV_h$). 
If the coefficients are 
piecewise $C^{p-1,1}$, the boundary of the domain and the boundaries across which the coefficients jump are all $C^{p,1}$, and 
\beqs
\rho(k)(hk)^{2p}
\text{ is sufficiently small,} 
\eeqs
then the Galerkin solution $u_h$ exists, is unique, and satisfies
\begin{align}\label{e:highGalerkinintro}
\N{u-u_h}_{H^{1}_k(\Omega)}&\leq C \Big(1 + \rho(k)(hk)^p \Big) \min_{v_h \in \fdspace} \N{u-v_h}_{H^1_k(\Omega)}
\end{align}
and, for $\newm\in\{1,\ldots,p\}$,
\beq\label{e:lowGalerkinintro}
\N{u-u_h}_{H^{1-\newm}_k(\Omega)}\leq C \Big((hk)^\newm + \rho(k)(hk)^p 
\Big) \min_{v_h \in \fdspace} \N{u-v_h}_{H^1_k(\Omega)}.
\eeq
Furthermore, if the data $f$ has frequency $\lesssim k$
then 
\beq\label{e:relError}
\frac{\N{u-u_h}_{H^1_k(\Omega)}}
{
\N{u}_{H^1_k(\Omega)}
}
\leq C \Big(1  +  \rho(k)(hk)^p\Big)(hk)^p.
\eeq
\end{informaltheorem}

The rigorous statement of Informal Theorem \ref{t:informalR1} (in the specific case of PML truncation) is Theorem \ref{t:R1} below.

\paragraph{The context of Informal Theorem \ref{t:informalR1}.}

Informal Theorem \ref{t:informalR1} contains the following two results:
\begin{enumerate}[label=\arabic*, ref=\arabic*]
\item
\label{i:asymptotic}
if $\rho(k)(hk)^p$ is sufficiently small then the Galerkin solution is \emph{quasioptimal} (i.e., the Galerkin error is bounded by a multiple of the best approximation error), uniformly in $k$, via \eqref{e:highGalerkinintro}, and 
\item\label{i:preasymptotic} 
for physically-relevant data, the relative $H^1_k$ error can be made controllably small by making $\rho(k)(hk)^{2p}$ sufficiently small 
via \eqref{e:relError}.
\end{enumerate}
Regarding Result~\ref{i:asymptotic} (quasioptimality):~the Schatz duality argument \cite{Sc:74} (recapped in Lemma \ref{l:Schatz} below) combined with $H^2$ regularity of the solution implies quasioptimality for any $p\geq 1$ when $\rho(k)hk$ is sufficiently small; 
this was first recognised for 1-d problems in \cite{AzKeSt:88}, with a $k$-explicit bound on the solution operator obtained using Rellich/Morawetz identities (discussed in \S\ref{s:energy} and Remark \ref{r:nt} below). 
Other uses of this argument on 1-d problems included \cite[Lemma 2.6]{DoSaShBe:93}, \cite[Theorem 3]{IhBa:95}, \cite[Theorem 3.2]{MaIhBa:96} and on the 2-d interior impedance problem in \cite[Prop.~8.2.7]{Me:95}. 

Result \ref{i:asymptotic} was then proved for certain constant-coefficient Helmholtz problems 
\cite{MeSa:10}, \cite{MeSa:11} as part of their  $hp$-FEM analysis (discussed in \S\ref{s:informalR2} below).
One notable feature of this work is that (as far as the authors are aware) it was the first 
use in a numerical-analysis context of the fact that the Helmholtz solution operator is 
well-behaved on high frequencies (which 
we understood in \S\ref{s:how} in terms of semiclassical ellipticity ).

Result \ref{i:asymptotic} for general Helmholtz problems satisfying essentially the same assumptions as the rigorous version of Informal Theorem \ref{t:informalR1}
(see Corollary \ref{c:preasymptotic}/Theorem \ref{t:R1} below) was then proved in \cite{ChNi:20}. These assumptions are essentially that the solution operator satisfies the natural elliptic-regularity shift:~i.e., if the data $f\in H^{m-1}$ then $u\in H^{m+1}$ for $0\leq m\leq p$  (see Assumptions \ref{a:ellipticRegularity} 
and \ref{a:adjointEllipticRegularity}
below). 


Regarding Result \ref{i:preasymptotic} (controllably-small relative error):~the 
threshold ``$k(hk)^{2p}$ sufficiently small'' for controllably-small relative error was first identified for $p=1$ in \cite{IhBa:95} and for general $p\in \mathbb{Z}^+$ in \cite{IhBa:97} for constant-coefficient 1-d problems (where the solution can be written down explicitly). These papers also introduced the terminology that the solution is quasioptimal in the \emph{asymptotic regime}, the complement of this is the \emph{preasymptotic regime}, and the fact that the asymptotic regime is not ``$hk$ sufficiently small" (and thus $\gg k^d$ degrees of freedom is required to maintain accuracy) is the \emph{pollution effect} (see also \cite{BaSa:97} and Definition~\ref{d:pollution} below).
The condition ``$k(hk)^{2p}$ sufficiently small'' also appears from so-called \emph{dispersion analysis}, namely analysis of the FEM linear system for the constant coefficient Helmholtz equation 
\cite{HaHu:91, IhBa:95a, IhBa:95, Ih:98,
DeBaBo:99, Ai:04}.

Result \ref{i:preasymptotic} when $p=1$ was proved 
\bit
\item for the interior impedance problem for discontinous Galerkin methods in \cite{FeWu:09, FeWu:11}
and for continuous interior penalty methods and the standard FEM in \cite{Wu:14}, and
\item for truncation via the exact Dirichlet-to-Neumann map \cite[Theorem 4.1]{LSW2}, 
truncation via a radial, $k$-independent PML \cite[Theorem 4.4]{LiWu:19}, and truncation via a radial, $k$-dependent PML \cite[Theorem 7.2]{ChGaNiTo:22}.
\eit
All these results used the \emph{elliptic projection} argument introduced in \cite{FeWu:09} -- we discuss this argument in detail in \S\ref{s:ellipticProjection} below.

Result \ref{i:preasymptotic} for general $p\in \mathbb{Z}^+$ was then proved for the interior impedance problem in 
\cite[Theorem 5.1]{DuWu:15} (for constant coefficients) and \cite[Theorem 2.39]{Pe:20} (for variable coefficients), and then for general Helmholtz problems satisfying an elliptic-regularity shift in \cite{GS3} (as described in Informal Theorem \ref{t:informalR1}).

\subsection{\ref{R2}: $k$-explicit error analysis of the $hp$-FEM}\label{s:informalR2}

\begin{informaltheorem}[The $hp$-FEM does not suffer from the pollution effect]
\label{t:informalR2}
Suppose that the Helmholtz problem \eqref{e:informalPDE} with a Dirichlet boundary condition and the Sommerfeld radiation condition  approximated by a radial PML is solved using the Galerkin FEM with degree $p$ polynomials. 

Suppose that the coefficients are analytic near $\Omega_-$ and smooth elsewhere, $\Omega_-$ has analytic boundary, the PML truncation boundary is $C^{1,1}$ and $\rho(k)$ is polynomially bounded in $k$. Then there exists $C>0$ such that given $k_0, \epsilon>0$ there exists $c>0$ such that 
if $k\geq k_0$, 
\beqs
\frac{hk}{p} \leq c, \quad\tand\quad
p\geq 1+\epsilon \log k,
\eeqs
then the Galerkin solution $u_h$ exists, is unique, and satisfies
\begin{align}\label{e:H1bound2}
\N{u-u_h}_{H^1_k(\Omega)}&\leq C \min_{v_h \in \fdspace} \N{u-v_h}_{H^1_k(\Omega)}.
\end{align}
\end{informaltheorem}

The rigorous statement of Informal Theorem \ref{t:informalR2} is Theorem \ref{t:concreteHPFEM} below.

\paragraph{The context of Informal Theorem \ref{t:informalR2}.}

With a uniform mesh, the number of degrees of freedom of the piecewise polynomial approximation space $\sim (p/h)^d$; thus 
Informal Theorem \ref{t:informalR2} implies that, with $hk/p=c$, $k$-uniform quasioptimality is achieved with the number of degrees of freedom $\sim k^d$; i.e., the $hp$-FEM does not suffer from the pollution effect.

The first results of this form were proved 
for constant-coefficient Helmholtz problems 
in \cite{MeSa:10, MeSa:11, EsMe:12, MePaSa:13}. As mentioned in the discussion in \S\ref{s:informalR1}, these results used the fact that Helmholtz operator is well behaved on high frequencies, with the splitting achieved via the Fourier transform in $\Rea^d$ and then combined with extension operators to define splittings on Lipschitz domains (see \cite{MeSa:11}).
 
Results about quasioptimality of the $hp$-FEM applied to variable coefficient problems were then obtained via two approaches:
\ben[label=\arabic*, ref=\arabic*]
\item\label{i:GLSW} \cite{LSW3, LSW4, GLSW1}, and
\item\label{i:BCM} \cite{BeChMe:25},
\een 
with the rigorous version of Informal Theorem \ref{t:informalR2} proved in \cite{GLSW1}.

Both approaches \ref{i:GLSW} and \ref{i:BCM} use the fact that the Helmholtz solution operator is well behaved on high frequencies. In \cite{LSW3, LSW4, GLSW1} this is understood and implemented via semiclassical ellipticity (with the expository paper \cite{Sp:23} reproducing the result in \cite{MeSa:10} using this approach). \cite{BeChMe:25} uses semiclassical ellipticity implicitly via 
the fact that $(-k^{-2}\Delta-1)^{-1}$ and 
$(-k^{-2}\Delta+1)^{-1}$ 
act similarly on high-frequency data (see the discussion in \cite[\S2.5]{BeChMe:25}.
The approach of \cite{LSW3, LSW4, GLSW1} 
allows for smooth coefficients (with no obstacles) and the addition of an analytic Dirichlet obstacle, provided that the coefficients are analytic in a neighbourhood of the obstacle. The approach of \cite{BeChMe:25} considers piecewise-analytic coefficients and analytic boundaries/interfaces.

In this article, Informal Theorem \ref{t:informalR2} is proved 
via establishing, first, a new abstract result about quasioptimality of the $hp$-FEM under abstract regularity-type assumptions on the natural ``low frequencies" associated with the operator (see Theorem \ref{t:hpFEMAbstract} below); this result is conceptually similar to -- and simpler than --  the analysis in \cite{LSW4, GLSW1} that reduces proving quasioptimality of the $hp$-FEM applied to ``black box" scattering problems (in the sense of \cite{SjZw:91}) to proving certain elliptic-regularity-type statements. 

The regularity assumptions in this abstract result are then shown to be satisfied in the set up of Informal Theorem \ref{t:informalR2} (see \S\ref{s:analyticestimates} below).
In fact, we expect these regularity-type assumptions to also hold for coefficients that are piecewise Gevrey in a neighbourhood of a Gevrey interface (which would then link -- and extend -- the approaches \ref{i:GLSW} and \ref{i:BCM} above), but, to the best of the authors' knowledge, the elliptic-regularity results necessary are not immediately available in the literature.






\subsection{\ref{R3}:~Optimal error estimates for the $h$-BEM}\label{s:informalR3}

When $A\equiv I$ and $n\equiv1$ the scattering problem \eqref{e:informalPDE} can be reformulated as a boundary integral equation (BIE) on $\Gamma_-:=\partial\Omega_-$ (see Appendix~\ref{app:A}). We consider the standard second-kind BIE formulations for the Helmholtz scattering problems, which for the Dirichlet problem involve the operators 
\begin{equation}\label{e:DBIEs}
A_k':= \tfrac{1}{2} I + \DL'_k - \ri k V_k
    \quad\tand\quad
A_k:= \tfrac{1}{2} I + \DL_k - \ri k V_k
       \end{equation}
and for the Neumann problem 
\begin{equation}\label{e:NBIEs}
    \Breg := \ri \left(\tfrac{1}{2}I-\DL_k\right) +  \Reg H_k
    \quad\tand\quad
    \Breg' := \ri\left(\tfrac{1}{2}I-\DL_k'\right) +  H_k\Reg,
       \end{equation}
where the operators $V_k, \DL_k, \DL'_k$, and $H_k$ are the single-, double-, adjoint-double-layer and hypersingular operators defined by \eqref{e:SD'} and \eqref{e:DH} below. 
(For the history of these particular second-kind formulations; see Remark \ref{r:historyBIEs} below.)

The BIEs involving these operators can all be written in the form:~given $f\in L^2(\Gamma_-)$, find $v\in L^2(\Gamma_-)$ such that
\begin{equation}
\label{e:basicForm}
\operator v=f,\qquad \operator := c_0 (I+\pert),\qquad c_0\in\mathbb{C}\setminus \{0\},
\end{equation}
where the operator $\pert:L^2(\Gamma_-)\to L^2(\Gamma_-)$ is compact; see Theorem~\ref{thm:BIEs} below.
Let 
$$
\rhoB(k):=\|\operator^{-1}\|_{L^2(\Gamma_-)\to L^2(\Gamma_-)},
$$
and observe that the compactness of $\pert$ implies that $\rhoB(k) \geq c_0^{-1}$ (see Lemma \ref{lem:inversebound} below).

For the Dirichlet problem,
\beq\label{e:DirichletBIOsolution}
\rho_{\!_{A_k}}(k)=\rho_{\!_{A'_k}}(k)\leq Ck^{-1}\rho(k), 
\eeq
where $\rho(k)$ is defined by \eqref{e:rho} (see \cite{ChMo:08, BaSpWu:16} and \cite[Lemma 6.2]{ChSpGiSm:20}).  
For the Neumann problem, $\rho_{\!_{\Breg}}(k)=\rho_{\!_{\Breg'}}(k)$, but the relationship with the Helmholtz solution operator is more complicated; see \cite[Equation 7.8]{GaMaSp:21N}.

We study the Galerkin approximation to $v$ with $a:L^2(\Gamma_-)\times L^2(\Gamma_-)\to \mathbb{C}$ defined by
$$
a(v,w):= \langle \operator v,w\rangle_{L^2(\Gamma_-)}.
$$
\begin{informaltheorem}\mythmname{Preasymptotic error estimates for the $h$-BEM}
\label{t:informalR3a}
Suppose that $\Gamma_-$ is smooth and $\rhoB(k)$ is polynomially bounded in $k$. Suppose that \eqref{e:basicForm} is solved using the Galerkin BEM with degree $p$ polynomials (with approximation space $\cV_h$). If 
\begin{equation}
\label{e:approxRequirementsConcreteIntro}
(hk)^{2p+2}\rhoB(k) \text{ is sufficiently small,}
\end{equation}
then for all $v\in L^2(\Gamma_-)$, the Galerkin approximation, $v_h$, to $v$ in $\cV_h$ exists, is unique, and satisfies
\begin{equation}
\label{e:qoBEM}
\begin{aligned}
&\|v-v_h\|_{L^2(\Gamma_-)}\leq \Big(1+
C(hk)^{p+1}\rhoB(k)+Chk+
Ck^{-1}\Big)\min_{w_h\in \cV_h}\|v-w_h\|_{L^2(\Gamma_-)}.
\end{aligned}
\end{equation}
Furthermore, if the right-hand side $f$ has frequency $\lesssim k$ (e.g. in plane-wave scattering), then
\beq\label{e:PPMR2}
\frac{\N{v-v_h}_{L^2(\Gamma_-)}}{\N{v}_{L^2(\Gamma_-)}} \leq \Big(1+C(hk)^{p+1}\rhoB(k)+Chk+Ck^{-1}\Big)C (hk)^{p+1}.
\eeq
\end{informaltheorem}

Informal Theorem~\ref{t:informalR3a} and its rigorous version, Theorem~\ref{t:concretehBEM} below,
show that the Galerkin solution is quasioptimal, uniformly in $k$, if $(hk)^{p+1}\rhoB(k)$ is sufficiently small, and that the relative error can be made  controllably small if $(hk)^{2p+2}\rhoB(k)$ is sufficiently small (compare to the $h$-FEM result of Informal Theorem \ref{t:informalR1}). 
By \eqref{e:DirichletBIOsolution}, 
$\rho_{_{A_k}}(k)\leq C$ when 
the problem is nontrapping, and thus these results show that the $h$-BEM for the Dirichlet BIEs does not suffer from the pollution effect for nontrapping problems (with this first proved in \cite{GS2}). However, since $\rho_{_{A_k}}(k)\gg 1$ for trapping problems and $\rho_{_{\Breg}}(k)\gg 1$ even for nontrapping problems,
Informal Theorem~\ref{t:informalR3a}/Theorem~\ref{t:concretehBEM}
leave open the possibility that the $h$-BEM suffers from the pollution effect. 

To define pollution carefully, we recall that for a scattering problem, the solution $v$ to the BIE \eqref{e:basicForm} 
oscillates at frequency $\lesssim k$ with few additional properties.  The Weyl law (see e.g.~\cite[Chapter 14]{Zw:12}) or the Nyquist--Shannon--Whittaker sampling theorem 
\cite{Wh:15, Sh:49} (see, e.g., \cite[Theorem 5.21.1]{BaNaBe:00}) 
then implies that the dimension of this space is $\sim k^{d-1}$. Thus, it is certainly not possible to achieve accuracy for general scattering data with a space of dimension $\ll k^{d-1}$. Motivated by this, we say that a numerical BIE method \emph{suffers from the pollution effect} if $N\gg k^{d-1}$ is required to maintain accuracy of the computed solutions as $k\to \infty.$ We make this precise via the following definition.

\begin{definition}[The pollution effect in $L^2(\Gamma_-)$]\label{d:pollution}
Let $\mathcal{V}$ be a collection of subspaces of $L^2(\Gamma_-)$. For $V\in \mathcal{V}$ and $k>0$, let $\operator^{-1}_V(k) : L^2(\Gamma_-)\to V$ be an approximation of $\operator^{-1}(k)$. 
The pair $(\mathcal{V}, \{\operator^{-1}_V\}_{V\in \mathcal{V}})$ \emph{suffers from the pollution effect in $L^2(\Gamma_-)$} if 
\begin{equation}
\label{e:qo_pollution}
\inf_{\Lambda>0}\limsup_{k\to \infty}\sup_{\substack{V\in \mathcal{V}\\  \dim V \geq \Lambda k^{d-1}}} 
\sup_{f\in V} C_{\rm qo} (f,V)=\infty, 
\eeq
where
\beqs
C_{\rm qo}(f,V):=\inf \bigg\{ M>0 : \big\|\operator^{-1} f-\operator^{-1}_V f\big\|_{L^2(\Gamma_-)} 
\leq M \min_{w \in V} \N{ \operator^{-1} f-w}_{L^2(\Gamma_-)}\bigg\}.
\end{equation*}
\end{definition}

To unpack this definition, 
observe that if the right-hand side of \eqref{e:qo_pollution} is finite, then there exists $k_0, \Lambda,$ and $M<\infty$ such that for all $k\geq k_0$, $V\in \mathcal{V}$ with $\dim V\geq \Lambda k^{d-1},$ and $f\in \LtG$, 
\beq\label{e:Cqo}
 \N{\operator^{-1} f-\operator^{-1}_V f}_{\LtG} \leq M \min_{w \in V} \N{ \operator^{-1} f-w}_{\LtG};
\eeq
i.e., $k$-uniform quasi-optimality is achieved (for all possible data) with a choice of subspace dimension proportional to $k^{d-1}$.

\begin{informaltheorem}\mythmname{The $h$-BEM suffers from the pollution effect}
\label{t:informalR3b}
Let $\operator\in \{A_k,A_k'\}$. There is $\Gamma_-$ smooth such that if \eqref{e:basicForm} is solved using the Galerkin BEM with degree $p $ polynomials (with approximation space $\cV_h$) then 
given $\epsilon>0$ there are sequences of $h\to 0$ and $k\to\infty$ with $hk\leq \epsilon$ 
and a sequence of $v\in L^2(\Gamma_-)$ 
such that the Galerkin approximation, $v_h$, to $v$ in $\cV_h$, if it exists satisfies
\begin{equation}
\label{e:thePollution}
\|v-v_h\|_{L^2(\Gamma_-)}\geq \e^{-1} \min_{w_h\in \cV_h}\|v-w_h\|_{L^2(\Gamma_-)}.
\end{equation}
Furthermore, if $\Omega_-=B(0,1)$ and $\operator\in \{ \Breg,\Breg'\}$, then there is $c>0$ such that if \eqref{e:basicForm} is solved using the Galerkin BEM with degree $p$ polynomials (with approximation space $\cV_h$) there is $v\in L^2(\Gamma_-)$ such that the Galerkin approximation, $v_h$, to $v$ in $\cV_h$, if it exists satisfies
\begin{equation}
\label{e:thePollution2}
\begin{aligned}
&\|v-v_h\|_{L^2(\Gamma_-)}\geq  c\Big(1+\min\big\{(hk)^{p+1}\rhoB(k), (hk)^{-p-1}\big\} \Big)\min_{w_h\in \cV_h}\|v-w_h\|_{L^2(\Gamma_-)}.
\end{aligned}
\end{equation}
\end{informaltheorem}

The bound \eqref{e:thePollution} implies that the $h$-BEM with $\operator\in\{A_k,A_k'\}$ suffers from the pollution effect (in the sense of Definition \ref{d:pollution}). Moreover, since for $\operator\in\{\Breg,\Breg'\}$ and $\Omega_-=B(0,1)$, $\rhoB(k)\sim k^{1/3}$
\cite[\S14]{GaRaSp:25}, if $hk=\e$, then~\eqref{e:thePollution2} implies that the quasioptimality constant is \emph{at least} $c\e^{-p-1}$ and hence that decreasing $\e$ can actually make the quasioptimality constant worse until the point that $\e^{2p+2}k^{1/3}\lesssim1$. The bound \eqref{e:thePollution2}
also shows that Informal Theorem~\ref{t:informalR3a} is optimal over its full range of validity, \eqref{e:approxRequirementsConcreteIntro}.

In the rigorous versions of Informal Theorem~\ref{t:informalR3b} --  Theorems~\ref{t:neumannDisk}, \ref{t:diamond}, and Theorem~\ref{t:diamondNew} below -- we quantify~\eqref{e:thePollution} and show that the phenomenon of decreasing $hk$ resulting in \emph{increasing} the quasioptimality constant also occurs for certain $\Omega_-$ with $\operator\in\{A_k,A_k'\}$. Furthermore,  Figure~\ref{f:aReallyCoolPicture} gives a numerical illustration of pollution for the $h$-BEM, with further numerical results demonstrating pollution in $h$-BEM in \cite[\S2.3]{GaRaSp:25}.

\paragraph{The context of Informal Theorem~\ref{t:informalR3a}.}
Informal Theorem~\ref{t:informalR3a} was proved in \cite{GaRaSp:25}.
Of the previous investigations into proving quasioptimality of the Galerkin method with piecewise polynomials applied to $A_k'$ and $A_k$, 
namely \cite{BaSa:07, LoMe:11, GrLoMeSp:15, SpKaSm:15, GaMuSp:19, GS2}, the best results are the following. 
\bit\item The result of \cite[Lemma 3.1]{GS2} that if $\Gamma_-$ is smooth and
$\rhoB(k) (hk)$ is sufficiently small, then the Galerkin method is quasioptimal with constant proportional to $\rhoB(k)$.
\item
The  result of \cite[Theorem 3.17]{LoMe:11} that if $\Gamma$ is analytic then the Galerkin method applied to $A_k'$ is quasioptimal (with constant independent of $k$) if
$(hk)^{p+1} k^5 \rhoB(k)$
is sufficiently small \cite[Equation 3.22]{LoMe:11}; a similar result holds for $A_k$ with $k^5$ replaced by $k^6$ \cite[Equation 3.26]{LoMe:11}.
\eit
The quasioptimality constants in these two sets of results are larger than that in \eqref{e:qoBEM} and the thresholds for existence are more restrictive than that in \eqref{e:approxRequirementsConcreteIntro}.

As far as the authors are aware, there are no other results stated in the literature about quasioptimality of the Galerkin method applied to either $\Breg$ or $\Breg'$ defined by \eqref{e:NBIEs}.

\bre[Analysis of collocation and Nystr\"om methods]
We highlight that \cite{GaRaSp:25} contains 
results analogous to Informal Theorem \ref{t:informalR3a} for the Galerkin method in 2-d with trigonometric polynomials, the collocation method in 2-d with \emph{either} piecewise polynomials \emph{or} trigonometric polynomials, and for a 2-d Nystr\"om method -- i.e., a fully-discrete method -- based on trigonometric polynomials. For brevity, we do not state these results here, but note that for all the methods involving trigonometric polynomials, up to potential factors of $k^\e$ for any $\e>0$, 
there is no pollution, even for trapping obstacles (see \cite[Table 2.1]{GaRaSp:25} for an overview of these results).
\ere

\paragraph{The context of Informal Theorem~\ref{t:informalR3b}.}

As discussed in \S\ref{s:informalR1}, the pollution effect was famously identified in~\cite{IhBa:95,IhBa:97} for the FEM applied to constant-coefficient Helmholtz problems in $1$-d and has since been observed in many numerical experiments involving the FEM in higher dimensions.

In contrast, there is a common belief in both engineering~\cite{Ma:02,Ma:17,Ma:16a} and numerical analysis~\cite{BaSa:07, LoMe:11,GrLoMeSp:15} that boundary integral methods do not suffer from the pollution effect. Indeed, it is standard in engineering to ask whether a specific number of points per wavelength (e.g., six~\cite{Ma:02}) is sufficient to obtain accurate solutions. Informal Theorem~\ref{t:informalR3b} disproves this common belief, showing that no fixed number of points per wavelength is sufficient.  In fact, to the best of the authors' knowledge, Informal Theorem~\ref{t:informalR3b} (or rather its rigorous analogues Theorems~\ref{t:neumannDisk}, \ref{t:diamond}, and Theorem~\ref{t:diamondNew}) is the first rigorous proof of pollution 
for a numerical method applied to a
Helmholtz problem with non-empty scatterer.


\subsection{\ref{R4}:~Non-uniform $h$-FEM meshes for scattering problems with PML truncation, where the meshes are defined by ray dynamics}
\label{s:informalR4}

We consider the scattering problem in \eqref{e:informalPDE} with smooth coefficients, smooth obstacle, the radiation condition approximated by a radial PML, and under the assumption  that $\rho(k)$ is polynomially bounded in $k$.

Theorem \ref{t:informalR1} shows that the $h$-FEM solution 
\bit
\item is quasioptimal if $\rho(k) (hk)^p$ is sufficiently small (see \eqref{e:highGalerkinintro}), and 
\item has controllably-small $H^1_k$ relative error (for data with frequency $\lesssim k$) if $\rho(k) (hk)^{2p}$ is sufficiently small (see \eqref{e:relError}).
\eit 
We now show that the properties of quasioptimality 
and controllably-small relative error
can be achieved using certain non-uniform meshes that violate the conditions above on $h$ and involve coarser meshes away from trapping and in the perfectly matched layer (PML). 

\paragraph{Subsets of $\Omega$ defined by ray dynamics.}
We define billiard trajectories
(i.e., the natural analogue of geometric-optic rays in the presence of a boundary) 
to be geodesics for the metric $g^{-1}=n^{-1}A$ in $\Omega_+$ continued by reflection with respect to $g$ at the boundary 
of $\Omega_+$; when $A=I$ and $n=1$, these are straight-line paths continued using the Snell--Descartes law at the boundary. 
Next, we define the \emph{cavity} $\cavity\subset \overline{\Omega}_+$ as the set of points $x\in \overline{\Omega}_+$ such that there is a billiard trajectory passing over $x$ that remains in a compact set for all positive and negative times. We also define the \emph{visible set} $\visible\subset \overline{\Omega}_+$ as those points $x\in \overline{\Omega}_+$ such that there is a billiard trajectory passing over $x$ that remains in a compact set for all positive times or all negative times. Finally, we define the \emph{invisible set} $\invisible:=\overline{\Omega}_+\setminus (\visible\cup \cavity)$ (the adjectives visible and invisible are relative to the cavity). 
Let $\Gamma_{\tr}:=\partial\Omega\setminus\Gamma_-$ be the truncation boundary, and 
let $\Omega_{\pml}\subset \Omega$ be an open neighbourhood of $\Gamma_{\tr}$ that is strictly contained in the PML. Next, let $\Omega_\cavity$, $\Omega_\visible$ and $\Omega_\invisible$ be open neighbourhoods of the intersections with $\overline{\Omega}$ of, respectively, $\cavity$, $\visible\setminus (\cavity\cup\Omega_\pml)$, and $\invisible\setminus \Omega_\pml$ in the subspace topology of $\overline{\Omega}$ such that $\Omega_{\cavity}\cap \Gamma_{\tr}=\Omega_{\visible}\cap\partial \Omega_{\tr}=\Omega_{\invisible}\cap\Gamma_{\tr}=\emptyset$; for an illustration of these domains, see Figure \ref{f:domainsTrappingMeshing}.
\begin{figure}
\begin{center}
\begin{tikzpicture}
\begin{scope}[scale=.9]
\draw[fill=orange ](0,0)circle(3);
\node at(0,2.55){$\Omega_{\pml}$};
\fill[fill=light-blue] (0,0)circle(2.35);
\fill[fill=verde,domain=-155:-25] plot({2.35*cos(\x)},{2.35*sin(\x)})--cycle;
\fill[fill=verde,domain=155:25] plot({2.35*cos(\x)},{2.35*sin(\x)})--cycle;
\fill[fill=pink] (-1,-1)rectangle (1,1);
    \draw[rounded corners=0.2cm, fill=gray] (-1.6,-1) rectangle (-.5,1);

    \draw[rounded corners=0.2cm, fill=gray](1.6,-1) rectangle (.5,1);
    \node at(0,0){$\Omega_\cavity$};
\node at(0,-1.5){$\Omega_\visible$};
\node at(0,1.5){$\Omega_\visible$};
\node at(2,0){$\Omega_\invisible$};
\node at(-2,0){$\Omega_\invisible$};
    \end{scope}
\end{tikzpicture}
\end{center}
\caption{\label{f:domainsTrappingMeshing}The domains $\Omega_\cavity,\Omega_\visible,\Omega_\invisible,$ and $\Omega_\pml$, when $\Omega_-$ consists of two (rounded) aligned rectangles}
\end{figure}

\paragraph{The finite-element space.}
Given a triangulation, $\mathcal{T}$, of $\Omega$, we define 
\begin{equation}
\label{e:meshWidths}
h_\star:=\sup\big\{ h_{T}\,:\, T\in \mathcal{T},\, T\cap \Omega_\star\neq \emptyset\big\}, \, \text{ with } \,\star\in \{\cavity,\visible,\invisible,\pml\},
\end{equation}
where $h_T$ is the width of the element $T$. We also define $h:=\max_{T\in\mathcal{T}}h_T$.

\renewcommand{\arraystretch}{2}
\begin{table}[htbp]
\footnotesize
\hspace{-.5cm}
	$\rotatebox[origin=c]{90}{fewer DoFs \qquad\quad more DoFs}%
	\left\updownarrow
    \vcenter{\hbox{
	\begin{tabular}{|l|l|l|}
		\hlineb
		{\bf Mesh threshold} & {\bf Asymptotic DoFs} & {\bf Theoretical guarantee} \\
		\hlineb
		$(h_\cavity k)^p \rho(k) + (h_\visible k)^p \rho(k) + (h_\invisible k)^p \rho(k) =c$ & $\textup{vol}(\Omega) k^d (\rho(k))^{\frac{d}{p}}$ & $k$-QO \\
		\hlineb
		$(h_\cavity k)^p \rho(k) + (h_\visible k)^p \sqrt{k\rho(k)} + (h_\invisible k)^p k=c$ & $\textup{vol}(\Omega_\cavity) k^d (\rho(k))^{\frac{d}{p}}$ & $k$-QO \\
		\hlineb
		$(h_\cavity k)^p \sqrt{k\rho(k)} + (h_\visible k)^p k+ (h_\invisible k)^p k =c$ & $\textup{vol}(\Omega_\cavity) k^{d+\frac{1}{2p}}(\rho(k))^{\frac{d}{2p}}$ & $k$-QO away from trapping \\
		\hlineb
		$(h_\cavity k)^{2p} \rho(k) + (h_\visible k)^{2p} \rho(k) + (h_\invisible k)^{2p} \rho(k) =c$ & $\textup{vol}(\Omega) k^d (\rho(k))^{\frac{d}{2p}}$ & CRE \\
		\hlineb
		$(h_\cavity k)^{2p} \rho(k) + (h_\visible k)^{2p} \sqrt{k \rho(k)} + (h_\invisible k)^{2p} k = c$ & $\textup{vol}(\Omega_\cavity) k^d (\rho(k))^{\frac{d}{2p}}$ & CRE\\
		\hlineb
		$(h_\cavity k)^{2p} \rho(k) + (h_\visible k)^p k + (h_\invisible k)^p k = c$ & $\textup{vol}(\Omega_\cavity) k^d (\rho(k))^{\frac{d}{2p}}$ & CRE away from trapping \\
		\hlineb
	\end{tabular}}}
	\right.%
	\rotatebox[origin=c]{90}{}$
	\normalsize
    \renewcommand{\arraystretch}{1}
    	\caption{\label{tab:regimes}
         The table summarises of special cases of Theorem \ref{t:R4} with $\cavity\neq \emptyset$ (i.e., the problem is trapping). In all cases we require $h_Pk= c$ (with this not shown in the table for brevity) which does not contribute to the asymptotic number of degrees of freedom (DoFs). The notations $k$-QO, $k$-QO away from trapping, CRE, and CRE away from trapping are explained in the text.}
\end{table}

\paragraph{Results about the $h$-FEM applied with non-uniform meshes.}

Theorem \ref{t:R4} below details how varying the meshwidth in one of the regions $\Omega_\cavity,\Omega_\visible,\Omega_\invisible,$ and $\Omega_\pml$ 
affects errors both in that region and elsewhere. 
Table \ref{tab:regimes} gives some important special cases of Theorem \ref{t:R4}, showing, in particular, how the conditions $``\rho(k)(hk)^p$ sufficiently small" for quasioptimality and 
``$\rho(k) (hk)^{2p}$ is sufficiently small for controllably-small $H^1$ error can be violated away from $\Omega_\cavity$.

In 
   Table~\ref{tab:regimes}, \emph{$k$-QO} stands for $k$-uniform quasioptimality and means that there is $C>0$ independent of $k$ such that 
    $$
    \|u-u_h\|_{H_k^1(\Omega)}\leq C\inf_{w\in \mathcal{V}_h}\|u-w_h\|_{H_k^1(\Omega)},
    $$
    \emph{$k$-QO away from trapping} means that if $U$ is any open set not intersecting $\cavity$, then
    $$
    \|u-u_h\|_{H_k^1(U)}\leq C\inf_{w_h\in \mathcal{V}_h}\|u-w_h\|_{H_k^1(\Omega)},
    $$
    \emph{CRE} stands for controllably-small relative error and means that if $\|u\|_{H_k^{p+1}(\Omega)}\leq C\|u\|_{H_k^1(\Omega)}$ i.e., $u$ is oscillating at frequency $\lesssim k$, then by making $c$ in the left-most column small as a function $\e$, one can obtain
    $$
    \|u-u_h\|_{H_k^1(\Omega)}\leq \e\|u\|_{H_k^1(\Omega)},
    $$
    and, similarly, \emph{CRE away from trapping} means that if $u$ is oscillating at frequency $k$, then for any $U$ not intersecting $\cavity$ one can obtain
    $$
    \|u-u_h\|_{H_k^1(U)}\leq \e\|u\|_{H_k^1(\Omega)},
    $$
    by making $c$ small.

\paragraph{The context of the results summarised in Table \ref{tab:regimes}.}

Estimates on the local FEM error for second-order linear elliptic PDEs were pioneered in \cite{NiSc:74}; see also \cite{De:75}, \cite{ScWa:77}, \cite{ScWa:82}, \cite[Chapter 9]{Wa:91}, and \cite{DeGuSc:11}. 
The standard interpretation of these results 
is that the FEM solution is locally quasioptimal up to a lower-order term that allows
error to propagate in from the rest of the domain, sometimes
called the ``slush'' term in the literature.

The $k$-explicit analogues of these classic local-FEM-error results were then obtained in \cite{AGS1}, with the result being, essentially, that if $\Omega_0\subset \Omega_1\subset\Omega$, then 
	\begin{equation}
\|u - u_h\|_{H^1_k(\Omega_0)}
\leq C
		\big(\min_{w_h \in V_h} 
		\|u - w_h\|_{H^1_k(\Omega_1)} 
		+\|u-u_h\|_{(H^p_k{(\Omega_1})^*}\big).
        \label{e:QOmodulo}
	\end{equation}
The interpretation of \eqref{e:QOmodulo} is that the 
``slush" term (i.e., the second term on the right-hand side) consists of low frequencies, since a bound on a high $k$-weighted Sobolev norm of a function in
terms of a low $k$-weighted Sobolev norm, with the constant independent of $k$, implies
that the function is controlled by its frequencies $\lesssim k$; i.e., the Helmholtz FEM solution is locally quasioptimal, modulo low frequencies. However, the results of~\cite{AGS1} do not give any information about what impacts the low frequency term in~\eqref{e:QOmodulo}.

To the best of the authors’ knowledge, Theorem \ref{t:R4} (and its more sophisticated analogue
\cite[Theorem 3.11]{AGS2})
  are the first results
 specifying in a $k$-explicit way how the low-frequency errors propagate around the domain (see the discussion in \S\ref{s:interpret} below), and then using this information to design $k$-dependent non-uniform finite-element meshes
that maintain accuracy (either in the sense of $k$-QO or CRE) as $k\to\infty$. In particular, one can interpret the estimates in Theorem~\ref{t:R4} (and their more sophisticated analogues~\cite[Theorem 3.11]{AGS2}) as effectively estimating the low frequency term in~\eqref{e:QOmodulo}.

\subsection{\ref{R5} Analysis of overlapping Schwarz methods 
with PMLs at the subdomain boundaries}    
\label{s:informalR5}

Let $P$ be as in \eqref{e:informalPDE} with $A\equiv I$ and $\Omega_-=\emptyset$.
Given a hyperrectangle $\Omega$, let $P_{\rm s}$ be the operator $P$ with the Sommerfeld radiation condition approximated by a Cartesian PML near $\partial\Omega$.
We consider the problem:~given $f \in H^{-1}(\Omega)$, 
find $u\in H^1_0(\Omega)$ such that 
$P_{\rm s}u =f$.

We split $\Omega$ into overlapping hyperrectangles $\Omega_j$, and let $P^j_{\rm s}$ be the operator $P_{\rm s}$ on $\Omega_j$ with additional Cartesian PMLs contained in the overlap between $\Omega_j$ and its neighbours.
Let $\{\chi_j\}_{j=1}^N$ be 
a partition of unity 
with $\chi_j \in C^\infty_c(\Omega_j)$ and $\chi_j$ supported away from the PML regions of $P^j_{\rm s}$ that are not PML regions of $P_{\rm s}$. The parallel overlapping Schwarz method is then:~let $u_0=0$ and for $n\geq 0$ define  $u^{n+1} \in H^1_0(\Omega)$ by
\beq\label{e:SchwarzIntro}
u^{n+1} := u^n + \sum_{j=1}^N \chi_j 
(P_{\rm s}^j)^{-1}
(f- P_{\rm s}u^n )|_{\Omega_j};
\eeq
see, e.g., \cite[\S1.1]{DoJoNa:15}. 

\begin{informaltheorem}\label{t:informalR5}
Suppose that $P$ is nontrapping.
Let $\mathcal{N}$ be the maximum number of subdomains (counted with multiplicity) that a geometric optics ray can travel through before leaving $\Omega$. After $\mathcal{N}$ iterations, the error $u-u^\mathcal{N}$ is both smooth and $O(k^{-\infty})$ (i.e., superalgebraically small in $k$). 
\end{informaltheorem}

The rigorous statement of Informal Theorem \ref{t:informalR5} is Theorem \ref{t:R5} below. 
We make the following immediate remarks:
\bit
\item
The number $\mathcal{N}$ is defined more precisely in \S\ref{s:Ncurly} below,
but we highlight here that when $n\equiv 1$ and $\{\Omega_j\}_{j=1}^N$ is a decomposition of $\Omega$ into $N$ strips, then $\mathcal{N}=N$.
\item \cite{GGGLS2} (where Informal Theorem \ref{t:informalR5} was originally proved) also contains analogous results about sequential overlapping Schwarz methods, but, for brevity, we do not state these here.
\item All the results in \cite{GGGLS2}, while initially stated for Cartesian PML truncation on $\Omega$ and Cartesian PMLs on the subdomains, apply to truncation methods based on complex absorption and other types of PML; see the discussion in \cite[Appendix A]{GGGLS2}.
\eit

\paragraph{The context of Informal Theorem \ref{t:informalR5}.}
The design and analysis of DD methods for solving the Helmholtz equation 
is a very active area; see, e.g., the reviews \cite{Er:08, ErGa:12, LaTaVu:17, GaZh:19, GaZh:22}. 

A key question is:~what boundary conditions should be imposed on the DD subdomains? It is known that the optimal boundary condition on a particular DD subdomain 
is the Dirichlet-to-Neumann map for the Helmholtz equation posed in the exterior of that subdomain (as a subset of the whole domain)
 \cite[\S2]{NaRoSt:94} \cite[\S2]{EnZh:98}, \cite[\S2.4]{DoJoNa:15}. However, complete knowledge of these maps is 
equivalent to knowing the solution operator for the original problem.
 The design of good, practical subdomain boundary conditions is then the goal of \emph{optimised Schwarz methods}; see, e.g., \cite{Ga:06}.

Using PML as a subdomain boundary condition for Helmholtz DD was first advocated for in \cite{To:98}, and there are now many DD methods for Helmholtz using PML on subdomain boundaries with impressive empirical performance; see, e.g.,  
\cite{EnYi:11c, St:13, ChXi:13, ZeDe:16, TaZeHeDe:19, LeJu:19, RoGeBeMo:22, BoNaTo:23} and the reviews \cite{GaZh:19, GaZh:22}.

It is now understood in a $k$-explicit way how PML approximates the Dirichlet-to-Neumann map associated with the Sommerfeld radiation condition. Indeed, when $A\equiv I$, $n \equiv 1$, and $d=2$, the error between the true Helmholtz solution of~\eqref{e:informalPDE} and the Cartesian PML approximation decreases exponentially in $k$, the width of the PML, and the strength of the scaling by \cite[Lemma 3.4]{ChXi:13} (this result also holds for general scatterers in $d\geq 2$ with a radial PML by \cite{GLS2} -- see also Theorem~\ref{t:blackBoxErr}). For general smooth $n$ and any $d\geq 2$, this error is smaller than any negative power of $k$ by~\cite[Theorem 4.6]{GGGLS2}.

However, to the best of the authors' knowledge, Informal Theorem~\ref{t:informalR5} (or rather its rigorous statement~Theorem~\ref{t:R5}) is the first result to provide a rigorous understanding of how well PML 
approximates the (more complicated) Dirichlet-to-Neumann maps corresponding to the optimal DD boundary conditions for general decompositions and non-empty scatterers.

For example, in the particular case of a strip decomposition with $N$ subdomains, 
 the parallel overlapping Schwarz method with a strip decomposition with $N$ subdomains converges in $N$ iterations with the optimal boundary conditions \cite[Proposition 2.4]{NaRoSt:94} and \cite[Lemma 2.2]{EnZh:98}.
For such a strip decomposition with $n\equiv 1$, 
 Informal Theorem \ref{t:informalR5}/Theorem \ref{t:R5} shows that the parallel method  with PML at the subdomain boundaries has $O(k^{-\infty})$ error after $N$ iterations; i.e., PML approximates well the optimal boundary condition in this case. 
However, 
we highlight that $\mathcal N$ can be much larger than $N$ when $n \not\equiv 1$ and the rays are not straight lines (e.g., if the rays ``turn around" at one end of the domain), and PML does not approximate well the optimal boundary condition in this case. 

The only other existing $k$-explicit rigorous convergence results are for the sequential ``source transfer" DD methods (which \cite{GaZh:19} showed can be considered as a particular type of optimised Schwartz method) applied to~\eqref{e:informalPDE} with $A\equiv I$, $n\equiv 1$, and $\Omega_-=\emptyset$ (i.e., no scattering). 
These methods involve subdomains that only overlap via the PMLs (i.e., the physically-relevant parts of the subdomains do not overlap). For these methods applied with 
$A\equiv I$, $n\equiv 1$, and $\Omega_-=\emptyset$ (i.e., no scattering), $k$-explicit convergence of the DD method at the continuous level can be obtained from a $k$-explicit result about how well (Cartesian) PML approximates the Sommerfeld radiation condition -- this is precisely because the optimal boundary conditions on the subdomains in this case are the Dirichlet-to-Neumann maps associated with the Sommerfeld radiation condition. Therefore, accuracy of Cartesian PML  when 
$A\equiv I$, $n\equiv 1$, and $\Omega_-=\emptyset$ 
\cite[Lemma 3.4]{ChXi:13} is then the heart of the $k$-explicit convergence proofs of the source-transfer-type methods in \cite{ChXi:13, LeJu:19, DuWu:20, LeJu:21, LeJu:22} in the case of no scattering.

\section{Definition of the scattering problem and its approximation by a radial PML}\label{s:scat}

\subsection{The scattering problem.}\label{s:problem}

We consider scattering by a combination of an impenetrable obstacle and a penetrable obstacle (the latter corresponding to variable coefficients in the PDE). Let $\Omega_-\Subset \mathbb{R}^d$ be a bounded Lipschitz open set with connected open complement, $\Omega_+:=\mathbb{R}^d\setminus \overline{\Omega}_-$. Let 
$A$ be an $L^\infty$ symmetric, positive-definite matrix with real coefficients and $n\in L^\infty(\overline{\Omega_+};\mathbb{R}_+)$ such that $\supp (A-I)\cup \supp (n-1)\Subset \overline{\Omega}_+$. 
Recall that the \emph{conormal derivative} $\partial_{n,A}$ is such that 
$\partial_{n,A}u := n \cdot (A\nabla u)$ when $u\in H^2(\Omega)$, and that $\partial_{n,A} u$ can be defined for $u\in H^1(\Omega)$ with $\divergence (A\nabla u)\in L^2(\Omega)$ by Green's identity; see, e.g., \cite[Lemma 4.3]{Mc:00}.

Given $f\in L^2_{\rm comp}(\overline{\Omega_+})$, 
the scattering problem is to
find $u\in H^1_{\rm loc}(\overline{\Omega_+}):=\{ v : \chi v \in H^1(\Omega_+) \tfa \chi \in C^\infty_c(\Rea^d)\}$  
such that
\begin{equation}
\label{e:edp}\begin{gathered}
Pu:=-k^{-2}\operatorname{div}( A\nabla u)-n u=f\text{ in }\Omega_+,\quad (Bu)|_{\partial\Omega_+}=0,\\ 
\|(k^{-1}\partial_r-i)u\|_{L^\infty(|x|=r)}=o_{r\to \infty}(r^{\frac{1-d}{2}}),
\end{gathered}
\end{equation}
where $Bu=u$ in the sound-soft case and $Bu=\partial_{\nu,A}u$
in the sound-hard case. The condition as $r\to \infty$ in \eqref{e:edp} is the \emph{Sommerfeld radiation condition}; this condition ensures that the corresponding solutions of the wave equation created via \eqref{e:wave1} travel away from the obstacle as time increases.

\subsection{Existence and uniqueness of the solution of the scattering problem}

Since the boundary value problem \eqref{e:edp} is Fredholm with index zero (see, e.g., \cite[Theorem 4.4]{DyZw:19}) existence of a solution to~\eqref{e:edp} follows from uniqueness. 
Rellich's uniqueness theorem (see \cite{Re:43}, \cite[\S3.3]{CoKr:83}, \cite[Theorem 4.17]{DyZw:19}) shows that any two solutions to~\eqref{e:edp} must agree away from $\supp (A-I)\cup \supp( n-1)\cup \Omega_-$.
Uniqueness can then be shown via the unique continuation principle (UCP); i.e., that any compactly supported solution of~\eqref{e:edp} with $f\equiv 0$ must be 0. The standard tool used to prove such a result is a \emph{Carleman estimate}, going back to \cite{Ca:39} (where a UCP is proved for the case $d=2$, $A\equiv I$, and $n\in L^\infty$).

There is a large literature on proving UCPs under progressively weaker conditions on $A$ and $n$. Furthermore, given a UCP for $A$ and $n$ with certain regularity, \cite{BaCaTs:12},
proves uniqueness for the problem when $A$ and $n$ satisfy that regularity ``piecewise" 
(this argument is formulated in \cite{BaCaTs:12} for the time-harmonic Maxwell equations, but holds more generally, in particular for the Helmholtz equation).

The upshot is that in 2-d, the UCP holds when 
$A \in L^\infty$ and $n \in L^p$ with $p>1$ \cite{Al:12}. In 3-d, the UCP holds when $A$ is Lipschitz (and hence piecewise Lipschitz by \cite{BaCaTs:12}) and $n\in L^\infty$ \cite{Ho:59}; much more singular $n$ are allowed (although this is not important for this article) -- see 
\cite{JeKe:85, 
Wo:92}
and the surveys in \cite[Chapter 8]{Le:19}, \cite{Wo:93}.
Examples of $A \in C^{0,\alpha}$ for
all $\alpha<1$ for which the solution of \eqref{e:edp} is not unique at a particular
$k>0$ (and thus the UCP fails) are given in 
\cite{Pl:63,Mi:74,Fi:01}; see also \cite{Ch:25}.

\subsection{Recap of the $k$-dependence of the solution operator to \eqref{e:edp}}\label{s:rho}

Let $\mathcal{R}$ be the solution operator for the problem \eqref{e:edp} (i.e., the map $f\mapsto u$, aka, the resolvent).
Let $R_{\rm scat}$ be such that
\beq\label{e:Rscat}
\supp(A-I) \cup \supp(n-1) \cup \Omega_-\Subset B(0,\Rscat)
\eeq
let $\chi \in C_c^\infty(\mathbb{R}^d)$ with $\chi \equiv 1$ on $B(0,\Rscat)$, and let $\rho(k)$ be defined by \eqref{e:rho}.

By considering solutions of the form $e^{i k x_1} \psi(x)$ for $\psi \in C_c^\infty(\Rea^d)$
with $\supp \psi \subset B(0,\Rscat)
\setminus \supp(A-I) \cup \supp(n-1) \cup \Omega_-$, one can show that there exists $c>0$ such that $\rho(k)$ defined by \eqref{e:rho} satisfies
\beqs
\rho(k) \geq c k \quad\tfa k>0.
\eeqs

If the problem is nontrapping (i.e., all geometric-optic rays escape to infinity) then 
given $k_0>0$ there exists $C>0$ such that 
\beqs
\rho(k) \leq Ck\quad \tfa k\geq k_0
\eeqs
\cite{Mo:75, Va:75,MeSj:82,Bu:02, ChMo:08}, \cite[Theorem 4.43]{DyZw:19}, with the constant $C$ proportional to the length of the longest ray \cite{GaSpWu:20}.

When the problem is trapping, $\rho(k)$ grows faster than $k$ \cite{BoBuRa:10}, \cite[Theorem 7.1]{DyZw:19} but if the boundary and coefficients are sufficiently smooth then given $k_0>0$ there exist $C_1,C_2>0$ such that 
\beqs
\rho(k) \leq C_1 \exp(C_2 k)\quad \tfa k\geq k_0
\eeqs
\cite{Bu:98, Vo:00, Bel:03}. Furthermore, without any smoothness requirements, the resolvent can, in fact, grow arbitrarily fast \cite{ChSa:25}.

Nevertheless, for ``most" frequencies, the resolvent is polynomially bounded in $k$:~for all $\delta,k_0,\varepsilon>0$ there is $C>0$ and a set $\mathcal{J}\subset \mathbb{R}$ with Lebesgue measure $\leq  \delta$ 
such that 
\beq\label{e:polybound}
\rho(k) \leq C k^{5d/2+\varepsilon} \quad\tfa k\in [k_0,\infty)\setminus \mathcal{J}
\eeq
\cite{LSW1}.

\subsection{The PML problem.}\label{s:PML}

We approximate the Sommerfeld radiation condition using a radial perfectly matched layer (PML).
With $R_{\rm scat}$ defined by \eqref{e:Rscat}, 
let $\RPMLo>R_{\rm scat}$ and let $\Omega_{\tr}$ be such that $B_{\RPMLo }\Subset \Omega_{\tr}$.
Let $\Omega:=\Omega_{+}\cap \Omega_{\tr}$ and $\Gamma_{\tr}:=\partial\Omega_{\tr}$.

For $0\leq \theta<\pi/2$ (the \emph{scaling angle}), let the PML scaling function $\PMLs_\theta\in C^{\infty}([0,\infty);\mathbb{R})$ be defined by $\PMLs_\theta(r):=\PMLs(r)\tan\theta$ for some $\PMLs$ satisfying
\begin{equation}
\label{e:fProp}
\begin{gathered}
\big\{\PMLs(r)=0\big\}=\big\{\PMLs'(r)=0\big\}=\big\{r\leq \RPMLo\big\},\quad \PMLs'(r)\geq 0;
\end{gathered}
\end{equation}
i.e., the scaling ``turns on'' at $r=\RPMLo$. 
Given $\PMLs(r)$,  let 
\beqs
\alpha(r) := 1 + \ri \PMLs_\theta'(r) \quad \tand\quad \beta(r) := 1 + \ri \PMLs_\theta(r)/r.
\eeqs

We now define two possible PML problems; both are formed by first replacing $\Delta$ in \eqref{e:edp} by 
\begin{align*}
\Delta_\theta&= \Big(\frac{1}{1+\ri \PMLs_\theta'(r)}\pdiff{}{r}\Big)^2+\frac{d-1}{(r+\ri \PMLs_\theta(r))(1+\ri \PMLs_\theta'(r))}\pdiff{}{r} +\frac{1}{(r+\ri \PMLs_\theta(r))^2}\Delta_\omega,\\
&= \frac{1}{(1+\ri \PMLs_\theta'(r))(r+\ri \PMLs_\theta(r))^{d-1}}\frac{\partial}{\partial r}
\left( \frac{ (r+\ri \PMLs_\theta(r))^{d-1}}{1+\ri \PMLs_\theta'(r)}\frac{\partial}{\partial r}
\right)+\frac{1}{(r+\ri \PMLs_\theta(r))^2}\Delta_\omega
\end{align*}
(with $\Delta_\omega$ the surface Laplacian on $S^{d-1}$) and then \emph{either} multiplying by $ \alpha \beta^{d-1}$ \emph{or} not. 

In both cases, the PML problem has the form:~given $f\in L^2(\Omega)$, find $u\in H^1(\Omega)$ such that
\begin{equation}
\label{e:PML1}
P_\theta u:=-k^{-2}\operatorname{div}(A_\theta\nabla u)+k^{-2} b_\theta\cdot\nabla u-n_\theta u=f\text{ in }\Omega,\, (Bu)|_{\partial\Omega_+}=0,\, u|_{\partial\Omega_{\tr}}=0,
\end{equation} 
with the associated sesquilinear form
\begin{equation}
\label{e:def_ak}
a(u,v) := \int_{\Omega} \Big(k^{-2}A_\theta(x) \nabla u(x) \cdot \overline{\nabla v(x)} 
+k^{-2}\langle b_\theta(x),\nabla u\rangle \overline{v(x)}
- n_\theta(x) u(x) \overline{v(x)}\Big)\,dx.
\end{equation}
Finally, since $\alpha= \beta=1$ for $r\leq \RPMLo$, when the data $f$ in \eqref{e:edp} is supported in this region, the right-hand sides of the two different PML problems are the same.

\paragraph{Comparison of the two different formulations.}

Before we give the precise definitions of the two different formulations, we discuss their advantages and disadvantages. 

The multiplication by $\alpha \beta^{d-1}$ ensures that $b_\theta=0$ in \eqref{e:PML1}; i.e., the resulting operator is in divergence form. However, one requires additional assumptions for $P_k$ to satisfy the G\aa rding inequality:~there exists $\omega\in \Rea$ such that 
for all $k_0 > 0$ there are $c,C > 0$ such that for all $k \geq k_0$, 
\beq\label{e:Garding1}
\Re( \re^{\ri \omega} a_k(u,u)) \geq c\|u\|^2_{H^1_k(\Omega)} - C\|u\|^2_{L^2(\Omega)} \quad\tfa u \in H^1(\Omega).
\eeq
In particular,~\cite[Lemma 2.3]{GLSW1} shows that~\eqref{e:Garding1} holds for any $\PMLs_\theta(r)$ satisfying the above assumptions in $d=2$ and holds in $d=3$ when $\PMLs_\theta(r)/r$ is, in addition, non-decreasing and~\cite[Remark 2.1]{GLSW1} shows that such an additional assumption is needed.

If one instead integrates by parts the complex-scaled PDE directly (i.e., 
one does not multiply by $\alpha \beta^{d-1}$), 
then the resulting sesquilinear form satisfies the G\aa rding inequality after multiplication by $\re^{\ri \omega}$ for some suitable constant $\omega$~\cite[Lemma A.6]{GLS1}. 

We highlight that, in other papers on PMLs, the scaled variable, which in our case is $r+\ri \PMLs_\theta(r)$, is often written as $r(1+ \ri \widetilde{\sigma}(r))$ with $\widetilde{\sigma}(r)= \sigma_0$ for $r$ sufficiently large; see, e.g., \cite[\S4]{HoScZs:03}, \cite[\S2]{BrPa:07}. Therefore, to convert from our notation, set $\widetilde{\sigma}(r)= \PMLs_\theta(r)/r$ and $\sigma_0= \tan\theta$.
In this alternative notation, the assumption that $\PMLs_\theta(r)/r$ is nondecreasing is therefore that $\widetilde{\sigma}$ is nondecreasing -- see \cite[\S2]{BrPa:07}.

\paragraph{The sesquilinear form after multiplication by $\alpha \beta^{d-1}$.}
Let $b_\theta:=0$,
\beq\label{e:firstPML}
A_\theta := 
\begin{cases}
A
\hspace{-1ex}
& \tin \Omega,\\
HDH^T 
\hspace{-1ex}
&\tin (B_{\RPMLo})^c
\end{cases}
\quad\tand\quad
n_\theta := 
\begin{cases}
 n &\tin \Omega \cap B_{\RPMLo},\\
\alpha(r) \beta(r)^{d-1} 
\hspace{-1ex}
&\tin (B_{\RPMLo})^c,
\end{cases}
\eeq
where, in polar coordinates $(r,\varphi)$,
\beqs
D :=
\left(
\begin{array}{cc}
\beta(r)\alpha(r)^{-1} &0 \\
0 & \alpha(r) \beta(r)^{-1}
\end{array}
\right) 
\quad\tand\quad
H :=
\left(
\begin{array}{cc}
\cos \varphi & - \sin\varphi \\
\sin \varphi & \cos\varphi
\end{array}
\right) 
\tfor d=2,
\eeqs
and, in spherical polar coordinates $(r,\varphi, \phi)$,
\beqs
D :=
\left(
\begin{array}{ccc}
\beta(r)^2\alpha(r)^{-1} &0 &0\\
0 & \alpha(r) &0 \\
0 & 0 &\alpha(r)
\end{array}
\right) 
\tand\,
H :=
\left(
\begin{array}{ccc}
\sin \varphi \cos\phi & \cos \varphi \cos\phi & - \sin \phi \\
\sin \varphi \sin\phi & \cos \varphi \sin\phi & \cos \phi \\
\cos \varphi & - \sin \varphi & 0 
\end{array}
\right) 
\eeqs
for $d=3$.
(observe that then $A=I$ and $n=1$ when $r=\RPMLo$ and thus $A_\theta$ and $n_\theta$ are continuous at $r=\RPMLo$).

\ble[{\cite[Lemma 2.3]{GLSW1}}]\label{l:PMLGard1}
Let $\PMLs_\theta$ satisfy \eqref{e:fProp} 
and the additional assumption when $d=3$ that $\PMLs_\theta(r)/r$ is nondecreasing.
Given $\epsilon>0$
there exists $c>0$ such that, 
for all $\epsilon \leq \theta\leq \pi/2-\epsilon$, $A_\theta$ defined by \eqref{e:firstPML} satisfies
\beqs
\Re \big( A_\theta(x) \xi, \xi\big)_2 \geq c \|\xi\|_2^2 \quad\tfa \,\xi \in \mathbb{C}^d\, \tand \,x \in \Omega;
\eeqs
thus the G\aa rding inequality \eqref{e:Garding1} holds with $\omega=0$.
\ele

\paragraph{The sesquilinear form without multiplication by $\alpha \beta^{d-1}$.}

Let
\beq\label{e:secondPML}
A_\theta := 
\begin{cases}
A
\hspace{-1ex}
& \tin \Omega,\\
HDH^T 
\hspace{-1ex}
&\tin (B_{\RPMLo})^c
\end{cases}
\quad\tand\quad
n_\theta  := 
\begin{cases}
 n &\tin \Omega \cap B_{\RPMLo},\\
1
\hspace{-1ex}
&\tin (B_{\RPMLo})^c,
\end{cases}
\eeq
where, in polar coordinates $(r,\varphi)$,
\beqs
D :=
\left(
\begin{array}{cc}
\alpha(r)^{-2} &0 \\
0 & \beta(r)^{-2}
\end{array}
\right) 
\quad\tand\quad
H :=
\left(
\begin{array}{cc}
\cos \varphi & - \sin\varphi \\
\sin \varphi & \cos\varphi
\end{array}
\right) 
\tfor d=2,
\eeqs
and, in spherical polar coordinates $(r,\varphi, \phi)$,
\beqs
D :=
\left(
\begin{array}{ccc}
\alpha(r)^{-2} &0 &0\\
0 & \beta(r)^{-2} &0 \\
0 & 0 &\beta(r)^{-2}
\end{array}
\right) 
\,\tand
\,
H :=
\left(
\begin{array}{ccc}
\sin \varphi \cos\phi & \cos \varphi \cos\phi & - \sin \phi \\
\sin \varphi \sin\phi & \cos \varphi \sin\phi & \cos \phi \\
\cos \varphi & - \sin \varphi & 0 
\end{array}
\right) 
\eeqs
for $d=3$. 
(Since $A_\theta=I$ and $n_\theta=1$ when $r=\RPMLo$, $A$ and $n$ are continuous at $r=\RPMLo$.)
In addition, for $d=2$, 
$$
b_\theta(r):=\begin{cases}0&\tin \Omega\cap B_{\RPMLo},\\
H \begin{pmatrix} 
\alpha^{-2}\big( \log (\alpha\beta)\big)'
\\0\end{pmatrix}&\tin (B_{{\RPMLo}})^c,\end{cases}
$$
and for $d=3$
$$
b_\theta(r):=\begin{cases}0&\tin \Omega\cap B_{{\RPMLo}},\\
H\begin{pmatrix}
\alpha^{-2}\big( \log (\alpha\beta^2)\big)'\\0\\0\end{pmatrix}&\tin(B_{{\RPMLo}})^c.\end{cases}
$$

\ble[{\cite[Lemma A.6]{GLS2}}]\label{l:PMLGard2}
Let $\PMLs_\theta$ satisfy \eqref{e:fProp}. 
Given $\epsilon>0$
there exists $\omega\in \Rea, c>0$ such that, 
for all $\epsilon \leq \theta\leq \pi/2-\epsilon$, $A_\theta$ defined by \eqref{e:secondPML} satisfies
\beqs
\Re \big( \re^{\ri \omega} A_\theta(x) \xi, \xi\big)_2 \geq c\|\xi\|_2^2 \quad\tfa \xi \in \mathbb{C}^d \,\tand\, x \in \Omega;
\eeqs
thus the G\aa rding inequality \eqref{e:Garding1} holds with $\omega=0$.
\ele

\subsection{The PML problem is exponentially accurate at high-frequency and inherits the bound on the solution operator of the scattering problem}\label{s:PMLaccuracy}

We define the exponential rate of growth for the solution operator away from $\mathcal{J}\subset \mathbb{R}$:
\begin{equation}
\label{e:Lambda1}
\Lambda(P,\Rea\setminus\mathcal{J}):=
\limsup_{\substack{k\to \infty\\k\in \mathbb{R}\setminus \mathcal{J}}}\frac{1}{k}\log \rho(k)=\limsup_{\substack{k\to \infty\\k\in \mathbb{R}\setminus \mathcal{J}}}\frac{1}{k}\log \|\chi \mathcal{R}(k)\chi\|_{L^2\to L^2}.
\end{equation}
We write $\Lambda(P)$ for $\Lambda(P,\mathbb{R})$. 
The bounds on $\rho(k)$ recapped in \S\ref{s:rho} imply that 
\bit
\item if the boundary and coefficients are sufficiently smooth, $\Lambda(P)<\infty$,
\item if the problem is nontrapping then $\Lambda(P)=0$,
\item for all $\delta>0$ there is a set $\mathcal{J}\subset \mathbb{R}$ with $|\mathcal{J}|\leq \delta$ such that $\Lambda(P,\Rea\setminus\mathcal{J})=0$.
\eit

The following are simplified versions of \cite[Theorems 1.5 and 1.6]{GLS2}.
Both results involve a quantity $\theta_0(P,\mathcal{J},R_{\tr})$ defined in \cite[Equation 1.10]{GLS2}. For our purposes, we only need that $\theta_0(P,\mathcal{J},R_{\tr})<\pi/2$ with $\theta_0(P,\mathcal{J},R_{\tr})=0$ if $\Lambda(P,\mathbb{R}\setminus\mathcal{J})=0$. 

\begin{theorem}\mythmname{The PML error decreases exponentially in $k$, the PML width, and the scaling angle}
\label{t:blackBoxErr}
Let $\mathcal{J}\subset \mathbb{R}$ and suppose that $P$ is such that $\Lambda(P,\Rea\setminus\mathcal{J})<\infty$. Let $\chi \in C_c^\infty(B(0,\RPMLo))$ with $\chi \equiv 1$ in a neighbourhood of $B(0,\Rscat)$, and $\e>0$. Then there are $C,c,k_0>0$ such that for all $R_{\tr}>\RPMLo+\e$, $\Omega_{\tr}$ with $B(0,R_{\tr})\subset \Omega_{\tr}\subset \mathbb{R}^d$ and Lipschitz boundary, $\theta_0(P,\mathcal{J},R_{\tr})+\e<\theta<\pi/2-\e$, $f\in L^2$, $k>k_0$ and $k\notin \mathcal{J}$, 
the solution $v$ of $P_\theta v=\chi f$ exists, is unique, and satisfies
\begin{align*}\nonumber
&\|\chi( u-v)\|_{H^1_k}+\|(1-\chi)(u-v)\|_{H_k^2(B(0,\RPMLo))}
\\
&\qquad\leq C
\exp\bigg(-k
c(R_{\tr}- R_1 + \e)\tan \theta -2\Lambda(P,\Rea\setminus\mathcal{J})\Big)\bigg)
\|\chi f\|_{L^2},
\label{e:blackBoxErr}
\end{align*}
where $u$ is the solution of $Pu=\chi f$.
\end{theorem}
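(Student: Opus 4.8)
The plan is to follow the standard ``complex scaling = analytic continuation'' strategy from black-box scattering theory, combining (i) a representation of the true scattering solution $u$ in terms of its Cauchy data on a sphere inside the PML region, (ii) the fact that, after complex scaling, the outgoing Helmholtz resolvent continues to a meromorphic family whose poles are the scattering resonances, and (iii) the a priori polynomial/exponential bound on $\rho(k)$ encoded in $\Lambda(P,\Rea\setminus\mathcal J)$ together with the ``free-to-scaled'' resolvent comparison. Concretely, first I would set up the exterior problem on $\{|x|>\RPMLo\}$, where $A\equiv I$ and $n\equiv 1$, so that on this shell the scaled operator $P_\theta$ is literally the complex-scaled free Helmholtz operator; the key algebraic input here is that $u$, being outgoing, extends holomorphically in $r$ to the contour $r\mapsto r+\ri \PMLs_\theta(r)$, and on that contour it decays like $\exp(-k\PMLs_\theta(r))=\exp(-k\PMLs(r)\tan\theta)$. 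This is the source of the factor $\exp(-kc(R_{\tr}-R_1+\e)\tan\theta)$.

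Next I would compare the PML solution $v$ with (a suitable cutoff of) this analytically continued $u$. Write $w:=v-\Theta u$, where $\Theta$ is a smooth cutoff equal to $1$ near $B(0,\Rscat)$ and supported inside $B(0,\RPMLo)$-ish, chosen so that $\Theta u$ already solves $P_\theta(\Theta u)=\chi f + [P_\theta,\Theta]u$ with the commutator supported in the shell where $u$ is exponentially small along the scaling contour. Then $w$ solves $P_\theta w = -[P_\theta,\Theta]u$ with zero Dirichlet data on $\Gamma_{\tr}$, and the right-hand side has $L^2$ norm bounded by $C\exp(-kc(R_{\tr}-R_1+\e)\tan\theta)\,\|u\|_{H^1_k(\text{shell})}$, which in turn is $\lesssim (1+\rho(k))\|\chi f\|_{L^2}$ by the resolvent bound and Green's identity (as quoted after \eqref{e:rho} in the excerpt). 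The remaining task is to invert $P_\theta$ on the truncated domain $\Omega_{\tr}$ with the stated uniformity in $R_{\tr}$, $\theta$, and $k\notin\mathcal J$; this is exactly where the hypothesis $\theta>\theta_0(P,\mathcal J,R_{\tr})+\e$ enters, guaranteeing that $k^2$ is not near a resonance ``seen'' by the scaling angle $\theta$, so that $\|P_\theta^{-1}\|_{L^2\to L^2}$ is controlled by (a constant times) $\rho(k)$ up to the exponential weight. I would get this bound by the usual perturbation/parametrix argument: away from $r=\RPMLo$ the operator $P_\theta$ is elliptic (Lemmas~\ref{l:PMLGard1}, \ref{l:PMLGard2} give the G\aa{}rding inequality), so a Fredholm/Neumann-series argument reduces invertibility of $P_\theta$ to invertibility of the interior problem, which is $\cR(k)$, hence bounded by $\rho(k)$; the exponential gain then propagates through because the forcing $[P_\theta,\Theta]u$ is itself exponentially small.

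The main obstacle, and the part that needs the most care, is the $R_{\tr}$- and $\theta$-\emph{uniform} resolvent estimate for $P_\theta$ on $\Omega_{\tr}$, i.e.\ showing $\|P_\theta^{-1}\|\le C\,\mathrm{e}^{2\Lambda(P,\Rea\setminus\mathcal J)}$ (up to the exponential weight) with $C$ independent of the truncation radius and of the scaling angle in the allowed window. The subtlety is twofold: one must show the scaled operator has no spurious eigenvalues at real $k^2$ (this is the role of $\theta_0$, and is proved in \cite{GLS2} via the analyticity of the resolvent family in $\theta$ and the exponential smallness of scaled resonant states), and one must track the dependence on $R_{\tr}$ in the parametrix so that enlarging the PML does not degrade the constant --- here one uses that enlarging $\Omega_{\tr}$ only adds region where $P_\theta$ is elliptic with a \emph{better} constant (more damping), so the bound is monotone in the right direction. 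Once this uniform bound is in hand, combining it with the exponentially small forcing gives the stated estimate on $w=v-\Theta u$, and hence on $\chi(u-v)$ and $(1-\chi)(u-v)$ on $B(0,\RPMLo)$, since $\Theta\equiv 1$ there; I would finish by noting existence and uniqueness of $v$ follow from the same invertibility of $P_\theta$. I would refer to \cite[Theorems 1.5 and 1.6]{GLS2} for the full details of the uniform resolvent estimate, as the statement here is explicitly a simplified version of those results.
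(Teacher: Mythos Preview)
The paper does not prove this theorem; it is stated (together with Theorem~\ref{t:blackBoxResolve}) as a simplified version of \cite[Theorems 1.5 and 1.6]{GLS2}, with no proof given in the paper itself. Your sketch is a faithful outline of the strategy used in that reference: analytically continue the outgoing solution $u$ along the scaling contour (where it decays like $\exp(-k\PMLs_\theta(r))$), compare the PML solution $v$ to a suitably cut-off version of this continuation, and reduce to a $\theta$- and $R_{\tr}$-uniform resolvent bound for $P_\theta$, which is precisely Theorem~\ref{t:blackBoxResolve}. Your identification of the main obstacle (the uniform invertibility of $P_\theta$ and the role of $\theta_0$ in keeping $k$ away from resonances seen by the scaling) is correct.

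One minor imprecision: the cutoff $\Theta$ should not be supported \emph{inside} $B(0,\RPMLo)$ as you first wrote, since the commutator $[P_\theta,\Theta]$ would then live in the unscaled region where $u$ has no exponential decay. Rather, $\Theta$ should be applied to the analytically continued $u_\theta$ (which agrees with $u$ on $B(0,\RPMLo)$ and is the scaled solution outside), and $\Theta$ should be $\equiv 1$ on a neighbourhood of $B(0,\RPMLo)$ and supported in $\Omega_{\tr}$, so that the commutator sits in the shell $\RPMLo\lesssim r\lesssim R_{\tr}$ where $u_\theta$ is exponentially small. Your subsequent description (``the commutator supported in the shell where $u$ is exponentially small along the scaling contour'') shows you have the right picture; the placement of $\Theta$ just needs to be consistent with it.
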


\begin{theorem}\mythmname{The PML problem inherits the solution-operator bound of the scattering problem}
\label{t:blackBoxResolve}
Let $J\subset \mathbb{R}$ and suppose that $P$ is such that $\Lambda(P,J)<\infty$.
Let $\chi \in C_c^\infty(B(0,\RPMLo))$ with $\chi \equiv 1$ in a neighbourhood of $B(0,\Rscat)$, and $\e>0$. Then there are $C,k_0>0$ such that 
the following holds. For all $R_{\tr}>R_1+\e$, 
$\Omega_{\tr}$ with $B(0,R_{\tr})\subset \Omega_{\tr}\subset \mathbb{R}^d$ and Lipschitz boundary, $\theta_0(P,J,R_{\tr})+\e<\theta<\pi/2-\e$, all $f\in L^2$ with $\supp f\subset \Omega_{\tr}$, all $k>k_0$ and $k\notin \cJ$, the solution $v$ to $P_\theta v =f$ exists, is unique, and satisfies
\beq\label{e:blackBoxResolve}
\|v\|_{H^1_k(\Omega_{\tr})}\leq C\|\chi \mathcal{R}(k)\chi\|_{L^2\to L^2}\|f\|_{L^2}.
\eeq
\end{theorem}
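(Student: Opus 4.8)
Following \cite{GLS2}, the plan is to deduce Theorem~\ref{t:blackBoxResolve} from the exponential-accuracy estimate of Theorem~\ref{t:blackBoxErr} together with the semiclassical ellipticity of the complex-scaled operator $P_\theta$ inside the PML, the whole argument being organised around the a priori resolvent bound \eqref{e:blackBoxResolve}. First I would reduce to that a priori bound. The sesquilinear form \eqref{e:def_ak} satisfies the G\aa rding inequality \eqref{e:Garding1} (Lemma~\ref{l:PMLGard1} or Lemma~\ref{l:PMLGard2}), so $P_\theta$ with the boundary conditions in \eqref{e:PML1} is a Fredholm operator of index zero between the natural energy space and its dual; hence it is enough to show that any solution $v$ of $P_\theta v = f$ obeys $\|v\|_{H^1_k(\Omega_{\tr})}\le C\rho(k)\|f\|_{L^2}$, since taking $f=0$ then yields uniqueness and Fredholmness upgrades this to existence. (Alternatively, uniqueness may be obtained beforehand from the standard fact that the complex scaling defining $P_\theta$ produces no real resonances of the scattering problem $P$; having uniqueness in hand lets us recombine solutions freely below.)

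For the a priori bound I would argue by contradiction and rescaling. If it fails there are $k_j\to\infty$ with $k_j\notin\mathcal J$ and data $f_j$, $\supp f_j\subset\Omega_{\tr}$, $\|f_j\|_{L^2}=1$, whose PML solutions $v_j$ satisfy $\|v_j\|_{H^1_{k_j}(\Omega_{\tr})}\ge j\,\rho(k_j)$. Setting $\tilde v_j:=v_j/\|v_j\|_{H^1_{k_j}(\Omega_{\tr})}$ and $g_j:=f_j/\|v_j\|_{H^1_{k_j}(\Omega_{\tr})}$ gives $P_\theta\tilde v_j=g_j$, $\|\tilde v_j\|_{H^1_{k_j}(\Omega_{\tr})}=1$, and $\rho(k_j)\|g_j\|_{L^2}\le 1/j\to 0$; in particular, since $\rho(k)\ge ck$, also $k_j\|g_j\|_{L^2}\to 0$. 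Next I would split the data as $g_j=\psi g_j+(1-\psi)g_j$, with $\psi\in C_c^\infty(B(0,\RPMLo))$ equal to $1$ on a ball strictly containing $B(0,\Rscat)$. For $\psi g_j$, which is supported in $B(0,\RPMLo)$, I would compare the PML solution carrying data $\psi g_j$ with $\mathcal R(k_j)(\psi g_j)$: Theorem~\ref{t:blackBoxErr} controls their difference by $Ce^{-ck_j}\|g_j\|_{L^2}$ on $B(0,\RPMLo)$, the cutoff-independence of $\rho$ (the remark after \eqref{e:rho}, upgraded to $H^1_k$ by Green's identity) bounds $\mathcal R(k_j)(\psi g_j)$ on compact sets by $C\rho(k_j)\|g_j\|_{L^2}$, and an elliptic estimate for $P_\theta$ on $\{r>\RPMLo\}$ — where $\psi g_j$ vanishes and $P_\theta$ is semiclassically elliptic, this being the mechanism behind Lemmas~\ref{l:PMLGard1}--\ref{l:PMLGard2} — propagates the control out to $\Gamma_{\tr}$; thus this contribution tends to $0$ in $H^1_{k_j}(\Omega_{\tr})$.

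For $(1-\psi)g_j$, supported near and inside the PML and away from the scatterer, $P_\theta$ coincides there with the scatterer-free PML operator, and I would bound the corresponding solution using the (polynomially-bounded) free-space resolvent together with an elliptic estimate in the PML, plus the $\mathcal R(k_j)$-governed response it produces when radiated inward through the interface; the upshot is a bound by $C(\rho(k_j)+k_j)\|g_j\|_{L^2}=o(1)$ in $H^1_{k_j}(\Omega_{\tr})$. By linearity and uniqueness, $\tilde v_j$ is the sum of these two contributions, so $\|\tilde v_j\|_{H^1_{k_j}(\Omega_{\tr})}\to 0$, contradicting $\|\tilde v_j\|_{H^1_{k_j}(\Omega_{\tr})}=1$.

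I expect the main obstacle to be this second data piece: in a full neighbourhood of $r=\RPMLo$ the operator $P_\theta$ is neither the physical Helmholtz operator carrying the radiation condition nor semiclassically elliptic, so separating the PML-region behaviour from the scatterer's (possibly resonant) response — and checking that the two are coupled only through operators whose corrections gain powers of $k^{-1}$, so that the resulting feedback closes — is the technical heart of the proof. What makes the scheme succeed is the universal lower bound $\rho(k)\gtrsim k$: it guarantees that every merely polynomial-in-$k$ contribution generated along the way is dominated by $\rho(k)\|g_j\|_{L^2}\to 0$. A more routine, but still necessary, point is the uniqueness of the PML problem used to recombine the two pieces.
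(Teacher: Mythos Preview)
The paper does not prove Theorem~\ref{t:blackBoxResolve}: the statements of Theorems~\ref{t:blackBoxErr} and~\ref{t:blackBoxResolve} are introduced as simplified versions of \cite[Theorems 1.5 and 1.6]{GLS2}, with no proof given here. So there is no paper proof to compare against; what follows is an assessment of your proposal on its own merits.

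Your reduction to an a priori bound via Fredholmness is fine, and the treatment of the inner piece $\psi g_j$ is essentially correct: Theorem~\ref{t:blackBoxErr} compares the PML solution carrying $\psi g_j$ to $\mathcal R(k_j)(\psi g_j)$, the latter is controlled by $C\rho(k_j)\|g_j\|\to 0$ (using the cutoff-independence of $\rho$ to enlarge $\chi$), and semiclassical ellipticity of $P_\theta$ strictly inside the PML extends the control to $\Gamma_{\tr}$.

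The gap is in the second piece. Your claimed bound $C(\rho(k_j)+k_j)\|g_j\|$ does not follow from the argument you sketch. If you solve the scatterer-free PML problem with data $(1-\psi)g_j$, the solution $w$ has $\|w\|_{H^1_k}\lesssim k_j\|g_j\|$; correcting $w$ to satisfy the boundary condition on $\partial\Omega_-$ (or to account for $A\neq I$, $n\neq 1$) introduces a source near the scatterer of size $\sim k_j\|g_j\|$, and the scatterer's response via $\mathcal R$ is then $\sim \rho(k_j)\,k_j\,\|g_j\|$, not $\rho(k_j)\|g_j\|$. You only know $\rho(k_j)\|g_j\|\to 0$, and $\rho\gtrsim k$ gives $k_j\|g_j\|\to 0$ but says nothing about $\rho(k_j)k_j\|g_j\|$; the loop does not close.

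What actually repairs this is a sharper statement than ordinary semiclassical ellipticity: for data supported in the PML region, the scatterer-free PML solution is \emph{exponentially} small in $k$ when measured near the scatterer (because the complex scaling damps propagation from the PML back into the physical region), with rate depending on $\theta$ and the PML width. The hypothesis $\theta>\theta_0(P,\mathcal J,R_{\tr})$ is exactly what makes this exponential decay beat the possible growth $e^{\Lambda k}$ of $\rho$. Establishing this decay is essentially the same mechanism as Theorem~\ref{t:blackBoxErr} (run ``in reverse'', from PML to physical region), and in \cite{GLS2} the two results are developed together rather than one being a black-box consequence of the other. So while your overall architecture is sound, the second piece is not a routine add-on to Theorem~\ref{t:blackBoxErr}; it requires an input of comparable depth.
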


\bre[Results on the accuracy of PML truncation at fixed $k$]\label{r:PMLfixedk}
Theorem \ref{t:blackBoxErr} shows that, for a PML with any positive width, the PML solution exists and is unique for sufficiently large $k$ and sufficiently large $\theta$ (with $\theta$ only needing to be $>0$ for nontrappping problems). 

For a fixed $k$, 
the PML solution exists and is unique when the width is sufficiently large by \cite[Theorem 2.1]{LaSo:98}, \cite[Theorem A]{LaSo:01}, \cite[Theorem 5.8]{HoScZs:03}, and the $H^1_k$ errors decreases exponentially in the width 
\cite[Theorem 2.1]{LaSo:98}, \cite[Theorem A]{LaSo:01}, \cite[Theorem 5.8]{HoScZs:03}.
\ere

\subsection{Definition of the Helmholtz PML problem considered in this article}\label{s:defPML}

We collect here the definitions of the Helmholtz PML problem considered elsewhere in the article.

We write $u\in \overline{C^{\ell,1}}(\Omega)$ if for all connected components $\Omega_j$ of $\Omega$ there exists $U_j \in C^{\ell,1}(\Rea^d)$ such that $U_j|_{\Omega_j}= u|_{\Omega_j}$ (i.e., $u$ is $C^{\ell,1}$ ``up to the boundary"). 


\begin{definition}[$H^\ell$ Helmholtz problem]
\label{d:helmholtzProblem}
The problem~\eqref{e:edp} is a \emph{$H^{\ell}$ Helmholtz problem} if $\Omega_-$ is $C^{\ell-1,1}$, and there is $\Gamma_{\rm p}\Subset \Omega_+$ a $C^{\ell-1,1}$ closed, embedded hypersurface such that $A,n\in \overline{C^{\ell-2,1}}(\Omega_+\setminus \Gamma_{\rm p})$. 
The problem \eqref{e:edp} is an $H^\infty$ Helmholtz problem if it is an 
$H^\ell$ Helmholtz problem for all $\ell$.
\end{definition}

The whole point of this notation is that if \eqref{e:edp} is an \emph{$H^{\ell}$ Helmholtz problem} then, by elliptic regularity, $u \in 
H^{\ell}(\Omega_+\setminus \Gamma_{\rm p})$; see, e.g., \cite[Theorems 4.18 and 4.20]{Mc:00}.

\begin{definition}[$H^\ell$ Helmholtz problem truncated using a PML]
\label{d:pmlTruncation}
An $H^{\ell}$ Helmholtz problem is truncated using a $C^{n,1}$ PML with a $C^{r,1}$ truncation boundary if $\PMLs_{\theta} \in C^{n,1}$ and $\Gamma_{\tr}\in C^{r,1}$ 
and, for the formulation corresponding to multiplying by $\alpha\beta^{d-1}$, $\PMLs_\theta(r)/r$ is nondecreasing.
\end{definition}

\section{A primer on semiclassical analysis}
\label{s:SCA}

\subsection{Symbols, quantisation, and semiclassical pseudodifferential operators on $\Rea^d$}\label{s:symbol}

It is standard in the analysis of partial differential equations (PDEs) to use the position variables, $x\in\mathbb{R}^d$, to decompose the solution 
of a partial differential equation 
into component parts localized at different positions (via, e.g., a partition of unity). Similarly, Fourier analysis allows one to do the same in the momentum/frequency variables, $\xi\in\mathbb{R}^d$. Microlocal analysis -- or, more specifically, since we work in the high-frequency regime, semiclassical analysis -- allows one to do both of these things simultaneously, up to the limit imposed by the Heisenberg uncertainty principle\footnote{The mathematical formulation of this principle is actually not important in the rest of this article, but can be found in, e.g., \cite[Theorem 3.9]{Zw:12}.} 

As described in \S\ref{s:how}, the combination of physical and Fourier variables is known as \emph{phase space}. In this section, we consider $x\in \Rea^d$, and treat phase space as $\Rea^{2d}$ (strictly speaking $(x,\xi)\in T^*\Rea^d:= \Rea^d \times (\Rea^d)^*$, i.e., $\Rea^d$  times its dual, 
but this distinction is not important in what follows).
We work with the small parameter $\hbar =k^{-1}$; typically in SCA the small parameter is denoted by $h$, but we instead use $\hbar$ to distinguish it from the FEM/BEM meshwidth.

The operators that allow one to work microlocally are ``quantisations'' of symbols, where a symbol is a function of $(x,\xi)$; this terminology comes from quantum mechanics, where the symbols are classical observables (i.e., functions of position and momentum) and their quantisations are quantum measurements (i.e., operators acting on wavefunctions). 

We work with quantisations of symbols from the following classes: given $m\in \Rea$,
$$
S^m(\mathbb{R}^{2d}):=\Big\{ a\in C^\infty(\mathbb{R}^{2d})\,:\,  
\forall\alpha,\beta \in \mathbb{N}^d
\sup_{\substack{(x,\xi)\in\mathbb{R}^{2d}\\ 0<\hbar<1
}
}\langle \xi\rangle^{-m+|\beta|}|\partial_x^\alpha \partial_{\xi}^\beta a(x,\xi)|<\infty\Big\},
$$
where $\langle\xi\rangle:= (1+|\xi|^2)^{1/2}$; i.e., for a symbol in $S^m$, 
taking a derivative in $\xi$ 
results in 
one power of $\langle\xi\rangle$ faster decay in $\xi$, 
but taking a derivative in $x$ does not change the decay in $\xi$. (Note that we allow the symbols $a(x,\xi)$ to depend on $\hbar$, but do not write this dependence explicitly in the notation).

\begin{example}[Examples of symbols]\label{ex:symbol} \ \\

\vspace{-1em}

(i) $a(x,\xi) := \sum_{|\alpha|\leq m}a_\alpha(x) \xi^\alpha$, where 
$m\in\mathbb{N}$, $a_\alpha \in C^\infty$, and $\partial^\gamma a_\alpha \in L^\infty$ for all $\gamma$ and $\alpha$, is in $S^m$.

(ii) $\langle \xi\rangle^{-m}:= (1+ |\xi|^2)^{-m/2}\in S^{-m}$.

(iii) If $\chi \in C^{\infty}_{c}(\Rea^{2d})$, then $\chi \in S^{-N}$ for every $N\geq 1$.
\end{example}

Given a symbol $a(x,\xi)$, the most-basic quantisation map
defines an operator for each $\hbar>0$ by
\begin{align}\label{e:quant}
(\widetilde{\Op}(a)u)(x)
&:=(2\pi\hbar)^{-d}\int_{\Rea^d}\int_{\Rea^d} e^{\frac{i}{\hbar}\langle x-y,\xi\rangle}a(x,\xi)u(y)dyd\xi
\\
&=(2\pi\hbar)^{-d}\int_{\Rea^d} e^{\frac{i}{\hbar}\langle x,\xi\rangle}a(x,\xi)(\mc{F}_\hbar u)(\xi)d\xi
\nonumber
\end{align}
for $x\in \Rea^d$ and $u\in \mathcal{S}(\Rea^d)$, with $\widetilde{\Op}(a)$ acting on $\mathcal{S}'(\Rea^d)$ then defined by duality.

\begin{example}[Examples of quantisations]\label{ex:quant} \ \\

\vspace{-1em}

(i) If $a(x,\xi)=1$, then $(\widetilde{\Op}(a)v)(x)= v(x)$, i.e., $\widetilde{\Op}(1)= I$.

(ii) If $a(x,\xi)=a(x)$, then $(\widetilde{\Op}(a)v)(x) = a(x)v(x)$.

(iii) If $a(x,\xi)=a(\xi)$, then $(\widetilde{\Op}(a)v)(x)= \mathcal{F}^{-1}_\hbar \big(a(\cdot)(\mathcal{F}_\hbar v)(\cdot)\big)(x)$, i.e., $\widetilde{\Op}(a)$ is a Fourier multiplier.

(iv) If $a(x,\xi) := \sum_{|\alpha|\leq m}a_\alpha(x) \xi^\alpha$, where $m\in\mathbb{N}$, $a_\alpha \in C^\infty$, then $(\widetilde{\Op}(a)v)(x)= \sum_{|\alpha|\leq m}a_\alpha(x) (\hbar D)^\alpha v$, where $D:= -i \partial$.
\end{example}

As special cases of (iv), we see that $-\hbar^2\Delta -1= \widetilde{\Op}( |\xi|^2-1)$
and
\beq\label{e:quantex}
P:= -\hbar^2\sum_{i,j=1}^d\partial_{x^i}A^{ij}(x)\partial_{x^j}-n(x)
=\widetilde{\Op}\Big( \langle A \xi,\xi\rangle -i \hbar\xi_j \partial_{x^i}A^{ij}(x) -n(x)\Big).
\eeq

Unfortunately, $\widetilde{\Op}(a)$ does not have the property that it is \emph{properly supported}; recall that an operator $B$ is properly supported if and only if for any
$\chi \in C_c^\infty(\Rea^d)$ there exist $\chi_1, \chi_2 \in C_c^\infty(\Rea^d)$ such that 
$$
\chi B = \chi B \chi_1\quad\tand \quad B \chi = \chi_2 B \chi.
$$
Having the quantisation be properly supported is helpful when dealing with functions that are only locally in $L^2$ (such as Helmholtz solutions).
We therefore let $\psi\in C_c^\infty(\mathbb{R})$ with $0\notin \supp (1-\psi)$ and use the quantisation map
\beq\label{e:quant2}
({\Op}(a)u)(x):=(2\pi\hbar)^{-d}\int_{\Rea^d}\int_{\Rea^d} e^{\frac{i}{\hbar}\langle x-y,\xi\rangle}a(x,\xi)\psi(|x-y|)u(y)dyd\xi.
\eeq
With this quantisation, Parts (i), (ii), and (iv) of Example \ref{ex:quant} still hold. Moreover, 
by repeatedly integrating by parts the integral defining $\widetilde{\Op}(a)-\Op(a)$, we find that 
for all $a\in S^m(\mathbb{R}^{2d})$ and all $N>0$ there is $C>0$ such that for $0<\hbar<1$
\beq\label{e:residual}
\|\widetilde{\Op}(a)-\Op(a)\|_{H_k^{-N}(\Rea^d)\to H_k^N(\Rea^d)}\leq C\hbar^N,
\eeq
where the spaces $H_k^s(\Rea^d)$ are defined in \S\ref{s:Sobolev} below.

We say that an ($\hbar$-dependent) operator $B: C_c^\infty(\mathbb{R}^d)\to \mathcal{D}'(\mathbb{R}^d)$ is $ O(\hbar^\infty)_{\Psi_h^{-\infty}}$ if for all $N>0$ there is $C>0$ such that 
\beq\label{e:residualclass}
\|B\|_{H_k^{-N}(\Rea^d)\to H_k^N(\Rea^d)}\leq C\hbar^N.
\eeq
In particular, $\widetilde{\Op}(a) = \Op(a) +O(\hbar^\infty)_{\Psi_h^{-\infty}}$ by \eqref{e:residual}.

We then define the class of semiclassical pseudodifferential operator of order $m$, $\Psi^m_\hbar(\mathbb{R}^d)$ by
\beq\label{e:pseudos}
\Psi^m_\hbar(\mathbb{R}^d):=\big\{ \Op(a)+O(\hbar^\infty)_{\Psi_\hbar^{-\infty}}\,:\, a\in S^m(\Rea^{2d})\big\}
\eeq
(without the inclusion of the residual term in \eqref{e:pseudos}, this class of operators cannot contain, e.g., $(-\hbar^2\Delta+1)^{-1}$, since this operator is not properly supported).


\bre[Why do we work in $S^m$?]\label{r:Sm}  
$S^m$ -- 
known as the Kohn--Nirenberg symbol class \cite{KoNi:65} -- is a natural class of symbols that includes both polynomials and, when they are invertible, the inverse of polynomials. This results in the fact that the quantisation of $S^m$ includes (up to residual terms) both the (semiclassical) differential operators and, when the differential operator is elliptic in an appropriate sense, the inverse of such a differential operator. 

Much wider classes of (smooth) symbols exist; see, e.g., \cite[Chapter 7, \S1]{Ta:96}, \cite[\S4.4.1]{Zw:12}, \cite[Equation E.1.48]{DyZw:19}.
\ere

\bre[Other symbol classes]
There are many calculi of (semiclassical) pseudodifferential operators that allow for the treatment of various geometric singularities; e.g., boundaries, corners, cone points, etc.
In this article, we discuss only the standard semiclassical calculus, although we use some results from the calculus associated to boundaries in \S\ref{s:R4} (when stating the analogues of Theorems \ref{l:measureAwayTrapping} and \ref{l:everythingawaytrapping} below for the PML problem -- see Table \ref{ta:resolve} below).

In general, non-smooth symbol classes are more technically complicated and have worse remainder terms than smooth symbol classes; see e.g.~\cite[\S 13.9]{Ta:23}
\ere

\subsection{Semiclassical Sobolev spaces}\label{s:Sobolev}

For $s\geq 0$, let 
$$
|u|_{H_k^{s}(\mathbb{R}^d)}^2:=\frac{1}{(2\pi \hbar)^d}\int_{\Rea^d}|\xi|^{2s}|\mathcal{F}_k(u)(\xi)|^2 d\xi
$$
(where $\mathcal{F}_k$ -- the semiclassical Fourier transform -- is defined by \eqref{e:Fourier} above) 
and 
\beq\label{e:norm2}
\|u\|_{H_k^s(\Rea^d)}^2:= \sum_{\ell=0}^s \frac{1}{(\ell!)^2}|u|_{H_k^\ell(\Rea^d)}^2
\eeq
(compare to \eqref{e:norm}). Let $H^{-s}_k(\Rea^d):= (H^s_k(\Rea^d))^*$, with corresponding norm defined by duality.

Given $s_0>0$, for all $|s|\leq s_0$ these norms  are then equivalent
 -- up to $s_0$- and $d$-dependent constants -- to the more-standard Sobolev norms on $\Rea^d$ defined by Fourier transform:
\beq\label{eq:Hhnorm}
\vertiii{u}_{H_k^s(\Rea^d)} ^2 := (2\pi \hbar)^{-d} \int_{\Rea^d} \langle \xi \rangle^{2s}
 |\mathcal F_\hbar u(\xi)|^2 \, d \xi.
\eeq
Throughout this article we work with the definition \eqref{e:norm}/\eqref{e:norm2}, but the precise values of the constants in the norm is only important for the results on $h$- and $p$-explicit polynomial approximation in \S\ref{a:poly} (and thus, in particular, not important in this section).

\subsection{Fundamental properties of semiclassical pseudodifferential operators}


\begin{informaltheorem}[Properties of semiclassical pseudodifferential calculus]\label{t:rules}

\ 
\vspace{-1em}

\begin{enumerate}
\item (Quantisation.) The  \emph{quantisation} map $\Op:S^m(\mathbb{R}^{2d})\to \Psi^{m}_\hbar(\Rea^d)$ 
defined by \eqref{e:quant2} is 
such that $\Op(a)$ 
is properly supported, $\Op(1)=I$, and $[\Op(a)u](x)=a(x)u(x)$ when $a$ is independent of $\xi$.
\item (Principal symbol.)
There is 
a \emph{principal symbol} map $\sigma_\hbar:\Psi^{m}_\hbar(\Rea^d)\to S^m(\mathbb{R}^d)$ such that 
$$\sigma_\hbar\circ \Op = I +\lot $$ 
(where ``\lot " stands for ``lower-order terms" --- both smaller in $\hbar$, and more smoothing).
\item (Boundedness in Sobolev norms.)
Given $a\in S^m(\mathbb{R}^{2d})$,
for all $s\in \Rea$ there is $C_s>0$ such that $$\|\Op(a)\|_{H_k^s(\Rea^d)\to H_k^{s-m}(\Rea^d)}\leq C_s$$
(i.e., the quantisation of a symbol of order $m$ decreases Sobolev regularity by $m$). 
Furthermore, for $a\in S^0(\mathbb{R}^{2d})$, $$\|\Op(a)\|_{L^2(\Rea^d)\to L^2(\Rea^d)}\leq \sup_{(x,\xi)\in \Rea^{2d}}|a(x,\xi)|+\lot $$
\item (Composition.) $$\Op(a)\Op(b)=\Op(ab)+\lot$$ 
Furthermore, $\Op(a)\Op(b)=\Op(c)+O(\hbar^\infty)_{\Psi_{\hbar}^{-\infty}}$
with $\supp c\subset \supp a\cap \supp b$.
\item (Adjoints.) $(\Op(a))^* =\Op(\overline{a}) + \lot $
\item (Commutators.) 
\begin{align*}
i\hbar^{-1} [\Op(a),\Op(b)]
&:=i\hbar^{-1}\big(\Op(a)\Op(b)- \Op(b)\Op(a)\big)\\
&=\Op(\{ a,b\})+\lot,
\end{align*}
where the Poisson bracket $\{\cdot,\cdot\}$ is defined by
$$\{a,b\}:=\langle \partial_{\xi}a,\partial_x b\rangle -\langle \partial_{\xi}b,\partial_x a\rangle.$$
\item (Sharp G\aa rding inequality.) If $a\in S^m(\mathbb{R}^{2d})$, $w\in S^{m/2}(\mathbb{R}^{2d})$ and $a\geq w^2$, then for all $u\in H^{m/2}_k(\Rea^d)$
\begin{equation}
\label{e:gaarding}\langle \Op(a)u,u\rangle\geq \|\Op(w)u\|_{L^2(\Rea^d)}^2+\lot 
\end{equation}
\end{enumerate}
\end{informaltheorem}

The main way in which Informal Theorem \ref{t:rules} is non-rigorous is that we have not been precise about what the lower-order terms are. 
The precise statement of what these lower-order terms are is given in Theorem \ref{t:rules2} below. However, we emphasise that the sketch proofs in the rest of this section can be understood using only Informal Theorem \ref{t:rules}.


As an example of Point 2 of Informal Theorem \ref{t:rules},
\beq\label{e:p}
p(x,\xi):=\sigma_\hbar(P)(x,\xi)=
\langle A\xi,\xi\rangle - n(x) =
\sum_{i,j=1}^dA^{ij}(x)\xi_i\xi_j-n(x);
\eeq
i.e., the term on the right-hand side of \eqref{e:quantex} involving $\hbar$ has been neglected. 

\bre[Pseudodifferential operators on a manifold]
\label{r:manifold}
In \S\ref{s:R3} we use pseudodifferential operators defined on the boundary of $\Omega_-$ (with the boundary assumed smooth). 
We highlight that the analogue of Informal Theorem~\ref{t:rules} (and its precise version Theorem \ref{t:rules2}) holds on any smooth, compact manifold, $M$, with $\mathbb{R}^{2d}$ replaced by the cotangent bundle $T^*M$, and $\mathbb{R}^d$ replaced by $M$ \cite[Appendix E]{DyZw:19}, except that now the quantisation map is defined by \eqref{e:quant2}
on coordinate charts (see \cite[Equation E.1.31]{DyZw:19}).
\ere

\subsection{Ellipticity}
\label{s:elliptic}
Ellipticity is the notion of ``microlocal invertibility":~at elliptic points, the solution operator for $P$ is very well behaved, in that it both has good estimates and is (almost) local.

From now on, to keep the notation concise, we abbreviate
 $\|\cdot\|_{H^s_k(\Rea^d)}$ to $\|\cdot\|_{H^s_k}$
 and $\|\cdot\|_{L^2(\Rea^d)}$ to $\|\cdot\|_{L^2}$.

The next result involves an arbitrary pseudodifferential operator $Q=\Op(q)$.
We highlight that in almost all applications of this result in this article, $Q$ is taken to be either $P$ \eqref{e:quantex} or $P^*$.

\begin{theorem}[Elliptic estimate]
\label{t:elliptic}
Let $m_1,m_2\in \mathbb{R}$ and $q\in S^{m_1}(\mathbb{R}^{2d})$, $a\in S^{m_2}(\mathbb{R}^{2d})$, and $b\in S^{0}(\mathbb{R}^{2d})$ be such that there is $c>0$ such that  
\beq\label{e:ellipticp}
|q(x,\xi)|\geq c\langle \xi\rangle^{m_1},\qquad (x,\xi)\in \supp a,
\eeq
and $\supp a\cap \supp (1-b)=\emptyset$. Then for all $\chi\in C_c^\infty(\mathbb{R}^d)$ there are $e\in S^{m_2-m_1}(\mathbb{R}^{2d})$, $\psi\in C_c^\infty(\mathbb{R}^{d})$ such that for all $N>0$ there is $C>0$ such that for $0<\hbar<1$ and $u\in \mathcal{D}'(\Rea^d)$
\beq\label{e:elliptic0}
\big\|\chi\big(\Op(a)-\Op(e)\Op(b) \Op(q)\big)u\big\|_{ H_k^N}\leq C_N\hbar^N\|\psi u\|_{H_k^{-N}}.
\eeq
In particular,
\beq\label{e:elliptic1}
\|\chi\Op(a) u\|_{H_k^s}\leq C\|\Op(b) \Op(q) u\|_{H_k^{s+m_2-m_1}}+C\hbar^N\|\psi u\|_{H_k^{-N}}.
\eeq
\end{theorem}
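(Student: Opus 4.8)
The plan is to build a microlocal left parametrix for $\Op(b)\Op(q)$ over $\supp a$ by the usual elliptic-parametrix iteration, and then to localise the resulting $O(\hbar^\infty)_{\Psi_\hbar^{-\infty}}$ remainder using proper support. The structural inputs are the composition rule in its support-refined form — Part 4 of Informal Theorem \ref{t:rules}, i.e.\ $\Op(u)\Op(v)=\Op(u\# v)+O(\hbar^\infty)_{\Psi_\hbar^{-\infty}}$ with $u\# v=uv+\hbar(\cdots)$ and $\supp(u\# v)\subseteq\supp u\cap\supp v$ — the Sobolev boundedness of Part 3, and the hypotheses that $|q|\geq c\langle\xi\rangle^{m_1}$ and $b\equiv 1$ on $\supp a$. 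First I would note that $q$ is elliptic on the open set $\{|q|>(c/2)\langle\xi\rangle^{m_1}\}\supseteq\supp a$, so that $1/q$ is a symbol of order $-m_1$ there (the derivative bounds come from the quotient rule and the lower bound on $|q|$), and hence, for any $r\in S^\mu$ with $\supp r\subseteq\supp a$, the product $r/q$ extended by $0$ lies in $S^{\mu-m_1}$ with support in $\supp a$.

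Set $e_0:=a/q\in S^{m_2-m_1}$. Since $b\equiv 1$ on $\supp a$ we have $e_0bq=ab=a$ identically, so the composition rule gives
\beqs
\Op(e_0)\Op(b)\Op(q)=\Op(a)+\hbar\,\Op(s_1)+O(\hbar^\infty)_{\Psi_\hbar^{-\infty}},
\eeqs
where $s_1\in S^{m_2}$ and, since every term of the composition expansion is a product of derivatives of $e_0$, $b$ and $q$, one has $\supp s_1\subseteq\supp e_0\subseteq\supp a$.

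Now iterate: given a remainder $\hbar^{j}\Op(s_j)$ with $s_j\in S^{m_2}$ and $\supp s_j\subseteq\supp a$, set $e_j:=-\hbar^{j}s_j/q\in\hbar^{j}S^{m_2-m_1}$ (well defined by the previous step); since $b\equiv 1$ on $\supp s_j$ we get $e_jbq=-\hbar^{j}s_j$, so
\beqs
\Op(e_j)\Op(b)\Op(q)=-\hbar^{j}\Op(s_j)+\hbar^{j+1}\Op(s_{j+1})+O(\hbar^\infty)_{\Psi_\hbar^{-\infty}},
\eeqs
with $s_{j+1}\in S^{m_2}$ and $\supp s_{j+1}\subseteq\supp a$; thus adding $e_j$ cancels the order-$\hbar^{j}$ remainder at the price of one of the same type at order $\hbar^{j+1}$. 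Taking $e\sim\sum_{j\geq0}e_j$ to be a Borel (asymptotic) sum, so that $e\in S^{m_2-m_1}$ and $e-\sum_{j<N}e_j\in\hbar^{N}S^{m_2-m_1}$ for every $N$, we obtain $R:=\Op(a)-\Op(e)\Op(b)\Op(q)\in O(\hbar^\infty)_{\Psi_\hbar^{-\infty}}$. Given $\chi\in C_c^\infty(\Rea^d)$, proper support of $\Op(a)$ and of the composition $\Op(e)\Op(b)\Op(q)$ yields $\psi\in C_c^\infty(\Rea^d)$ with $\chi R=\chi R\psi$, and then, by \eqref{e:residualclass} together with the $k$-uniform boundedness of multiplication by $\chi$ on $H^N_k$,
\beqs
\big\|\chi\big(\Op(a)-\Op(e)\Op(b)\Op(q)\big)u\big\|_{H^N_k}=\|\chi R\psi u\|_{H^N_k}\leq C_N\hbar^N\|\psi u\|_{H^{-N}_k},
\eeqs
which is \eqref{e:elliptic0}. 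The estimate \eqref{e:elliptic1} then follows by writing $\chi\Op(a)u=\chi\Op(e)\Op(b)\Op(q)u+\chi Ru$, bounding the first term by $C\|\Op(b)\Op(q)u\|_{H^{s+m_2-m_1}_k}$ using the mapping property $\chi\Op(e)\colon H^{s+m_2-m_1}_k\to H^s_k$ (Part 3 of Informal Theorem \ref{t:rules}, since $e\in S^{m_2-m_1}$), and the second term by \eqref{e:elliptic0}.

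I expect the main obstacle to be the symbol-class bookkeeping that the informal calculus suppresses: checking that $r/q$ is a symbol of the stated order whenever $r\in S^\mu$ is supported in the ellipticity set — the one place where the structure of the Kohn--Nirenberg class $S^m$ is genuinely used — and keeping the remainders $s_j$ supported in $\supp a$ throughout the iteration, so that the factors $q$ and $b$ reproduce them exactly; this is precisely what the support-refined composition statement of Part 4 of Informal Theorem \ref{t:rules} delivers. By contrast, the asymptotic summation of the $e_j$ and the passage from the global $O(\hbar^\infty)_{\Psi_\hbar^{-\infty}}$ remainder to the $\chi$-localised estimate \eqref{e:elliptic0} via proper support are routine.
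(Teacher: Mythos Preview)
Your approach is exactly the paper's: set $e_0=a/q$, use the support-refined composition rule to produce a remainder supported in $\supp a$, and iterate; the paper compresses your recursion and Borel sum into the phrase ``iterating this argument (to improve the regularity and size of the remainder term)''.

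One bookkeeping slip: you track $s_j\in S^{m_2}$ throughout, but that alone gives $R\in\hbar^\infty\Psi^{m_2}$, which is not yet $O(\hbar^\infty)_{\Psi_\hbar^{-\infty}}$ in the sense of \eqref{e:residualclass} (smoothing is required, not just $\hbar$-smallness). In fact each step of the iteration improves the symbolic order too, so $s_j\in S^{m_2-j}$ and hence $e_j\in\hbar^j S^{m_2-m_1-j}$; with that sharper tracking the Borel sum satisfies $e-\sum_{j<N}e_j\in\hbar^N S^{m_2-m_1-N}$ and one gets $R\in\hbar^N\Psi^{m_2-N}$ for every $N$, which is $O(\hbar^\infty)_{\Psi_\hbar^{-\infty}}$. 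This is precisely what the paper's parenthetical ``to improve the regularity'' is flagging.
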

\begin{proof}[Sketch of proof]
\begin{enumerate}
\item By \eqref{e:ellipticp}, $e:=a/q\in S^{m_2-m_1}$ and $a /q=a b/q$. Hence, by the composition property and principal symbol property from Informal Theorem \ref{t:rules},
$$\Op(a)=
\Op(\tfrac{a}{q} bq)
+\lot
=\Op(\tfrac{a}{q}) \Op(b)\Op(q)+\lot.$$
The result \eqref{e:elliptic0} then follows by iterating this argument (to improve the regularity and size of the remainder term) 
and using the fact that 
\begin{align*}
&\chi\big(\Op(\tfrac{a}{q} bq)
-\Op(\tfrac{a}{q}) \Op(b)\Op(q)
\big)
\\
&\qquad=\chi\big(\Op(\tfrac{a}{q} bq)
-\Op(\tfrac{a}{q}) \Op(b)\Op(q)
\big)\psi,
\end{align*}
for some $\psi$, since the quantisation is properly supported.
\item
The bound \eqref{e:elliptic1} follows from \eqref{e:elliptic0} since $\|\Op(e)\|_{H^{s+m_2-m_1}_k \to H^s_k}\leq C$
by the boundedness property in Informal Theorem \ref{t:rules}.
\end{enumerate}
\end{proof}

\begin{example}
The prototypical semiclassically-elliptic differential operator is 
$\widetilde{P}:= -\hbar^2 \Delta +1$. Indeed, $\widetilde{P}=\Op(\widetilde{p})$ with $\widetilde{p}(x,\xi) =|\xi|^2+1 = \langle \xi\rangle^{2}$. 
Since this operator is a Fourier multiplier, 
directly by the Fourier transform,
\beqs
\|u\|_{H^s_k}\leq C \| \widetilde{P}u\|_{H^{s-2}_k},
\eeqs
which is analogous to the elliptic estimate \eqref{e:elliptic1} with $a=b=1$, no cutoffs, no remainder term, and $\Op(q)=\widetilde{P}$.
\end{example}

\subsection{The Hamiltonian flow}

In \S\ref{s:how} we stated that SCA describes how Helmholtz solutions in the limit $k\to\infty$ are governed by the rays. The rays appear via the commutator property of Informal Theorem \ref{t:rules} which implies that
\begin{equation}
\label{e:comm}
i\hbar^{-1}[P,\Op(a)]=\Op(H_pa)+\lot .
    \end{equation}
where $\lot$ stands for lower-order terms and 
$$
H_p a:=
\{ p, a\}=
\langle \partial_{\xi}p(x,\xi),\partial_x a\rangle -\langle \partial_{x}p(x,\xi),\partial_\xi a \rangle.
$$
The key point is that 
\beq\label{e:key}
\begin{minipage}{\textwidth}
    \begin{center}
    $H_p a$ is the derivative of $a(x,\xi)$ when $(x,\xi)$ evolve according to Hamilton's equations with Hamiltonian $p(x,\xi)$.
    \end{center}
\end{minipage}
\eeq
To see this, recall that given a function $\mathscr{H}(x,\xi)$ (the \emph{Hamiltonian}), Hamilton's equations are 
\beq\label{eq:Hamilton_equations}
\diff{x_i}{t}(t) = \pdiff{}{\xi_i}\mathscr{H}\big(x(t), \xi(t) \big), \qquad
\diff{\xi_i}{t}(t)
 = -\pdiff{}{x_i}\mathscr{H}\big(x(t), \xi(t) \big).
\eeq
By the chain rule and the definition of the Poisson bracket $\{\cdot,\cdot\}$, 
\beq\label{eq:Hamilton1}
\diff{}{t}\big( f(x(t),\xi(t);t)\big) = \bigg(\{ \mathscr{H} , f\} +\pdiff{f}{t}\bigg) \big(x(t), \xi(t);t\big).
\eeq
Given $(x_0,\xi_0)$, we define the Hamiltonian flow 
\beq\label{e:flow}
\varphi_t(x_0,\xi_0):=(x(t),\xi(t)),
\eeq
where $(x(t),\xi(t))$ is the solution of \eqref{eq:Hamilton_equations} with initial condition $(x(0),\xi(0))= (x_0,\xi_0)$.
(Another notation for $\varphi_t$ is $\exp(tH_p)$.)
With this notation, given $a(x,\xi)$, \eqref{eq:Hamilton1} can be rewritten as 
\beq\label{eq:a_flow}
\diff{}{t}(a\circ \varphi_t) = \{\mathscr{H},a\}\circ \varphi_t.
\eeq
When $\mathscr{H}=p$, \eqref{eq:a_flow} therefore justifies the statement \eqref{e:key}.

When $P$ is given by \eqref{e:quantex} with $A\equiv I$ and $n\equiv 1$, by \eqref{e:p}
$p(x,\xi)= |\xi|^2-1$, and Hamilton's equations \eqref{eq:Hamilton_equations} become
$$
\dot{x}_i = 2 \xi_i, \quad \dot{\xi}_i =0,
$$
with solution 
$$
x=x_0 +2 t \xi_0, \quad \xi= \xi_0;
$$
i.e., straight-line motion with speed $2\xi_0$. We see below that special importance is played by the set $\Sigma:=\{p=0\}$; on this set $|\xi_0|=1$ so that the flow restricted to $\Sigma$ has speed $2$.

\bre[Generalised broken bicharacteristics]
When the domain has a boundary, the Hamiltonian trajectories need to be replaced by \emph{generalised broken bicharacteristics} \cite[Section 24.3]{Ho:85}, which, roughly speaking, extend trajectories that intersect the boundary by the Snell--Descartes law of reflection. When $A\equiv I$ and $n\equiv 1$, these are the usual billiard trajectories except possibly when the trajectory is tangent to a point on the boundary that is flat to infinite order.
\ere

\subsection{Propagation}
\label{s:propagate}

\subsubsection{From energy estimates to commutators}\label{s:energy}

In PDE theory, \emph{energy estimates} usually refer to bounds obtained by multiplying the PDE by a combination of the solution and its derivatives and integrating by parts. If $P$ is self-adjoint and $V$ is a vector field, then
\begin{align}\label{e:vf}
2\Re \big\langle Pu,(V-V^*) u\big\rangle &=    
\big\langle (V-V^*) u, Pu \big\rangle
+\big\langle Pu, (V-V^*) u \big\rangle
= \big\langle [P,V-V^*]u,u\big\rangle 
\end{align}
so that if 
\beq\label{e:finalMorawetz}
[P,V-V^*]\geq c(V-V^*)^2,
\eeq
then one can bound $(V-V^*)u$ in terms of $Pu$. In other words, standard arguments used to obtain energy estimates can/should be thought of as \emph{positive commutator arguments}.

For example, 
if $V= x\cdot \nabla+d/2$ then $V^* = -x\cdot \nabla -d/2$, so that $V-V^*= 2 x\cdot \nabla$,
and then $[V-V^*, \Delta]= 2[x\cdot \nabla, \Delta] = -4\Delta$ and \eqref{e:finalMorawetz} is satisfied; this is the basis of so-called Rellich/Morawetz identities \cite{Re:40, MoLu:68, Mo:75} which have been the main way the numerical-analysis community has obtained propagation information about Helmholtz solutions (see Remark \ref{r:nt} below for more discussion on this).


In semiclassical and microlocal analysis, propagation estimates can be thought of as ``microlocalized energy estimates'', where one multiplies by the quantisation of certain carefully-chosen symbols. 
If $A$ and $P$ are both self-adjoint, then 
\begin{align}\label{e:multiplier}
-2i\Im \langle Pu,A u\rangle &= \big( \langle A u, Pu \rangle - \langle Pu, A u \rangle\big)= \langle [P,A]u,u\rangle.
\end{align}
By the key relation \eqref{e:comm}, 
the commutator term in \eqref{e:multiplier} has a 
sign (up to $\lot$) provided that the symbol $a=\sigma_\hbar(A)$ either increases or decreases along the flow with Hamiltonian $p=\sigma_\hbar(P)$.


\subsubsection{The basic propagation argument}

The elliptic estimate from Theorem \ref{t:elliptic} shows that Helmholtz solutions are well-behaved at high frequency (since $P$ is semiclassically elliptic there). 
For simplicity, therefore, we only consider propagation estimates with compactly supported symbols (i.e., compact support in both space and frequency). For propagation estimates with more general symbols; see \cite[Appendix E.4]{DyZw:19}.


\begin{theorem}[Basic propagation estimate]
\label{t:basicPropagate}
Suppose that $P$ is given by \eqref{e:quantex} and let $\varphi_t:=\exp(tH_p)$ be the Hamiltonian flow of $p=\sigma_\hbar(P)$ (given by \eqref{e:p}).
Let $T\in\mathbb{R}$, $(x_0,\xi_0)\in \mathbb{R}^{2d}$, and $(y_0,\eta_0):=\varphi_{-T}(x_0,\xi_0)$. Let $U$ be a neighborhood of $(y_0,\eta_0)$ and $U_1$ be a neighborhood of 
$$
\bigcup_{t=0}^T \varphi_{-t}(x_0,\xi_0).
$$
Then there is a neighborhood, $V$, of $(x_0,\xi_0)$ and $\chi \in C_c^\infty(\mathbb{R}^d)$ such that for all $a\in C_c^\infty(V;\Rea)$ and $b,b_1\in C_c^\infty(\mathbb{R}^{2d})$ 
with $\supp (1-b)\cap U=\emptyset$, $\supp(1-b_1)\cap U_1=\emptyset$ and all $N>0$, there is $C>0$ such that for $0<\hbar<1$ and for all $u\in \mathcal{D}'(\Rea^d)$ with $\Op(b_1)Pu\in L^2(\Rea^d)$
\beq\label{e:propagate}
\|\Op(a) u\|_{L^2}\leq C\hbar^{-1}\|\Op(b_1)Pu\|_{L^2}+C\|\Op(b)u\|_{L^2}+C\hbar^N\|\chi u\|_{H_k^{-N}}.
\eeq
\end{theorem}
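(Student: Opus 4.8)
The plan is a \emph{positive-commutator (escape-function)} argument. One may assume $\xi_0\neq 0$: if $\xi_0=0$ then $H_p(x_0,\xi_0)=0$ but $p(x_0,\xi_0)=-n(x_0)\neq0$, so $P$ is (semiclassically) elliptic near $(x_0,\xi_0)$ and \eqref{e:propagate} follows from the elliptic estimate (Theorem~\ref{t:elliptic}). When $\xi_0\neq0$, $\partial_\xi p(x_0,\xi_0)=2A(x_0)\xi_0\neq0$, so the Hamiltonian flow is non-stationary at $(x_0,\xi_0)$ and, for short times, $t\mapsto\varphi_{-t}(x_0,\xi_0)$ is an embedded curve possessing a tubular neighbourhood contained in $U_1$. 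Since $A$ is real symmetric and $n$ real, the operator $P$ of \eqref{e:quantex} is formally self-adjoint, so for a real symbol $q$ and $Q:=\Op(q)$ the commutator rule of Informal Theorem~\ref{t:rules} (with $\{p,q\}=H_pq$) gives $\langle \Op(H_pq)u,u\rangle = \langle i\hbar^{-1}[P,Q]u,u\rangle+\lot = \tfrac{2}{\hbar}\,\Im\langle Pu,Qu\rangle+\lot$. Everything then reduces to constructing a suitable real $q\geq0$ that is \emph{strictly decreasing along the flow} on (a neighbourhood of) the trajectory segment, except near its initial point $(y_0,\eta_0)$.

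\textbf{The escape function.} First take $T$ small enough that the segment is embedded; the general case follows by subdividing $[0,T]$ and composing the estimates, the output cutoff of one sub-step being the input cutoff of the next. Work in flow-box coordinates $(t,y)$ near the segment, in which $H_p=\partial_t$, the segment is $\{y=0,\ t\in[-T,0]\}$, and $(x_0,\xi_0)\leftrightarrow t=0$, $(y_0,\eta_0)\leftrightarrow t=-T$. Set
\[
q(t,y):=\Lambda\,\chi_-(t)\,\chi_+(t)\,e^{-ct}\,\chi_\perp(y),
\]
where $\chi_\perp\geq0$ is a small transverse bump, $\chi_-$ rises smoothly from $0$ (for $t<-T-\delta$) to $1$ (for $t\geq-T$), $\chi_+$ descends from $1$ (for $t\leq\delta$) to $0$ (for $t>2\delta$), and $\Lambda\geq1$ is a constant to be chosen depending on $a$. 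Then $q\geq0$, $\supp q\subset U_1$, and a direct computation gives $-H_pq=c\,q-\Lambda\chi_-'\chi_+ e^{-ct}\chi_\perp+\Lambda\chi_-|\chi_+'|e^{-ct}\chi_\perp\geq c\,q$ wherever $\chi_-'=0$, while $-H_pq<0$ only where $\chi_-'\neq0$, a region we may place inside $U$ (where $b\equiv1$). Since $q\geq c_0\Lambda>0$ on a fixed neighbourhood $V$ of $(x_0,\xi_0)$ (take $V\subset\{t\in(-\delta,\delta),\ \chi_\perp(y)>0\}$, on which $\chi_\pm\equiv1$ so $-H_pq=c\,q$ there), enlarging $\Lambda$ relative to $\|a\|_\infty^2$ forces $-H_pq\geq a^2$ on $\supp a\subset V$. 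Consequently, for suitable constants $C,\lambda,\mu>0$ one has the two pointwise symbol inequalities
\[
-H_pq+Cb^2-a^2\geq0
\qquad\text{and}\qquad
\lambda(-H_pq)+\mu b^2-q^2\geq0
\]
on all of $\mathbb{R}^{2d}$ (the second because $q^2\leq \tfrac{\sup q}{c}(-H_pq)$ where $q$ is flow-decreasing and $q^2\leq(\sup q)^2b^2$ in the rising region inside $U$).

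\textbf{The commutator estimate.} Apply the sharp G\aa rding inequality (Informal Theorem~\ref{t:rules}(7), with vanishing square-root symbol) to these two nonnegative symbols, together with the composition and adjoint rules to replace $\Op(a^2),\Op(b^2),\Op(q^2)$ by $\Op(a)^*\Op(a)$, $\Op(b)^*\Op(b)$, $Q^*Q$ modulo $\lot$, obtaining
\[
\|\Op(a)u\|_{L^2}^2\leq -\langle\Op(H_pq)u,u\rangle+C\|\Op(b)u\|_{L^2}^2+\lot,
\]
\[
\|Qu\|_{L^2}^2\leq -\lambda\langle\Op(H_pq)u,u\rangle+C\|\Op(b)u\|_{L^2}^2+\lot.
\]
Now substitute $-\langle\Op(H_pq)u,u\rangle=-\tfrac2\hbar\Im\langle Pu,Qu\rangle+\lot$ and localise $Pu$: since $b_1\equiv1$ on $U_1\supset\supp q$, both $Q(1-\Op(b_1))$ and $(1-\Op(b_1))^*Q$ are $O(\hbar^\infty)_{\Psi_\hbar^{-\infty}}$, so $|\langle Pu,Qu\rangle|\leq\|\Op(b_1)Pu\|_{L^2}\|Qu\|_{L^2}+\lot$. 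The second displayed inequality then self-improves to $\|Qu\|_{L^2}^2\leq C\hbar^{-1}\|\Op(b_1)Pu\|_{L^2}\|Qu\|_{L^2}+C\|\Op(b)u\|_{L^2}^2+\lot$, hence $\|Qu\|_{L^2}\leq C\hbar^{-1}\|\Op(b_1)Pu\|_{L^2}+C\|\Op(b)u\|_{L^2}+\lot$; feeding this back into the first displayed inequality and taking square roots yields \eqref{e:propagate} (after relabelling $N$).

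\textbf{Main obstacle.} The substantive step is the escape-function construction, and within it the passage from short $T$ to arbitrary $T$: for large $T$ the segment need not be embedded (e.g.\ a periodic trajectory), so it cannot be straightened in a single flow box and one must subdivide and compose, as indicated above. The remaining difficulty is bookkeeping within the calculus: the ``$\lot$'' terms produced by the non-exactness of the composition, adjoint, commutator and G\aa rding rules are a priori only $O(\hbar)$ and are microsupported in $U_1$ rather than near $(y_0,\eta_0)$, so one re-runs the argument on these remainders---each pass gaining a power of $\hbar$---until the error is genuinely $O(\hbar^\infty)\|\chi u\|_{H_k^{-N}}$; and, since $u$ is merely a distribution with $\Op(b_1)Pu\in L^2$, all the pairings above are first carried out with $u$ replaced by $\Op(\chi_\varepsilon)u$ for a mollifier $\chi_\varepsilon$ and one then lets $\varepsilon\to0$. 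Both points are standard and are treated in detail in \cite[Appendix~E]{DyZw:19}.
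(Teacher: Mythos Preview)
Your proof is correct and follows essentially the same positive-commutator/escape-function strategy as the paper's sketch: construct a nonnegative escape function decreasing along the flow except near $(y_0,\eta_0)$, apply sharp G\aa rding, use the commutator identity together with self-adjointness of $P$, and iterate to improve the remainders. The differences are only organizational---you give an explicit flow-box construction of the escape function (with subdivision for large $T$) where the paper cites \cite[Lemma~E.48]{DyZw:19}, you take $Q=\Op(q)$ as the commutant where the paper uses $G^*G=\Op(g)^*\Op(g)$, and you localise $Pu$ via the disjoint-support composition rule where the paper invokes the elliptic estimate (Theorem~\ref{t:elliptic}).
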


\begin{figure}[htbp]
  \centering
  \begin{tikzpicture}
\draw[domain=.01:6, smooth, variable=\x, samples=300, thick] (-.8,1)--plot({\x}, {1+20*exp(- 2/(max(\x,.001)))*exp(-\x )*exp(-1/ (6.001-\x)})--(7,1);
\fill (.25,1)circle (.1cm);
\draw (.25,1)node[below]{$(y_0,\eta_0)$};
\fill (4.5,1.05)node[below]{$(x_0,\xi_0)$}circle (.1cm);
\draw (-.85,1)node[left]{$g$};
\draw (-.85,-0.5)node[left]{$b_1$};
\draw (-.85,-1.5)node[left]{$b$};
\draw (-.85,-2.5)node[left]{$a$};

\draw[domain=-0.8:1.5, smooth, variable=\x, samples=300, thick,xshift=0cm] (-.8,-1.5)--plot({\x}, {-1.5+.75*(exp(-1/(3*max(\x+.475,.0001)))*exp(-1/(3*max(1.1-\x,.0001))))/(exp(-1/(3*max(\x+.475,.0001)))*exp(-1/(3*max(1.1-\x,.0001)))+exp(-1/(3*max(-.1-\x,.0001)))+exp(-1/(3*max(\x-.725,.0001))))})--(7,-1.5);

\draw[domain=-0.8:7, smooth, variable=\x, samples=300, thick,xshift=0cm] (-.8,-.5)--plot({\x}, {-.5+.75*(exp(-1/(3*max(\x+.675,.0001)))*exp(-1/(3*max(5.55-\x,.0001))))/(exp(-1/(3*max(\x+.675,.0001)))*exp(-1/(3*max(5.55-\x,.0001)))+exp(-1/(3*max(-.3-\x,.0001)))+exp(-1/(3*max(\x-5.175,.0001))))})--(7,-.5);

\draw[domain=-.8:7, smooth, variable=\x, samples=300, thick](-.8,-2.5)-- plot({\x}, {-2.5+.75*(exp(-1/(3*max(\x-3.775,.0001)))*exp(-1/(3*max(5.35-\x,.0001))))/(exp(-1/(3*max(\x-3.775,.0001)))*exp(-1/(3*max(5.35-\x,.0001)))+exp(-1/(3*max(4.15-\x,.0001)))+exp(-1/(3*max(\x-4.975,.0001))))})--(7,-2.5);

\draw[thick,->,dashed,>=Stealth] (.45,-4)--(5.95,-4)node[midway,above]{flow direction};
      \end{tikzpicture}\\
  \caption{
 \label{f:escape}Profile of the escape function, $g$,  the control function, $b$,
 and the observation function, $a$, along the flow $\varphi_t$ used in the propagation estimate of Theorem \ref{t:basicPropagate} (when the assumptions hold with $T>0$).}
\end{figure}

\bre[Direction of propagation]\label{r:direction}
Theorem \ref{t:basicPropagate} involves propagation in both directions; this is possible since $\Im p=0$. More generally, if $\Im \sigma_\hbar(P)\leq 0$ then one can propagate forwards 
(i.e., $T\geq 0$)
under the Hamiltonian flow defined by $\Re \sigma_\hbar(P)$ and if $\Im \sigma_\hbar(P)\geq 0$ then one can propagate backwards under this flow (i.e., $T\leq 0$).
\ere

\begin{proof}[Sketch of the proof of Theorem \ref{t:basicPropagate}]
\begin{enumerate}
\item 
Without loss of generality, we assume that $T\geq 0$. (If $T<0$, then we can apply the proof below to $-P$, noting that the backward flow for $-P$ equals the forward flow for $P$.)

By, e.g., \cite[Lemma E.48]{DyZw:19}, 
there exists an ``escape function" $g\in C_c^\infty(\mathbb{R}^{2d};\mathbb{R})$ such that there is $c>0$ satisfying 
\begin{equation}
\label{e:escape}
g\geq 0,\qquad H_pg^2\leq -c g^2+b^2, \qquad g^2\geq ca^2,\qquad \supp g\subset U_1
\end{equation}
i.e., $g$ is decreasing along the flow apart from on the support of $b$ and is supported where $b_1$ is 1.
Figure~\ref{f:escape} gives a sketch of the functions $a$, $b$, $b_1$, and $g$ in the flow direction.
\item Let $G:=\Op(g)$. By \eqref{e:comm},
$$i\hbar^{-1}[P,G^*G]= \Op( H_pg^2) + \lot 
$$
and, by the self-adjointness of $P$,\begin{align}\nonumber
2\hbar^{-1}\Im \langle Pu,G^*Gu\rangle &=i \hbar^{-1} \big( \langle G^* G u, Pu \big\rangle - \langle Pu, G^*Gu \rangle\big)\\
&=i\hbar^{-1} \langle [P,G^*G]u,u\rangle = \langle \Op( H_pg^2)u,u\rangle+\lot 
\label{e:commutator}
\end{align}
(where the boundary terms in the integration by parts vanish since $G$ is compactly supported).
\item By the sharp G\aa rding inequality \eqref{e:gaarding} (with $w=0$)
together with~\eqref{e:escape},
$$
\langle \Op( b^2 - cg^2 - H_p g^2)u,u\rangle \geq \lot.
$$
Thus, 
by the adjoint and composition properties of Informal Theorem \ref{t:rules},
\beq\label{e:dark1}
\langle \Op( H_pg^2)u,u\rangle \leq -c\|Gu\|_{L^2}^2+ \|\Op(b)u\|_{L^2}^2 +\lot 
\eeq
\item The combination of \eqref{e:commutator}, \eqref{e:dark1}, and the Cauchy--Schwarz inequality implies that
\begin{align*}
-2\hbar^{-1}\|GPu\|_{L^2}\|Gu\|_{L^2}&\leq 2\hbar^{-1}\Im \langle Pu,G^*Gu\rangle\\
&\leq -c\|Gu\|_{L^2}^2+\|\Op(b)u\|_{L^2}^2 +\lot 
\end{align*}
\item  
Combining the result of Step 4 with the weighted Young's inequality, 
\beq\label{e:peterPaul}
2ab \leq \epsilon a^2 + \epsilon^{-1} b^2 \tfa a,b,\e>0,
\eeq
gives
\beq\label{e:dark2}
\|Gu\|_{L^2}^2\leq C\hbar^{-2}\|GPu\|_{L^2}^2+C\|\Op(b)u\|_{L^2}^2+\lot 
\eeq
\item 
Since $g^2\geq c a^2$, the sharp G\aa rding inequality \eqref{e:gaarding} (with $w=0$) implies that
$
\langle \Op(g^2) u, u \rangle\geq 
c\langle \Op(a^2) u, u \rangle+ \lot,
$
from which
\beq\label{e:dark3}
c\|\Op(a)u\|_{L^2}^2\leq \| Gu\|_{L^2}^2 + \lot
\eeq
by the composition and adjoint properties of Informal Theorem \ref{t:rules}. 
The combination of \eqref{e:dark2} and \eqref{e:dark3} implies that 
\beqs
\|\Op(a) u\|_{L^2}\leq C\hbar^{-1}\|GPu\|_{L^2}+C\|\Op(b)u\|_{L^2}+\lot
\eeqs
\item Next, Theorem~\ref{t:elliptic} and the support properties of $g$ imply
$$
\|GPu\|_{L^2}\leq \|\Op(b_1)Pu\|_{L^2}+\lot 
$$
and hence
\beq\label{e:propagate2}
\|\Op(a) u\|_{L^2}\leq C\hbar^{-1}\|\Op(b_1)Pu\|_{L^2}+C\|\Op(b)u\|_{L^2}+\lot
\eeq
(compare to \eqref{e:propagate}). 
\item By iterating Steps 1-6, one can make the remainder both smoother and smaller in terms of $\hbar$. Finally, the error term is compactly supported since $a$, $b$, and $b_1$ are.
\end{enumerate} 
\end{proof}

\subsection{Wavefront set}

For our treatment of \ref{R5} (the result about the parallel overlapping Schwarz method) in \S\ref{s:R5} it is useful to have the notion of semiclassical wavefront set. We highlight that this notion is only used in \S\ref{s:R5} (for readers wanting to initially skip this concept). 

\begin{definition}[$\hbar$-tempered distributions]\label{def:tempered}
An $\hbar$-dependent family of distributions $v_\hbar \in \mathcal D'(\mathbb R^d)$ is \emph{$\hbar$-tempered} if
for all $\chi \in C^\infty_c(\mathbb R^d)$ there exist 
$C, M>0$ such that
$$\Vert \chi v_\hbar \Vert_{H^{-M}_\hbar(\Rea^d)} \leq C \hbar^{-M}.$$
\end{definition}

The wavefront set of an $\hbar$-tempered distribution $u$, $\WF(u)$, should be understood as the points in phase space at which $u$ has non-trivial mass. One precise version of this is given in the following definition.

\begin{definition}[Wavefront set]\label{d:wavefront}
Suppose that $u$ is $\hbar$-tempered. $(x_0,\xi_0)\in \mathbb{R}^{2d}$ is \emph{not in the wavefront set of $u$}, $(x_0,\xi_0)\notin \WF(u)$, if there is $a\in C_c^\infty(\mathbb{R}^{2d})$ such that $a(x_0,\xi_0)=1$ and $M>0$ such that
$$
\|\Op(a)u\|_{H_k^{-M}(\mathbb{R}^d)}=O(\hbar^{\infty}).
$$
\end{definition}

\begin{example}[Simple examples of wavefront sets]
By integration by parts and/or stationary phase, 
\begin{enumerate}
\item for $\xi_0\in \mathbb{R}^d$ and $u(x):=\exp(i \hbar^{-1} \langle x,\xi_0\rangle)$, 
$$\WF(u )= \big\{ (x,\xi_0)\,:\, x\in \mathbb{R}^d \big\},$$
\item for $(x_0,\xi_0)\in \mathbb{R}^{2d}$, and $v(x):=\exp\big( i\hbar^{-1} \langle x-x_0,\xi_0\rangle -\frac{1}{2}\hbar^{-1}|x-x_0|^2\big)$, $$\WF(v )=\big\{ (x_0,\xi_0)\big\}.$$
\end{enumerate}
\end{example}

\begin{lemma}[Properties of $\WF$] \label{l:WF1}
Let $u$ be $\hbar$-tempered. If $(x_0,\xi_0)\notin \WF(u)$ then there is a neighbourhood $U$ of $(x_0,\xi_0)$ such that for all $a\in C_c^\infty(U)$ and all $N>0$
$$
\|\Op(a)u\|_{H_k^N(\mathbb{R}^n)}=O(\hbar^\infty).
$$\end{lemma}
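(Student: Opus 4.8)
The plan is to obtain Lemma~\ref{l:WF1} directly from the elliptic estimate of Theorem~\ref{t:elliptic}, using the very symbol supplied by the hypothesis $(x_0,\xi_0)\notin\WF(u)$ as the ``elliptic operator'' $q$. By Definition~\ref{d:wavefront} there are $a_0\in C_c^\infty(\mathbb{R}^{2d})$ with $a_0(x_0,\xi_0)=1$ and $M_0>0$ such that $\|\Op(a_0)u\|_{H_k^{-M_0}}=O(\hbar^\infty)$. By continuity $a_0>1/2$ on an open neighbourhood of $(x_0,\xi_0)$, and I would take $U$ to be any bounded open neighbourhood of $(x_0,\xi_0)$ with $\overline{U}\subset\{a_0>1/2\}$; this is the neighbourhood claimed in the lemma. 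The key structural observation is that, since $\overline{U}$ is compact, $\langle\xi\rangle$ is bounded on $\overline{U}$, so for \emph{every} $m\geq0$ there is $c_m>0$ with $|a_0(x,\xi)|\geq c_m\langle\xi\rangle^{m}$ on $\overline{U}$; that is, the compactly supported symbol $a_0$ is elliptic of every nonnegative order there.

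Now fix $a\in C_c^\infty(U)$ and $N>0$. I would apply Theorem~\ref{t:elliptic} with $q:=a_0$ viewed as an element of $S^{N+M_0}$, the given $a$ viewed as an element of $S^{0}$, a cutoff $b\in C_c^\infty(\mathbb{R}^{2d})$ chosen with $b\equiv1$ on a neighbourhood of $\supp a$ and $\supp b\subset\{a_0>1/2\}$ (so that $\supp a\cap\supp(1-b)=\emptyset$ and the ellipticity hypothesis $|q|\geq c\langle\xi\rangle^{N+M_0}$ on $\supp a$ holds by the previous paragraph), Sobolev index $s=N$, and $\chi\in C_c^\infty(\mathbb{R}^d)$ identically $1$ on the $x$-projection of $\supp a$. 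With $m_1=N+M_0$ and $m_2=0$ one has $s+m_2-m_1=-M_0$, so the ``in particular'' bound of Theorem~\ref{t:elliptic} gives, for every $N'>0$,
\[
\|\chi\Op(a)u\|_{H_k^N}\leq C\,\|\Op(b)\Op(a_0)u\|_{H_k^{-M_0}}+C_{N'}\hbar^{N'}\|\psi u\|_{H_k^{-N'}},
\]
with $\psi\in C_c^\infty(\mathbb{R}^d)$ fixed (independent of $N'$). Since $a(x,\xi)=0$ for $x$ outside the $x$-projection of $\supp a$, the distribution $\Op(a)u$ is supported in that set, where $\chi\equiv1$, so the left-hand side equals $\|\Op(a)u\|_{H_k^N}$.

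It remains to estimate the two terms on the right. For the first, $\Op(b)$ has a symbol in $S^0$ and is therefore bounded on $H_k^{-M_0}$, so $\|\Op(b)\Op(a_0)u\|_{H_k^{-M_0}}\leq C\|\Op(a_0)u\|_{H_k^{-M_0}}=O(\hbar^\infty)$ by the choice of $a_0$. For the second, $\hbar$-temperedness of $u$ (Definition~\ref{def:tempered}) applied to the fixed cutoff $\psi$ produces $C_1,M_1>0$ with $\|\psi u\|_{H_k^{-N'}}\leq\|\psi u\|_{H_k^{-M_1}}\leq C_1\hbar^{-M_1}$ for all $N'\geq M_1$, so that term is $O(\hbar^{N'-M_1})$; letting $N'\to\infty$ makes it $O(\hbar^\infty)$. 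Combining, $\|\Op(a)u\|_{H_k^N}=O(\hbar^\infty)$, which is the assertion. The argument is otherwise pure bookkeeping; the one conceptual point, and the only place any real care is needed, is the passage from smallness in the single negative norm $H_k^{-M_0}$ to smallness in \emph{every} positive norm $H_k^N$, achieved in one step by invoking the elliptic estimate with $q=a_0$ of the large order $m_1=N+M_0$ --- legitimate precisely because $a_0$ is bounded below on the compact set $\overline{U}$, where $\langle\xi\rangle$ is bounded.
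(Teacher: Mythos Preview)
Your proof is correct and follows essentially the same route as the paper: both arguments apply the elliptic estimate of Theorem~\ref{t:elliptic} with the symbol furnished by Definition~\ref{d:wavefront} playing the role of $q$, exploit that a compactly supported symbol is elliptic of arbitrarily high order on any compact subset of $\{|q|>1/2\}$, and absorb the remainder via $\hbar$-temperedness. The only cosmetic differences are that the paper takes $b=1$ in Theorem~\ref{t:elliptic} (so that $\Op(b)=I$) rather than a compactly supported cutoff, and writes ``$|a_0|>1/2$'' where you write ``$a_0>1/2$''.
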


Observe that Lemma \ref{l:WF1} implies that $\WF(u)$ is closed.

\begin{proof}[Proof of Lemma \ref{l:WF1}]
Let $(x_0,\xi_0)\notin \WF(u)$.
By the definition of $\WF(u)$ there are
$M\geq 0$, and $b\in C_c^\infty(\mathbb{R}^{2d})$ with $b(x_0,\xi_0)=1$ such that 
\beq\label{e:WFcombine1}
\|\Op(b)u\|_{H_k^{-M}(\mathbb{R}^d)}=O(\hbar^\infty).
\eeq
Let 
$$
U:=\big\{ (x,\xi)\,:\, |b(x,\xi)|>\tfrac{1}{2}\big\}.
$$
By the bound \eqref{e:elliptic1} from Theorem~\ref{t:elliptic}, for any $a\in C_c^\infty(U)$, there are $e\in S^{-\infty}(\mathbb{R}^{2d})$ and $\chi_1,\chi_2\in C_c^\infty(\mathbb{R}^d)$ such that 
\beq\label{e:WFcombine2}
\chi_1\Op(a)=\Op(a)
\eeq
and
\beq\label{e:WFcombine3}
\|\chi_1\Op(a)u\|_{H^N_k}
\leq \|\Op(b)u\|_{H_k^{-N}}+C_N\hbar^N\|\chi u\|_{H_k^{-N}(\mathbb{R}^d)}.
\eeq
The result then follows from 
combining \eqref{e:WFcombine1}, \eqref{e:WFcombine2}, \eqref{e:WFcombine3}, the boundedness property of Informal Theorem \ref{t:rules}, and 
the fact that $u$ is $\hbar$-tempered.
\end{proof}

We record the following corollaries of Theorems~\ref{t:elliptic} and~\ref{t:basicPropagate}. 

\begin{corollary}[Ellipticity and the wavefront set]
\label{c:ellipticWavefront}
Suppose that $u\in\mathcal{D}'(\mathbb{R}^d)$ is $\hbar$-tempered, $P$ is given by \eqref{e:quantex}, and $p$ is given by \eqref{e:p}. Then,
$$
\WF(u)\subset \{p=0\}\cup \WF(Pu). 
$$
\end{corollary}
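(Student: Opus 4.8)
The plan is to argue by contraposition in phase space: fix a point $(x_0,\xi_0)\in\mathbb{R}^{2d}$ with $p(x_0,\xi_0)\neq 0$ and $(x_0,\xi_0)\notin\WF(Pu)$, and show that $(x_0,\xi_0)\notin\WF(u)$. The underlying principle is that off the characteristic set $\{p=0\}$ the operator $P$ is semiclassically elliptic, so Theorem~\ref{t:elliptic} controls $u$ microlocally in terms of $Pu$ plus a negligible remainder, while $Pu$ is itself negligible near $(x_0,\xi_0)$ by hypothesis.

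First I would record that $Pu$ is $\hbar$-tempered whenever $u$ is: $P$ is a differential operator (Example~\ref{ex:quant}(iv)), hence local and bounded $H^s_k\to H^{s-2}_k$ (Informal Theorem~\ref{t:rules}), so $\chi Pu=\chi P\chi_1u$ for $\chi_1\equiv1$ near $\supp\chi$ and the temperedness bound for $u$ transfers; thus $\WF(Pu)$ makes sense. Next, since $(x_0,\xi_0)\notin\WF(Pu)$, Lemma~\ref{l:WF1} applied to $Pu$ produces a neighbourhood $U$ of $(x_0,\xi_0)$ with $\|\Op(b)Pu\|_{H^N_k}=O(\hbar^\infty)$ for every $b\in C_c^\infty(U)$ and every $N$. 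Using continuity of $p$ and $p(x_0,\xi_0)\neq0$, I then pick $a\in C_c^\infty(\mathbb{R}^{2d})$ with $a(x_0,\xi_0)=1$ and $\supp a$ small enough that $\supp a\subset U$ and $|p|\geq c>0$ on $\supp a$, together with $b\in C_c^\infty(U)$ with $b\equiv1$ near $\supp a$ (so $\supp a\cap\supp(1-b)=\emptyset$). Writing $P=\Op(q)+O(\hbar^\infty)_{\Psi_\hbar^{-\infty}}$ with $q\in S^2$ the full symbol of $P$ and $\sigma_\hbar(P)=p$, we have $|q|\geq|p|-C\hbar\geq c/2$ on the compact set $\supp a$ for $\hbar$ small, hence $|q|\gtrsim\langle\xi\rangle^2$ there; Theorem~\ref{t:elliptic} (with $m_1=2$, $m_2=0$, $s=0$, and $\chi\equiv1$ on a ball large enough that $\chi\Op(a)=\Op(a)$) then gives $\psi\in C_c^\infty$ with, for each $N$,
\[
\|\Op(a)u\|_{L^2}\leq C\|\Op(b)\Op(q)u\|_{H^{-2}_k}+C\hbar^N\|\psi u\|_{H^{-N}_k}.
\]
Here $\Op(b)\Op(q)u=\Op(b)Pu-\Op(b)Ru$ with $R=O(\hbar^\infty)_{\Psi_\hbar^{-\infty}}$, and both terms are $O(\hbar^\infty)$ in $H^{-2}_k$ (the first by the choice of $U$, the second by temperedness of $u$ and \eqref{e:residualclass}). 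For the remainder, temperedness yields $M_0$ with $\|\psi u\|_{H^{-M_0}_k}\leq C\hbar^{-M_0}$, hence $\hbar^N\|\psi u\|_{H^{-N}_k}\leq C\hbar^{N-M_0}$ for $N\geq M_0$; letting $N\to\infty$ gives $\|\Op(a)u\|_{L^2}=O(\hbar^\infty)$, i.e.\ $(x_0,\xi_0)\notin\WF(u)$ by Definition~\ref{d:wavefront}.

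This argument has no genuine obstacle; it is a direct application of the semiclassical elliptic estimate. The only points needing a little care are: checking that $Pu$ is $\hbar$-tempered so that $\WF(Pu)$ is defined; the elementary bookkeeping by which the arbitrarily large gain $\hbar^N$ in \eqref{e:elliptic1} dominates the fixed polynomial loss $\hbar^{-M_0}$ from temperedness; and verifying that the ellipticity hypothesis $|q|\gtrsim\langle\xi\rangle^2$ on $\supp a$ survives the passage from the principal symbol $p$ to the (mildly $\hbar$-dependent) full symbol $q$ of $P$, which it does on the compact set $\supp a$ once $\hbar$ is small. If one wishes to avoid the full symbol entirely, one may instead take $q=p$ in Theorem~\ref{t:elliptic} and absorb $P-\Op(p)\in\hbar\Psi^1_\hbar$ into the remainder, at the cost of one extra line.
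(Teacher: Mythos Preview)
Your proof is correct and follows essentially the same route as the paper's: contraposition, then the elliptic estimate of Theorem~\ref{t:elliptic} applied with a cutoff $a$ supported where both $p\neq 0$ and $\Op(b)Pu$ is negligible, with the $\hbar^N$ remainder absorbed via temperedness. You are, if anything, slightly more careful than the paper---explicitly checking that $Pu$ is $\hbar$-tempered, invoking Lemma~\ref{l:WF1} rather than the raw definition to get the all-$N$ bound on $\Op(b)Pu$, and discussing the passage from the principal symbol $p$ to the full symbol $q$---but these are refinements of the same argument, not a different one.
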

\begin{proof}
It is sufficient to prove that if  $p(x_0,\xi_0)\neq 0$ and $(x_0,\xi_0)\notin \WF(Pu)$, then $(x_0,\xi_0)\notin\WF(u)$. 
Since $(x_0,\xi_0)\notin \WF(Pu)$, by the definition of $\WF(Pu)$
there is $b\in C_c^\infty(\mathbb{R}^{2d})$ with $b(x_0,\xi_0)=1$ such that for all $N$
$$
\|\Op(b)Pu\|_{H_k^N}=O(\hbar^{\infty}).
$$
Furthermore, since $p(x_0,\xi_0)\neq 0$, there is $a\in C_c^\infty(\mathbb{R}^{2d})$ with $a(x_0,\xi_0)=1$ such that $\supp a\subset \{|p|>0\}\cap\{ |b|>0\}$. Then, by Theorem~\ref{t:elliptic} there are $\chi,\psi\in C_c^\infty(\mathbb{R}^d)$ with $\chi(x_0)=1$ such that for all $N$
$$
\|\chi\Op(a)\|_{L^2}\leq C\|\Op(b)Pu\|_{H_k^{-N}}+C\hbar^N\|\psi u\|_{H_k^{-N}}.
$$
Therefore, since $u$ is $\hbar$-tempered, $\|\chi\Op(a)\|_{L^2}=O(\hbar^\infty)$.
Finally, since $\sigma_\hbar(\chi\Op(a))=\chi a$ 
(by the composition and principal symbol properties of Informal Theorem \ref{t:rules}) and $\chi(x_0) a(x_0,\xi_0)=1$, $(x_0,\xi_0)\notin \WF(u)$.
\end{proof}

\begin{corollary}[Propagation and the wavefront set]
\label{c:propagateWavefront}
Suppose that $u\in\mathcal{D}'(\mathbb{R}^d)$ is $\hbar$-tempered and $P$ is as in~\eqref{e:quantex}. If there is $T\in\mathbb{R}$ such that $\varphi_{-T}(x_0,\xi_0)\notin \WF(u)$ and 
$$
\bigcup_{t=0}^T\varphi_{-t}(x_0,\xi_0)\cap \WF(Pu)=\emptyset,
$$
then $(x_0,\xi_0)\notin \WF(u)$.
\end{corollary}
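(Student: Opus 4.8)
The plan is to deduce the wavefront-set statement from the quantitative propagation estimate of Theorem \ref{t:basicPropagate} in exactly the same way that Corollary \ref{c:ellipticWavefront} was deduced from the elliptic estimate of Theorem \ref{t:elliptic}. First I would fix the data of Theorem \ref{t:basicPropagate}: set $(y_0,\eta_0):=\varphi_{-T}(x_0,\xi_0)$, and choose neighbourhoods $U$ of $(y_0,\eta_0)$ and $U_1$ of the closed flow segment $\bigcup_{t=0}^{T}\varphi_{-t}(x_0,\xi_0)$. The hypotheses give us exactly the information needed to control the two "input" terms on the right-hand side of \eqref{e:propagate}. Indeed, since $(y_0,\eta_0)\notin\WF(u)$, by Lemma \ref{l:WF1} there is a neighbourhood of $(y_0,\eta_0)$ on which $\Op(b)u=O(\hbar^\infty)$ in every $H^N_k$-norm for every $b$ supported there; shrinking $U$ if necessary, we may take $b\in C_c^\infty(\mathbb{R}^{2d})$ with $\supp(1-b)\cap U=\emptyset$ and still have $\|\Op(b)u\|_{L^2}=O(\hbar^\infty)$. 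Similarly, since the compact set $\bigcup_{t=0}^{T}\varphi_{-t}(x_0,\xi_0)$ is disjoint from the closed set $\WF(Pu)$, we may choose $U_1$ a neighbourhood of that segment that is still disjoint from $\WF(Pu)$, and then pick $b_1\in C_c^\infty(\mathbb{R}^{2d})$ with $\supp(1-b_1)\cap U_1=\emptyset$ and $\supp b_1\cap\WF(Pu)=\emptyset$; by Lemma \ref{l:WF1} applied to $Pu$ (note $Pu$ is $\hbar$-tempered since $u$ is and $P\in\Psi^2_\hbar$ by the boundedness property of Informal Theorem \ref{t:rules}), we get $\|\Op(b_1)Pu\|_{L^2}=O(\hbar^\infty)$.

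With these choices, Theorem \ref{t:basicPropagate} supplies a neighbourhood $V$ of $(x_0,\xi_0)$ and $\chi\in C_c^\infty(\mathbb{R}^d)$ such that for every $a\in C_c^\infty(V;\mathbb{R})$ and every $N>0$,
\[
\|\Op(a)u\|_{L^2}\leq C\hbar^{-1}\|\Op(b_1)Pu\|_{L^2}+C\|\Op(b)u\|_{L^2}+C\hbar^N\|\chi u\|_{H^{-N}_k}.
\]
The first two terms on the right are $O(\hbar^\infty)$ by the previous paragraph, and the third is $O(\hbar^\infty)$ because $u$ is $\hbar$-tempered. Hence $\|\Op(a)u\|_{L^2}=O(\hbar^\infty)$. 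Choosing $a\in C_c^\infty(V)$ with $a(x_0,\xi_0)=1$, this is precisely the defining condition for $(x_0,\xi_0)\notin\WF(u)$ in Definition \ref{d:wavefront} (with $M=0$), which completes the proof.

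The one point requiring a little care — and the only genuine obstacle — is the case $T=0$, or more generally making sure that the neighbourhoods $U,U_1,V$ and the cutoffs $b,b_1$ can be chosen simultaneously compatibly; but when $T=0$ the statement is trivial (the hypothesis is then just $(x_0,\xi_0)\notin\WF(u)$), and for $T\neq 0$ the compactness of the flow segment $\bigcup_{t=0}^{T}\varphi_{-t}(x_0,\xi_0)$ together with the closedness of $\WF(u)$ and $\WF(Pu)$ (the latter from the remark after Lemma \ref{l:WF1}) guarantees that separating neighbourhoods exist, so the construction of $b$ and $b_1$ above goes through. As in Remark \ref{r:direction}, nothing changes if $T<0$ since one simply applies the argument with the roles of forward and backward flow interchanged; this is already built into the statement of Theorem \ref{t:basicPropagate}.
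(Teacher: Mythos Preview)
Your proposal is correct and follows essentially the same route as the paper's proof: use Lemma \ref{l:WF1} together with compactness of the flow segment to find cutoffs $b$, $b_1$ with $\|\Op(b)u\|_{L^2}$ and $\|\Op(b_1)Pu\|_{L^2}$ both $O(\hbar^\infty)$, feed these into the propagation estimate \eqref{e:propagate}, and use $\hbar$-temperedness to dispose of the remainder term. The paper's proof is slightly terser (it does not separately discuss $T=0$ or the compatibility of neighbourhoods), but the logic is identical.
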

\begin{proof}
By Lemma~\ref{l:WF1} and compactness of $\bigcup_{t=0}^T\varphi_{-t}(x_0,\xi_0)$, there are 
$b,b_1\in C_c^\infty(\mathbb{R}^{2d})$ 
 such that
$\varphi_{-T}(x_0,\xi_0)\notin \supp (1-b)$, 
$$
\bigcup_{t=0}^T\varphi_{-t}(x_0,\xi_0)\cap \supp (1-b_1)=\emptyset,
$$
and
$$
\|\Op(b)u\|_{H_k^N}+\|\Op(b_1)Pu\|_{H_k^N}=O(\hbar^\infty).
$$
By Theorem~\ref{t:basicPropagate}, there are $a\in C_c^\infty(\mathbb{R}^{2d})$ and $\chi\in C_c^\infty(\mathbb{R}^{d})$ such that $a(x_0,\xi_0)=1$ and 
\beqs
\|\Op(a) u\|_{L^2}\leq C\hbar^{-1}\|\Op(b_1)Pu\|_{L^2}+C\|\Op(b)u\|_{L^2}+C\hbar^N\|\chi u\|_{H_k^{-N}}.
\eeqs
Therefore,
$$
\|\Op(a)u\|_{L^2}=O(\hbar^\infty),
$$
and the result follows from the definition of $\WF(u)$.
\end{proof}

\subsection{Application of ellipticity and propagation to solutions of the Helmholtz equation}

\subsubsection{Outgoing solutions of the Helmholtz equation}

Solutions of the Helmhotz satisfying the Sommerfeld radiation condition are semiclassically outgoing; i.e., far from the scatterer they are microlocally concentrated on trajectories traveling towards infinity. This fact is encoded in the following lemma via the support condition on $a$.

\begin{lemma}[The semiclassical outgoing property]\label{l:outgoing}
For all $R>0$ there are $R'>R$, $c>0$ such that for all $a\in S^0(\mathbb{R}^{2d})$ with 
\beq\label{e:suppa}
\supp a\subset \big\{ \langle \tfrac{x}{|x|},\xi\rangle \leq c,\,|x|>R'\big\}\cup\big\{\big||\xi|-1\big|>c>0, |x|>R'\big\}
\eeq
and all $N>0$ there are $
\psi \in C_c^\infty(\mathbb{R}^d)$ and $C>0$ such that if 
$u\in H^2_{\loc}(\mathbb{R}^d)$ satisfies
$$
(-\hbar^2\Delta-1)u=0,\qquad |x|>R,\qquad 
\|(\hbar D_r-1)u\|_{L^\infty(|x|=r)}=o_{r\to \infty}(r^{\frac{1-d}{2}}),
$$
then
\beq\label{e:outgoingWF}
\|\Op(a)u\|_{L^2(\Rea^d)}\leq C\hbar ^N\|\psi u\|_{H_k^{-N}(\mathbb{R}^d)}.
\eeq
In addition, there is $N_0>0$ such that for $\chi \in C_c^\infty(\mathbb{R}^d)$ with $\supp \chi \cap B(0,R)=\emptyset$,  
\begin{equation}
\label{e:weak1}
\|\chi u\|_{L^2(\Rea^d)}\leq C\hbar^{-N_0}\|\psi u\|_{H_k^2(\mathbb{R}^d)}. 
\end{equation}
\end{lemma}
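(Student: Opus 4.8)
I would prove both estimates by reducing to the \emph{free} outgoing resolvent $R_0(\hbar):=(-\hbar^2\Delta-1)^{-1}$ (with the Sommerfeld condition selecting the outgoing branch), whose phase-space structure is explicit, and then exploiting that $R_0(\hbar)$ ``propagates only outward''. First fix $R'$ so large that the Schwartz kernel of $\Op(a)$, which by \eqref{e:quant2} is supported within a fixed bounded distance of the diagonal, only sees $u$ on $\{|x|>R\}$ when $\supp a\subset\{|x|>R'\}$; this makes all the quantities well defined. Choose $\chi_0\in C^\infty(\mathbb{R}^d)$ with $\chi_0\equiv0$ on $B(0,R_1)$ and $\chi_0\equiv1$ on $\{|x|\ge R_2\}$ for $R<R_1<R_2<R'$, and set $v:=\chi_0u$. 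Since $(-\hbar^2\Delta-1)u=0$ on $\{|x|>R\}$, we have $(-\hbar^2\Delta-1)v=g:=[-\hbar^2\Delta,\chi_0]u$, which is supported in the fixed annulus $\{R_1\le|x|\le R_2\}\Subset\{|x|>R\}$ and satisfies $\|g\|_{H^s_k}\le C\hbar\,\|\psi u\|_{H^{s+1}_k}$ for $\psi$ a fixed cutoff equal to $1$ near that annulus. As $v=u$ near infinity, $v$ satisfies the Sommerfeld condition, so by uniqueness of the outgoing solution $v=R_0(\hbar)g$. Finally, writing $\Op(a)u=\Op(a)v+\Op(a)(1-\chi_0)u$ and using $\supp a\subset\{|x|>R'\}$, $\supp(1-\chi_0)\subset B(0,R_2)$, $R_2<R'$, and the near-diagonal support of the kernel of $\Op(a)$, one has $\Op(a)(1-\chi_0)\in\hbar^\infty\Psi_\hbar^{-\infty}$, so $\Op(a)u=\Op(a)R_0(\hbar)g+O(\hbar^\infty)_{\Psi_\hbar^{-\infty}}u$, and the remainder is $\le C\hbar^N\|\psi u\|_{H^{-N}_k}$.

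\textbf{The microlocal heart (proof of \eqref{e:outgoingWF}).} It remains to show $\|\Op(a)R_0(\hbar)g\|_{L^2(\mathbb{R}^d)}\le C\hbar^{N}\|g\|_{H^{-N-1}_k}$ when $\supp a$ is as in \eqref{e:suppa}. The Schwartz kernel of $R_0(\hbar)$ is, for $|x-y|$ bounded below, a constant multiple of a semiclassical Hankel function, i.e.\ $e^{i|x-y|/\hbar}b(x,y;\hbar)$ with $b$ a classical symbol; it is associated with the \emph{outgoing Lagrangian} $\{\xi=(x-y)/|x-y|\}$ and is smooth and $O(\hbar^\infty)$ in every $C^m$-seminorm on $\{|x-y|\ge\delta\}$. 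Composing with the kernel of $\Op(a)$ (phase $\langle x-z,\xi\rangle$), the kernel of $\Op(a)R_0(\hbar)$, which is automatically supported in $x\in\{|x|>R'\}$, $y$ in the annulus $\{R_1\le|y|\le R_2\}$, is an oscillatory integral in $(z,\xi)$ with phase $\Phi=\langle x-z,\xi\rangle+|z-y|$ and stationary point $z=x$, $\xi=(x-y)/|x-y|$. For $|x|>R'$, $|y|\le R_2$ this direction has modulus $1$ and $\langle x/|x|,(x-y)/|x-y|\rangle\ge(R'-R_2)/(R'+R_2)=:c_0>0$; choosing $c<c_0$ in \eqref{e:suppa} places the stationary point outside $\supp a$. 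A non-stationary-phase / repeated-integration-by-parts argument — integrating by parts in $\xi$ (using $\partial_\xi\Phi=x-z$) when $z$ is far from $x$ and in $z$ (using $\partial_z\Phi=-\xi+(z-y)/|z-y|$) when $z$ is near $x$ — then shows this kernel is $O(\hbar^\infty)$ in all seminorms and decays in $x$, whence $\Op(a)R_0(\hbar)$ restricted to data supported in the annulus maps $H^{-N-1}_k\to H^N_k$ with norm $O(\hbar^\infty)$. Combined with the first paragraph and the bound on $\|g\|_{H^{-N-1}_k}$ this gives \eqref{e:outgoingWF}. (Alternatively this step may be quoted: it is the statement that the free resolvent propagates along outgoing rays, obtainable from radial-point propagation at spatial infinity; cf.\ \cite[Appendix E]{DyZw:19}.)

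\textbf{The crude bound \eqref{e:weak1}.} Keep $v,g$ from above and take $\chi\in C_c^\infty$ with $\supp\chi\cap B(0,R)=\emptyset$. Then $\chi u=\chi v+\chi(1-\chi_0)u$; the second term is supported in $B(0,R_2)\setminus B(0,R)$ and hence $\le\|\psi u\|_{L^2}$, and for the first, $\chi v=\chi R_0(\hbar)g$ together with the standard free-space bound $\|R_0(\hbar)\|_{L^2(K)\to L^2(K')}\le C\hbar^{-1}$ on any fixed compacts $K,K'$ gives $\|\chi v\|_{L^2}\le C\hbar^{-1}\|g\|_{L^2}\le C\|\psi u\|_{H^1_k}\le C\|\psi u\|_{H^2_k}$. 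This proves \eqref{e:weak1} (one may in fact take $N_0=0$; we state it with a generic exponent for robustness). One can also deduce \eqref{e:weak1} inside the present framework by covering $\supp\chi$ (at $\hbar$-bounded frequencies) with finitely many phase-space boxes and, on each, invoking Theorem~\ref{t:elliptic} where $||\xi|-1|>c$, \eqref{e:outgoingWF} where $\langle x/|x|,\xi\rangle\le c$, and the propagation estimate Theorem~\ref{t:basicPropagate} to transport the $L^2$ mass of the remaining (outgoing) boxes back along the straight-line flow to $B(0,R_1)$, where it is controlled by $\|\psi u\|_{L^2}$.

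\textbf{Main obstacle.} The real content is the second paragraph: the Sommerfeld condition is an asymptotic statement as $r\to\infty$ (a decay rate for $(\hbar D_r-1)u$ on large spheres), and the task is to upgrade it to a statement, \emph{uniform in} $\hbar$, that $u$ carries no phase-space mass on incoming or non-characteristic directions far from the scatterer. Reducing to $R_0(\hbar)$ isolates this in the single explicit fact that the free outgoing resolvent's kernel is a Lagrangian distribution concentrated on outgoing rays; the non-stationary-phase estimate against symbols supported off that Lagrangian — together with the bookkeeping around proper support and the near-diagonal cutoff in \eqref{e:quant2} — is the one place where more than the formal calculus of Informal Theorem~\ref{t:rules} is needed.
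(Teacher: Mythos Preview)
Your proposal is correct and follows essentially the same route as the paper: reduce to the free outgoing resolvent applied to compactly supported data, use the explicit Hankel-function representation of its kernel, and kill $\Op(a)R_0$ on that data by non-stationary phase in the oscillatory integral with phase $\langle x-z,\xi\rangle+|z-y|$. The only cosmetic difference is that the paper writes $u=R_0\big((-\hbar^2\Delta-1)u\big)$ directly, with source $f=Pu$ supported in $B(0,R)$ and $\|f\|_{H^s_k}\le C\|u\|_{H^{s+2}_k(B(0,R))}$, rather than introducing your commutator cutoff $\chi_0$ (which lands the source in an annulus outside $B(0,R)$ and buys an extra $\hbar$); and for \eqref{e:weak1} the paper just applies Cauchy--Schwarz to the Hankel kernel to get $N_0=(d-3)/2+2$, whereas you quote the sharper free-resolvent bound $\|\chi R_0\chi\|_{L^2\to L^2}\le C\hbar^{-1}$ to obtain $N_0=0$.
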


(A much sharper bound than \eqref{e:weak1} holds, but \eqref{e:weak1} is sufficient for 
the later proofs in this section. Indeed,
using the result of Theorem~\ref{t:nontrap} below in the proof of Lemma \ref{l:outgoing}, we see that \eqref{e:weak1} holds with $N_0$ replaced by $1$.)

\begin{proof}[Proof of Lemma \ref{l:outgoing}]
Let 
\beq\label{eq:R_0}
\big(R_0(\hbar) f\big)(x):= \int_{\Rea^d}\Phi_\hbar(x,y) \, f(y)\, d y,
\eeq
where $\Phi_\hbar$ is the outgoing fundamental solution satisfying $(-\hbar^2\Delta-1)\Phi_\hbar(x,y)=\delta(x-y)$, i.e., 
\beq\label{eq:fundH}
\Phi_\hbar(x,y):=
\hbar^{-2}\frac{\ri}{4} \left( \frac{\hbar^{-1}}{2\pi|x-y|}\right)^{(d-2)/2} H^{(1)}_{d/2-1}\big(\hbar^{-1}|x-y|\big);
\eeq
see, e.g., \cite[Chapter 9]{Mc:00}.
Observe that
$$
u=R_0(-\hbar^2\Delta-1)u=:R_0f, 
$$
with $f\in L^2(\mathbb{R}^d)$, $\supp f\subset B(0,R)$. 
Since $f=Pu$, for any $s$, 
\beq\label{e:dark4}
\|f\|_{H_k^{s}}\leq C\|u\|_{H_k^{s+2}(B(0,R))}.
\eeq
Using standard Hankel-function asymptotics for large argument,
for $|z|>R+1$
\beq\label{e:Hankel}
R_0 f(z) = \hbar^{-(d-3)/2-2} \int_{B(0,R)} e^{\frac{i}{\hbar} |z-y|}q(|z-y|) f(y)d y
\eeq
where 
$$
|\partial^\alpha q(s)|\leq C_\alpha |s|^{\frac{1-d}{2}-|\alpha|}.
$$
Thus, by the Cauchy--Schwarz inequality,
\beq\label{e:dark5}
\| \chi u\|_{L^2(\Rea^d)}\leq C 
\hbar^{-(d-3)/2-2}
\|f\|_{L^2},
\eeq
and then \eqref{e:weak1} follows by combining \eqref{e:dark4} (with $s=0$) and \eqref{e:dark5}.

Let $\psi$ be the cutoff function appearing in the definition of $\Op$ \eqref{e:quant2}.
To prove \eqref{e:outgoingWF}, observe that for $R'$ large enough $\supp (a(x,\xi)\psi(|x-\bullet|))\subset \{|\bullet |>R+1\}$, 
and thus, by \eqref{e:Hankel},
\begin{align*}
&[\Op(a)u](x)= [\Op(a)R_0f](x)\\
&= \frac{\hbar^{-2-\frac{d-3}{2}}}{(2\pi \hbar)^d}\int_{|z|>R+1, |y|<R} e^{\frac{i}{\hbar}
\Phi(z,\xi;x,y)
}a(x,\xi)\psi(|x-z|) q(|z-y|)f(y) d\xi dzdy.
\end{align*}
where
$$
\Phi(z,\xi;x,y):=\langle x-z,\xi\rangle +|z-y|.
$$
The critical points of the phase in $(z,\xi)$ occur when 
$\partial_z\Phi = \partial_\xi \Phi=0$; i.e.,  when 
$$
\xi=\frac{z-y}{|z-y|}\quad\tand\quad x=z;
$$
i.e., 
when $\xi = (x-y)/|x-y|$.
We now claim that the support properties of $a$ \eqref{e:suppa} imply that there are no critical points of the phase function on the support of the integrand.
Indeed, when $|x|>R'$ for large enough $R'$, $(x-y)/|x-y|$ is pointing ``outwards" (i.e., close to the direction $x/|x|$) and has modulus one, while, by \eqref{e:suppa}, on $\supp a$ \emph{either} $\xi$  is pointing ``inwards" (or rather ``not too outward" because of the presence of $c>0$)  \emph{or} $|\xi|$ is bounded away from one.
Repeatedly integrating by parts in $z$ and $\xi$ then brings down arbitrarily many powers of $\hbar$ and $\langle\xi\rangle^{-1}$, proving the result. 
\end{proof}

\subsubsection{Trapping and nontrapping}
Define $\pi_x:\mathbb{R}^{2d}\to \mathbb{R}^d$ by $\pi_x(x_0,\xi_0)=x_0$. Then
\beq\label{eq:Gammafw}
\Gamma_{\rm f} := \big\{ (x,\xi) : \big|\big(\pi_x(\varphi_t(x,\xi)\big)\big|\nrightarrow \infty \quad \tas \quad t \to\infty\big\},
\eeq
i.e., $\Gamma_{\rm f}$ is the \emph{forward trapped set}, and 
\beq\label{eq:Gammabw}
\Gamma_{\rm b} := \big\{ (x,\xi) : \big|\big(\pi_x(\varphi_t(x,\xi)\big)\big|\nrightarrow \infty \quad \tas \quad t \to-\infty\big\}
\eeq
i.e., $\Gamma_{\rm b}$ is the \emph{backward trapped set}.
Let $K:= \Gamma_{\rm f}\cap \Gamma_{\rm b}$; i.e., $K$ is the \emph{trapped set}. 

\ble[Properties of $\Gamma_{\rm f}, \Gamma_{\rm b}$, and $K$]\label{l:trapping}
\ \
\vspace{-.8em}
\begin{enumerate}
\item[(i)] $\Gamma_{\rm f}\neq \emptyset$ if and only if $\Gamma_{\rm b}\neq \emptyset$. 
\item[(ii)] $\Gamma_{\rm f}\neq \emptyset$ if and only if $K\neq \emptyset$. 
\item[(iii)] $\Gamma_{\rm f}$, $\Gamma_{\rm b}$, and thus $K$ are closed.
\end{enumerate}
\ele
\bpf[References for the proof]
(i) follows by time-reversal of the flow. (ii) is proved in, e.g., \cite[Proposition 6.4]{DyZw:19}, and (iii) is proved in  \cite[Proposition 6.3]{DyZw:19}.
\epf

Recall that $\mathcal{R}$ is the solution operator for the problem \eqref{e:edp}.

\begin{theorem}[Nontrapping cutoff resolvent bound]
\label{t:nontrap}
Let $P$ be given by \eqref{e:quantex}/\eqref{e:edp} 
with $\Omega_-=\emptyset$, $A$ and $n$ smooth, and $\supp(A-I)$ and $\supp(n-1)$ compact. 
If $K=\emptyset$, then for all $\chi\in C_c^\infty(\mathbb{R}^d)$ there is $C>0$ such that for all $0<\hbar<1$,
\beq\label{e:nt}
\|\chi\mathcal{R}\chi\|_{L^2\to H_k^2} \leq C \hbar^{-1}.
\eeq
\end{theorem}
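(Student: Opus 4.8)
The plan is to run the positive-commutator / propagation-of-singularities machinery of \S\ref{s:propagate}: control the solution elliptically off the characteristic set $\{p=0\}$ (Theorem~\ref{t:elliptic}), control it near spatial infinity via the semiclassical outgoing property (Lemma~\ref{l:outgoing}), and bridge the two with the propagation estimate (Theorem~\ref{t:basicPropagate}), the nontrapping hypothesis guaranteeing that every characteristic trajectory can be flowed out to infinity. Here $\hbar=k^{-1}$ and $p(x,\xi)=\langle A\xi,\xi\rangle-n(x)$ is the (real) principal symbol of $P$, with $p=|\xi|^2-1$ outside $B(0,\Rscat)$. First I would make two reductions. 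It suffices to prove the $L^2\to L^2$ bound $\|\chi_1\mathcal R\chi_1\|_{L^2\to L^2}\le C\hbar^{-1}$ for one fixed $\chi_1\in C_c^\infty(\mathbb R^d)$ with $\chi_1\equiv1$ near $B(0,\Rscat)$: the upgrade to the $H^2_k$ norm on the left is soft elliptic regularity for the second-order operator $P$ (an $\hbar$-uniform interior estimate $\|v\|_{H^2_k(\Omega_0)}\lesssim\|Pv\|_{L^2(\Omega_1)}+\|v\|_{L^2(\Omega_1)}$ for $\Omega_0\Subset\Omega_1$, after interpolating away the first-order term), and the passage from $\chi_1$ to an arbitrary $\chi\in C_c^\infty(\mathbb R^d)$ is a gluing argument that expresses $u$ away from $B(0,\Rscat)$ via the outgoing free resolvent $R_0(\hbar)$ of \eqref{eq:R_0} and uses the classical free bound $\|\widetilde\chi R_0(\hbar)\widetilde\chi\|_{L^2\to L^2}\lesssim\hbar^{-1}$.

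For the local estimate, fix an outgoing $u$ with $Pu=f$, $\supp f\Subset B(0,\Rscat)$. Since $K=\emptyset$, Lemma~\ref{l:trapping}(i)--(ii) gives $\Gamma_{\rm f}=\Gamma_{\rm b}=\emptyset$, so every trajectory of $\varphi_t=\exp(tH_p)$ escapes to infinity both as $t\to+\infty$ and as $t\to-\infty$, with escape time finite and locally bounded (hence, on a compact set of initial data, uniformly bounded) by continuity of the flow. I would then microlocalise $\chi_1 u$ over $T^*_{\supp\chi_1}\mathbb R^d$, whose only ``dangerous'' part is the compact set $\{p=0\}\cap\{|x|\le\Rscat+1\}$ (for $|\xi|$ large, $p>0$). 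On $\{p\ne0\}$, Theorem~\ref{t:elliptic} with $q=p$ gives $\|\Op(a)u\|_{L^2}\lesssim\|\Op(b)f\|_{L^2}+\hbar^N\|\psi u\|_{H_k^{-N}}$. For a point $(x_0,\xi_0)$ with $p(x_0,\xi_0)=0$ and $|x_0|\le\Rscat+1$, backward escape provides $T>0$ with $(y_0,\eta_0):=\varphi_{-T}(x_0,\xi_0)$ satisfying $|y_0|$ large and $\langle y_0/|y_0|,\eta_0\rangle<0$ --- a point escaping to infinity as $t\to-\infty$ is necessarily ``incoming'' there --- so $(y_0,\eta_0)$ lies in the support region \eqref{e:suppa} where Lemma~\ref{l:outgoing} controls $u$; feeding this into Theorem~\ref{t:basicPropagate} yields, in a neighbourhood of $(x_0,\xi_0)$,
\begin{equation*}
\|\Op(a)u\|_{L^2}\lesssim\hbar^{-1}\|\Op(b_1)f\|_{L^2}+\|\Op(b)u\|_{L^2}+\hbar^N\|\psi u\|_{H_k^{-N}}\lesssim\hbar^{-1}\|f\|_{L^2}+\hbar^N\|\psi u\|_{H_k^{-N}},
\end{equation*}
using the outgoing property to kill $\|\Op(b)u\|_{L^2}$ and $\|\Op(b_1)f\|_{L^2}\lesssim\|f\|_{L^2}$ (valid however large $\supp b_1$ is, since $\supp f$ is fixed). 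Covering the compact dangerous set by finitely many such neighbourhoods and summing with a partition of unity over $T^*_{\supp\chi_1}\mathbb R^d$ gives $\|\chi_1 u\|_{L^2}\le C\hbar^{-1}\|f\|_{L^2}+C\hbar^N\|\psi_1 u\|_{H_k^{-N}}$ for a fixed $\psi_1\in C_c^\infty(\mathbb R^d)$ (supported on the union of the finitely many flow-outs) and every $N$.

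To absorb the remainder, I would first record a crude a priori bound: \eqref{e:weak1} of Lemma~\ref{l:outgoing} together with the soft interior estimate gives $\|\psi_1 u\|_{L^2}\le C\hbar^{-N_0}\big(\|f\|_{L^2}+\|\chi_1 u\|_{L^2}\big)$ for some fixed $N_0$. Choosing $N>N_0+1$ and using $\|\psi_1 u\|_{H_k^{-N}}\le\|\psi_1 u\|_{L^2}$ turns the remainder into $C\hbar^{N-N_0}\big(\|f\|_{L^2}+\|\chi_1 u\|_{L^2}\big)$; for $\hbar$ small enough the $\|\chi_1 u\|_{L^2}$ contribution is absorbed on the left, yielding $\|\chi_1 u\|_{L^2}\lesssim\hbar^{-1}\|f\|_{L^2}$. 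The remaining range $\hbar\in[\hbar_0,1)$ is covered because $\mathcal R(k)$ exists for every $k>0$ and $k\mapsto\chi_1\mathcal R(k)\chi_1$ is norm-continuous, hence bounded on $[1,\hbar_0^{-1}]$. This gives the local bound, and with the two reductions, the theorem.

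The step I expect to be the main obstacle is the propagation-plus-covering argument: verifying that the backward flow-out genuinely lands in the \emph{incoming} cone at infinity (where Lemma~\ref{l:outgoing} applies), rather than the outgoing cone where no a priori control is available; that the escape times --- and hence the number of propagation estimates and the support of $\psi_1$ --- stay uniformly bounded over the compact characteristic set; and that the cutoffs $b_1$ in Theorem~\ref{t:basicPropagate}, although possibly supported on large flow-outs, only meet $f$ inside the fixed set $\supp f$. Everything else --- the elliptic estimate, the outgoing property, the gluing to the full cutoff resolvent, the soft elliptic regularity, and the absorption of the remainder --- is routine given the tools of \S\ref{s:SCA} and the classical free-resolvent bound.
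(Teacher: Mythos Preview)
Your proposal is correct and follows essentially the same approach as the paper: elliptic estimate off $\{p=0\}$, backward propagation along nontrapped trajectories into the incoming region at infinity, the outgoing property (Lemma~\ref{l:outgoing}) to kill the control term there, a partition-of-unity argument over the compact characteristic set, and absorption of the $O(\hbar^N)$ remainder via the crude bound \eqref{e:weak1}. The only organisational differences are that the paper works directly in $H^2_k$ (the elliptic estimate already yields this, and on the characteristic set the propagation symbols are compactly supported in $\xi$ so $L^2$ and $H^2_k$ agree) rather than proving $L^2\to L^2$ first and upgrading, and that the paper handles an arbitrary $\chi$ simply by choosing $\chi_1\equiv1$ on $\supp\chi$ rather than by gluing with the free resolvent; neither difference is substantive.
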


\begin{proof}[Sketch of the proof]
\begin{enumerate}
    \item (Set up.) Since $\|\chi\cR\chi \|_{L^2\to H^2_k}$ is a continuous function of $\hbar$ and bounded for every fixed $\hbar$, it is sufficient to prove \eqref{e:nt} for $\hbar$ sufficiently small.
Without loss of generality, we can assume that $\supp(A-I)\cup \supp(n-1)\subset \supp \chi$.
Let $u:=\mathcal{R}\chi f$, $R>0$ such that $\supp \chi \subset B(0,R)$, and $\chi_1\in C_c^\infty(\mathbb{R}^d)$ with $\supp (1-\chi_1)\cap B(0,R)=\emptyset$. 

\item (The elliptic region.) 
By the composition property in Informal Theorem \ref{t:rules},
given $a\in S^0$ with $\supp a\cap \{p=0\}=\emptyset$ and $\chi_1$ as above, there exists $a'$ such that $\Op(a)\chi_1 =\Op(a')$
and, up to an $O(\hbar^\infty)_{\Psi_\hbar^{-\infty}}$ error, $\supp a'\subset \supp a$. 
Then, by the fact that $\Op(a)$ is properly supported and the elliptic estimate (Theorem \ref{t:elliptic}, applied with $a$ replaced by $a'$), there are $\psi_1,\psi\in C_c^\infty(\mathbb{R}^d)$ such that for any $M>0$,  
\begin{align}\nonumber
\|\Op(a)\chi_1 u\|_{H_k^2}
=\|\psi_1\Op(a)\chi_1 u\|_{H_k^2}
&=\|\psi_1\Op(a') u\|_{H_k^2}\\
&\leq C\|Pu\|_{L^2} + C \hbar^M\| \psi u\|_{L^2}.
\label{e:res1}
\end{align}

\item (The propagative region.)  Since $K=\emptyset$, $\Gamma_{\rm b}=\emptyset$ (by Part (ii) of Lemma \ref{l:trapping}), and thus for any $R'>R$ 
and all $(x_0,\xi_0)\in \{p=0\}\cap \supp \chi_1$, there is $t>0$ such that 
$$
\varphi_{-t}(x_0,\xi_0)\in \big\{(x,\xi)\,:\, |x|>R',\, \langle \tfrac{x}{|x|},\xi\rangle \leq 0\big\}. 
$$
By the propagation estimate (Theorem~\ref{t:basicPropagate}), there are $a,b\in C_c^\infty(\mathbb{R}^{2d})$ and $\psi\in C_c^\infty(\mathbb{R}^d)$ with $a(x_0,\xi_0)=1$ and 
$$
\supp b\subset \big\{(x,\xi)\,:\, |x|>R',\, \langle \tfrac{x}{|x|},\xi\rangle \leq 0\big\}
$$
such that given $M>0$ there exists $C>0$ such that
$$
\|\Op(a) u\|_{L^2}\leq C\hbar^{-1}\|Pu\|_{L^2}+\|\Op(b)u\|_{L^2}+C\hbar^M\|\psi u\|_{L^2}.
$$
By the outgoing property \eqref{e:outgoingWF} and the support properties of $b$, we obtain that
\beq\label{e:res2}
\|\Op(a) u\|_{L^2}\leq C\hbar^{-1}\|Pu\|_{L^2}+C\hbar^M\|\psi u\|_{L^2}. 
\eeq
\item (Conclusion.) 
The combination of \eqref{e:res1}, \eqref{e:res2}, the compactness of $\{p=0\}\cap \supp\chi_1$, and a partition of unity argument implies that
$$
\|\chi_1 u\|_{H_k^2}\leq C\hbar^{-1}\|Pu\|_{L^2}+C\hbar^M\|\psi u\|_{L^2}
$$
for some $\psi\in C_c^\infty(\mathbb{R}^d)$ and any $M>0$. 
Absorbing part of the right-most term into the left-hand side, we obtain
\beq\label{e:swim1}
\|\chi_1 u\|_{H_k^2}\leq C\hbar^{-1}\|Pu\|_{L^2}+C\hbar^M\|\psi(1-\chi_1) u\|_{L^2}.
\eeq
Since $\psi(1-\chi_1)\in C^\infty_c$ with support away from $B(0,R)$, we can use the crude bound~\eqref{e:weak1} to bound the last term in \eqref{e:swim1}. Then, with $M=N+N_0$,
$$
\|\chi_1 u\|_{H_k^2}\leq C\hbar^{-1}\|Pu\|_{L^2}+C\hbar^N\|u\|_{H_k^2(B(0,R))};
$$
the result then follows by absorbing the right-most term into the left-hand side.
\end{enumerate}
\end{proof}

\bre[History and context of Theorem \ref{t:nontrap}]
Theorem \ref{t:nontrap}, and its generalisation to 
either a Dirichlet or Neumann obstacle, was proved in \cite{Va:75} using the propagation results of \cite{MeSj:78, MeSj:82} (see also 
\cite[Theorem 4.43]{DyZw:19}). 
Alternative proofs appear in \cite{Me:79} and \cite{Bu:02}, using, respectively, Lax--Phillips scattering theory \cite{LaPh:89} and \emph{defect measures} (another way of encoding propagation information; see \cite[\S5]{Zw:12}).
\ere

\bre[Proving cutoff resolvent bounds using vector fields]
\label{r:nt}
When $\Omega_-$ is star-shaped and $A$ and $n$ satisfy certain monotonicity properties, the bound \eqref{e:nt} can be proved using the vector field $V= x\cdot \nabla$ in \eqref{e:vf}; i.e., using Rellich/Morawetz identities (see \S\ref{s:energy}). When $A\equiv I$ and $n\equiv 1$, this result goes back to \cite{MoLu:68,Mo:75}; for the case of variable $A$ and $n$, see \cite{Bl:73, BlKa:77, PeVe:99, GrPeSp:19, MoSp:19}.
The advantage of the vector-field method is that, when it works, the boundary and coefficients can be very rough (see \cite{ChMo:08, GrPeSp:19}), but it 
cannot prove the bound for a general smooth nontrapping obstacle (unlike propagation estimates). 
(For cutoff resolvent estimates proved using non-radial vector fields -- and hence managing to break out of the ``star-shaped paradigm" slightly --   see \cite{BlKa:74, Mo:75}, 
\cite[\S4]{MoRaSt:77}, \cite{ChSpGiSm:20, ChSp:23}.)

The evolution of propagation estimates relying on positive commutators from vector-field-type arguments is discussed in \cite{MoRaSt:77} (where the term ``escape function" originated) and \cite[\S1]{BaLeRa:92}. The relationship of the vector-field method to symmetries of the underlying PDE is given in \cite[\S4.4]{Ol:93}.

We highlight that the vector field $x\cdot \nabla$ has often been used by the numerical-analysis community to 
prove bounds on the solution operator of the interior impedance problem (a ubiquitous model problem in the numerical analysis of the Helmholtz equation) \cite{AzKeSt:88, DoSaShBe:93, MaIhBa:96}, \cite[Prop.~8.2.7]{Me:95}, \cite{CuFe:06, He:07}, \cite[Chapter 2]{Ch:15}, \cite{BrGaPe:17, BaChGo:17, OhVe:18,GrSa:20}
and also to obtain new, coercive variational formulations of this problem \cite{MoSp:14, GaMo:20}.
\ere

\subsubsection{Improved estimates with the measurement and/or data away from trapping}

The next result shows that (provided the cutoff resolvent is polynomially bounded in $\hbar$), the behaviour of the solution operator is improved when the solution is measured away from the trapped set $K$.

\begin{theorem}[Estimates with the measurement away from trapping]
\label{l:measureAwayTrapping}
Let $P$ be given by \eqref{e:quantex}/\eqref{e:edp} 
with $\Omega_-=\emptyset$, $A$ and $n$ smooth, and $\supp(A-I)$ and $\supp(n-1)$ compact. 
Suppose that $a\in C_c^\infty (\mathbb{R}^{2d})$ with $\supp a\cap K=\emptyset$. Then, there is $\chi \in C_c^\infty(\mathbb{R}^d)$ such that for all $N>0$ there is $C>0$ such that for $0<\hbar<1$ and all $u\in L^2_{\loc}$  with $u$ satisfying the Sommerfeld radiation condition and $\supp(1-\chi)\cap \supp Pu=\emptyset$
\begin{align}\label{e:improve1}
\|\Op(a)u\|_{L^2}^2&\leq C\hbar^{-1}\|Pu\|_{L^2}\|\chi u\|_{L^2}+C\hbar^{-2}\|Pu\|_{L^2}^2+C\hbar^N\|\chi u\|_{L^2}^2\\ \nonumber
 &\leq C\hbar^{-1}\Big(\|\chi \cR\chi \|_{L^2\to L^2}+\hbar^{-1} +\hbar^N\|\chi \cR\chi\|_{L^2\to L^2}^2\Big)\|Pu\|_{L^2}^2.
\end{align}
Therefore, if $\|\chi \cR\chi\|_{L^2\to L^2}\leq C \hbar^{-M}$ for some $M>0$, then 
\beqs
\|\Op(a)\cR\chi\|_{L^2\to L^2} \leq C \sqrt{ \hbar^{-1}\|\chi \cR\chi \|_{L^2\to L^2}}.
\eeqs
\end{theorem}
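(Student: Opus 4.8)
The proof of Theorem~\ref{l:measureAwayTrapping} follows the same structure as the proof of Theorem~\ref{t:nontrap}, the only difference being that we are no longer assuming $K=\emptyset$, so we cannot propagate every point of $\{p=0\}\cap \supp a$ out to infinity; instead we stop the propagation at a cutoff $\chi$ near the trapped set and pay for this with the term $\|\chi u\|_{L^2}$. First I would set up as in Theorem~\ref{t:nontrap}: write $u$ for a solution of $Pu=f$ satisfying the Sommerfeld radiation condition with $\supp(1-\chi)\cap\supp f=\emptyset$, pick $R$ with $\supp\chi\subset B(0,R)$, fix $R'>R$ as in Lemma~\ref{l:outgoing}, and cover $\supp a$ by two regions. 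On the elliptic region $\{p\neq 0\}$, the elliptic estimate (Theorem~\ref{t:elliptic}) together with the fact that $\Op(a)$ is properly supported gives, for any $M>0$,
\beqs
\|\Op(a_{\rm ell})u\|_{L^2}\leq C\|Pu\|_{L^2}+C\hbar^M\|\psi u\|_{H^{-M}_k},
\eeqs
where $a_{\rm ell}$ is the part of $a$ supported where $|p|>0$ and $\psi\in C_c^\infty$. This contributes only a $\|Pu\|_{L^2}$ term and a high-order remainder.

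\textbf{The propagative region.} The substantive point is the region $\{p=0\}\cap\supp a$. Since $\supp a\cap K=\emptyset$ and $K=\Gamma_{\rm f}\cap\Gamma_{\rm b}$, every point $(x_0,\xi_0)\in\{p=0\}\cap\supp a$ lies outside $\Gamma_{\rm f}$ or outside $\Gamma_{\rm b}$. After possibly replacing $P$ by $-P$ (which swaps the two trapped sets, as in Step 1 of the proof of Theorem~\ref{t:basicPropagate}), we may arrange that $(x_0,\xi_0)\notin\Gamma_{\rm b}$, i.e.\ there is $T=T(x_0,\xi_0)>0$ with $|\pi_x(\varphi_{-T}(x_0,\xi_0))|>R'$ and, flowing a little further, $\langle\tfrac{x}{|x|},\xi\rangle\leq 0$ there. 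The backward flow segment $\bigcup_{t=0}^{T}\varphi_{-t}(x_0,\xi_0)$ is compact, but unlike in Theorem~\ref{t:nontrap} it need not avoid $\supp(1-\chi)$ — however, by the hypothesis $\supp(1-\chi)\cap\supp Pu=\emptyset$, the relevant control function $b_1$ in the propagation estimate only sees $Pu$, so that is fine; what we genuinely must control is the endpoint term. Applying the propagation estimate (Theorem~\ref{t:basicPropagate}) with the observation function $a$ concentrated near $(x_0,\xi_0)$ and the control function $b$ supported in $\{|x|>R',\ \langle\tfrac{x}{|x|},\xi\rangle\leq 0\}$ — where, by the semiclassical outgoing property \eqref{e:outgoingWF}, $\|\Op(b)u\|_{L^2}\leq C\hbar^N\|\psi u\|_{H^{-N}_k}$ — gives
\beqs
\|\Op(a)u\|_{L^2}\leq C\hbar^{-1}\|Pu\|_{L^2}+C\hbar^N\|\psi u\|_{H^{-N}_k}.
\eeqs
Wait — this would give the \emph{nontrapping} bound, which is too strong. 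The point is that the above only works pointwise in $(x_0,\xi_0)$; the escape time $T(x_0,\xi_0)$ is \emph{not} uniformly bounded as $(x_0,\xi_0)$ approaches $K$, so the constant $C$ in the propagation estimate (which depends on $T$ and on $U_1$) blows up, and one cannot extract a finite subcover of $\{p=0\}\cap\supp a$ with a uniform constant. This is exactly why a $\|\chi u\|_{L^2}$ term must appear.

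\textbf{Handling the blow-up near $K$ — the main obstacle.} The fix, and the crux of the argument, is to cover $\{p=0\}\cap\supp a$ by: (i) finitely many sets on which the backward escape time is bounded, handled as above, producing $C\hbar^{-1}\|Pu\|_{L^2}+C\hbar^N\|\psi u\|_{H^{-N}_k}$; and (ii) one remaining set $W$, a neighbourhood of $\{p=0\}\cap\supp a\cap\Gamma_{\rm b}$ (which is disjoint from $\Gamma_{\rm f}$ since $\supp a\cap K=\emptyset$), shrunk so that the \emph{forward} flow from $W$ escapes to $\{|x|>R'\}$ in bounded time — here one propagates forward instead of backward, and the control function now lands on $\{|x|>R',\ \langle\tfrac{x}{|x|},\xi\rangle\geq 0\}$, which is \emph{not} killed by the outgoing property; one is instead forced to estimate $\Op(b)u$ in the bounded region by $\|\chi u\|_{L^2}$. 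Quadratic Cauchy–Schwarz on the resulting cross terms (i.e.\ squaring, using $2ab\leq \hbar a^2+\hbar^{-1}b^2$ on the products of $\|Pu\|_{L^2}$ with $\|\chi u\|_{L^2}$) then yields the first displayed inequality in \eqref{e:improve1}:
\beqs
\|\Op(a)u\|_{L^2}^2\leq C\hbar^{-1}\|Pu\|_{L^2}\|\chi u\|_{L^2}+C\hbar^{-2}\|Pu\|_{L^2}^2+C\hbar^N\|\chi u\|_{L^2}^2,
\eeqs
after bounding the high-order remainder $\hbar^N\|\psi u\|_{H^{-N}_k}$ by $\hbar^N\|\chi u\|_{L^2}$ using the crude estimate \eqref{e:weak1} exactly as in the last step of Theorem~\ref{t:nontrap}. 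Substituting $u=\cR\chi g$, so $Pu=\chi g$ and $\|\chi u\|_{L^2}\leq\|\chi\cR\chi\|_{L^2\to L^2}\|g\|_{L^2}$, and applying $2ab\leq a^2+b^2$ to the first term, gives the second line of \eqref{e:improve1}; and then the final norm bound
\beqs
\|\Op(a)\cR\chi\|_{L^2\to L^2}\leq C\sqrt{\hbar^{-1}\|\chi\cR\chi\|_{L^2\to L^2}}
\eeqs
follows because, under the polynomial bound $\|\chi\cR\chi\|_{L^2\to L^2}\leq C\hbar^{-M}$, the $\hbar^{-2}$ term is dominated by $\hbar^{-1}\|\chi\cR\chi\|_{L^2\to L^2}$ once $\hbar$ is small (as $\|\chi\cR\chi\|\gtrsim\hbar^{-1}$), and the $\hbar^N\|\chi\cR\chi\|^2$ term is lower order for $N$ chosen large relative to $M$. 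I expect the delicate point to be the clean separation of the cover into "bounded escape time" pieces and the residual neighbourhood of $\Gamma_{\rm b}$, together with keeping track of which control-function supports are annihilated by the outgoing property \eqref{e:outgoingWF} (the backward-trapped directions, flowed forward, are the ``incoming at infinity'' directions, which are \emph{not} annihilated) — this asymmetry is precisely what forces the $\|\chi u\|_{L^2}$ term and hence the square-root loss.
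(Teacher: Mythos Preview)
Your approach has a genuine gap that prevents it from reaching the square-root improvement. The propagation estimate~\eqref{e:propagate} is \emph{additive}: $\|\Op(a)u\|\leq C\hbar^{-1}\|Pu\|+C\|\Op(b)u\|+\text{remainder}$. If, as in your step~(ii), the control symbol $b$ lands in a region where you can only bound $\|\Op(b)u\|$ by $C\|\chi u\|$, then squaring gives
\[
\|\Op(a)u\|^2\leq C\hbar^{-2}\|Pu\|^2+C\|\chi u\|^2+\text{l.o.t.},
\]
with a $\|\chi u\|^2$ term carrying \emph{no} power of $\hbar$. Applying this with $u=\cR\chi g$ yields only $\|\Op(a)\cR\chi\|\leq C\|\chi\cR\chi\|$, the trivial bound. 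The cross term you mention, $\hbar^{-1}\|Pu\|\|\chi u\|$, does appear when squaring, but it is dominated by the other two and cannot rescue you; what matters is that the leading $\|\chi u\|^2$ term is present with coefficient $O(1)$ rather than $O(\hbar^N)$.

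The paper obtains the product structure $\hbar^{-1}\|Pu\|\|\chi u\|$ by a different mechanism: a positive-commutator (multiplier) argument, not propagation. One takes a radial spatial cutoff $\psi_R$, computes $2\hbar^{-1}\Im\langle Pu,\psi_Ru\rangle=\langle\Op(H_p\psi_R)u,u\rangle+\text{l.o.t.}$ via~\eqref{e:am1}, and uses the outgoing property (Lemma~\ref{l:outgoing}) together with sharp G\aa rding to show that $H_p\psi_R$ is effectively $\leq -c\,w^2(|x|-R)$ on the mass of $u$. This gives $\|w(|\cdot|-R)u\|^2\leq C\hbar^{-1}\|Pu\|\|\chi u\|+\text{l.o.t.}$ --- the product appears directly from Cauchy--Schwarz applied to the pairing $\langle Pu,\psi_Ru\rangle$, with the $\|\chi u\|^2$ contribution relegated to the $O(\hbar^N)$ remainder. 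Only \emph{then} does one apply propagation (Theorem~\ref{t:basicPropagate}), from the annulus $\{|x|\sim R\}$ back to $\supp a$: since $\supp a\cap K=\emptyset$, every point of $\supp a$ reaches the annulus in uniformly bounded (forward or backward) time, and this final propagation step only adds the $\hbar^{-2}\|Pu\|^2$ term.
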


As recalled in \S\ref{s:rho}, $\rho:=\|\chi \cR\chi\|_{L^2\to L^2}\gg\hbar^{-1}$ when $K\neq \emptyset$, so that 
$\sqrt{ \hbar^{-1}\rho }\ll \rho$ in this case; i.e., Lemma \ref{l:measureAwayTrapping} indeed shows that the solution operator is improved when the solution is measured away from $K$.

\begin{proof}[Sketch of the proof of Theorem \ref{l:measureAwayTrapping}]
\begin{enumerate}
\item Let $w\in C_c^\infty(-2,2)$ $w\equiv \frac{1}{2}$ on $[-1,1]$ and $\int w^2\equiv 1$. Then set 
$$
\psi_R(x):=w_R(|x|),\qquad w_R(r):=1-\int_0^{r}w^2(s-R)ds
$$
(i.e., $\psi_R\equiv 1$ on $r\leq R-2$, and $\psi_R\equiv 0$ on $r\geq R+2$).
Let $R>0$ be sufficiently large so that  $\supp (1-\psi_R)\cap \big(\supp (A-I)\cup \supp (n-1)\cup \supp (Pu)\big)=\emptyset$. Then, by \eqref{e:p}, $p(x,\xi)= |\xi|^2 -1$ on $\supp \psi_R$ so that
$$
H_p\psi_R = -2\langle \xi,\tfrac{x}{|x|}\rangle w^2(|x|-R).
$$
\item Since $u$ is outgoing, 
Lemma~\ref{l:outgoing} implies that all of its energy/mass in phase space over $\supp  w^2(|\cdot |-R)$ is contained where $\langle \xi,\tfrac{x}{|x|}\rangle \geq c/2>0$ and $|\xi|\leq 2$.
Since 
$$
H_p \psi_R \leq - c w^2(|x|-R) 
\quad \ton 
\big\{\langle \xi,\tfrac{x}{|x|}\rangle \geq c/2>0\big\} \cup \{|\xi|\leq 2\},
$$
by the sharp G\aa rding inequality \eqref{e:gaarding} and Lemma \ref{l:outgoing},
\begin{align*}
\langle \Op(H_p\psi_R) u,u\rangle
\leq 
-c\|w(|\bullet|-R)u\|_{L^2}^2+\lot 
\end{align*}
\item Arguing as in~\eqref{e:multiplier}/\eqref{e:commutator}, for real-valued $g\in C_c^\infty$,
\beq\label{e:am1}
2\hbar^{-1}\Im \langle Pu,\Op(g) u\rangle 
=i\hbar^{-1} \langle [P,\Op(g)]u,u\rangle = \langle \Op( H_p g)u,u\rangle+\lot 
\eeq
so that 
\begin{equation}
\label{e:upgradeIntermediate}
-2\hbar^{-1}|\Im \langle Pu,\psi_Ru\rangle|\leq -c\|w(|\bullet|-R)u\|_{L^2}^2+\lot 
\end{equation}
Although $\psi_R$ is not compactly supported in $\xi$ and so \eqref{e:am1} cannot immediately be applied with $g=\psi_R$, the elliptic estimate shows that the high-frequencies of $u$ are well-behaved, and so this step can be rigorously implemented by inserting low-frequency cutoffs.
\item By the Cauchy--Schwarz inequality, if $\chi$ is such that  $\supp (1-\chi)\cap \supp \psi_R=\emptyset$,
$$
\|w(|\bullet|-R)u\|^2_{L^2}\leq C\hbar^{-1}\|Pu\|_{L^2}\|\chi u\|_{L^2} +\lot 
$$
\item The result \eqref{e:improve1} now follows by applying Theorem~\ref{t:basicPropagate}, the fact that $w(|x|-R)=\frac{1}{2}$ on $\{R-1\leq |x|\leq R+1\}$, and the fact that the Hamiltonian trajectory from any point in $\supp a$ reaches $\{R-1\leq |x|\leq R+1\}$ in finite time (under one of the forward or backward flow) since $\supp a\cap K=\emptyset$.
(As in the proof of Theorem \ref{t:basicPropagate}, we iterate the argument to show that the lower-order terms can be made $O(\hbar^\infty)$.)
\end{enumerate}
\end{proof}

The next result shows that (provided that the resolvent is polynomially bounded in $\hbar$) if $Pu$ is supported away from the trapped set and $u$ is measured away from trapping, then 
one recovers the nontrapping cutoff resolvent bound.

\begin{theorem}[Estimates with measurement and data away from trapping]\label{l:everythingawaytrapping}
Let $P$ be given by \eqref{e:quantex}/\eqref{e:edp} 
with $\Omega_-=\emptyset$, $A$ and $n$ smooth, and $\supp(A-I)$ and $\supp(n-1)$ compact. 
Suppose that $a,b\in S^0 (\mathbb{R}^{2d})$ with $(\supp a\cup \supp b)\cap K=\emptyset$. Then, there is $\chi \in C_c^\infty(\mathbb{R}^d)$ such that for all $N>0$ there is $C>0$ such that for $0<\hbar<1$ and all $u\in L^2_{\loc}(\Rea^d)$  with $u$ satisfying the Sommerfeld radiation condition 
and $\supp(1-\chi)\cap \supp Pu=\emptyset$
\begin{align}\label{e:improve2}
\|\Op(a)u\|_{L^2}^2&\leq C\hbar^{-2}\|Pu\|_{L^2}^2+C\hbar^{-1}\|\Op(1-b)Pu\|_{L^2}\|\chi u\|_{L^2}+C\hbar^N\|\chi u\|_{L^2}^2\\ \nonumber
&\leq C\hbar^{-2}\big(1+\hbar^N\|\chi \cR\chi \|_{L^2\to L^2}^2\big)\|Pu\|_{L^2}^2\\
&\qquad +C\hbar^{-1}\|\Op(1-b)Pu\|_{L^2}\|\chi \cR\chi \|_{L^2\to L^2} \|Pu\|_{L^2}.
\label{e:improve2a}
\end{align}
Moreover, if $\|\chi \cR\chi\|_{L^2\to L^2}\leq C \hbar^{-M}$ for some $M>0$
then 
\beq\label{e:improve3}
\| \Op(a)\mathcal{R}_P \Op(b)\|_{L^2\to L^2} \leq C \hbar^{-1}.
\eeq
\end{theorem}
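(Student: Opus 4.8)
The plan is to upgrade the estimate \eqref{e:improve1} of Theorem~\ref{l:measureAwayTrapping} by exploiting the extra hypothesis $\supp b\cap K=\emptyset$. After writing $Pu=\Op(b)Pu+\Op(1-b)Pu+\lot$, the piece $\Op(1-b)Pu$ will produce exactly the cross term $\hbar^{-1}\|\Op(1-b)Pu\|_{L^2}\|\chi u\|_{L^2}$ of \eqref{e:improve2}, while the piece $\Op(b)Pu$, paired as $\langle Pu,\Op(b)^*\psi_Ru\rangle$, turns into a pairing against $\Op(\overline b\,\psi_R)u$ — a quantity microlocalised \emph{away from the trapped set}, which can therefore itself be propagated out towards infinity and controlled by the same near-field quantities used in the proof of Theorem~\ref{l:measureAwayTrapping}. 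A short self-improving (Young's-inequality) argument then closes the bound at nontrapping quality $\hbar^{-2}\|Pu\|_{L^2}^2$.

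First I would reduce to $\hbar$ small and to $a,b$ of compact support: the part of $a$ with $|x|$ large and $\xi$ incoming or bounded away from $\{|\xi|=1\}$ is $O(\hbar^\infty)$ by Lemma~\ref{l:outgoing}, and the part with $p\neq 0$ is bounded by $C\|Pu\|_{L^2}$ by Theorem~\ref{t:elliptic}, leaving $a$ supported near $\{p=0\}$, still away from $K$. Fixing $R$ large and letting $\psi_R,w$ and $X:=\|w(|\cdot|-R)u\|_{L^2}^2$ be as in the proof of Theorem~\ref{l:measureAwayTrapping}, I would extract two ingredients from that proof: (a) the propagation step, which for any $c\in S^0$ of compact support with $\supp c\cap K=\emptyset$ gives $\|\Op(c)u\|_{L^2}\leq C\hbar^{-1}\|Pu\|_{L^2}+CX^{1/2}+C\hbar^N\|\chi u\|_{L^2}$ (each trajectory through $\supp c$ reaches the annulus $\{R-1\leq|x|\leq R+1\}$ in finite forward or backward time by Theorem~\ref{t:basicPropagate} and Remark~\ref{r:direction}, and there $w(|x|-R)u$ is the control term); and (b) the radial positive-commutator step, $X\leq C\hbar^{-1}|\langle Pu,\psi_Ru\rangle|+C\hbar^N\|\chi u\|_{L^2}^2$ (the sign of $H_p\psi_R$ on the incoming region being handled by Lemma~\ref{l:outgoing}).

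The crux is then to expand $\langle Pu,\psi_Ru\rangle=\langle\Op(1-b)Pu,\psi_Ru\rangle+\langle Pu,\Op(b)^*\psi_Ru\rangle+\langle\lot(Pu),\psi_Ru\rangle$: the first term is $\leq\|\Op(1-b)Pu\|_{L^2}\|\chi u\|_{L^2}$ (choosing $\chi\equiv 1$ on $\supp\psi_R$), and for the second, the adjoint and composition rules of Informal Theorem~\ref{t:rules} give $\Op(b)^*\psi_R=\Op(\overline b\,\psi_R)+\lot$ with $\supp(\overline b\,\psi_R)\subset\supp b$, so ingredient (a) with $c=\overline b\,\psi_R$ applies. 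Writing $D:=\|Pu\|_{L^2}$, $E:=\|\Op(1-b)Pu\|_{L^2}$, $Z:=\|\chi u\|_{L^2}$ and feeding all of this into (b), one arrives at an inequality of the shape
\beqs
X\leq C\hbar^{-1}EZ+C\hbar^{-2}D^2+C\hbar^{-1}DX^{1/2}+C\hbar^{N-1}DZ+C\hbar^{N}Z^2+(\text{residual}),
\eeqs
and absorbing $C\hbar^{-1}DX^{1/2}$ into the left-hand side by Young's inequality (together with $\hbar^{N-1}DZ\leq\hbar^{-2}D^2+\hbar^{2N}Z^2$) gives $X\leq C\hbar^{-1}EZ+C\hbar^{-2}D^2+C\hbar^{N}Z^2$. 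One more application of (a) with $c=a$ then yields \eqref{e:improve2}; \eqref{e:improve2a} follows by inserting $\|\chi u\|_{L^2}\leq\|\chi\cR\chi\|_{L^2\to L^2}\|Pu\|_{L^2}$ (valid since $\supp(1-\chi)\cap\supp Pu=\emptyset$); and \eqref{e:improve3} follows by applying \eqref{e:improve2a} with $b$ enlarged to $\widetilde b\in S^0$, $\widetilde b\equiv 1$ on $\supp b$, compactly supported and disjoint from $K$, so that for $u=\cR\Op(b)g$ one has $\|Pu\|_{L^2}\leq C\|g\|_{L^2}$ and $\|\Op(1-\widetilde b)Pu\|_{L^2}=O(\hbar^\infty)\|g\|_{L^2}$ by composition, whence choosing $N\geq 2M$ bounds the right-hand side by $C\hbar^{-2}\|g\|_{L^2}^2$.

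I expect the main obstacle to be ingredient (b) combined with the observation that makes the bootstrap close — namely that $\Op(b)^*\psi_Ru$ must be treated as microlocalised away from $K$ and \emph{propagated} (yielding the self-improving term $\hbar^{-1}DX^{1/2}$) rather than crudely bounded by $\|\chi u\|_{L^2}$; it is precisely here that $\supp b\cap K=\emptyset$ enters, and without it one only recovers \eqref{e:improve1}. A secondary technical point, as in Theorem~\ref{t:basicPropagate}, is iterating the whole argument to render the residual terms genuinely $O(\hbar^\infty)$.
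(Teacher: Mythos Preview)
Your proposal is correct and follows essentially the same approach as the paper's sketch proof: both split $\langle Pu,\psi_Ru\rangle$ via $\Op(b)+\Op(1-b)$, propagate the $\Op(\bar b\psi_R)u$ piece (supported away from $K$) back to the annulus to produce the self-improving term $\hbar^{-1}\|Pu\|\,X^{1/2}$, absorb it by Young's inequality, and then propagate $\Op(a)u$ to $X$. The paper writes $\Op(\bar b)$ where you (more precisely) write $\Op(\bar b\psi_R)$, and it leaves implicit the reduction to compactly supported $a,b$ that you spell out, but the argument is the same.
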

\begin{proof}[Sketch of the proof]
\begin{enumerate}
\item We begin from~\eqref{e:upgradeIntermediate}.
As in the proof of Theorem \ref{l:measureAwayTrapping}, 
let $\chi$ be such that $\supp (1-\chi)\cap \supp \psi_R=\emptyset$.
The inequality \eqref{e:upgradeIntermediate} 
together with the Cauchy--Schwarz inequality imply that
\begin{equation}
\label{e:mint}
\begin{aligned}
\|w(|x|-R)u\|_{L^2}^2&\leq C|\hbar^{-1}\Im \langle \Op(b)Pu,\psi_Ru\rangle|\\
&\qquad
+C\hbar^{-1}\| \Op(1-b)Pu\|_{L^2}\|\chi u\|_{L^2}+\lot \\
&= C|\hbar^{-1}\Im \langle Pu,\Op(\bar{b})u\rangle|\\
&\qquad
+C\hbar^{-1}\| \Op(1-b)Pu\|_{L^2}\|\chi u\|_{L^2}+\lot \\
&\leq C\hbar^{-1}\|Pu\|_{L^2}\|\Op(\bar{b})u\|_{L^2}\\
&\qquad
+C\hbar^{-1}\| \Op(1-b)Pu\|_{L^2}\|\chi u\|_{L^2}+\lot 
\end{aligned}
\end{equation}
\item By Theorem \ref{t:basicPropagate} and the fact that $\supp b\cap K=\emptyset$ (and hence that the Hamiltonian trajectory from any point in $\supp b$ reaches $\{R-1\leq |x|\leq R+1\}$ in finite time), 
\begin{equation}
\label{e:pastry}
\|\Op(\bar{b})u\|_{L^2}\leq C\hbar^{-1}\|Pu\|_{L^2}+\|w(|x|-R)u\|_{L^2}+\lot 
\end{equation}
\item  Combining~\eqref{e:mint} and~\eqref{e:pastry} yields
\begin{equation}
\label{e:mint2}
\begin{aligned}
\|w(|x|-R)u\|_{L^2}^2&\leq C\hbar^{-2}\|Pu\|_{L^2}^2+C\hbar^{-1}\|Pu\|_{L^2}\|w(|x|-R)u\|_{L^2}\\
&\qquad
+C\hbar^{-1}\| \Op(1-b)Pu\|_{L^2}\|\chi u\|_{L^2}+\lot 
\end{aligned}
\end{equation}
\item Use the weighted Young's inequality \eqref{e:peterPaul} on the second term on the right-hand-side of~\eqref{e:mint2} and absorbing the resulting $C\epsilon\|w(|x|-R)u\|_{L^2}^2$ on the left-hand-side yields
\begin{equation*}
\begin{aligned}
\|w(|x|-R)u\|_{L^2}^2&\leq C\hbar^{-2}\|Pu\|_{L^2}^2+C\hbar^{-1}\| \Op(1-b)Pu\|_{L^2}\|\chi u\|_{L^2}+\lot 
\end{aligned}
\end{equation*}
\item The result \eqref{e:improve2} follows by applying Theorem~\ref{t:basicPropagate} as in Step 5 of the proof of Theorem \ref{l:measureAwayTrapping}.
\end{enumerate}
To obtain \eqref{e:improve3}, given $b\in C^\infty_c$ with $\supp b\cap K=\emptyset$, since $K$ is closed (by Part (iii) of Lemma \ref{l:trapping}) there exists $\widetilde{b}\in C^\infty_c$ with $\supp \widetilde{b}\cap K=\emptyset$ and $\supp(1-\widetilde{b})\cap \supp b=\emptyset$ (i.e., $\widetilde{b}$ is ``a bit bigger" than $b$).
Applying \eqref{e:improve2a} with $b$ replaced by $\widetilde{b}$ and $u= R_P \Op(b) f$, 
and using both the boundedness property in Theorem \ref{t:rules} and the polynomial bound on the resolvent, 
we obtain that 
\beqs
\|\Op(a)R_P \Op(b) f\|_{L^2}^2\leq C\hbar^{-2}\|f\|_{L^2}^2+C\hbar^{-1-M}\|\Op(1-\widetilde{b})\Op(b)\|_{L^2\to L^2} \|f\|_{L^2}^2.
\eeqs
By the composition property in Theorem \ref{t:rules} $\|\Op(1-\widetilde{b})\Op(b)\|_{L^2\to L^2} =\lot$; in fact one can show that it is $O(\hbar^N)$ for any $N>0$, and the result \eqref{e:improve3} follows.
\end{proof}

\bre[History and context of Theorems \ref{l:measureAwayTrapping} and \ref{l:everythingawaytrapping}]
The result of Theorem \ref{l:measureAwayTrapping} first appeared in \cite{DaVa:12}, and the result of Theorem  \ref{l:everythingawaytrapping} first appeared in \cite{DaVa:12a}.
The result of Theorem \ref{l:everythingawaytrapping} with 
$\Op(a)$ and $\Op(1-b)$ replaced by 
spatial cutoffs supported far away from $\supp(A-I)\cup \supp(n-1)$, but without the assumption that $\|\chi \cR \chi\|_{L^2\to L^2}$ is polynomially bounded, was proved in \cite{Bu:02, CaVo:02} (see also \cite[Theorem 6.22]{DyZw:19}).
The analogues of Theorems \ref{l:measureAwayTrapping} and \ref{l:everythingawaytrapping} for the Helmholtz PML problem were proved in \cite[Appendix C]{AGS2}; these are used to prove \ref{R4} in \S\ref{s:R4}.
\ere

\section{Preasymptotic error estimates for the $h$-FEM (\ref{R1})}\label{s:R1}

In this section, we review and slightly improve the results of~\cite{GS3} concerning preasymptotic error estimates in the $h$-FEM,
with the main result applied to the Helmholtz PML problem Theorem \ref{t:R1} below.

The argument is based on the fact that Helmholtz operators are semiclassically elliptic at high frequencies.
As discussed in \S\ref{s:Fouriertopseudo}, it is often possible to prove results based on this property without explicit use of SCA, and, in this section, all SCA
has been removed in favour of the functional calculus of a self-adjoint operator. Nevertheless, we emphasise that the idea of the proof was obtained via ``thinking semiclassically", with this process  illustrated via a model problem in \S\ref{s:ellipticProjection}.

\subsection{Abstract set up}

Let $\mathcal{H}\subset \mathcal{H}_0\subset \mathcal{H}^*$ be  Hilbert spaces with $\mathcal{H}_0$ identified with its dual and let $a:\mathcal{H}\times \mathcal{H}\to \mathbb{C}$ be a sesquilinear form. 

\begin{definition}[Galerkin approximation of $u$ for $a$]
Let $\mathcal{V}_h\subset \mathcal{H}$ and $u\in\mathcal{H}$. A \emph{Galerkin approximation of $u$ in $\mathcal{V}_h$}, is an element $u_h\in\mathcal{V}_h$ such that 
\begin{equation}
\label{e:galerkinOrthogonality}
a(u_h,v_h)=a( u,v_h)\quad\text{ for all }v_h\in\mathcal{V}_h
\end{equation}
(with \eqref{e:galerkinOrthogonality} known as \emph{Galerkin orthogonality}).
\end{definition}

The goal of this section is to analyse the error in approximating $u$ by $u_h$.

We use the subscript to $h$ to indicate that the object comes from the approximation space (e.g., $\cV_h$, $u_h$). This notation is motivated by the fact that the concrete spaces to which this abstract theory is applied in this article are piecewise-polynomial spaces on a triangulation with width $h$.

\begin{definition}[Norm of a sesquilinear form]
\label{def:norm}
For $a:\mathcal{H}\times \mathcal{H}\to \mathbb{C}$ a sesquilinear form,
$$
\|a\|:=\sup_{\substack{u,v\in\mathcal{H}\\u,v\neq0}}\frac{|a(u,v)|}{\|u\|_{\mathcal{H}}\|v\|_{\mathcal{H}}}.
$$
\end{definition}

If $\|a\|<\infty$, we say that $a$ is \emph{bounded}.
We also need the notion of injectivity, surjectivity, and invertibility for sesquilinear forms.
\begin{definition}[Injectivity, surjectivity, and invertibility]
\label{d:injSurBij}
A sesquilinear form is \emph{injective} if there is an operator $\mathcal{R}^*:\mathcal{H}^*\to \mathcal{H}$ such that for all $v\in\mathcal{H}^*$,
\beq\label{e:R*}
a(w,\mathcal{R}^*v)=\langle w,v\rangle\quad \text{for all }w\in\mathcal{H}.
\eeq
$a$ is \emph{surjective} if there is an operator $\mathcal{R}:\mathcal{H}^*\to \mathcal{H}$ such that for all $v\in\mathcal{H}^*$,
$$
a(\mathcal{R}v,w)=\langle v,w\rangle\quad \text{for all }w\in\mathcal{H}.
$$
$a$ is \emph{invertible} if it is both surjective and injective. In this case, $\mathcal{R}^*=(\mathcal{R})^*$. 
\end{definition}

\begin{remark}\label{r:operatorP}
Recall that a sesquilinear form $a:\mathcal{H}\times\mathcal{H}\to \mathbb{C}$ defines a unique operator $P:\mathcal{H}\to \mathcal{H}^*$ such that 
$$
\langle Pu,v\rangle=a(u,v)\quad \text{ for all }u,v\in{H}.
$$
(Note that $\|a\|=\|P\|_{\mathcal{H}\to \mathcal{H}^*}$.)
The existence of $\mathcal{R}^*$ as in Definition~\ref{d:injSurBij} implies that $P^*$ has a right inverse and hence is surjective. In particular, since $\mathcal{H}$ is a Hilbert space (indeed, a reflexive Banach space), this implies that $P$ is injective. Likewise, the existence of $\mathcal{R}$ implies that $P$ is surjective.
\end{remark}

\subsection{Coercivity and C\'ea's lemma}

It is straightforward to analyse the Galerkin method when it is applied to a sesquilinear form that is coercive in the following sense.
\begin{definition}[Coercive sesquilinear form]
A sesquilinear form $a$ is \emph{coercive} if
$$
\mathfrak{C}(a):=\inf_{\substack{u\in\mathcal{H}\\u\neq 0}}\frac{ |a(u,u)|}{\|u\|_{\mathcal{H}}^2}>0.
$$
\end{definition}

By the Lax--Milgram lemma, if $a$ is bounded and coercive, then it is invertible (in the sense of Definition \ref{d:injSurBij}).
In addition, we have the following estimate for the Galerkin approximation known as C\'ea's Lemma  \cite{Ce:64}.
\begin{lemma}[C\'ea's Lemma]
\label{l:cea}
Suppose that $a:\mathcal{H}\times\mathcal{H}\to \mathbb{C}$ is a coercive sesquilinear form and $\mathcal{V}_h\subset \mathcal{H}$ is a finite dimensional subspace. Then for all $u\in\mathcal{H}$, the Galerkin approximation, $u_h$, for $u$ in $\mathcal{V}_h$ exists, is unique, and satisfies
\begin{equation}
\label{e:ceaEstimate}
\|u-u_h\|_{\mathcal{H}}\leq(\mathfrak{C}(a))^{-1}\|a\|\|(I-\Pi_h)u\|_{\mathcal{H}},
\end{equation}
where $\Pi_h:\mathcal{H}\to \mathcal{V}_h$ denotes the orthogonal projection.
\end{lemma}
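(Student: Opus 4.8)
The plan is the standard three-step argument: establish existence and uniqueness of the Galerkin solution via coercivity restricted to $\cV_h$, then derive the quasi-optimality estimate via Galerkin orthogonality, and finally replace the arbitrary comparison element by the orthogonal projection. First I would observe that the restriction of $a$ to $\cV_h \times \cV_h$ is still coercive with the same constant $\mathfrak{C}(a)$, since the infimum defining $\mathfrak{C}(a)$ is taken over all of $\cH$ and $\cV_h \subset \cH$. Because $\cV_h$ is finite-dimensional, coercivity of $a|_{\cV_h \times \cV_h}$ together with boundedness gives invertibility of the associated finite linear system (coercivity forces the matrix $(a(\phi_i,\phi_j))_{ij}$ to be injective on $\mathbb{C}^{\dim \cV_h}$, hence invertible in finite dimensions), so a unique $u_h \in \cV_h$ satisfying \eqref{e:galerkinOrthogonality} exists. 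Here I would invoke, if desired, the abstract fact noted after the definition of coercivity that a bounded coercive form is invertible by Lax--Milgram; applied to the closed subspace $\cV_h$ this handles existence and uniqueness directly.

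For the error bound, the key identity is Galerkin orthogonality: for any $w_h \in \cV_h$, the element $u_h - w_h$ lies in $\cV_h$, so
\begin{align*}
\mathfrak{C}(a)\,\|u_h - w_h\|_{\cH}^2 &\leq |a(u_h - w_h, u_h - w_h)| = |a(u - w_h, u_h - w_h)| \\
&\leq \|a\|\,\|u - w_h\|_{\cH}\,\|u_h - w_h\|_{\cH},
\end{align*}
where the middle equality uses $a(u_h - u, v_h) = 0$ for $v_h = u_h - w_h \in \cV_h$, and the final inequality is the definition of $\|a\|$. Dividing by $\|u_h - w_h\|_{\cH}$ (the case $u_h = w_h$ being trivial) gives $\|u_h - w_h\|_{\cH} \leq \mathfrak{C}(a)^{-1}\|a\|\,\|u - w_h\|_{\cH}$, and then the triangle inequality yields
\[
\|u - u_h\|_{\cH} \leq \|u - w_h\|_{\cH} + \|u_h - w_h\|_{\cH} \leq \big(1 + \mathfrak{C}(a)^{-1}\|a\|\big)\|u - w_h\|_{\cH}.
\]

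This already gives quasi-optimality with constant $1 + \mathfrak{C}(a)^{-1}\|a\|$, but the statement claims the sharper constant $\mathfrak{C}(a)^{-1}\|a\|$ with the best-approximation error replaced specifically by $\|(I - \Pi_h)u\|_{\cH}$. To get this I would instead take $w_h = \Pi_h u$, the orthogonal projection, and use the Pythagorean decomposition: since $u - u_h = (I - \Pi_h)u + (\Pi_h u - u_h)$ with $(I-\Pi_h)u \perp \cV_h$ and $\Pi_h u - u_h \in \cV_h$, we have $\|u - u_h\|_{\cH}^2 = \|(I-\Pi_h)u\|_{\cH}^2 + \|\Pi_h u - u_h\|_{\cH}^2$. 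Combining with the bound $\|\Pi_h u - u_h\|_{\cH} \leq \mathfrak{C}(a)^{-1}\|a\|\,\|(I-\Pi_h)u\|_{\cH}$ from the previous step (with $w_h = \Pi_h u$) gives $\|u - u_h\|_{\cH}^2 \leq \big(1 + (\mathfrak{C}(a)^{-1}\|a\|)^2\big)\|(I-\Pi_h)u\|_{\cH}^2$; since $\mathfrak{C}(a) \leq \|a\|$ always, $1 + (\mathfrak{C}(a)^{-1}\|a\|)^2 \leq 2(\mathfrak{C}(a)^{-1}\|a\|)^2$ is not quite the claimed constant, so to land exactly on $(\mathfrak{C}(a)^{-1}\|a\|)^2$ one uses the refined observation $\sqrt{1+t^2}\le \ldots$ — or, more cleanly, one notes $\|u-u_h\|_\cH \le (\mathfrak{C}(a)^{-1}\|a\|)\|(I-\Pi_h)u\|_\cH$ follows directly from $\mathfrak{C}(a)\|u-u_h\|_\cH^2 \le |a(u-u_h,u-u_h)| = |a(u-u_h, u - \Pi_h u)| \le \|a\|\|u-u_h\|_\cH\|(I-\Pi_h)u\|_\cH$, using Galerkin orthogonality in the form $a(u-u_h, v_h)=0$ once more (legitimate since $\Pi_h u - u_h \in \cV_h$). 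The only real subtlety — and the one place to be careful — is justifying this last use of Galerkin orthogonality against $u - \Pi_h u$ rather than against an element of $\cV_h$: one writes $u - \Pi_h u = (u - u_h) - (\Pi_h u - u_h)$ and uses orthogonality against the second, $\cV_h$-valued, piece. That bookkeeping is the main (minor) obstacle; everything else is routine.
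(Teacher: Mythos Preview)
Your proposal is correct, and the clean argument you eventually land on---applying coercivity to $u-u_h$ itself and using Galerkin orthogonality to replace the second argument by $(I-\Pi_h)u$---is exactly the paper's proof. The paper simply goes there in one line without your detour through $\|u_h-w_h\|$, the triangle inequality, and the Pythagorean attempt; and for existence/uniqueness it works backwards (the estimate forces uniqueness, then finite-dimensionality gives existence) rather than invoking Lax--Milgram on $\cV_h$ up front.
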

\begin{proof}
Suppose that a Galerkin approximation, $u_h$, for $u$ in $\mathcal{V}_h$ exists. Then, by coercivity, Galerkin orthogonality, and the definition of $\|a\|$,
\begin{align*}
\mathfrak{C}(a)\|u-u_h\|_{\mathcal{H}}^2\leq | a(u-u_h,u-u_h)|
&= | a(u-u_h,(I-\Pi_h)u)|\\
&\leq \|a\|\|u-u_h\|_{\mathcal{H}}\|(I-\Pi_h)u\|_{\mathcal{H}},
\end{align*}
and~\eqref{e:ceaEstimate} holds.
Now, if $u=0$ then \eqref{e:ceaEstimate} implies that any Galerkin approximation is zero. Hence, since $\mathcal{V}_h$ is finite dimensional, the Galerkin approximation exists and is unique for any $u$, and the proof is complete. 
\end{proof}

\subsection{The Schatz duality argument}

The standard variational formulations of Helmholtz problems are not coercive for large $k$. However, they are coercive up to lower order terms in the following sense.

\begin{assumption}[G\aa rding inequality]
\label{a:Gaarding1}
There are $c_{\rm G}>0$ and $C_{\rm G}>0$ such that for all $u\in \cH$
$$
| a(u,u)|\geq c_{\rm G}\|u\|_{\mathcal{H}}^2-C_{\rm G}\|u\|_{\mathcal{H}_0}^2.
$$
\end{assumption}

The Schatz duality argument then gives sufficient conditions for quasioptimality in 
terms of the following adjoint approximability constant.
\begin{definition}[Adjoint approximability]
Let $\mathcal{V}_h\subset \mathcal{H}$, $a$ be an injective sesquilinear form, and $\mathcal{W}$ be a Hilbert space with $\mathcal{W}\subset \mathcal{H}^*$. The \emph{adjoint approximability constant from $\mathcal{W}$ to $\mathcal{H}$} is defined by
\beq\label{e:eta}
\eta_{_{\mathcal{W}\to \mathcal{H}}}(\mathcal{V}_h):=\|(I-\Pi_h)\mathcal{R}^*\|_{\mathcal{W}\to \mathcal{H}}.
\eeq 
\end{definition}
\begin{lemma}[Schatz duality argument]
\label{l:Schatz}
Let $a:\mathcal{H}\times \mathcal{H}\to \mathbb{C}$ be an injective sesquilinear form satisfying Assumption~\ref{a:Gaarding1}, and let $0<\epsilon<1$.  Then for all $u\in\mathcal{H}$ and $\mathcal{V}_h\subset \mathcal{H}$ finite dimensional subspaces satisfying
\begin{equation}
\label{e:schatzCondition}
C_{\rm G}\|a\|^2\big(\eta_{_{\mathcal{H}_0\to \mathcal{H}}}(\mathcal{V}_h)\big)^2\leq (1-\epsilon)c_{\rm G},
\end{equation}
the Galerkin approximation, $u_h$, for $u$ in $\mathcal{V}_h$ exists, is unique, and satisfies
\begin{equation}
\label{e:SchatzFinal}
\|u-u_h\|_{\mathcal{H}}\leq \epsilon^{-1}c_{\rm G}^{-1}\|a\|\|(I-\Pi_h)u\|_{\mathcal{H}}.
\end{equation}
\end{lemma}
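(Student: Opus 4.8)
The plan is to run the classical Schatz duality argument in the abstract Hilbert-space setting described above. First I would set up the key duality identity: let $u_h$ be a Galerkin approximation for $u$ in $\mathcal{V}_h$ (assuming for now it exists), write $e := u - u_h$, and use the injectivity of $a$ to pick $z := \mathcal{R}^*(e)$ thought of as an element paired against $\mathcal{H}_0$, so that $a(w,z) = \langle w, e\rangle_{\mathcal{H}_0}$ for all $w \in \mathcal{H}$. Taking $w = e$ gives $\|e\|_{\mathcal{H}_0}^2 = a(e,z)$, and then Galerkin orthogonality lets us subtract any $z_h \in \mathcal{V}_h$, yielding $\|e\|_{\mathcal{H}_0}^2 = a(e, z - z_h) \le \|a\|\,\|e\|_{\mathcal{H}}\,\|z - z_h\|_{\mathcal{H}}$. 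Choosing $z_h = \Pi_h z$ and recalling $\|z - \Pi_h z\|_{\mathcal{H}} \le \eta_{_{\mathcal{H}_0 \to \mathcal{H}}}(\mathcal{V}_h)\,\|e\|_{\mathcal{H}_0}$ (this is exactly the definition of the adjoint approximability constant, since $z = \mathcal{R}^* e$ and $e \in \mathcal{H}_0$), we obtain
\begin{equation*}
\|e\|_{\mathcal{H}_0} \le \|a\|\,\eta_{_{\mathcal{H}_0\to\mathcal{H}}}(\mathcal{V}_h)\,\|e\|_{\mathcal{H}}.
\end{equation*}

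Next I would feed this into the G\aa rding inequality (Assumption~\ref{a:Gaarding1}) combined with Galerkin orthogonality in the style of C\'ea's lemma. Namely, for any $v_h \in \mathcal{V}_h$,
\begin{equation*}
c_{\rm G}\|e\|_{\mathcal{H}}^2 \le |a(e,e)| + C_{\rm G}\|e\|_{\mathcal{H}_0}^2 = |a(e, u - v_h)| + C_{\rm G}\|e\|_{\mathcal{H}_0}^2 \le \|a\|\,\|e\|_{\mathcal{H}}\,\|u - v_h\|_{\mathcal{H}} + C_{\rm G}\|e\|_{\mathcal{H}_0}^2,
\end{equation*}
using Galerkin orthogonality to replace $a(e,e)$ by $a(e, u - v_h)$. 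Substituting the bound on $\|e\|_{\mathcal{H}_0}^2$ from the first paragraph gives $c_{\rm G}\|e\|_{\mathcal{H}}^2 \le \|a\|\,\|e\|_{\mathcal{H}}\,\|u-v_h\|_{\mathcal{H}} + C_{\rm G}\|a\|^2 \big(\eta_{_{\mathcal{H}_0\to\mathcal{H}}}(\mathcal{V}_h)\big)^2 \|e\|_{\mathcal{H}}^2$. Under the smallness hypothesis~\eqref{e:schatzCondition}, the last term is at most $(1-\epsilon)c_{\rm G}\|e\|_{\mathcal{H}}^2$, so it can be absorbed into the left-hand side, leaving $\epsilon c_{\rm G}\|e\|_{\mathcal{H}} \le \|a\|\,\|u-v_h\|_{\mathcal{H}}$; minimizing over $v_h \in \mathcal{V}_h$ and noting that $\Pi_h u$ is the minimizer in the $\mathcal{H}$-norm gives~\eqref{e:SchatzFinal}.

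Finally I would address existence and uniqueness, which is the one genuinely delicate point since $a$ is not coercive. The estimate just derived shows that \emph{if} a Galerkin approximation exists then it is controlled by the best approximation error; applying this with $u = 0$ shows any Galerkin approximation of $0$ must be $0$, i.e., the finite-dimensional square linear system $a(\cdot,\cdot)$ restricted to $\mathcal{V}_h \times \mathcal{V}_h$ has trivial kernel, hence (being square) is invertible, giving existence and uniqueness for every $u \in \mathcal{H}$. One subtlety to handle carefully is that the a priori estimate argument in the first two paragraphs presupposes $u_h$ exists in order to form $e$; the clean way around this is to first establish the uniqueness/invertibility of the Galerkin system via the $u=0$ case (which only needs the estimate as a conditional implication), and only then invoke existence. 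The main obstacle is thus not any single computation but rather organizing this logical order correctly — and making sure the adjoint approximability constant $\eta_{_{\mathcal{H}_0 \to \mathcal{H}}}$ is invoked with the correct source space $\mathcal{H}_0$ (the lower-order space in the G\aa rding inequality), since that is precisely what makes the absorption step work.
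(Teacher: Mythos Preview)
Your proof is correct and follows essentially the same approach as the paper: the duality step to bound $\|e\|_{\mathcal{H}_0}$ via $\mathcal{R}^*$ and Galerkin orthogonality, followed by the G\aa rding inequality with absorption under~\eqref{e:schatzCondition}, and finally the finite-dimensional kernel argument for existence and uniqueness. The paper presents these as two labelled steps in the same order, with the same logic for existence and uniqueness at the end.
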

\begin{proof}
Suppose that $u_h$ is a Galerkin approximation for $u$ in $\mathcal{V}_h$.

\noindent{{\bf Step 1:}}  Estimates in $\mathcal{H}_0$.
 Let $v\in\mathcal{H}_0$. By the definition of $\cR^*$ \eqref{e:R*}, Galerkin orthogonality \eqref{e:galerkinOrthogonality}, the definition of $\|a\|$, and the definition of $\eta_{_{\mathcal{H}_0\to \mathcal{H}}}$, 
\begin{align*}
|\langle u-u_h,v\rangle|=|a(u-u_h,\mathcal{R}^*v)|
&=|a(u-u_h,(I-\Pi_h)\mathcal{R}^*v)|\\
&\leq \|a\|\|u-u_h\|_{\mathcal{H}}\|(I-\Pi_h)\mathcal{R}^*v\|_{\mathcal{H}}\\
&\leq \|a\|\|u-u_h\|_{\mathcal{H}}\eta_{_{\mathcal{H}_0\to \mathcal{H}}}\|v\|_{\mathcal{H}_0}.
\end{align*}
Hence,
\begin{equation}
\label{e:H0Estimate}
\|u-u_h\|_{\mathcal{H}_0}\leq \|a\|\eta_{_{\mathcal{H}_0\to \mathcal{H}}}(\mathcal{V}_h)\|u-u_h\|_{\mathcal{H}}.
\end{equation}

\noindent{{\bf Step 2:}} Estimates in $\mathcal{H}$  using the G\aa rding inequality.
By Assumption~\ref{a:Gaarding1},~\eqref{e:galerkinOrthogonality}, and~\eqref{e:H0Estimate},
\begin{align*}
c_{\rm G}\|u-u_h\|_{\mathcal{H}}^2&\leq | a(u-u_h,u-u_h)|+ C_{\rm G}\|u-u_h\|_{\mathcal{H}_0}^2\\
&\leq  | a(u-u_h,(I-\Pi_h)u)|+ C_{\rm G}\|a\|^2\big(\eta_{_{\mathcal{H}_0\to \mathcal{H}}}(\mathcal{V}_h)\big)^2\|u-u_h\|^2_{\mathcal{H}}\\
&\leq  \|a\|\|u-u_h\|_{\mathcal{H}}\|(I-\Pi_h)u\|_{\mathcal{H}}+ C_{\rm G}\|a\|^2\big(\eta_{_{\mathcal{H}_0\to \mathcal{H}}}(\mathcal{V}_h)\big)^2\|u-u_h\|^2_{\mathcal{H}}.
\end{align*}
Therefore, using~\eqref{e:schatzCondition}, and subtracting the last term on the right-hand side to the left-hand side, we obtain
$$
\epsilon c_{\rm G}\|u-u_h\|_{\mathcal{H}}^2\leq \|a\|\|u-u_h\|_{\mathcal{H}}\|(I-\Pi_h)u\|_{\mathcal{H}},
$$
and thus~\eqref{e:SchatzFinal} holds. Now, if $u=0$, then this implies that any Galerkin approximation is zero. Hence, since $\mathcal{V}_h$ is finite dimensional, the Galerkin approximation exists and is unique for any $u$, and the proof is complete.
\end{proof}

\bre[The history of the Schatz argument]\label{rem:Schatz}
The bound \eqref{e:H0Estimate} when $a(\cdot,\cdot)$ is coercive was proved 
by Nitsche \cite{Ni:68} and 
Aubin \cite[Theorem 3.1]{Au:67},
and appears in the form \eqref{e:H0Estimate} in, e.g., \cite[Theorem 3.2.4]{Ci:02}.
Schatz was then the first to use this argument in conjunction with a G\aa rding inequality to prove quasioptimality; see \cite{Sc:74},  \cite[Lemma 4]{ScWa:96}.
The notation $\eta(\mathcal{V}_h)$ was introduced -- and the importance of adjoint approximability emphasised --  by \cite{Sa:06}.
\ere

\subsection{The elliptic projection}
\label{s:ellipticProjection}

The idea behind the elliptic projection argument, as first introduced in~\cite{FeWu:09, FeWu:11}, is to replace a  sesquilinear form $a$ satisfying a G\aa rding inequality
by one that is coercive with the cost of this replacement controlled. 
In~\cite{GS3}, the original argument is improved by substantially decreasing the cost of this replacement. To explain the argument, we consider a model problem.

\paragraph{Model problem.}

As a model problem, we consider the Galerkin solution for the problem: given $f\in H^{-1}(\mathbb{T}^d)$, find $u\in H^1(\mathbb{T}^d)$ such that 
$$
(-k^{-2}\Delta-n(x)-ik^{-1})u=f\text{ on }\mathbb{T}^d
$$
(where the imaginary term in the PDE is introduced just to ensure that the solution is unique for all $k$).
More precisely, let $\mathcal{H}_0:=L^2(\mathbb{T}^d)$ and $\mathcal{H}:=H^1_k(\mathbb{T}^d)$, and let $a_k:\mathcal{H}\times \mathcal{H}\to \mathbb{C}$ be defined by
\begin{equation}
\label{e:modelForm}
a_k(u,v):=\langle k^{-1}\nabla u,k^{-1}\nabla v\rangle-\langle n(x)u,v\rangle -ik^{-1}\langle u,v\rangle.
\end{equation}
By this definition, the following G\aa rding inequality holds:
\begin{align*}
\Re  a_k(u,u)&\geq \|k^{-1}\nabla u\|_{L^2(\mathbb{T}^d)}^2-\|n\|_{L^\infty}\|u\|_{L^2(\mathbb{T}^d)}^2\\
&\geq \|u\|_{H_k^{1}(\mathbb{T}^d)}^2-(1+\|n\|_{L^\infty})\|u\|_{L^2(\mathbb{T}^d)}^2.
\end{align*}
In particular, this implies that 
$$
\widetilde{a}_k(u,v):=a_k(u,v)+\langle \widetilde{S}u,v\rangle,\quad \text{ with }\,\widetilde{S}=(1+\|n\|_{L^\infty})I
$$
is coercive and hence C\'ea's Lemma applies. 
This is the key observation (applied to this particular model) used in~\cite[Lemma 5.1]{FeWu:09}, \cite[Lemma 4.1]{FeWu:11} (albeit there in the context of discontinuous Galerkin methods).

To produce the estimates from~\cite{GS3}, one needs to reduce the `cost' of obtaining a coercive operator. It is not possible to reduce the norm of $\|\widetilde{S}\|_{\mathcal{H}_0\to \mathcal{H}_0}$, but it \emph{is} possible to improve its regularity properties.

\begin{lemma}\mythmname{Construction of $S_k$ for the model problem with constant coefficients}
\label{l:model}
Let $a_k$ be as in~\eqref{e:modelForm} with $n\equiv 1$. 
For all $k_0> 0$ there is $c>0$ such that for  $k\geq k_0$ there exists $S_k$ such that 
\begin{equation}
\label{e:modelPerturb}
a_{S_k}(u,v):=a_k(u,v)+\langle S_ku,v\rangle\,\text{ is coercive  with }\,\mathfrak{C}(a_{S_k})\geq c.
\end{equation}
 Furthermore, for all $N>0$ there exists $C_N>0$ such that for all $k\geq k_0$,
\begin{equation}
\label{e:modelSmoothingEstimates}
\|S_k\|_{H_k^{-N}\to H_k^N}\leq C_N.
\end{equation}
\end{lemma}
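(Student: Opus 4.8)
The plan is to build $S_k$ as the quantisation (equivalently, since we are on $\mathbb T^d$ with constant coefficients, the Fourier multiplier) of a symbol that is smooth, rapidly decaying, and large enough to kill the bad low-frequency region of $a_k$. Recall that with $n\equiv 1$ the principal symbol of the operator associated to $a_k$ is $p(x,\xi)=|\xi|^2-1$, and $a_k$ fails to be coercive precisely on the set where $|\xi|$ is near or below $1$; there $\Re(|\xi|^2-1)$ can be as negative as $-1$. First I would fix a smooth cutoff $\phi\in C_c^\infty(\mathbb R^d)$ with $\phi\equiv 1$ on $\{|\xi|\leq 2\}$ and $\supp\phi\subset\{|\xi|\leq 3\}$, and set $S_k$ to be the Fourier multiplier with symbol $s(\xi):=M\,\phi(\xi)$ for a fixed constant $M$ (to be chosen, e.g.\ $M=3$) independent of $k$. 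Since $s$ is a fixed element of $C_c^\infty(\mathbb R^d)$, by Example~\ref{ex:symbol}(iii) it lies in $S^{-N}$ for every $N$, so $S_k=\widetilde{\Op}(s)$, and by the boundedness property in Informal Theorem~\ref{t:rules} (or directly by Plancherel, since $\langle\xi\rangle^{N}s(\xi)\langle\xi\rangle^{N}$ is bounded on its compact support) we get $\|S_k\|_{H_k^{-N}\to H_k^N}\leq C_N$ for all $k\geq k_0$, which is \eqref{e:modelSmoothingEstimates}. Note $S_k$ is self-adjoint and nonnegative since $s\geq 0$ is real.

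For coercivity of $a_{S_k}$, I would work on the Fourier side: writing $\hat u(\xi)$ for the ($k$-scaled) Fourier coefficients,
\begin{align*}
\Re a_{S_k}(u,u)&=\sum_{\xi}\big(|\xi|^2-1+M\phi(\xi)\big)|\hat u(\xi)|^2,
\end{align*}
where the sum is over the scaled dual lattice. On $\{|\xi|\leq 2\}$ we have $|\xi|^2-1+M\geq M-1\geq 2$, and on $\{|\xi|>2\}$ the term $M\phi(\xi)$ is nonnegative and $|\xi|^2-1\geq \tfrac34|\xi|^2+\big(\tfrac14\cdot 4-1\big)=\tfrac34|\xi|^2$. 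Hence
\begin{align*}
\Re a_{S_k}(u,u)&\geq \tfrac34\sum_{\xi}\min\{1,|\xi|^2\}|\hat u(\xi)|^2\ \geq\ c\,\|u\|_{H^1_k(\mathbb T^d)}^2
\end{align*}
for a fixed $c>0$, using that $\min\{1,|\xi|^2\}\gtrsim\langle\xi\rangle^2/(1+\langle\xi\rangle^2)$ is bounded below by a constant times $1+|\xi|^2$ on the lattice only after noting that the lattice spacing is $k^{-1}\leq k_0^{-1}$, so $|\xi|^2\geq$ a fixed positive constant for all nonzero $\xi$ — for the zero mode the cutoff term $M\phi(0)=M$ supplies the bound directly. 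This gives $\mathfrak C(a_{S_k})\geq c>0$, i.e.\ \eqref{e:modelPerturb}.

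I expect the only genuinely delicate point to be the bookkeeping in the last display: on the lattice $\xi\in(k^{-1}\mathbb Z)^d$ the quantity $\min\{1,|\xi|^2\}$ is \emph{not} uniformly comparable to $\langle\xi\rangle^{-2}|\xi|^2$ without using that there are no lattice points with $0<|\xi|<k_0^{-1}$, so one must treat the zero mode separately (handled by $M\phi(0)$) and use the spectral gap of the lattice Laplacian for the rest; this is where the hypothesis $k\geq k_0$ enters, and it is the reason the constant $c$ depends on $k_0$. Everything else — self-adjointness, the multiplier bound \eqref{e:modelSmoothingEstimates}, and the rewriting of $\Re a_{S_k}$ — is routine. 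An alternative, fully coordinate-free route that avoids the lattice discussion is to observe $a_{S_k}(u,u)=\langle (\mathrm{Op}(|\xi|^2-1+M\phi(\xi))-ik^{-1})u,u\rangle$ and apply the sharp Gårding inequality (Informal Theorem~\ref{t:rules}, Point 7) with $w(\xi)$ a smooth square root of $c(|\xi|^2+1)$, absorbing the $\lot$ and the $k^{-1}$ term for $k\geq k_0$ large; I would present the elementary Fourier-series computation as the main argument and mention the Gårding route as the conceptual ``semiclassical" explanation, matching the narrative of \S\ref{s:ellipticProjection}.
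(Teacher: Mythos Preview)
Your approach is essentially the paper's: build $S_k$ as a compactly supported Fourier multiplier and verify coercivity and smoothing by direct Fourier-side computation. The smoothing estimate \eqref{e:modelSmoothingEstimates} is handled correctly.

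There is, however, a genuine gap in your coercivity argument. After establishing the pointwise bounds $|\xi|^2-1+M\phi(\xi)\geq 2$ on $\{|\xi|\le 2\}$ and $\geq \tfrac34|\xi|^2$ on $\{|\xi|>2\}$, you collapse both to $\tfrac34\min\{1,|\xi|^2\}$. This throws away precisely the $|\xi|^2$ growth you need: the $H^1_k$ norm is $\sum_\xi(1+|\xi|^2)|\hat u(\xi)|^2$, and $\min\{1,|\xi|^2\}\leq 1$ cannot dominate $1+|\xi|^2$ for large $|\xi|$. Your proposed lattice/spectral-gap argument addresses small nonzero $\xi$ and the zero mode, but the failure is at \emph{high} frequency, where lattice spacing is irrelevant. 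The fix is immediate and requires no lattice structure or $k_0$ dependence at all: on $\{|\xi|\le 2\}$ one has $1+|\xi|^2\leq 5$, so $2\geq \tfrac25(1+|\xi|^2)$; on $\{|\xi|>2\}$ one has $\tfrac34|\xi|^2\geq \tfrac34\cdot\tfrac{4}{5}(1+|\xi|^2)=\tfrac35(1+|\xi|^2)$. Hence $|\xi|^2-1+M\phi(\xi)\geq \tfrac25(1+|\xi|^2)$ for all $\xi$, giving $\mathfrak C(a_{S_k})\geq \tfrac25$ directly. This is exactly how the paper argues (splitting at $|\xi|=2$ and noting $\inf_{x\ge 2}(x^2-1)/(1+x^2)>0$); your ``delicate point'' discussion about the zero mode and the dependence of $c$ on $k_0$ is a red herring and should be dropped.
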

\begin{proof}
Let $\chi\in C_c^\infty((-2,2);[0,1])$ with $\supp (1-\chi)\cap [-1,1]=\emptyset$ and define  $S:\cup_N H_k^{-N}(\mathbb{T}^d)\to \cap _NH_k^N(\mathbb{T}^d)$ by
$$
\widehat{S_ku}(m):=2\chi(k^{-1}|m|)\widehat{u}(m),\qquad m\in\mathbb{Z}^d. 
$$
where 
\beqs
\widehat{u}(m) := \int_{\mathbb{T}^d} e^{-i m x} u(x) d x.
\eeqs
The bound~\eqref{e:modelSmoothingEstimates} follow easily from this definition of $S_k$. 
To prove \eqref{e:modelPerturb}, we express $a_k$ using Fourier series. By Plancherel's theorem,
$$
a_k(u,v)=(2\pi)^{-d}\Big(\big\langle \widehat{u}(m)ik^{-1}m,\widehat{v}(m)ik^{-1}m\big\rangle_{\ell^2(\mathbb{Z}^d)}-(1+ik^{-1})\big\langle \widehat{u},\widehat{v}\big\rangle_{\ell^2(\mathbb{Z}^d)}\Big),
$$
so that 
$$
\Re a_{S_k}(u,u)=(2\pi)^{-d}\big\langle \big(k^{-2}|m|^2-1+2\chi(k^{-1}|m|)\big)\widehat{u},\widehat{u}\big\rangle_{\ell^2(\mathbb{Z}^d)}.
$$
Observe that since $\supp(1-\chi)\cap [-1,1]=\emptyset$, 
$$
\inf_{x\in[0,2]}
\big(x^2-1+2\chi(x)\big)>0,\qquad \inf_{x\in[2,\infty)} (1+x^2)^{-1}(x^2-1)>0,
$$
and hence
$$
\Re a_{S_k}(u,u)\geq c\big\langle (1+k^{-2}|m|^2)\widehat{u},\widehat{u}\big\rangle_{\ell^2(\mathbb{Z}^d)}=c\|u\|_{H_k^1(\mathbb{T}^d)}^2;
$$
i.e., $\mathfrak{C}(a_{S_k})\geq c$. 
\end{proof}

When the wave-speed is not constant, the Fourier analysis used in Lemma~\ref{l:model} does not apply. One route to proving the analogue  of Lemma~\ref{l:model} is to use a pseudodifferential operator $S_k$ rather than a Fourier multiplier. 

Indeed, in the variable wave-speed case, one could replace $S_k$ by 
$$
S_k:=\Op\big(2n(x)\chi(n(x)^{-1}|\xi|^2)\big)
$$
where $\chi\in C_c^\infty(\mathbb{R};[0,1])$, $\supp (1-\chi)\cap[-1,1]=\emptyset$. 
Since
$$
\sigma_\hbar(S_k) = 2n(x) \quad\text{ when } \quad |\xi|^2 \leq n(x),
$$
there exists $\mu>0$ such that
$$
\Re\sigma_\hbar(-k^{-2}\Delta -n(x)-ik^{-1} +S_k)
= |\xi|^2 - n(x) + \sigma_\hbar(S_k)
\geq \mu\langle \xi\rangle^2,
$$
so that, by 
the sharp G\aa rding inequality \eqref{e:gaarding} from Informal Theorem \ref{t:rules},
$$
\Re \big\langle (-k^{-2}\Delta -n(x)-ik^{-1} +S_k)u,u\big\rangle \geq \frac{\mu}{2}\|u\|_{H_k^1}^2.
$$
Furthermore, since $\chi\in C_c^\infty(\mathbb{R})$, 
$$
\|S_k\|_{H_k^{-N}\to H_k^N}\leq C_N
$$
by the boundedness property from Informal Theorem \ref{t:rules}.

However, we show in \S\ref{s:abstractHelmholtz} (see Theorem~\ref{t:abstractHelmholtz}) that the procedure in the proof in Lemma~\ref{l:model}, applied there for constant coefficients, can also be implemented in much more general situations by using the functional calculus of a self-adjoint operator (i.e., we do not need to use pseudodifferential operators to generalise to variable coefficients). Before doing this, we provide an analysis of the Galerkin error when one has the property \eqref{e:modelPerturb}.

\subsection{An abstract elliptic projection setup}\label{s:absep}

To demonstrate the main idea behind the $h$-FEM preasymptotic error estimates, we introduce the notion of coercivity up to a subspace $\mathcal{Z}\subset\mathcal{H}$. This notion replaces the standard G\aa rding type inequality in the elliptic projection argument. 
\begin{assumption}[Coercivity up to $\mathcal{Z}^*$]
\label{a:regularizer}
Let $a:\mathcal{H}\times \mathcal{H}\to \mathbb{C}$ be a sesquilinear form and $\mathcal{Z}\subset \mathcal{H}$. There is $S:\mathcal{Z}^*\to \mathcal{Z}$ such that 
\begin{equation}
\label{e:sCoercive}
\begin{gathered}
\mathfrak{C}(a_S)>0,\qquad a_S(u,v):=a(u,v)+\langle Su,v\rangle.
\end{gathered}
\end{equation}
\end{assumption}
\begin{remark}
Notice that Assumption~\ref{a:regularizer} implies the weaker G\aa rding-type inequality
\begin{equation}
\label{e:weakGarding}
|a(u,u)|\geq c_{\rm G}\|u\|_{\mathcal{H}}^2-C_{\rm G}\|u\|_{\mathcal{Z}^*}^2,
\end{equation}
hence the terminology ``coercivity up to $\mathcal{Z}^*$".
\end{remark}

\begin{remark}
 We show in \S~\ref{s:abstractHelmholtz} that, under elliptic-regularity-type assumptions on $a$, \eqref{e:weakGarding} holds with $\mathcal{Z}=H_k^\ell$ for some large $\ell$.  In this context, one should think of the $\|\bullet\|_{\mathcal{Z}^*}$ norm as measuring ``low-frequencies'' and hence that~\eqref{e:weakGarding} states that $a$ is coercive up to low frequencies (as discussed in \S\ref{s:how}).
\end{remark}

If $a$ satisfies Assumption~\ref{a:regularizer} and $\|a\|<\infty$, then by the Lax--Milgram Lemma, there is a unique operator $\mathcal{R}_S:\mathcal{H}^*\to \mathcal{H}$ such that 
\begin{equation}
\label{e:ellipticInverse}
a_S(\mathcal{R}_S g,w)=\langle g,w\rangle\quad \text{for all } w\in\mathcal{H},\,g\in\mathcal{H}^*.
\end{equation}

\begin{theorem}[Abstract elliptic projection argument]
\label{t:abstractEllipticProjection}
Let $a:\mathcal{H}\times \mathcal{H}\to \mathbb{C}$ be an injective sesquilinear form satisfying Assumption~\ref{a:regularizer}, let $\mathcal{V}_h$ be a finite dimensional subspace of $\mathcal{H}$, and $\epsilon>0$. Suppose that
\begin{equation}
\label{e:ellipticProjectionThreshold}
\|S\|_{\mathcal{Z}^*\to \mathcal{Z}}(\mathfrak{C}(a_S))^{-1}\|a_S\|^2\|(I-\Pi_h)\mathcal{R}_S\|_{\mathcal{Z}\to \mathcal{H}}\eta_{_{\mathcal{Z}\to \mathcal{H}}}(\mathcal{V}_h)\leq 1-\epsilon.
\end{equation}
Then for all $u\in\mathcal{H}$, the Galerkin approximation, $u_h$, for $u$ in $\mathcal{V}_h$ exists, is unique, and, for all $\mathcal{Y}\subset \mathcal{Z}$, satisfies
\begin{align}\nonumber
&\|u-u_h\|_{\mathcal{H}}\\
&\leq (\mathfrak{C}(a_S))^{-1}\Big(\|a\|+\epsilon^{-1} \sqrt{2(\mathfrak{C}(a_S))^{-1}\|S\|_{\mathcal{Z}^*\to\mathcal{Z}}} \|a_S\|^2\eta_{_{\mathcal{Z}\to \mathcal{H}}}(\mathcal{V}_h)\Big)\|(I-\Pi_h)u\|_{\mathcal{H}},\label{e:highGalerkin}\\
&\|u-u_h\|_{\mathcal{Y}^*}\nonumber\\ \nonumber
&\leq
\Big(
1 +\epsilon^{-1}
\|S\|_{\mathcal{Z}^*\to \mathcal{Z}}
(\mathfrak{C}(a_S))^{-1}\|a_S\|^2
\|(I-\Pi_h)\mathcal{R}_S\|_{\mathcal{Z}\to \mathcal{H}}\eta_{_{\mathcal{Z}\to \mathcal{H}}}(\mathcal{V}_h)\Big)
\\
&\qquad\cdot 
\|a_S\|^2(\mathfrak{C}(a_S))^{-1}
\eta_{_{\mathcal{Y}\to \mathcal{H}}}(\mathcal{V}_h)
\|(I-\Pi_h)u\|_{\mathcal{H}},
\label{e:intermediateGalerkin}
\end{align}
and
\begin{align}
\|u-u_h\|_{\mathcal{Z}^*}\leq \epsilon^{-1} \|a_S\|^2(\mathfrak{C}(a_S))^{-1}\eta_{_{\mathcal{Z}\to \mathcal{H}}}(\mathcal{V}_h)\|(I-\Pi_h)u\|_{\mathcal{H}}\label{e:lowGalerkin}.
\end{align}
\end{theorem}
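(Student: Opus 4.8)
The plan is to run a duality argument à la Schatz, but with the coercive form $a_S$ playing the role previously played by a coercive lift, and with the "low-frequency" space $\mathcal{Z}^*$ measuring the defect. First I would establish the analogue of the $\mathcal{H}_0$-estimate \eqref{e:H0Estimate}: for $v \in \mathcal{Z}$, write $\langle u - u_h, v\rangle = a_S\big(u-u_h, \mathcal{R}_S v\big) - \langle S(u-u_h), v\rangle$, using \eqref{e:ellipticInverse} for the first term. The term $a_S(u-u_h,\mathcal{R}_S v)$ can be treated by Galerkin orthogonality only \emph{approximately}, since $u_h$ is Galerkin for $a$, not $a_S$; so I would write $a_S(u-u_h, \mathcal{R}_S v) = a(u-u_h,\mathcal{R}_S v) + \langle S(u-u_h),\mathcal{R}_S v\rangle$, apply Galerkin orthogonality for $a$ to replace $\mathcal{R}_S v$ by $(I-\Pi_h)\mathcal{R}_S v$ in the first summand, and bound by $\|a\| \|u-u_h\|_{\mathcal{H}}\, \|(I-\Pi_h)\mathcal{R}_S v\|_{\mathcal{H}} \le \|a\| \|u-u_h\|_{\mathcal{H}}\, \eta_{_{\mathcal{Z}\to\mathcal{H}}}(\mathcal{V}_h)\|v\|_{\mathcal{Z}}$. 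The two $S$-terms combine, and since $\|S(u-u_h)\|_{\mathcal{Z}} \le \|S\|_{\mathcal{Z}^*\to\mathcal{Z}}\|u-u_h\|_{\mathcal{Z}^*}$, after taking the supremum over $\|v\|_{\mathcal{Z}}\le 1$ this yields a bound of the shape
\[
\|u-u_h\|_{\mathcal{Z}^*} \;\le\; C_1 \, \eta_{_{\mathcal{Z}\to\mathcal{H}}}(\mathcal{V}_h)\, \|u-u_h\|_{\mathcal{H}} \;+\; C_2 \, \|u-u_h\|_{\mathcal{Z}^*},
\]
with $C_1, C_2$ built from $\|a\|, \|a_S\|, \|S\|_{\mathcal{Z}^*\to\mathcal{Z}}, \mathfrak{C}(a_S)^{-1}$, and $\|(I-\Pi_h)\mathcal{R}_S\|_{\mathcal{Z}\to\mathcal{H}}$.

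The main obstacle — and the reason for the hypothesis \eqref{e:ellipticProjectionThreshold} — is controlling the self-referential term $C_2\|u-u_h\|_{\mathcal{Z}^*}$ on the right. Here the smallness condition \eqref{e:ellipticProjectionThreshold} is precisely what guarantees $C_2 \le 1-\epsilon$ (or something of that form), so that this term can be absorbed into the left-hand side, leaving
\[
\|u-u_h\|_{\mathcal{Z}^*} \;\le\; \epsilon^{-1} C_1 \, \eta_{_{\mathcal{Z}\to\mathcal{H}}}(\mathcal{V}_h)\,\|u-u_h\|_{\mathcal{H}}.
\]
I expect the careful bookkeeping of which constant is which — in particular, making sure the $a_S$-coercivity is used only to invoke Lax--Milgram and to estimate $\mathcal{R}_S$, while $\|a\|$ (not $\|a_S\|$) appears in the Galerkin-orthogonality step — to be the delicate point, but it is routine once the structure above is fixed.

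With the $\mathcal{Z}^*$-estimate in hand, the second step mirrors Step 2 of Lemma~\ref{l:Schatz}: use the weak Gårding inequality \eqref{e:weakGarding} (a consequence of Assumption~\ref{a:regularizer}), together with Galerkin orthogonality for $a$ and the just-proved $\mathcal{Z}^*$-bound, to estimate $\mathfrak{C}(a)\|u-u_h\|_{\mathcal{H}}^2 \le |a(u-u_h,(I-\Pi_h)u)| + C_{\rm G}\|u-u_h\|_{\mathcal{Z}^*}^2$. Substituting the $\mathcal{Z}^*$-bound into the last term gives a quadratic inequality in $\|u-u_h\|_{\mathcal{H}}$ of the form $\|u-u_h\|_{\mathcal{H}}^2 \lesssim \|u-u_h\|_{\mathcal{H}}\|(I-\Pi_h)u\|_{\mathcal{H}} + \big(\eta_{_{\mathcal{Z}\to\mathcal{H}}}(\mathcal{V}_h)\big)^2\|u-u_h\|_{\mathcal{H}}^2$; the $\eta^2$-term is again absorbed using \eqref{e:ellipticProjectionThreshold}, producing \eqref{e:highGalerkin} (the square-root factor there comes from the $\sqrt{\|S\|_{\mathcal{Z}^*\to\mathcal{Z}}\,\mathfrak{C}(a_S)^{-1}}$ grouping of constants). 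Plugging \eqref{e:highGalerkin} back into the $\mathcal{Z}^*$-estimate gives \eqref{e:lowGalerkin}. Finally, for \eqref{e:intermediateGalerkin} with an intermediate space $\mathcal{Y}\subset\mathcal{Z}$, I would repeat the Step-1 duality argument but now test against $v \in \mathcal{Y}$, so that $\eta_{_{\mathcal{Y}\to\mathcal{H}}}(\mathcal{V}_h)$ replaces $\eta_{_{\mathcal{Z}\to\mathcal{H}}}(\mathcal{V}_h)$ in the leading factor, and use the already-established $\|u-u_h\|_{\mathcal{Z}^*}$-bound to handle the $S$-error term; collecting constants yields the stated form. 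The existence and uniqueness of $u_h$ follows, as in the proofs of Lemma~\ref{l:cea} and Lemma~\ref{l:Schatz}, from the observation that the estimate \eqref{e:highGalerkin} forces any Galerkin solution of the zero data problem to vanish, combined with finite-dimensionality of $\mathcal{V}_h$.
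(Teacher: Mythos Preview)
Your Step 1 has a genuine gap. Set aside the first identity you wrote (which, as stated, is not correct: with $\mathcal{R}_S^*$ in place of $\mathcal{R}_S$ one has $a_S(u-u_h,\mathcal{R}_S^* v)=\langle u-u_h,v\rangle$ exactly, with no extra $S$-term). Interpreting your argument charitably as
\[
\langle u-u_h,v\rangle = a_S(u-u_h,\mathcal{R}_S^* v) = a\big(u-u_h,(I-\Pi_h)\mathcal{R}_S^* v\big) + \big\langle S(u-u_h),\mathcal{R}_S^* v\big\rangle,
\]
the $S$-term is bounded by $\|S\|_{\mathcal{Z}^*\to\mathcal{Z}}\|u-u_h\|_{\mathcal{Z}^*}\|\mathcal{R}_S^* v\|_{\mathcal{Z}^*}$, and the only available control on $\|\mathcal{R}_S^* v\|_{\mathcal{Z}^*}$ is $\lesssim(\mathfrak{C}(a_S))^{-1}\|v\|_{\mathcal{Z}}$. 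Your constant $C_2$ is therefore $\sim\|S\|_{\mathcal{Z}^*\to\mathcal{Z}}(\mathfrak{C}(a_S))^{-1}$, which carries \emph{no} factor of $\eta_{_{\mathcal{Z}\to\mathcal{H}}}$ or $\|(I-\Pi_h)\mathcal{R}_S\|_{\mathcal{Z}\to\mathcal{H}}$ and so cannot be absorbed via \eqref{e:ellipticProjectionThreshold}, contrary to what you assert.

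The missing ingredient is the elliptic projection proper: the $a_S$-adjoint-Galerkin projection $\Pi_S:\mathcal{H}\to\mathcal{V}_h$, defined by $a_S(w_h,(I-\Pi_S)g)=0$ for all $w_h\in\mathcal{V}_h$. The paper starts instead from the \emph{true} adjoint, $\langle u-u_h,v\rangle = a(u-u_h,\mathcal{R}^*v)$, and uses Galerkin orthogonality for $a$ to insert $\Pi_S$ (not $\Pi_h$), obtaining $a(u-u_h,(I-\Pi_S)\mathcal{R}^*v)$. After converting to $a_S$, two things happen that your route cannot achieve: (i) the $\Pi_S$-orthogonality for $a_S$ lets one replace $u-u_h$ by $(I-\Pi_h)u$ in the $a_S$-term, so \eqref{e:lowGalerkin} comes out directly in terms of the best-approximation error rather than $\|u-u_h\|_{\mathcal{H}}$; and (ii) the $S$-term is now $\langle S(u-u_h),(I-\Pi_S)\mathcal{R}^*v\rangle$, and $\|(I-\Pi_S)\mathcal{R}^*v\|_{\mathcal{Z}^*}$ \emph{is} small by an Aubin--Nitsche duality for the coercive form $a_S$ --- this is precisely where the factor $\|(I-\Pi_h)\mathcal{R}_S\|_{\mathcal{Z}\to\mathcal{H}}$ in \eqref{e:ellipticProjectionThreshold} originates. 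With this, the self-referential coefficient is exactly the left-hand side of \eqref{e:ellipticProjectionThreshold}, and absorption yields \eqref{e:lowGalerkin}. Your Step-2 outline (coercivity of $a_S$ combined with the $\mathcal{Z}^*$ bound to recover the $\mathcal{H}$-estimate) is then essentially the paper's final step.
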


\bre
If $\mathcal{Y}=\mathcal{H}^*$, then \eqref{e:intermediateGalerkin} gives a bound on $\|u-u_h\|_\cH$ complementary to that in \eqref{e:highGalerkin}. When applied to concrete Helmholtz problems, these two bounds both give \eqref{e:highGalerkinintro}; the only difference being the value of the constant $C$.
 \ere
\begin{proof} 

\

\noindent{{\bf{Step 1: }}}Estimates for $a_S$.
By Assumption~\ref{a:regularizer}, $a_S$ is coercive. Hence, by Cea's Lemma (Lemma~\ref{l:cea}),  there is $\Pi_S:\mathcal{H}^*\to \mathcal{V}_h$ such that 
$$
a_S(w_h,\Pi_Sg)=a_S(w_h,g)\, \tfa w_h\in\mathcal{V}_h,\,g\in\mathcal{H}^*.
$$
Moreover,
\begin{equation}
\label{e:coerciveAS}
\|(I-\Pi_S)w\|_{\mathcal{H}}\leq (\mathfrak{C}(a_S))^{-1}\|a_S\|\|(I-\Pi_h)w\|_{\mathcal{H}}.
\end{equation}

Next, we improve~\eqref{e:coerciveAS} when $(I-\Pi_S)w$ is measured in $\mathcal{Z}^*$ using the Schatz duality argument (Lemma \ref{l:Schatz}).  For $v\in\mathcal{Z}$,
by \eqref{e:ellipticInverse}, Galerkin orthogonality, and \eqref{e:coerciveAS},
\begin{align*}
\big|\langle v,(I-\Pi_S)w\rangle \big|&=\big|a_S(\mathcal{R}_Sv,(I-\Pi_S)w\rangle \big|\\
&=\big|a_S((I-\Pi_h)\mathcal{R}_Sv,(I-\Pi_S)w)\big|\\
&\leq \|a_S\|\|(I-\Pi_h)\mathcal{R}_S\|_{\mathcal{Z}\to \mathcal{H}}\|(I-\Pi_S)w\|_{\mathcal{H}}\|v\|_{\mathcal{Z}}\\
&\leq (\mathfrak{C}(a_S))^{-1}\|a_S\|^2\|(I-\Pi_h)\mathcal{R}_S\|_{\mathcal{Z}\to \mathcal{H}}\|(I-\Pi_h)w\|_{\mathcal{H}}\|v\|_{\mathcal{Z}}.
\end{align*}
Hence,
\begin{equation}
\label{e:aSLowNorm}
\|(I-\Pi_S)w\|_{\mathcal{Z}^*}\leq  (\mathfrak{C}(a_S))^{-1}\|a_S\|^2\|(I-\Pi_h)\mathcal{R}_S\|_{\mathcal{Z}\to \mathcal{H}}\|(I-\Pi_h)w\|_{\mathcal{H}}.
\end{equation}

\noindent{{\bf{Step 2:}}} Proof of existence, uniqueness, and~\eqref{e:lowGalerkin}.
Suppose that $u_h$ is a Galerkin approximation for $u$ in $\mathcal{V}_h$ and let $v\in\mathcal{Z}$. Then, by \eqref{e:R*}, Galerkin orthogonality \eqref{e:galerkinOrthogonality}, and 
\eqref{e:sCoercive},
\begin{equation}
\label{e:soSAAD}
\begin{aligned}
\big|\langle u-u_h,v\rangle\big|&=\big|a(u-u_h,\mathcal{R}^*v)\big|\\
&=\big|a(u-u_h,(I-\Pi_S)\mathcal{R}^*v)\big|\\
&=\big|a_S((I-\Pi_h)u,(I-\Pi_S)\mathcal{R}^*v)-\langle S(u-u_h),(I-\Pi_S)\mathcal{R}^*v\rangle\big|\\
&\leq \|a_S\|\|(I-\Pi_h)u\|_{\mathcal{H}}\|(I-\Pi_S)\mathcal{R}^*v\|_{\mathcal{H}}\\
&\qquad +\|S\|_{\mathcal{Z}^*\to \mathcal{Z}}\|u-u_h\|_{\mathcal{Z}^*}\|(I-\Pi_S)\mathcal{R}^*v\|_{\mathcal{Z}^*}.
\end{aligned}
\end{equation}
Hence, by~\eqref{e:coerciveAS} and~\eqref{e:aSLowNorm},
\begin{align*}
&\big|\langle u-u_h,v\rangle\big|\\
&\leq \|a_S\|\|(I-\Pi_h)u\|_{\mathcal{H}}(\mathfrak{C}(a_S))^{-1}|\|a_S\|\|(I-\Pi_h)\mathcal{R}^*\|_{\mathcal{Z}\to \mathcal{H}}\|v\|_{\mathcal{Z}}\\
&\,
+\|S\|_{\mathcal{Z}^*\to \mathcal{Z}}\|u-u_h\|_{\mathcal{Z}^*}(\mathfrak{C}(a_S))^{-1}\|a_S\|^2\|(I-\Pi_h)\mathcal{R}_S\|_{\mathcal{Z}\to \mathcal{H}}\|(I-\Pi_h)\mathcal{R}^*\|_{\mathcal{Z}\to\mathcal{H}}\|v\|_{\mathcal{Z}}
\end{align*}
so that the condition \eqref{e:ellipticProjectionThreshold} implis that \eqref{e:lowGalerkin} holds. Since~\eqref{e:lowGalerkin} implies uniqueness of the Galerkin solution and $\mathcal{V}_h$ is finite dimensional, the Galerkin solution exists, is unique, and satisfies~\eqref{e:lowGalerkin}

\noindent{{\bf{Step 3:}}} Proof of~\eqref{e:intermediateGalerkin}. We begin from~\eqref{e:soSAAD}, this time using~\eqref{e:lowGalerkin},~\eqref{e:coerciveAS}, and~\eqref{e:aSLowNorm}, to obtain
\begin{align*}
&\big|\langle u-u_h,v\rangle\big|\\
&\leq \Bigg(1+\epsilon^{-1}\|(I-\Pi_h)\mathcal{R}_S\|_{\mathcal{Z}\to \mathcal{H}} \eta_{_{\mathcal{Z}\to \mathcal{H}}}(\mathcal{V}_h)\|S\|_{\mathcal{Z}^*\to \mathcal{Z}}(\mathfrak{C}(a_S))^{-1}\|a_S\|^2\Bigg)
\\
&\hspace{3cm}\cdot \|a_S\|^2(\mathfrak{C}(a_S))^{-1}\eta_{_{\mathcal{Y}\to\mathcal{H}}}(\mathcal{V}_h)\|(I-\Pi_h)u\|_{\mathcal{H}}\|v\|_{\mathcal{Y}},
\end{align*}
which implies \eqref{e:intermediateGalerkin}.

\noindent{{\bf{Step 4:}}} Proof of~\eqref{e:highGalerkin}.
By \eqref{e:sCoercive} and  Galerkin orthogonality~\eqref{e:galerkinOrthogonality},
\begin{align*}
\|u-u_h\|_{\mathcal{H}}^2&\leq (\mathfrak{C}(a_S))^{-1}|a_S(u-u_h,u-u_h)|\\
&\leq  (\mathfrak{C}(a_S))^{-1}|\big(| a(u-u_h,(I-\Pi_h)u)|+|\langle u-u_h,S(u-u_h)\rangle|\big)\\
&\leq (\mathfrak{C}(a_S))^{-1}\big(\|a\|\|u-u_h\|_{\mathcal{H}}\|(I-\Pi_h)u\|_{\mathcal{H}}+\|S\|_{\mathcal{Z}^*\to\mathcal{Z}}\|u-u_h\|_{\mathcal{Z}^*}^2\big)\\
&\leq  \big(\delta \|u-u_h\|^2_{\mathcal{H}}+\tfrac{1}{4}\|a\|^2(\mathfrak{C}(a_S))^{-2}\delta^{-1}\|(I-\Pi_h)u\|^2_{\mathcal{H}}\\
&\qquad+(\mathfrak{C}(a_S))^{-1}\|S\|_{\mathcal{Z}^*\to\mathcal{Z}} \epsilon^{-2} \|a_S\|^4\|(I-\Pi_h)u\|^2_{\mathcal{H}}(\mathfrak{C}(a_S))^{-2}\eta^2_{_{\mathcal{Z}\to\mathcal{H}}}(\mathcal{V}_h)\big).
\end{align*}
Optimizing in $\delta$, and using $\sqrt{a^2+b^2}\leq a+b$, we obtain~\eqref{e:highGalerkin}.
\end{proof}

\subsection{Ellipticity for abstract Helmholtz operators}
\label{s:abstractHelmholtz}

In this section we consider a \emph{family} of sesquilinear operators $a_k:\mathcal{H}\times \mathcal{H}\to \mathbb{C}$, $k\geq 0$, where the norms on all spaces may implicitly depend on $k$. 
\begin{assumption}[Uniform boundedness]
\label{a:continuity}
For all $k_0>0$ 
$$
\sup_{k>k_0}\|a_k\|<\infty.
$$
\end{assumption}

\begin{assumption}[Uniform G\aa rding inequality]
\label{a:Gaarding}
For all $k_0>0$ there are $c_{\rm G}>0$ and $C_{\rm G}>0$ such that for $k>k_0$, 
$$
\Re a(u,u)\geq c_{\rm G}\|u\|_{\mathcal{H}}^2-C_{\rm G}\|u\|_{\mathcal{H}_0}^2.
$$
\end{assumption}

\begin{assumption}[Abstract elliptic regularity for $a_k$ and $\Re a_k$]
\label{a:ellipticRegularity}
Let $\mathcal{Z}_0:=\mathcal{H}_0$, $\mathcal{Z}_1:=\mathcal{H}$, and $\mathcal{Z}_j\subset \mathcal{Z}_{j-1}$ for $j=2,3,\dots, \ell+1$ such that $\mathcal{Z}_j$ is dense in $\mathcal{Z}_{j-1}$. For all $k_0 \geq 0$, there are $C_{\rm ell}>0$ such that for all $u\in\mathcal{H}$ and $k>k_0$
\begin{equation}
\label{e:fullReg}
\|u\|_{\mathcal{Z}_j}\leq C_{\rm ell}\Big(\|u\|_{\mathcal{H}_0}+\sup_{v\in\mathcal{H},\, \|v\|_{(\mathcal{Z}_{j-2})^*}=1}\big|a_k(u,v)\big|\big),\quad j=2,\dots, \ell+1,
\end{equation}
and
\begin{equation}
\label{e:realReg}
\|u\|_{\mathcal{Z}_j}\leq C_{\rm ell}\bigg(\|u\|_{\mathcal{H}_0}+\sup_{v\in\mathcal{H},\, \|v\|_{(\mathcal{Z}_{j-2})^*}=1}\Big|\big(\Re a_k\big)(u,v)\Big|\bigg),\quad j=2,\dots, \ell+1,
\end{equation}
where $(\Re a_k)(u,v)= \frac{1}{2}(a_k(u,v)+\overline{a_k(v,u)}).$ 
\end{assumption}
\begin{remark}
In Assumpion \ref{a:ellipticRegularity}, we mean that if either of the right-hand sides of the bounds \eqref{e:fullReg} and \eqref{e:realReg}
are finite, then $u\in\mathcal{Z}_j$ and the bounds hold.
\end{remark}

We now state the main theorem of this section.
\begin{theorem}
\label{t:abstractHelmholtz}
Suppose that $a_k:\mathcal{H}\times\mathcal{H}\to \mathbb{C}$ is a sesquilinear form satisfying Assumptions~\ref{a:continuity}, \ref{a:Gaarding}, and~\ref{a:ellipticRegularity} for some $\ell \in\mathbb{Z}^+$. Then for all $\epsilon>0$ and $k_0>0$, there are $C>0$ and $S_k$ such that, for $k>k_0$, $a_k$ satisfies Assumption~\ref{a:regularizer} with $S=S_k$ and for all $k>k_0$, 
\begin{align}
  \mathfrak{C}(a_{k,S_k})&\geq (1-\epsilon)c_{\rm G},\label{e:helmholtzCoercive}\\
  \|S_k\|_{(\mathcal{Z}_{\ell+1})^*\to \mathcal{Z}_{\ell+1}}&\leq C,\label{e:sIsSmoothing}\\
\|\mathcal{R}_{S_k}\|_{\mathcal{Z}_{j-1}\to \mathcal{Z}_{j+1}}&\leq C,\qquad j=0,1,\dots,\ell, \label{e:rIsReasonable}
\end{align}
where $\mathcal{R}_{S_k}$ is the operator defined by~\eqref{e:ellipticInverse} with $a=a_k$ and $S=S_k$.  
\end{theorem}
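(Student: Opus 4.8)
The plan is to reproduce, at the abstract level, the construction in the proof of Lemma~\ref{l:model}, but with the Fourier multiplier $S_k$ there replaced by a function of a self-adjoint operator attached to $\Re a_k$, and then to read off the three conclusions from the functional calculus of that operator together with the elliptic-regularity Assumption~\ref{a:ellipticRegularity}. First I fix $k_0$ and take $c_{\rm G},C_{\rm G}$ as in Assumption~\ref{a:Gaarding}. The sesquilinear form $b_k(u,v):=(\Re a_k)(u,v)+C_{\rm G}\langle u,v\rangle_{\mathcal{H}_0}$ is symmetric, bounded on $\mathcal{H}$ (Assumption~\ref{a:continuity}), and coercive, with $c_{\rm G}\|u\|_{\mathcal{H}}^2\le b_k(u,u)\le C_1\|u\|_{\mathcal{H}}^2$ where $C_1:=C_{\rm G}+\sup_{k>k_0}\|a_k\|$ (using $\|\cdot\|_{\mathcal{H}_0}\le\|\cdot\|_{\mathcal{H}}$, as holds in the semiclassical normalisation). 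By the representation theorem for closed, bounded, coercive forms, $b_k$ is the form of a unique self-adjoint operator $\widetilde T_k\ge 0$ on $\mathcal{H}_0$ with form domain $\mathcal{H}$, so that $\langle\widetilde T_k u,u\rangle_{\mathcal{H}_0}=b_k(u,u)\simeq\|u\|_{\mathcal{H}}^2$ uniformly in $k$, and $(\Re a_k)(u,v)=\langle(\widetilde T_k-C_{\rm G})u,v\rangle_{\mathcal{H}_0}$ for $u$ in the operator domain of $\widetilde T_k$.

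Next I choose the regulariser. Fix $\eta\in C_c^\infty(\mathbb{R};[0,1])$ with $\eta\equiv 1$ on $(-\infty,1]$ and $\supp\eta\subset(-\infty,2]$, set $\phi(t):=C_{\rm G}\,\eta(\epsilon t/C_{\rm G})$, and define $S_k:=\phi(\widetilde T_k)$ by the functional calculus. Then $\phi\in C_c^\infty(\mathbb{R})$, $0\le\phi\le C_{\rm G}$, and $\phi(t)\ge\max\{0,\,C_{\rm G}-\epsilon t\}$ for $t\ge 0$; hence $S_k$ is self-adjoint with $\|S_k\|_{\mathcal{H}_0\to\mathcal{H}_0}\le C_{\rm G}$ and $a_{k,S_k}$ (defined as in Assumption~\ref{a:regularizer} with $S=S_k$) is bounded. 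Writing $\psi(t):=t-C_{\rm G}+\phi(t)$, one has $\psi(t)\ge(1-\epsilon)t$ for all $t\ge 0$, so by the spectral theorem, for $u\in\mathcal{H}$,
\begin{equation*}
\Re a_{k,S_k}(u,u)=b_k(u,u)-C_{\rm G}\|u\|_{\mathcal{H}_0}^2+\langle\phi(\widetilde T_k)u,u\rangle_{\mathcal{H}_0}=\langle\psi(\widetilde T_k)u,u\rangle_{\mathcal{H}_0}\ge(1-\epsilon)\langle\widetilde T_k u,u\rangle_{\mathcal{H}_0}\ge(1-\epsilon)c_{\rm G}\|u\|_{\mathcal{H}}^2 .
\end{equation*}
This gives $\mathfrak{C}(a_{k,S_k})\ge(1-\epsilon)c_{\rm G}$, i.e.\ \eqref{e:helmholtzCoercive} (and, once the next step shows $S_k:(\mathcal{Z}_{\ell+1})^*\to\mathcal{Z}_{\ell+1}$, Assumption~\ref{a:regularizer} with $\mathcal{Z}=\mathcal{Z}_{\ell+1}$).

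The key step is \eqref{e:sIsSmoothing}, and it is here that the elliptic-regularity estimate \eqref{e:realReg} for $\Re a_k$ is used. Since $t^m\phi(t)$ is bounded, for every $m\ge 0$ the operator $\widetilde T_k^{m}\phi(\widetilde T_k)=(t^m\phi)(\widetilde T_k)$ is bounded $\mathcal{H}_0\to\mathcal{H}_0$ with a norm depending only on $m,\epsilon,C_{\rm G}$, and, by the form equivalence of Step~1, also $\mathcal{H}_0\to\mathcal{H}$. I then show by induction on $j$ that $(t^m\phi)(\widetilde T_k):\mathcal{H}_0\to\mathcal{Z}_j$ uniformly in $k$ for all $m\ge 0$ and all $0\le j\le\ell+1$: the cases $j=0,1$ are the bounds just noted; for $j\ge 2$, given $g\in\mathcal{H}_0$ and $u:=(t^m\phi)(\widetilde T_k)g$, the inductive hypothesis at index $j-2$ applied to the symbols $t^{m'}\phi$ gives $\|\widetilde T_k^{m'}u\|_{\mathcal{Z}_{j-2}}\le C\|g\|_{\mathcal{H}_0}$ for all $m'\ge 0$, and then inserting $(\Re a_k)(u,v)=\langle(\widetilde T_k-C_{\rm G})u,v\rangle_{\mathcal{H}_0}$ into \eqref{e:realReg} yields $\|u\|_{\mathcal{Z}_j}\le C_{\rm ell}\big(\|u\|_{\mathcal{H}_0}+\|\widetilde T_k u\|_{\mathcal{Z}_{j-2}}+C_{\rm G}\|u\|_{\mathcal{Z}_{j-2}}\big)\le C\|g\|_{\mathcal{H}_0}$. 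Taking $m=0$ gives $\phi(\widetilde T_k):\mathcal{H}_0\to\mathcal{Z}_{\ell+1}$ uniformly; picking $\phi_2\in C_c^\infty$ with $\phi_2\equiv 1$ on $\supp\phi$ so that $S_k=\phi(\widetilde T_k)\,\phi_2(\widetilde T_k)$ and dualising the self-adjoint factor $\phi_2(\widetilde T_k)$ to get $\phi_2(\widetilde T_k):(\mathcal{Z}_{\ell+1})^*\to\mathcal{H}_0$ uniformly, we obtain \eqref{e:sIsSmoothing}.

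Finally, for \eqref{e:rIsReasonable}, set $u:=\mathcal{R}_{S_k}g$, so $a_k(u,w)=\langle g-S_k u,w\rangle_{\mathcal{H}_0}$ for all $w\in\mathcal{H}$. For $j=0$ (reading $\mathcal{Z}_{-1}:=\mathcal{H}^*$) this is the Lax--Milgram bound $\|u\|_{\mathcal{H}}\le\big((1-\epsilon)c_{\rm G}\big)^{-1}\|g\|_{\mathcal{H}^*}$. For $1\le j\le\ell$ and $g\in\mathcal{Z}_{j-1}$, one has $|a_k(u,v)|\le\big(\|g\|_{\mathcal{Z}_{j-1}}+\|S_k u\|_{\mathcal{Z}_{j-1}}\big)\|v\|_{(\mathcal{Z}_{j-1})^*}$ with $\|S_k u\|_{\mathcal{Z}_{j-1}}\le C\|u\|_{\mathcal{H}_0}\le C\|g\|_{\mathcal{Z}_{j-1}}$ by \eqref{e:sIsSmoothing} (applicable since $j-1\le\ell-1<\ell+1$) and Lax--Milgram; feeding this into \eqref{e:fullReg} at index $j+1$ gives $\|u\|_{\mathcal{Z}_{j+1}}\le C_{\rm ell}\big(\|u\|_{\mathcal{H}_0}+\sup_{\|v\|_{(\mathcal{Z}_{j-1})^*}=1}|a_k(u,v)|\big)\le C\|g\|_{\mathcal{Z}_{j-1}}$, which is \eqref{e:rIsReasonable}. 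The main obstacle is the induction in Step~3: converting uniform-in-$k$ bounds on $\widetilde T_k^m\phi(\widetilde T_k)$ in $\mathcal{H}_0$ into uniform-in-$k$ smoothing onto $\mathcal{Z}_{\ell+1}$ forces one to carry along the bound for all powers $\widetilde T_k^m$ simultaneously, so that the right-hand side of \eqref{e:realReg} is controlled at each step; everything else (the representation theorem, the spectral theorem, the elementary inequality $\psi(t)\ge(1-\epsilon)t$, Lax--Milgram, and the duality) is routine.
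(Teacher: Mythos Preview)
Your proof is correct and follows essentially the same approach as the paper: construct a self-adjoint operator from $\Re a_k$ via the Friedrichs extension/representation theorem, define $S_k$ as a compactly supported function of this operator, obtain coercivity from a pointwise spectral inequality, bootstrap smoothing of $S_k$ via \eqref{e:realReg} by induction, and deduce \eqref{e:rIsReasonable} from Lax--Milgram plus \eqref{e:fullReg}. The only cosmetic differences are that you work with the shifted operator $\widetilde T_k=\mathcal{P}+C_{\rm G}$ rather than $\mathcal{P}$ itself, and your choice of cutoff $\phi$ yields the $(1-\epsilon)$ factor directly via the elementary inequality $\psi(t)\ge(1-\epsilon)t$, whereas the paper takes a cutoff equal to $1$ on $[-C_{\rm G},M]$ and sends $M\to\infty$.

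One minor slip: you write $\eta\in C_c^\infty(\mathbb{R})$ with $\eta\equiv 1$ on $(-\infty,1]$, which is impossible, and then claim $\phi\in C_c^\infty(\mathbb{R})$. What you need (and clearly intend) is $\eta\in C^\infty$ with $\eta\equiv 1$ on $(-\infty,1]$ and $\eta\equiv 0$ on $[2,\infty)$; since $\widetilde T_k\ge 0$, one may replace $\phi$ by any $C_c^\infty$ function agreeing with it on $[0,\infty)$ without changing $\phi(\widetilde T_k)$, so all subsequent uses of compact support (boundedness of $t^m\phi(t)$, the factorisation $\phi=\phi\phi_2$) go through unchanged.
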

\begin{remark}
In fact, the proof shows that there is $\chi\in C_c^\infty(\mathbb{R})$ such that $S_k=\chi(\mathcal{P})$, where $\mathcal{P}$ is the self-adjoint operator generated by $\Re a_k$. This operator is also used in the proofs of the results \ref{R2} and \ref{R4}.
\end{remark}
\begin{proof}

\

\noindent {\bf{Step 1:}} Construction of $S_k$.
By Assumptions~\ref{a:continuity} and~\ref{a:Gaarding} and the Friedrichs extension theorem (see, e.g., \cite[Theorem VIII.15, Page 278]{ReSi:80},  \cite[Theorem 12.24, Page 360]{Gr:08}, with  \cite{Fr:34} the original paper), there is a (potentially unbounded) self-adjoint operator $\mathcal{P}:\mathcal{H}_0\to \mathcal{H}_0$ with domain
\beq\label{e:domain}
\mathcal{D}(\mathcal{P})=\bigg\{ u\in\mathcal{H}\,:\, \sup_{\substack{v\in\mathcal{H}\\v\neq 0}}\frac{|\Re a_k(u,v)|}{\|v\|_{\mathcal{H}_0}}<\infty\bigg\}
\eeq
such that 
$$
\Re a_k(u,v)=\langle \mathcal{P}u,v\rangle\, \tfor u\in\mathcal{D}(\mathcal{P}).
$$

By the spectral theorem for unbounded operators 
(see, e.g., \cite[Chapter VII]{ReSi:80}), there is a unitary operator $U:\mathcal{H}_0\to L^2(\mathbb{R};\mu)$ for some Borel measure $\mu$ such that for $u\in\mathcal{D}(\mathcal{P})$, 
$$
\mathcal{P}u=U^*M_{\lambda} Uu,
$$
where, for $f:\mathbb{R}\to \mathbb{C}$ $\mu$ measurable, $(M_fw)(\lambda):=f(\lambda)w(\lambda)$. In addition,
 $u\in \mathcal{D}(P)$ if and only if $M_\lambda Uu\in L^2(\mu)$. 
 
Therefore, for $u\in\mathcal{H}$, 
$$
\Re a_k(u,u)= \langle \lambda Uu,Uu\rangle_{L^2(d\mu)}\geq c_{\rm G}\|u\|_{\mathcal{H}}^2-C_{\rm G}\|u\|^2_{\mathcal{H}_0}\geq (c_{\rm G}-C_{\rm G})\|u\|_{\mathcal{H}_0}^2,
$$
which implies that $\supp \mu \subset[c_{\rm G}-C_{\rm G},\infty)$. 

Let $M\geq C_{\rm G}$, to be chosen large depending on $\epsilon$, and $\chi\in C_c^\infty(\mathbb{R};[0,1])$ with $\supp (1-\chi)\cap [-C_{\rm G},M]=\emptyset$ and define 
$$
S_k u:= C_{\rm G}\chi(\mathcal{P}),
$$ 
where for $f\in C^0(\mathbb{R},d\mu)$, we define $f(\mathcal{P}):=U^*M_{f}U$.
Note that when $f$ is real-valued, $f(\mathcal P)$ is then self-adjoint.
Thus, by the definition of $\chi$ and the support properties of $\mu$,
\begin{align*}
\Re a_{k,S_k}(u,u)&= \big\langle (\lambda +C_{\rm G}\chi(\lambda))Uu,Uu\big\rangle_{L^2(d\mu)} \\
&\geq  \Big\langle \frac{\lambda +C_{\rm G}\chi(\lambda)}{\lambda+C_{\rm G}} (\lambda+C_{\rm G})Uu,Uu\Big\rangle_{L^2(d\mu)}\\
&\geq \inf_{\lambda \geq M}\frac{\lambda}{\lambda+C_{\rm G}}\big\langle (\lambda+C_{\rm G})Uu,Uu\big\rangle_{L^2(d\mu)}\\
&\geq \inf_{\lambda \geq M}\frac{\lambda}{\lambda+C_{\rm G}} c_{\rm G}\|u\|_{\mathcal{H}}^2.
\end{align*}
For $M$ large enough, this implies~\eqref{e:helmholtzCoercive}.

\noindent {\bf{Step 2:}} Estimates for $S_k$.
To prove~\eqref{e:sIsSmoothing}, we claim that for any $\psi\in C_c^\infty(\mathbb{R})$, there is $C>0$ such that 
\begin{equation}
\label{e:compactBound}
\|\psi(\mathcal{P})\|_{( \mathcal{Z}_{\ell+1})^*\to \mathcal{Z}_{\ell+1}}\leq C;
\end{equation} 
since $S=C_{\rm G}\chi(\mathcal{P})$ for some $\chi\in C_c^\infty$, this implies~\eqref{e:sIsSmoothing}.

To prove~\eqref{e:compactBound} we use Assumption~\ref{a:ellipticRegularity}. First, observe that for all $m \geq 0$ and any $\psi\in C_c^\infty(\mathbb{R})$, 
$$
\|\lambda^m\psi(\lambda)\|_{L^\infty}\leq C. 
$$
Hence, 
$$
\|\mathcal{P}^m\psi(\mathcal{P})\|_{\mathcal{H}_0\to \mathcal{H}_0}\leq C. 
$$
Now, by~\eqref{e:realReg}, for $2\leq j\leq \ell+1$, 
$$
\|\psi(\mathcal{P})\|_{\mathcal{H}_0\to \mathcal{Z}_j}\leq C_{\rm ell}\Big(\|\psi(\mathcal{P})\|_{\mathcal{H}_0\to \mathcal{H}_0}+\|\mathcal{P}\psi(\mathcal{P})\|_{\mathcal{H}_0\to \mathcal{Z}_{j-2}}\Big).
$$
Hence, by induction,
$$
\|\psi(\mathcal{P})\|_{\mathcal{H}_0\to \mathcal{Z}_{\ell+1}}\leq C_{\ell}\sum_{j=0}^{\lceil (\ell+1)/2\rceil}\|\mathcal{P}^j\psi(\mathcal{P})\|_{\mathcal{H}_0\to \mathcal{H}_0}\leq C.
$$
Now, let $\tilde{\psi}\in C_c^\infty(\mathbb{R})$ with $\supp \psi\cap \supp (1-\tilde{\psi})=\emptyset$. Then, since $\tilde{\psi}(\mathcal{P})$ is self-adjoint, 
$$
\|\tilde{\psi}(\mathcal{P})\|_{( \mathcal{Z}_{\ell+1})^*\to \mathcal{H}_0}\leq C
$$
and hence, since $\psi(\mathcal{P})=\psi(\mathcal{P})\tilde{\psi}(\mathcal{P})$, 
\begin{align*}
\|\psi(\mathcal{P})\|_{( \mathcal{Z}_{\ell+1})^*\to \mathcal{Z}_{\ell+1}}&=\|\psi(\mathcal{P})\tilde{\psi}(\mathcal{P)}\|_{( \mathcal{Z}_{\ell+1})^*\to \mathcal{Z}_{\ell+1}}\\
&\leq \|\psi(\mathcal{P})\|_{\mathcal{H}_0\to \mathcal{Z}_{\ell+1}}\|\tilde{\psi}(\mathcal{P})\|_{( \mathcal{Z}_{\ell+1})^*\to \mathcal{H}_0}\leq C,
\end{align*}
which proves~\eqref{e:compactBound}.

\noindent {\bf{Step 3:}} Estimates for $\mathcal{R}_{S_k}$.
By \eqref{e:helmholtzCoercive} and the Lax--Milgram lemma, $\mathcal{R}_{S_k}$ defined by 
$$
a_{k,S_k}(\mathcal{R}_{S_k}g,v)=\langle g,v\rangle, \quad g\in \mathcal{H}^*, 
$$
satisfies
$$
\|\mathcal{R}_{S_k}g\|_{\mathcal{H}}\leq 
\mathfrak{C}(a_{k,S_k})^{-1}
\|g\|_{\mathcal{H}^*}.
$$
In particular,  for $g\in\mathcal{Z}_{j-2}$, $j=2,\dots,\ell+1$, $v\in\mathcal{H}$, 
\begin{align*}
|a_k(\mathcal{R}_{S_k}g,v)|&=\big|a_{k,S_k}(\mathcal{R}_{S_k}g,v)-\langle S_k\mathcal{R}_{S_k}g,v\rangle\big|\\
&=\big|\langle g,v\rangle-\langle S_kg,v\rangle\big|\\
&\leq \big(\|g\|_{\mathcal{Z}_{j-2}}+\|S_k\mathcal{R}_{S_k}g\|_{\cZ_{j-2}}\big)\|v\|_{(\cZ_{j-2})^*}\\
&\leq \big(\|g\|_{\mathcal{Z}_{j-2}}+C\|\mathcal{R}_{S_k}g\|_{\mathcal{H}_0}\big)\|v\|_{(\cZ_{j-2})^*}\\
&\leq \big(\|g\|_{\mathcal{Z}_{j-2}}+C\|g\|_{\mathcal{H}^*}\big)\|v\|_{(\cZ_{j-2})^*}\\
&\leq C\|g\|_{\mathcal{Z}_{j-2}}\|v\|_{(\cZ_{j-2})^*},
\end{align*}
which, together with~\eqref{e:fullReg} implies~\eqref{e:rIsReasonable}.
\end{proof}

Next, we bound the adjoint-approximability constant under the following additional elliptic-regularity assumption.
\begin{assumption}[Abstract adjoint elliptic regularity for $a_k$]
\label{a:adjointEllipticRegularity}
Let $\mathcal{Z}_0:=\mathcal{H}_0$, $\mathcal{Z}_1:=\mathcal{H}$, and $\mathcal{Z}_j\subset \mathcal{Z}_{j-1}$ for $j=2,3,\dots, \ell+1$ such that $\mathcal{Z}_j$ is dense in $\mathcal{Z}_{j-1}$. For all $k_0 \geq 0$, there are $C_{\rm ell}>0$ such that for all $u\in\mathcal{H}$, and $k>k_0$
\begin{equation}
\label{e:adjointReg}
\|u\|_{\mathcal{Z}_j}\leq C_{\rm ell}\Big(\|u\|_{\mathcal{H}_0}+\sup_{v\in\mathcal{H},\, \|v\|_{(\mathcal{Z}_{j-2})^*}=1}\big|a_k(v,u)\big|\big),\quad j=2,\dots, \ell+1.
\end{equation}
\end{assumption}
\begin{lemma}[Bound on $\eta$ \eqref{e:eta}]
\label{l:adjointApproximability}
Suppose that $a_k:\mathcal{H}\times\mathcal{H}\to \mathbb{C}$ is a sesquilinear form satisfying Assumptions~\ref{a:continuity}, \ref{a:Gaarding}, \ref{a:ellipticRegularity}, and~\ref{a:adjointEllipticRegularity} for some $\ell \in\mathbb{Z}^+$. Let $k_0>0$. Then there is $C>0$ such that for all $k>k_0$ with $a_k$ invertible, all $\mathcal{V}_h\subset \mathcal{H}$, and 
$m\in\{-1,\ldots,\ell\}$,
\beq\label{e:boundoneta}
\eta_{_{\mathcal{Z}_{m+1}\to \mathcal{H}}}(\mathcal{V}_h)
\leq C\Big(\|(I-\Pi_h)\|_{\mathcal{Z}_{\max\{m+3,\ell+1\}}\to \mathcal{H}}+\|(I-\Pi_h)\|_{\mathcal{Z}_{\ell+1}\to \mathcal{H}}\|\mathcal{R}\|_{\mathcal{Z}_{\ell+1}\to (\mathcal{Z}_{m+1})^*}\Big)
\eeq
and 
$$
\eta_{_{\mathcal{H}^*\to \mathcal{H}}}(\mathcal{V}_h)\leq C\Big(1+\|(I-\Pi_h)\|_{\mathcal{Z}_{\ell+1}\to \mathcal{H}}\|\mathcal{R}\|_{\mathcal{Z}_{\ell+1}\to \cH}\Big).
$$
\end{lemma}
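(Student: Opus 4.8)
The goal is to bound $\eta_{_{\mathcal{W}\to\mathcal{H}}}(\mathcal{V}_h)=\|(I-\Pi_h)\mathcal{R}^*\|_{\mathcal{W}\to\mathcal{H}}$ by controlling the ``smoothness'' of $\mathcal{R}^*v$ for $v$ in the relevant space and then using the best-approximation estimate $\|(I-\Pi_h)w\|_{\mathcal H}\le \|(I-\Pi_h)\|_{\mathcal Z_j\to\mathcal H}\|w\|_{\mathcal Z_j}$. Since $\mathcal{R}^*=(\mathcal{R})^*$, the adjoint elliptic regularity Assumption~\ref{a:adjointEllipticRegularity} is exactly what lets us say that $\mathcal{R}^*v$ gains regularity: formally, if $w=\mathcal{R}^*v$ then $a_k(\cdot,w)=\langle \cdot,v\rangle$, so $\sup_{\|\phi\|_{(\mathcal Z_{j-2})^*}=1}|a_k(\phi,w)|\le \|v\|_{\mathcal Z_{j-2}}$, and \eqref{e:adjointReg} bootstraps $w\in\mathcal Z_{j}$. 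The plan therefore is to run this bootstrap as far as the regularity index $\ell+1$ allows, and for the remaining ``deficit'' (when the data $v\in\mathcal Z_{m+1}$ is smoother than the regularity ceiling permits extracting directly) to pay with a factor of $\|\mathcal{R}\|$ by first estimating $\|\mathcal{R}^*v\|_{\mathcal H_0}\le\|\mathcal{R}\|\|v\|$ in a suitable dual pairing and then bootstrapping from $\mathcal H_0$.

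\begin{proof}[Sketch of proof]
Fix $k>k_0$ with $a_k$ invertible and write $\mathcal{R}=\mathcal{R}(k)$, so that $\mathcal{R}^*$ satisfies \eqref{e:R*}. First I would record the basic duality bound: for $v\in\mathcal{H}^*$ and $\phi\in\mathcal{H}$,
\[
|\langle \phi,\mathcal{R}^*v\rangle| = |a_k(\mathcal{R}\phi^\flat,\mathcal{R}^*v)|\le \cdots,
\]
or more directly, by definition of $\mathcal{R}$ as the solution operator and self-adjointness of the pairing, $\|\mathcal{R}^*v\|_{\mathcal{H}}\le \|\mathcal{R}\|_{\mathcal H^*\to\mathcal H}\|v\|_{\mathcal{H}^*}$, and more generally $\|\mathcal{R}^*v\|_{(\mathcal Z_{m+1})^*}\le\|\mathcal{R}\|_{\mathcal Z_{\ell+1}\to(\mathcal Z_{m+1})^*}\|v\|_{\mathcal Z_{\ell+1}}$ by taking adjoints. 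Next, for $w:=\mathcal{R}^*v$ and $2\le j\le\ell+1$, apply \eqref{e:adjointReg}: using $a_k(\phi,w)=\langle\phi,v\rangle$ gives
\[
\|w\|_{\mathcal Z_j}\le C_{\rm ell}\Big(\|w\|_{\mathcal H_0}+\sup_{\|\phi\|_{(\mathcal Z_{j-2})^*}=1}|\langle \phi,v\rangle|\Big)\le C_{\rm ell}\big(\|w\|_{\mathcal H_0}+\|v\|_{\mathcal Z_{j-2}}\big).
\]
Iterating (start from $\|w\|_{\mathcal H_0}\le \|\mathcal R\|\,\|v\|_{\mathcal H^*}$ or from $\|w\|_{\mathcal Z_1}\le C\|v\|_{\mathcal Z_{-1}}$ depending on the index) shows $\mathcal{R}^*$ maps $\mathcal Z_{m+1}$ into $\mathcal H$ with a gain of regularity, up to the deficit term $\|w\|_{\mathcal H_0}$ which is absorbed by $\|(I-\Pi_h)\|_{\mathcal Z_{\ell+1}\to\mathcal H}\|\mathcal R\|_{\mathcal Z_{\ell+1}\to(\mathcal Z_{m+1})^*}$ after one more application of the best-approximation estimate to that low-frequency piece. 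Combining $\|(I-\Pi_h)\mathcal{R}^*v\|_{\mathcal H}\le \|(I-\Pi_h)\|_{\mathcal Z_j\to\mathcal H}\|\mathcal{R}^*v\|_{\mathcal Z_j}$ with the two contributions — the ``regular'' part landing in $\mathcal Z_{\min\{m+3,\ell+1\}}$ and the ``deficit'' part requiring the resolvent factor — yields \eqref{e:boundoneta}; the case $\mathcal W=\mathcal H^*$ is the special case $m=-1$ with $\mathcal Z_{-1}:=\mathcal H^*$ handled identically, noting $\|\mathcal R^*\|_{\mathcal H^*\to\mathcal H}\le\|\mathcal R\|$ accounts for the ``$1$'' in the stated bound.
\end{proof}

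The main obstacle I anticipate is bookkeeping rather than conceptual: keeping track of which index $j$ one can reach by bootstrapping \eqref{e:adjointReg} starting from data in $\mathcal Z_{m+1}$ (the regularity shift is ``$+2$'' per step but it costs two indices of dual regularity of the test function, hence the appearance of $\max\{m+3,\ell+1\}$ — note this is the paper's notation even though $\min$ would be the natural reading), and correctly identifying the residual low-frequency term $\|w\|_{\mathcal H_0}$ that cannot be bootstrapped away and must instead be estimated crudely by $\|\mathcal R\|$ composed with a best-approximation factor at the top regularity level $\mathcal Z_{\ell+1}$. One should also be careful that Assumption~\ref{a:adjointEllipticRegularity} is stated as an implication (if the right-hand side is finite then $u$ lies in $\mathcal Z_j$), so the bootstrap must be justified by first knowing $w\in\mathcal H$ and then climbing the scale $\mathcal Z_2\subset\cdots$ one rung at a time, using density of $\mathcal Z_j$ in $\mathcal Z_{j-1}$ to make the dual pairings well-defined.
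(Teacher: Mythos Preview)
Your direct-bootstrap strategy has a genuine gap. Applying the adjoint regularity estimate \eqref{e:adjointReg} to $w=\mathcal{R}^*v$ gives
\[
\|w\|_{\mathcal Z_j}\le C_{\rm ell}\big(\|w\|_{\mathcal H_0}+\|v\|_{\mathcal Z_{j-2}}\big),
\]
but this is a bound on a \emph{norm} of the single function $w$, not a decomposition of $w$ into two functions. When you then apply $(I-\Pi_h)$, you are forced to use
\[
\|(I-\Pi_h)w\|_{\mathcal H}\le \|(I-\Pi_h)\|_{\mathcal Z_j\to\mathcal H}\,\|w\|_{\mathcal Z_j}
\]
with the \emph{same} approximation rate $\|(I-\Pi_h)\|_{\mathcal Z_j\to\mathcal H}$ multiplying both contributions on the right. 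With $j=\min\{m+3,\ell+1\}$ this yields
\[
\eta_{_{\mathcal Z_{m+1}\to\mathcal H}}(\mathcal V_h)\le C\|(I-\Pi_h)\|_{\mathcal Z_{\min\{m+3,\ell+1\}}\to\mathcal H}\big(1+\|\mathcal R^*\|_{\mathcal Z_{m+1}\to\mathcal H_0}\big),
\]
which for $m<\ell-2$ is strictly weaker than \eqref{e:boundoneta}: the resolvent factor is multiplied by $\|(I-\Pi_h)\|_{\mathcal Z_{m+3}\to\mathcal H}$ rather than the sharper $\|(I-\Pi_h)\|_{\mathcal Z_{\ell+1}\to\mathcal H}$. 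Your claim that the ``low-frequency piece'' can be approximated at the $\mathcal Z_{\ell+1}$ rate is exactly the point that needs justification, and it cannot be extracted from a norm inequality alone.

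The paper closes this gap not by bootstrapping $\mathcal R^*$ directly but by invoking the smoothing operator $S_k$ from Theorem~\ref{t:abstractHelmholtz}. Writing $(P_k^*+S_k)\mathcal R^*=I+S_k\mathcal R^*$ gives the \emph{operator} decomposition
\[
\mathcal R^*=\mathcal R_{S_k}^*+\mathcal R_{S_k}^*S_k\mathcal R^*,
\]
which actually splits $\mathcal R^*v$ into two functions. The first term $\mathcal R_{S_k}^*v$ enjoys the elliptic shift $\mathcal Z_{m+1}\to\mathcal Z_{m+3}$ with a $k$-uniform constant (proved as in Step~3 of Theorem~\ref{t:abstractHelmholtz} but using Assumption~\ref{a:adjointEllipticRegularity}), while the second term lands in $\mathcal Z_{\ell+1}$ regardless of $m$ because $S_k:(\mathcal Z_{\ell+1})^*\to\mathcal Z_{\ell+1}$ by \eqref{e:sIsSmoothing}. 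Applying $(I-\Pi_h)$ separately to each piece then gives precisely the two terms in \eqref{e:boundoneta}, with the resolvent factor correctly paired with the top-regularity approximation rate.
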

\begin{proof}
Let  $\epsilon>0$ and let $S_k$ be the operator constructed in Theorem~\ref{t:abstractHelmholtz}. 
With $P_k$ the operator associated to $a_k$ via Remark \ref{r:operatorP},
$$
(P_k^*+S_k)\cR^* = I + S_k \cR^*,$$
so that
\begin{equation}
\label{e:inverseFormula}
\begin{aligned}
\mathcal{R}^*=\mathcal{R}^*_{S_k}+\mathcal{R}^*_{S_k}S_k\mathcal{R}^*.
\end{aligned}
\end{equation}
Arguing as in Step 3 of the proof of Theorem~\ref{t:abstractHelmholtz}, but using Assumption~\ref{a:adjointEllipticRegularity} (instead of Assumption \ref{a:ellipticRegularity}), yields 
$$
\|\mathcal{R}^*_{S_k}\|_{\mathcal{Z}_j\to \mathcal{Z}_{j+2}}\leq C,\qquad 0\leq j\leq \ell-1.
$$
Using this in~\eqref{e:inverseFormula}, we obtain
\begin{align*}
&\eta_{_{\mathcal{Z}_{m+1}\to \mathcal{H}}}(\mathcal{V}_h)=\|(I-\Pi_h)\mathcal{R}^*\|_{\mathcal{Z}_{m+1}\to \mathcal{H}}\\
&\qquad\leq C\|(I-\Pi_h)\|_{\mathcal{Z}_{\max\{m+3,\ell+1\}}\to \mathcal{H}}
+
C\|(I-\Pi_h)\|_{\mathcal{Z}_{\ell+1}\to \mathcal{H}}
\|\mathcal{R}^*\|_{\mathcal{Z}_{m+1}\to (\mathcal{Z}_{\ell+1})^*}\\
&\qquad= 
C\|(I-\Pi_h)\|_{\mathcal{Z}_{\max\{m+3,\ell+1\}}\to \mathcal{H}}
+
C\|(I-\Pi_h)\|_{\mathcal{Z}_{\ell+1}\to \mathcal{H}}
\|\mathcal{R}\|_{\mathcal{Z}_{\ell+1}\to (\mathcal{Z}_{m+1})^*}
\end{align*}
as claimed. The proof of the bound on $\eta_{_{\cH^* \to \cH}}(\cV_h)$ is very similar, using that $\| \mathcal{R}^*_{S_k}\|_{\cH^*\to \cH}\leq C$ by coercivity of $a_{k,S_k}$ \eqref{e:helmholtzCoercive}.
\end{proof}

\begin{lemma}[Oscillatory data implies oscillatory solution]\label{l:osci}
    Suppose that $a_k:\mathcal{H}\times\mathcal{H}\to \mathbb{C}$ is a sesquilinear form satisfying Assumption \ref{a:ellipticRegularity} for some $\ell \in\mathbb{Z}^+$. 
   If $f \in \cZ_{\ell-1}$ satisfies
   \beqs
\| f\|_{\cZ_{\ell-1}}\leq C_{\rm osc} \|f\|_{\cH^*},
   \eeqs
   then the solution $u\in \cH$ to $a_k(u,v) = \langle f, v\rangle$ for all $v\in \cH$ satisfies
   \beqs
\| u\|_{\cZ_\ell} \leq C_{\rm ell}\big( 1 + C_{\rm osc} \| a_k\|\big) \|u \|_\cH.
   \eeqs
\end{lemma}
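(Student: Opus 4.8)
The statement is a direct consequence of the abstract elliptic-regularity bound \eqref{e:fullReg} in Assumption \ref{a:ellipticRegularity}, applied at the top index $j = \ell+1$, i.e.\ with $\mathcal Z_j = \mathcal Z_{\ell+1}$ and $\mathcal Z_{j-2} = \mathcal Z_{\ell-1}$. The plan is to take the solution $u\in\mathcal H$ of $a_k(u,v)=\langle f,v\rangle$ for all $v\in\mathcal H$, plug it into \eqref{e:fullReg}, and bound the supremum term using the defining relation for $u$ together with the hypothesis on $f$. First I would note that $u\in\mathcal H = \mathcal Z_1$, so the left-hand side of \eqref{e:fullReg} with $j=\ell+1$ makes sense as soon as we check finiteness of the right-hand side.

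The key computation is the estimate of the supremum on the right of \eqref{e:fullReg}. For $v\in\mathcal H$ with $\|v\|_{(\mathcal Z_{\ell-1})^*}=1$ we have
\begin{align*}
\big|a_k(u,v)\big| = \big|\langle f,v\rangle\big| \leq \|f\|_{\mathcal Z_{\ell-1}}\,\|v\|_{(\mathcal Z_{\ell-1})^*} = \|f\|_{\mathcal Z_{\ell-1}} \leq C_{\rm osc}\,\|f\|_{\mathcal H^*}.
\end{align*}
Here the first inequality uses the duality pairing between $\mathcal Z_{\ell-1}$ and its dual, and the last inequality is the hypothesis on $f$. It remains to control $\|f\|_{\mathcal H^*}$: since $a_k(u,v)=\langle f,v\rangle$ for all $v\in\mathcal H$, we have $\|f\|_{\mathcal H^*} = \sup_{v\in\mathcal H,\,v\neq 0}|a_k(u,v)|/\|v\|_{\mathcal H}\leq \|a_k\|\,\|u\|_{\mathcal H}$ by Definition \ref{def:norm} of the norm of a sesquilinear form. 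Combining,
\begin{align*}
\sup_{\substack{v\in\mathcal H\\ \|v\|_{(\mathcal Z_{\ell-1})^*}=1}}\big|a_k(u,v)\big| \leq C_{\rm osc}\,\|a_k\|\,\|u\|_{\mathcal H}.
\end{align*}

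Finally I would assemble the pieces. Substituting the last display into \eqref{e:fullReg} with $j=\ell+1$, and using $\|u\|_{\mathcal H_0}\leq \|u\|_{\mathcal H}$ (recall $\mathcal H\subset\mathcal H_0$ with continuous, indeed norm-nonincreasing, inclusion in this abstract setup — if one prefers, absorb any constant into $C_{\rm ell}$), gives
\begin{align*}
\|u\|_{\mathcal Z_\ell}\leq \|u\|_{\mathcal Z_{\ell+1}} \leq C_{\rm ell}\Big(\|u\|_{\mathcal H_0} + C_{\rm osc}\,\|a_k\|\,\|u\|_{\mathcal H}\Big)\leq C_{\rm ell}\big(1+C_{\rm osc}\,\|a_k\|\big)\|u\|_{\mathcal H},
\end{align*}
which is the claimed bound. (If $\mathcal Z_\ell\neq\mathcal Z_{\ell+1}$ one simply uses \eqref{e:fullReg} at $j=\ell$ instead, with the identical argument, replacing $\mathcal Z_{\ell-1}$ by $\mathcal Z_{\ell-2}$; since $\|v\|_{(\mathcal Z_{\ell-2})^*}\leq C\|v\|_{(\mathcal Z_{\ell-1})^*}$ up to the inclusion constant, the bound $|\langle f,v\rangle|\leq \|f\|_{\mathcal Z_{\ell-2}}\|v\|_{(\mathcal Z_{\ell-2})^*}$ still holds after noting $\|f\|_{\mathcal Z_{\ell-2}}\leq \|f\|_{\mathcal Z_{\ell-1}}$.) There is no real obstacle here: the only mild subtlety is bookkeeping the index shift — making sure that "data in $\mathcal Z_{\ell-1}$, solution in $\mathcal Z_\ell$" matches the indices in \eqref{e:fullReg}, where $a_k(u,v)$ is tested against $v\in(\mathcal Z_{j-2})^*$ — and confirming that the finiteness of the right-hand side of \eqref{e:fullReg} (which is what licenses the conclusion $u\in\mathcal Z_{\ell}$, per the Remark following Assumption \ref{a:ellipticRegularity}) is guaranteed by the estimate above.
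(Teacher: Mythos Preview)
Your proof is correct and essentially identical to the paper's: both apply \eqref{e:fullReg} at $j=\ell+1$, bound the supremum by $\|f\|_{\cZ_{\ell-1}}\le C_{\rm osc}\|f\|_{\cH^*}\le C_{\rm osc}\|a_k\|\|u\|_{\cH}$, and use $\|u\|_{\cH_0}\le\|u\|_{\cH}$. The parenthetical alternative using $j=\ell$ is unnecessary (and would not deliver the exact constant stated), so you can drop it.
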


\bpf
By \eqref{e:fullReg} and the fact that $a(u,v)=\langle g, v\rangle$ for all $v\in \cH$,
\begin{align*}
\| u\|_{\cZ_{\ell+1}}\leq C_{\rm ell} \big( \| u\|_{\cH_0} + \| f\|_{\cZ_{\ell-1}}\big)
& \leq C_{\rm ell}\big( \| u\|_{\cH_0} + C_{\rm osc} \| f\|_{\cH^*}\big),\\
& \leq C_{\rm ell} \big( \| u\|_{\cH_0} + C_{\rm osc} \| a_k\| \| u\|_\cH \big),\\
& \leq C_{\rm ell} \big( 1 + C_{\rm osc} \| a_k\| \big) \| u\|_{\cH}.
\end{align*}
\epf

\begin{corollary}[Abstract preasymptotic error bound for the Galerkin method]
\label{c:preasymptotic}
Suppose that $a_k: \cH\times \cH\to \Com, k\geq 0$ is a family of injective sesquilinear forms satisfying Assumptions \ref{a:continuity}, \ref{a:Gaarding}, \ref{a:ellipticRegularity}, and \ref{a:adjointEllipticRegularity} for some $\ell\in\mathbb{Z}^+$
and spaces $\mathcal{Z}_0:=\mathcal{H}_0$, $\mathcal{Z}_1:=\mathcal{H}$, and $\mathcal{Z}_j\subset \mathcal{Z}_{j-1}$ for $j=2,3,\dots, \ell+1$ such that $\mathcal{Z}_j$ is dense in $\mathcal{Z}_{j-1}$.

There exists $c,C>0$ such that the following is true.
Let $\cV_h$ be a finite-dimensional subspace of $\cH$.
If 
\beq\label{e:threshold}
\| (I- \Pi_h)\|_{\cZ_{\ell+1}\to \cH}^2 \big( 1 + \|\cR\|_{\cZ_{\ell+1}\to (\cZ_{\ell+1})^*}\big) \leq c,
\eeq
then for all $u\in \cH$ the Galerkin approximation, $u_h$, for $u$ in $\cV_h$ exists, is unique, and, for all $m\in \{-1,\ldots,\ell\}$, 
\beq\label{e:highGalerkinfinal}
\|u-u_h\|_{\cH}\leq C \Big(1+ \| (I- \Pi_h)\|_{\cZ_{\ell+1}\to \cH}
\big(1+ \|\cR\|_{\cZ_{\ell+1}\to (\cZ_{\ell+1})^*}\big)\Big) \| (I-\Pi_h) u\|_{\cH}
\eeq
and 
\begin{align}\nonumber
&\|u-u_h\|_{(\cZ_{m+1})^*}\\ \nonumber
&\leq C 
\Big(
\| (I- \Pi_h)\|_{\cZ_{\max\{m+3,\ell+1\}}\to \cH} + \| (I- \Pi_h)\|_{\cZ_{\ell+1}\to \cH}
\big(1+\|\cR\|_{\cZ_{\ell+1}\to (\cZ_{m+1})^*}\big)
\Big)
\\
&\hspace{4cm}\cdot\| (I-\Pi_h) u\|_{\cH}.
\label{e:intermediateGalerkinfinal}
\end{align}
Furthermore, given $C_{\rm osc}>0$ there exists $C'>0$ such that, with $P:\cH\to \cH^*$ the operator defined by $a_k$, if 
 \beqs
 \N{Pu}_{\cZ_{\ell-1}}\leq C_{\rm osc} \N{Pu}_{\cH^*}
  \eeqs
then 
\beq\label{e:relerrorfinal}
\frac{
\N{u-u_h}_{\cH}
}{
\N{u}_{\cH}
}
\leq C' 
\Big(1+ \| (I- \Pi_h)\|_{\cZ_{\ell+1}\to \cH}
\big(1+ \|\cR\|_{\cZ_{\ell+1}\to (\cZ_{\ell+1})^*}\big)\Big)\| (I- \Pi_h)\|_{\cZ_{\ell+1}\to \cH}.
\eeq
\end{corollary}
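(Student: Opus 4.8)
The plan is to assemble Corollary~\ref{c:preasymptotic} from Theorem~\ref{t:abstractHelmholtz}, Theorem~\ref{t:abstractEllipticProjection}, Lemma~\ref{l:adjointApproximability}, and Lemma~\ref{l:osci}; the substance is already in those results, so the corollary is essentially a matter of tracking constants. Fix $\epsilon\in(0,1)$. First I would apply Theorem~\ref{t:abstractHelmholtz} with the chain $\mathcal{Z}_0,\dots,\mathcal{Z}_{\ell+1}$: this produces, for $k>k_0$, an operator $S_k$ for which $a_k$ satisfies Assumption~\ref{a:regularizer} with $S=S_k$ and $\mathcal{Z}:=\mathcal{Z}_{\ell+1}$, together with the uniform bounds $\mathfrak{C}(a_{k,S_k})\geq(1-\epsilon)c_{\rm G}$, $\|S_k\|_{(\mathcal{Z}_{\ell+1})^*\to\mathcal{Z}_{\ell+1}}\leq C$, and $\|\mathcal{R}_{S_k}\|_{\mathcal{Z}_{j-1}\to\mathcal{Z}_{j+1}}\leq C$. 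Combining these with Assumption~\ref{a:continuity} and the continuity of the embeddings $\mathcal{Z}_{\ell+1}\hookrightarrow\mathcal{H}\hookrightarrow\mathcal{H}_0\hookrightarrow(\mathcal{Z}_{\ell+1})^*$ gives $\|a_{k,S_k}\|\leq C$ and, using $\mathcal{Z}_{\ell+1}\subset\mathcal{Z}_{\ell-1}$, $\|\mathcal{R}_{S_k}\|_{\mathcal{Z}_{\ell+1}\to\mathcal{Z}_{\ell+1}}\leq C$, so that $\|(I-\Pi_h)\mathcal{R}_{S_k}\|_{\mathcal{Z}_{\ell+1}\to\mathcal{H}}\leq C\|(I-\Pi_h)\|_{\mathcal{Z}_{\ell+1}\to\mathcal{H}}$. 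In short, every factor occurring in Theorem~\ref{t:abstractEllipticProjection} other than the projection-error norms $\|(I-\Pi_h)\|_{\mathcal{Z}_{\bullet}\to\mathcal{H}}$, the adjoint-approximability constants $\eta_{_{\mathcal{Z}_{\bullet}\to\mathcal{H}}}(\mathcal{V}_h)$, and $\|\mathcal{R}\|$ is $O(1)$ uniformly in $k>k_0$.

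Next I would bound the adjoint-approximability constants via Lemma~\ref{l:adjointApproximability} (which applies for those $k$ where $a_k$ is invertible; this is harmless, since~\eqref{e:threshold} already forces $\|\mathcal{R}\|_{\mathcal{Z}_{\ell+1}\to(\mathcal{Z}_{\ell+1})^*}<\infty$). For $m\in\{-1,\dots,\ell\}$ this gives
\begin{equation*}
\eta_{_{\mathcal{Z}_{m+1}\to \mathcal{H}}}(\mathcal{V}_h)\leq C\Big(\|(I-\Pi_h)\|_{\mathcal{Z}_{\max\{m+3,\ell+1\}}\to\mathcal{H}}+\|(I-\Pi_h)\|_{\mathcal{Z}_{\ell+1}\to\mathcal{H}}\,\|\mathcal{R}\|_{\mathcal{Z}_{\ell+1}\to(\mathcal{Z}_{m+1})^*}\Big),
\end{equation*}
and in particular $\eta_{_{\mathcal{Z}_{\ell+1}\to\mathcal{H}}}(\mathcal{V}_h)\leq C\|(I-\Pi_h)\|_{\mathcal{Z}_{\ell+1}\to\mathcal{H}}\big(1+\|\mathcal{R}\|_{\mathcal{Z}_{\ell+1}\to(\mathcal{Z}_{\ell+1})^*}\big)$. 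Feeding this together with the $O(1)$ bounds of the previous paragraph into the left-hand side of the threshold~\eqref{e:ellipticProjectionThreshold} (applied with $\mathcal{Z}=\mathcal{Z}_{\ell+1}$, $S=S_k$) shows that it is bounded by $C\|(I-\Pi_h)\|_{\mathcal{Z}_{\ell+1}\to\mathcal{H}}^2\big(1+\|\mathcal{R}\|_{\mathcal{Z}_{\ell+1}\to(\mathcal{Z}_{\ell+1})^*}\big)$, so choosing the constant $c$ in~\eqref{e:threshold} small enough forces~\eqref{e:ellipticProjectionThreshold}.

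Theorem~\ref{t:abstractEllipticProjection} then gives existence and uniqueness of $u_h$. Inequality~\eqref{e:highGalerkin}, after inserting the $O(1)$ bounds and the bound on $\eta_{_{\mathcal{Z}_{\ell+1}\to\mathcal{H}}}$, becomes~\eqref{e:highGalerkinfinal}. Inequality~\eqref{e:intermediateGalerkin} applied with $\mathcal{Y}=\mathcal{Z}_{m+1}$ (for which $\eta_{_{\mathcal{Z}_{m+1}\to\mathcal{H}}}(\mathcal{V}_h)$ is finite by Assumption~\ref{a:adjointEllipticRegularity}), after also using that the prefactor $1+(\cdots)$ is $O(1)$ by~\eqref{e:threshold} and the Lemma~\ref{l:adjointApproximability} bound on $\eta_{_{\mathcal{Z}_{m+1}\to\mathcal{H}}}$, becomes~\eqref{e:intermediateGalerkinfinal} (the case $m=\ell$ being alternatively read off from~\eqref{e:lowGalerkin}). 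Finally, for~\eqref{e:relerrorfinal} I would assume $u\neq0$ (if $u=0$ then $u_h=0$ and the claim is trivial) and apply Lemma~\ref{l:osci} to $f=Pu$ with the given constant $C_{\rm osc}$: this yields $\|u\|_{\mathcal{Z}_{\ell+1}}\leq C_{\rm ell}\big(1+C_{\rm osc}\|a_k\|\big)\|u\|_{\mathcal{H}}\leq C'\|u\|_{\mathcal{H}}$, hence $\|(I-\Pi_h)u\|_{\mathcal{H}}\leq\|(I-\Pi_h)\|_{\mathcal{Z}_{\ell+1}\to\mathcal{H}}\|u\|_{\mathcal{Z}_{\ell+1}}\leq C'\|(I-\Pi_h)\|_{\mathcal{Z}_{\ell+1}\to\mathcal{H}}\|u\|_{\mathcal{H}}$; substituting into~\eqref{e:highGalerkinfinal} and dividing by $\|u\|_{\mathcal{H}}$ gives~\eqref{e:relerrorfinal}.

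The main obstacle is not conceptual but the bookkeeping through Theorems~\ref{t:abstractHelmholtz} and~\ref{t:abstractEllipticProjection}: one must verify that the numerous $S_k$- and $a_{S_k}$-dependent factors appearing in~\eqref{e:ellipticProjectionThreshold}--\eqref{e:lowGalerkin} are uniformly bounded in $k>k_0$, that the threshold~\eqref{e:ellipticProjectionThreshold} is genuinely implied by the cleaner hypothesis~\eqref{e:threshold}, and that the somewhat intricate right-hand sides of~\eqref{e:highGalerkin}--\eqref{e:intermediateGalerkin} collapse to the stated forms once those factors are absorbed into $C$. The only mildly delicate point beyond this is the invertibility of $a_k$ needed to invoke Lemma~\ref{l:adjointApproximability}, which — as noted above — is not an extra assumption in practice, since~\eqref{e:threshold} cannot hold unless $\|\mathcal{R}\|_{\mathcal{Z}_{\ell+1}\to(\mathcal{Z}_{\ell+1})^*}$ is finite.
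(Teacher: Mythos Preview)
Your proposal is correct and follows essentially the same approach as the paper: combine Theorem~\ref{t:abstractHelmholtz} (to produce $S_k$ with uniform bounds) with Theorem~\ref{t:abstractEllipticProjection} (applied with $\mathcal{Z}=\mathcal{Z}_{\ell+1}$, $\mathcal{Y}=\mathcal{Z}_{m+1}$), bound the adjoint-approximability constants via Lemma~\ref{l:adjointApproximability}, and obtain the relative-error bound from Lemma~\ref{l:osci}. You are slightly more explicit than the paper about some bookkeeping details (e.g., why $\|(I-\Pi_h)\mathcal{R}_{S_k}\|_{\mathcal{Z}_{\ell+1}\to\mathcal{H}}$ is controlled, and the invertibility needed to invoke Lemma~\ref{l:adjointApproximability}), but the argument is the same.
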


\bpf
The combination of Theorem \ref{t:abstractEllipticProjection} (applied with $\cZ =\cZ_{\ell+1}$ and $\cY = \cZ_{m+1}$, $m\in\{-1,\ldots, \ell\}$) and Theorem \ref{t:abstractHelmholtz} imply that there exist $c,C>0$ such that if 
\beq\label{e:thresholdpf}
\| (I- \Pi_h)\|_{\cZ_{\ell+1}\to \cH} \,\eta_{_{\cZ_{\ell+1}\to \cH}}(\cV_h)\leq c
\eeq
then 
\beq\label{e:highGalerkinpf}
\|u-u_h\|_{\cH}\leq C \big(1+ \eta_{_{\cZ_{\ell+1}\to \cH}}(\cV_h)\big) \| (I-\Pi_h) u\|_{\cH},
\eeq
\beq\label{e:intermediateGalerkinpf}
\|u-u_h\|_{(\cZ_{m+1})^*}\leq 
C \eta_{_{\cZ_{m+1}\to \cH}}(\cV_h) 
\| (I-\Pi_h) u\|_{\cH},
\eeq
and 
\beq\label{e:lowGalerkinpf}
\|u-u_h\|_{(\cZ_{\ell+1})^*}\leq C \eta_{_{\cZ_{\ell+1}\to \cH}}(\cV_h) 
\| (I-\Pi_h) u\|_{\cH},
\eeq
where \eqref{e:highGalerkinpf}, \eqref{e:intermediateGalerkinpf}, and \eqref{e:lowGalerkinpf} come from  \eqref{e:highGalerkin}, \eqref{e:intermediateGalerkin}, and \eqref{e:lowGalerkin} respectively.  
Observe that the bound \eqref{e:lowGalerkinpf} is contained in the bound \eqref{e:intermediateGalerkinpf} when $m=\ell$.
The results \eqref{e:highGalerkinfinal} and \eqref{e:intermediateGalerkinfinal} then follow from combining 
\eqref{e:highGalerkinpf} and \eqref{e:intermediateGalerkinpf} with the bound on $\eta_{_{\cZ_{m+1}\to \cH}}(\cV_h)$ from \eqref{e:boundoneta}.

The relative error bound \eqref{e:relerrorfinal} then follows from combining Lemma \ref{l:osci} with \eqref{e:highGalerkinpf}.
\epf

\subsection{Application to the $h$-FEM in scattering problems}

The limiting factor in applying the abstract results of Theorems \ref{t:abstractEllipticProjection} and \ref{t:abstractHelmholtz} (combined in Corollary \ref{c:preasymptotic}) is checking the abstract elliptic-regularity assumptions (Assumptions \ref{a:ellipticRegularity}, \ref{a:adjointEllipticRegularity}).
We now do this for 
the Helmholtz PML problem of Definition \ref{d:pmlTruncation}, but we emphasise that these elliptic-regularity assumptions are also satisfied for Helmholtz problems with \emph{either} impedance boundary conditions, \emph{or} when the exact Dirichlet-to-Neumann map is imposed on the boundary of a large ball; see \cite[\S3]{GS3}.

Let
\beq\label{e:cH_PML}
\cH:=H^1_0(\Omega)  \quad \text{ or } \quad \big\{ v \in H^1(\Omega) \,:\, v=0 \text{ on } \Gamma_{\tr}\big\},
\eeq
depending on whether $\Gamma_-$ has Dirichlet or Neumann boundary conditions, and let
\beq\label{e:cZj}
\cZ_j := H^j(\Omega\setminus \Gamma_{\rm p})\cap \cH
\eeq
with norm
\beqs
\| v\|_{\cZ_j}:= \| v\|_{H^j_k(\Omega\setminus \Gamma_{\rm p})}.
\eeqs

\begin{theorem}\mythmname{Rigorous statement of \ref{R1}:~$k$-explicit error analysis of the $h$-FEM}\label{t:R1}
Let $p\in \mathbb{Z}^+$, and let $a$ be the sesquilinear form defining an $H^{p+1}$ Helmholtz problem approximated using a $C^{p,1}$ PML with $C^{p,1}$ truncation boundary (in the sense of Definitions \ref{d:helmholtzProblem} and \ref{d:pmlTruncation}). Given $k_0, \e>0$, and $\Upsilon>0$ there exist $c,C>0$  such that for all affine-conforming
$C^{p+1}$ simplicial triangulations $\mathcal{T}$ (in the sense of Definition \ref{d:Crtriang}) with constant $\Upsilon>0$ such that 
$$
\partial \Omega\cup \Gamma_{\rm p}\subset \bigcup_{T\in\mathcal{T}}\partial T, 
$$
 $k\geq k_0$, and $\e \leq \theta\leq \pi/2-\e$ the following is true. 
If 
$h:=h(\mathcal{T})$ satisfies 
\beq\label{e:R1threshold}
(hk)^{2p}\rho(k) \leq c,
\eeq
then for all $u\in H_k^1$, the Galerkin approximation, $u_h$, for $u$ in $\mathcal{P}_{\mathcal{T}}^p$ (defined by \eqref{e:polySpace}) exists, is unique, and satisfies 
\begin{align}\label{e:H1boundnew}
\N{u-u_h}_{H^{1}_k(\Omega)}&\leq C \Big(1 + \rho(k)(hk)^p \Big) \min_{v_h \in \mathcal{P}_{\mathcal{T}}^p} \N{u-v_h}_{H^1_k(\Omega)}
\end{align}
and, for $\newm\in\{1,\ldots,p\}$,
\beq\label{e:lowfreqbound}
\N{u-u_h}_{(\cZ_{\newm-1})^*}\leq C \Big((hk)^\newm + \rho(k)(hk)^p 
\Big) \min_{v_h \in \mathcal{P}_{\mathcal{T}}^p} \N{u-v_h}_{H^1_k(\Omega)}.
\eeq
Furthermore, given $C_{\rm osc}>0$ there exists $C'>0$ such that, with $P:H_k^1(\Omega)\to (H_k^1(\Omega))^*$ be the operator defined by $a$, if 
 \beq\label{e:koscil}
 \N{Pu}_{H^{p-1}_{k}(\Omega\setminus\Gamma_{\rm p})}\leq C_{\rm osc} \N{Pu}_{(H_k^{1}(\Omega))^*}
  \eeq
then 
\beq\label{e:rel_error}
\frac{\N{u-u_h}_{H^1_k(\Omega)}}
{
\N{u}_{H^1_k(\Omega)}
}
\leq C' \Big(1  + \rho(k)(hk)^p \Big)(hk)^p.
\eeq
\end{theorem}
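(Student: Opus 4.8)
The plan is to apply the abstract machinery of Corollary~\ref{c:preasymptotic} with the concrete choices $\cH$, $\cZ_j$ given by~\eqref{e:cH_PML}, \eqref{e:cZj}, $\ell = p$, and $\cH_0 = L^2(\Omega)$. The proof thus splits into three parts: (i) verify the abstract hypotheses (Assumptions~\ref{a:continuity}, \ref{a:Gaarding}, \ref{a:ellipticRegularity}, \ref{a:adjointEllipticRegularity}) for the PML sesquilinear form; (ii) estimate the various approximation-theoretic quantities appearing in Corollary~\ref{c:preasymptotic} --- namely $\|(I-\Pi_h)\|_{\cZ_{j}\to\cH}$ and $\min_{v_h}\|u-v_h\|_{H^1_k}$ --- in terms of $(hk)$ using polynomial approximation on the triangulation $\mathcal{T}$; and (iii) control the resolvent-type norms $\|\cR\|_{\cZ_{\ell+1}\to(\cZ_{m+1})^*}$ in terms of $\rho(k)$.

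For part (i): uniform boundedness of $a$ (Assumption~\ref{a:continuity}) is immediate from the form~\eqref{e:def_ak} and the $L^\infty$ bounds on $A_\theta, b_\theta, n_\theta$. The uniform G\aa rding inequality (Assumption~\ref{a:Gaarding}) is exactly Lemma~\ref{l:PMLGard1} (or Lemma~\ref{l:PMLGard2}, depending on which PML formulation is used), possibly after multiplication by $e^{i\omega}$; since the scaling angle ranges over the compact interval $[\e,\pi/2-\e]$ the constants are uniform in $\theta$. The key point is the abstract elliptic regularity (Assumptions~\ref{a:ellipticRegularity} and~\ref{a:adjointEllipticRegularity}): here one invokes standard elliptic-regularity shift results for transmission problems --- e.g.~\cite[Theorems 4.18, 4.20]{Mc:00} --- which apply precisely because the problem is an $H^{p+1}$ Helmholtz problem truncated by a $C^{p,1}$ PML with $C^{p,1}$ boundary, so the coefficients are piecewise $C^{p-1,1}$ and the interfaces/boundaries are $C^{p,1}$; these are exactly the regularity thresholds needed for the shift $f\in H^{j-2}\Rightarrow u \in H^j$ for $2\le j\le p+1$, both for $a_k$ and $\Re a_k$ and for the adjoint (the latter is handled identically since $P^*$ has the same structure). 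One must also check that the norms used ($k$-weighted Sobolev norms on $\Omega\setminus\Gamma_{\rm p}$) match the ones in which the abstract theorem is phrased --- this is a matter of bookkeeping with the definitions~\eqref{e:norm}.

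For part (ii): the triangulation is affine-conforming, $C^{p+1}$, with shape-regularity constant $\Upsilon$, and resolves both $\partial\Omega$ and $\Gamma_{\rm p}$; therefore standard (but $k$- and $p$-explicit) polynomial interpolation estimates --- of the kind developed in Appendix~\ref{a:poly} --- give $\|(I-\Pi_h)v\|_{H^1_k(\Omega)}\le C(hk)^{j-1}\|v\|_{\cZ_j}$ for $1\le j\le p+1$, and likewise $\|(I-\Pi_h)\|_{\cZ_{p+1}\to\cH}\le C(hk)^p$. Since $\Pi_h$ in the abstract statement is the $\cH$-orthogonal projection it is quasi-optimal, so these interpolation bounds transfer. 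Then the threshold~\eqref{e:threshold} becomes $(hk)^{2p}(1+\|\cR\|)\le c$, which by part (iii) is~\eqref{e:R1threshold}; and~\eqref{e:highGalerkinfinal}, \eqref{e:intermediateGalerkinfinal}, \eqref{e:relerrorfinal} turn into~\eqref{e:H1boundnew}, \eqref{e:lowfreqbound}, \eqref{e:rel_error} after substituting the $(hk)$-powers. For the oscillatory-data hypothesis~\eqref{e:koscil}, one just notes $\cZ_{\ell-1}=\cZ_{p-1}=H^{p-1}_k(\Omega\setminus\Gamma_{\rm p})\cap\cH$ so the abstract condition in Corollary~\ref{c:preasymptotic} matches verbatim.

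Part (iii) is where the quantity $\rho(k)$ enters, and I expect it to be the main obstacle --- or at least the step requiring the most care. The abstract theorem wants $\|\cR\|_{\cZ_{\ell+1}\to(\cZ_{m+1})^*}$, i.e.\ a bound on the PML solution operator in $k$-weighted Sobolev norms, whereas $\rho(k)$ is defined as the $L^2\to L^2$ norm of the cutoff scattering resolvent~\eqref{e:rho}. Three facts must be chained together: first, Theorem~\ref{t:blackBoxResolve} shows the PML solution operator inherits the scattering bound, $\|v\|_{H^1_k(\Omega_{\tr})}\le C\rho(k)\|f\|_{L^2}$; second, one upgrades this to higher $k$-weighted Sobolev norms using the elliptic regularity from part (i) (each extra derivative costing an $O(1)$ factor once $f$ is measured in the appropriate dual space, because the norms are $k$-weighted); third, one checks the index arithmetic so that $\|\cR\|_{\cZ_{p+1}\to(\cZ_{p+1})^*}\le C\rho(k)$ and more generally $\|\cR\|_{\cZ_{p+1}\to(\cZ_{m+1})^*}\le C\rho(k)$ (the $m$-dependence only improves things). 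Since $\rho(k)\gtrsim k \gg 1$, the ``$1+\|\cR\|$'' simplifies to $\rho(k)$, yielding the stated thresholds and constants. A subtlety worth flagging: one needs the PML solution operator to be defined at all relevant $k$ (not merely away from an exceptional set), which is why the hypotheses fix $k\ge k_0$ and a PML of fixed positive width and scaling angle --- Theorem~\ref{t:blackBoxErr}/\ref{t:blackBoxResolve} then guarantee existence, uniqueness, and the resolvent bound for $k\ge k_0$.
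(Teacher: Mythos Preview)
Your proposal is correct and follows essentially the same route as the paper's proof: verify the abstract hypotheses of Corollary~\ref{c:preasymptotic} (via Lemmas~\ref{l:PMLGard1}/\ref{l:PMLGard2} for the G\aa rding inequality and Theorems~\ref{t:erDN}/\ref{t:erT} for elliptic regularity of $a_k$, $\Re a_k$, and the adjoint), then insert the polynomial-approximation bounds from Appendix~\ref{a:poly} and the resolvent estimate from Theorem~\ref{t:blackBoxResolve}. One simplification in your part (iii): no ``upgrading'' via elliptic regularity is needed, since the trivial embeddings $\cZ_{p+1}\hookrightarrow L^2(\Omega)=\cH_0\hookrightarrow(\cZ_{m+1})^*$ (each of norm $\le 1$ by the definition of the $\cZ_j$ norms) give $\|\cR\|_{\cZ_{p+1}\to(\cZ_{m+1})^*}\le\|\cR\|_{L^2\to L^2}\le C\rho(k)$ directly.
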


\begin{remark}\label{r:negativeNorms}
Note that $H_{0}^{j}(\Omega)\subset \cZ_j$ and for all $k_0>0$ there is $C>0$ such that for $k>k_0$ for $u\in H_0^j$, 
$$
\|u\|_{H_k^j}\leq C\|u\|_{\mathcal{Z}_j}.
$$
Therefore, $\|u\|_{H_k^{-j}(\Omega)}\leq C\|u\|_{(\cZ_j)^*}$ and the bound \eqref{e:lowfreqbound} implies bounds on $\|u-u_h\|_{H^{1-n}_k(\Omega)}$ (consistent with \eqref{e:lowGalerkin}).
\end{remark}

To prove Theorem \ref{t:R1}, we first recap local elliptic-regularity results near a Dirichlet or Neumann boundary, and for the transmission problem.

Given coefficients $A, b,$ and $n$, let 
\beq\label{e:L}
L u := -k^{-2} \divergence (A \nabla u ) - k^{-1} b\cdot \nabla u - n u.
\eeq

\begin{assumption}\label{ass:McLean}
For all $x\in \Omega$, 
$A_{j\ell}(x)= A_{\ell j}(x)$ and 
\beqs
\Re \big( A(x) \xi, \xi\big)_2=\Re\sum_{j=1}^d\sum_{\ell=1}^d A_{j\ell}(x) \xi_\ell \overline{\xi_j} \geq c|\xi|^2 \quad\tfa \xi \in \Com^d.
\eeqs
\end{assumption}

\begin{theorem}[Local elliptic regularity near a Dirichlet or Neumann boundary]\label{t:erDN}
Let $\Omega$ be a Lipschitz domain and let $G_1, G_2$ be open subsets of $\Rea^d$ with $G_1 \Subset G_2$ and $G_1 \cap \partial \Omega \neq \emptyset$. Let 
\beq\label{e:Omegaj}
\Omega_j := G_j \cap \Omega, \,\,j=1,2, \quad\tand \quad \Gamma_2:= G_2 \cap \partial\Omega.
\eeq
Suppose that $A$ satisfies Assumption \ref{ass:McLean}, $A, b, n \in C^{m,1}(\overline{\Omega_2})$, and $\Gamma_2 \in C^{m+1,1}$ for some $m\in \mathbb{N}$.
Given $k_0>0$, there exists $C>0$ such that if $k\geq k_0$, 
 $u\in H^1(\Omega_2)$, $L u \in H^m(\Omega_2)$, and either $u=0$ or $\partial_{n,A}u=0$ on $\Gamma_2$, then 
\beq\label{e:erDN}
\N{u}_{H^{m+2}_k(\Omega_1)} \leq C \big( \N{u}_{H^1_k(\Omega_2)} + \N{L u}_{H^m_k(\Omega_2)}\big).
\eeq
\end{theorem}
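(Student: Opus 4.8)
The plan is to reduce the statement to the standard interior and boundary elliptic-regularity shift estimates for second-order elliptic operators (as found in, e.g., \cite[Theorems 4.18 and 4.20]{Mc:00}), tracking the $k$-dependence carefully so that every constant is $k$-uniform for $k\ge k_0$. The only $k$-dependence enters through the $k^{-1}$-scaled derivatives in the norms $\|\cdot\|_{H^m_k}$ and through the lower-order terms $k^{-1}b\cdot\nabla u$ and $nu$ in $L$; the principal part $-k^{-2}\divergence(A\nabla\cdot)$ already carries the ``right'' number of $k^{-1}$'s. So the strategy is: first establish the estimate with all $k$'s set to $1$ using the classical results, then insert a cutoff and iterate in a way compatible with the $k$-weighting.

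First I would choose an intermediate open set $G_{1.5}$ with $G_1\Subset G_{1.5}\Subset G_2$ and a cutoff $\chi\in C^\infty_c(G_{1.5})$ with $\chi\equiv 1$ on $G_1$, and set $\Omega_{1.5}:=G_{1.5}\cap\Omega$. The product $\chi u$ then satisfies a similar boundary-value problem on $\Omega_{1.5}$: it vanishes (or has vanishing conormal derivative) on $\partial\Omega\cap G_2$, and $L(\chi u) = \chi Lu + [L,\chi]u$, where the commutator $[L,\chi]$ is a first-order operator whose coefficients are bounded (in $C^{m,1}$) uniformly in $k$ and which carries a factor $k^{-1}$ or $k^{-2}$ on each term — crucially, $[L,\chi]u$ involves only derivatives of $u$ of order $\le 1$, multiplied by $k^{-1}$ or $k^{-2}$, so $\|[L,\chi]u\|_{H^{m}_k(\Omega_{1.5})}$ is controlled by $C\|u\|_{H^{m+1}_k(\Omega_{1.5})}$ with the wrong regularity but by $C\|u\|_{H^1_k(\Omega_2)}$ after one more iteration (bootstrapping). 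Next I would apply the classical boundary and interior $H^{m+2}$-regularity results for the (non-semiclassical) elliptic operator $-\divergence(A\nabla\cdot)$ with lower-order perturbations on the Lipschitz domain $\Omega_{1.5}$ with the $C^{m+1,1}$ boundary piece $\Gamma_2$: since $\chi u$ has compact support in $G_{1.5}$, the part of $\partial\Omega_{1.5}$ coming from $\partial G_{1.5}$ is irrelevant, and the relevant boundary is $\Gamma_2\in C^{m+1,1}$, matching the regularity needed for the $H^{m+2}$ shift. This gives (with $k=1$ constants)
\[
\|\chi u\|_{H^{m+2}(\Omega)} \le C\big(\|\chi u\|_{L^2(\Omega)} + \|L(\chi u)\|_{H^m(\Omega)}\big).
\]
To promote this to the $k$-weighted statement, I would observe that $\|v\|_{H^j_k(\Omega)}$ is equivalent, up to $j$- and $d$-dependent constants independent of $k$, to $\sum_{|\alpha|\le j} \|(k^{-1}\partial)^\alpha v\|_{L^2(\Omega)}$, and rerun the classical estimate for the operator $L$ itself (whose principal symbol is $k^{-2}\langle A\xi,\xi\rangle$, uniformly elliptic in the $\xi$-variable after the substitution $\xi\mapsto k^{-1}\xi$), so that each application of the classical shift produces exactly the $k^{-1}$-weighted norms. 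Concretely, the cleanest route is a rescaling argument combined with a partition of unity into pieces of size $\sim 1$ (not $\sim k^{-1}$ — the point is that the operator is already well-scaled), or, equivalently, to cite the $k$-explicit elliptic-regularity statements already packaged for this purpose; but I would present it via the substitution making the $k$-weighting transparent.

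The estimate \eqref{e:erDN} then follows by a finite induction on $m$: assuming the bound with $m$ replaced by $m-1$ (and $G_1,G_2$ replaced by a pair of nested sets between them), one controls the slush term $\|[L,\chi]u\|_{H^m_k}$ — which contains $u$ to order $m+1$ but with favorable $k$-powers — by feeding it into the inductive hypothesis on a slightly larger domain, at the cost of absorbing a small multiple of $\|u\|_{H^{m+2}_k(\Omega_{1.5})}$ and shrinking the domain finitely many times; since only finitely many iterations ($\le m+1$) are needed, the constants stay uniform in $k\ge k_0$. The base case $m=0$ (the $H^2$ estimate near the boundary) is the classical result for Lipschitz domains with a $C^{1,1}$ boundary piece. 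The main obstacle I anticipate is the bookkeeping of the $k$-powers in the commutator and the verification that the classical boundary regularity theorems apply with exactly the stated boundary regularity ($C^{m+1,1}$ for an $H^{m+2}$ shift) on a domain that is merely Lipschitz away from $\Gamma_2$ — this requires the cutoff to genuinely localize to a neighborhood of $\Gamma_2$ where the boundary is smooth, so that the Lipschitz-only part of $\partial\Omega$ never enters the high-regularity estimate. Once that localization is set up correctly, the rest is routine.
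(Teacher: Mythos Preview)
Your proposal is correct and takes essentially the same route as the paper: reduce to the classical unweighted boundary-regularity shift (the paper cites \cite[Theorems 4.7 and 4.18]{Mc:00}) and observe that the argument carries through in the $k$-weighted norms because $L$ already carries the correct $k^{-1}$ on each derivative, with the cutoff/commutator/induction you describe being exactly what ``very similar'' unpacks to. The paper additionally remarks (though the theorem as stated does not need it) that pairing $\langle Lu,\chi u\rangle_\Omega$ and using Assumption~\ref{ass:McLean} upgrades the $\|u\|_{H^1_k(\Omega_2)}$ on the right of \eqref{e:erDN} to $\|u\|_{L^2(\Omega_2)}$.
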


\bpf
The bound in unweighted norms
\beqs
\N{u}_{H^{m+2}(\Omega_1)} \leq C \big( \N{u}_{H^1(\Omega_2)} + \N{L u}_{H^m(\Omega_2)}\big)
\eeqs
is proved in e.g., \cite[Theorems 4.7 and 4.18]{Mc:00}; the proof of the bound 
\beq\label{e:erDN2}
\N{u}_{H^{m+2}_k(\Omega_1)} \leq C \big( \N{u}_{H^1_k(\Omega_2)} + \N{L u}_{H^m_k(\Omega_2)}\big)
\eeq
is then very similar. By considering the pairing $\langle L u, \chi u\rangle_\Omega$, where $\chi \in C^\infty_c(\Rea^d)$ and $\chi \equiv 1$ on $\Omega_1$, integrating by parts, and using Assumption \ref{ass:McLean}, one obtains that 
\beqs
\N{u}_{H^1_k(\Omega_1)} \leq C \big( \N{u}_{L^2(\Omega_2)} + \N{L u }_{L^2(\Omega_2)}\big).
\eeqs
This bound then allows us to replace $\|u\|_{H^1_k(\Omega_2)}$ in \eqref{e:erDN} by $\|u\|_{L^2(\Omega_2)}$ (by introducing an intermediate domain between $\Omega_1$ and $\Omega_2$), and hence obtain the result \eqref{e:erDN}.
\epf

\begin{theorem}[Local elliptic regularity for the transmission problem]\label{t:erT}
Let $\Omega_{\rm in}$ be a Lipschitz domain, and let $\Omega_{\rm out}:= \Rea^d \setminus \overline{\Omega_{\rm in}}$. 
Let $G_1, G_2$ be open subsets of $\Rea^d$ with $G_1 \Subset G_2$ and $G_1 \cap \partial \Omega_{\rm in} \neq \emptyset$. Let 
\beqs
\Omega_{\rm in/out, j}:= G_j \cap \Omega_{\rm in/out},\quad j=1,2, \quad\tand\quad \Gamma_2 := G_{2} \cap \partial \Omega_{\rm in}.
\eeqs
Suppose that $A$ satisfies Assumption \ref{ass:McLean}, $A|_{\Omega_{\rm in/out, 2}}, 
b|_{{\Omega_{\rm in/out, 2}}},
n|_{{\Omega_{\rm in/out, 2}}}\in C^{m,1}(\overline{\Omega_{\rm in/out, 2})}$, and $\Gamma_2 \in C^{m+1,1}$ for some $m\in \mathbb{N}$.
Given $k_0>0$, there exists $C>0$ such that if $k\geq k_0$, $u_{\rm in/out}\in H^1(\Omega_{\rm in/out})$, $L u \in H^m(\Omega_{\rm in/out, 2})$, and
$\uin =\uout$ and $\partial_{n,A}u_{\rm in} = \partial_{n,A} u_{\rm out}$ on $\Gamma_2$.
then 
\begin{align}\nonumber
&\N{u_{\rm in}}_{H^{m+2}_k(\Omega_{\rm in, 1})}
+\N{u_{\rm out}}_{H^{m+2}_k(\Omega_{\rm out, 1})}\\
&\quad \leq C \Big( \N{\uin}_{H^1_k(\Omega_{\rm in, 2})}
+  \N{\uout}_{H^1_k(\Omega_{\rm out, 2})}
 + \N{L\uin}_{H^m_k(\Omega_{\rm in, 2})}
  + \N{L\uout}_{H^m_k(\Omega_{\rm out, 2})}\Big).\label{e:erT}
\end{align}
\end{theorem}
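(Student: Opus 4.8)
\textbf{Proof proposal for Theorem \ref{t:erT}.}

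The plan is to reduce the transmission problem to a Dirichlet/Neumann problem to which Theorem \ref{t:erDN} applies, using the standard flattening-and-reflection strategy, but carried out $k$-explicitly. First I would localise: since $G_1 \Subset G_2$, I can choose intermediate sets $G_1 \Subset G_1' \Subset G_2' \Subset G_2$ and a cutoff $\chi \in C^\infty_c(G_2')$ with $\chi \equiv 1$ on $G_1'$, and it suffices to estimate $\chi \uin$ and $\chi \uout$. Because $\Gamma_2 \in C^{m+1,1}$, near any boundary point there is a $C^{m+1,1}$ diffeomorphism straightening $\partial\Omega_{\rm in}$ to a hyperplane; under this change of variables $L$ transforms into an operator of the same form with $C^{m,1}$ coefficients (one loses one degree of regularity off $A$ to the Jacobian, which is exactly the budget $\Gamma_2 \in C^{m+1,1}$ allows), and the transmission conditions $\uin = \uout$, $\partial_{n,A}\uin = \partial_{n,A}\uout$ become Dirichlet and conormal matching conditions across the flat interface. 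The $k$-weighted norms behave well under this bi-Lipschitz-with-bounded-derivatives change of variables up to $k$-independent constants, so I work on the half-space model from here on.

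On the half-space, the key step is the folding/reflection trick: set $v := \uin$ on $\{x_d > 0\}$ and $v := \uout$ on $\{x_d<0\}$ (both in the straightened coordinates), and observe that $v \in H^1$ globally precisely because $\uin = \uout$ on the interface, while $Lv$ (interpreted distributionally) has no interface contribution precisely because the conormal derivatives match — this is the standard computation that the jump of the conormal derivative is the distributional singular part of $\divergence(A\nabla v)$ along the interface. Thus $v$ solves $Lv = g$ on $G_2' \cap \{\pm x_d > 0\}$ with $g \in H^m_k$ there, and $v$ is merely $H^1$ globally with a possibly discontinuous (but piecewise $C^{m,1}$) coefficient $A$. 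I would then apply Theorem \ref{t:erDN} separately in the upper and lower half-spaces — but this requires first upgrading $v$ to $H^2$ across the interface so that $\partial_{n,A}v$ makes classical sense, which is obtained by a single difference-quotient argument in the tangential directions $x_1,\dots,x_{d-1}$: tangential difference quotients commute with the flattened operator up to lower-order terms, so $\partial_{x_j} v \in H^1$ for $j<d$ with the bound $\|\partial_{x_j}v\|_{H^1_k} \le C(\|v\|_{H^1_k} + \|g\|_{L^2_k})$, and then the equation itself recovers $\partial_{x_d}^2 v$ in $L^2$ on each side. Iterating this tangential-difference-quotient plus use-the-equation bootstrap $m$ more times, tracking the $k^{-1}$ weights (each tangential derivative costs a factor $k^{-1}$, matching the $H^m_k$ norms), gives the piecewise $H^{m+2}_k$ bound, which transports back through the diffeomorphism and the cutoff to \eqref{e:erT}.

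The main obstacle I expect is \emph{not} the smooth-coefficient elliptic theory — that is textbook (e.g.\ \cite[Theorems 4.7, 4.18]{Mc:00}) once flattened — but rather keeping every constant $k$-independent, i.e.\ making sure no step secretly costs a power of $k$. The danger points are: (i) the interpolation/absorption used to replace $\|v\|_{H^1_k}$ by $\|v\|_{L^2}$ in intermediate domains (as in the proof of Theorem \ref{t:erDN}), which must be done with the $k$-weighted Young inequality \eqref{e:peterPaul} and the introduction of nested domains so that the large constants multiply only lower-order terms; (ii) the commutator of the tangential difference quotient with the variable-coefficient operator, whose lower-order terms involve $\nabla A$, $b$, $n$ — these are $O(1)$ in $k$ since they carry no $k^{-1}$ while the principal part carries $k^{-2}$, so one must check the bookkeeping gives the stated homogeneity; and (iii) verifying the distributional conormal-jump computation is valid at the $H^1$ level (it is — this is exactly \cite[Lemma 4.19 / the definition of $\partial_{n,A}$ via Green's identity]{Mc:00}), so that no spurious interface term appears. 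Once these $k$-explicit bookkeeping issues are handled, the argument closes.
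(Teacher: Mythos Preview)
Your approach is correct and is precisely the standard proof that the paper cites rather than reproduces: the paper's own proof simply points to \cite[Theorems 4.7 and 4.20]{Mc:00} and \cite[Theorem 5.2.1(i)]{CoDaNi:10} for the unweighted statement, remarks that the $k$-weighted argument is identical, and invokes the same $\langle Lu,\chi u\rangle$ pairing as in Theorem \ref{t:erDN} to handle the lower-order norm on the right-hand side. Your localise--flatten--glue--tangential-difference-quotient--recover-the-normal-derivative-from-the-equation bootstrap is exactly the content of those references, so you are filling in what the paper omits, and your list of $k$-bookkeeping danger points is accurate.

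Two minor corrections. First, what you call the ``folding/reflection trick'' is not a reflection: you are simply regarding $u_{\rm in}$ and $u_{\rm out}$ as a single $H^1$ function across the flattened interface, which the transmission conditions permit (as you correctly argue). Second, the detour through Theorem \ref{t:erDN} does not work as written and is not needed: that theorem requires \emph{zero} Dirichlet or Neumann data on the boundary piece, which the glued $v$ does not have on the interface, and upgrading $v$ to $H^2$ does not change this. Fortunately your argument never actually uses Theorem \ref{t:erDN} --- the difference-quotient bootstrap you describe immediately afterwards is self-contained --- so you should simply delete that sentence.
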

\bpf
The analogue of \eqref{e:erT} in unweighted norms and with $H^1$ norms on the right-hand side (instead of $L^2$ norms) is, e.g.,
\cite[Theorem 5.2.1(i)]{CoDaNi:10}, \cite[Theorems 4.7 and 4.20]{Mc:00}. The proof of the bound in the weighted norms is very similar, and then the $H^1_k$ norms can be replaced by $L^2$ norms by considering $\langle L u, \chi u\rangle_\Omega$, where $\chi \in C^\infty_c(\Rea^d)$ and $\chi \equiv 1$ on $\Omega_{\rm out, 1} \cap \Omega_{\rm in, 1}$ (similar to in the proof of Theorem \ref{t:erDN}).
\epf

\bpf[Proof of Theorem \ref{t:R1}]
The plan is to apply Corollary \ref{c:preasymptotic} (with $\ell=p$) to the sesquilinear form corresponding to the PML problem (i.e., defined by \eqref{e:def_ak}); we start by 
verifying the assumptions of these results, namely that the sesquilinear form is continuous (Assumption \ref{a:continuity}), satisfies a G\aa rding inequality (Assumption \ref{a:Gaarding}), and that the abstract elliptic-regularity assumptions (Assumptions \ref{a:ellipticRegularity} and \ref{a:adjointEllipticRegularity}) hold. (Without loss of generality we can assume that $a$ is injective, since otherwise $\rho=\infty$ and the theorem is trivially true.)

Let $\cH$ and $\cZ_j$ be defined by \eqref{e:cH_PML} and \eqref{e:cZj}, respectively. 

The sesquilinear form \eqref{e:def_ak} of the PML problem  is continuous, uniformly in $k$. 
By Lemmas \ref{l:PMLGard1} and \ref{l:PMLGard2}, there exists $\omega\in\Rea$ such that $e^{i \omega}a$ satisfies the G\aa rding inequality of Assumption \ref{a:Gaarding} (with $\omega=0$ for the first PML formulation in \S\ref{s:PML}). 
Since, with $\omega\in \Rea$, 
\beqs
a_k(u-u_h, v_h)=0 \quad\text{ if and only if }\quad 
e^{i \omega} a_k (u-u_h, v_h)=0,
\eeqs
without loss of generality we can assume that $\omega=0$.


In both PML formulations, the matrix $A_\theta$ (defined by \eqref{e:firstPML} and \eqref{e:secondPML}, respectively) is symmetric.
Therefore Assumption \ref{ass:McLean} is satisfied for both $P_\theta$ and $P_\theta^*$ by Lemmas \ref{l:PMLGard1} and \ref{l:PMLGard2} (again noting that we can take $\omega=0$ without loss of generality). 

Definitions \ref{d:helmholtzProblem} and \ref{d:pmlTruncation} imply that $\partial\Omega$ is $C^{p,1}$, the coefficients of the PDE are $C^{p-1,1}$ and that $\Gamma_{\rm p}$ is $ C^{p,1}$. 

The elliptic-regularity assumptions for $a_k$ \eqref{e:fullReg} and its adjoint \eqref{e:adjointReg} with $\ell=p$ then follow by combining  Theorem \ref{t:erDN} (used near $\partial\Omega_-$ and $\Gamma_{\tr}$) and Theorem \ref{t:erT} (used near $\Gamma_{\rm p}$), both with $m=p-1$. 
The operator associated with the sesquilinear form $\Re a_k$ is 
\begin{align*}
&\left(\frac{P + P^*}{2}\right)u  \\
&\quad=-k^{-2} \divergence \left(\frac{A_\theta+\overline{A_\theta}^T}{2} \nabla u \right) 
 + k^{-2}\bigg( \frac{\nabla \cdot( b_\theta u) - b_\theta \cdot\nabla u}{2}\bigg)
 - \left(\frac{n_\theta+ \overline{n_\theta}}{2}\right) u.
\end{align*}
Since $A_\theta^T=A_\theta$ (as noted above), the sesquilinear form $\Re a_k$ also satisfies Assumption \ref{ass:McLean}, and the elliptic-regularity assumption for $\Re a_k$ \eqref{e:realReg} holds by the combination of Theorems \ref{t:erDN} and \ref{t:erT} used above.

Finally, since $c$ in both Lemma \ref{l:PMLGard1} and \ref{l:PMLGard2} is independent of $\e \leq \theta\leq \pi/2-\e$ and since the coefficients $A_\theta, b_\theta$, and $n_\theta$ can be bounded uniformly in $\e \leq \theta\leq \pi/2-\e$, the constant $C_{\rm \ell}$ in Assumptions \ref{a:ellipticRegularity} and \ref{a:adjointEllipticRegularity}, and the constants $c_{\rm G}$ and $C_{\rm G}$ in Assumption \ref{a:Gaarding} can be taken to be independent of $\e \leq \theta\leq \pi/2-\e$. Furthermore the bound on $\|a_k\|$ in Assumption \ref{a:continuity} can also be taken to be independent of $\e \leq \theta\leq \pi/2-\e$.

We apply Corollary \ref{c:preasymptotic} with $\cV_h = \mathcal{P}^p_{\mathcal{T}}$. The assumptions on $\mathcal{T}$ and Theorem 
\ref{t:approxHighLowReg} 
imply that there exists $C>0$ such that for all $m\in \{0,\ldots,p\}$
\beqs
\| (I-\Pi_h)\|_{\cZ_{m+1}\to \cH}\leq C (hk)^m.
\eeqs
Furthermore, for $m\in \{-1,\ldots,\ell\}$,
\beqs
\| \cR\|_{\cZ_{\ell+1} \to (\cZ_{m+1})^*} \leq 
\| \cR\|_{\cH_0\to \cH_0}
=\| \cR\|_{L^2(\Omega)\to L^2(\Omega)}\leq C \rho,
\eeqs
by Theorem \ref{t:blackBoxResolve}.
By these last two displayed inequalities, the bound \eqref{e:highGalerkinfinal} then becomes \eqref{e:H1boundnew}.
The bound \eqref{e:lowfreqbound} with $n\in \{1,\ldots,p\}$ then follows from \eqref{e:intermediateGalerkinfinal} applied with $m=n-2$ (so that $m\in \{-1,\ldots,p-2\}$ corresponds to $n\in\{1,\ldots,p\}$).

\epf

\bre[Analysis of the $h$-FEM including geometric error]
Theorem \ref{t:R1} assumes that the triangulations resolve 
both $\partial\Omega$ and any boundaries across which $A$ and $n$ jump. The paper \cite{ChSp:25} considers the case these interfaces are approximated to order $q$ by the mesh (so that $q=1$ for straight elements and $q=p$ for isoparametric elements) and proves that if
\beq\label{e:geoThreshold}
\rho(k)(hk)^{2p}
+ \rho(k) h^q \,\,\text{ is sufficiently small, }
\eeq
(compare to \eqref{e:R1threshold}) 
then the Galerkin solution $u_h$ exists, is unique, and satisfies, for $k$-oscillatory data (in the sense of \eqref{e:koscil}),
\beq\label{e:geoR1}
\frac{
\N{u-u_h}_{H^1_k(\Omega)}
}{
\N{u}_{H^1_k(\Omega)}
}
\leq C \bigg[ 
 \Big( 1 +  \rho(k)(hk)^p\Big) (hk)^p + \rho(k) h^q
\bigg]
\eeq
(compare to \eqref{e:rel_error}); 
this result is obtained via proving an abstract analogue of Corollary \ref{c:preasymptotic} allowing for a variational crime \cite[Theorem 3.2]{ChSp:25}. In \eqref{e:geoR1}, near $\partial \Omega$, the error $\|u-u_h\|_{H^1_k(\Omega)}$ is measured on the subset of $\Omega$ where both $u$ and $u_h$ are well-defined; when $A$ and $n$ are discontinuous, defining precisely where the error $\|u-u_h\|_{H^1_k(\Omega)}$ is measured is more involved (see \cite[Theorem 4.11]{ChSp:25}).
Observe that \eqref{e:geoThreshold} and \eqref{e:geoR1} imply that when the problem is nontrapping (i.e., $\rho(k)\leq C$), even with straight elements ($q=1$), 
 in the limit $k\to\infty$ with $h$ chosen to control the pollution error (via \eqref{e:R1threshold}), the geometric
error is smaller than the pollution error.
\ere

\bre
The analogue of Theorem \ref{t:R1} for the PML problem for the time-harmonic Maxwell equations is proved in \cite{CGS1}.
\ere

\section{The $hp$-FEM does not suffer from the pollution effect (\ref{R2})}\label{s:R2}

In this section we analyse the Galerkin error for the  $hp$-FEM. In this case, we require strong regularity assumptions on the coefficients and boundaries in the Helmholtz problem, as well as on the mesh defining the piecewise-polynomial spaces. However, under these strong assumptions, one obtains $k$-uniform quasioptimality without pollution for piecewise-polynomial spaces with high degree.

We saw in \S\ref{s:R1} that both the Schatz duality argument (Lemma~\ref{l:Schatz}) and the elliptic-projection argument from Theorem~\ref{t:abstractEllipticProjection} give sufficient conditions for quasioptimality. We use the former here; using the latter 
would lead to slightly better constants.

The results in this section build on the $hp$-FEM analyses in \cite{LSW3, LSW4} (for truncation by the exact Dirichlet-to-Neumann map on a ball) and \cite{GLSW1} (for PML truncation), although the details of the arguments are different, drawing on the results of \cite{GS3} (appearing in \S\ref{s:R1}) and \cite{AGS2}.

\subsection{An abstract $hp$-FEM convergence result}

The main goal of this section is the following abstract estimate on the adjoint approximability constant. 
Let \begin{equation}\label{e:weightednorm1}
\|u\|_{H_k^p(\Omega)}^2:= \sum_{\ell=0}^p \frac{1}{(\ell!)^2}|u|_{H_k^\ell(\Omega)}^2,
\eeq
where
\beqs
|u|_{H_k^{p}(\Omega)}^2:=\sum_{\substack{|\alpha|=p\\\alpha\in\mathbb{N}^d}}\frac{p!}{\alpha!}\|(k^{-1}D_{x_1})^{\alpha_1}(k^{-1}D_{x_2})^{\alpha_2}\dots (k^{-1}D_{x_d})^{\alpha_d}u\|_{L^2(\Omega)}^2
\end{equation*}
(observe that this definition is the same as in \eqref{e:norm}).
The key point about the constants in the seminorm definition is that 
\beq\label{e:Fouriernorm1}
|u|_{H_k^{p}(\mathbb{R}^d)}^2:=\frac{1}{(2\pi)^d}\big\||\xi|^{p}\mathcal{F}_k(u)(\xi)\big\|^2_{L^2(\mathbb{R}^d)},
\eeq
and similarly for $|\cdot|_{H^p(\Rea^d)}$.
We let $|\cdot|_{H^p(\Omega)}
:=|\cdot|_{H^p_1(\Omega)}
$
and 
$\|\cdot\|_{H^p(\Omega)}
:=\|\cdot\|_{H^p_1(\Omega)}
$.

\begin{theorem}\mythmname{$hp$-FEM quasioptimality under an abstract assumption on low frequencies near interfaces}
\label{t:hpFEMAbstract}
Suppose $a$ is the sesquilinear form defining an $H^\infty$ Helmholtz  problem approximated by a $C^\infty$ PML with $C^{1,1}$ truncation boundary (in the sense of Definitions \ref{d:helmholtzProblem} and \ref{d:pmlTruncation}). 
There is $\psi \in C_c^\infty(\mathbb{R})$ such that for all $W\subset \overline{\Omega}$ open in the subspace topology of $\overline{\Omega}$ with $\Gamma_{\rm p}\cup \Gamma_-\Subset W$, 
given $M,N,k_0>0$ and $\Upsilon>0$ there exist $C,h_0>0$ such that 
for all affine-conforming
$C^\omega$ simplicial triangulations $\mathcal{T}$ with constant $\Upsilon>0$ (in the sense of Definition~\ref{d:Crtriang}) such that 
\beq\label{e:meshGammap}
\partial \Omega\cup \Gamma_{\rm p}\subset \bigcup_{T\in\mathcal{T}}\partial T, 
\eeq
$p\geq 1$, $0<h(\mathcal{T})<h_0$, and $k\geq k_0$, 
\begin{equation}
\label{e:hpApproximability}
\begin{aligned}
&\eta_{L^2\to H_k^1}(\mathcal{P}^p_{\mathcal{T}})
\\
&\leq C\Bigg[\frac{hk}{p}+
\Big(\frac{hk}{p}\Big)^Mk^{-N} \rho(k) \\
&\quad +C^{p}(hk)^{p}k^{-p-1}
\Bigg(
\max\Big\{\frac{k}{p+1},1\Big\}^{p+1}
+\|\psi(\mathcal{P})\|_{L^2\to H^{p+1}(W\setminus \Gamma_{\rm p})\cap H_k^1}
\Bigg)\rho(k)\Bigg],
\end{aligned}
\end{equation}
where 
 $h:= h(\mathcal{T})$.
\end{theorem}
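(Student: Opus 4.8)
The plan is to bound $\eta_{L^2\to H_k^1}(\mathcal{P}^p_{\mathcal{T}}) = \|(I-\Pi_h)\mathcal{R}^*\|_{L^2\to H_k^1}$ by decomposing $\mathcal{R}^*v$, for $v\in L^2$, into a ``low-frequency'' part and a ``high-frequency'' part, approximating each separately, and then exploiting the crucial observation from \S\ref{s:R1} that the high-frequency part of a Helmholtz solution is controlled by elliptic regularity (hence is analytic-regular and can be approximated at an exponential rate in $p$), while the low-frequency part carries the factor $\rho(k)$ but is smooth. Concretely, with $\mathcal{P}$ the self-adjoint operator generated by $\Re a_k$ (as constructed in Theorem~\ref{t:abstractHelmholtz}), pick $\psi\in C_c^\infty(\mathbb{R})$ with $\psi\equiv 1$ on a neighbourhood of the support of the symbols associated to the Helmholtz operator's ``low frequencies'', and split
\begin{equation}
\mathcal{R}^* v = \psi(\mathcal{P})\mathcal{R}^* v + (I-\psi(\mathcal{P}))\mathcal{R}^* v =: w_{\rm lo} + w_{\rm hi}.
\end{equation}
The first step is to handle $w_{\rm hi}$: since $\mathcal{P}-\lambda$ is elliptic on the range of $I-\psi(\mathcal{P})$, one has $\|(I-\psi(\mathcal{P}))\mathcal{R}^* v\|_{H^{m}_k} \leq C^m \|v\|_{L^2}$ with $m$-independent control (this is where the $H^\infty$/analytic regularity of the coefficients and boundaries enters, via iterating Theorems~\ref{t:erDN} and~\ref{t:erT} away from $\Gamma_{\rm p}$, and near $\Gamma_{\rm p}$ one stays away from the interface inside $W$). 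By the $p$-explicit polynomial approximation results of Appendix~\ref{a:poly} (Theorem~\ref{t:approxHighLowReg}), a function with such Gevrey-type bounds on its $H^m_k$ norms is approximated in $H^1_k$ by $\mathcal{P}^p_{\mathcal{T}}$ with error $\lesssim (hk/p)\cdot(\text{exponentially small in } p) + C^p(hk)^p k^{-p-1}(\max\{k/(p+1),1\})^{p+1}\|v\|_{L^2}$; the middle term $(hk/p)^M k^{-N}\rho(k)$ arises from the cross-terms where the tail of $\psi$ is not exactly sharp, bounded crudely using $\|\mathcal{R}^*\|_{L^2\to L^2}\leq C\rho(k)$.

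The second step is to handle $w_{\rm lo} = \psi(\mathcal{P})\mathcal{R}^* v$. Here $\|\mathcal{R}^* v\|_{L^2}\leq C\rho(k)\|v\|_{L^2}$, and $\psi(\mathcal{P})$ is smoothing; the point is to measure how smoothing it is \emph{up to the interface}. Away from $\Gamma_{\rm p}$ and $\Gamma_-$ (i.e., outside $W$), $\psi(\mathcal{P})$ gains arbitrarily many derivatives with $k$-explicit constants by the functional-calculus argument of Step~2 of the proof of Theorem~\ref{t:abstractHelmholtz} (combined with interior/near-smooth-boundary elliptic regularity), so $\psi(\mathcal{P})\mathcal{R}^*v$ restricted to $\overline{\Omega}\setminus W$ has $H^{p+1}_k$ norm $\lesssim \max\{k/(p+1),1\}^{p+1}\rho(k)\|v\|_{L^2}$ — this produces the $\max\{k/(p+1),1\}^{p+1}$ term. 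Inside $W$ (near the interface), we cannot gain regularity freely across $\Gamma_{\rm p}$, so we simply retain the norm $\|\psi(\mathcal{P})\|_{L^2\to H^{p+1}(W\setminus\Gamma_{\rm p})\cap H^1_k}$ as an abstract quantity — this is precisely the ``abstract assumption on low frequencies near interfaces'' in the theorem's name. Applying the polynomial approximation bound to $w_{\rm lo}\in H^{p+1}_k$ (using \eqref{e:meshGammap} so that the mesh resolves $\Gamma_{\rm p}$ and the piecewise estimate applies) gives a contribution $\lesssim C^p(hk)^p k^{-p-1}(\cdots)\rho(k)\|v\|_{L^2}$, matching the third bracket in \eqref{e:hpApproximability}. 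Combining the two steps, dividing by $\|v\|_{L^2}$ and taking the supremum yields \eqref{e:hpApproximability}.

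\textbf{Main obstacle.} The delicate point is bookkeeping the $p$- and $k$-explicit constants so that the final bound has exactly the claimed form — in particular, showing that (i) the high-frequency piece genuinely has constants of the form $C^m$ (not $C_m$) in $\|w_{\rm hi}\|_{H^m_k}\leq C^m\|v\|_{L^2}$, which requires the iterated elliptic-regularity estimates of Theorems~\ref{t:erDN}, \ref{t:erT} to have multiplicative constants independent of the order $m$ when applied away from $\Gamma_{\rm p}$ (this is where analyticity of the coefficients and of $\partial\Omega$, $\Gamma_{\rm p}$ is essential), and (ii) the polynomial-approximation estimates of Appendix~\ref{a:poly} combine the two regimes additively with the stated powers of $h$, $k$, $p$. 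A secondary technical subtlety is making the split $\mathcal{R}^* = \psi(\mathcal{P})\mathcal{R}^* + (I-\psi(\mathcal{P}))\mathcal{R}^*$ interact cleanly with the restriction to $W\setminus\Gamma_{\rm p}$ versus $\overline\Omega\setminus W$, which requires inserting smooth spatial cutoffs subordinate to $W$ and commuting them through $\psi(\mathcal{P})$ modulo $O(\hbar^\infty)$-type errors (handled via the pseudodifferential or functional-calculus machinery, as in the proof of Theorem~\ref{t:abstractHelmholtz}); these commutator errors contribute only to the harmless $(hk/p)^M k^{-N}\rho(k)$ term.
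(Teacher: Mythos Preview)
Your proposal has the right overall architecture (split $\mathcal{R}^*$ into high- and low-frequency pieces, approximate each) but misses the key technical device that makes the abstract result work under \emph{only} the $H^\infty$ hypothesis, and misattributes which piece produces which term.

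\textbf{The gap.} You use a single frequency splitting, $\psi(\mathcal{P})$, globally. To obtain the $\max\{k/(p+1),1\}^{p+1}$ bound, you then need either (i) $\|(I-\psi(\mathcal{P}))\mathcal{R}^*\|_{L^2\to H^m_k}\leq C^m$ with $C$ independent of $m$, or (ii) $\|\psi(\mathcal{P})\|_{L^2\to H^{p+1}_k(\Omega\setminus W)}\leq (C\max\{k/(p+1),1\})^{p+1}$. Both require analyticity-type control of the coefficients and boundaries (this is exactly what Theorem~\ref{t:hpFEMEllipticRegularity} proves, under analytic hypotheses). But Theorem~\ref{t:hpFEMAbstract} assumes only $H^\infty$; iterating Theorems~\ref{t:erDN} and~\ref{t:erT} with merely smooth data gives constants $C_m$, not $C^m$, so neither (i) nor (ii) is available. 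The paper's manoeuvre is to use a \emph{second}, different frequency cutoff --- the Fourier multiplier $\psi(|\hbar D|)$ --- on a spatial cutoff $\chi$ supported in the interior (away from $\Gamma_-$, $\Gamma_{\rm p}$, and $\Gamma_{\rm tr}$). Because $\psi$ has compact support in $\xi$, the bound $\|\psi(|\hbar D|)\|_{L^2\to H^{p+1}}\leq C^{p+1}\max\{k/(p+1),1\}^{p+1}$ holds automatically (equation~\eqref{e:analyticityBound1a}), with no regularity of the coefficients entering. The functional-calculus cutoff $\psi(\mathcal{P})$ is used only on $(1-\varphi_P)(1-\chi)\mathcal{R}^*$, i.e.\ near $\Gamma_-\cup\Gamma_{\rm p}$, where the abstract norm $\|\psi(\mathcal{P})\|_{L^2\to H^{p+1}(W\setminus\Gamma_{\rm p})}$ is deliberately left unbounded. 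This dual splitting (``Answer 1'' in the interior, ``Answer 2'' near interfaces, in the language of \S\ref{s:informalR2}) is the content of the theorem and is what your proposal is missing.

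\textbf{Two secondary points.} First, you have the roles reversed: the high-frequency pieces (both $(1-\psi(|\hbar D|))\chi\mathcal{R}^*$ via semiclassical ellipticity and $(1-\psi(\mathcal{P}))(1-\varphi_P)(1-\chi)\mathcal{R}^*$ via Lemma~\ref{l:trainLate1}) are only bounded $L^2\to H^2_k$ and contribute the $hk/p$ term, not any exponential-in-$p$ term. The $C^p(hk)^p k^{-p-1}$ terms come from the \emph{low}-frequency pieces. Second, the PML region $\supp\varphi_P$ is handled separately in the paper via ellipticity of the scaled operator there (equation~\eqref{e:PMLEstimate}), again yielding an $H^2_k$ bound and an $hk/p$ contribution; your decomposition does not account for it.
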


\paragraph{The ideas of the proof.}

The estimate~\eqref{e:hpApproximability} is obtained by splitting the adjoint solution operator $\mathcal{R}^*$ into its action on high and low frequencies
and using the estimates from Theorem~\ref{t:approxHighLowReg}:
\begin{align}
\label{e:pExplicitApproximation2}
\| I-\mathcal{C}_{\mathcal{T}}^{p}\|_{H_k^{m}(\Omega\setminus \Gamma_{\rm p})\cap H_k^1\to H_k^1}&\leq C_{m} \Big(\frac{hk}{p}\Big)^{m-1},\\
\label{e:pExplicitApproximation}
\| I-\mathcal{C}_{\mathcal{T}}^{p}\|_{H^{p+1}(\Omega\setminus \Gamma_{\rm p})\cap H_k^1\to H_k^1}&\leq C^{p+1}  (hk)^{p}k^{-p-1},
\end{align}
where $\mathcal{C}_{\mathcal{T}}^p$ is a 
triangulation-preserving operator, in the sense of Definition \ref{d:triangulationPreserving} below.

 The main idea is that $P$ is semiclassically elliptic at high frequencies and hence $\mathcal{R}^*:L^2\to H_k^2$ is bounded uniformly in $k$ there. On the other hand, functions with frequency $\lesssim k$ are highly regular (with quantitative bounds on the derivatives) and thus should be well approximated by piecewise polynomials.
 
 The key question is then:~how should one define high and low frequencies to make this heuristic argument rigorous? Below, we use two different answers to this question.
\begin{itemize}
\item[Answer 1.] The first answer (and best answer when it is available) is that a function with low frequencies is any function whose (semiclassical) Fourier transform is compactly supported; i.e., there is $\psi\in C_c^\infty(\mathbb{R})$ such that $u=\psi(|\hbar D|)u$, where $D:= -i \partial$. For $\psi\in C_c^\infty(\mathbb{R})$, by \eqref{e:Fouriernorm1} with $k=1$, 
\beq\label{e:Fouriermult}
(\psi(|\hbar D|)u)(x):= \mathcal{F}^{-1}_\hbar\big( \psi(|\bullet|)(\mathcal{F}u)(\bullet)\big)(x),
\eeq
satisfies
\beqs
| \psi(|\hbar D|) u|_{H^\ell(\Rea^d)}\leq (Ck)^\ell \| u\|_{L^2(\Rea^d)},
\eeqs
so that by the definition of $\|\cdot\|_{H^p(\Rea^d)}$ \begin{align}\label{e:analyticityBound0}
\|\psi(|\hbar D|)\|_{L^2\to H^{p+1}}\leq \sqrt{\sum_{\ell=0}^{p+1} \frac{1}{(\ell!)^2}(Ck)^{2\ell}}
&\leq \sum_{\ell=0}^{p+1} \frac{1}{\ell!}(Ck)^{\ell}.
\end{align}
We show below (in the proof of Theorem~\ref{t:hpFEMAbstract}) that, by 
Stirling's approximation,
\beq
\|\psi(|\hbar D|)\|_{L^2\to H^{p+1}}
\leq 
C^{p+1}
\max\Big\{\frac{k}{p+1},1\Big\}^{p+1}.
\label{e:analyticityBound1a}
\eeq
Combining this with \eqref{e:pExplicitApproximation}, we obtain that
\begin{align*}
\| (I-\Pi_h^p) \psi(|\hbar D|) \|_{L^2\to H^1_k}
& \leq C^{p+1}  (hk)^pk^{-p-1}
\max\Big\{\frac{k}{p+1},1\Big\}^{p+1},\\
& \leq C^{p+1}  \Big(\frac{hk}{p+1}\Big)^p \quad\text{ when $p+1\leq k$};
\end{align*}
i.e., low frequencies are very well approximated by piecewise polynomials with both $h$ and $hk/p$ small and $p$ large.
The main issue to resolve is then:~is there $\psi\in C_c^\infty(\mathbb{R})$ such that $(1-\psi(|\hbar D|))\mathcal{R}^*$ is uniformly bounded? When there are no boundaries or interfaces, semiclassical ellipticity (together  with a bound of the form $\rho(k)\leq Ck^L$ for some $L$) provides a positive answer to this question, even when all coefficients are only smooth (with this first shown in \cite{LSW3}). However, in the presence of boundaries or interfaces, one cannot work with Fourier multipliers and one needs a second definition of low frequencies.
\item[Answer 2.] When the Fourier transform is unavailable, it is natural to define low frequencies in terms of the operator $P$, or more precisely its real part, $\mathcal{P}$. In this case, we say a function is low frequency if there is $\psi\in C_c^\infty(\mathbb{R})$ such that $u=\psi(\mathcal{P})u$; i.e., $u$ can be written as a sum $\sum_{j=1}^{N_k}a_ju_{\lambda_j}$, of eigenfunctions of $\mathcal{P}$ with eigenvalue, $\lambda_j\leq C$. In this case, it is relatively straightforward to check that there is $\psi\in C_c^\infty(\mathbb{R})$ such that $(1-\psi(\mathcal{P}))\mathcal{R}^*$ is uniformly bounded.
\eit
To prove Theorem \ref{t:hpFEMAbstract}, we combine Answers 1 and 2, using Answer 1 away from all boundaries and interfaces, and Answer 2 in a neighborhood of all non-PML boundaries or interfaces.
Finally, near the truncation boundary, we use that for $\chi\in \overline{C^\infty}(\Omega)$ supported inside the PML, $\chi \mathcal{R}^*:L^2\to H_k^2$ is uniformly bounded. 

\subsection{Application of Theorem \ref{t:hpFEMAbstract}}

To apply Theorem \ref{t:hpFEMAbstract} to concrete settings, one must estimate 
$\|\psi(\mathcal{P})\|_{L^2\to H^{p+1}(W)}$.
We show below in Theorem~\ref{t:hpFEMEllipticRegularity} that if 
$\Gamma_{\rm p}=\emptyset$, $\Gamma_-\in C^\omega$, and $A$ and $n$ are analytic in a neighbourhood of $\Gamma_-$, then
for $\psi\in C_c^\infty(\mathbb{R})$
\beq\label{e:analyticityBound2}
\|\psi(\mathcal{P})\|_{L^2\to H^{p+1}(W)}\leq \Big(C
\max\Big\{\frac{k}{p+1},1\Big\}\Big)^{p+1};
\eeq
i.e., the same bound as in \eqref{e:analyticityBound1a}.
Combining~\eqref{e:analyticityBound2} with~\eqref{e:hpApproximability}, we obtain the rigorous statement of Informal Theorem \ref{t:informalR2}.
\begin{theorem}
\mythmname{Rigorous statement of \ref{R2}:~$hp$-FEM quasioptimality for analytic obstacles with variable coefficients}
\label{t:concreteHPFEM}
Suppose $a$ is the sesquilinear form defining an $H^\infty$ Helmholtz  problem approximated using a $C^\infty$ PML with $C^{1,1}$ truncation boundary (in the sense of Definitions \ref{d:helmholtzProblem} and \ref{d:pmlTruncation}) and, in addition, suppose that $\Gamma_{\rm p}=\emptyset$, $\Gamma_-\in C^\omega$, a Dirichlet condition is imposed on $\Gamma_-$, and there is a neighborhood, $U$ of $\Gamma_-$ such that $A,n\in C^\omega(U)$.

(i) Given $M,N,k_0>0$ and $\Upsilon>0$ there exists $C,h_0>0$ such that 
for all affine-conforming
$C^\omega$ simplicial triangulations $\mathcal{T}$ with constant $\Upsilon>0$ (in the sense of Definition~\ref{d:AffineConforming}) such that 
\beq\label{e:goodtri}
\partial \Omega \subset \bigcup_{T\in\mathcal{T}}\partial T, 
\eeq
$p\geq 1$, $0<h(\mathcal{T})<h_0$, and $k\geq k_0$, 
\begin{align*}
&\eta_{L^2\to H_k^1}(\mathcal{P}_{\mathcal{T}}^p)\\
&
\qquad
\leq C\Bigg[\frac{hk}{p}+
\Big(\frac{hk}{p}\Big)^Mk^{-N} \rho(k)  +C^{p}(hk)^{p}k^{-p-1}
\max\Big\{\frac{k}{p+1},1\Big\}^{p+1}
\rho(k)\Bigg],
\end{align*}
where $h:=h(\mathcal{T})$. 

(ii) Given $k_0>0$ $\e>0$, and $\Upsilon>0$, for any $\mathcal{J}$ and $L>0$ such that 
$$
\sup\{k^{-L}\rho (k)\,:\, k\in (k_0,\infty)\setminus \mathcal{J}\}<\infty
$$
there exist $C,c,h_0>0$ such that 
for all $k\geq k_0$, $k\notin \mathcal{J}$,
$C^\omega$ simplicial triangulations $\mathcal{T}$ with constant $\Upsilon>0$ such that \eqref{e:goodtri} holds, 
$$
0<h(\mathcal{T})<h_0,\qquad \frac{h(\mathcal{T})k}{p}<c,\quad\tand\quad p\geq 1+\e \log k,
$$
then for all $u\in H_k^1(\Omega)$, the Galerkin approximation $u_h$ in $\mathcal{P}_{\mathcal{T}}^p$ satisfies
$$
\|u-u_h\|_{H_k^1(\Omega)}\leq C\min_{w_h\in\mathcal{P}_{\mathcal{T}}^p}\|u-w_h\|_{H_k^1(\Omega)}.
$$
\end{theorem}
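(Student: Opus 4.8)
The plan is to deduce Theorem~\ref{t:concreteHPFEM} from three ingredients that are already available: the abstract $hp$-FEM approximability estimate of Theorem~\ref{t:hpFEMAbstract}, the analytic elliptic-regularity bound \eqref{e:analyticityBound2} (i.e.\ Theorem~\ref{t:hpFEMEllipticRegularity}), and, for Part~(ii), the Schatz duality argument of Lemma~\ref{l:Schatz} together with the G\aa rding inequality for the PML sesquilinear form (Lemmas~\ref{l:PMLGard1}--\ref{l:PMLGard2}) and the resolvent transfer from the scattering problem to the PML problem (Theorem~\ref{t:blackBoxResolve}).

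For Part~(i): since $\Gamma_{\rm p}=\emptyset$ and $A,n$ are analytic on a neighbourhood $U$ of $\Gamma_-$, I would fix a set $W$, open in the subspace topology of $\overline\Omega$, with $\Gamma_-\Subset W\Subset U$, so that $A,n\in C^\omega(W)$. Applying Theorem~\ref{t:hpFEMAbstract} with this $W$ yields $\psi\in C^\infty_c(\mathbb{R})$ and, for each $M,N,k_0,\Upsilon$, constants $C,h_0$ for which \eqref{e:hpApproximability} holds. It then remains only to estimate the term $\|\psi(\mathcal{P})\|_{L^2\to H^{p+1}(W\setminus\Gamma_{\rm p})\cap H^1_k}=\|\psi(\mathcal{P})\|_{L^2\to H^{p+1}(W)\cap H^1_k}$, and here Theorem~\ref{t:hpFEMEllipticRegularity} (using that $\Gamma_-$ is analytic, a Dirichlet condition is imposed on $\Gamma_-$, and $A,n$ are analytic near $\Gamma_-$) gives exactly \eqref{e:analyticityBound2}, namely a bound by $(C\max\{k/(p+1),1\})^{p+1}$. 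Substituting this into \eqref{e:hpApproximability} merges the $\|\psi(\mathcal{P})\|$ contribution into the $C^p(hk)^pk^{-p-1}\max\{k/(p+1),1\}^{p+1}\rho(k)$ term (after renaming $C$), which is precisely the third term of the asserted estimate; this gives Part~(i).

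For Part~(ii): given $\mathcal{J}$ and $L$ with $\sup\{k^{-L}\rho(k):k\in(k_0,\infty)\setminus\mathcal{J}\}<\infty$, I would apply Part~(i) with, say, $N:=L+1$ and $M:=1$, and estimate the three resulting terms of $\eta_{L^2\to H^1_k}(\mathcal{P}_{\mathcal{T}}^p)$ for any admissible mesh with $h(\mathcal{T})k/p<c$. The first is $\le c$. The second is $\le (hk/p)\,k^{-(L+1)}\rho(k)=O(k^{-1})$. For the third, for $k$ large one has $p+1\le k$ (as $p\ge1+\epsilon\log k$), so using $(hk)^pk^{-p-1}(k/(p+1))^{p+1}=(hk)^p/(p+1)^{p+1}$, $hk<cp$ and $\rho(k)\le C_\rho k^L$,
\[
C^p(hk)^pk^{-p-1}\Big(\tfrac{k}{p+1}\Big)^{p+1}\rho(k)\ \le\ \frac{C_\rho}{p+1}\,(Cc)^p\,k^L\ \le\ \frac{C_\rho}{p+1}\,(Cc)\,k^{\,L+\epsilon\log(Cc)},
\]
and choosing $c$ small enough that $Cc<1$ and $L+\epsilon\log(Cc)<0$ forces this term to $0$ as $k\to\infty$. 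Hence, taking $c$ small and $k_0$ large (depending only on $\epsilon$, $L$, $\Upsilon$, the fixed geometric/coefficient data, and the scaling angle), $\eta_{L^2\to H^1_k}(\mathcal{P}_{\mathcal{T}}^p)$ can be made arbitrarily small. Since the PML form $a$ is bounded uniformly in $k$, satisfies a G\aa rding inequality after multiplication by a suitable $e^{i\omega}$ (which, as in the proof of Theorem~\ref{t:R1}, may be taken $\omega=0$ without loss of generality) by Lemmas~\ref{l:PMLGard1}--\ref{l:PMLGard2}, and is injective for $k\notin\mathcal{J}$ because $\rho(k)<\infty$ there and the PML problem inherits this bound via Theorem~\ref{t:blackBoxResolve}, Lemma~\ref{l:Schatz} applies: once $C_{\rm G}\|a\|^2\big(\eta_{L^2\to H^1_k}(\mathcal{P}_{\mathcal{T}}^p)\big)^2\le\tfrac12 c_{\rm G}$, the Galerkin solution exists, is unique, and $\|u-u_h\|_{H^1_k(\Omega)}\le 2c_{\rm G}^{-1}\|a\|\min_{w_h\in\mathcal{P}_{\mathcal{T}}^p}\|u-w_h\|_{H^1_k(\Omega)}$, which is the claimed quasioptimality.

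The main obstacle is not in this assembly --- which is essentially bookkeeping once Theorem~\ref{t:hpFEMAbstract} is granted --- but in the analytic elliptic-regularity estimate \eqref{e:analyticityBound2} (Theorem~\ref{t:hpFEMEllipticRegularity}). The crux is that each application of $\mathcal{P}=\Re a$ to $\psi(\mathcal{P})u$ must buy two derivatives on $W$, up to and including the analytic Dirichlet boundary $\Gamma_-$, at the cost of only a factor $O(k^2)$ with a constant \emph{independent of the differentiation order}; iterating this $\lceil(p+1)/2\rceil$ times and tallying the combinatorial factors (via Stirling, as in the passage from \eqref{e:analyticityBound0} to \eqref{e:analyticityBound1a}) produces the claimed $(C\max\{k/(p+1),1\})^{p+1}$ growth. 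Proving this requires Cauchy-type a priori estimates for the analytic elliptic operator $\mathcal{P}$ near an analytic boundary with analytic coefficients, with all constants tracked explicitly in both $k$ and the order of differentiation --- a quantitative, $k$-explicit refinement of the classical Morrey--Nirenberg analytic-regularity theory.
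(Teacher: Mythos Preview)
Your assembly is correct and matches the paper's approach: Part~(i) is obtained by substituting the analytic-regularity bound of Theorem~\ref{t:hpFEMEllipticRegularity} into the abstract estimate \eqref{e:hpApproximability} of Theorem~\ref{t:hpFEMAbstract}, and Part~(ii) follows from Part~(i) via the Schatz argument (Lemma~\ref{l:Schatz}), exactly as the paper indicates at the start of \S\ref{s:R2}.

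One small gap: in Part~(ii) you write ``for $k$ large one has $p+1\le k$ (as $p\ge1+\epsilon\log k$)'', but the hypothesis $p\ge1+\epsilon\log k$ is a \emph{lower} bound on $p$ and does not preclude $p+1>k$. You must also treat the case $p+1>k$, where the max equals $1$ and the third term becomes $C^p(hk)^pk^{-p-1}\rho(k)=(Ch)^p k^{-1}\rho(k)$; with $h<h_0$ chosen so that $Ch_0<1$, and $p\ge k-1$, this is $\le (Ch_0)^{k-1}C_\rho k^{L-1}$, which decays exponentially in $k$ and is harmless. With this case added, your argument is complete.

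A tangential remark: your closing paragraph describes the proof of Theorem~\ref{t:hpFEMEllipticRegularity} as a ``quantitative, $k$-explicit refinement of Morrey--Nirenberg analytic regularity''. The paper instead proves it (in \S\ref{s:analyticestimates}) via heat-kernel smoothing bounds from \cite{EsMoZh:17}: one writes $\psi(\mathcal{P})=e^{-t\mathcal{P}}\cdot e^{t\mathcal{P}}\psi(\mathcal{P})$, uses analytic smoothing of the heat semigroup $e^{-t\mathcal{P}}$ near the analytic boundary, and optimises in $t$. This is a genuinely different mechanism from iterating elliptic estimates, though it delivers the same bound.
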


\subsection{Recap of pseudolocality results from \cite{AGS2}}\label{s:pseudo}

To prove Theorem~\ref{t:hpFEMAbstract}, we need results on pseudolocality of functions of a self-adjoint operator. To do this, 
with $P_\theta$ defined by  \eqref{e:PML1} and $\mathcal{P}=\Re P_\theta$,
we fix $\psi_0\in C_c^\infty(\Rea)$ 
such that there is $c>0$ such that with $S_k:=\psi_0(\mathcal{P})$
$$
\Re \big\langle \big(P_\theta+S_k\big)v,v\big\rangle \geq c\|v\|_{H_k^1}^2,\quad\tfa  v\in H_k^1(\Omega).
$$
As in \eqref{e:ellipticInverse}, let $\mathcal{R}_{S_k}:= (P_\theta+S_k)^{-1}$.

Before we state the pseudolocality results, we introduce the following notation.
\begin{definition}
Let $X,Y$ be Banach spaces, $f:(0,\infty)\to (0,\infty)$, and $B_k:X\to Y$ a $k$-dependent operator. 
$$
B_k=O(f(k))_{X\to Y}
$$
if for all $k_0>0$ there is $C>0$ such that 
$$
\|B_k\|_{X\to Y}\leq Cf(k),\qquad k>k_0.
$$
Furthermore, 
$$
B_k=O(k^{-\infty})_{X\to Y}
$$
if for all $k_0>0$ and $N>0$ there is $C>0$ such that 
$$
B_k=O(k^{-N})_{X\to Y}.
$$
\end{definition}

\begin{lemma}\mythmname{Spatial Pseudolocality \cite[Theorems 5.27 and 6.2]{AGS2}}
\label{l:spatialPseudolocalFinal}
Suppose that $\psi\in \mathcal{S}(\mathbb{R})$ and 
$\chi_1,\chi_2 \in \overline{C^\infty}(\Omega)$  with 
$\supp \chi_1\cap \supp \chi_2=\emptyset$. Then for all $N>0$, 
\begin{align*}
\chi_1\psi(\mathcal{P})\chi_2&=O(k^{-\infty})_{(H_k^N(\Omega\setminus \Gamma_{\rm p}))^*\to H_k^N(\Omega\setminus \Gamma_{\rm p})},\\
\chi_1\mathcal{R}_{S_k}\chi_2&=O(k^{-\infty})
_{
 (H_k^N(\Omega\setminus \Gamma_{\rm p}))^*
\to H_k^N(\Omega\setminus \Gamma_{\rm p})},\\ 
\chi_1\mathcal{R}_{S_k}^*\chi_2&=O(k^{-\infty})_{( H_k^N(\Omega\setminus \Gamma_{\rm p}))^*\to H_k^N(\Omega\setminus \Gamma_{\rm p})}.
\end{align*}
\end{lemma}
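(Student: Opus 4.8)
\textbf{Proof plan for Lemma~\ref{l:spatialPseudolocalFinal}.}

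The plan is to prove the three estimates simultaneously, reducing everything to a single ``commutator pushing'' argument that uses the semiclassical ellipticity of $\mathcal{P}$ (equivalently, of $P_\theta+S_k$) away from the characteristic set, together with the finite propagation speed encoded in the structure of $\mathcal{P}$. First I would fix cutoffs $\chi_0\in\overline{C^\infty}(\Omega)$ with $\chi_0\equiv 1$ on $\supp\chi_1$ and $\supp\chi_0\cap\supp\chi_2=\emptyset$, and similarly a nested family $\chi_1\prec\chi_0^{(1)}\prec\chi_0^{(2)}\prec\cdots$ of cutoffs each supported away from $\supp\chi_2$. The key is that, modulo the interface $\Gamma_{\rm p}$, the operator $P_\theta+S_k$ is a semiclassical differential operator whose principal symbol $\langle A_\theta\xi,\xi\rangle-n_\theta+\sigma_\hbar(S_k)$ is, by construction of $S_k$ and the G\aa rding inequality of Lemmas~\ref{l:PMLGard1}/\ref{l:PMLGard2}, elliptic on all of phase space; hence $\mathcal{R}_{S_k}$ ``behaves like'' the quantisation of an $S^{-2}$ symbol, and the commutator $[\chi_0,P_\theta+S_k]$ is a first-order operator supported in $\supp\nabla\chi_0$, which is disjoint from $\supp\chi_1$ as well.

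The main steps I would carry out are: (1) For $\chi_1\mathcal{R}_{S_k}\chi_2$: write $\chi_1\mathcal{R}_{S_k}\chi_2=\chi_1\mathcal{R}_{S_k}[(P_\theta+S_k),\chi_0^{(1)}]\mathcal{R}_{S_k}\chi_2$ using that $\chi_1\mathcal{R}_{S_k}(P_\theta+S_k)\chi_0^{(1)}\chi_2=\chi_1\chi_0^{(1)}\chi_2=0$ (since $\chi_0^{(1)}\chi_2\equiv 0$) minus $\chi_1\mathcal{R}_{S_k}\chi_0^{(1)}(P_\theta+S_k)\chi_2$; rearranging gives an identity expressing $\chi_1\mathcal{R}_{S_k}\chi_2$ as $\chi_1\mathcal{R}_{S_k}(\text{first-order op supported in }\supp\nabla\chi_0^{(1)})\chi_0^{(2)}\mathcal{R}_{S_k}\chi_2$. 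Iterating this identity $j$ times and using at each stage that $\|\mathcal{R}_{S_k}\|_{H^s_k\to H^{s+2}_k}$ is $O(1)$ on the relevant interior/boundary regions (by the local elliptic regularity Theorems~\ref{t:erDN} and~\ref{t:erT} applied to $P_\theta+S_k$, away from $\Gamma_{\rm p}$), each iteration gains two $k$-weighted derivatives of smoothing while costing only the one derivative lost in the first-order commutator, for a net gain of one order of $k^{-1}$ per step; after $N+M$ steps one gets a bound $O(k^{-N})$ from $(H^N_k)^*$ to $H^N_k$ on $\Omega\setminus\Gamma_{\rm p}$. (2) For $\chi_1\psi(\mathcal{P})\chi_2$: use the Helffer--Sj\"ostrand formula $\psi(\mathcal{P})=\frac{1}{\pi}\int_{\mathbb{C}}\bar\partial\tilde\psi(z)(\mathcal{P}-z)^{-1}\,dm(z)$ with $\tilde\psi$ an almost-analytic extension, and observe that $(\mathcal{P}-z)^{-1}=\mathcal{R}_{S_k}(I+(S_k-\psi_0(\mathcal{P})+z)\mathcal{R}_{S_k})^{-1}$ on a neighbourhood of $\supp\psi$ after adjusting $\psi_0$; then the same nested-cutoff commutator argument with $(\mathcal{P}-z)^{-1}$ in place of $\mathcal{R}_{S_k}$, keeping track of the polynomial-in-$\langle\Im z\rangle^{-1}$ constants which are integrable against $|\bar\partial\tilde\psi(z)|\lesssim_n |\Im z|^n$, yields the $O(k^{-\infty})$ bound. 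Since $\psi\in\mathcal{S}(\mathbb{R})$ one first approximates $\psi$ by a compactly supported function up to $O(k^{-\infty})$ using that $\chi_1\langle\mathcal{P}\rangle^{-L}\chi_2$-type tails are already controlled, reducing to $\psi\in C_c^\infty$. (3) For $\chi_1\mathcal{R}_{S_k}^*\chi_2$: take adjoints of the identity in step (1), noting $\Re a_k$ is symmetric so $\mathcal{P}$ is self-adjoint and $S_k=\psi_0(\mathcal{P})$ is self-adjoint, hence $(P_\theta+S_k)^*=P_\theta^*+S_k$ satisfies the same ellipticity (Assumption~\ref{ass:McLean} holds for $P_\theta^*$ as noted in the proof of Theorem~\ref{t:R1}), and the argument of step (1) applies verbatim.

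The hard part will be step (2) --- controlling the $\Gamma_{\rm p}$-interface carefully and, more seriously, making the commutator iteration quantitative with constants that are \emph{uniform in $p$-independent data but track the dependence on the number of iterations correctly}. Specifically, one must verify that the local elliptic-regularity constants for $P_\theta+S_k$ (and for $(\mathcal{P}-z)^{-1}$) can be taken uniform on the nested family of shrinking neighbourhoods, which forces the cutoffs $\chi_0^{(j)}$ to be chosen with uniformly bounded derivatives and the supports to interpolate between $\supp\chi_1$ and a fixed neighbourhood of $\supp\chi_1$ disjoint from $\supp\chi_2$ --- this is where the hypothesis $\supp\chi_1\cap\supp\chi_2=\emptyset$ (giving a fixed positive distance) is essential. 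A secondary technical point is that $\mathcal{R}_{S_k}$ does not commute with cutoffs that cross $\Gamma_{\rm p}$, so one must either arrange all cutoffs to be transverse to $\Gamma_{\rm p}$ in a controlled way or absorb the interface contributions using Theorem~\ref{t:erT}; the cleanest route is to choose $\chi_0^{(j)}$ constant near $\Gamma_{\rm p}$ so the commutators never see the interface. Once these uniformity issues are dispatched, the three bounds follow by the same mechanism, and the $O(k^{-\infty})$ conclusion is immediate since the number of iterations $N+M$ is arbitrary.
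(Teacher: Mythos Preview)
Your overall strategy---commutator iteration combined with the Helffer--Sj\"ostrand formula---matches the paper's, but there is a genuine error in your step~(1). You assert that ``the operator $P_\theta+S_k$ is a semiclassical differential operator'' and that $[\chi_0,P_\theta+S_k]$ is ``a first-order operator supported in $\supp\nabla\chi_0$''. This is false for the $S_k$ contribution: $S_k=\psi_0(\mathcal{P})$ is a functional-calculus operator, not a differential one, and $[\chi_0,S_k]$ is neither local nor supported in $\supp\nabla\chi_0$. Your iteration therefore does not close as written: the $[\chi_0^{(j)},S_k]$ terms cannot be fed back into the same scheme without already knowing that $S_k$ is spatially pseudolocal, which is precisely the first bound you want to prove.

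The paper avoids this by organising the argument around $\mathcal{P}=\tfrac12(P_\theta+P_\theta^*)$, which \emph{is} a differential operator, rather than around $P_\theta+S_k$. Two further simplifications relative to your plan: (i) the paper uses a \emph{single} intermediate cutoff $\chi$ with $\chi_1\chi=\chi_1$ and $\chi\chi_2=0$, so that $\chi_1 S_k\chi_2=\chi_1[\chi,[\chi,\ldots,[\chi,S_k]]]\chi_2$ for arbitrarily many commutators---no nested family $\chi_0^{(j)}$ and hence none of the uniformity issues you worry about; (ii) the reduction to differential commutators goes via $[\chi,(\mathcal{P}-z)^{-1}]=-(\mathcal{P}-z)^{-1}[\chi,\mathcal{P}](\mathcal{P}-z)^{-1}$, giving terms of the form $(\mathcal{P}-z)^{-1}[\chi,\mathcal{P}]\cdots[\chi,\mathcal{P}](\mathcal{P}-z)^{-1}$ whose $z$-dependence is polynomial in $|\Im z|^{-1}$ and integrable against $|\bar\partial\tilde\psi|$. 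This handles $\psi(\mathcal{P})$ (hence $S_k$) directly. Your convoluted identity in step~(2), rewriting $(\mathcal{P}-z)^{-1}$ in terms of $\mathcal{R}_{S_k}$, is backwards and unnecessary. For $\mathcal{R}_{S_k}$ and $\mathcal{R}_{S_k}^*$ the paper defers to the abstract framework in \cite{AGS2}; once the pseudolocality of $S_k$ is established, your step-(1) iteration can be repaired by handling the $[\chi,S_k]$ contributions as already $O(k^{-\infty})$-smoothing.
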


\begin{lemma}
\mythmname{Pseudolocality in Frequency
\cite[Theorem 5.37 and Lemmas 6.7 and 6.9]{AGS2}}
\label{l:frequencyPseudolocalFinal}
Let $\psi_1,\psi_2\in C^\infty(\mathbb{R})$ with either $\psi_1\in C_c^\infty(\mathbb{R})$ or $\psi_2\in C_c^\infty(\mathbb{R})$, and $\supp\psi_1\cap \supp\psi_2=\emptyset$. Suppose that $\chi \in C^\infty(\Omega)$ and $\supp \nabla \chi\cap (\partial \Omega\cup \Gamma_{\rm p}) =\emptyset$, and $\supp \chi\cap \Gamma_{\tr}=\emptyset$. Then for all $N>0$, 
\begin{align*}
\psi_1(\mathcal{P})\chi\psi_2(\mathcal{P})&=O(k^{-\infty})_{\mathcal{D}(\mathcal{P}^{-N})\to \mathcal{D}(\mathcal{P}^{N})},\\
\psi_1(\mathcal{P})\chi \mathcal{R}_{S_k}\chi\psi_2(\mathcal{P})&=O(k^{-\infty})_{\mathcal{D}(\mathcal{P}^{-N})\to \mathcal{D}(\mathcal{P}^{N})},\\
\psi_1(\mathcal{P})\chi \mathcal{R}_{S_k}^*\chi\psi_2(\mathcal{P})&=O(k^{-\infty})_{\mathcal{D}(\mathcal{P}^{-N})\to \mathcal{D}(\mathcal{P}^{N})}.
\end{align*}
\end{lemma}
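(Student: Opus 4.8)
The plan is to deduce the three "frequency pseudolocality" statements from the corresponding \emph{spatial} pseudolocality statements of Lemma \ref{l:spatialPseudolocalFinal} by a commutator argument, exploiting that $\chi$ is a smooth function whose gradient is supported away from $\partial\Omega\cup\Gamma_{\rm p}$ and whose support avoids $\Gamma_{\tr}$. The key algebraic identity is that, for $\psi_1,\psi_2\in C^\infty(\mathbb{R})$ with disjoint supports and (say) $\psi_1\in C_c^\infty(\mathbb{R})$, one has $\psi_1(\mathcal{P})\psi_2(\mathcal{P})=0$ exactly, so that the operator $\psi_1(\mathcal P)\chi\psi_2(\mathcal P)$ only ``sees'' the failure of $\chi$ to commute with $\psi_j(\mathcal P)$. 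First I would write
$$
\psi_1(\mathcal{P})\chi\psi_2(\mathcal{P})=\psi_1(\mathcal{P})\big(\chi\psi_2(\mathcal{P})-\psi_2(\mathcal{P})\chi\big)+\psi_1(\mathcal{P})\psi_2(\mathcal{P})\chi=\psi_1(\mathcal{P})[\chi,\psi_2(\mathcal{P})],
$$
and similarly peel $\psi_1$ off on the left. The commutator $[\chi,\psi_2(\mathcal P)]$ should be representable, via a Helffer--Sj\"ostrand formula for $\psi_2(\mathcal P)$ (using an almost-analytic extension of $\psi_2$, valid even though $\psi_2$ need not be compactly supported since one factor is), as an integral of $(\mathcal P-z)^{-1}[\chi,\mathcal P](\mathcal P-z)^{-1}$ against $\bar\partial\tilde\psi_2$. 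The term $[\chi,\mathcal P]$ is a first-order differential operator whose coefficients are supported in $\supp\nabla\chi$, which by hypothesis is a \emph{compact subset of $\Omega$ disjoint from $\partial\Omega$ and from $\Gamma_{\rm p}$}. Iterating the resolvent expansion to absorb more powers of $k^{-1}$, one reduces matters to estimating $\psi_1(\mathcal P)\,\zeta\,B\,\zeta'\,\psi_2(\mathcal P)$-type quantities where $\zeta,\zeta'$ are cutoffs: spatial cutoffs localizing near $\supp\nabla\chi$ (hence away from the geometric singularities), so the interior pseudolocality estimates of \cite{AGS2} apply.

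The cleanest route is probably this. Choose an auxiliary $\widetilde\chi\in C_c^\infty(\Omega)$ with $\widetilde\chi\equiv 1$ near $\supp\nabla\chi$ and $\supp\widetilde\chi$ still disjoint from $\partial\Omega\cup\Gamma_{\rm p}\cup\Gamma_{\tr}$, so that $[\chi,\mathcal P]=\widetilde\chi[\chi,\mathcal P]\widetilde\chi$ up to terms that vanish. Then $\psi_1(\mathcal{P})[\chi,\psi_2(\mathcal{P})]$ becomes, after the Helffer--Sj\"ostrand representation, a $z$-integral of
$$
\psi_1(\mathcal{P})(\mathcal{P}-z)^{-1}\widetilde\chi\,[\chi,\mathcal P]\,\widetilde\chi(\mathcal{P}-z)^{-1},
$$
times $\bar\partial\tilde\psi_2(z)$. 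On the one hand $\widetilde\chi(\mathcal P-z)^{-1}$ gains smoothing/decay from $\bar\partial\tilde\psi_2$ vanishing to infinite order on $\mathbb R$ and from the disjointness $\supp\psi_1\cap\supp\psi_2=\emptyset$ applied at the level of $\psi_1(\mathcal P)$; on the other hand $\widetilde\chi\,[\chi,\mathcal P]\,\widetilde\chi$ is a nice compactly-supported interior operator. Here one feeds in the spatial pseudolocality statement: $\psi_1(\mathcal P)$ composed with a spatial cutoff supported in $\Omega$'s interior, paired against $\psi_2(\mathcal P)$ composed with a disjointly-supported cutoff, is $O(k^{-\infty})$ by Lemma \ref{l:spatialPseudolocalFinal} (or rather by the frequency--space mixed estimates underlying it). The statements for $\mathcal{R}_{S_k}$ and $\mathcal{R}_{S_k}^*$ follow the same template, inserting $\chi\mathcal{R}_{S_k}\chi$ in the middle: one commutes $\psi_1(\mathcal{P})$ past the left $\chi$ and $\psi_2(\mathcal{P})$ past the right $\chi$, picks up commutators $[\chi,\psi_j(\mathcal P)]$ supported in $\supp\nabla\chi$, and then uses that $\mathcal{R}_{S_k}$ composed with spatially-separated cutoffs is $O(k^{-\infty})$ (the second and third displays of Lemma \ref{l:spatialPseudolocalFinal}), together with boundedness of $\mathcal{R}_{S_k}:H_k^{-N}\to H_k^{2-N}$ type estimates from coercivity of $a_{S_k}$.

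The main obstacle I expect is the bookkeeping of \emph{where} regularity is measured: the operators live on $\Omega\setminus\Gamma_{\rm p}$ and the target/source spaces in the statement are $\mathcal{D}(\mathcal{P}^{\pm N})$, which are graph-norm spaces adapted to $\mathcal P$ rather than plain Sobolev spaces, so one must check that elliptic regularity for $\mathcal P$ identifies $\mathcal{D}(\mathcal{P}^N)$ with $H_k^{2N}(\Omega\setminus\Gamma_{\rm p})\cap(\text{boundary conditions})$ — or at least that the $O(k^{-\infty})$ bounds transfer between the two scales — and that the cutoffs $\widetilde\chi$ can be inserted without creating spurious boundary contributions. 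A related subtlety is justifying the Helffer--Sj\"ostrand contour integral at the operator level uniformly in $k$, i.e.\ controlling $\|(\mathcal P-z)^{-1}\|$ for $z$ in the relevant strip with $k$-explicit constants; since $\mathcal P$ is self-adjoint this is $|\Im z|^{-1}$, which is integrable against $\bar\partial\tilde\psi$ for an almost-analytic extension vanishing to high enough order, so this is routine but must be done carefully to keep everything $k$-uniform. Once these identifications are in place, the $O(k^{-\infty})$ conclusion is immediate from Lemma \ref{l:spatialPseudolocalFinal} and the fact that $\supp\nabla\chi$ is safely in the interior.
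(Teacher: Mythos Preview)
Your proposal has the right technical ingredients—the Helffer--Sj\"ostrand representation, the commutator identity $[\chi,(\mathcal P-z)^{-1}]=-(\mathcal P-z)^{-1}[\chi,\mathcal P](\mathcal P-z)^{-1}$, and the recognition that iteration is needed to accumulate powers of $k^{-1}$—but the overall strategy of reducing to the \emph{spatial} pseudolocality of Lemma~\ref{l:spatialPseudolocalFinal} does not work. After your expansion, every spatial cutoff $\widetilde\chi$ that appears is supported near $\supp\nabla\chi$; there is never a pair of \emph{spatially disjoint} cutoffs, which is what Lemma~\ref{l:spatialPseudolocalFinal} requires. Your sentence invoking ``$\psi_1(\mathcal P)$ composed with a spatial cutoff \ldots\ paired against $\psi_2(\mathcal P)$ composed with a disjointly-supported cutoff'' conflates the frequency disjointness of $\psi_1,\psi_2$ with spatial disjointness. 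The same issue recurs for $\chi\mathcal{R}_{S_k}\chi$: commuting $\psi_j(\mathcal P)$ past $\chi$ produces commutators supported near $\supp\nabla\chi$ on \emph{both} sides, so ``$\mathcal{R}_{S_k}$ composed with spatially-separated cutoffs'' never arises.

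The paper's route (following \cite{AGS2}) does not go through spatial pseudolocality at all: the $O(k^{-\infty})$ smallness comes directly from the iterated commutators. One inserts $\psi\in C_c^\infty$ with $\psi\psi_1=\psi_1$ and $\supp\psi\cap\supp\psi_2=\emptyset$, so that
\[
\psi_1(\mathcal P)\,B\,\psi_2(\mathcal P)=\psi_1(\mathcal P)\,[\psi(\mathcal P),[\psi(\mathcal P),\ldots,[\psi(\mathcal P),B]]]\,\psi_2(\mathcal P)
\]
for every depth $N$; expanding each $[\psi(\mathcal P),\cdot]$ via Helffer--Sj\"ostrand and the commutator identity (with $\chi$ replaced by $B$) reduces this to iterated commutators $[\mathcal P,[\mathcal P,\ldots,[\mathcal P,B]]]$. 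Each commutator with $\mathcal P$ gains a factor $k^{-1}$: for $B=\chi$ this is because $[\mathcal P,\chi]$ is a first-order differential operator with a $k^{-2}$ prefactor (the hypothesis on $\supp\nabla\chi$ ensures its coefficients are smooth and away from boundaries); for $B=\chi\mathcal R_{S_k}\chi$ one additionally needs $[\mathcal P,\mathcal R_{S_k}]=-\mathcal R_{S_k}[\mathcal P,P_\theta+S_k]\mathcal R_{S_k}$ and its iterates to be well-behaved, which is where the support hypotheses on $\chi$ enter (this is the content of \cite[Lemmas 6.7 and 6.9]{AGS2}). So the iteration you mention \emph{is} the whole mechanism; the appeal to Lemma~\ref{l:spatialPseudolocalFinal} is a red herring and should be dropped.
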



\paragraph{The ideas behind the proofs of Lemmas  \ref{l:spatialPseudolocalFinal}
 and \ref{l:frequencyPseudolocalFinal}.}
 
\cite[Theorems 5.27 and 5.37]{AGS2} respectively provide abstract conditions under which $S_k$, $\mathcal{R}_{S_k}$, and $\mathcal{R}_{S_k}^*$ are pseudolocal in space and abstract under which an operator $B$ acts pseudolocally in frequency. These conditions boil down to estimating repeated commutators of either spatial cutoffs with $P_{\theta}$ and $P_{\theta}^*$ (i.e., commutators of the form $[\chi,[\chi,[\chi,\cdots[\chi,P_\theta]]]$) in the case of spatial pseudolocality, and estimating repeated commutators of $P_\theta$ and $P_\theta^*$ with $B$ (i.e., commutators of the form $[P_\theta,[P_\theta,[P_\theta,\cdots[P_\theta,B]]]$) in the case of frequency pseudolocality.  These abstract conditions are checked in~\cite[Lemma 6.2]{AGS2} for spatial pseudolocality and~\cite[Lemmas 6.7 and 6.9]{AGS2} for frequency pseudolocality.

To see why commutators arise in the case of spatial pseudolocality of $S_k:=\psi_0(\cP)$, first notice that if $\supp \chi_1\cap \supp \chi_2=\emptyset$ then there is $\chi\in \overline{C^\infty}(\Omega)$ such that $\supp \chi \cap \supp \chi_2=\emptyset$ and $\supp (1-\chi)\cap \supp \chi_1=\emptyset$ and hence
$$
\chi_1S_k\chi_2= \chi_1\chi S_k\chi_2=\chi_1[\chi,S_k]\chi_2.
$$
Repeating this argument, we have
$$
\chi_1S_k\chi_2=\chi_1[\chi,[\chi,[\chi,\dots[\chi,S_k]]]]\chi_2.
$$
Next, recall from the Helffer--Sj\"ostrand formula (see, e.g., \cite[Theorem 8.1]{DiSj:99}) that 
	for all $f \in \mathcal{S}(\Rea)$, then
	\begin{equation*}
		f(\cP) = \frac{1}{\pi}\int_{\mathbb{C}} \partial_{\bar{z}}\tilde{f}(z) (\cP - z)^{-1} d m_{\mathbb{C}}(z),
	\end{equation*}
	where $dm_{\mathbb{C}}(x + iy)= dx\,dy$,  $\partial_{\bar{z}}:=\frac{1}{2}(\partial_{x}+i\partial_y)$ and $\tilde{f}$ is an almost analytic extension of $f$ in the sense of~\cite[Theorem 3.6]{Zw:12}.

Then, since
\begin{equation}
\label{e:commuteInverse}
[\chi,(\mathcal{P}-z)^{-1}]=-(\mathcal{P}-z)^{-1}[\chi,\mathcal{P}](\mathcal{P}-z)^{-1},
\end{equation}
and $\mathcal{P}=\frac{1}{2}(P_\theta+P_\theta^*)$, iterated commutators of $\chi$ with $P_\theta$ and $P_\theta^*$ appear naturally.

Analogously, frequency pseudolocality involves repeated commutators of a frequency cutoff, $\psi(\mathcal{P})$, with the operator $B$ and hence, again via the Helffer--Sj\"ostrand formula, can be reduced to iterated commutators with $(\mathcal{P}-z)^{-1}$ and finally to iterated commutators with $\mathcal{P}$ via~\eqref{e:commuteInverse} with $\chi$ replaced by $B$.

\subsubsection{Bound on the high frequencies of the solution operator}

We now use the pseudolocality results of the previous subsection (i.e., Lemmas ~\ref{l:spatialPseudolocalFinal} and~\ref{l:frequencyPseudolocalFinal}) to 
show that 
the (adjoint) solution operator behaves well on high frequencies as measured by $\psi(\mathcal{P})$.

\begin{lemma}[High frequency elliptic-type estimate]\label{l:trainLate1}
Given $\psi_0 \in C_c^\infty$, 
let $\psi\in C_c^\infty(\mathbb{R})$ be such that $\supp (1-\psi)\cap \supp \psi_0=\emptyset$ and $\chi \in \overline{C^\infty}(\Omega)$ such that $\supp \chi\cap \Gamma_{\rm tr}=\emptyset$, $\supp \nabla \chi \cap (\Gamma_{\rm p}\cup \Gamma_-
)=\emptyset$. Then, for all $N>0$ $k_0>0$ there is $C>0$ such that for all $k>k_0$
\begin{equation}
\label{e:highElliptic1}
\|(1-\psi(\mathcal{P}))\chi\mathcal{R}^*\|_{L^2\to H_k^2(\Omega\setminus \Gamma_{\rm p})\cup H_k^1(\Omega)}\leq C\big(1+k^{-N}\|\mathcal{R}^*\|_{L^2\to L^2}\big).
\end{equation}
\end{lemma}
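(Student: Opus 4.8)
\textbf{Proof proposal for Lemma \ref{l:trainLate1}.}

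The plan is to decompose the operator $(1-\psi(\mathcal{P}))\chi\mathcal{R}^*$ using the regularised resolvent $\mathcal{R}_{S_k}^*=(P_\theta^*+S_k)^{-1}$, which is uniformly bounded $L^2\to H_k^1(\Omega)$ (and, by the adjoint analogue of Step 3 in the proof of Theorem \ref{t:abstractHelmholtz}, $L^2\to H_k^2(\Omega\setminus\Gamma_{\rm p})$) by construction, since $a_{k,S_k}$ is coercive. The identity analogous to \eqref{e:inverseFormula}, namely $\mathcal{R}^* = \mathcal{R}_{S_k}^* + \mathcal{R}_{S_k}^* S_k \mathcal{R}^*$, lets us write
\begin{equation*}
(1-\psi(\mathcal{P}))\chi\mathcal{R}^* = (1-\psi(\mathcal{P}))\chi\mathcal{R}_{S_k}^* + (1-\psi(\mathcal{P}))\chi\mathcal{R}_{S_k}^* S_k \mathcal{R}^*.
\end{equation*}
The first term is handled by bounding $(1-\psi(\mathcal{P}))\chi\mathcal{R}_{S_k}^*$ uniformly; the second term is where $\|\mathcal{R}^*\|_{L^2\to L^2}$ enters, and the aim is to show that the prefactor $(1-\psi(\mathcal{P}))\chi\mathcal{R}_{S_k}^* S_k$ is $O(k^{-\infty})_{L^2\to L^2}$ using the pseudolocality results.

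For the first term: since $\supp(1-\psi)\cap\supp\psi_0=\emptyset$ and $S_k=\psi_0(\mathcal{P})$, we can insert a cutoff. Choose $\widetilde\chi\in\overline{C^\infty}(\Omega)$ with $\supp\widetilde\chi\cap\Gamma_{\rm tr}=\emptyset$, $\supp\nabla\widetilde\chi\cap(\Gamma_{\rm p}\cup\Gamma_-)=\emptyset$, $\supp(1-\widetilde\chi)\cap\supp\chi=\emptyset$; then $\chi\mathcal{R}_{S_k}^* = \chi\mathcal{R}_{S_k}^*\widetilde\chi + \chi\mathcal{R}_{S_k}^*(1-\widetilde\chi)$, and the second piece is $O(k^{-\infty})$ in the relevant norms by the spatial pseudolocality of $\mathcal{R}_{S_k}^*$ (Lemma \ref{l:spatialPseudolocalFinal}), since $\supp\chi$ and $\supp(1-\widetilde\chi)$ are disjoint. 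This reduces matters to $\chi\mathcal{R}_{S_k}^*\widetilde\chi$, which away from the truncation boundary and with the cutoff properties of $\widetilde\chi$ is uniformly bounded $L^2\to H_k^2(\Omega\setminus\Gamma_{\rm p})\cup H_k^1(\Omega)$ by local elliptic regularity (Theorems \ref{t:erDN}, \ref{t:erT}) applied to $P_\theta^*+S_k$, whose associated sesquilinear form is coercive — the point being that $S_k$, being $\psi_0(\mathcal{P})$, is smoothing (by the argument in Step 2 of the proof of Theorem \ref{t:abstractHelmholtz}) and so does not spoil the elliptic-regularity estimate. Applying $(1-\psi(\mathcal{P}))$ on the left is harmless since it is bounded on all the $H_k^s$-type spaces involved (it is a bounded function of the self-adjoint operator $\mathcal{P}$, combined with the elliptic-regularity characterisation of $\mathcal{D}(\mathcal{P}^{N/2})$).

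For the second term, the key is to show $(1-\psi(\mathcal{P}))\chi\mathcal{R}_{S_k}^*\psi_0(\mathcal{P}) = O(k^{-\infty})_{L^2\to L^2}$. Write $\chi\mathcal{R}_{S_k}^* = \chi\mathcal{R}_{S_k}^*\widetilde\chi + O(k^{-\infty})$ as above; on the piece with $\widetilde\chi$, one inserts a further cutoff $\chi_\circ\in C^\infty(\Omega)$ with $\supp\nabla\chi_\circ\cap(\partial\Omega\cup\Gamma_{\rm p})=\emptyset$, $\supp\chi_\circ\cap\Gamma_{\rm tr}=\emptyset$, equal to $1$ near $\supp\chi$ and with support where $\widetilde\chi$-type localisation still holds, so that frequency pseudolocality (Lemma \ref{l:frequencyPseudolocalFinal}, applied with the pair $(1-\psi,\psi_0)$ whose supports are disjoint) gives $(1-\psi(\mathcal{P}))\chi_\circ\mathcal{R}_{S_k}^*\chi_\circ\psi_0(\mathcal{P}) = O(k^{-\infty})_{\mathcal{D}(\mathcal{P}^{-N})\to\mathcal{D}(\mathcal{P}^N)}$. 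Combining with $L^2\hookrightarrow\mathcal{D}(\mathcal{P}^{-N})$ and $\mathcal{D}(\mathcal{P}^N)\hookrightarrow L^2$ (uniformly in $k$ once suitably $k$-weighted), and tidying up the cutoff mismatches via Lemma \ref{l:spatialPseudolocalFinal} again, yields the $O(k^{-\infty})_{L^2\to L^2}$ bound on the prefactor. Then the second term is bounded by $Ck^{-N}\|\mathcal{R}^*\|_{L^2\to L^2} = Ck^{-N}\|\mathcal{R}\|_{L^2\to L^2}$, and adding the two contributions gives \eqref{e:highElliptic1}.

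The main obstacle I anticipate is the bookkeeping around the cutoffs: one must carefully choose the intermediate spatial and frequency cutoffs so that at each step either the spatial supports are genuinely disjoint (to invoke Lemma \ref{l:spatialPseudolocalFinal}) or the frequency supports are genuinely disjoint and the spatial cutoff has gradient supported away from $\partial\Omega\cup\Gamma_{\rm p}$ and support away from $\Gamma_{\rm tr}$ (to invoke Lemma \ref{l:frequencyPseudolocalFinal}), while simultaneously keeping track of which operators are bounded in which $k$-weighted norm and ensuring the losses are genuinely $O(k^{-\infty})$ rather than merely $O(k^{-N_0})$ for some fixed $N_0$. The analytic content — coercivity of $a_{k,S_k}$, smoothing of $S_k$, local elliptic regularity, and the two pseudolocality lemmas — is all available; the work is in assembling it cleanly.
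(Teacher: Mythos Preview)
Your proposal is correct and follows essentially the same approach as the paper: decompose via $\mathcal{R}^*=\mathcal{R}_{S_k}^*+\mathcal{R}_{S_k}^*S_k\mathcal{R}^*$, handle the first term by the uniform bounds on $\mathcal{R}_{S_k}^*$, and show the prefactor $(1-\psi(\mathcal{P}))\chi\mathcal{R}_{S_k}^*\psi_0(\mathcal{P})$ in the second term is $O(k^{-\infty})$ by combining spatial and frequency pseudolocality. The only technical difference is in the cutoff bookkeeping for the second term: you propose an extra spatial cutoff $\chi_\circ$ to force the form required by Lemma~\ref{l:frequencyPseudolocalFinal}, whereas the paper instead introduces an intermediate \emph{frequency} cutoff $\psi_1$ with $\supp\psi_0\cap\supp(1-\psi_1)=\emptyset$ and $\supp\psi_1\cap\supp(1-\psi)=\emptyset$, uses the first line of Lemma~\ref{l:frequencyPseudolocalFinal} to write $(1-\psi(\mathcal{P}))\chi=(1-\psi(\mathcal{P}))\chi(1-\psi_1(\mathcal{P}))+O(k^{-\infty})$, then inserts $\tilde\chi$ again via spatial pseudolocality, and finally applies the $\mathcal{R}_{S_k}^*$ line of Lemma~\ref{l:frequencyPseudolocalFinal} to $(1-\psi_1(\mathcal{P}))\tilde\chi\mathcal{R}_{S_k}^*\tilde\chi\psi_0(\mathcal{P})$. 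Both routes work; the paper's has the mild advantage that the matching-cutoffs requirement in Lemma~\ref{l:frequencyPseudolocalFinal} is met directly without a further round of spatial tidying.
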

\begin{proof}
By \eqref{e:inverseFormula},
$$
\mathcal{R}^*=\mathcal{R}_{S_k}^*+\mathcal{R}_{S_k}^*\psi_0(\mathcal{P})\mathcal{R}^*,
$$
so that 
\beq\label{e:slough0}
(1-\psi(\mathcal{P}))\chi\mathcal{R}^*= (1-\psi(\mathcal{P}))\chi \mathcal{R}_{S_k}^*+(1-\psi(\mathcal{P}))\chi\mathcal{R}_{S_k}^*\psi_0(\mathcal{P})\mathcal{R}^*.
\eeq
Let $\tilde{\chi}\in \overline{C^\infty}(\Omega)$ with $\supp \nabla \tilde{\chi}\cap (\Gamma_{\rm p}\cup \partial\Omega)=\emptyset$, $\supp (1-\tilde{\chi})\cap \supp \chi=\emptyset$, and $\supp \tilde{\chi}\cap \Gamma_{\tr}=\emptyset$. Then, by Lemma \ref{l:spatialPseudolocalFinal}, 
\begin{align}\nonumber
&(1-\psi(\mathcal{P}))\chi\mathcal{R}_{S_k}^*\psi_0(\mathcal{P})\mathcal{R}^*\\
&
=(1-\psi(\mathcal{P}))\chi\mathcal{R}_{S_k}^*\tilde{\chi}\psi_0(\mathcal{P})\mathcal{R}^* +(1-\psi(\mathcal{P}))O(k^{-\infty})_{L^2\to H_k^N(\Omega\setminus \Gamma_{\rm p})}
\psi_0(\mathcal{P})\mathcal{R}^*.
\label{e:slough1}
\end{align}
Now let $\psi_1\in C_c^\infty(\mathbb{R})$ be such that $\supp \psi_0\cap \supp(1-\psi_1)=\emptyset$ and $\supp \psi_1\cap \supp(1-\psi)=\emptyset$. 
By Lemma \ref{l:frequencyPseudolocalFinal} and then Lemma \ref{l:spatialPseudolocalFinal},
\begin{align}\nonumber
(1-\psi(\mathcal{P}))\chi\mathcal{R}_{S_k}^*\tilde{\chi}\psi_0(\mathcal{P})\mathcal{R}^*
&=
(1-\psi(\mathcal{P}))\chi(1-\psi_1(\mathcal{P}))\mathcal{R}_{S_k}^*\tilde{\chi}\psi_0(\mathcal{P})\mathcal{R}^* \\ \nonumber
&\qquad
+O(k^{-\infty})_{\mathcal{D}(\mathcal{P}^{-N})\to \mathcal{D}(\mathcal{P}^N)}
\mathcal{R}_{S_k}^*\tilde{\chi}\psi_0(\mathcal{P})\mathcal{R}^*\\ \nonumber
&=(1-\psi(\mathcal{P}))\chi(1-\psi_1(\mathcal{P}))\tilde{\chi}\mathcal{R}_{S_k}^*\tilde{\chi}\psi_0(\mathcal{P})\mathcal{R}^* \\
&\,\quad
+O(k^{-\infty})_{L^2\to H_k^N(\Omega\setminus \Gamma_{\rm p})}
\mathcal{R}_{S_k}^*\tilde{\chi}\psi_0(\mathcal{P})\mathcal{R}^*
\label{e:slough2}
\end{align}
(where we have combined the remainder terms).
Finally, by Lemma \ref{l:frequencyPseudolocalFinal},
\begin{align}\nonumber
&(1-\psi(\mathcal{P}))\chi(1-\psi_1(\mathcal{P}))\tilde{\chi}\mathcal{R}_{S_k}^*\tilde{\chi}\psi_0(\mathcal{P})\mathcal{R}^*\\
&\qquad\qquad=(1-\psi(\mathcal{P}))\chi O(k^{-\infty})_{\mathcal{D}(\mathcal{P}^{-N})\to \mathcal{D}(\mathcal{P}^N)}\psi_0(\mathcal{P})\mathcal{R}^*;
\label{e:slough3}
\end{align}
the result then follows by combining \eqref{e:slough0}, \eqref{e:slough1}, \eqref{e:slough2}, and \eqref{e:slough3}.
\end{proof}

\subsection{Proof of Theorem~\ref{t:hpFEMAbstract}}

\begin{proof}
Recall from the text underneath \eqref{e:sums} that the plan is to split into high and low frequencies using (i) the Fourier transform away from all boundaries and interfaces and (ii) $\psi(\cP)$ near all non-PML boundaries and interfaces. The contribution from the PML is dealt with using ellipticity of $P_\theta$ (and hence good solution-operator bounds) in the PML region.

 Let $\varphi_{P}\in \overline{C^\infty}(\Omega)$ (recall that this notation is defined in \S\ref{s:defPML})
with 
$\supp \varphi_P$ contained inside the PML and 
$\supp (1-\varphi_P)\cap \Gamma_{\tr}=\emptyset$.
Let $\chi \in C_c^\infty(\Omega)$ with $\supp (1-\chi)\cap \supp(1-\varphi_P)\subset W$ and $\supp \chi\cap \Gamma_{p}=\emptyset$. 
Let also $\psi\in C_c^\infty(\mathbb{R})$ with $\supp(1-\psi)\cap [-M,M]=\emptyset$ for some $M>0$ to be chosen large enough. We then write 
\begin{align*}
\mathcal{R}^*&=\chi \mathcal{R}^* + (1-\chi)\mathcal{R}^*= \big((1-\psi(|\hbar D|)\big)\chi \mathcal{R}^*+ \psi(|\hbar D|)\chi\mathcal{R}^*+ (1-\chi)\mathcal{R}^*,
\end{align*}
Since the coefficients of $P_\theta$ are smooth on $\supp\chi\Subset \Omega$,  by the semiclassical ellipticity of $P_\theta$ on high frequencies and the elliptic estimate (Theorem \ref{t:elliptic}),
\beq\label{e:useSCelliptic}
\begin{gathered}
 (1-\psi(|\hbar D|))\chi \mathcal{R}^*=\mathcal{H}+\mathcal{L},\\
\text{ where } \quad\|\mathcal{H}\|_{L^2(\Omega)\to H_k^2(\Omega)}\leq C\quad \tand\quad\|\mathcal{L}\|_{L^2(\Omega)\to H_k^N(\Omega)}\leq C\hbar^N\rho(k).
 \end{gathered}
\eeq
Indeed, let $\chi_{\Omega}\in C_c^\infty(\mathbb{R}^d)$ with $\supp (1-\chi_{\Omega})\cap \Omega=\emptyset$ and $\chi_i\in C_c^\infty(\Omega)$, $i=1,2$ with $\supp(1-\chi_i)\cap \supp \chi_{i-1}=\emptyset$ and $\chi_0:=\chi$. 
Since a Fourier multiplier (such as $\psi(|\hbar D|)$ \eqref{e:Fouriermult}) can be written in terms of the quantisation $\widetilde{\Op}$ \eqref{e:quant}, the bound \eqref{e:residual} implies that 
\beq\label{e:early1}
\big(1 - \psi(|\hbar D|)\big) \chi
= 
\Op(a) + O(\hbar^\infty)_{\Psi_\hbar^{-\infty}},
\eeq
where $a\in C^\infty_c(\Rea^{2d})$ with $\supp a\subset \{ (x,\xi)\,:\, x\in \supp \chi,\,\xi\in \supp (1-\psi)\}$.
Then, by Theorem \ref{t:elliptic} applied with $P_\theta$ replaced by $P_\theta^*$, $b=\chi_1$, and $u$ replaced by $\chi_2\mathcal{R}^*f$, 
there is $e\in S^{-2}(\Rea^d)$ such that
\begin{align}\label{e:early2}
\big\|\chi_{\Omega}\big(\Op(a) - \Op(e)\chi_1P_\theta^*\big) \chi_2\mathcal{R}^*f\big\|_{H_k^N}
\leq C_N\hbar^N \|\chi_2\mathcal{R}^*f\|_{H_k^{-N}}.
\end{align}
Now, with $\mathscr{R}= O(\hbar^\infty)_{\Psi_\hbar^{-\infty}}$, by \eqref{e:early2},
\begin{align*}
\chi_\Omega(1- \psi(|\hbar D|)) \chi\cR^* f 
&=\chi_\Omega (1- \psi(|\hbar D|))\chi \chi_2 \cR^* f\\
&= \chi_\Omega \Op(a) \chi_2 \cR^* f + \chi_\Omega \mathscr{R}
\chi_2 \cR^* f.
    \end{align*}
Therefore, if 
$
\mathcal{H}:=\Op(e)\chi_1
$
and 
$$
\mathcal{L}:=\big(
\Op(a) \chi_2\mathcal{R}^* - \Op(e)\chi_1\big) + \mathscr{R}\chi_2 \cR^*,
$$
then the mapping properties in  \eqref{e:useSCelliptic} hold by \eqref{e:early1} and the boundedness property of Informal Theorem \ref{t:rules}.

Combining \eqref{e:useSCelliptic} with \eqref{e:pExplicitApproximation}, \eqref{e:pExplicitApproximation2}and~\eqref{e:analyticityBound0}, 
we obtain
\begin{align*}
&\|(I-\Pi_h^p)\chi \mathcal{R}^*\|_{L^2\to H_k^1}\\
&\leq \|(I-\Pi_h^p)\mathcal{H}\|_{L^2\to H_k^1}+\|(I-\Pi_h^p)\mathcal{L}\|_{L^2\to H_k^1}+\|(I-\Pi_h^p)\psi(|\hbar D|)\chi\mathcal{R}^*\|_{L^2\to H_k^1}\\
&\leq C\|(I-\Pi_h^p)\|_{H_k^2\to H_k^1}+C_{N,M}k^{-N}\rho(k) \|(I-\Pi_h^p)\|_{H_k^M\to H_k^1}\\
&\qquad +\|(I-\Pi_h^p)\|_{H^{p+1}\to H_k^1}
\rho(k)
\sum_{\ell=0}^{p+1}\frac{1}{\ell!}(Ck)^{\ell},\\
&\quad\leq C\frac{hk}{p}+ C_{N,M}k^{-N}\rho(k) \Big(\frac{hk}{p}\Big)^M+ C^{p+1}(hk)^{p}k^{-p-1}
\rho(k)
\sum_{\ell=0}^{p+1}\frac{1}{\ell!}(Ck)^{\ell}.
\end{align*}
We now show that, as claimed in \eqref{e:analyticityBound1a} above,
\beq\label{e:Stirling1}
\sum_{\ell=0}^{p+1} \frac{1}{\ell!}(Ck)^{\ell} \leq
C^{p+1}
\max\Big\{\frac{k}{p+1},1\Big\}^{p+1},
\eeq
so that 
\begin{align}\nonumber
&\|(I-\Pi_h^p)\chi \mathcal{R}^*\|_{L^2\to H_k^1}\\
&\leq C\frac{hk}{p}+ C_{N,M}k^{-N}\rho(k) \Big(\frac{hk}{p}\Big)^M+ C^{p+1}(hk)^{p}k^{-p-1}
C^{p+1}
\max\Big\{\frac{k}{p+1},1\Big\}^{p+1}
\rho(k).
\label{e:highInterior}
\end{align}
First, by Stirling's approximation, 
\beqs
\sum_{\ell=0}^{p+1}\frac{1}{\ell!}C^{\ell}k^{\ell}
\leq \sum_{\ell=0}^{p+1}\Big(\frac{Ck}{\ell}\Big)^{\ell}.
\eeqs
Next, for $Ck/\ell \geq e$ (which is ensured if $Ck\geq p$, with a larger value of $C$) the function $(Ck/\ell)^\ell$ is increasing. Therefore if $Ck\geq p$ then
\beq\label{e:boundingsums1}
\sum_{\ell=0}^{p+1}\Big(\frac{Ck}{\ell}\Big)^{\ell}
\leq (p+2) \Big(\frac{Ck}{p+1}\Big)^{p+1} \leq 
\Big(\frac{Ck}{p+1}\Big)^{p+1},
\eeq
(where we have increased $C$ in the last inequality). 
On the other hand for any $p\geq C k$, 
$$
\sum_{\ell=0}^{p+1}\frac{1}{\ell!} C^\ell k^\ell \leq \sum_{\ell=0}^{\infty} \frac{1}{\ell!}(Ck)^{\ell}=e^{Ck}\leq C^{p+1},
$$
and then \eqref{e:Stirling1} holds.

To handle $(1-\chi)\mathcal{R}^*$, 
we write
$$
(1-\chi)\mathcal{R}^*=\varphi_P(1-\chi)\mathcal{R}^*+(1-\varphi_P)(1-\chi)\mathcal{R}^*
$$
By ellipticity of $P_\theta$ in the PML region (see e.g.~\cite[Theorem 4.2 and \S C.3]{AGS2}), 
\begin{equation}
\label{e:PMLEstimate}
\|\varphi_P(1-\chi)\mathcal{R}^*\|_{L^2\to H_k^2}\leq C.
\end{equation}
Thus, by~\eqref{e:pExplicitApproximation},
\begin{equation}
\label{e:PMLApprox}
\|(I-\Pi_h^p)\varphi_P(1-\chi)\mathcal{R}^*\|_{L^2\to H_k^1}\leq C\frac{hk}{p},
\end{equation}
and it remains to bound $(I-\Pi_h^p)(1-\varphi_P)(1-\chi)\mathcal{R}^*$.
To do this, observe that by~\eqref{e:highElliptic1}, for $M$ large enough,
\begin{equation}
\label{e:highElliptic}
\|(1-\psi(\mathcal{P}))(1-\varphi_P)(1-\chi)\mathcal{R}^*\|_{L^2\to H_k^2(\Omega\setminus \Gamma_{\rm p})\cap H_k^1(\Omega)}\leq C,
\end{equation}
where we have used that $\supp(1-\varphi_P)(1-\chi)\cap \Gamma_{\rm tr}=\emptyset$ and $(1-\varphi_P)(1-\chi)$ is constant on $\Gamma_{\rm p}\cup \Gamma_-$.
By~\eqref{e:highElliptic} and~\eqref{e:pExplicitApproximation},
\begin{equation}
\label{e:highPart}
\|(I-\Pi_h^p)(1-\psi(\mathcal{P}))(1-\varphi_P)(1-\chi)\mathcal{R}^*\|_{L^2\to H_k^1}\leq C\frac{hk}{p},
\end{equation}
and it now remains to bound $(I-\Pi_h^p)\psi(\mathcal{P})(1-\varphi_P)(1-\chi)\mathcal{R}^*$.
To do this, 
let $N\geq 0$ and 
 $\chi_0\in \overline{C^\infty}(\Omega)$ with 
 $$\supp \chi_0\subset W \tand \supp(1- \chi_0)\cap \supp (1-\chi)\cap \supp (1-\varphi_P)=\emptyset.$$ 
  Then, by Lemma~\ref{l:spatialPseudolocalFinal}, for any $N$ and $M$,
\begin{equation}
\label{e:pseudolocal1}
\|(1-\chi_0)\psi(\mathcal{P})(1-\varphi_P)(1-\chi)\|_{L^2\to H_k^{M+1}}\leq Ck^{-N}.
\end{equation}
   Next, define 
 \begin{gather*}
  W_{\mathcal{T}}:=\bigcup_{T \in \cT\,:\,T \subset W} T.
 \end{gather*}
 Then, let $h_0>0$ be small enough such that for $0<h<h_0$, 
 $$
 \supp \chi_0\subset W_{\mathcal{M}}.
 $$
 By Theorem~\ref{t:approxHighLowReg}, applied with $\Omega_{\mathcal{T}_1}=\overline{W_{\mathcal{T}}^c}$, $\Omega_{\mathcal{T}_2}=W_{\mathcal{T}}$, $t=M+1$, and $s=1$, 
together with~\eqref{e:pseudolocal1}, we obtain 
\begin{align*}
&\|(I-\Pi_h^p)\psi(\mathcal{P})(1-\varphi_P)(1-\chi)\|_{L^2\to H_k^1(\Omega)}\\
&\qquad\leq C\Big(\frac{h k}{p}\Big)^{M}k^{-M-1}\|\psi(\mathcal{P})(1-\varphi_P)(1-\chi)\|_{L^2\to H^{M+1}(W_{\mathcal{T}}^c)}\\
&\qquad\qquad\qquad+(Chk)^{p}k^{-p-1}\|\psi(\mathcal{P})\|_{L^2\to H^{p+1}(W_{\mathcal{T}}\setminus \Gamma_{\rm p})\cap H^1(W_{\mathcal{T}})},\\
&\qquad\leq C\Big(\frac{h k}{p}\Big)^{M}\|(1-\chi_0)\psi(\mathcal{P})(1-\varphi_P)(1-\chi)\|_{L^2\to H^{M+1}_k(W_{\mathcal{T}}^c)}\\
&\qquad\qquad\qquad+(Chk)^{p}k^{-p-1}\|\psi(\mathcal{P})\|_{L^2\to H^{p+1}(W\setminus \Gamma_{\rm p})\cap H^1(W)},\\
&\qquad\leq C\Big(\frac{h k}{p}\Big)^{M}k^{-N}+(Chk)^{p}k^{-p-1}\|\psi(\mathcal{P})\|_{L^2\to H^{p+1}(W\setminus \Gamma_{\rm p})\cap H^1(W)}.
\end{align*}
Together with \eqref{e:highInterior}, \eqref{e:PMLApprox}, and~\eqref{e:highPart}, this implies~\eqref{e:hpApproximability}.

(Strictly speaking, when applying Theorem \ref{t:approxHighLowReg}, we should split the domain into more than two sets -- depending on $\Gamma_{\rm p}$ -- and  use \eqref{e:meshGammap}. The argument, however, is identical.)
\end{proof}

\subsection{Analytic regularity estimates for $\psi(\mathcal{P})$}\label{s:analyticestimates}

\begin{theorem}
\label{t:hpFEMEllipticRegularity}
Suppose $a$ is the sesquilinear form defining an $H^\infty$ Helmholtz  problem approximated using a $C^\infty$ PML with $C^{1,1}$ truncation boundary (in the sense of Definitions \ref{d:helmholtzProblem} and \ref{d:pmlTruncation}) and, in addition, suppose that $\Gamma_{\rm p}=\emptyset$, $\Gamma_-\in C^\omega$, a Dirichlet condition is imposed on $\Gamma_-$, and there is a neighborhood, $U$ of $\Gamma_-$ such that $A,n\in C^\omega(U)$. Then for all $W\Subset U$, $\psi\in C_c^\infty(\mathbb{R})$ and $k_0>0$, there is $C>0$ such that
\beq\label{e:sums}
\|\psi(\mathcal{P})\|_{L^2\to H^{p+1}(W)}\leq \Big(C\max\big\{\tfrac{k}{p+1},1\big\}\Big)^{p+1}.
\eeq
\end{theorem}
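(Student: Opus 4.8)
The plan is to prove the analytic-regularity bound \eqref{e:sums} by combining two ingredients: (i) interior and boundary analytic elliptic regularity for the self-adjoint operator $\mathcal{P}=\Re P_\theta$, valid in the region $W$ where all coefficients and boundaries are analytic, and (ii) the Helffer--Sj\"ostrand representation of $\psi(\mathcal{P})$ together with the resolvent identity to convert the analytic-regularity estimates for $\mathcal{P}$ into the claimed estimate for $\psi(\mathcal{P})$. A key technical point is that the relevant region $W\Subset U$ avoids both the PML and (since $\Gamma_{\rm p}=\emptyset$) all interior interfaces, so only the analytic regularity up to the Dirichlet boundary $\Gamma_-$ is needed; away from $\Gamma_-$ the coefficients are analytic so interior analytic elliptic estimates apply.

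First I would record the analytic elliptic-regularity statement in the form needed. The operator $\mathcal{P}$ is an elliptic, formally self-adjoint second-order operator with analytic coefficients in $U$ (by the formula for $\Re a_k$ given in the proof of Theorem~\ref{t:R1}, using that $A_\theta^T = A_\theta$ and $A_\theta, n_\theta$ are analytic where $A,n$ are, away from the PML), with analytic Dirichlet boundary $\Gamma_-$. Standard analytic elliptic regularity (Morrey--Nirenberg up to an analytic boundary; see, e.g., the classical references, though one must track the $k$-dependence by using the semiclassical norms $H^j_k$) gives: for $W \Subset W' \Subset U$ there are $C, R>0$ (independent of $k$ and $j$) such that for any $v$ with $v=0$ on $\Gamma_-$,
\begin{equation}
\label{e:analyticER}
|v|_{H^{j}_k(W)} \leq C R^{j} j! \sum_{i=0}^{j} \frac{1}{i!}|\mathcal{P}^{\lceil i/2\rceil} v|_{H^{j - 2\lceil i/2 \rceil}_k(W')} \quad\text{for all } j\geq 0,
\end{equation}
or a variant thereof controlling $\|v\|_{H^{j}(W)}$ (unweighted) by powers of $k$ times $\|\mathcal{P}^{m}v\|_{L^2}$ for $m\lesssim j/2$. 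The precise bookkeeping of constants — ensuring the $R^j j!$ growth and that the constants are uniform in $k$ once the $k^{-1}$-weighting is accounted for — is where care is required, but this is a known statement; I would cite it and adapt the $k$-dependence.

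Second, I would apply this with $v = \psi(\mathcal{P})f$ for $f\in L^2(\Omega)$. Since $\psi(\mathcal{P})f$ lies in the domain of every power of $\mathcal{P}$ and $\mathcal{P}^m\psi(\mathcal{P}) = (\lambda^m\psi(\lambda))(\mathcal{P})$ is bounded on $L^2$ with $\|\mathcal{P}^m\psi(\mathcal{P})\|_{L^2\to L^2}\leq \sup|\lambda^m\psi(\lambda)|\leq C_0^m$ for a constant $C_0$ depending only on $\supp\psi$, the right-hand side of \eqref{e:analyticER} is bounded by $C R^j j! \sum_{i=0}^j \frac{1}{i!}C_0^{\lceil i/2\rceil}\|f\|_{L^2}\leq C (C R)^j j! \|f\|_{L^2}$. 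Converting the seminorms $|\cdot|_{H^j_k}$ back to the full norm $\|\cdot\|_{H^{p+1}(W)}$ via the definition \eqref{e:weightednorm1}--\eqref{e:Fouriernorm1} (which divides the degree-$j$ seminorm by $(j!)^2$ and uses $k^{-1}$-weighted derivatives, hence reinserting a factor $k^{p+1}$ when passing to the unweighted $H^{p+1}(W)$), and finally applying Stirling's approximation exactly as in the proof of Theorem~\ref{t:hpFEMAbstract} — i.e., $\sum_{\ell\le p+1}\frac{(Ck)^\ell}{\ell!}\leq C^{p+1}\max\{\tfrac{k}{p+1},1\}^{p+1}$ — yields \eqref{e:sums}. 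The main obstacle, and the step I expect to consume most of the work, is establishing the $k$-explicit analytic elliptic estimate \eqref{e:analyticER} with constants that are genuinely uniform in $k$ and with the correct factorial growth $R^j j!$ up to the analytic boundary; this requires a careful semiclassical rescaling of the classical Morrey--Nirenberg nested-domains iteration, keeping track of how each commutator with a cutoff costs a factor $\sim k$ rather than $O(1)$, and confirming that the resulting constant absorbs cleanly into the $C^{p+1}$ on the right-hand side.
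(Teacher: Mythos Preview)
Your overall strategy—use analytic elliptic regularity for $\mathcal{P}$ together with the boundedness of $\mathcal{P}^m\psi(\mathcal{P})$ on $L^2$—is sound in spirit, but the specific bound \eqref{e:analyticER} you propose is not sharp enough to yield \eqref{e:sums}. If $|v|_{H^j_k(W)}\leq (CR)^j j!\,\|f\|_{L^2}$ for $v=\psi(\mathcal{P})f$, then converting to the unweighted norm gives
\[
\|v\|_{H^{p+1}(W)}^2=\sum_{\ell=0}^{p+1}\frac{k^{2\ell}}{(\ell!)^2}|v|_{H^\ell_k(W)}^2
\leq \sum_{\ell=0}^{p+1}\frac{k^{2\ell}}{(\ell!)^2}\big((CR)^\ell \ell!\big)^2\|f\|_{L^2}^2
=\sum_{\ell=0}^{p+1}(CRk)^{2\ell}\|f\|_{L^2}^2,
\]
so $\|\psi(\mathcal{P})\|_{L^2\to H^{p+1}(W)}\leq (Ck)^{p+1}$, which is weaker than the required $(C\max(\tfrac{k}{p+1},1))^{p+1}$ by a factor of $(p+1)^{p+1}$ when $p+1\leq k$ (and by $k^{p+1}$ when $p+1>k$). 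The $\ell!$ in your bound exactly cancels the $(\ell!)^{-1}$ in the norm, so the Stirling argument $\sum_{\ell\leq p+1}\tfrac{(Ck)^\ell}{\ell!}\leq C^{p+1}\max(\tfrac{k}{p+1},1)^{p+1}$ that you invoke never gets triggered—there is no surviving $1/\ell!$ in your sum.

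The missing ingredient is that the correct analytic regularity for functions with bounded $\mathcal{P}$-spectrum is $\|\partial^\alpha v\|_{L^2(W)}\leq C^{|\alpha|}\max(|\alpha|,k)^{|\alpha|}\|f\|_{L^2}$, not $(Ck)^{|\alpha|}|\alpha|!$; the $\max$ is essential, and a naive $k$-rescaling of Morrey--Nirenberg does not produce it because the rescaled operator $k^2\mathcal{P}=-\operatorname{div}(A\nabla)-k^2 n+\ldots$ has a zeroth-order coefficient of size $k^2$ whose analyticity constants are not $k$-uniform. The paper obtains this sharper bound by writing $\psi(\mathcal{P})=e^{-t\mathcal{P}}\big(e^{t\mathcal{P}}\psi(\mathcal{P})\big)$ and invoking the analytic heat-kernel estimates of Escauriaza--Montaner--Zhang (via \cite[Theorem 4.3]{LSW4}), which give $\|\chi\partial^\alpha e^{-t\mathcal{P}}\|_{L^2\to L^2}\leq e^{(tk^{-2})^{-\tau}}|\alpha|!\,C^{|\alpha|}(tk^{-2})^{(\tau-1)|\alpha|/2}$; optimising over $t\in[0,k^2]$ and $\tau\in[0,1]$ then yields exactly $C^{|\alpha|}\max(|\alpha|^{|\alpha|},k^{|\alpha|})$. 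Your route could be salvaged if you replaced \eqref{e:analyticER} by a $k$-explicit analytic estimate of this sharper form (e.g., Donnelly--Fefferman-type bounds for eigenfunctions with eigenvalue $\sim k^2$), but this is substantially more than the ``careful semiclassical rescaling of Morrey--Nirenberg'' you describe.
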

\begin{proof}
Let $\chi\in C_c^\infty(U)$ with $\supp(1-\chi)\cap W=\emptyset$. Since $\psi$ is compactly supported,
\begin{align}
\|\chi \partial^\alpha \psi(\mathcal{P})\|_{L^2\to L^2}&= \|\chi \partial^{\alpha} e^{-t\mathcal{P}} e^{t\mathcal{P}}\psi(\mathcal{P}) \|_{L^2\to L^2}\leq  \|\chi \partial^{\alpha} e^{-t\mathcal{P}}\|_{L^2\to L^2} Ce^{Ct}.\label{e:rain3}
\end{align}
By the heat-equation bounds of \cite{EsMoZh:17} (as summarised in \cite[Theorem 4.3]{LSW4}), for $0\leq t\leq k^2$ and $\tau\in [0,1]$,
\beq\label{e:heat1}
\|\chi \partial^{\alpha} e^{-t\mathcal{P}}\|_{L^2\to L^2}\leq \exp \big((tk^{-2})^{-\tau}\big)|\alpha|!C^{|\alpha|}(tk^{-2})^{(\tau-1)|\alpha|/2}.
\eeq
We now prove that 
\beq\label{e:rain1}
\inf_{t\in [0,k^2]}\|\chi \partial^{\alpha} e^{-t\mathcal{P}}\|_{L^2\to L^2}e^{Ct}\leq C^{|\alpha|}\max(|\alpha|^{|\alpha|}, k^{|\alpha|}). 
\eeq
When $|\alpha|\geq k$, let $\tau=1$ and $t=k$. Then, by \eqref{e:heat1}, 
\begin{align*}
\|\chi \partial^{\alpha} e^{-t\mathcal{P}}\|_{L^2\to L^2}e^{Ct}
\leq C e^{Ck} |\alpha|!C^{|\alpha|}e^{Ck},
\end{align*}
which implies \eqref{e:rain1} when $|\alpha|\geq k$. 
When $|\alpha|\leq k$, we seek $t$ and $\tau$ such that
\beq\label{e:rain2}
(k^{-2}t)^{(\tau-1)|\alpha|/2} = k^{|\alpha|}|\alpha|^{-|\alpha|}
\quad\tand\quad
t=(k^{-2} t)^{-\tau}
\eeq
(with the second equality making the arguments of the two exponentials equal). The second equality in \eqref{e:rain2} is that 
\beqs
\tau =\frac{\log|\alpha|}{\log  k^{2}|\alpha|^{-1}},
\eeqs
and inputting this into the first equality in \eqref{e:rain2} implies that $t=|\alpha|$. With these choices, \eqref{e:rain1} and Stirling's approximation imply that 
\begin{align*}
\|\chi \partial^{\alpha} e^{-t\mathcal{P}}\|_{L^2\to L^2}e^{Ct}
\leq C e^{C|\alpha|} C^{|\alpha|}k^{|\alpha|},
\end{align*}
which implies \eqref{e:rain1} when $|\alpha|\leq k$.
With $t=\min(k,|\alpha|)$ and $\tau=\min(\frac{\log|\alpha|}{\log  k^{2}|\alpha|^{-1}},1)$, this bound implies that
$$
C\|\chi \partial^{\alpha} e^{-t\mathcal{P}}\|_{L^2\to L^2}e^{Ct}\leq C^{|\alpha|}\max(|\alpha|^{|\alpha|}, k^{|\alpha|}). 
$$
Combining \eqref{e:rain3}, \eqref{e:rain1}, the definition of $\|\cdot\|_{H^{p+1}(W)}$
(i.e., \eqref{e:weightednorm1} with $k=1$), 
the equality 
\beq\label{e:consequencemulti}
\sum_{|\alpha|=\ell}\frac{1}{\alpha!} =\frac{d^\ell}{\ell!}
\eeq
(proved using the multinomial formula), and Stirling's approximation,
we obtain that
\begin{align}\nonumber
\|\psi(\mathcal{P})\|_{L^2\to H^{p+1}(W)}^2&\leq \sum_{\ell=0}^{p+1}\sum_{|\alpha|=\ell}\frac{1}{\ell!\alpha!}\|\chi \partial^\alpha \psi(\mathcal{P})\|^2_{L^2\to L^2}\\
&\leq \sum_{\ell=0}^{p+1}\sum_{|\alpha|=\ell}\frac{1}{\ell!\alpha!}C^{2\ell}\max(\ell^{2\ell},k^{2\ell})\leq \sum_{\ell=0}^{p+1}C^{2\ell}\max\Big(1,\frac{k^{2\ell}}{{\ell}^{2\ell}}\Big).
\label{e:sums1}
\end{align}
Now, 
by arguing as in \eqref{e:boundingsums1}, 
if $k\geq p+1$ then 
\begin{align}
\sum_{\ell=0}^{p+1} C^{2\ell} \max\Big(1,\frac{k^{2\ell}}{{\ell}^{2\ell}}\Big)
\leq
\sum_{\ell=0}^{p+1} C^{2\ell} \frac{k^{2\ell}}{{\ell}^{2\ell}}
\leq C^{2(p+1)} \Big( \frac{k}{p+1}\Big)^{2(p+1)}.
\label{e:sums2}
\end{align}
Furthermore, if $k\leq p+1$, then 
\begin{align}\nonumber
\sum_{\ell=0}^{p+1} C^{2\ell}  \max\Big(1,\frac{k^{2\ell}}{{\ell}^{2\ell}}\Big)
&= \sum_{\ell=0}^k C^{2\ell} \Big(\frac{k}{\ell}\Big)^{2\ell} + \sum_{\ell=k+1}^{p+1} C^{2\ell}
\\
&\leq C^{2k} + C^{2(p+1)}\leq C^{2(p+1)}.
\label{e:sums3}
\end{align}
The result \eqref{e:sums} then follows by combining \eqref{e:sums1}, \eqref{e:sums2}, and \eqref{e:sums3}.
\end{proof}

\section{Pollution and preasymptotic error estimates for the $h$-BEM (\ref{R3})}\label{s:R3}

\subsection{Second-kind BIEs for Helmholtz scattering problems}

In this section, we consider the $h$-BEM for the standard second-kind boundary integral equation formulations for the Helmholtz scattering problems:~given $f\in L^2(\Gamma)$, find $v\in L^2(\Gamma)$ such that
\begin{equation}
\label{e:basicForm2}
\operator v=f,\qquad \operator := c_0 (I+\pert),\qquad c_0\in\mathbb{C}\setminus \{0\},
\end{equation}
where the operator $\pert:L^2(\Gamma_-)\to L^2(\Gamma_-)$ is compact. 
Theorem \ref{thm:BIEs} below recaps the standard result that the Helmholtz exterior Dirichlet and Neumann problems can be reformulated as integral equations of the form~\eqref{e:basicForm2}
where $f$ is given in terms of the known Dirichlet/Neumann boundary data and $\operator$ is one of the boundary integral operators $A_k'$, $A_k$, $\Breg$, or $\Breg'$ defined in~\eqref{e:DBIEs} and~\eqref{e:NBIEs}.
Standard mapping properties of these operators (see, e.g., \cite[Theorems 2.17 and 2.18]{ChGrLaSp:12}) imply that $A'_k, A_k, \Breg, \Breg'$ are bounded on $L^2(\Gamma_-)\to L^2(\Gamma_-)$. Furthermore, each of  $A'_k, A_k, \Breg, \Breg'$ is invertible on $L^2(\Gamma_-)\to L^2(\Gamma_-)$; moreover they are each equal to a multiple of the identity plus a compact operator on $L^2(\Gamma_-)\to L^2(\Gamma_-)$ (see, e.g., \cite[\S2.6]{ChGrLaSp:12} for $A'_k, A_k$ and \cite[Theorem 2.2]{GaMaSp:21N} for $\Breg, \Breg'$). \footnote{In the real-valued $L^2(\Gamma_-)$ inner product, $A'_k$ and $A_k$ are each other's adjoints (hence the $'$ notation); similarly for $\Breg, \Breg'$.}
      As before, we use the notation
\beq\label{e:rhoB}
\rhoB(k):=\|\operator^{-1}\|_{L^2(\Gamma_-)\to L^2(\Gamma_-)},
\eeq
and observe that since $\pert$ is compact, $\rhoB(k) \geq c_0^{-1}$;
see Lemma \ref{lem:inversebound} below.

We study the Galerkin approximation to $u$ with $a:L^2(\Gamma_-)\times L^2(\Gamma_-)\to \mathbb{C}$ defined by
$$
a(v,w):= \langle \operator v,w\rangle_{L^2(\Gamma_-)}
$$
under assumptions on $\operator$ that allow us to include any of $A_k$, $A_k'$, $\Breg$, or $\Breg'$. Since $\operator:L^2(\Gamma_-)\to L^2(\Gamma_-)$, for $\mathcal{V}\subset L^2(\Gamma_-)$, and $u\in L^2(\Gamma_-)$, the Galerkin approximation, $v_h$, to $v$ in $\mathcal{V}$ satisfies, 
\begin{equation}
\label{e:galerkinApproximationBEM0}
\Pi_{\mathcal{V}}\operator v_h=(I+\Pi_{\mathcal{V}}\pert)v_h=\Pi_{\mathcal{V}}\operator v,
\end{equation}
where $\Pi_{\mathcal{V}}$ is the $L^2$-orthogonal projector onto $\mathcal{V}$. In fact, we prove error bounds for the solution, $v_h\in\mathcal{V}_h$ to 
\begin{equation}
\label{e:galerkinApproximationBEM}
(I+P_{\mathcal{V}}\pert)v_h=P_{\mathcal{V}}\operator v,
\end{equation}
where $P_{\mathcal{V}}:L^2(\Gamma)\to \mathcal{V}$ is any projector onto $\mathcal{V}$; i.e, in principle, we consider any projection method for approximating $v$.

\bre[The history of the operators $A_k'$, $A_k$, $\Breg$, $\Breg'$]
\label{r:historyBIEs}
     The BIEs involving the operators $A_k'$ and $A_k$ \eqref{e:DBIEs} were introduced in \cite{BrWe:65, Le:65, Pa:65}. The subscript ``reg" on the Neumann boundary-integral operators \eqref{e:NBIEs} indicates that these are not the 
``combined-field'' (or ``combined-potential") Neumann BIEs introduced by \cite{BuMi:71} (denoted by $B'_k$ and $B_k$ in, e.g., \cite[\S2.6]{ChGrLaSp:12}); the operators  introduced in \cite{BuMi:71} are given by \eqref{e:NBIEs} with $S_{\ri k}$ removed, and thus are not bounded on $\LtG$ because $H_k: \LtG\to H^{-1}(\Gamma)$. The idea of preconditioning $H_k$ with an order $-1$ operator goes back to \cite{Bu:76} (see, e.g., the discussion in \cite{AmHa:90}),
with the use of $S_{\ri k}$ proposed in \cite{BrElTu:12}, and then advocated for in
\cite{BoTu:13,ViGrGi:14} (for more details, see the discussion in, e.g., \cite[\S2.1.1]{GaMaSp:21N}).
The results in this section hold for a wider class of regularising operators, of which $S_{\ri k}$ is the prototypical example; see \cite[Assumption 1.1]{GaMaSp:21N}. 
\ere

\bre[Implementing the operator product for the Neumann BIEs \eqref{e:NBIEs}]
Because of the operator product $S_{\ri k}H_k$ in $\Breg$, in practice, approximations to the solution of $\Breg v= f$ are computing by applying the 
projection 
method, not to $\Breg v= f$, 
but to the system
\begin{align*}
\left(
\begin{array}{cc}
\ri (\tfrac{1}{2} I - K) & S_{\ri k}\\
H_k & - 1
\end{array}
\right)
\left(
\begin{array}{c}
v\\
w
\end{array}
\right)
=
\left(
\begin{array}{c}
f \\
0
\end{array}
\right);
\end{align*}
similarly for $\Breg'$.
For simplicity, when studying $\Breg$ and $\Breg'$ 
we consider the idealised situation of 
\eqref{e:galerkinApproximationBEM}; 
i.e., we ignore the issue of discretising the operator product in Galerkin or collocation methods.
We emphasise, however, that 
\cite{GaRaSp:25} \emph{does} analyse discretising this operator product in the Nystr\"om method.
\ere

\subsection{Assumptions on $\operator$ and the functional calculus of $-\Delta_{\Gamma_-}$}

Recall that since $\Gamma_-$ is a smooth, compact hypersurface in $\mathbb{R}^d$, the tangential Laplacian, $-\Delta_{\Gamma_-}$ (i.e., the Laplacian induced on $\Gamma_-$ from the flat Laplacian on $\mathbb{R}^d$) is a self-adjoint, non-negative, elliptic, second-order differential operator on $L^2$ with domain $H^2(\Gamma_-)$. In particular, there are $0=\lambda_1< \lambda_2\leq\dots$ with $\lambda_j\to \infty$ and an orthonormal basis of $L^2(\Gamma_-)$, $\{\phi_{\lambda_j}\}_{j=1}^\infty$ satisfying
$$
(-\Delta_{\Gamma_-}-\lambda_j^2)\phi_{\lambda_j}=0.
$$
For $f\in L^\infty(\mathbb{R})$ we define
$$
f(-k^{-2}\Delta_{\Gamma_-})u:=\sum_{j=1}^\infty f(k^{-2}\lambda_j^2)\langle u,\phi_{\lambda_j}\rangle_{L^2(\Gamma_-)}\phi_{\lambda_j}.
$$
We recall that elliptic regularity in $H_k^s$ implies that for any basis of vector fields $\{X_i\}_{i=1}^N$ on $\Gamma_-$ and $j\geq 0$, there is $C>0$ such that 
\begin{align*}
\tfrac{1}{C}\|(-k^{-2}\Delta_{\Gamma_-}+1)^{j/2}u\|_{L^2(\Gamma_-)}&\leq \sum_{|\alpha|\leq j}\|k^{-|\alpha|}X_{1}^{\alpha_1}\dots X_{N}^{\alpha_N}u\|_{L^2(\Gamma_-)}\\
&\leq C\|(-k^{-2}\Delta_{\Gamma_-}+1)^{j/2}u\|_{L^2(\Gamma_-)}.
\end{align*}
Therefore, we define 
\beq\label{e:weightedNormGamma}
\|u\|_{H_k^s(\Gamma_-)}:=\|(-k^{-2}\Delta_{\Gamma_-}+1)^{s/2}u\|_{L^2(\Gamma_-)}.
\eeq
We then have the following estimate for functions of the Laplacian.
\begin{lemma}\label{l:functionLaplace}
If $f\in L^\infty_{\comp}(\mathbb{R})$ then for all $N>0$ there is $C>0$ such that 
$$
\|f(-k^{-2}\Delta_{\Gamma_-})\|_{H_k^{-N}(\Gamma_-)\to H_k^N(\Gamma_-)}\leq C.
$$
Moreover, for all $f\in L^\infty(\mathbb{R})$ and $s\in\mathbb{R}$, 
$$
\|f(-k^{-2}\Delta_{\Gamma_-})\|_{H_k^s(\Gamma_-)\to H_k^s(\Gamma_-)}\leq \|f\|_{L^\infty}.
$$
\end{lemma}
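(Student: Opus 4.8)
The plan is to prove both bounds using the spectral decomposition of $-\Delta_{\Gamma_-}$ directly, since this is where all the content lives. Recall that $\{\phi_{\lambda_j}\}_{j=1}^\infty$ is an orthonormal basis of $L^2(\Gamma_-)$ consisting of eigenfunctions, and that by definition $f(-k^{-2}\Delta_{\Gamma_-})u = \sum_j f(k^{-2}\lambda_j^2)\langle u,\phi_{\lambda_j}\rangle \phi_{\lambda_j}$. The weighted norm $\|u\|_{H_k^s(\Gamma_-)}^2 = \|(-k^{-2}\Delta_{\Gamma_-}+1)^{s/2}u\|_{L^2(\Gamma_-)}^2 = \sum_j (1+k^{-2}\lambda_j^2)^s |\langle u,\phi_{\lambda_j}\rangle|^2$ is therefore simply a weighted $\ell^2$-norm on the spectral coefficients.

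For the second (easier) bound, let $f\in L^\infty(\mathbb R)$ and $s\in\mathbb R$. Writing $c_j := \langle u,\phi_{\lambda_j}\rangle$, we have
\begin{align*}
\|f(-k^{-2}\Delta_{\Gamma_-})u\|_{H_k^s(\Gamma_-)}^2
&= \sum_{j=1}^\infty (1+k^{-2}\lambda_j^2)^s |f(k^{-2}\lambda_j^2)|^2 |c_j|^2\\
&\leq \|f\|_{L^\infty}^2 \sum_{j=1}^\infty (1+k^{-2}\lambda_j^2)^s |c_j|^2 = \|f\|_{L^\infty}^2 \|u\|_{H_k^s(\Gamma_-)}^2,
\end{align*}
since $|f(k^{-2}\lambda_j^2)|\leq \|f\|_{L^\infty}$ for every $j$. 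Taking square roots gives the claim.

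For the first bound, suppose $f\in L^\infty_{\comp}(\mathbb R)$, so there is $R>0$ with $\supp f\subset [-R,R]$, and fix $N>0$. The point is that on the support of the multiplier sequence $(f(k^{-2}\lambda_j^2))_j$, the frequency $k^{-2}\lambda_j^2$ is bounded by $R$, so switching between the $H_k^{-N}$ and $H_k^{N}$ norms only costs a $k$-independent constant. Concretely, if $f(k^{-2}\lambda_j^2)\neq 0$ then $1+k^{-2}\lambda_j^2 \leq 1+R$, hence $(1+k^{-2}\lambda_j^2)^{N} \leq (1+R)^{2N}(1+k^{-2}\lambda_j^2)^{-N}$. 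Therefore, again with $c_j = \langle u,\phi_{\lambda_j}\rangle$,
\begin{align*}
\|f(-k^{-2}\Delta_{\Gamma_-})u\|_{H_k^N(\Gamma_-)}^2
&= \sum_{j=1}^\infty (1+k^{-2}\lambda_j^2)^{N} |f(k^{-2}\lambda_j^2)|^2 |c_j|^2\\
&\leq (1+R)^{2N}\|f\|_{L^\infty}^2 \sum_{j=1}^\infty (1+k^{-2}\lambda_j^2)^{-N} |c_j|^2
= (1+R)^{2N}\|f\|_{L^\infty}^2 \|u\|_{H_k^{-N}(\Gamma_-)}^2,
\end{align*}
which is the desired bound with $C = (1+R)^N\|f\|_{L^\infty}$ (taking square roots), a constant independent of $k$ (though depending on $N$ and $f$, as allowed).

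I do not expect any real obstacle here: the only mild subtlety is making sure the negative-order norm $\|\cdot\|_{H_k^{-N}(\Gamma_-)}$ as defined by \eqref{e:weightedNormGamma} (with $s=-N$) is genuinely the spectral $\ell^2$-norm with weight $(1+k^{-2}\lambda_j^2)^{-N}$ — which it is, since $(-k^{-2}\Delta_{\Gamma_-}+1)^{-N/2}$ acts as the Fourier multiplier $(1+k^{-2}\lambda_j^2)^{-N/2}$ on each eigenfunction — and that this agrees with the duality definition of $H_k^{-N}(\Gamma_-)$; this is standard for functions of a non-negative self-adjoint elliptic operator, and the density of finite spectral sums in all the relevant spaces means it suffices to verify the inequalities on such sums and pass to the limit.
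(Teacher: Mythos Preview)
Your proof is correct and is exactly the natural argument from the spectral definition of the $H_k^s(\Gamma_-)$ norm~\eqref{e:weightedNormGamma}. The paper itself states this lemma without proof, presumably because, as you found, both bounds are immediate from the diagonalisation of $f(-k^{-2}\Delta_{\Gamma_-})$ and $(-k^{-2}\Delta_{\Gamma_-}+1)^{s/2}$ in the eigenbasis $\{\phi_{\lambda_j}\}$.
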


We make the following assumption on $\operator$.
\begin{assumption}[Assumptions on $\operator$ and its high-frequency components]
\label{ass:abstract1}

\

(i) For all $k>0$, the map $k\mapsto \pert$ is continuous, 
$\operator:= c_0(I+\pert)$ is bounded and invertible on $H^s(\Gamma_-)$ for all $s\in \Rea$, and $\pert :H^s(\Gamma_-)\to H^{s+1}(\Gamma_-)$. 

(ii) For any $\chi \in C^\infty_{c}(\Rea;[0,1])$ with $\supp(1-\chi)\cap [-1,1]=\emptyset$, 
$(I-\chi(-k^{-2}\Delta_\Gamma))\pert$ and $\pert(I-\chi(-k^{-2}\Delta_{\Gamma}))$ are both in $\Psi^{-1}_{\hbar}(\Gamma)$.

(iii) There is $L_{\max}>0$ such that for any $\chi \in C^\infty_{c}(\Rea;[0,1])$ with $\supp(1-\chi)\cap [-1,1]=\emptyset$,
$$
\big|\sigma_\hbar\big(I+\big(I-\chi(-k^{-2}\Delta_{\Gamma_-})\big)\pert\big)\big|\geq \tfrac{1}{L_{\max}}.
$$
\end{assumption}

\begin{remark}
Since multiplication by $c_0$ maps $\mathcal{V}$ to $\mathcal{V}$ for any space of functions, $\mathcal{V}$, we assume, without loss of generality,  that $c_0=1$ from now on. 
\end{remark}

Under Assumption~\ref{ass:abstract1}, the calculus of semiclassical pseudodifferential operators has the following three consequences.
Roughly speaking, the first (Lemma \ref{l:forward}) states that, uniformly in $k$, the operator $\pert$ increases Sobolev regularity by one on high frequencies, the third (Lemma \ref{l:inverse2}) states that 
$(I+(1-\psi(-k^{-2}\Delta_{\Gamma_-}))\pert)^{-1}$ does not move frequencies, and the second (Lemma \ref{l:inverse1}) states that the symbol of this operator controls its norm.

\begin{lemma}
\label{l:forward}Suppose Assumption~\ref{ass:abstract1} holds. Then for all $\psi\in C_c^\infty(\mathbb{R};[0,1])$ with $\supp (1-\psi)\cap [-1,1]=\emptyset$ and all $k_0>0$, $s\in\mathbb{R}$, is $C>0$ such that for $k>k_0$, 
\begin{equation}
\label{e:forwardNorm}
\begin{aligned}
\big\|\pert \big(I-\psi(-k^{-2}\Delta_{\Gamma_-})\big)\big\|_{H_k^s(\Gamma_-)\to H_k^{s+1}(\Gamma_-)}&\leq C \\
\big\| \big(I-\psi(-k^{-2}\Delta_{\Gamma_-})\big)\pert\big\|_{H_k^s(\Gamma_-)\to H_k^{s+1}(\Gamma_-)}&\leq C.
\end{aligned}
\end{equation}
\end{lemma}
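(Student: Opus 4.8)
\textbf{Proof plan for Lemma~\ref{l:forward}.} The plan is to reduce both estimates in \eqref{e:forwardNorm} to the two ingredients already available: Assumption~\ref{ass:abstract1}(ii), which places the high-frequency part of $\pert$ in $\Psi^{-1}_\hbar(\Gamma_-)$, and the boundedness property of semiclassical pseudodifferential operators from Informal Theorem~\ref{t:rules}, which says that an operator in $\Psi^{-1}_\hbar(\Gamma_-)$ maps $H_k^s(\Gamma_-)$ to $H_k^{s+1}(\Gamma_-)$ with norm bounded uniformly in $k$. The only genuine point to handle is that the cutoff $\psi$ in the statement need not coincide with the cutoff implicit in Assumption~\ref{ass:abstract1}(ii); the discrepancy is smooth and compactly supported, hence harmless.

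First I would fix once and for all a cutoff $\chi\in C_c^\infty(\mathbb{R};[0,1])$ with $\supp(1-\chi)\cap[-1,1]=\emptyset$. Since the given $\psi$ has the same property, both $1-\psi$ and $1-\chi$ are identically $1$ for large argument, so $g:=\chi-\psi\in C_c^\infty(\mathbb{R})$ and $g$ vanishes on a neighbourhood of $[-1,1]$. Writing
$$
I-\psi(-k^{-2}\Delta_{\Gamma_-})=\big(I-\chi(-k^{-2}\Delta_{\Gamma_-})\big)+g(-k^{-2}\Delta_{\Gamma_-})
$$
splits $\pert\big(I-\psi(-k^{-2}\Delta_{\Gamma_-})\big)$ into two pieces. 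The first, $\pert\big(I-\chi(-k^{-2}\Delta_{\Gamma_-})\big)$, lies in $\Psi^{-1}_\hbar(\Gamma_-)$ by Assumption~\ref{ass:abstract1}(ii), hence is bounded $H_k^s(\Gamma_-)\to H_k^{s+1}(\Gamma_-)$ uniformly in $k$ by the boundedness property in Informal Theorem~\ref{t:rules}.

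For the second piece I would introduce a further cutoff $\chi_1\in C_c^\infty(\mathbb{R};[0,1])$ with $\chi_1\equiv 1$ on $[-1,1]$ (so $\supp(1-\chi_1)\cap[-1,1]=\emptyset$) and $\chi_1\equiv 0$ on $\supp g$; this is possible because $\supp g$ and $[-1,1]$ are disjoint. Then $g(\lambda)=g(\lambda)\big(1-\chi_1(\lambda)\big)$ for all $\lambda$, so, by the functional calculus of $-\Delta_{\Gamma_-}$,
$$
\pert\, g(-k^{-2}\Delta_{\Gamma_-})=\big(\pert\big(I-\chi_1(-k^{-2}\Delta_{\Gamma_-})\big)\big)\, g(-k^{-2}\Delta_{\Gamma_-}).
$$
Here $\pert\big(I-\chi_1(-k^{-2}\Delta_{\Gamma_-})\big)\in\Psi^{-1}_\hbar(\Gamma_-)$ by Assumption~\ref{ass:abstract1}(ii), hence is bounded $H_k^t(\Gamma_-)\to H_k^{t+1}(\Gamma_-)$ for every $t$ uniformly in $k$, while $g\in L^\infty_{\comp}(\mathbb{R})$ and Lemma~\ref{l:functionLaplace} gives that $g(-k^{-2}\Delta_{\Gamma_-})$ is bounded $H_k^s(\Gamma_-)\to H_k^N(\Gamma_-)$ for every $N$ uniformly in $k$. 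Composing (taking $N\geq s$) yields the required uniform bound $H_k^s(\Gamma_-)\to H_k^{s+1}(\Gamma_-)$ for the second piece; adding the two pieces gives the first inequality in \eqref{e:forwardNorm}. The second inequality follows by the same argument applied on the other side: split $I-\psi(-k^{-2}\Delta_{\Gamma_-})$ as above, use Assumption~\ref{ass:abstract1}(ii) for $\big(I-\chi(-k^{-2}\Delta_{\Gamma_-})\big)\pert$, and for the remainder write $g(-k^{-2}\Delta_{\Gamma_-})\pert=g(-k^{-2}\Delta_{\Gamma_-})\big(I-\chi_1(-k^{-2}\Delta_{\Gamma_-})\big)\pert$, combining Lemma~\ref{l:functionLaplace} with Assumption~\ref{ass:abstract1}(ii) and Informal Theorem~\ref{t:rules}.

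I do not anticipate a serious obstacle: the content is essentially contained in the hypotheses, and the only care needed is the reduction from an arbitrary $\psi$ to the structure of Assumption~\ref{ass:abstract1}(ii) via the elementary identity $g=g(1-\chi_1)$, which lets the infinitely smoothing factor $g(-k^{-2}\Delta_{\Gamma_-})$ (by Lemma~\ref{l:functionLaplace}) be attached to the pseudodifferential factor $\pert\big(I-\chi_1(-k^{-2}\Delta_{\Gamma_-})\big)$. Note also that this argument deliberately avoids needing any $k$-uniform mapping property of $\pert$ between weighted spaces, which is not among the assumptions.
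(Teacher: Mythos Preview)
Your proof is correct and uses the same ingredients as the paper (Assumption~\ref{ass:abstract1}(ii) together with the boundedness property from Informal Theorem~\ref{t:rules}), but you have introduced an unnecessary detour: Assumption~\ref{ass:abstract1}(ii) is stated for \emph{any} $\chi\in C_c^\infty(\mathbb{R};[0,1])$ with $\supp(1-\chi)\cap[-1,1]=\emptyset$, so it applies directly with $\chi=\psi$, and the decomposition $I-\psi=(I-\chi)+g$ is not needed. The paper's proof is therefore a one-liner.
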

\begin{proof}
This follows from Part (ii) of Assumption~\ref{ass:abstract1}
and the boundedness property of 
Informal Theorem~\ref{t:rules} (or, more precisely, its analogue for semiclassical pseudodifferential operators on a smooth manifold; see Remark \ref{r:manifold}).
\end{proof}
\begin{lemma}
\label{l:inverse1}Suppose Assumption~\ref{ass:abstract1} holds. Then for all $\psi\in C_c^\infty(\mathbb{R};[0,1])$ with $\supp (1-\psi)\cap [-1,1]=\emptyset$ there is $k_0>0$ such that for all $s\in \mathbb{R}$ there is $C>0$ such that for $k>k_0$
$$\big(I+\big(I-\psi(-k^{-2}\Delta_{\Gamma_-})\big)\pert\big)^{-1}\in\Psi_{\hbar}^0(\Gamma_-)$$
with 
\begin{equation}
\label{e:inverseNorm}
\big\|\big(I+\big(I-\psi(-k^{-2}\Delta_{\Gamma_-})\big)\pert\big)^{-1}\big\|_{H_k^s(\Gamma_-)\to H_k^s(\Gamma_-)}\leq L_{\max}+Ck^{-1}. 
\end{equation}
\end{lemma}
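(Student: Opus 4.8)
The plan is to show that the operator $T := I+\big(I-\psi(-k^{-2}\Delta_{\Gamma_-})\big)\pert$ lies in $\Psi^0_\hbar(\Gamma_-)$, that its principal symbol is bounded below by $1/L_{\max}$, and then to invert it using the elliptic parametrix construction (essentially Theorem~\ref{t:elliptic} in the manifold setting of Remark~\ref{r:manifold}), keeping careful track of the $k$-dependence so as to get the explicit constant $L_{\max}+Ck^{-1}$ in \eqref{e:inverseNorm}.

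First I would observe that by Part~(ii) of Assumption~\ref{ass:abstract1}, $\big(I-\psi(-k^{-2}\Delta_{\Gamma_-})\big)\pert \in \Psi^{-1}_\hbar(\Gamma_-)$, hence $T \in \Psi^0_\hbar(\Gamma_-)$ (the identity being in $\Psi^0_\hbar$), and by Part~(iii) its principal symbol $\sigma_\hbar(T)$ satisfies $|\sigma_\hbar(T)|\geq 1/L_{\max}$ everywhere on $T^*\Gamma_-$ --- note the symbol is globally elliptic of order $0$, since the order-$(-1)$ piece does not change the leading behaviour at infinity, where $\sigma_\hbar(T)\to 1$. Then I would apply the elliptic parametrix construction: since $\sigma_\hbar(T)$ is elliptic of order $0$, there is $Q_0 = \Op(1/\sigma_\hbar(T)) \in \Psi^0_\hbar(\Gamma_-)$ with $Q_0 T = I + R$ where $R \in \Psi^{-1}_\hbar(\Gamma_-)$, and moreover $\sigma_\hbar(R) = 0$ so that $R$ has a symbol that is $O(\hbar)$ in $S^{-1}$. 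By the boundedness property of Informal Theorem~\ref{t:rules}, $\|R\|_{H^s_k\to H^s_k}\leq C\hbar = Ck^{-1}$, so for $k$ large enough $\|R\|<1/2$ and the Neumann series $(I+R)^{-1} = \sum_{j\geq 0}(-R)^j$ converges in $\mathcal{L}(H^s_k(\Gamma_-))$. Hence $T^{-1} = (I+R)^{-1}Q_0$ exists as an operator on $H^s_k(\Gamma_-)$; since $\pert$ is bounded on every $H^s(\Gamma_-)$ by Part~(i), and $\psi(-k^{-2}\Delta_{\Gamma_-})$ is bounded there by Lemma~\ref{l:functionLaplace}, $T$ is genuinely invertible (not merely mod smoothing), and one checks $T^{-1}\in\Psi^0_\hbar(\Gamma_-)$ by composing the parametrix identity on the other side as well.

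For the quantitative bound \eqref{e:inverseNorm}, I would use the sharpened $L^2$-boundedness statement in Part~3 of Informal Theorem~\ref{t:rules}: $\|\Op(a)\|_{L^2\to L^2}\leq \sup|a| + \lot$. Applied to $Q_0=\Op(1/\sigma_\hbar(T))$, this gives $\|Q_0\|_{H^s_k\to H^s_k}\leq \sup|1/\sigma_\hbar(T)| + Ck^{-1}\leq L_{\max} + Ck^{-1}$, using the lower bound from Part~(iii) of Assumption~\ref{ass:abstract1}; here I should pass through $H^s_k$ rather than just $L^2$, which is fine since conjugating by $(-k^{-2}\Delta_{\Gamma_-}+1)^{s/2}$ changes $Q_0$ by a lower-order ($O(k^{-1})$ in operator norm) term, as that conjugation commutes with $\Op$ up to $\Psi^{-1}_\hbar$ errors. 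Then $\|T^{-1}\|_{H^s_k\to H^s_k}\leq \|(I+R)^{-1}\|\,\|Q_0\| \leq (1 + Ck^{-1})(L_{\max}+Ck^{-1}) = L_{\max}+Ck^{-1}$ after absorbing constants, which is the desired estimate.

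The main obstacle I anticipate is the bookkeeping of $k$-dependence throughout --- in particular, ensuring that every ``lower-order term'' that appears (in the composition formula, in the $L^2$-boundedness refinement, in conjugating to $H^s_k$) is genuinely $O(k^{-1})$ in the relevant operator norm and not merely ``smaller in $\hbar$'' in some weaker sense. This requires using the \emph{precise} version of Informal Theorem~\ref{t:rules}, namely Theorem~\ref{t:rules2}, where the remainders in composition and in the G\aa rding/boundedness statements are quantified as $O(\hbar)$ maps between the appropriate semiclassical Sobolev spaces; with that in hand the estimates are routine, but the constant $L_{\max}$ (rather than $L_{\max}+\epsilon$ or $2L_{\max}$) in the leading term is what forces one to invoke the sharp symbol bound on $\|\Op(a)\|_{L^2\to L^2}$ rather than a cruder estimate.
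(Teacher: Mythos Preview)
Your proposal is correct and follows essentially the same route as the paper: show $T\in\Psi^0_\hbar(\Gamma_-)$ is globally elliptic via Parts~(ii)--(iii) of Assumption~\ref{ass:abstract1}, build a parametrix, invert the remainder, and extract the sharp constant $L_{\max}$ from the $\sup|a|$ term in the $L^2$ boundedness estimate (with the $H^s_k$ case handled by conjugation, exactly as you describe --- the paper cites \cite[Lemma~3.2]{GaRaSp:25} for this step).

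The one structural difference is that the paper iterates the parametrix construction to obtain $E\in\Psi^0_\hbar(\Gamma_-)$ with $|\sigma_\hbar(E)|\le L_{\max}$ and $ET=I+R$ where $R=O(k^{-\infty})_{\Psi^{-\infty}_\hbar}$, rather than stopping after one step with $R\in\hbar\Psi^{-1}_\hbar$ as you do. The payoff is that $(I+R)^{-1}=I+O(k^{-\infty})_{\Psi^{-\infty}_\hbar}$ is then \emph{immediately} in $\Psi^0_\hbar$ by the definition \eqref{e:pseudos}, so $T^{-1}=(I+R)^{-1}E\in\Psi^0_\hbar(\Gamma_-)$ without further argument. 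Your Neumann-series inverse $(I+R)^{-1}=\sum_j(-R)^j$ with $R\in\hbar\Psi^{-1}_\hbar$ gives the norm bound just as cleanly, but the infinite sum is not obviously a pseudodifferential operator (convergence is in operator norm, not in any symbol topology), which is why you flag ``one checks $T^{-1}\in\Psi^0_\hbar$'' as a separate step. Iterating the parametrix closes that gap with no extra work and is the standard move here.
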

\begin{proof}
By Theorem~\ref{t:elliptic} (or rather its analogue on a manifold) together with Parts (ii) and (iii) of Assumption~\ref{ass:abstract1} and the composition property of (the manifold analogue of) Informal Theorem \ref{t:rules}, 
there is $E\in \Psi^0_{\hbar}(\Gamma_-)$ with $|\sigma_\hbar(E)|\leq L_{\max}$ such that 
$$
E\big(I+(I-\chi(-k^{-2}\Delta_{\Gamma_-}))\pert \big)=I+R,\quad\text{ with }\, R=O(k^{-\infty})_{\Psi_{\hbar}^{-\infty}}. 
$$
Therefore, for $\hbar$ small enough (and hence $k_0>0$ large enough), 
$$
(I+(1-\psi(-k^{-2}\Delta_{\Gamma_-}))\pert)^{-1}=(I+R)^{-1}E=E-R(I+R)^{-1}E=E+O(k^{-\infty})_{\Psi^{-\infty}_\hbar}.
$$
Hence, $(I+(1-\psi(-k^{-2}\Delta_{\Gamma_-}))\pert)^{-1}\in \Psi^{0}_{\hbar}(\Gamma_-)$ and $|\sigma_\hbar((I+(1-\psi(-k^{-2}\Delta_{\Gamma_-}))\pert)^{-1})|\leq L_{\max}$. The bound \eqref{e:inverseNorm} then follows from the boundedness property of Informal Theorem~\ref{t:rules}, using that the $L^2\to L^2$ bound in terms of the symbol recorded there can be converted into an  analogous $H^s_k\to H^s_k$ bound; see \cite[Lemma 3.2]{GaRaSp:25}.
\end{proof}

\begin{lemma}
\label{l:inverse2}
Suppose Assumption~\ref{ass:abstract1} holds and let $\psi\in C_c^\infty(\mathbb{R};[0,1])$ with $\supp (1-\psi)\cap [-1,1]=\emptyset$. Then there is $k_0>0$ such that for all $N>0$, $\psi_1,\psi_2\in C_c^\infty(\mathbb{R})$ with $\supp \psi_1\cap \supp (1-\psi_2)=\emptyset$, and $\supp (1-\psi_1)\cap [-1,1]=\emptyset$,  there is $C>0$ such that for $k>k_0$
\begin{equation}
\label{e:inverseLocal}
\begin{aligned}
&\|\psi_1(-k^{-2}\Delta_{\Gamma})\pert(1-\psi_2(-k^{-2}\Delta_{\Gamma_-}))\|_{H_k^{-N}(\Gamma_-)\to H_k^N(\Gamma_-)}\leq Ck^{-N},\\
&\|\psi_1(-k^{-2}\Delta_{\Gamma})(I+(1-\psi(-k^{-2}\Delta_{\Gamma_-}))\pert)^{-1}(1-\psi_2(-k^{-2}\Delta_{\Gamma_-})\|_{H_k^{-N}(\Gamma_-))\to H_k^N(\Gamma_-)}\\
&\hspace{8cm}\leq Ck^{-N}\\
&\|(1-\psi_2(-k^{-2}\Delta_{\Gamma}))(I+(1-\psi(-k^{-2}\Delta_{\Gamma_-}))\pert)^{-1}\psi_1(-k^{-2}\Delta_{\Gamma_-})\|_{H_k^{-N}(\Gamma_-))\to H_k^N(\Gamma_-)}\\
&\hspace{8cm}\leq Ck^{-N}.
\end{aligned}
\end{equation}
\end{lemma}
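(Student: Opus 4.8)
The plan is to prove each of the three bounds in~\eqref{e:inverseLocal} by the same mechanism: these are all statements that a composition of a pseudodifferential (or function-of-the-Laplacian) operator with frequency support in $[-1,1]$, a pseudodifferential operator in $\Psi^0_\hbar(\Gamma_-)$ (or $\Psi^{-1}_\hbar$), and a second frequency cutoff with frequency support \emph{away} from $[-1,1]$, is $O(k^{-\infty})_{H^{-N}_k\to H^N_k}$ whenever the two frequency cutoffs have disjoint supports. The underlying reason is the composition/pseudolocality-in-frequency property of the pseudodifferential calculus (Part of Informal Theorem~\ref{t:rules}, or rather its manifold analogue, Remark~\ref{r:manifold}), applied with the Laplacian cutoffs $\psi_j(-k^{-2}\Delta_{\Gamma_-})$ playing the role of $\xi$-cutoffs.

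First I would handle the first bound, on $\psi_1(-k^{-2}\Delta_{\Gamma_-})\pert(1-\psi_2(-k^{-2}\Delta_{\Gamma_-}))$. The key observation is that $\psi_1(-k^{-2}\Delta_{\Gamma_-})$ and $1-\psi_2(-k^{-2}\Delta_{\Gamma_-})$ are both semiclassical pseudodifferential operators on $\Gamma_-$ (by Lemma~\ref{l:functionLaplace} and the fact that $-k^{-2}\Delta_{\Gamma_-}$ is a semiclassical differential operator, whose symbol is the square of the cometric), with $\sigma_\hbar(\psi_1(-k^{-2}\Delta_{\Gamma_-}))$ supported in $\{|\xi|^2\leq 1+\delta\}$ and $\sigma_\hbar(1-\psi_2(-k^{-2}\Delta_{\Gamma_-}))$ supported in $\{|\xi|^2\geq 1-\delta'\}$ with the supports disjoint after choosing the cutoffs appropriately; indeed, since $\supp\psi_1\cap\supp(1-\psi_2)=\emptyset$, the symbol supports are disjoint in frequency. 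By Part (ii) of Assumption~\ref{ass:abstract1}, $\pert(I-\chi(-k^{-2}\Delta_{\Gamma_-}))\in\Psi^{-1}_\hbar(\Gamma_-)$ for any suitable $\chi$; choosing $\chi$ so that $\supp(1-\chi)\cap[-1,1]=\emptyset$ and $1-\psi_2 = (1-\psi_2)(1-\chi)$, write $\pert(1-\psi_2(-k^{-2}\Delta_{\Gamma_-})) = \big(\pert(I-\chi(-k^{-2}\Delta_{\Gamma_-}))\big)(1-\psi_2(-k^{-2}\Delta_{\Gamma_-}))$, which is a product in $\Psi^{-1}_\hbar(\Gamma_-)$ whose symbol is supported where $1-\psi_2$ is nonzero. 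Then the triple composition $\psi_1(-k^{-2}\Delta_{\Gamma_-})\cdot\pert(I-\chi(\cdots))\cdot(1-\psi_2(-k^{-2}\Delta_{\Gamma_-}))$ has, by the composition property, symbol supported in the intersection of the symbol support of $\psi_1(-k^{-2}\Delta_{\Gamma_-})$ and the symbol support of $(1-\psi_2(-k^{-2}\Delta_{\Gamma_-}))$ --- which is empty. Hence the composition is $O(\hbar^\infty)_{\Psi^{-\infty}_\hbar(\Gamma_-)}$, i.e., $O(k^{-N})_{H^{-N}_k(\Gamma_-)\to H^N_k(\Gamma_-)}$ for all $N$, which is the first estimate.

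For the second and third bounds, the same argument applies with $\pert$ replaced by $\big(I+(1-\psi(-k^{-2}\Delta_{\Gamma_-}))\pert\big)^{-1}$. By Lemma~\ref{l:inverse1}, this inverse lies in $\Psi^0_\hbar(\Gamma_-)$ for $k\geq k_0$, and moreover --- crucially --- the proof of Lemma~\ref{l:inverse1} shows it equals $E + O(k^{-\infty})_{\Psi^{-\infty}_\hbar}$ where $E\in\Psi^0_\hbar(\Gamma_-)$ has principal symbol satisfying $|\sigma_\hbar(E)|\leq L_{\max}$. I would then repeat the support-of-the-symbol argument: compose $\psi_1(-k^{-2}\Delta_{\Gamma_-})$ (symbol supported in $\{|\xi|^2\lesssim 1\}$) with $E$ and with $(1-\psi_2(-k^{-2}\Delta_{\Gamma_-}))$ (symbol supported in $\{|\xi|^2\gtrsim 1\}$, disjoint from the first since $\supp\psi_1\cap\supp(1-\psi_2)=\emptyset$); the composition property gives symbol support equal to the (empty) intersection, so the composition is $O(\hbar^\infty)_{\Psi^{-\infty}_\hbar}$, and the $O(k^{-\infty})$ remainder term contributes at the same order. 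The third estimate is obtained identically with the order of the two cutoffs reversed (equivalently, by taking adjoints, using that $-\Delta_{\Gamma_-}$ and the functions of it are self-adjoint while $\pert^*$ satisfies the analogue of Assumption~\ref{ass:abstract1}(ii), or simply re-running the argument noting composition of symbol supports is symmetric). The bounds in the norms $H^{-N}_k(\Gamma_-)\to H^N_k(\Gamma_-)$ then follow from the definition of $O(\hbar^\infty)_{\Psi^{-\infty}_\hbar}$ (cf.~\eqref{e:residualclass}) together with the identification~\eqref{e:weightedNormGamma} of $H^s_k(\Gamma_-)$ via the functional calculus of $-\Delta_{\Gamma_-}$.

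The main obstacle --- really the only non-bookkeeping point --- is justifying that functions of $-k^{-2}\Delta_{\Gamma_-}$ are semiclassical pseudodifferential operators on $\Gamma_-$ with the claimed (compactly supported, or frequency-localised) symbols, so that the composition/symbol-support calculus of Informal Theorem~\ref{t:rules} can legitimately be applied to them; this is where one invokes Lemma~\ref{l:functionLaplace} (which already encodes the needed smoothing/boundedness) together with the standard fact (e.g., via the Helffer--Sj\"ostrand formula, as recalled in \S\ref{s:pseudo}) that $\chi(-k^{-2}\Delta_{\Gamma_-})\in\Psi^{-\infty}_\hbar(\Gamma_-)$ for $\chi\in C_c^\infty$ with symbol $\chi(|\xi|^2_g)$ modulo lower-order terms. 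Once that is in hand, everything reduces to the observation that disjoint frequency supports force an empty intersection of symbol supports, which is exactly the frequency-pseudolocality phenomenon already used (for the Helmholtz operator $P$) in Lemma~\ref{l:frequencyPseudolocalFinal}.
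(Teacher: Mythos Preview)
Your proposal is correct and follows essentially the same route as the paper's sketch proof: both reduce all three bounds to the single statement that $\psi_1(-k^{-2}\Delta_{\Gamma_-})\,A\,(1-\psi_2(-k^{-2}\Delta_{\Gamma_-}))=O(k^{-\infty})_{\Psi_\hbar^{-\infty}}$ for any $A\in\Psi^m_\hbar(\Gamma_-)$, invoking the fact that $\varphi(-k^{-2}\Delta_{\Gamma_-})$ is a semiclassical pseudodifferential operator with symbol supported in $\{|\xi|_g^2\in\supp\varphi\}$ (the paper cites \cite[Lemma~3.9]{GaRaSp:25} for this, you invoke Helffer--Sj\"ostrand) together with the composition property. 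Your intermediate cutoff $\chi$ for the first bound is exactly the paper's $\psi_3$, and your use of Lemma~\ref{l:inverse1} to place the inverse in $\Psi^0_\hbar$ for the second and third bounds matches the paper verbatim.
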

\begin{proof}[Sketch proof of Lemma~\ref{l:inverse2}]
To prove the result, we use a notion of frequency pseudolocality similar to that in Lemma~\ref{l:frequencyPseudolocalFinal}. In particular, one can check using \emph{either} \cite[Theorem 5.37]{AGS2} \emph{or} the fact that by, e.g.,~\cite[Lemma 3.9]{GaRaSp:25} for $\varphi\in C_c^\infty(\mathbb{R})$, 
$$
\varphi(-k^{-2}\Delta_{\Gamma_-})=\Op(a)+O(k^{-\infty})_{\Psi_{\hbar}^{-\infty}}
$$
with
$$
\supp a\subset \big\{(x',\xi')\in T^*\Gamma_-\,:\,|\xi'|_{g_{\Gamma_-}}^2\in \supp \varphi\big\},
$$
together with 
the composition property from 
Informal Theorem~\ref{t:rules} that for any $m\in\mathbb{R}$, any $\psi_1,\psi_2$ as in the statement, and any $A\in\Psi_{\hbar}^m$ 
$$
\psi_1(-k^{-2}\Delta_{\Gamma_-})A(1-\psi_2(-k^{-2}\Delta_{\Gamma_-})=O(k^{-\infty})_{\Psi_{\hbar}^{-\infty}}
$$
and 
$$
(1-\psi_2(-k^{-2}\Delta_{\Gamma_-})
A\psi_1(-k^{-2}\Delta_{\Gamma_-})
=O(k^{-\infty})_{\Psi_{\hbar}^{-\infty}}.
$$
The last two bounds in \eqref{e:inverseLocal} immediately follow since 
$(I+(1-\psi(-k^{-2}\Delta_{\Gamma_-}))\pert)^{-1}\in\Psi^0_{\hbar}$ by Lemma~\ref{l:inverse1}.
For the first bound in \eqref{e:inverseLocal}, let 
$\psi_3\in C_c^\infty(\mathbb{R})$ with $\supp(1-\psi_3)\cap [-1,1]=\emptyset$, and $\supp \psi_3\cap \supp (1-\psi_2)=\emptyset$. These support properties imply that
\begin{align*}
&\psi_1(-k^{-2}\Delta_{\Gamma_-})\pert(1-\psi_2(-k^{-2}\Delta_{\Gamma_-})
\\
&\hspace{2cm}=
\psi_1(-k^{-2}\Delta_{\Gamma_-})\pert
(1-\psi_3(-k^{-2}\Delta_{\Gamma_-})
(1-\psi_2(-k^{-2}\Delta_{\Gamma_-}),
\end{align*}
and then the first bound in \eqref{e:inverseLocal} follows since 
$\pert(1-\psi_3(-k^{-2}\Delta_{\Gamma_-}))\in\Psi_{\hbar}^{-1}$ by Part (ii) of Assumption \ref{ass:abstract1}.
\epf



We now show that the high frequency components of $(I+\pert)^{-1}$ are well behaved; this result can be viewed as a boundary-integral analogue of the fact that the Helmholtz operator is semiclassically elliptic on high frequencies (discussed in \S\ref{s:how}). 
\begin{lemma}
\label{l:hfGood}
Suppose Assumption~\ref{ass:abstract1} holds and let $\chi\in C_c^\infty(\mathbb{R};[0,1])$ with $\supp (1-\chi)\cap [-1,1]=\emptyset$. Then for all $k_0>0$, $s\in \Rea$, and $N>0$ there is $C>0$ such that for all $k>k_0$ 
\begin{align}
\|(I+\pert)^{-1}\|_{H_k^s\to H_k^s}&\leq C\rhoB(k)\label{e:HsInverseNorm},\\
\big\|(I+\pert)^{-1}\big(I-\chi(-k^{-2}\Delta_{\Gamma_-})\big)\big\|_{H_k^{s}(\Gamma_-)\to H_k^s(\Gamma_-)}&\leq L_{\max}+Ck^{-1}+Ck^{-N}\rhoB(k),\label{e:hfInverseNorm1}\\
\big\|\big(I-\chi(-k^{-2}\Delta_{\Gamma_-})\big)(I+\pert)^{-1}\big\|_{H_k^{s}(\Gamma_-)\to H_k^s(\Gamma_-)}&\leq L_{\max}+Ck^{-1}+Ck^{-N}\rhoB(k).\label{e:hfInverseNorm2}
\end{align}
\end{lemma}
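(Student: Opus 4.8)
The plan is to establish \eqref{e:HsInverseNorm} by a regularity bootstrap and then upgrade it to \eqref{e:hfInverseNorm1} and \eqref{e:hfInverseNorm2}. Everything rests on the factorisation, valid for any $\psi\in C_c^\infty(\mathbb{R};[0,1])$ with $\supp(1-\psi)\cap[-1,1]=\emptyset$,
$$
\operator=I+\pert=G^{-1}(I+B),\qquad G:=\big(I+(I-\psi(-k^{-2}\Delta_{\Gamma_-}))\pert\big)^{-1},\quad B:=G\,\psi(-k^{-2}\Delta_{\Gamma_-})\,\pert,
$$
so $\operator^{-1}=(I+B)^{-1}G$. By Lemma~\ref{l:inverse1}, $G\in\Psi^0_{\hbar}(\Gamma_-)$ with $\|G\|_{H_k^s\to H_k^s}\leq L_{\max}+Ck^{-1}$, and by Lemma~\ref{l:forward} $\|G^{-1}\|_{H_k^s\to H_k^s}\leq C$. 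The key structural point is that $\psi(-k^{-2}\Delta_{\Gamma_-})$ is (a bounded multiple of) the spectral projection of $-\Delta_{\Gamma_-}$ onto eigenvalues $\lesssim k^2$, so its range is a ``semiclassically smooth'' subspace: $\|w\|_{H_k^N(\Gamma_-)}\leq C_N\|w\|_{L^2(\Gamma_-)}$ uniformly in $k$ on this range, and hence also on $\operatorname{ran}B\subset G(\operatorname{ran}\psi(-k^{-2}\Delta_{\Gamma_-}))$.

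For \eqref{e:HsInverseNorm} with $s\geq0$ it suffices, by $\|G\|_{H_k^s\to H_k^s}\leq C$, to bound $(I+B)^{-1}$ on $H_k^s$. Using $B(I+B)^{-1}=I-(I+B)^{-1}$, whose range lies in $\operatorname{ran}B$, the uniform smoothing estimate gives $\|(I+B)^{-1}u\|_{H_k^s}\leq\|u\|_{H_k^s}+C\|B(I+B)^{-1}u\|_{L^2}\leq C(1+\rhoB(k))\|u\|_{H_k^s}$, where I used $(I+B)^{-1}=(I+\pert)^{-1}G^{-1}$ and $\|G^{-1}\|_{L^2\to L^2}\leq C$ to get $\|(I+B)^{-1}\|_{L^2\to L^2}\leq C\rhoB(k)$; since $\rhoB(k)\geq c_0^{-1}$ this is \eqref{e:HsInverseNorm}. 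For $s<0$ I pass to the adjoint: $\operator^{*}=I+\pert^{*}$ satisfies Assumption~\ref{ass:abstract1} with the same $L_{\max}$ and the same value of $\rhoB(k)$ (using the adjoint property in Informal Theorem~\ref{t:rules} and the self-adjointness of $\chi(-k^{-2}\Delta_{\Gamma_-})$ for real $\chi$), and $\|\operator^{-1}\|_{H_k^s\to H_k^s}=\|(\operator^{*})^{-1}\|_{H_k^{-s}\to H_k^{-s}}$.

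For \eqref{e:hfInverseNorm1}, choose in addition $\supp\psi$ inside the neighbourhood of $[-1,1]$ on which $\chi\equiv1$ (possible since $\supp(1-\chi)\cap[-1,1]=\emptyset$), and $\tilde\chi\in C_c^\infty(\mathbb{R};[0,1])$ with $\tilde\chi\equiv1$ on $\supp\psi$ and near $[-1,1]$ but $\supp\tilde\chi\cap\supp(1-\chi)=\emptyset$. From $(I+B)^{-1}=I-(I+B)^{-1}B$ and $B(1-\tilde\chi(-k^{-2}\Delta_{\Gamma_-}))=G\,\psi(-k^{-2}\Delta_{\Gamma_-})\,\pert(1-\tilde\chi(-k^{-2}\Delta_{\Gamma_-}))$, the disjointness of $\supp\psi$ and $\supp(1-\tilde\chi)$ together with $\pert(1-\tilde\chi(-k^{-2}\Delta_{\Gamma_-}))\in\Psi^{-1}_{\hbar}(\Gamma_-)$, Lemma~\ref{l:inverse2}, and the $H_k^N$-bound on $(I+B)^{-1}$ from the previous paragraph give
$$
(I+B)^{-1}\big(1-\tilde\chi(-k^{-2}\Delta_{\Gamma_-})\big)=\big(1-\tilde\chi(-k^{-2}\Delta_{\Gamma_-})\big)+R,\qquad\|R\|_{H_k^s\to H_k^s}\leq Ck^{-N}\rhoB(k).
$$
Writing $G(1-\chi(-k^{-2}\Delta_{\Gamma_-}))=(1-\tilde\chi(-k^{-2}\Delta_{\Gamma_-}))G(1-\chi(-k^{-2}\Delta_{\Gamma_-}))+\tilde\chi(-k^{-2}\Delta_{\Gamma_-})G(1-\chi(-k^{-2}\Delta_{\Gamma_-}))$, with the last term $O(k^{-\infty})_{H_k^{-N}\to H_k^N}$ by pseudolocality of $G\in\Psi^0_{\hbar}$ against the disjointly-supported $\tilde\chi$ and $1-\chi$, and substituting into $\operator^{-1}(1-\chi(-k^{-2}\Delta_{\Gamma_-}))=(I+B)^{-1}G(1-\chi(-k^{-2}\Delta_{\Gamma_-}))$, I obtain that every term is bounded on $H_k^s$ either by $L_{\max}+Ck^{-1}$ (the contribution $(1-\tilde\chi(-k^{-2}\Delta_{\Gamma_-}))G(1-\chi(-k^{-2}\Delta_{\Gamma_-}))$) or by $Ck^{-N}\rhoB(k)$; this is \eqref{e:hfInverseNorm1}. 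Estimate \eqref{e:hfInverseNorm2} then follows by duality from \eqref{e:hfInverseNorm1} applied to $\operator^{*}$, using again the self-adjointness of $\chi(-k^{-2}\Delta_{\Gamma_-})$ and that $\rhoB$ is unchanged under adjoints.

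The crux of the matter is the apparent circularity: the high-frequency behaviour of $\operator^{-1}$ that we are trying to pin down controls the error terms in the identity used to analyse it. The factorisation through $G$ breaks the circle — $G$ is a genuine semiclassical pseudodifferential operator with symbol, hence operator norm up to $O(k^{-1})$, bounded by $L_{\max}$, so the entire $\rhoB(k)$-dependence is confined to $(I+B)^{-1}$, whose ``bad'' direction lives in the finite-dimensional, uniformly-semiclassically-smooth range of $\psi(-k^{-2}\Delta_{\Gamma_-})$; this direction is annihilated to infinite order in $\hbar$ by the high-frequency cutoff and is harmless for the regularity bootstrap. The remaining effort is the organisational task of choosing the nested cutoffs $\psi,\tilde\chi,\chi$ so that every off-diagonal composition is covered by Lemma~\ref{l:inverse2} (or is a disjoint pair of functions of $-\Delta_{\Gamma_-}$), noting in particular that $\pert$ enters the genuinely new estimates only as $\pert(1-\tilde\chi(-k^{-2}\Delta_{\Gamma_-}))\in\Psi^{-1}_{\hbar}(\Gamma_-)$, which is uniformly bounded, so no polynomially-growing operator norms of $\pert$ intervene.
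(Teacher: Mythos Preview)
Your proof is correct and rests on the same underlying ingredients as the paper's (Lemmas~\ref{l:inverse1} and~\ref{l:inverse2}, Lemma~\ref{l:functionLaplace}, and duality for $s<0$), but the organisation is genuinely different. The paper does not introduce your factorisation $\operator^{-1}=(I+B)^{-1}G$. For \eqref{e:HsInverseNorm} it instead splits $\phi=(I+\pert)^{-1}g$ directly as $\widetilde\chi(-k^{-2}\Delta_{\Gamma_-})\phi+(I-\widetilde\chi(-k^{-2}\Delta_{\Gamma_-}))\phi$, bounds the first piece in $H_k^s$ via Lemma~\ref{l:functionLaplace} and the $L^2$ definition of $\rhoB$, and controls the second piece by an elliptic-parametrix bound for the operator $(I-\chi(-k^{-2}\Delta_{\Gamma_-}))(I+\pert)$ (using Assumption~\ref{ass:abstract1}(iii) via Theorem~\ref{t:elliptic}). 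For \eqref{e:hfInverseNorm1} and \eqref{e:hfInverseNorm2} the paper computes directly
\[
(I+\pert)\,G\,(I-\chi(-k^{-2}\Delta_{\Gamma_-}))=(I-\chi(-k^{-2}\Delta_{\Gamma_-}))+O(k^{-\infty})_{H_k^{-N}\to H_k^N}
\]
(and the analogous identity with the cutoff on the left), then multiplies by $(I+\pert)^{-1}$ and invokes \eqref{e:HsInverseNorm}; it does \emph{not} use duality for \eqref{e:hfInverseNorm2}. Your factorisation is a clean way to isolate the low-frequency part $B$ and makes the ``circularity'' remark transparent; the paper's route is slightly shorter for \eqref{e:hfInverseNorm1}--\eqref{e:hfInverseNorm2} since it avoids the intermediate cutoff $\tilde\chi$ and the second splitting of $G(I-\chi)$. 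One small omission in your write-up: Lemmas~\ref{l:inverse1} and~\ref{l:inverse2} only supply $G$ for $k$ above some threshold, so you should note (as the paper does) that the continuity in Assumption~\ref{ass:abstract1}(i) handles the finitely many remaining $k\in(k_0,k_1]$.
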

Before proving Lemma \ref{l:hfGood}, we highlight that the rest of this section only uses Lemmas \ref{l:forward} and \ref{l:hfGood} and the following corollary of these two results (i.e., Lemmas \ref{l:inverse1} and \ref{l:inverse2} are only used to prove Lemma \ref{l:hfGood}.

\begin{corollary}\label{c:hfGood}
Suppose Assumption~\ref{ass:abstract1} holds and let $\chi\in C_c^\infty(\mathbb{R};[0,1])$ with $\supp (1-\chi)\cap [-1,1]=\emptyset$. Then for all $k_0>0$ and $s,N>0$ there is $C>0$ such that for all $k>k_0$ 
\begin{align*}
\big\| \pert (I+\pert)^{-1} 
\big(I-\chi(-k^{-2}\Delta_{\Gamma_-})\big)\big\|_{H^s_k(\Gamma_-) \to H^{s+1}_k(\Gamma_-)} \leq C + C k^{-N}\rhoB(k).
\end{align*}
    \end{corollary}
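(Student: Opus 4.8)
The plan is to reduce the estimate to compositions of operators that are already controlled by Lemmas~\ref{l:forward} and~\ref{l:hfGood}, using the algebraic identity
\[
\pert(I+\pert)^{-1}=(I+\pert)^{-1}\pert=I-(I+\pert)^{-1},
\]
which is valid since, by Assumption~\ref{ass:abstract1}(i), $\pert$ and $(I+\pert)^{-1}$ are bounded operators on every $H^s(\Gamma_-)$ and commute. Throughout, write $\chi_k:=\chi(-k^{-2}\Delta_{\Gamma_-})$.

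First I would record two elementary facts. (a) Since $\chi\in C^\infty_c(\mathbb{R})$, Lemma~\ref{l:functionLaplace} gives that $\chi_k$ is bounded $H^{-N}_k(\Gamma_-)\to H^N_k(\Gamma_-)$ uniformly in $k$ for every $N$; in particular $\chi_k$, and likewise $\chi_k(I-\chi_k)=\big(\chi(1-\chi)\big)(-k^{-2}\Delta_{\Gamma_-})$, are bounded $H^{s}_k(\Gamma_-)\to H^{s+1}_k(\Gamma_-)$ uniformly in $k$ for every $s$ — i.e.\ these smoothing operators ``gain a derivative for free''. (b) Since $\supp(1-\chi)\cap[-1,1]=\emptyset$, Lemma~\ref{l:forward} applies with $\psi:=\chi$, so $\pert(I-\chi_k)$ is bounded $H^s_k(\Gamma_-)\to H^{s+1}_k(\Gamma_-)$ uniformly in $k$. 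The role of the $\chi_k$-cutoff is precisely that $\pert$ on its own is not uniformly bounded $H^s_k(\Gamma_-)\to H^{s+1}_k(\Gamma_-)$, whereas $\pert(I-\chi_k)$ is.

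The argument then proceeds by repeated use of the identity above together with the splitting $I=\chi_k+(I-\chi_k)$. Using the commutation identity,
\[
\pert(I+\pert)^{-1}(I-\chi_k)=(I+\pert)^{-1}\pert(I-\chi_k),
\]
I would then split $(I+\pert)^{-1}=(I-\chi_k)(I+\pert)^{-1}+\chi_k(I+\pert)^{-1}$. For the first piece, $(I-\chi_k)(I+\pert)^{-1}$ is bounded $H^{s+1}_k\to H^{s+1}_k$ by $L_{\max}+Ck^{-1}+Ck^{-N}\rhoB(k)$ by~\eqref{e:hfInverseNorm2}, and composing this on the right with $\pert(I-\chi_k):H^s_k\to H^{s+1}_k$ from (b) gives a contribution bounded by $C+Ck^{-N}\rhoB(k)$. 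For the second piece, applying the identity once more,
\[
\chi_k(I+\pert)^{-1}\pert(I-\chi_k)=\chi_k(I-\chi_k)-\chi_k(I+\pert)^{-1}(I-\chi_k);
\]
the first term here is bounded $H^s_k\to H^{s+1}_k$ uniformly in $k$ by (a), while in the second term $(I+\pert)^{-1}(I-\chi_k)$ is bounded $H^s_k\to H^s_k$ by $L_{\max}+Ck^{-1}+Ck^{-N}\rhoB(k)$ by~\eqref{e:hfInverseNorm1}, so composing with $\chi_k:H^s_k\to H^{s+1}_k$ from (a) again produces a contribution bounded by $C+Ck^{-N}\rhoB(k)$. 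Adding the three contributions yields the claimed bound.

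There is no genuinely hard step here — the corollary is engineered to be a quick consequence of Lemmas~\ref{l:forward} and~\ref{l:hfGood}. The only points that need care are the bookkeeping of Sobolev indices (invoking the uniform-in-$k$ bounds of Lemmas~\ref{l:forward},~\ref{l:hfGood} and~\ref{l:functionLaplace} at the correct indices, and using the ``free derivative'' from $\chi_k$ exactly where $\pert$ by itself would fail to be uniformly bounded between semiclassical spaces), and confirming that the identity $\pert(I+\pert)^{-1}=(I+\pert)^{-1}\pert$ is applied only to genuinely bounded operators, which holds by Assumption~\ref{ass:abstract1}(i).
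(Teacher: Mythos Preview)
Your proof is correct and follows essentially the same approach as the paper: insert $I=(I-\chi_k)+\chi_k$ on the left and use the identity $\pert(I+\pert)^{-1}=I-(I+\pert)^{-1}$ for the $\chi_k$ piece. The only cosmetic differences are that the paper inserts the split directly on the left of $\pert(I+\pert)^{-1}(I-\chi_k)$ (without first commuting $\pert$ through $(I+\pert)^{-1}$), and correspondingly bounds the first piece via $(I-\chi_k)\pert$ and~\eqref{e:hfInverseNorm1} rather than $\pert(I-\chi_k)$ and~\eqref{e:hfInverseNorm2}; these are mirror images of each other and both appear in Lemmas~\ref{l:forward} and~\ref{l:hfGood}.
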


\bpf
Since $I= (I-\chi(-k^{-2}\Delta_{\Gamma_-})) + 
\chi(-k^{-2}\Delta_{\Gamma_-})$,
\begin{align*}
&\pert (I+\pert)^{-1} 
\big(I-\chi(-k^{-2}\Delta_{\Gamma_-})\big)\\
&\qquad = 
\big(I-\chi(-k^{-2}\Delta_{\Gamma_-})\big)
\pert (I+\pert)^{-1} 
\big(I-\chi(-k^{-2}\Delta_{\Gamma_-})\big)
\\
&\hspace{3cm}
+ \chi(-k^{-2}\Delta_{\Gamma_-})
\pert (I+\pert)^{-1} 
\big(I-\chi(-k^{-2}\Delta_{\Gamma_-})\big)\\
&\qquad= 
\big(I-\chi(-k^{-2}\Delta_{\Gamma_-})\big)
\pert (I+\pert)^{-1} 
\big(I-\chi(-k^{-2}\Delta_{\Gamma_-})\big)
\\
&\hspace{3cm}
+ \chi(-k^{-2}\Delta_{\Gamma_-})
\big( I - (I+\pert)^{-1} \big)
\big(I-\chi(-k^{-2}\Delta_{\Gamma_-})\big),
\end{align*}
and then the result follows from Lemma \ref{l:forward}, \eqref{e:hfInverseNorm1}, and 
Lemma \ref{l:functionLaplace}.
\epf

\begin{proof}[Proof of Lemma \ref{l:hfGood}]
We first prove \eqref{e:HsInverseNorm}. First observe that, by Part (i)
of Assumption \ref{ass:abstract1} it is sufficient to prove the result for $k_0$ sufficiently large. 
Next observe that once \eqref{e:HsInverseNorm} is proved for $s\geq 0$, since $\|(I+\pert)^{-1}\|_{H^{s}_\hbar(\Gamma) \to H^{s}_\hbar(\Gamma)} = \|((I+\pert)^*)^{-1} \|_{H^{-s}_\hbar(\Gamma) \to H^{-s}_\hbar(\Gamma)}$, the result for $s<0$ follows by applying the result for $s\geq 0$ to $(I+\pert)^*$.

Given $g\in H^s_k$, let $\phi\in H^s_k$ satisfy $(I+\pert)\phi=g$. We need to show that 
\beq\label{e:sweet1}
\|\phi\|_{H^s_k} \leq C \rhoB(k) \|g\|_{H^s_k}.
\eeq
Let $\widetilde{\chi} \in C_c^\infty(\Rea;[0,1])$ with $\supp(1-\widetilde{\chi})\cap [-1,1]=\emptyset$ and $\supp(1-\widetilde{\chi})\cap \supp\chi=\emptyset$.
By Lemma \ref{l:functionLaplace} and the definition of $\rhoB(k)$ \eqref{e:rhoB},
\begin{align}\label{e:GRSold1}
\big\| \widetilde{\chi}(-k^{-2}\Delta_{\Gamma_-})\phi\big\|_{H^s_k}\leq C \| \phi\|_{L^2} \leq C\rhoB(k) \|g\|_{L^2}\leq C\rhoB(k) \|g\|_{H^s_k}.
\end{align}
We now claim that 
\beq\label{e:ellipPara1}
\big\|\big(I-\widetilde{\chi}(-k^{-2}\Delta_{\Gamma_-}\big)\phi\big\|_{H^s_k}\leq
C\big\|\big(I-\chi(-k^{-2}\Delta_{\Gamma_-})\big)(I+\pert)\phi\big\|_{H^s_k}+ Ck^{-M}\|\phi\|_{H^s_k}.
\eeq
Once this is established, the bound
\eqref{e:sweet1} (and hence
\eqref{e:HsInverseNorm}) for $k_0$ sufficiently large follows by combining \eqref{e:GRSold1} with \eqref{e:ellipPara1}.

To establish \eqref{e:ellipPara1}, let $\psi\in C_c^\infty(\Rea;[0,1])$ with $\supp(1-\psi)\cap [-1,1]=\emptyset$ and $\supp(1-\chi)\cap \supp\psi=\emptyset$. By Part (iii) of Assumption \ref{ass:abstract1}, $I + (I-\psi(-k^{-2}\Delta_{\Gamma_-})\pert$ is elliptic, and thus 
\beq\label{e:ellipticHF}
\big(I-\chi(-k^{-2}\Delta_{\Gamma_-})\big)
\big(I + (I-\psi(-k^{-2}\Delta_{\Gamma_-})\pert\big)
=
\big(I-\chi(-k^{-2}\Delta_{\Gamma_-})\big)(I+\pert)
\eeq
is elliptic on the support of $I-\widetilde{\chi}(-k^{-2}\Delta_{\Gamma_-})
$. 
More precisely by~\cite[Lemma 3.9]{GaRaSp:25} 
\begin{gather*}I-\tilde{\chi}(-k^{-2}\Delta_{\Gamma_-})=\Op(\tilde{a})+O(k^{-\infty})_{\Psi_{\hbar}^{-\infty}},\\ I-\chi(-k^{-2}\Delta_{\Gamma_-})=\Op(a)+O(k^{-\infty})_{\Psi_{\hbar}^{-\infty}}
 \end{gather*} for some $a,\tilde{a}\in S^0(T^*\Gamma)$ with $\supp \tilde{a}\subset \{ |a|>\frac{1}{2}\}$. Therefore, by the composition property of Informal Theorem~\ref{t:rules}, 
 $$
 (I-\chi(-k^{-2}\Delta_{\Gamma_-}))(I+\pert)=\Op(b)+O(k^{-\infty})_{\Psi_{\hbar}^{-\infty}}
 $$ 
 with $\{|a|>\frac{1}{2}\}\subset \{ |b|>c>0\}$ and hence $\supp \tilde{a}\subset \{ |b|>c>0\}$.
The bound \eqref{e:ellipPara1} then follows from the manifold analogue of the elliptic estimate of Theorem \ref{t:elliptic}.

Next observe that the bound \eqref{e:HsInverseNorm} implies 
both~\eqref{e:hfInverseNorm1} and~\eqref{e:hfInverseNorm2} for all $k_0<k\leq k_1$ for any $k_1>0$ fixed. Thus, it remains to check~\eqref{e:hfInverseNorm1} and~\eqref{e:hfInverseNorm2} for $k>k_1$. 

To do this, let $\psi\in C_c^\infty(\mathbb{R})$ with $\supp (1-\psi)\cap [-1,1]=\emptyset$, and $\supp \psi\cap \supp (1-\chi)=\emptyset$. Then, by Lemma~\ref{l:inverse1} and Lemma~\ref{l:inverse2}, there are $k_1>0$ and $C>0$ such that \eqref{e:inverseNorm}
and \eqref{e:inverseLocal} hold for $k>k_1$.

Now, since $I + \pert = I + (1-\psi(-k^{-2}\Delta_{\Gamma_-}))\pert + \psi(-k^{-2}\Delta_{\Gamma_-})\pert$,
\begin{align*}
&(I+\pert)\big(I+\big(1-\psi(-k^{-2}\Delta_{\Gamma_-})\big)\pert\big)^{-1}
\big(I-\chi(-k^{-2}\Delta_{\Gamma_-})\big)\\
&=  I-\chi(-k^{-2}\Delta_{\Gamma_-})
+\psi(-k^{-2}\Delta_{\Gamma_-})\pert\big(I+\big(1-\psi(-k^{-2}\Delta_{\Gamma_-})\big)\pert\big)^{-1}
\big(I-\chi(-k^{-2}\Delta_{\Gamma_-})\big)\\
&=I-\chi(-k^{-2}\Delta_{\Gamma_-})+ O(k^{-\infty})_{H_k^{-N}\to H_k^N},
\end{align*}
where we have used~\eqref{e:inverseLocal} in the last equality.
Hence,
\begin{align*}
\big\|\big(I+\big(1-\psi(k^{-2}\Delta_{\Gamma_-})
\big)\pert\big)^{-1}\big(I-\chi(-k^{-2}\Delta_{\Gamma_-})\big)&-(I+\pert)^{-1}\big(I-\chi(-k^{-2}\Delta_{\Gamma_-})\big)\big\|_{H_k^{-N}\to H_k^N}\\
&\leq C k^{-N}\rhoB(k).
\end{align*}
Combining this with~\eqref{e:inverseNorm} implies~\eqref{e:hfInverseNorm1} for $k>k_1$
Similarly, 
\begin{align*}
&\big(I-\chi(-k^{-2}\Delta_{\Gamma_-})\big)\big(I+\big(1-\psi(-k^{-2}\Delta_{\Gamma_-})\big)\pert\big)^{-1}(I+\pert)\\
&= \big(I-\chi(-k^{-2}\Delta_{\Gamma_-})\big)\big(I+\big(1-\psi(-k^{-2}\Delta_{\Gamma_-})\big)\pert\big)^{-1}\psi(-k^{-2}\Delta_{\Gamma_-})\pert+ (I-\chi(-k^{-2}\Delta_{\Gamma_-})\\
&=\big(I-\chi(-k^{-2}\Delta_{\Gamma_-})\big)+ O(k^{-\infty})_{H_k^{-N}\to H_k^N},
\end{align*}
which implies
\begin{align*}
\big\|\big(I-\chi(-k^{-2}\Delta_{\Gamma_-})\big)\big(I+\big(1-\psi(-k^{-2}\Delta_{\Gamma_-})\big)\pert\big)^{-1}&-\big(I-\chi(-k^{-2}\Delta_{\Gamma_-})\big)(I+\pert)^{-1}\big\|_{H_k^{-N}\to H_k^N}\\
&\leq C\rhoB(k) k^{-N}.
\end{align*}
Combining this with~\eqref{e:inverseNorm} implies~\eqref{e:hfInverseNorm2} for $k>k_1$.
\end{proof}

\subsection{Preasymptotic error estimates for the $h$-BEM}

Let $s,t\in\mathbb{R}$, $\mathcal{V}\subset H_k^s(\Gamma_-)$, and $P_{\mathcal{V}}:H_k^t(\Gamma_-)\to \mathcal{V}$ be a projector onto $\mathcal{V}$. Let
\beq\label{e:projnotation}
\bestt{t}{s}(P_{\mathcal{V}}):=\|I-P_{\mathcal{V}}\|_{H_k^t(\Gamma_-)\to H_k^s(\Gamma_-)}.
\eeq
\begin{theorem}[Abstract result on convergence of the projection method]
\label{t:abstracthBEM}
Let $s\geq 0$, $M>0$ $k_0>0$, suppose that $\operator$ satisfies Assumption~\ref{ass:abstract1}, and $\mathcal{J}\subset (0,\infty)$ satisfies 
$$
\sup\big\{ k^{-M}\rhoB(k)\,:\, k\in (k_0,\infty)\setminus \mathcal{J}\big\}<\infty.
$$
For all $N\geq s$, $\e> 0$, there are $c,C>0$ such that 
the following holds. If $k>k_0$, $k\notin \mathcal{J}$, $\mathcal{V}\subset H_k^s(\Gamma_-)$, and $P_{\mathcal{V}}:H_k^{s}(\Gamma_-)\to\mathcal{V}$ is a projector onto $\mathcal{V}$ satisfying
\begin{equation}
\label{e:approxRequirements}
\Big(\bestt{N}{- N}(P_{\mathcal{V}}) + 
\bestt{N}{s}(P_{\mathcal{V}})\bestt{s+1}{-N}(P_{\mathcal{V}})
\Big)\rhoB(k)
+\bestt{s+1}{ s}(P_{\mathcal{V}})<c,
\end{equation}
then for all $v\in H_k^s(\Gamma_-)$, the solution $v_h\in\mathcal{V}$ to~\eqref{e:galerkinApproximationBEM} exists, is unique, and satisfies
\begin{align}
&\big\|1_{[-1-\e,1+\e]}(k^{-2}\Delta_{\Gamma})(v-v_h)\big\|_{H_k^N(\Gamma_-)} \label{e:BEMLF}
\leq C \bestt{s}{s}\bestt{s}{-N}\rhoB(k)\|(I-P_{\mathcal{V}})v\|_{H_k^s(\Gamma_-)},\\ \nonumber
&\big\|(I-1_{[-1-\e,1+\e]}(k^{-2}\Delta_{\Gamma}))(v-v_h)\big\|_{H_k^s(\Gamma_-)}\\ \nonumber§
&\leq (L_{\max}+Ck^{-1})\bestt{s}{s}\Bigg(\bestt{s}{s}+C\Big[
\bestt{N}{s}\bestt{s}{-N}\rhoB(k) 
\\
&\hspace{1cm}+\bestt{s}{s}\Big((\bestt{N}{-N}+\bestt{N}{s}\bestt{s+1}{-N})\rhoB(k)+\bestt{s+1}{s}\Big)\Big]\Bigg)
\|(I-P_{\mathcal{V}})v\|_{H_k^s(\Gamma_-)}.
\label{e:BEMHF}
\end{align}
\end{theorem}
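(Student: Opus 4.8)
The plan is to follow the same ``elliptic projection''-style strategy used for the $h$-FEM in \S\ref{s:R1}, but now with the role of the smoothing regulariser $S$ played by the frequency cutoff $\chi(-k^{-2}\Delta_{\Gamma_-})$ and with Lemmas \ref{l:forward} and \ref{l:hfGood} (together with Corollary \ref{c:hfGood}) supplying the fact that $(I+\pert)^{-1}$ is well-behaved on high frequencies. Concretely, fix $\chi\in C_c^\infty(\Rea;[0,1])$ with $\supp(1-\chi)\cap[-1,1]=\emptyset$ and $\supp\chi\subset(-1-\e,1+\e)$, and decompose any $w\in L^2(\Gamma_-)$ as $w=\chi(-k^{-2}\Delta_{\Gamma_-})w+(I-\chi(-k^{-2}\Delta_{\Gamma_-}))w$. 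The point is that on the range of $I-\chi(-k^{-2}\Delta_{\Gamma_-})$ the operator $I+\pert$ is (uniformly) invertible with norm $\le L_{\max}+Ck^{-1}+Ck^{-N}\rhoB(k)$ by \eqref{e:hfInverseNorm1}--\eqref{e:hfInverseNorm2}, so all the ``large'' constant $\rhoB(k)$ is confined to the finite-dimensional low-frequency block $1_{[-1-\e,1+\e]}(k^{-2}\Delta_{\Gamma_-})$.

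First I would set up the error equation. Writing $e:=v-v_h$ and using \eqref{e:galerkinApproximationBEM}, Galerkin-type orthogonality gives $(I+P_{\mathcal{V}}\pert)e=(I-P_{\mathcal{V}})(I+\pert)v$ — more precisely $(I+P_{\mathcal{V}}\pert)(v-v_h)=(I-P_{\mathcal{V}})v+P_{\mathcal{V}}\pert(I-P_{\mathcal{V}})v$, which rearranges to an identity of the form $e=(I-P_{\mathcal{V}})v - P_{\mathcal{V}}\pert e$. The low-frequency bound \eqref{e:BEMLF} is proved by a duality (Aubin--Nitsche/Schatz-type) argument: pair $1_{[-1-\e,1+\e]}(k^{-2}\Delta_{\Gamma_-})e$ against a test function $g$, write $\langle e,g\rangle$ using the adjoint solution operator $((I+\pert)^*)^{-1}$, insert $P_{\mathcal{V}}$ via Galerkin orthogonality, and estimate using $\|((I+\pert)^*)^{-1}\|_{H_k^{-N}\to H_k^{-N}}\le C\rhoB(k)$ from \eqref{e:HsInverseNorm} together with the projector-approximation quantities $\bestt{s}{s}$, $\bestt{s}{-N}$; the frequency localisation of $g$ lets us replace $H_k^{s}$ norms on the dual side by $H_k^{N}$ norms at the cost of Lemma \ref{l:functionLaplace}. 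The threshold \eqref{e:approxRequirements} is exactly what is needed to absorb the $\pert e$ term and close this estimate, and simultaneously gives existence/uniqueness since $\mathcal{V}$ is finite-dimensional (as in the proofs of Lemmas \ref{l:cea} and \ref{l:Schatz}).

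Next I would prove the high-frequency bound \eqref{e:BEMHF}. Apply $I-\chi(-k^{-2}\Delta_{\Gamma_-})$ to the error identity, use \eqref{e:hfInverseNorm2} to invert $(I+\pert)$ on high frequencies with the small constant $L_{\max}+Ck^{-1}$, and control the remaining pieces: the term $\chi(-k^{-2}\Delta_{\Gamma_-})e$ is handled by the already-established \eqref{e:BEMLF}, the commutator-type error between $\chi(-k^{-2}\Delta_{\Gamma_-})\pert$ and $\pert\chi(-k^{-2}\Delta_{\Gamma_-})$ (and between $P_{\mathcal{V}}\pert$ and $\pert P_{\mathcal{V}}$) is estimated by Lemma \ref{l:forward} and Corollary \ref{c:hfGood}, and the best-approximation term $(I-P_{\mathcal{V}})v$ is kept. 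Carefully tracking the products of the $\bestt{\cdot}{\cdot}(P_{\mathcal{V}})$ factors and the single $\rhoB(k)$ wherever a low-frequency/duality step is invoked reproduces the stated right-hand side of \eqref{e:BEMHF}; the $\mathcal{J}$-exclusion and the polynomial bound on $\rhoB(k)$ off $\mathcal{J}$ are used only to make the $k^{-N}\rhoB(k)$ remainders in \eqref{e:hfInverseNorm1}--\eqref{e:hfInverseNorm2} and in the smoothing estimates genuinely small for $k\notin\mathcal{J}$ large.

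The main obstacle I anticipate is the bookkeeping of how $P_{\mathcal{V}}$ interacts with the frequency cutoff $\chi(-k^{-2}\Delta_{\Gamma_-})$: unlike in the $h$-FEM argument where the regulariser $S_k$ commuted cleanly with the relevant projections, here $P_{\mathcal{V}}$ and $\chi(-k^{-2}\Delta_{\Gamma_-})$ do not commute, so one must repeatedly insert and remove cutoffs, paying $\pert(1-\psi(-k^{-2}\Delta_{\Gamma_-}))\in\Psi_\hbar^{-1}$ (Lemma \ref{l:forward}) each time to gain a power of $k^{-1}$ and a power of $h k$ from the projector estimates, and verify that no term escapes with an unabsorbed $\rhoB(k)\bestt{}{}$ product exceeding what appears in \eqref{e:BEMHF}. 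Getting the \emph{sharp} combination of approximation factors (so that the bound is not merely qualitatively but quantitatively as stated, matching the concrete corollary Informal Theorem~\ref{t:informalR3a}) is where the care lies; everything else is a direct transcription of the $h$-FEM/elliptic-projection template to the boundary-integral setting.
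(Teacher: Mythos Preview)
Your high-level picture is right --- split into low/high frequencies via $\chi(-k^{-2}\Delta_{\Gamma_-})$ and exploit Lemmas~\ref{l:forward}, \ref{l:hfGood} and Corollary~\ref{c:hfGood} --- but the paper does \emph{not} run a Schatz-type duality argument. Instead it proceeds directly: from Lemma~\ref{lem:QOabs} one has $v-v_h=(I+P_{\mathcal{V}}\pert)^{-1}(I-P_{\mathcal{V}})^2v$, and \eqref{e:JeffFav1}/\eqref{e:thething} rewrite $(I+P_{\mathcal{V}}\pert)^{-1}(I-P_{\mathcal{V}})$ as $(I+\pert)^{-1}(I-P_{\mathcal{V}})\big(I+(P_{\mathcal{V}}-I)\pert(I+\pert)^{-1}\big)^{-1}(I-P_{\mathcal{V}})$. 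The middle inverse is obtained by a Neumann series, but only \emph{after} conjugating by a diagonal rescaling $\mathcal{C}$ (see \eqref{e:newNorm}) that equalises the off-diagonal blocks of the $2\times2$ low/high decomposition \eqref{e:split1}. The block estimates are recorded in Lemmas~\ref{l:perturbationBlock} and~\ref{l:inverseBlock}; one then multiplies matrices and reads off \eqref{e:BEMLF}--\eqref{e:BEMHF}.

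The rescaling is the point you are missing, and it is precisely what produces the threshold \eqref{e:approxRequirements}. Without it, your absorption/duality step would naturally require $\bestt{N}{s}\,\rhoB(k)$ small (the unbalanced bottom-left block entry in \eqref{e:blockDecomp1}), which in the concrete setting gives $(hk)^{p+1}\rhoB(k)\ll1$ rather than the sharp $(hk)^{2p+2}\rhoB(k)\ll1$ of \eqref{e:approxRequirementsConcrete}. Your sketch asserts that ``\eqref{e:approxRequirements} is exactly what is needed to absorb the $\pert e$ term'', but there is no mechanism in a straight Schatz argument that produces the \emph{product} $\bestt{N}{s}\bestt{s+1}{-N}$ --- this arises only because the conjugation $\mathcal{C}(\cdot)\mathcal{C}^{-1}$ replaces each off-diagonal entry by the geometric mean of the two (see \eqref{e:blockDecomp}). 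A minor point: your error identity has a spurious term; the correct relation is $(I+P_{\mathcal{V}}\pert)(v-v_h)=(I-P_{\mathcal{V}})v$, and $(I-P_{\mathcal{V}})^2=(I-P_{\mathcal{V}})$ then gives the form used in Lemma~\ref{lem:QOabs}.
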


Observe that \eqref{e:BEMLF} is a bound on the low frequencies of the Galerkin error, and \eqref{e:BEMHF} is a bound on the high frequencies.

In the concrete setting of the $h$-BEM, with piecewise polynomials Theorem~\ref{t:abstracthBEM} has the following consequence. 
\begin{theorem}
\mythmname{Rigorous statement of Informal Theorem \ref{t:informalR3a}:~$k$-explicit upper bounds on the $h$-BEM error}
\label{t:concretehBEM}
Let $\Omega_-\Subset \mathbb{R}^d$ with smooth boundary, $M>0$, $\Upsilon>0$ $p\geq 0$, $k_0>0$. Suppose that $\operator\in\{ A_k, A_k',\Breg,\Breg'\}$ and $\mathcal{J}\subset (0,\infty)$ satisfies 
$$
\sup\big\{ k^{-M}\rhoB(k)\,:\, k\in (k_0,\infty)\setminus \mathcal{J}\big\}<\infty.
$$
Then for all $\e> 0$, there are $c,C>0$ such that for all $k>k_0$, $k\notin \mathcal{J}$, and all $C^{p+1}$ triangulations with constant $\Upsilon$, $\mathcal{T}$ (in the sense of Definition~\ref{d:Crtriang}), satisfying
\begin{equation}
\label{e:approxRequirementsConcrete}
(hk)^{2p+2}\rhoB(k)+hk<c
\end{equation}
(where $h:=h(\mathcal{T})$) 
 all $v\in L^2(\Gamma_-)$, the Galerkin approximation, $v_h$ to $v$ in $\mathcal{P}_{\mathcal{T}}^p$ exists, is unique, and satisfies
\begin{align}\label{e:BEMLFconcrete}
&\|1_{[-1-\e,1+\e]}(k^{-2}\Delta_{\Gamma})(v-v_h)\|_{L^2(\Gamma_-)} \leq C(hk)^{p+1}\rhoB(k)\min_{w_h\in\mathcal{P}_{\mathcal{T}}^p}\|v-w_h\|_{L^2(\Gamma_-)},\\ \nonumber
&\|(I-1_{[-1-\e,1+\e]}(k^{-2}\Delta_{\Gamma}))(v-v_h)\|_{L^2(\Gamma_-)}\\
&\qquad\qquad\leq (1+Ck^{-1})\big(1+C(hk)^{2p+2}\rhoB(k) +Chk\big)\min_{w_h\in\mathcal{P}_{\mathcal{T}}^p}\|v-w_h\|_{L^2(\Gamma_-)}.
\label{e:BEMHFconcrete}
\end{align}
\end{theorem}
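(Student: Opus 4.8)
The plan is to derive Theorem~\ref{t:concretehBEM} as a corollary of the abstract convergence result Theorem~\ref{t:abstracthBEM}, by checking its hypotheses in the concrete setting $\mathcal{V}_h=\mathcal{P}_{\mathcal{T}}^p$, $P_{\mathcal{V}}=\Pi_h$ the $L^2(\Gamma_-)$-orthogonal projection, $s=0$, and a sufficiently large but fixed choice of $N$. First I would verify that each operator $\operator\in\{A_k,A_k',\Breg,\Breg'\}$ satisfies Assumption~\ref{ass:abstract1}. Part~(i) is the standard invertibility and mapping properties recalled at the start of~\S\ref{s:R3} (with references \cite{ChGrLaSp:12} for $A_k,A_k'$ and \cite{GaMaSp:21N} for $\Breg,\Breg'$), together with continuity in $k$. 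Parts~(ii) and~(iii) are the statements that the high-frequency part of $\pert$ is in $\Psi_\hbar^{-1}(\Gamma_-)$ with an elliptic (bounded-below) principal symbol; this is where the microlocal structure of the layer-potential operators enters, and I would quote it from \cite{GaRaSp:25} (it is exactly the content of \cite[Assumption~1.1]{GaMaSp:21N} holding for the regularising operator $S_{\ri k}$). Thus Assumption~\ref{ass:abstract1} holds with some $L_{\max}$ depending only on the class of operators.

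Next I would translate the abstract projection quantities $\bestt{t}{s}(\Pi_h)=\|I-\Pi_h\|_{H_k^t(\Gamma_-)\to H_k^s(\Gamma_-)}$ into $h$- and $k$-explicit bounds using the standard piecewise-polynomial approximation theory on a smooth compact hypersurface (the appropriate analogue of Theorem~\ref{t:approxHighLowReg}, with the $C^{p+1}$-triangulation hypothesis of Definition~\ref{d:Crtriang} ensuring the surface is resolved to the needed order). Specifically, for $0\le s\le t\le p+1$ one has $\bestt{t}{s}(\Pi_h)\le C(hk)^{t-s}$, and for negative target regularity one uses the $L^2$-orthogonality of $\Pi_h$ together with a duality argument to get $\bestt{t}{-N}(\Pi_h)\le C(hk)^{t+N}$ for $0\le t\le p+1$ and any $N\ge 0$. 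Applying these with $s=0$ gives $\bestt{s+1}{s}(\Pi_h)=\bestt{1}{0}(\Pi_h)\le Chk$, $\bestt{s+1}{-N}(\Pi_h)=\bestt{1}{-N}(\Pi_h)\le C(hk)^{1+N}$, $\bestt{N}{-N}(\Pi_h)$ and $\bestt{N}{s}(\Pi_h)$ bounded appropriately for $N\le p+1$, and — crucially — $\bestt{s}{-N}(\Pi_h)=\bestt{0}{-N}(\Pi_h)\le C(hk)^{N}$ and $\bestt{s}{s}(\Pi_h)=\|I-\Pi_h\|_{L^2\to L^2}\le 1$. I would fix $N=p+1$ (or any value $\ge p+1$ is irrelevant since $\bestt{N}{\,\cdot}$ only improves) so that the dominant error contributions from $\Pi_h$ scale like $(hk)^{p+1}$; with $N=p+1$, $\bestt{N}{s}\bestt{s}{-N}\rhoB(k)=\bestt{p+1}{0}\bestt{0}{p+1}\rhoB(k)\le C(hk)^{2p+2}\rhoB(k)$, which is exactly the threshold appearing in~\eqref{e:approxRequirementsConcrete}. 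Then the abstract smallness condition~\eqref{e:approxRequirements} becomes $C\big((hk)^{2p+2}\rhoB(k)+hk\big)<c$, matching~\eqref{e:approxRequirementsConcrete}, and existence/uniqueness of $v_h$ follows. Feeding these same bounds into~\eqref{e:BEMLF} and~\eqref{e:BEMHF} and using $\min_{w_h}\|v-w_h\|_{L^2}=\|(I-\Pi_h)v\|_{L^2}$ yields~\eqref{e:BEMLFconcrete} and~\eqref{e:BEMHFconcrete} after collecting terms (the $(L_{\max}+Ck^{-1})$ prefactor with $L_{\max}=1$ for the second-kind operators produces the $(1+Ck^{-1})$ in~\eqref{e:BEMHFconcrete}).

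One small technical point I would need to address is that the spectral projector $1_{[-1-\e,1+\e]}(k^{-2}\Delta_{\Gamma_-})$ in the statement is a sharp cutoff, whereas the abstract Theorem~\ref{t:abstracthBEM} and the smoothing lemmas (Lemma~\ref{l:functionLaplace}, etc.) are phrased with smooth cutoffs $\chi\in C_c^\infty$. This is handled by noting that $1_{[-1-\e,1+\e]} = 1_{[-1-\e,1+\e]}\,\chi$ for any $\chi\in C_c^\infty(\Rea;[0,1])$ equal to $1$ on $[-1-\e,1+\e]$, and that $1_{[-1-\e,1+\e]}(k^{-2}\Delta_{\Gamma_-})$ is bounded on every $H_k^s(\Gamma_-)$ with norm $\le 1$ by the spectral theorem; so inserting such a $\chi$ reduces the sharp-cutoff estimates to the smooth-cutoff ones already available, at the cost of enlarging $\supp\chi$ slightly (still compact), which is harmless.

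The main obstacle I expect is not in this deduction — which is essentially bookkeeping once the abstract theorem is in hand — but rather in the underlying Theorem~\ref{t:abstracthBEM} itself, namely establishing the high/low-frequency decomposition of the projection-method error: splitting $v-v_h$ according to the spectral cutoff of $-k^{-2}\Delta_{\Gamma_-}$, controlling the low-frequency part via the resolvent bound $\rhoB(k)$ and a duality (Aubin--Nitsche-type) argument in negative-order norms, and controlling the high-frequency part via the semiclassical ellipticity of $I+\pert$ on high frequencies (Lemma~\ref{l:hfGood} and Corollary~\ref{c:hfGood}), together with a careful Neumann-series/perturbation argument to show that $\Pi_h\pert$ is close to $\pert$ on the relevant frequency band. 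Since Theorem~\ref{t:abstracthBEM} is stated (not yet proved) in the excerpt, for the present proof I would simply invoke it and confine myself to the verification of its hypotheses and the translation of its conclusions, as outlined above.
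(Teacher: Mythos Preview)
Your proposal is correct and follows essentially the same route as the paper: verify Assumption~\ref{ass:abstract1} for the four operators (the paper cites \cite[\S5.1]{GaRaSp:25} for this, with $L_{\max}=1$), insert the polynomial-approximation bounds for the $L^2$-orthogonal projection with $s=0$ and $N=p+1$, and read off the concrete threshold and error estimates from Theorem~\ref{t:abstracthBEM}. Two small remarks: the relevant approximation result is Theorem~\ref{t:approxHighLowRegL2} (the $L^2$ version, which permits $p\ge 0$) rather than Theorem~\ref{t:approxHighLowReg}; and your worry about sharp versus smooth cutoffs is unnecessary, since the abstract Theorem~\ref{t:abstracthBEM} is already stated with the indicator $1_{[-1-\e,1+\e]}$.
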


The bounds \eqref{e:BEMLFconcrete} and \eqref{e:BEMHFconcrete} together imply \eqref{e:qoBEM}.
The bound \eqref{e:PPMR2} on the relative error then follows from the combination of \eqref{e:qoBEM}, the polynomial-approximation result of 
Theorem~\ref{t:approxHighLowRegL2}
and the following result.

\begin{lemma}
\mythmname{Oscillatory behaviour of BIE solution under plane-wave scattering}\label{thm:oscil}
Suppose that $\operator$ is one of $A_k, A_k'$, $\Breg$, or $\Breg'$ and the right-hand side $f$ is as described in Theorem \ref{thm:BIEs}.
Then $v\in H^t(\Gamma)$ for all $t\geq 0$ and
given $t\geq s\geq 0$ and $k_0>0$ there exists $C>0$ such that
\beq\label{e:oscil}
\N{v}_{H^t_k(\Gamma)} \leq C \N{v}_{H^s_k(\Gamma)} \quad\tfa k\geq k_0.
\eeq
\end{lemma}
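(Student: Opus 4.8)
The plan is to exploit that the combined-field operators are semiclassically elliptic on frequencies $\gtrsim k$ (Lemma~\ref{l:inverse1}), together with the fact that, for plane-wave incidence, the data $f$ of Theorem~\ref{thm:BIEs} is microlocalised on frequencies $\lesssim k$. First I would reduce to large $k$ and establish the regularity claim. Taking $c_0=1$ without loss of generality (as noted after Assumption~\ref{ass:abstract1}), we have $v=f-\pert v$. Since $f$ is --- up to a fixed power of $k$ --- the restriction to $\Gamma_-$ of a plane wave, possibly composed with one layer operator of order $-1$ or one tangential derivative, it lies in $C^\infty(\Gamma_-)$; and $\pert$ maps $H^m(\Gamma_-)\to H^{m+1}(\Gamma_-)$ by Part~(i) of Assumption~\ref{ass:abstract1}. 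Bootstrapping from $v\in L^2(\Gamma_-)$ therefore gives $v\in H^t(\Gamma_-)$ for every $t\ge 0$. In particular $\N{v}_{H^t_k(\Gamma_-)}<\infty$ for each $k$, and since $v\neq 0$ and $k\mapsto v$ is continuous into each $H^t(\Gamma_-)$, it suffices to prove the estimate for $k\ge k_1$ with $k_1$ to be fixed below; the interval $k_0\le k\le k_1$ is then handled by compactness.

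Next I would record two properties of the data. (a) In local coordinates on $\Gamma_-$, $f$ is a finite sum of terms $T(e^{\ri k\phi}a)$ with $\phi$ the restriction to $\Gamma_-$ of a linear phase $x\mapsto x\cdot d$, $|d|=1$ (so $|\nabla\phi|\le 1$), $a$ smooth and $k$-independent, and $T$ either the identity or a fixed operator in $\Psi^{-1}_\hbar(\Gamma_-)$ such as $V_{\ri k}$; hence $\WF(f)\subseteq\{(x',\xi')\in T^*\Gamma_-:\ |\xi'|_g\le 1\}$, and for any $\psi\in C_c^\infty(\mathbb{R})$ with $\psi\equiv 1$ on a neighbourhood of $[-1,1]$,
\[
\big\|\big(1-\psi(-k^{-2}\Delta_{\Gamma_-})\big)f\big\|_{H^t_k(\Gamma_-)}\le C_{t,N}\,k^{-N}\qquad\text{for all }t,N,
\]
by non-stationary phase: $1-\psi(-k^{-2}\Delta_{\Gamma_-})=\Op(b)+O(k^{-\infty})_{\Psi^{-\infty}_\hbar}$ with $\operatorname{supp}b\subseteq\{|\xi'|_g^2\ge 1+\delta\}$ by \cite[Lemma 3.9]{GaRaSp:25}, which is disjoint from $\WF(f)$. (b) $\N{f}_{L^2(\Gamma_-)}$ is bounded below and $\N{\operator}_{L^2(\Gamma_-)\to L^2(\Gamma_-)}$ above by fixed powers of $k$ --- the former by a direct computation from the explicit incident-field data (the amplitude $a$ is not identically zero on $\Gamma_-$), the latter by standard mapping properties of the layer potentials. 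Therefore $\N{v}_{L^2(\Gamma_-)}\ge \N{f}_{L^2(\Gamma_-)}/\N{\operator}_{L^2(\Gamma_-)\to L^2(\Gamma_-)}\ge c\,k^{-N_0}$ for some fixed $N_0$.

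Finally the elliptic step: with $\psi$ as in (a), set $Q:=I+\big(1-\psi(-k^{-2}\Delta_{\Gamma_-})\big)\pert$, which lies in $\Psi^0_\hbar(\Gamma_-)$ by Part~(ii) of Assumption~\ref{ass:abstract1}. By Lemma~\ref{l:inverse1} there is $k_1>0$ such that for $k\ge k_1$, $Q$ is invertible with $\N{Q^{-1}}_{H^t_k(\Gamma_-)\to H^t_k(\Gamma_-)}\le 2L_{\max}$. Since $\pert v=f-v$,
\[
Qv=v+\big(1-\psi(-k^{-2}\Delta_{\Gamma_-})\big)(f-v)=\psi(-k^{-2}\Delta_{\Gamma_-})v+\big(1-\psi(-k^{-2}\Delta_{\Gamma_-})\big)f,
\]
so, using Lemma~\ref{l:functionLaplace} on the first term and (a) on the second,
\[
\N{v}_{H^t_k(\Gamma_-)}\le 2L_{\max}\N{Qv}_{H^t_k(\Gamma_-)}\le C\,\N{v}_{L^2(\Gamma_-)}+C_N\,k^{-N}\qquad\text{for all }N.
\]
Taking $N>N_0$ and using the lower bound in (b), the last term is $\le \N{v}_{L^2(\Gamma_-)}$ for $k$ large (enlarging $k_1$), whence $\N{v}_{H^t_k(\Gamma_-)}\le C\N{v}_{L^2(\Gamma_-)}\le C\N{v}_{H^s_k(\Gamma_-)}$ (the last step since $s\ge 0$). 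With the reduction of the first paragraph this completes the proof.

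The main obstacle I expect is the handling of the $O(k^{-\infty})$ term: since no polynomial bound on $\rhoB$ is assumed here, it cannot be absorbed into a resolvent norm, and the argument instead relies on the data being genuinely microlocalised on $\{|\xi'|_g\le 1\}$ --- so that the high-frequency content of $v$ is forced to be only super-algebraically large --- combined with the crude polynomial lower bound $\N{v}_{L^2(\Gamma_-)}\gtrsim k^{-N_0}$. Verifying that $\WF(f)\subseteq\{|\xi'|_g\le 1\}$, i.e.\ pinning down the microlocal structure of the plane-wave data, is the step requiring the most care.
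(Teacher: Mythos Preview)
Your proof is correct and follows essentially the same three-step strategy as the paper's sketch: (1) show that the high-frequency components of the plane-wave data $f$ are $O(k^{-\infty})$ via nonstationary phase, (2) use semiclassical ellipticity of $I+(1-\psi(-k^{-2}\Delta_{\Gamma_-}))\pert$ on high frequencies (the paper uses \eqref{e:ellipticHF} and Part~(iii) of Assumption~\ref{ass:abstract1} directly, you use Lemma~\ref{l:inverse1} via the clean identity $Qv=\psi v+(1-\psi)f$), and (3) invoke the smoothing of the low-frequency cutoff from Lemma~\ref{l:functionLaplace}. Your explicit absorption of the residual $O(k^{-\infty})$ term via the polynomial lower bound $\|v\|_{L^2}\gtrsim k^{-N_0}$ is a detail the paper's sketch defers to \cite[Theorem~7.2]{GaRaSp:25}, but it is exactly the right mechanism.
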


We do not prove Lemma \ref{thm:oscil}, but instead give 
a sketch (with references).

\bpf[Sketch proof]
The steps of the proof are the following.
\ben
\item Show that the high-frequency components of $f$ are $O(k^{-\infty})$, via an explicit calculation using integration by parts/stationary phase; see \cite[Lemma 7.3]{GaRaSp:25}.
\item Show that the high-frequency components of $v$ are $O(k^{-\infty})$ using the result of Step 1 and the fact that 
$(I-\chi(-k^{-2}\Delta_{\Gamma_-}))(I+\pert)
$ is elliptic on high-frequencies via Part (iii) of Assumption \ref{ass:abstract1} and \eqref{e:ellipticHF}; see \cite[Lemma 7.4]{GaRaSp:25}.
\item Obtain the bound \eqref{e:oscil} from the result of Step 2 and the smoothing property of low-frequency cutoffs (from Lemma \ref{l:functionLaplace}, as in \eqref{e:projectionbounds} below);
see
\cite[Theorem 7.2]{GaRaSp:25}.
\een
\epf

\subsection{Proof of Theorem \ref{t:abstracthBEM}}

We start with the following lemma.

\ble[Quasioptimality in terms of the norm of the discrete inverse]\label{lem:QOabs}
Let $s\geq 0$, $\mathcal{V}\subset H_k^s(\Gamma_-)$ and suppose that $P_{\mathcal{V}}: H_k^s (\Gamma_-)\to \mathcal{V}$ is a projection onto $\mathcal{V}$ such that $I+ P_{\mathcal{V}}\pert : H^s_k(\Gamma_-)\to H^s_k(\Gamma_-)$ is invertible, then for all $v\in H_k^s(\Gamma_-)$, the Galerkin approximation, $v_h$ to $v$ in $\mathcal{V}$  exists, is unique, and satisfies
\beq\label{e:QOabs}
v-v_h= (I+P_{\mathcal{V}} \pert)^{-1}(I-P_{\mathcal{V}}) (I-P_{\mathcal{V}})v.
\eeq
\ele

\bpf
We first consider the equation \eqref{e:galerkinApproximationBEM} as an equation in $H^s_k(\Gamma_-)$.
Since $I+P_{\mathcal{V}} \pert:H^s_k(\Gamma)\to H^s_k(\Gamma)$ is invertible, the solution $v_h$ to~\eqref{e:galerkinApproximationBEM} in $H^s_k(\Gamma)$ exists and is unique as an element of $H^s_k(\Gamma)$. Applying $(I-P_{\mathcal{V}})$ to \eqref{e:galerkinApproximationBEM}, we see that $(I-P_{\mathcal{V}})v_h=0$ and thus $v_h\in \mathcal{V}$; i.e., the equation \eqref{e:galerkinApproximationBEM} has a unique solution in $\mathcal{V}$.
Then, \eqref{e:galerkinApproximationBEM},
\begin{align*}
(I+P_{\mathcal{V}} \pert) (v-v_h) &= (I+P_{\mathcal{V}} \pert) v - P_{\mathcal{V}}(I+\pert)v \\
& = v + P_{\mathcal{V}} \pert v - P_{\mathcal{V}} (I+\pert)v  = (I-P_{\mathcal{V}})v.
\end{align*}
Therefore, since $ (I-P_{\mathcal{V}})=  (I-P_{\mathcal{V}})^2$,
\begin{align*}
(v-v_h) = (I+P_{\mathcal{V}} \pert)^{-1} (I-P_{\mathcal{V}})v=(I+P_{\mathcal{V}} \pert)^{-1} (I-P_{\mathcal{V}})(I-P_{\mathcal{V}})v.
\end{align*}
\epf

To obtain conditions under which $I+ P_{\mathcal{V}}\pert : H^s_k(\Gamma)\to H^s_k(\Gamma)$ is invertible (and thus under which we can use Lemma \ref{lem:QOabs}),
we write
\beq\label{e:JeffFav1}
(I+P_{\mathcal{V}}\pert) = I +\pert + (P_{\mathcal{V}} -I)\pert = \Big( I+ (P_{\mathcal{V}}-I)\pert (I+\pert)^{-1}\Big) (I+\pert),
\eeq
and study when $ I+ (P_{\mathcal{V}}-I)\pert (I+\pert)^{-1}$ is invertible. 
We then use that 
\beqs
\Big( I+ (P_{\mathcal{V}}-I) \pert(I+\pert)^{-1}\Big)^{-1}(I-P_{\mathcal{V}}) = (I-P_{\mathcal{V}})\Big( I+ (P_{\mathcal{V}}-I) \pert(I+\pert)^{-1}\Big)^{-1}(I-P_{\mathcal{V}});
\eeqs
i.e., we can add an $(I-P_{\mathcal{V}})$ on the left.
Indeed, if $( I+ (P_{\mathcal{V}}-I) \pert(I+\pert)^{-1})v= (I-P_{\mathcal{V}})f$, then $P_{\mathcal{V}} v =0$ so that $v=(I-P_{\mathcal{V}})v$.
Therefore the operator on the right-hand side of \eqref{e:QOabs} can be written as 
\beq\label{e:thething}
(I+P_{\mathcal{V}}\pert )^{-1}(I-P_{\mathcal{V}}) = (I+\pert)^{-1} (I-P_{\mathcal{V}})\Big( I+ (P_{\mathcal{V}}-I) \pert(I+\pert)^{-1}\Big)^{-1}(I-P_{\mathcal{V}}).
\eeq

The first natural attempt is to show that $ I+ (P_{\mathcal{V}}-I)\pert (I+\pert)^{-1}$ is invertible is to show that
\begin{equation}
\label{e:Neumann0}
\|(P_{\mathcal{V}}-I)\pert (I+\pert)^{-1}\|_{H^s_k(\Gamma_-)\to H^s_k(\Gamma_-)}<\frac{1}{2}, 
\end{equation}
in which case $\|(I+(P_N-I)\pert (I+\pert)^{-1})^{-1}\|_{H^s_k(\Gamma_-)\to H^s_k(\Gamma_-)}\leq 2$. 

To understand when the condition \eqref{e:Neumann0} holds, given $\e>0$, we define low- and high-frequency projectors, 
\beq\label{e:highandlow}
\Pi_{\mathscr{L}}:= 1_{[-1-\e,1+\e]}(-k^{-2}\Delta_{\Gamma_-})\quad\tand\quad
\Pi_{\mathscr{H}}:=(I-\Pi_{\mathscr{L}}),
\eeq
 and decompose $H_{k}^s$ into an $L^2$-orthogonal sum $\Pi_{\mathscr{L}}H_{k}^s\oplus \Pi_{\mathscr{H}}H_{k}^s$. 
That is, 
\beq\label{e:split1}
\begin{aligned}
&(P_{\mathcal{V}}-I)\pert (I+\pert)^{-1}\\
&\qquad = \begin{pmatrix}\Pi_{\mathscr{L}}(P_{\mathcal{V}}-I)\pert (I+\pert)^{-1}\Pi_{\mathscr{L}}&\Pi_{\mathscr{L}}(P_N-I)\pert (I+\pert)^{-1}\Pi_{\mathscr{H}}\\

\Pi_{\mathscr{H}}(P_{\mathcal{V}}-I)\pert (I+\pert)^{-1}\Pi_{\mathscr{L}}&\Pi_{\mathscr{H}}(P_{\mathcal{V}}-I)\pert (I+\pert)^{-1}\Pi_{\mathscr{H}}\end{pmatrix}.
\end{aligned}
\eeq
\begin{definition}
We write
$$
\|A\|_{H^s_k(\Gamma_-)\to H^s_k(\Gamma_-)}\leq C_1\begin{pmatrix} a_{\mathscr{LL}}&a_{\mathscr{LH}}\\a_{\mathscr{HL}}&a_{\mathscr{HH}}\end{pmatrix},$$
if
\beq\label{e:matrix}
A=
\begin{pmatrix}
 \Pi_{\mathscr{L}} A \Pi_{\mathscr{L}}
 &\Pi_{\mathscr{L}} A \Pi_{\mathscr{H}}
 \\\Pi_{\mathscr{H}} A \Pi_{\mathscr{L}}&\Pi_{\mathscr{H}} A \Pi_{\mathscr{H}}\end{pmatrix}, 
\eeq
with
$$
\begin{aligned}
\|\Pi_{\mathscr{L}} A \Pi_{\mathscr{L}}\|_{H^s_k(\Gamma_-)\to H^s_k(\Gamma_-)}&\leq C_1a_{\mathscr{LL}},&\|\Pi_{\mathscr{H}} A \Pi_{\mathscr{L}}\|_{H^s_k(\Gamma_-)\to H^s_k(\Gamma_-)}&\leq C_1a_{\mathscr{HL}},\\
\|\Pi_{\mathscr{L}} A \Pi_{\mathscr{H}}\|_{H^s_k(\Gamma_-)\to H^s_k(\Gamma_-)}&\leq C_1a_{\mathscr{LH}},&\|\Pi_{\mathscr{H}} A \Pi_{\mathscr{H}}\|_{H^s_k(\Gamma_-)\to H^s_k(\Gamma_-)}&\leq C_1a_{\mathscr{HH}}.
\end{aligned}
$$
\end{definition}
One can then easily check that if $\|A\|\leq C_1 \mathscr{A}$ and $\|B\|\leq C_2 \mathscr{B}$, then $\|AB\|\leq C_1 C_2 \mathscr{A}\mathscr{B}$.

\begin{lemma}
\label{l:perturbationBlock}Let $s\geq 0$, $k_0>0$ and suppose $\operator$ satisfies Assumption~\ref{ass:abstract1}. Then for any $N\in\mathbb{R}$ there is $C>0$ such that for all $k>k_0$, $\mathcal{V}\subset H_k^s(\Gamma_-)$ and $P_{\mathcal{V}}: H_k^s (\Gamma_-)\to \mathcal{V}$ projectors onto $\mathcal{V}$, 
\begin{equation}
\label{e:blockDecomp1}
\begin{aligned}
&\big\|(P_{\mathcal{V}}-I)\pert (I+\pert)^{-1}\big\|_{H^s_k(\Gamma_-)\to H^s_k(\Gamma_-)}\leq 
C\begin{pmatrix}\bestt{N}{-N}  \rhoB(k) &\bestt{s+1}{-N}\\
\bestt{N}{s}\rhoB(k)&\bestt{s+1}{s}
\end{pmatrix}.
\end{aligned}
\end{equation}
\end{lemma}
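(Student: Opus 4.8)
The plan is to prove the four block estimates in \eqref{e:blockDecomp1} one at a time, in each case writing $(P_{\mathcal{V}}-I)\pert(I+\pert)^{-1} = -(I-P_{\mathcal{V}})\pert(I+\pert)^{-1}$, inserting the low/high-frequency decomposition $I = \Pi_{\mathscr{L}} + \Pi_{\mathscr{H}}$ where it helps, and then pushing the $\pert$ through to the side where its smoothing property (Lemma \ref{l:forward}) or the high-frequency boundedness of $(I+\pert)^{-1}$ (Lemma \ref{l:hfGood}, specifically \eqref{e:HsInverseNorm}--\eqref{e:hfInverseNorm2}, together with the corollary Corollary \ref{c:hfGood}) can be applied. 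The key book-keeping tool is the smoothing of the low-frequency cutoff from Lemma \ref{l:functionLaplace}: for any $N,m$ there is $C$ with $\|\Pi_{\mathscr{L}}\|_{H^{-N}_k(\Gamma_-)\to H^m_k(\Gamma_-)}\leq C$, and also $\|\Pi_{\mathscr{L}}\|_{H^s_k(\Gamma_-)\to H^s_k(\Gamma_-)}\leq 1$ and $\|\Pi_{\mathscr{H}}\|_{H^s_k(\Gamma_-)\to H^s_k(\Gamma_-)}\leq 1$. Throughout I would fix $\chi\in C_c^\infty(\Rea;[0,1])$ with $\supp(1-\chi)\cap[-1,1]=\emptyset$ and $\supp\chi\cap[1+\e,\infty)=\emptyset$ (so that $\chi(-k^{-2}\Delta_{\Gamma_-})\Pi_{\mathscr{H}}=0$ and $(I-\chi(-k^{-2}\Delta_{\Gamma_-}))\Pi_{\mathscr{L}}$ is itself a smoothing operator).

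\textbf{The $\mathscr{LL}$ and $\mathscr{HL}$ blocks (right column supported on low frequencies).} For these I would write
$$
\Pi_{\mathscr{X}}(I-P_{\mathcal{V}})\pert (I+\pert)^{-1}\Pi_{\mathscr{L}} = \Pi_{\mathscr{X}}(I-P_{\mathcal{V}})\big[\pert (I+\pert)^{-1}\Pi_{\mathscr{L}}\big],
$$
and bound the bracketed factor in $H^{-N}_k(\Gamma_-)\to H^N_k(\Gamma_-)$. Since $\pert(I+\pert)^{-1} = I-(I+\pert)^{-1}$, and $\Pi_{\mathscr{L}} = (I-\chi(-k^{-2}\Delta_{\Gamma_-}))\Pi_{\mathscr{L}}+\chi(-k^{-2}\Delta_{\Gamma_-})\Pi_{\mathscr{L}}$, one gets, using Corollary \ref{c:hfGood} on the piece carrying $(I-\chi(-k^{-2}\Delta_{\Gamma_-}))$ and \eqref{e:HsInverseNorm} plus Lemma \ref{l:functionLaplace} on the piece carrying $\chi(-k^{-2}\Delta_{\Gamma_-})$, a bound of the form
$$
\|\pert(I+\pert)^{-1}\Pi_{\mathscr{L}}\|_{H^{-N}_k(\Gamma_-)\to H^N_k(\Gamma_-)}\leq C(1+k^{-N}\rhoB(k))\|\Pi_{\mathscr{L}}\|_{H^{-N}_k\to H^N_k} + C\rhoB(k)\leq C\rhoB(k).
$$
Composing with $\|\Pi_{\mathscr{X}}(I-P_{\mathcal{V}})\|_{H^N_k(\Gamma_-)\to H^{\bullet}_k(\Gamma_-)}$ then produces exactly the factors $\bestt{N}{-N}$ (for $\mathscr{X}=\mathscr{L}$, measuring the output in $H^{-N}_k$, then applying $\Pi_{\mathscr{L}}$ smoothing to land in $H^s_k$) and $\bestt{N}{s}$ (for $\mathscr{X}=\mathscr{H}$), each multiplied by $\rhoB(k)$. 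A minor point to handle: when $\mathscr{X}=\mathscr{L}$ the output norm is $H^s_k$, so I would route the estimate as $\Pi_{\mathscr{L}}(I-P_{\mathcal{V}})(\cdot)$ with $(\cdot)\in H^{-N}_k$, use $\|(I-P_{\mathcal{V}})\|_{H^N_k\to H^N_k}$ after first absorbing the gap $H^{-N}_k\to H^N_k$ into the $\rhoB$-bounded factor --- this is the reason the exponent $N$ (and not $s$) appears in the subscript $\bestt{N}{-N}$.

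\textbf{The $\mathscr{LH}$ and $\mathscr{HH}$ blocks (right column supported on high frequencies).} Here the factor is $\pert(I+\pert)^{-1}\Pi_{\mathscr{H}}$, and I would use that on the range of $\Pi_{\mathscr{H}}$ one may write $\Pi_{\mathscr{H}} = (I-\chi(-k^{-2}\Delta_{\Gamma_-}))\Pi_{\mathscr{H}}$, so that $\pert(I+\pert)^{-1}\Pi_{\mathscr{H}} = \pert(I+\pert)^{-1}(I-\chi(-k^{-2}\Delta_{\Gamma_-}))\Pi_{\mathscr{H}}$, and Corollary \ref{c:hfGood} gives $\|\pert(I+\pert)^{-1}(I-\chi(-k^{-2}\Delta_{\Gamma_-}))\|_{H^s_k(\Gamma_-)\to H^{s+1}_k(\Gamma_-)}\leq C+Ck^{-N}\rhoB(k)\leq C$ (for $k>k_0$, using the hypothesis that $\rhoB$ is polynomially bounded off $\mathcal{J}$ --- actually here one only needs the trivial bound, since the $k^{-N}$ can be chosen to beat any fixed polynomial, but I would just invoke it cleanly). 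Thus the bracketed factor maps $H^s_k\to H^{s+1}_k$ with $k$-uniform norm, and composing with $\|\Pi_{\mathscr{X}}(I-P_{\mathcal{V}})\|_{H^{s+1}_k(\Gamma_-)\to H^{\bullet}_k(\Gamma_-)}$ gives $\bestt{s+1}{-N}$ (routing through $\Pi_{\mathscr{L}}$ smoothing when $\mathscr{X}=\mathscr{L}$, hence the $-N$ output subscript) and $\bestt{s+1}{s}$ (when $\mathscr{X}=\mathscr{H}$). Finally, assembling the four blocks into the matrix notation \eqref{e:matrix} yields \eqref{e:blockDecomp1}.

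\textbf{Main obstacle.} The genuinely delicate point is the $\mathscr{LH}$ and $\mathscr{LL}$ blocks, i.e.\ getting the $H^{-N}_k$ in the \emph{output} subscripts $\bestt{\cdot}{-N}$ rather than $H^s_k$: this requires exploiting that the \emph{left-most} $\Pi_{\mathscr{L}}$ is a smoothing operator, so one first measures $(I-P_{\mathcal{V}})(\text{smoothing factor})$ into $H^{-N}_k$ and only then applies $\Pi_{\mathscr{L}}$ to recover an $H^s_k$ (or better) bound without extra loss; one must be careful that $(I-P_{\mathcal{V}})$ need not commute with $\Pi_{\mathscr{L}}$ and is only assumed to be a projector (no self-adjointness, no norm bound beyond the $\bestt{\cdot}{\cdot}$ quantities), so the decomposition must be arranged so that each $\bestt{t}{s}$ appears with the $t$ matching whatever Sobolev regularity the $\rhoB$-bounded or smoothing factor actually delivers. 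Keeping this bookkeeping consistent across all four entries, and in particular making sure the $\rhoB(k)$ factor attaches only to the left column (where the low-frequency cutoff forces a loss through \eqref{e:HsInverseNorm}) and \emph{not} to the right column (where Corollary \ref{c:hfGood} gives a $k$-uniform bound), is the crux.
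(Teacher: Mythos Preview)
Your proposal is correct and follows essentially the same route as the paper: decompose into the four blocks, use $\Pi_{\mathscr{H}}=(I-\chi(-k^{-2}\Delta_{\Gamma_-}))\Pi_{\mathscr{H}}$ together with Corollary~\ref{c:hfGood} for the blocks with $\Pi_{\mathscr{H}}$ on the right, and use the smoothing $\|\Pi_{\mathscr{L}}\|_{H^s_k\to H^N_k}\leq C$ together with \eqref{e:HsInverseNorm} for the blocks with $\Pi_{\mathscr{L}}$ on the right. The one simplification you are missing is that for the $\mathscr{LL}$ and $\mathscr{HL}$ blocks the paper does not insert the $\chi$ cutoff at all: it writes $\pert(I+\pert)^{-1}=I-(I+\pert)^{-1}$ and chains $\|\Pi_{\mathscr{L}}\|_{H^{-N}_k\to H^s_k}\cdot\bestt{N}{-N}\cdot\|I-(I+\pert)^{-1}\|_{H^N_k\to H^N_k}\cdot\|\Pi_{\mathscr{L}}\|_{H^s_k\to H^N_k}\leq C\rhoB(k)\bestt{N}{-N}$ (and similarly for $\mathscr{HL}$), which is a one-line estimate per block.
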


We prove Lemma \ref{l:perturbationBlock} below, but first we use it to understand when 
\eqref{e:Neumann0} holds. Indeed, \eqref{e:Neumann0} holds if all entries on the right-hand side of \eqref{e:blockDecomp1} are $\ll 1$.
However, this imposes too strong a restriction on the space $\mathcal{V}$ and we can instead change the norm on $H_{k}^{s}$ using an invertible operator $\mathcal{C}:H^s_k(\Gamma_-)\to H^s_k(\Gamma_-)$ to improve the situation. More precisely, we observe that 
$$
 I+ (P_{\mathcal{V}}-I)\pert (I+\pert)^{-1}= \mathcal{C}^{-1} \Big(I +\mathcal{C}(P_{\mathcal{V}}-I)\pert (I+\pert)^{-1}\mathcal{C}^{-1}\Big)\mathcal{C}.
$$
We now choose $\mathcal{C}$ to be a diagonal matrix with entries chosen so that the bottom-left and top-right entries of 
$\mathcal{C}(P_{\mathcal{V}}-I)\pert (I+\pert)^{-1}\mathcal{C}^{-1}$ are equal; i.e., compared to $(P_{\mathcal{V}}-I)\pert (I+\pert)^{-1}$ we make the
bottom-left  entry smaller, at the cost of making the top-right entry bigger.
The choice of $\mathcal{C}$ that achieves this is 
\begin{equation}
\mathcal{C}:=\begin{pmatrix} I&0\\0&\bestt{N}{s}^{-1/2}\bestt{s+1}{-N}^{1/2}\rhoB(k) ^{-1/2}I\end{pmatrix},
\label{e:newNorm}
\end{equation}
with then
\begin{equation}
\label{e:blockDecomp}
\begin{aligned}
&\|\mathcal{C}(P_{\mathcal{V}}-I)\pert (I+\pert)^{-1}\mathcal{C}^{-1}\|_{H^s_k(\Gamma_-)\to H^s_k(\Gamma_-)}\\
& \hspace{3cm}\leq 
C\begin{pmatrix}\bestt{N}{-N}  \rhoB(k) &\bestt{N}{s}^{1/2}\bestt{s+1}{-N}^{1/2}\rhoB(k)^{1/2}\\
\bestt{N}{s}^{1/2}\bestt{s+1}{-N}^{1/2}\rhoB(k)^{1/2}&\bestt{s+1}{s}
\end{pmatrix}.
\end{aligned}
\end{equation}
The inequality~\eqref{e:approxRequirements} then implies that
\begin{equation}
\label{e:Neumann1}
\big\|\mathcal{C}(P_{\mathcal{V}}-I)\pert (I+\pert)^{-1}\mathcal{C}^{-1}\big\|_{H^s_k(\Gamma_-)\to H^s_k(\Gamma_-)}<\frac{1}{2},
\end{equation}
and so $I+(P_{\mathcal{V}}-I)\pert (I+\pert)^{-1}$ is invertible and 
\beq\label{e:magicC1}
\Big(I+(P_{\mathcal{V}}-I)\pert (I+\pert)^{-1}\Big)^{-1}= \mathcal{C}^{-1}\Big(I+\mathcal{C}(P_{\mathcal{V}}-I)\pert (I+\pert)^{-1}\mathcal{C}^{-1}\Big)^{-1}\mathcal{C}.
\eeq

We now prove Theorem \ref{t:abstracthBEM}, assuming both Lemma \ref{l:perturbationBlock} and the following lemma.

\begin{lemma}
\label{l:inverseBlock}Let $s\geq 0$, $k_0>0$, and suppose $\operator$ satisfies Assumption~\ref{ass:abstract1}. Then for any $N\in\mathbb{R}$ there is $C>0$ such that for all $k>k_0$, $\mathcal{V}\subset H_k^s(\Gamma_-)$ and $P_{\mathcal{V}}: H_k^s (\Gamma_-)\to \mathcal{V}$ projectors onto $\mathcal{V}$ 
\begin{equation}
\label{e:blockDecomp2}
\begin{aligned}
&\big\| (I+\pert)^{-1}(I-P_{\mathcal{V}})\big\|_{H^s_k(\Gamma_-)\to H^s_k(\Gamma_-)}
\leq 
\begin{pmatrix}C\bestt{N}{-N}  \rhoB(k) &C\bestt{s}{-N}\rhoB(k)\\
C\bestt{N}{s}&(L_{\max}+Ck^{-1})\bestt{s}{s}
\end{pmatrix}.
\end{aligned}
\end{equation}
\end{lemma}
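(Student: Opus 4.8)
The plan is to decompose $(I+\pert)^{-1}(I-P_{\mathcal{V}})$ according to the high/low frequency splitting \eqref{e:highandlow}, and to estimate each of the four resulting blocks $\Pi_{\mathscr{A}}(I+\pert)^{-1}(I-P_{\mathcal{V}})\Pi_{\mathscr{B}}$ (for $\mathscr{A},\mathscr{B}\in\{\mathscr{L},\mathscr{H}\}$) using the results of Lemma \ref{l:hfGood} (and its Corollary \ref{c:hfGood}), together with the smoothing property of low-frequency cutoffs from Lemma \ref{l:functionLaplace} and the definition of $\bestt{t}{s}(P_{\mathcal{V}})$ in \eqref{e:projnotation}. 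First I would record the elementary bounds on the projectors: by Lemma \ref{l:functionLaplace}, $\Pi_{\mathscr{L}}=1_{[-1-\e,1+\e]}(-k^{-2}\Delta_{\Gamma_-})$ is $O(1)_{H^{-N}_k\to H^N_k}$ (the indicator function can be bounded by a fixed $C_c^\infty$ function), and $\|\Pi_{\mathscr{H}}\|_{H^s_k\to H^s_k}\leq 1$, $\|\Pi_{\mathscr{L}}\|_{H^s_k\to H^s_k}\leq 1$ for all $s$; and by the definition of $\bestt{t}{s}$, $\|(I-P_{\mathcal{V}})\|_{H^t_k\to H^s_k}=\bestt{t}{s}(P_{\mathcal{V}})$.

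Next I would treat the blocks. For the two blocks with an $\Pi_{\mathscr{L}}$ on the \emph{left} (i.e.\ $a_{\mathscr{LL}}$ and $a_{\mathscr{LH}}$ in the displayed matrix): write $\Pi_{\mathscr{L}}(I+\pert)^{-1}(I-P_{\mathcal{V}})\Pi_{\mathscr{B}} = \Pi_{\mathscr{L}}\big(I-(I+\pert)^{-1}\pert\big)(I-P_{\mathcal{V}})\Pi_{\mathscr{B}}$; using $\Pi_{\mathscr{L}}$ smoothing (mapping $H^{-N}_k\to H^N_k$), the $H^s_k\to H^s_k$ bound $\|(I+\pert)^{-1}\|\leq C\rhoB(k)$ from \eqref{e:HsInverseNorm}, and boundedness of $\pert$, one reduces to controlling $\|(I-P_{\mathcal{V}})\Pi_{\mathscr{B}}\|$ in an appropriate pair of norms — giving $\bestt{N}{-N}\rhoB(k)$ for $\mathscr{B}=\mathscr{L}$ (using $\Pi_{\mathscr{L}}$ on the right as well to go down to $H^{-N}_k$) and $\bestt{s}{-N}\rhoB(k)$ for $\mathscr{B}=\mathscr{H}$ (here one keeps the source norm at $H^s_k$). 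For the two blocks with $\Pi_{\mathscr{H}}$ on the left ($a_{\mathscr{HL}}$ and $a_{\mathscr{HH}}$): write $\Pi_{\mathscr{H}}(I+\pert)^{-1} = \big(I-\chi(-k^{-2}\Delta_{\Gamma_-})\big)(I+\pert)^{-1}\Pi_{\mathscr{H}}$ after inserting a cutoff $\chi$ with $\chi\equiv 1$ on $\supp\Pi_{\mathscr{H}}^{\rm c}$ (more precisely, $1-\chi(-k^{-2}\Delta_{\Gamma_-})$ acts as the identity on the range of $\Pi_{\mathscr{H}}$ for $\e$ small relative to the support of $1-\chi$); then \eqref{e:hfInverseNorm2} gives $\|\Pi_{\mathscr{H}}(I+\pert)^{-1}\|_{H^s_k\to H^s_k}\leq L_{\max}+Ck^{-1}+Ck^{-N}\rhoB(k)$, and the $k^{-N}\rhoB(k)$ term is absorbed into the constant since $k^{-N}\leq Ck^{-1}$ for $N\geq s\geq 0$ (or more simply one just keeps it and notes it is $\leq Ck^{-1}$ after possibly shrinking — actually one should be slightly careful here, but \eqref{e:hfInverseNorm2} with $N$ large and the hypothesis that $\rhoB$ is at most polynomially bounded off $\mathcal{J}$ lets us write it as $L_{\max}+Ck^{-1}$). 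Composing with $\|(I-P_{\mathcal{V}})\Pi_{\mathscr{B}}\|$ then yields $C\bestt{N}{s}$ for $\mathscr{B}=\mathscr{L}$ (using $\Pi_{\mathscr{L}}$-smoothing to raise the source regularity to $H^N_k$ — wait, here $\Pi_{\mathscr{L}}$ is on the right, so it raises the \emph{source} to $H^N_k$ for free, giving $\bestt{N}{s}$, and crucially \emph{no} factor of $\rhoB(k)$ since $\|\Pi_{\mathscr{H}}(I+\pert)^{-1}\|\leq L_{\max}+Ck^{-1}$) and $(L_{\max}+Ck^{-1})\bestt{s}{s}$ for $\mathscr{B}=\mathscr{H}$.

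Assembling these four estimates into the matrix notation \eqref{e:matrix} gives exactly \eqref{e:blockDecomp2}. I expect the main obstacle to be bookkeeping with the norms in the two ``mixed'' blocks — making sure that one genuinely gains the factor $\rhoB(k)^{-1}$ (i.e.\ avoids a spurious $\rhoB(k)$) in the $\Pi_{\mathscr{H}}$-on-the-left blocks by using \eqref{e:hfInverseNorm2} rather than \eqref{e:HsInverseNorm}, and conversely that the $\Pi_{\mathscr{L}}$-on-the-left blocks correctly pick up $\rhoB(k)$ and the right negative-order source norms from the smoothing of $\Pi_{\mathscr{L}}$; the presence of $k^{-N}\rhoB(k)$ remainder terms in \eqref{e:hfInverseNorm1}--\eqref{e:hfInverseNorm2} that must be shown to be harmless (absorbed into $Ck^{-1}$ up to increasing $N$, using the polynomial-in-$k$ bound on $\rhoB$ away from $\mathcal{J}$) is the one genuinely subtle point, and it is the same device already used repeatedly in this section.
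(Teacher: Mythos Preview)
Your overall strategy---decompose into the four $\Pi_{\mathscr{L}}/\Pi_{\mathscr{H}}$ blocks, use the smoothing of $\Pi_{\mathscr{L}}$ for the top row, and insert $(I-\chi(-k^{-2}\Delta_{\Gamma_-}))$ together with \eqref{e:hfInverseNorm2} for the bottom row---is exactly the paper's approach, and the bottom-row argument is essentially identical to what appears there.

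For the top row, however, your detour through $(I+\pert)^{-1}=I-(I+\pert)^{-1}\pert$ is unnecessary and introduces a small gap: you invoke ``boundedness of $\pert$'', but Assumption~\ref{ass:abstract1} gives no $k$-uniform bound on $\|\pert\|_{H^s_k\to H^s_k}$ (indeed, for the concrete operators $A_k,A_k'$ this norm grows with $k$). The paper simply avoids the issue by estimating directly:
\[
\big\|\Pi_{\mathscr{L}}(I+\pert)^{-1}(I-P_{\mathcal{V}})\Pi_{\mathscr{L}}\big\|_{H^s_k\to H^s_k}
\le
\|\Pi_{\mathscr{L}}\|_{H^{-N}_k\to H^s_k}\,
\|(I+\pert)^{-1}\|_{H^{-N}_k\to H^{-N}_k}\,
\bestt{N}{-N}\,
\|\Pi_{\mathscr{L}}\|_{H^s_k\to H^N_k},
\]
using \eqref{e:HsInverseNorm} for the middle factor; similarly for the $\mathscr{LH}$ block. (If you prefer to keep your decomposition, it can be rescued by writing $(I+\pert)^{-1}\pert=I-(I+\pert)^{-1}$, which bounds the whole combination by $1+C\rhoB(k)\le C\rhoB(k)$ without ever needing $\|\pert\|$---but the direct route is cleaner.)

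Your remark about the $Ck^{-N}\rhoB(k)$ remainder in \eqref{e:hfInverseNorm2} is well taken: the paper silently drops this term when writing $(L_{\max}+Ck^{-1})$ in the bottom row. Strictly speaking, the lemma as stated lacks a polynomial-boundedness hypothesis on $\rhoB$; the absorption is justified only in the downstream application (Theorem~\ref{t:abstracthBEM}), where such a hypothesis is present and one may take the $N$ in \eqref{e:hfInverseNorm2} larger than $M+1$.
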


\begin{proof}[Proof of Theorem~\ref{t:abstracthBEM}]
By \eqref{e:QOabs}, we need to bound the high- and low-frequency components of $(I+P_{\mathcal{V}} \pert)^{-1}(I-P_{\mathcal{V}})$. Furthermore, by the combination of \eqref{e:thething} and \eqref{e:magicC1}, 
\begin{align}\nonumber
&(I+P_{\mathcal{V}}\pert )^{-1}(I-P_{\mathcal{V}}) \\  \label{e:middle1}
&
=(I+\pert)^{-1} (I-P_{\mathcal{V}})
\Big(I+(P_{\mathcal{V}}-I)\pert (I+\pert)^{-1}\Big)^{-1}
(I-P_{\mathcal{V}})\\
&=
 (I+\pert)^{-1} (I-P_{\mathcal{V}})
\mathcal{C}^{-1}\Big(I+\mathcal{C}(P_{\mathcal{V}}-I)\pert (I+\pert)^{-1}\mathcal{C}^{-1}\Big)^{-1}\mathcal{C}(I-P_{\mathcal{V}}).
 \nonumber
\end{align}
By~\eqref{e:blockDecomp}, under the assumption~\eqref{e:approxRequirements}, 
\begin{align*}
&\Big\|\Big(I+\mathcal{C}(P_{\mathcal{V}}-I)\pert (I+\pert)^{-1}\mathcal{C}^{-1}\Big)^{-1}\Big\|_{H_k^s\to H_k^s}\\
&\leq  \sum_{j=0}^\infty \Big\|[\mathcal{C}(P_{\mathcal{V}}-I)\pert (I+\pert)^{-1}\mathcal{C}^{-1}]\Big\|_{H_k^s\to H_k^s}^j\\
&\leq \sum_{j=0}^\infty C^j\begin{pmatrix}\bestt{N}{-N}  \rhoB(k) &\bestt{N}{s}^{1/2}\bestt{s+1}{-N}^{1/2}\rhoB(k)^{1/2}\\
\bestt{N}{s}^{1/2}\bestt{s+1}{-N}^{1/2}\rhoB(k)^{1/2}&\bestt{s+1}{s}
\end{pmatrix}^j\\
&= \left(I-C\begin{pmatrix}\bestt{N}{-N}  \rhoB(k) &\bestt{N}{s}^{1/2}\bestt{s+1}{-N}^{1/2}\rhoB(k)^{1/2}\\
\bestt{N}{s}^{1/2}\bestt{s+1}{-N}^{1/2}\rhoB(k)^{1/2}&\bestt{s+1}{s}
\end{pmatrix}\right)^{-1}\\
&=\frac{1}{(1-C\bestt{N}{-N}\rhoB(k))(1-C\bestt{s+1}{s})-C^2\bestt{N}{s}\bestt{s+1}{-N}\rhoB(k)}\cdot\\
&\hspace{4.5cm}\begin{pmatrix} 1-C\bestt{s+1}{s}&C\bestt{N}{s}^{1/2}\bestt{s+1}{-N}^{1/2}\rhoB(k)^{1/2}\\C\bestt{N}{s}^{1/2}\bestt{s+1}{-N}^{1/2}\rhoB(k)^{1/2}&1-C\bestt{N}{-N}\rhoB(k)\end{pmatrix}\\
&\leq  I+C\footnotesize{\begin{pmatrix} (\bestt{N}{-N}+\bestt{N}{s}\bestt{s+1}{-N})\rhoB(k) +\bestt{s+1}{s}&\bestt{N}{s}^{1/2}\bestt{s+1}{-N}^{1/2}\rhoB(k)^{1/2}\\\bestt{N}{s}^{1/2}\bestt{s+1}{-N}^{1/2}\rhoB(k)^{1/2}&(\bestt{N}{-N}+\bestt{N}{s}\bestt{s+1}{-N})\rhoB(k) +\bestt{s+1}{s}\end{pmatrix}},
\end{align*}
where in the last step we use that, for $Ca<1$, $(1-Ca)^{-1}\leq 1+ C' a$ in the diagonal entries and $(1-Ca)^{-1}\leq C'$ in the off-diagonal entries.
Therefore, by 
\eqref{e:magicC1} and 
the definition of $\mathcal{C}$ \eqref{e:newNorm},
\begin{align*}
&\big\|\big(I+(P_{\mathcal{V}}-I)\pert (I+\pert)^{-1}\big)^{-1}\big\|_{H_k^s\to H_k^s}\\
&\leq  I+C\footnotesize{\begin{pmatrix} (\bestt{N}{-N}+\bestt{N}{s}\bestt{s+1}{-N})\rhoB(k) +\bestt{s+1}{s}&\bestt{s+1}{-N}\\\bestt{N}{s}\rhoB(k)&(\bestt{N}{-N}+\bestt{N}{s}\bestt{s+1}{-N})\rhoB(k) +\bestt{s+1}{s}\end{pmatrix}}.
\end{align*}
Now, with $\Pi_{\mathscr{L}}$ and $\Pi_{\mathscr{H}}$ defined by \eqref{e:highandlow}, 
by Lemma \ref{l:functionLaplace}
\beq\label{e:projectionbounds}
\|\Pi_{\mathscr{L}}\|_{H_{\hbar}^{-N}\to H_{\hbar}^{N}}\leq C\quad\text{ and }\quad \|\Pi_{\mathscr{H}}\|_{H_{\hbar}^t\to H_{\hbar}^t}\leq 1,
\eeq
and thus, by the notation \eqref{e:projnotation},
\begin{align*}
&\|I-P_{\mathcal{V}}\|_{H_k^s\to H_k^s}\leq  \begin{pmatrix} C\bestt{N}{-N}&C\bestt{s}{-N}\\C\bestt{N}{s}&\bestt{s}{s}\end{pmatrix},
\end{align*}
Therefore, 
under the assumption~\eqref{e:approxRequirements}, 
\begin{align*}
&\big\|\big(I+(P_{\mathcal{V}}-I)\pert (I+\pert)^{-1}\big)^{-1}(I-P_{\mathcal{V}})\big\|_{H_k^s\to H_k^s}\\
&\leq  \scriptsize{\begin{pmatrix} C(\bestt{N}{-N}+\bestt{N}{s}\bestt{s+1}{-N})&C(\bestt{s}{-N}+\bestt{s+1}{-N}\bestt{s}{s})\\C\bestt{N}{s}&\!\!\!\!\!\!\!\!\!\!\!\!C\bestt{N}{s}\bestt{s}{-N}\rhoB(k) +\bestt{s}{s}(1+C(\bestt{N}{-N}+\bestt{N}{s}\bestt{s+1}{-N})\rhoB(k)+C\bestt{s+1}{s}) \end{pmatrix}}.
\end{align*}

Finally, combining this last displayed inequality with \eqref{e:middle1}, Lemma~\ref{l:inverseBlock}, the assumption \eqref{e:approxRequirements}, and the fact that 
$\bestt{N}{-N}\leq \bestt{s}{-s}$ for $s\leq N$ by the definition \eqref{e:projnotation},
we obtain that
\begin{align*}
&\|\Pi_{\mathscr{L}}(I+P_{\mathcal{V}}\pert)^{-1}(I-P_{\mathcal{V}})\Pi_{\mathscr{L}}\|_{H_k^s(\Gamma)\to H_k^s(\Gamma)}\\
&\hspace{2.25cm}\leq C\bestt{N}{-N}
\rhoB(k)\big(\bestt{N}{-N}+\bestt{N}{s}\bestt{s+1}{-N}\big)+C\bestt{N}{s}\bestt{s}{-N}\rhoB(k),\\
&\|\Pi_{\mathscr{L}}(I+P_{\mathcal{V}}\pert)^{-1}(I-P_{\mathcal{V}})\Pi_{\mathscr{H}}\|_{H_k^s(\Gamma)\to H_k^s(\Gamma)}\\
&\hspace{1.25cm}\leq C\bestt{N}{-N}\rhoB(k) \big(\bestt{s}{-N}+\bestt{s+1}{-N}\bestt{s}{s}\big)
 \\
 &\hspace{1.45cm}+C\bestt{s}{-N}\rhoB(k)
 \Big[C\bestt{N}{s}\bestt{s}{-N}\rhoB(k)
 \\
 &\hspace{1.65cm}+\bestt{s}{s}\big(1+C(\bestt{N}{-N}+\bestt{N}{s}\bestt{s+1}{-N})\rhoB(k)+C\bestt{s+1}{s}\big)\Big],\\
&\hspace{1.25cm}\leq C\bestt{N}{-N}\rhoB(k) \big(\bestt{s}{-N}+\bestt{s+1}{-N}\bestt{s}{s}\big)+C\bestt{s}{s}\bestt{s}{-N}\rhoB(k),\\
&\hspace{1.25cm}\leq C\bestt{s}{s}\bestt{s}{-N}\rhoB(k),\\
&\|\Pi_{\mathscr{H}}(I+P_{\mathcal{V}}\pert)^{-1}(I-P_{\mathcal{V}})\Pi_{\mathscr{L}}\|_{H_k^s(\Gamma)\to H_k^s(\Gamma)}
\leq C\bestt{N}{s}\big( \bestt{N}{s}\bestt{s+1}{-N}+\bestt{s}{s}\big),\\
&\|\Pi_{\mathscr{H}}(I+P_{\mathcal{V}}\pert)^{-1}(I-P_{\mathcal{V}})\Pi_{\mathscr{H}}\|_{H_k^s(\Gamma)\to H_k^s(\Gamma)}\\
&\leq (L_{\max}+Ck^{-1})\bestt{s}{s}\Big[
C\bestt{N}{s}\bestt{s}{-N}\rhoB(k) 
\\
&\hspace{4cm}+\bestt{s}{s}\Big(1+C(\bestt{N}{-N}+\bestt{N}{s}\bestt{s+1}{-N})\rhoB(k)+C\bestt{s+1}{s}\Big)\Big],
\\
&\leq (L_{\max}+Ck^{-1})\bestt{s}{s}\Bigg(\bestt{s}{s}+C\Big[
\bestt{N}{s}\bestt{s}{-N}\rhoB(k) 
\\
&\hspace{4cm}+\bestt{s}{s}\Big((\bestt{N}{-N}+\bestt{N}{s}\bestt{s+1}{-N})\rhoB(k)+\bestt{s+1}{s}\Big)\Big]\Bigg).
\end{align*}
The bound \eqref{e:BEMLF} then follows from the first two displayed inequalities above, using that $\bestt{N}{-N}\leq \bestt{s}{-N}$, $\bestt{N}{s}\leq \bestt{s}{s}$, and $\rhoB(k)\geq c$. 
The bound \eqref{e:BEMHF} then follows from the third and fourth displayed inequalities in a similar way, using that $\bestt{s}{s}\geq 1$ (since $P_{\cV}$ is a projection).
\end{proof}

It therefore remains to prove Lemmas~\ref{l:perturbationBlock} and~\ref{l:inverseBlock}.
\begin{proof}[Proof of Lemma~\ref{l:perturbationBlock}]
For the top-left  entry
of the matrix corresponding to 
$(P_{\mathcal{V}}-I)\pert(I+\pert)^{-1}$ via \eqref{e:matrix},
by \eqref{e:projectionbounds} and \eqref{e:HsInverseNorm},
\begin{align*}
&\|\Pi_{\mathscr{L}}(P_{\mathcal{V}}-I)\pert(I+\pert)^{-1}\Pi_{\mathscr{L}}\|_{H_k^s\to H_k^s}\\
&= \|\Pi_{\mathscr{L}}(P_{\mathcal{V}}-I)(I-(I+\pert)^{-1})\Pi_{\mathscr{L}}\|_{H_k^s\to H_k^s}\\
&\leq \|\Pi_{\mathscr{L}}\|_{H_{k}^{-N}\to H_k^s}
\bestt{N}{-N}\|I-(I+\pert)^{-1}\|_{H_k^N\to H_k^N}\|\Pi_{\mathscr{L}}\|_{H_k^s\to H_k^N}\leq C\rhoB(k)\bestt{N}{-N}.
\end{align*}
For the top-right entry, 
let $\chi \in C_{c}^\infty((-1-\epsilon,1+\epsilon))$ with $\supp (1-\chi)\cap [-1,1]=\emptyset$ and observe that, by the definition of $\Pi_{\mathscr{H}}$ \eqref{e:highandlow}, $
\Pi_{\mathscr{H}}=
(I-\chi(-k^{-2}\Delta_\Gamma))\Pi_{\mathscr{H}}$. 
Then, by Corollary \ref{c:hfGood} and \eqref{e:projectionbounds}, 
\begin{align*}
&\|\Pi_{\mathscr{L}}(P_{\mathcal{V}}-I)\pert(I+\pert)^{-1}\Pi_{\mathscr{H}}\|_{H_k^s\to H_k^s}\\
&\leq \|\Pi_{\mathscr{L}}\|_{H_{k}^{-N}\to H_k^s}
\bestt{s+1}{-N}
\|\pert(I+\pert)^{-1}
(I-\chi(-k^{-2}\Delta_\Gamma))\|_{H_k^s\to H_k^{s+1}}
\|\Pi_{\mathscr{H}}\|_{H_k^{s}\to H_k^{s}}
\\
&\leq C\bestt{s+1}{-N}.
\end{align*}
For the bottom-left  entry, by \eqref{e:projectionbounds} and \eqref{e:HsInverseNorm},
\begin{align*}
&\|\Pi_{\mathscr{H}}(P_{\mathcal{V}}-I)\pert(I+\pert)^{-1}\Pi_{\mathscr{L}}\|_{H_k^s\to H_k^s}\\
&= \|\Pi_{\mathscr{H}}(P_{\mathcal{V}}-I)(I-(I+\pert)^{-1})\Pi_{\mathscr{L}}\|_{H_k^s\to H_k^s}\\
&\leq \|\Pi_{\mathscr{H}}\|_{H_{k}^{s}\to H_k^s}
\bestt{N}{s}\|I-(I+\pert)^{-1}\|_{H_k^N\to H_k^N}\|\Pi_{\mathscr{L}}\|_{H_k^s\to H_k^N}\leq C\rhoB(k)\bestt{N}{s}.
\end{align*}
Finally, for the bottom-right entry, 
by 
\eqref{e:projectionbounds} and 
Corollary \ref{c:hfGood}, 
\begin{align*}
&\|\Pi_{\mathscr{H}}(P_{\mathcal{V}}-I)\pert(I+\pert)^{-1}\Pi_{\mathscr{H}}\|_{H_k^s\to H_k^s}\\
&\leq \|\Pi_{\mathscr{H}}\|_{H_{k}^{s}\to H_k^s}
\bestt{s+1}{s}
\|\pert(I+\pert)^{-1}
(I-\chi(-k^{-2}\Delta_\Gamma))\|_{H_k^s\to H_k^{s+1}}
\|\Pi_{\mathscr{H}}\|_{H_k^{s}\to H_k^{s}}
\\
&\leq C\bestt{s+1}{s}.
\end{align*}
\end{proof}

\begin{proof}[Proof of Lemma~\ref{l:inverseBlock}]
For the top-left  entry of the matrix corresponding to $(I+L)^{-1}(I-P_{\mathcal{V}})$ via \eqref{e:matrix}, by \eqref{e:projectionbounds} and \eqref{e:HsInverseNorm},
\begin{align*}
&\|\Pi_{\mathscr{L}}(I+\pert)^{-1}(I-P_{\mathcal{V}})\Pi_{\mathscr{L}}\|_{H_k^s\to H_k^s}\\
&\leq \|\Pi_{\mathscr{L}}\|_{H_{k}^{-N}\to H_k^s}
\|(I+\pert)^{-1}\|_{H_k^{-N}\to H_k^{-N}}\bestt{N}{-N}\|\Pi_{\mathscr{L}}\|_{H_k^s\to H_k^N}\leq C\rhoB(k)\bestt{N}{-N}
\end{align*}
For the top-right entry, 
again by \eqref{e:projectionbounds} and \eqref{e:HsInverseNorm},
\begin{align*}
&\|\Pi_{\mathscr{L}}(I+\pert)^{-1}(I-P_{\mathcal{V}})\Pi_{\mathscr{H}}\|_{H_k^s\to H_k^s}\\
&\leq \|\Pi_{\mathscr{L}}\|_{H_{k}^{-N}\to H_k^s}
\|(I+\pert)^{-1}\|_{H_k^{-N}\to H_k^{-N}}\bestt{s}{-N}\|\Pi_{\mathscr{H}}\|_{H_k^s\to H_k^s}\leq C\rhoB(k)\bestt{s}{-N}
\end{align*}
For the bottom-left entry, let $\chi \in C_{c}^\infty((-1-\epsilon,1+\epsilon))$ with $\supp (1-\chi)\cap [-1,1]=\emptyset$ and observe that, by the definition of $\Pi_{\mathscr{H}}$ \eqref{e:highandlow}, $
\Pi_{\mathscr{H}}=
\Pi_{\mathscr{H}}(I-\chi(-k^{-2}\Delta_\Gamma))$. Using this, along with \eqref{e:projectionbounds} and \eqref{e:hfInverseNorm2}, 
we obtain that
\begin{align*}
&\|\Pi_{\mathscr{H}}(I+\pert)^{-1}(I-P_{\mathcal{V}})\Pi_{\mathscr{L}}\|_{H_k^s\to H_k^s}\\
&\qquad\leq 
\|\Pi_{\mathscr{H}}\|_{H_k^s\to H_k^s}
\|(I-\chi(-k^{-2}\Delta_\Gamma))(I+\pert)^{-1}\|_{H_k^s\to H_k^s}\bestt{N}{s}\|\Pi_{\mathscr{L}}\|_{H_k^s\to H_k^N}
\\
&\qquad\leq C\bestt{N}{s}.
\end{align*}
Finally, for the bottom-right entry, 
by \eqref{e:projectionbounds} and \eqref{e:hfInverseNorm2} again,
\begin{align*}
&\|\Pi_{\mathscr{H}}(I+\pert)^{-1}(I-P_{\mathcal{V}})\Pi_{\mathscr{H}}\|_{H_k^s\to H_k^s}\\
&\qquad\leq 
\|\Pi_{\mathscr{H}}\|_{H_k^s\to H_k^s}
\|(I-\chi(-k^{-2}\Delta_\Gamma))(I+\pert)^{-1}\|_{H_k^s\to H_k^s}\bestt{s}{s}\|\Pi_{\mathscr{H}}\|_{H_k^s\to H_k^s}
\\
&\qquad\leq (L_{\max} +Ck^{-1})\bestt{s}{s}.
\end{align*}
\end{proof}

\subsection{Proof of Theorem~\ref{t:concretehBEM}:~references for Assumption~\ref{ass:abstract1}}

Given Theorem \ref{t:abstracthBEM}, 
to prove Theorem~\ref{t:concretehBEM} it only remains to check that Assumption~\ref{ass:abstract1} holds and to obtain estimates on $\best(\Pi_{\mathcal{P}_{\mathcal{T}}^p})_{s\to t}$ when $\Pi_{\mathcal{P}_{\mathcal{T}}^p}:L^2(\Gamma_-)\to L^2(\Gamma_-)$ is the orthogonal projector onto $\mathcal{P}_{\mathcal{T}}^p$. 

Using the results of 
~\cite[Theorem 4.3]{GaMaSp:21N} and~\cite[Chapter 4]{Ga:19}, 
\cite[\S5.1.1]{GaRaSp:25} shows that 
Assumption~\ref{ass:abstract1}
holds for $A_k$ and $A_k'$, and 
\cite[\S5.1.2]{GaRaSp:25} shows that 
Assumption~\ref{ass:abstract1}
holds for $\Breg$ and $\Breg'$, in all cases with $L_{\max}=1$.

For the $H_k^t\to L^2$ mapping properties of $\Pi_{\mathcal{P}_{\mathcal{T}}^p}$, we refer to Appendix~\ref{a:poly}. Indeed, by Theorem~\ref{t:approxHighLowRegL2}
and the fact that $\Pi_{\mathcal{P}_{\mathcal{T}}^p}$ is the $L^2$-orthogonal projector, there is $C>0$ such that for $0\leq t\leq p+1$, 
$$
\|(I-\Pi_{\mathcal{P}_{\mathcal{T}}^p})\|_{H_k^t(\Gamma_-)\to L^2(\Gamma_-)}=\|(I-\Pi_{\mathcal{P}_{\mathcal{T}}^p})\|_{ L^2(\Gamma_-)\to H_k^{-t}(\Gamma_-)}\leq C (hk)^{t}. 
$$
Hence,
\begin{align*}
\bestt{1}{0}&\leq Chk,&\bestt{0}{0}&=1,\\
\bestt{p+1}{0}&\leq C(hk)^{p+1},&\bestt{0}{-p-1}&\leq C(hk)^{p+1},\\
\bestt{1}{-p-1}&\leq \bestt{1}{0}\bestt{0}{-p-1}\leq C(hk)^{p+2},\\
\bestt{p+1}{-p-1}&\leq \bestt{p+1}{0}\bestt{0}{-p-1}\leq C(hk)^{2p+2}.
\end{align*}
Theorem~\ref{t:concretehBEM} now follows by inserting these bounds into Theorem~\ref{t:abstracthBEM} with $N=p+1$ and $s=0$, and recalling that $L_{\max}=1$.

\subsection{Pollution in the Galerkin $h$-BEM with piecewise polynomials}

Pollution in the $h$-FEM (i.e., the fact that $\gg k^d$ degrees of freedom is required to accurately approximate  a scattering solution at frequency $k$ using the $h$-FEM) was famously identified in~\cite{IhBa:95,IhBa:97}. However, it was long conjectured that the $h$-BEM does not suffer from the pollution effect. Theorem~\ref{t:concretehBEM} shows that, indeed, there is no pollution for the $h$-BEM applied with a sound-soft, nontrapping obstacle (with this result first proved in ~\cite{GS2}). However, the rigorous versions of Informal Theorem \ref{t:informalR3b}, namely \cite[Theorems 2.10, 2.12, 2.13, 2.14, and 2.16]{GaRaSp:25}, show that the $h$-BEM \emph{does} suffer from pollution when applied to many trapping, sound-soft obstacles and even for sound-hard scattering by the disk.

We do not attempt to give complete proofs of these results here. Instead, we opt for a sketch of the arguments.

\paragraph{Step 1:} Quasimodes imply pollution. 
By Lemma~\ref{lem:QOabs}, the Galerkin approximation, $v_h$, to $v$ satisfies
\beqs
v-v_h= (I+\Pi_{\mathcal{P}_{\mathcal{T}}^p}\pert)^{-1}(I-\Pi_{\mathcal{P}_{\mathcal{T}}^p}) (I-\Pi_{\mathcal{P}_{\mathcal{T}}^p})v
\eeqs
(see \eqref{e:QOabs} below). Therefore, to prove pollution, we need to show that $(I+\Pi_{\mathcal{P}_{\mathcal{T}}^p} \pert)^{-1}(I-\Pi_{\mathcal{P}_{\mathcal{T}}^p})$ is large. A simple calculation shows that if 
\beq\label{e:introv}
v := (I + \pert)^{-1} (I-\Pi_{\mathcal{P}_{\mathcal{T}}^p}) \widetilde{f}
\eeq
then 
\beqs
(I+ \Pi_{\mathcal{P}_{\mathcal{T}}^p} \pert) v= (I-\Pi_{\mathcal{P}_{\mathcal{T}}^p}) v =(I-\Pi_{\mathcal{P}_{\mathcal{T}}^p})^2 v
\eeqs
i.e., we need to find $v$ with $\|v\|_{L^2}=1$ such that \eqref{e:introv} holds and $(I-\Pi_{\mathcal{P}_{\mathcal{T}}^p})v$ is ``small". 

To do this, we assume that there is a \emph{quasimode}, $u$ for $\operator$; i.e., that there are $u,f\in L^2(\Gamma_-)$ with 
\begin{equation}
\label{e:quasi1}
(I+\pert)u=f,\qquad \|u\|_{L^2(\Gamma_-)}=1,\quad\tand\quad\|f\|_{L^2(\Gamma_-)}\ll 1.
\end{equation}
Moreover, we assume that both $f$ and $u$ are 
oscillating at frequency $\lesssim k$, but not frequency $\ll k$, i.e., 
there is is $\chi\in C_c^\infty(\mathbb{R})$ with $0\notin \supp \chi$ such that 
\begin{equation}
\label{e:quasi2}
\big\|\big(I-\chi(-k^{-2}\Delta_{\Gamma_-})\big)f
\big\|_{L^2}+
\big\|\big(I-\chi(-k^{-2}\Delta_{\Gamma_-})\big)u
\big\|_{L^2}\ll 1
\end{equation}
and thus by Theorem~\ref{t:approxHighLowRegL2},
$\| (I-\Pi_{\mathcal{P}_{\mathcal{T}}^p})u\|_{L^2(\Gamma_-)} \leq C (hk)^{p+1} \| u\|_{L^2(\Gamma_-)}$.

If there is $\tilde{f}$ such that $f = (I-\Pi_{\mathcal{P}_{\mathcal{T}}^p})\widetilde{f}$, then
\beqs
\big\|(I+ \Pi_{\mathcal{P}_{\mathcal{T}}^p} \pert)^{-1} (I-\Pi_{\mathcal{P}_{\mathcal{T}}^p})\big\|_{L^2(\Gamma_-)\to L^2(\Gamma_-)}\geq c (hk)^{-p-1}.
\eeqs
However, this cannot be true for all $h$ since it contradicts the upper bound on the quasioptimality constant in Theorem \ref{t:concretehBEM}; i.e., we cannot solve $f = (I-\Pi_{\mathcal{P}_{\mathcal{T}}^p})\widetilde{f}$.

Motivated by this discussion, our goal is to find $\widetilde{f}$ that is as close as possible to satisfying $(I-\Pi_{\mathcal{P}_{\mathcal{T}}^p})\widetilde{f}=f$. Notice that, since $v=(I+\pert )^{-1}(I-\Pi_{\mathcal{P}_{\mathcal{T}}^p})\tilde{f}$ and $(I+ \pert)^{-1}$ is $O(1)$ on high frequencies (see Lemma \ref{l:hfGood}), low-frequency errors cost more than high-frequency errors. Therefore, we aim to solve $(I-\Pi_{\mathcal{P}_{\mathcal{T}}^p})\tilde{f}=f+e$, where $e$ has as few low-frequencies as possible.

The lower bounds on Galerkin piecewise-polynomial approximations of $k$-oscillating functions from \cite{Ga:22} (see more specifically~\cite[Lemma 12.7 and Corollary 12.8]{GaRaSp:25} imply that there exists $\widetilde{f}$ such that 
\beq\label{e:blueCar1}
\big\| (I-\Pi_{\mathcal{P}_{\mathcal{T}}^p})\widetilde{f} \big\|_{L^2(\Gamma_-)} \leq C(hk)^{-p-1}\big\| f\big\|_{L^2(\Gamma_-)}
\eeq
and
\begin{gather*}
(I-\Pi_{\mathcal{P}_{\mathcal{T}}^p})\widetilde{f}=f +\chi_{\mathscr{L}}(I-\Pi_{\mathcal{P}_{\mathcal{T}}^p})\widetilde{f}+\chi_{\mathscr{H}}(I-\Pi_{\mathcal{P}_{\mathcal{T}}^p})\widetilde{f}.
\end{gather*}
Here $\chi_{\mathscr{L}}$ is a finite-rank operator (which can be taken to be zero when $p=0$) and 
\beqs
\big\| \chi_{\mathscr{L}}\big\|_{H^{-M}(\Gamma)\to H^M(\Gamma)} \leq C_M 
\quad\tand\quad 
\big\| (I+ \pert)^{-1} \chi_{\mathscr{H}}\big\|_{L^2(\Gamma_-)\to L^2(\Gamma_-)} \leq C 
\eeqs
(i.e., $\chi_{\mathscr{L}}$ and $\chi_{\mathscr{H}}$ are low- and high-frequency cutoffs, respectively).
Thus 
\begin{align}\label{e:blueCar2}
v &= (I+\pert)^{-1} f + (I+\pert)^{-1} \chi_{\mathscr{L}}(I-\Pi_{\mathcal{P}_{\mathcal{T}}^p})\widetilde{f}+(I+\pert)^{-1}\chi_{\mathscr{H}}(I-\Pi_{\mathcal{P}_{\mathcal{T}}^p})\widetilde{f}.
\end{align}
In the rest of this sketch, we ignore the contribution from $\chi_{\mathscr{L}}$; including this contribution leads to a finite-dimensional possible obstruction to pollution.
Using \eqref{e:blueCar2}, \eqref{e:blueCar1}, and the fact that $u= (I+L)^{-1}f$ (with $\| u\|_{L^2(\Gamma)}= 1$), we see that $\|v\|_{L^2(\Gamma_-)}\geq 1/2$ if $(hk)^{-p-1} \|f \|_{L^2(\Gamma_-)} \ll 1$.
Finally, by \eqref{e:introv},
\begin{align*}
(I- \Pi_{\mathcal{P}_{\mathcal{T}}^p}) v = (I- \Pi_{\mathcal{P}_{\mathcal{T}}^p}) \chi_{\rm H}(I+\pert)^{-1} (I- \Pi_{\mathcal{P}_{\mathcal{T}}^p}) \widetilde{f} + (I-\Pi_{\mathcal{P}_{\mathcal{T}}^p}) \chi_{\rm L} v
\end{align*}
where $\chi_{\rm H}$ and $\chi_{\rm L}$ are high- and low-frequency cutoffs, so that
\begin{align*}
\big\|(I- \Pi_{\mathcal{P}_{\mathcal{T}}^p}) v\big\|_{L^2(\Gamma_-)} \leq C (hk)^{-p-1} \| f\|_{L^2(\Gamma_-)} + (hk)^{p+1} \|v\|_{L^2(\Gamma_-)}.
\end{align*}
In summary, 
\begin{align*}
&\big\|(I+ \Pi_{\mathcal{P}_{\mathcal{T}}^p} \pert)^{-1} (I-\Pi_{\mathcal{P}_{\mathcal{T}}^p})\big\|_{L^2(\Gamma_-)}\\
&\geq
\frac{\|v\|_{L^2(\Gamma_-)}}{\| (I-\Pi_{\mathcal{P}_{\mathcal{T}}^p})v\|_{L^2(\Gamma_-)}}\\
&\geq 
 c
\begin{cases}
(hk)^{-p-1}, & \|f\|_{L^2(\Gamma_-)} \leq (hk)^{2(p+1)} ,\\
(hk)^{p+1} \| f\|^{-1}_{L^2(\Gamma_-)}, & (hk)^{2(p+1)}\leq \|f\|_{L^2(\Gamma_-)} \ll (hk)^{p+1} .
\end{cases} 
\end{align*}
Since $\|v\|_{L^2(\Gamma_-)}/\| (I-\Pi_{\mathcal{P}_{\mathcal{T}}^p})v\|_{L^2(\Gamma_-)}\geq 1$, by reducing $c$ if necessary, 
\begin{align*}
\big\|(I+ \Pi_{\mathcal{P}_{\mathcal{T}}^p} \pert)^{-1} &(I-\Pi_{\mathcal{P}_{\mathcal{T}}^p})\big\|_{L^2(\Gamma_-)}\\
&\geq 
\frac 12 + 
c
\begin{cases}
(hk)^{-p-1}, & 1\leq (hk)^{2(p+1)} \|f\|_{L^2(\Gamma_-)}^{-1},\\
(hk)^{p+1} \| f\|^{-1}_{L^2(\Gamma_-)}, & \|f\|_{L^2(\Gamma_-)}^{-1}(hk)^{2(p+1)}\leq 1  ,
\end{cases} 
\end{align*}
which, if $\|f\|_{L^2(\Gamma_-)}\ll 1$, implies that the Galerkin $h$-BEM suffers from pollution.

\paragraph{Step 2:} Construction of quasimodes
The proof that pollution occurs for the $h$-BEM is then complete if we can construct $u$, $f$ satisfying~\eqref{e:quasi1} and~\eqref{e:quasi2}. The existence of $u$ and $f$ satisfying~\eqref{e:quasi1} is guaranteed whenever $\rhoB(k_j)\to \infty$ for some $k_j\to \infty$. However, the requirement~\eqref{e:quasi2} does not automatically hold for these quasimodes. In the case of the unit disk, the fact that the boundary layer operators are diagonal in the basis of trigonometric polynomials enables the use of classical Bessel function asymptotics together for the construction of quasimodes, leading to the following thoerem.

\begin{theorem}[Pollution for the Neumann BIEs on the unit disk]
\label{t:neumannDisk}
Let $\Upsilon>0$, $p\geq 0$, and $\Omega_-= B(0,1)\subset \mathbb{R}^2$, and $k_0>0$. Then there is $c>0$ such that for all  $k>k_0$, and all $C^{p+1}$ simplicial triangulations, $\mathcal{T}$ with constant $\Upsilon$ (in the sense of Definition~\ref{d:Crtriang}),  there is $v\in L^2(\Gamma_-)$ such that the Galerkin approximation, $v_h$ to $v$ in $\mathcal{P}_{\mathcal{T}}^p$ with $\operator$ given by $\Breg$ or $\Breg'$, if it exists, satisfies
\begin{equation*}
\frac{\|v_h-v\|_{L^2(\Gamma_-)}}{\|(I-\Pi_{\mathcal{P}_{\mathcal{T}}^p})v\|_{L^2(\Gamma_-)}}\geq 
\frac 12 + 
c \begin{cases} (hk)^{-p-1}, &  1\leq (hk)^{2(p+1)}k^{\frac{1}{3}},\\
(hk)^{p+1}k^{\frac{1}{3}},&k^{\frac{1}{3}}(hk)^{2(p+1)}\leq 1,
\end{cases}
\end{equation*}
where $h:=h(\mathcal{T})$.
In particular, Theorem~\ref{t:concretehBEM} is optimal in this case.
\end{theorem}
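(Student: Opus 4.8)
\textbf{Proof proposal for Theorem~\ref{t:neumannDisk}.}
The plan is to carry out the two-step argument sketched above in the case $\Omega_-=B(0,1)$, where everything can be made completely explicit because the boundary-integral operators $\Breg$ and $\Breg'$ are diagonalised by the trigonometric basis $\{e_n(\theta):=e^{in\theta}\}_{n\in\mathbb{Z}}$ of $L^2(\Gamma_-)$. First I would record the eigenvalues: with $V_k$, $\DL_k$, $H_k$ all Fourier multipliers on the circle, $\DL_k e_n = \lambda^{\DL}_n e_n$ etc., where the $\lambda$'s are given by products of Bessel and Hankel functions $J_{|n|}(k)$, $H^{(1)}_{|n|}(k)$ and their derivatives (see \cite[\S14]{GaRaSp:25} or the classical formulas). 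Consequently $\Breg e_n = \mu_n e_n$ for an explicit sequence $\mu_n$, and $\|\Breg^{-1}\|_{L^2\to L^2}=\sup_n |\mu_n|^{-1}$. The first task is the quasimode construction (Step 2): using the transition-region asymptotics of Bessel functions of order $\approx$ argument (the Airy/Langer regime, where $|n|\approx k$ and $J_{|n|}(k)\sim k^{-1/3}$), one shows that there is a mode index $n=n(k)$ with $|n(k)|=k+O(k^{1/3})$ such that $|\mu_{n(k)}|\leq C k^{-1/3}$; this gives $\rhoB(k)\geq c k^{1/3}$ and, taking $u=f/\mu_{n(k)}$ with $f=e_{n(k)}$, produces $u,f$ satisfying \eqref{e:quasi1} with $\|f\|_{L^2}/\|u\|_{L^2}=|\mu_{n(k)}|\sim k^{-1/3}$. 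Since $|n(k)|/k\to 1$, both $u$ and $f$ are supported at a single frequency $\asymp k$, so \eqref{e:quasi2} holds trivially (with $\chi\in C_c^\infty$ chosen so that $\chi\equiv 1$ near $1$ and $0\notin\supp\chi$); in particular $\|(I-\Pi_{\mathcal{P}_{\mathcal{T}}^p})u\|_{L^2}\leq C(hk)^{p+1}$ by Theorem~\ref{t:approxHighLowRegL2}.

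Next I would run Step 1 (quasimodes imply pollution) as in the sketch following the statement of Informal Theorem~\ref{t:informalR3b}, keeping careful track of constants. By Lemma~\ref{lem:QOabs}, if the Galerkin solution $v_h$ exists then
$$
v-v_h=(I+\Pi_{\mathcal{P}_{\mathcal{T}}^p}\pert)^{-1}(I-\Pi_{\mathcal{P}_{\mathcal{T}}^p})(I-\Pi_{\mathcal{P}_{\mathcal{T}}^p})v,
$$
so it suffices to exhibit $v$ with $\|v\|_{L^2(\Gamma_-)}\geq 1/2$ and $(I+\Pi_{\mathcal{P}_{\mathcal{T}}^p}\pert)v=(I-\Pi_{\mathcal{P}_{\mathcal{T}}^p})v$ for which $\|(I-\Pi_{\mathcal{P}_{\mathcal{T}}^p})v\|_{L^2(\Gamma_-)}$ is small. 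Following the sketch, I set $v:=(I+\pert)^{-1}(I-\Pi_{\mathcal{P}_{\mathcal{T}}^p})\widetilde f$ for a suitable $\widetilde f$, and invoke the lower bounds on Galerkin piecewise-polynomial approximation of $k$-oscillating functions from \cite{Ga:22} (in the precise form \cite[Lemma 12.7 and Corollary 12.8]{GaRaSp:25}) to find $\widetilde f$ with $\|(I-\Pi_{\mathcal{P}_{\mathcal{T}}^p})\widetilde f\|_{L^2(\Gamma_-)}\leq C(hk)^{-p-1}\|f\|_{L^2(\Gamma_-)}$ and a splitting $(I-\Pi_{\mathcal{P}_{\mathcal{T}}^p})\widetilde f=f+\chi_{\mathscr L}(I-\Pi_{\mathcal{P}_{\mathcal{T}}^p})\widetilde f+\chi_{\mathscr H}(I-\Pi_{\mathcal{P}_{\mathcal{T}}^p})\widetilde f$ into low- and high-frequency pieces. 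Using that $(I+\pert)^{-1}$ is $O(1)$ on high frequencies (Lemma~\ref{l:hfGood}) and that $u=(I+\pert)^{-1}f$ has unit norm, one gets $\|v\|_{L^2(\Gamma_-)}\geq 1/2$ provided $(hk)^{-p-1}\|f\|_{L^2(\Gamma_-)}\ll 1$, and then $\|(I-\Pi_{\mathcal{P}_{\mathcal{T}}^p})v\|_{L^2(\Gamma_-)}\leq C(hk)^{-p-1}\|f\|_{L^2(\Gamma_-)}+(hk)^{p+1}\|v\|_{L^2(\Gamma_-)}$. Substituting $\|f\|_{L^2(\Gamma_-)}\asymp k^{-1/3}$ and optimising the two terms against each other yields exactly the stated dichotomy: the ratio is $\gtrsim (hk)^{-p-1}$ when $(hk)^{2(p+1)}k^{1/3}\gtrsim 1$, and $\gtrsim (hk)^{p+1}k^{1/3}$ when $(hk)^{2(p+1)}k^{1/3}\lesssim 1$. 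Adding $1/2$ (using $\|v\|/\|(I-\Pi)v\|\geq 1$) gives the displayed inequality, and comparison with \eqref{e:qoBEM} and \eqref{e:approxRequirementsConcreteIntro}, recalling $\rhoB(k)\sim k^{1/3}$, shows Theorem~\ref{t:concretehBEM} is optimal here.

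I expect the main obstacle to be Step 2: the careful Bessel-function asymptotics needed to show $|\mu_{n(k)}|\asymp k^{-1/3}$ \emph{with matching upper and lower bounds}, uniformly for $k$ large. The relevant regime is $|n|-k=O(k^{1/3})$, where $J_n$, $Y_n$, $H^{(1)}_n$ and their derivatives are governed by Airy functions via Langer's uniform asymptotics; one must combine the asymptotics of $V_k$ (involving $J_n H^{(1)}_n$), of $\tfrac12 I-\DL_k$ (involving $J_n'H^{(1)}_n$ and $J_nH^{(1)\prime}_n$), and of $H_k$ (involving $J_n'H^{(1)\prime}_n$) to see that the leading terms in $\mu_n=\ri(\tfrac12-\lambda^{\DL}_n)+\lambda^{\Reg}\lambda^H_n$ conspire to produce a size $\asymp k^{-1/3}$ rather than cancelling to something smaller or failing to be small. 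This is precisely the computation carried out in \cite[\S14]{GaRaSp:25}, so in the present article I would state the eigenvalue asymptotics as a lemma, cite that reference for the proof, and then use them as a black box; the remaining estimates (Step 1) are soft and follow the template already given after Informal Theorem~\ref{t:informalR3b}. One minor technical point to handle carefully is the contribution of $\chi_{\mathscr L}$, which (as noted in the sketch) produces a finite-rank possible obstruction; since $\chi_{\mathscr L}=0$ when $p=0$ and is finite-rank in general, it does not affect the statement as written (which is an existence statement for a single bad $v$ at each $(h,k)$), but in the write-up I would either absorb it into the constant by a dimension-counting argument or note that it only affects a bounded number of frequencies and choose $n(k)$ to avoid them.
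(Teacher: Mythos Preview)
Your proposal is correct and follows essentially the same approach as the paper: the paper's own treatment is a sketch (Steps 1 and 2 preceding the theorem statement) that reduces the disk case to the Bessel-function asymptotics in the transition regime $|n|\approx k$, citing \cite[\S14]{GaRaSp:25} for the explicit eigenvalue computation giving $|\mu_{n(k)}|\asymp k^{-1/3}$, and then feeds the resulting single-mode quasimode into the general ``quasimodes imply pollution'' machinery. Your identification of the Airy/Langer regime as the source of the $k^{1/3}$ and your handling of the finite-rank obstruction $\chi_{\mathscr L}$ (zero for $p=0$, avoidable by choice of $n(k)$ otherwise) match the paper's reasoning.
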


However, except in the case of the unit ball, constructing quasimodes satisfying~\eqref{e:quasi2} in addition to~\eqref{e:quasi1}, is accomplished by working with the PDE rather than the BIEs. Indeed, in~\cite[\S13]{GaRaSp:25}, compactly supported functions, $u\in H_k^2(\Omega_+)\cap H_0^1(\Omega_+)$ are constructed such that 
$$
(-k^{-2}\Delta-1)u=g,\qquad \|g\|_{L^2}\ll k^{-1},\qquad \|u\|_{L^2(\Omega_+)}=1,
$$
and such that the microlocal concentration properties of $v$ and $g$  are well-understood. One then shows that the right-hand-side $g$ can be solved away at the cost of boundary data that is $k$-oscillating. It is then possible to use the description of $\operator^{-1}$ in terms of the outgoing Dirichlet-to-Neumann map and the interior impedance to Dirichlet map (see~\cite[Equation 13.1]{GaRaSp:25} or 
\cite[Theorem 2.33]{ChGrLaSp:12} and \cite[Lemma 7.4]{GaMaSp:21N}) to find $u$ and $f$ satisfying~\eqref{e:quasi1} and~\eqref{e:quasi2}.

As an example of this, we consider the following type of domain.
 \begin{definition}[Four-diamond domain]
$\Omega_-$ is a \emph{four-diamond domain} if there exists 
$0<\e<\pi/2$ such that $\Omega_-=\cup_{i=1}^4\Omega_i\Subset \mathbb{R}^2$, where $\Omega_i$ are open, convex, disjoint, and have smooth boundary so that 
\begin{gather*}
\begin{aligned}
\partial \big((-\pi,\pi)\times(-\pi, \pi)\big)\setminus \big(\cup_{a,b\in\pm 1}&B((a \pi,b \pi),\e)\big)\\
&\subset \Gamma_-\subset \mathbb{R}^2\setminus \big(\cup_{a,b\in\pm 1}B((a \pi,b \pi),\e/2)\big),
\end{aligned}\\[1em]
\Omega_-\cap (-\pi,\pi)\times (-\pi,\pi)=\emptyset.
\end{gather*}
\end{definition}
Figure \ref{f:bReallyCoolPicture} shows an example of a four-diamond domain.

Using the method of images, it is easy to construct a compactly supported function $u_1\in H_0^1(\Omega_+)$ that is supported in a small neighbourhood of the square with corners at $(0,\pm \pi)$ and $(\pm \pi, 0)$ such that $u_1$ is microlocally concentrated near this square and with momentum tangent to the square, and $u$ satisfies
$$
\|(-k^{-2}\Delta-1)u_1\|_{L^2(\Omega_+)}\leq Ck^{-2},\qquad \|u_1\|_{L^2(\Omega_+)}=1,\qquad \|k^{-1}\partial_{\nu}u_1\|_{L^2}\geq c>0.
$$

Using this microlocal information, one can construct $\tilde{u}\in H^2(\Omega_+)$ satisfying the Sommerfeld radiation condition such that 
$$
(-k^{-2}\Delta-1)\tilde{u}=(-k^{-2}\Delta-1)u_1,
$$
with
$$
\|\tilde{u}|_{\Gamma_-}\|_{L^2}+\|k^{-1}\partial_{\nu}\tilde{u}|_{\Gamma_-}\|_{L^2}\leq Ck\|(-k^{-2}\Delta-1)u_1\|_{L^2}\leq Ck^{-1}
$$
and $\tilde{u}_{\Gamma_-}$
is microlocally concentrated at frequencies $|\xi|\sim \frac{\sqrt{2}}{2}$. This is done use a propagation of singularities argument for an auxiliary Helmholtz type problem with additional damping.

Since $u:=u_1-\tilde{u}$ satisfies the Sommerfeld radiation condition and
$$
(-k^{-2}\Delta-1)u=0,\qquad \frac{\|u|_{\Gamma_-}\|_{L^2}}{\|k^{-1}\partial_{\nu}u|_{\Gamma_-}\|_{L^2}}\geq ck;
$$
this is, together with~\cite[Equation 13.1]{GaRaSp:25}, is immediately enough to construct a quasimode $v$ satisfying~\eqref{e:quasi1} and~\eqref{e:quasi2} in the case of $A_k$. Slightly more analysis is necessary in the case of $A_k'$ because one needs to find data for the impedance problem in $\Omega_-$ so that the resulting Dirichlet data is, in some sense, close to $\tilde{u}|_{\Gamma_-}$.  This is again done using a propagation argument for an auxiliary Helmholtz type problem with additional damping in $\Omega_-$.

The results of this analysis for the four-diamond domains are as follows.
\begin{theorem}[Pollution for the four-diamond domain when $p=0$]
\label{t:diamond}
Let $p=0$, $k_0>0$, $\Upsilon>0$, $\Omega_-$ be a four-diamond domain, $M>0$, $\mathcal{J}\subset \mathbb{R}$ such that
$$
\sup\big\{ k^{-M}\rhoB(k)\,:\, k>k_0,\,k\notin \mc{J}\big\}<\infty
$$
with $\operator=A_{k}$ or $A_k'$, and $k_n:=\sqrt{2}n$. Then there is $c>0$, such that for all $k\in \mathbb{R}\setminus \mathcal{J}$ and $n$ such that $k_n^{-1}\leq \delta \leq 1$, $|k-k_n|<\delta^{-1} k_n^{-1}$, and all $C^1$ simplicial triangulations (in the sense of Definition~\ref{d:Crtriang}),
$\mathcal{T}$ with constant $\Upsilon$, there is $v\in L^2(\Gamma)$ such that the Galerkin approximation, $v_h$, to $v$ in $\mathcal{P}_{\mathcal{T}}^0$, if it exists, satisfies
$$
\frac{\|v_h-v\|_{L^2(\Gamma_-)}}{\|(I-\Pi_{\mathcal{P}_{\mathcal{T}}^p})v\|_{L^2(\Gamma_-)}}\geq 
\frac 12 + 
c \begin{cases} (hk_n)^{-1}, &  1\leq (hk_n)^{2} \delta k_n,\\
(hk_n)\delta k_n,&\delta k_n(hk_n)^{2}\leq 1,
\end{cases}
$$
where $h:=h(\mathcal{T})$. 
\end{theorem}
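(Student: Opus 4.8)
The plan is to follow the two-step strategy sketched in the paragraph immediately preceding Theorem~\ref{t:diamond} — ``quasimodes imply pollution'' followed by ``construction of quasimodes'' — but making the construction of the quasimode explicit for the four-diamond domain. First I would invoke the abstract machinery: by Lemma~\ref{lem:QOabs}, $v-v_h=(I+\Pi_{\mathcal{P}_{\mathcal{T}}^0}\pert)^{-1}(I-\Pi_{\mathcal{P}_{\mathcal{T}}^0})(I-\Pi_{\mathcal{P}_{\mathcal{T}}^0})v$, so it suffices to exhibit, for each admissible $(k,n,\delta)$ and each triangulation, a $v\in L^2(\Gamma_-)$ with $\|v\|_{L^2(\Gamma_-)}\geq 1/2$ whose best approximation error is controlled and whose image under $(I+\Pi_{\mathcal{P}_{\mathcal{T}}^0}\pert)^{-1}$ is large. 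Following the sketch, I take $v=(I+\pert)^{-1}(I-\Pi_{\mathcal{P}_{\mathcal{T}}^0})\widetilde f$ for a suitable $\widetilde f$ so that $(I+\Pi_{\mathcal{P}_{\mathcal{T}}^0}\pert)v=(I-\Pi_{\mathcal{P}_{\mathcal{T}}^0})^2 v$, and then the lower bound on the ratio reduces to estimating $\|v\|_{L^2(\Gamma_-)}/\|(I-\Pi_{\mathcal{P}_{\mathcal{T}}^0})v\|_{L^2(\Gamma_-)}$. Since $p=0$, the finite-rank low-frequency obstruction $\chi_{\mathscr L}$ in the sketch vanishes, which is exactly why the statement is clean for $p=0$; I would remark on this simplification explicitly.

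The heart of the proof is Step~2: producing a quasimode $u,f$ with $(I+\pert)u=f$, $\|u\|_{L^2(\Gamma_-)}=1$, $\|f\|_{L^2(\Gamma_-)}\lesssim \delta^{-1}(\delta k_n)^{-1}$ or comparably small, and with $u,f$ microlocally concentrated at frequencies $|\xi|\sim \tfrac{\sqrt2}{2}$ bounded away from $0$ so that~\eqref{e:quasi2} holds. I would construct this via the PDE, not the BIEs. Using the method of images on the square with corners $(0,\pm\pi),(\pm\pi,0)$ inscribed in the reflecting diamond configuration, build $u_1\in H_0^1(\Omega_+)$ supported near this square, microlocally concentrated with momentum tangent to the square, satisfying $\|(-k^{-2}\Delta-1)u_1\|_{L^2}\leq Ck^{-2}$, $\|u_1\|_{L^2(\Omega_+)}=1$, $\|k^{-1}\partial_\nu u_1\|_{L^2(\Gamma_-)}\geq c>0$ — here the gluing of an approximate tangential eigenfunction of the square Dirichlet Laplacian with a smooth spatial cutoff supported in the ``throat'' region produces the $O(k^{-2})$ error, and the nonresonant detuning $|k-k_n|<\delta^{-1}k_n^{-1}$ keeps the error at the claimed size after rescaling. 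Then solve away the residual: using a propagation-of-singularities argument for an auxiliary damped Helmholtz problem (as in Theorem~\ref{t:basicPropagate} and its corollaries, in the form used in~\cite[\S13]{GaRaSp:25}), produce $\widetilde u\in H^2(\Omega_+)$ outgoing with $(-k^{-2}\Delta-1)\widetilde u=(-k^{-2}\Delta-1)u_1$ and $\|\widetilde u|_{\Gamma_-}\|_{L^2}+\|k^{-1}\partial_\nu\widetilde u|_{\Gamma_-}\|_{L^2}\leq Ck\|(-k^{-2}\Delta-1)u_1\|_{L^2}$, with $\widetilde u|_{\Gamma_-}$ microlocally concentrated away from zero frequency. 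Setting $u:=u_1-\widetilde u$ gives an exact outgoing Helmholtz solution with $\|u|_{\Gamma_-}\|_{L^2}/\|k^{-1}\partial_\nu u|_{\Gamma_-}\|_{L^2}\geq ck$, and then~\cite[Equation 13.1]{GaRaSp:25} (the formula for $\operator^{-1}$ in terms of the exterior Dirichlet-to-Neumann and interior impedance-to-Dirichlet maps) converts this into a BIE quasimode $u,f$. For $A_k'$, one extra ingredient is needed: one must choose interior impedance data so that the resulting Dirichlet trace on $\Gamma_-$ approximates $\widetilde u|_{\Gamma_-}$, again via a propagation argument for a damped Helmholtz problem posed in $\Omega_-$; I would state this as the one genuinely additional step relative to the $A_k$ case.

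With the quasimode in hand, the remaining bookkeeping is exactly as in the sketch. By the lower bounds on Galerkin piecewise-polynomial approximation of $k$-oscillating functions from~\cite{Ga:22} (in the form of~\cite[Lemma 12.7 and Corollary 12.8]{GaRaSp:25}), and since $p=0$ kills the $\chi_{\mathscr L}$ term, there is $\widetilde f$ with $\|(I-\Pi_{\mathcal{P}_{\mathcal{T}}^0})\widetilde f\|_{L^2(\Gamma_-)}\leq C(hk_n)^{-1}\|f\|_{L^2(\Gamma_-)}$ and $(I-\Pi_{\mathcal{P}_{\mathcal{T}}^0})\widetilde f=f+\chi_{\mathscr H}(I-\Pi_{\mathcal{P}_{\mathcal{T}}^0})\widetilde f$ with $\|(I+\pert)^{-1}\chi_{\mathscr H}\|_{L^2\to L^2}\leq C$ by Lemma~\ref{l:hfGood}. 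Hence $\|v\|_{L^2(\Gamma_-)}\geq 1/2$ once $(hk_n)^{-1}\|f\|_{L^2(\Gamma_-)}\ll 1$, which is guaranteed by the constraint $k_n^{-1}\leq\delta\leq 1$ and $\|f\|_{L^2(\Gamma_-)}\lesssim(\delta k_n)^{-1}$, while Theorem~\ref{t:approxHighLowRegL2} gives $\|(I-\Pi_{\mathcal{P}_{\mathcal{T}}^0})v\|_{L^2(\Gamma_-)}\leq C(hk_n)\|v\|_{L^2(\Gamma_-)}+C(hk_n)^{-1}\|f\|_{L^2(\Gamma_-)}$. Dividing and using $\|v\|/\|(I-\Pi)v\|\geq 1$ to absorb the constant yields the two-regime bound with the crossover at $(hk_n)^2\delta k_n\sim 1$, matching the claimed statement; here the factor $\delta k_n$ plays the role of $\rhoB(k)$-type growth, $\rhoB(k)\gtrsim \delta k_n$ being exactly what the detuned quasimode witnesses. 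I expect the main obstacle to be Step~2, specifically the propagation-of-singularities construction of $\widetilde u$ (and, for $A_k'$, the interior impedance data): controlling the $L^2$ mass of the boundary traces of the scattered correction in terms of the tiny PDE residual, uniformly over the detuning window and over all admissible triangulations, is where the real work lies, whereas Steps~1 and~3 are formal consequences of results already in the excerpt.
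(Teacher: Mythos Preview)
Your proposal is correct and follows essentially the same two-step strategy the paper sketches: the abstract ``quasimodes imply pollution'' machinery (exploiting that $\chi_{\mathscr L}=0$ when $p=0$, which is precisely why this case is stated separately), followed by the PDE quasimode construction via the method of images on the inscribed square, the propagation-of-singularities correction $\widetilde u$, and the conversion to a BIE quasimode through \cite[Equation 13.1]{GaRaSp:25}, with the extra interior-impedance step for $A_k'$. Your identification of the propagation argument for $\widetilde u$ as the real analytic work, and of $\delta k_n$ as the effective $\rhoB$-witness, matches the paper's account.
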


\begin{theorem}[Pollution for the four-diamond domain when $p\geq 1$]
\label{t:diamondNew}
Let $\Upsilon>0$, $p\geq 1$, $\Omega_-$ be a four-diamond domain
$\operator=A_{k}$ or $A_k'$, and $k_n:=\sqrt{2}n$. 

If there are $C_1>0, k_0>0$ such that 
\beq\label{e:fourDiamondsBound}
\rhoB(k) \leq C_1 k^2 \quad \tfa k\geq k_0,
\eeq
then there are $c,C>0$ such that for all $k\geq k_0$ and $n$ such that $Ck_n^{-1}\leq \delta \leq 1$ and $|k-k_n|<\delta^{-1} k_n^{-1}$ and all $C^{p+1}$ simplicial triangulations, $\mathcal{T}$ with constant $\Upsilon$ (in the sense of Definition~\ref{d:Crtriang}), there is $v\in L^2(\Gamma)$ such that the Galerkin approximation, $v_h$, to $v$ in $\mathcal{P}_{\mathcal{T}}^p$, if it exists, satisfies
$$
\frac{\|v_h-v\|_{L^2(\Gamma_-)}}{\|(I-\Pi_{\mathcal{P}_{\mathcal{T}}^0})v\|_{L^2(\Gamma_-)}}\geq \frac 12 +  c \begin{cases} (hk_n)^{-p-1}, &  1\leq (hk_n)^{2(p+1)} \delta k_n,\\
(hk_n)^{p+1}\delta k_n,&\delta k_n(hk_n)^{2(p+1)}\leq 1,
\end{cases}
$$
where $h:=h(\mathcal{T})$. 
\end{theorem}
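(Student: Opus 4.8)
\textbf{Proof proposal for Theorem \ref{t:diamondNew}.}

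The plan is to follow the two-step strategy sketched above for the $h$-BEM pollution results, but to carry out Step 2 (construction of the quasimode) in the four-diamond geometry and with enough care about the polynomial degree $p\geq 1$. The quantity that we ultimately want to lower bound is $\|(I+\Pi_{\mathcal{P}_{\mathcal{T}}^p}\pert)^{-1}(I-\Pi_{\mathcal{P}_{\mathcal{T}}^p})\|_{L^2(\Gamma_-)\to L^2(\Gamma_-)}$, via the identity \eqref{e:QOabs} from Lemma \ref{lem:QOabs}; concretely, we will exhibit, for each admissible triple $(k,n,\delta)$ and each triangulation $\mathcal{T}$, a function $v\in L^2(\Gamma_-)$ with $\|v\|_{L^2(\Gamma_-)}\sim 1$ whose Galerkin error is at least the claimed size relative to its best approximation error.

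First I would construct, using the method of images, a compactly supported $u_1\in H^1_0(\Omega_+)$ concentrated in a thin tube around the square with vertices $(0,\pm\pi),(\pm\pi,0)$, microlocally concentrated at frequency $|\xi|\sim \tfrac{\sqrt2}{2}$ with momentum tangent to that square, satisfying $\|(-k^{-2}\Delta-1)u_1\|_{L^2}\leq Ck^{-2}$, $\|u_1\|_{L^2(\Omega_+)}=1$, $\|k^{-1}\partial_\nu u_1|_{\Gamma_-}\|_{L^2}\geq c>0$; this is exactly the construction used for Theorem \ref{t:diamond}, and the Weyl-law density $\delta k_n$ of relevant modes in the window $|k-k_n|<\delta^{-1}k_n^{-1}$ is what produces the factor $\delta k_n$ in the final bound. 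Then, using a propagation-of-singularities argument for an auxiliary damped Helmholtz problem (as in \cite[\S13]{GaRaSp:25}), I would solve away the residual $(-k^{-2}\Delta-1)u_1$ by an outgoing $\widetilde u\in H^2(\Omega_+)$ whose Cauchy data on $\Gamma_-$ is $O(k^{-1})$ in $L^2$ and microlocally concentrated at $|\xi|\sim\tfrac{\sqrt2}{2}$, so that $u:=u_1-\widetilde u$ is an exact outgoing Helmholtz solution with $\|u|_{\Gamma_-}\|_{L^2}/\|k^{-1}\partial_\nu u|_{\Gamma_-}\|_{L^2}\geq ck$. Converting this into data $f,u$ for the BIE operator $\operator\in\{A_k,A_k'\}$ via the representation of $\operator^{-1}$ in terms of the exterior Dirichlet-to-Neumann and interior impedance-to-Dirichlet maps (\cite[Eq.\ 13.1]{GaRaSp:25}; for $A_k'$ one extra propagation argument inside $\Omega_-$ is needed) yields a quasimode satisfying \eqref{e:quasi1}--\eqref{e:quasi2} with $\|f\|_{L^2}\lesssim (\delta k_n)^{-1}$ (this is where the hypothesis $\rhoB(k)\leq C_1k^2$, i.e.\ \eqref{e:fourDiamondsBound}, enters, guaranteeing that the available window of near-resonant frequencies is wide enough). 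Then Step 1 applies verbatim: using the frequency-localised decomposition of $(I-\Pi_{\mathcal{P}_{\mathcal{T}}^p})\widetilde f$ from \cite[Lemma 12.7, Cor.\ 12.8]{GaRaSp:25}, the fact that $(I+\pert)^{-1}$ is $O(1)$ on high frequencies (Lemma \ref{l:hfGood}), the polynomial-approximation bound $\|(I-\Pi_{\mathcal{P}_{\mathcal{T}}^p})u\|_{L^2}\leq C(hk)^{p+1}\|u\|_{L^2}$ for $k$-oscillating $u$ (Theorem \ref{t:approxHighLowRegL2}, applicable by \eqref{e:quasi2}), and the trade-off between low- and high-frequency approximation errors, gives $\|v\|_{L^2}\geq \tfrac12$ and $\|(I-\Pi_{\mathcal{P}_{\mathcal{T}}^p})v\|_{L^2}\leq C(hk_n)^{-p-1}\|f\|_{L^2}+(hk_n)^{p+1}\|v\|_{L^2}$, whence the two-regime bound. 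Passing from $k$ to $k_n$ in the final inequality uses $|k-k_n|<\delta^{-1}k_n^{-1}$ and the continuity of the operators in $k$.

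The main obstacle is Step 2 for $p\geq 1$: unlike the $p=0$ case (Theorem \ref{t:diamond}), one needs the quasimode data $f$ and $u$ to be controllably concentrated not merely at frequencies $\lesssim k$ but strictly away from frequency $\ll k$ --- i.e.\ the full strength of \eqref{e:quasi2} with $0\notin\supp\chi$ --- because the polynomial approximation lower bound in \cite[Lemma 12.7]{GaRaSp:25} that drives \eqref{e:blueCar1} only gives a gain proportional to $(hk)^{-p-1}$ for genuinely $k$-oscillating data. Ensuring this band-limiting survives the two propagation-of-singularities steps (for $A_k$ the exterior one, for $A_k'$ also the interior one), while keeping the Cauchy-data ratio of size $\sim k$, is the delicate point; it is handled by choosing the auxiliary damping and the microlocal cutoffs so that singularities never reach the glancing set of $\Gamma_-$ and never propagate into the low-frequency regime, using that the four diamonds are convex with smooth boundary and that the image construction places all relevant rays tangent to a fixed square bounded away from the reentrant corners. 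The rest of the argument --- Lemma \ref{lem:QOabs}, the high/low splitting, and the approximation estimates --- is then routine bookkeeping.
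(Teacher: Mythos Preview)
Your proposal follows the paper's two-step strategy and correctly outlines Step~2 (the quasimode construction via the method of images plus propagation for a damped auxiliary problem in the four-diamond geometry). However, you have misidentified both the location of the main difficulty for $p\geq 1$ and the role of the hypothesis \eqref{e:fourDiamondsBound}.

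The quasimode construction in Step~2 is \emph{identical} for $p=0$ and $p\geq 1$: it produces $u,f$ satisfying \eqref{e:quasi1}--\eqref{e:quasi2} regardless of $p$, and \eqref{e:quasi2} as stated in the paper already requires $0\notin\supp\chi$. The band-limiting issue you flag as ``the delicate point'' is therefore already present and resolved in Theorem~\ref{t:diamond}; it does not distinguish $p\geq 1$ from $p=0$. Consequently your claim that ``Step~1 applies verbatim'' once the quasimode is in hand is the actual gap. The distinguishing feature of $p\geq 1$ lies in Step~1: the paper's sketch notes that the frequency-localised decomposition underlying \eqref{e:blueCar1} involves a finite-rank low-frequency operator $\chi_{\mathscr{L}}$ which ``can be taken to be zero when $p=0$'' but not otherwise, and that for $p\geq 1$ ``including this contribution leads to a finite-dimensional possible obstruction to pollution.'' You never address how this obstruction is overcome. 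This is where the hypothesis $\rhoB(k)\leq C_1k^2$ enters---the $\chi_{\mathscr{L}}$ term in \eqref{e:blueCar2} contributes a quantity involving $\rhoB(k)$, and one needs a quantitative bound on $\rhoB$ matched to the quasimode quality $\|f\|\sim(\delta k_n)^{-1}$ (not merely the polynomial bound suffices for Theorem~\ref{t:diamond}) to absorb it; this also explains why the lower bound on $\delta$ strengthens from $k_n^{-1}$ to $Ck_n^{-1}$. Your attribution of \eqref{e:fourDiamondsBound} to ``guaranteeing that the available window of near-resonant frequencies is wide enough'' is incorrect: the frequency window is fixed by the quasimode construction and does not depend on $\rhoB$.
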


\begin{remark}[Discussion of Theorems \ref{t:diamond} and \ref{t:diamondNew}]

\

\begin{enumerate}
\item 
When $\Omega_-$ consists of two aligned squares (or rounded squares) -- where the trapping is the same nature (\emph{parabolic}) as for a four-diamond domain (albeit on a smaller set in phase space) -- 
the bound \eqref{e:fourDiamondsBound} is proved in \cite[Corollary 1.14]{ChSpGiSm:20}. Furthermore, from both quasimode considerations and 
the bounds of 
\cite{ChWu:13, Ch:18} in the setting of scattering by metrics of revolution, 
\cite{ChSpGiSm:20}
conjecture that for $\Omega_-$ as in Theorem~\ref{t:diamond}, and $\operator\in\{A_k,A_k'\}$
\beq\label{e:thisMustBeTrue}
\rhoB(k)\leq Ck.
\eeq
If the bound \eqref{e:thisMustBeTrue} holds, then Theorems~\ref{t:diamond} and \ref{t:diamondNew} show that Theorem~\ref{t:concretehBEM} is optimal in this case and $\mathcal{J}=\emptyset$ in Theorem \ref{t:diamond}. 
\item 
We emphasise that the phenomenon behind Theorems~\ref{t:diamond} and \ref{t:diamondNew} is \emph{not} sparse in frequency. Indeed, 
both Theorems~\ref{t:diamond} and \ref{t:diamondNew}
hold with $\delta=1$ for a set of $k$ with infinite measure (since $\sum_{n=1}^\infty n^{-1}=\infty$). Furthermore, although 
we prove Theorem~\ref{t:diamond} with $k_n=\sqrt{2}n$, it is easy to generalize our construction in the proof of Theorem~\ref{t:diamond} to take $k_n=\sqrt{m_n^2+\ell_n^2}$ for any $m_n,\ell_n\in\mathbb{Z}$ with $c<|\frac{m_n}{\ell_n}|<C$ and such that, if $m_n/\ell_n=p_n/q_n$ with $p_n$ and $q_n$ relatively prime, then $|q_n|\leq C$. 
\item 
Theorems~\ref{t:diamond} and \ref{t:diamondNew} are
 illustrated by numerical experiments in Figures~\ref{f:bReallyCoolPicture} and~\ref{f:aReallyCoolPicture} (taken from \cite{GaRaSp:25}).
\end{enumerate}
\end{remark}

\begin{figure}[htbp]
\begin{center}
Piecewise constant elements

\begin{tikzpicture}
\node at(0,0){\includegraphics[width=.8\textwidth]{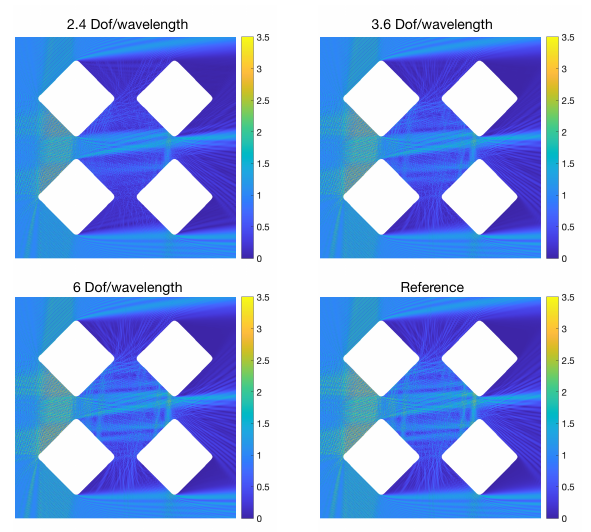}};
\end{tikzpicture}
\end{center}
\caption{\label{f:bReallyCoolPicture} 
The plots depict the absolute value of the total field at $k=40\sqrt{2}$ when the plane wave $e^{ik\langle \omega,x\rangle}$, with $\omega=(\cos (5\pi/180), \sin(5\pi/180))$, is incident on a sound-soft domain consisting of four nearly square obstacles.
The plot shows the Galerkin solutions 
for the BIE involving $A_k$ at 2.4 (top left), 3.6 (top right), and 6 (bottom left) degrees of freedom per wavelength and piecewise constant elements (i.e., $p=0$), with the reference solution (bottom right) computed with $p=11$.
These numbers of degrees of freedom per wavelength correspond to before, on, and after the peak of the quasioptimality constant in Figure~\ref{f:aReallyCoolPicture}. Many of the features of the Galerkin and reference plots are similar. However, in the Galerkin solutions with low numbers of points per wavelength the trapped rays are much less pronounced. 
The plots for piecewise linear elements (i.e., $p=1$) look qualitatively similar. }
\end{figure}

\begin{figure}[htbp]
\begin{center}
\begin{tikzpicture}
\node at(0,0){\includegraphics[width=1\textwidth]{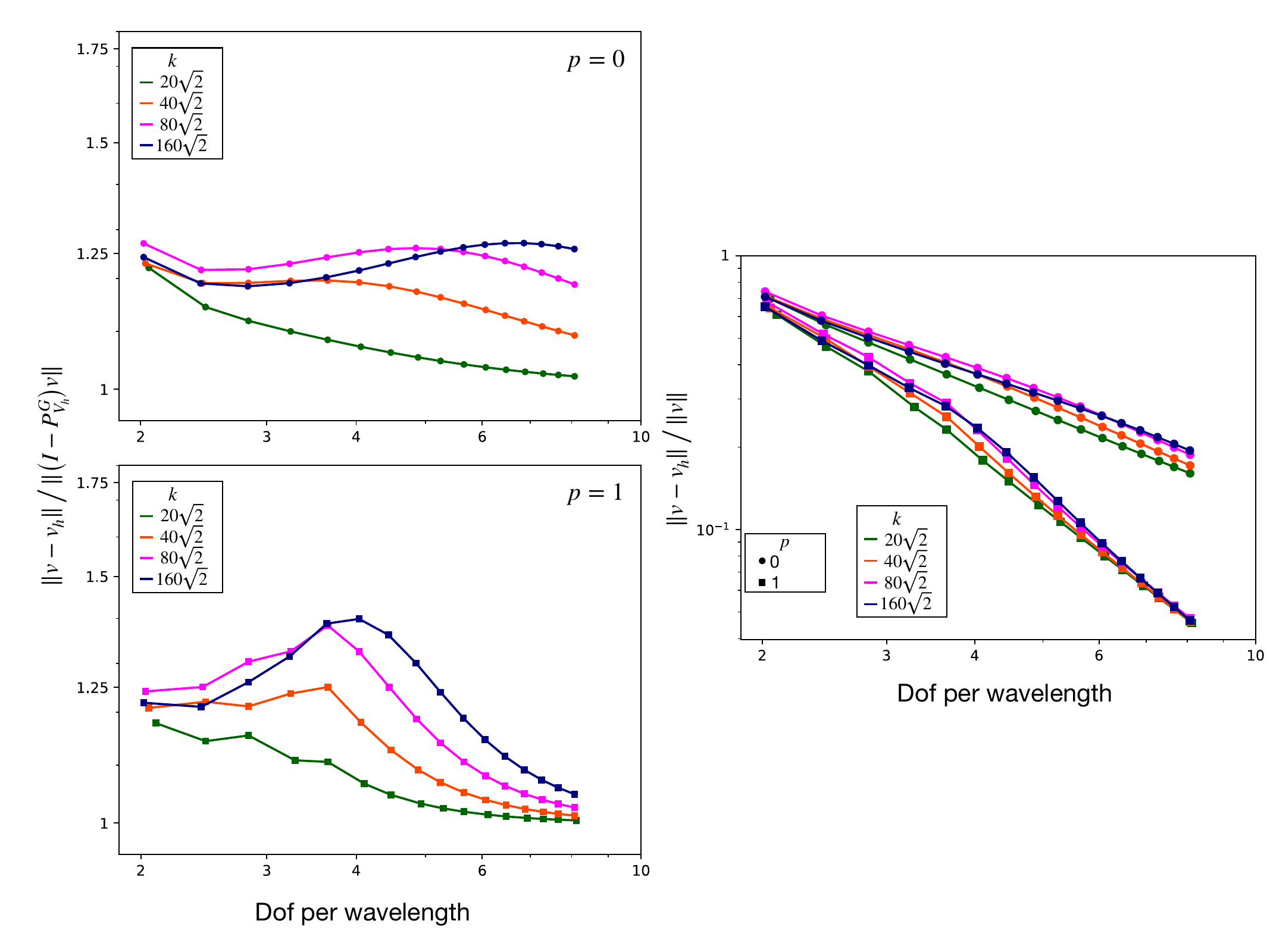}};
\end{tikzpicture}
\end{center}
\caption{\label{f:aReallyCoolPicture} The left plots shows the implied quasioptimality constant
for piecewise constant (top) and piecewise linear (bottom) elements when the plane wave $e^{ik\langle \omega,x\rangle}$ with $\omega=(\cos (5\pi/180), \sin(5\pi/180))$  is incident on a sound-soft domain consisting of four nearly square obstacles (see Figure~\ref{f:bReallyCoolPicture}),
with this problem solved using the BIE involving $A_k$.
  The right plot shows the corresponding $L^2$ relative error in the density. The reference and computed scattered solutions at $k=40\sqrt{2}$ are shown in Figure~\ref{f:bReallyCoolPicture}. The presence of pollution in this example can be seen from both plots:~in the left hand plot, one observes a peak in the implied quasioptimality constant that shifts up and to the right as $k$ increases. In the right hand plot, the error increases for a fixed number of degrees of freedom per wavelength as $k$ increases.}
\end{figure}

\section{$h$-FEM with non-uniform  meshes defined by ray-dynamics (\ref{R4})
}
\label{s:R4}

In this section, we consider an $H^\infty$ Helmholtz problem (in the sense of Definition \ref{d:helmholtzProblem}) with $\Gamma_{\rm p}=\emptyset$. This goal is to present a simple version of the results of~\cite{AGS2}. We  do not give complete proofs; instead, we 
give a sketch proof, 
indicating where extra input is needed.

To begin we recall the notation $\Omega_{\cavity}$, $\Omega_{\visible}$, $\Omega_{\invisible}$, and $\Omega_{\pml}$ for respectively a neighbourhood of the cavity, the visible set, the invisible set, and the PML from \S\ref{s:informalR4} (see also Figure~\ref{f:domainsTrappingMeshing}.)

Given a triangulation, $\mathcal{T}$ of $\Omega$, we also recall the meshwidth parameters $h_\star$, $\star\in \{\cavity,\visible,\invisible,\pml\}$, and $h=\max_{T\in\mathcal{T}}h_{T}$ from~\eqref{e:meshWidths}.
 In addition, we define the following measure of local uniformity of the triangulation at scale $\e>0$:
$$
U(\mathcal{T},\e):= \sup_{x\in \Omega}\underset{\substack{T_1,T_2\in\mathcal{T}\\T_1\cap B(x,\e)\neq \emptyset\\T_2\cap B(x,\e)=\emptyset}}{\sup}\frac{\diam(T_1)}{\diam(T_2)}.
$$
A triangulation $\cT$ is \emph{quasiuniform at scale $\e$ with constant $\gamma_0>0$} if $U(\cT,\e) \leq \gamma_0^{-1}$.

Define the matrices
\begin{gather*}
\mathcal{C}:=\begin{pmatrix} \rho(k)&\sqrt{k \rho(k)}&0&0\\
\sqrt{k\rho(k)}&k&k&0\\
0&k&k&0\\
0&0&0&1
\end{pmatrix},
\quad 
\cH:=\begin{pmatrix} h_\cavity  &0&0&0\\
0&h_\visible &0&0\\
0&0&h_\invisible&0\\
0&0&0&h_\pml\end{pmatrix},
\end{gather*}
\begin{gather*}
\mathscr{F}:=\begin{pmatrix}
1 &1&1& 1 \\
1 &1&1& 1\\
1 &1&1& 1\\
1 &1&1& 1
\end{pmatrix},
\\
\transferIntro:=\begin{pmatrix} 1&(h_\visible k)^{2p}\sqrt{k\rho(k)}&(h_\visible k)^{2p}\sqrt{k \rho(k)}( h_\invisible k)^{2p}k&
0\\
(h_\cavity  k)^{2p}\sqrt{k\rho(k)}&1&(h_\invisible k)^{2p}k&0\\
(h_\cavity  k)^{2p}\sqrt{k\rho}(h_\visible k)^{2p}k&(h_\visible k)^{2p}k&1&0\\
0
&0&0&1\end{pmatrix}.
\end{gather*}
Conceptually, $\mathcal{C}$ is the norm of the localised data-to-solution map (with $\mathcal{C}$ standing for ``communication") and $\mathscr{T}$ controls the propagation of Galerkin errors between subdomains according to the graph in 
Figure \ref{f:graph2} (with a simplified version -- Figure \ref{f:graph1} -- given in the sketch of the proof in \S\ref{s:sketch}). 

\begin{theorem}
\label{t:R4}
Suppose $a$ is the sesqulinear form defining an $H^\infty$ Helmholtz problem approximated using a $C^\infty$ PML with $C^{1,1}$ truncation boundary. Let $k_0,N, \Upsilon>0$, $p\in \mathbb{N}\setminus\{0\}$, $\mathcal{J}\subset \mathbb{R}_+$ such that 
$$
\sup\big\{ k^{-N}\rho(k)\,:\, k\in (k_0,\infty)\setminus \mathcal{J}\big\}<\infty
$$
and
let $\Oursubset{\star}$ be compactly contained in  $\Omega_{\star}$ with respect to the subspace topology of $\overline{\Omega}$, $\star\in\{ \cavity,\visible,\invisible,\pml\}$. 

There exist $c,C>0$ such that 
for all $k\in (k_0,\infty)\setminus \mathcal{J}$, all affine-conforming $C^{p+1}$ simplicial triangulations with constant $\Upsilon$, $\mathcal{T}$ (in the sense of Definition~\ref{d:AffineConforming}), that are quasiuniform at scale $(\Upsilon+k)^{-1}$
and satisfy 
\begin{equation}
\label{e:meshConditions}
(h_\cavity k)^{2p}\rho(k)+(h_\visible k)^{2p}k+(h_\invisible k)^{2p}k+(h_\pml k)^{2p}\leq c,
\end{equation}
all $w_{h,\star} \in V_{\mathcal{T}_{k}}^p$, with $\star\in\{ \cavity,\visible,\invisible,\pml\}$, and all $u\in H_k^1(\Omega)$,
the Galerkin approximation, $u_h\in \mathcal{P}_{\mathcal{T}}^p$, to $u$ exists, is unique, and satisfies, for $0\leq \newell \leq p$,
 \renewcommand{\jot}{1pt}
 \begin{equation}
\begin{aligned}\label{e:simple}
&\left(
\begin{aligned}
&\|u-u_h\|_{H_k^{1-\newell}(\Omega_\cavity')}\\
&\|u-u_h\|_{H_k^{1-\newell}(\Omega_\visible')}\\
&\|u-u_h\|_{H_k^{1-\newell}(\Omega_\invisible')}\\
&\|u-u_h\|_{H_k^{1-\newell}(\Omega_\pml')}
\end{aligned}
\right)\\
&
\qquad\leq C
\Big[(\cH k)^{\newell}+
\transferIntro\mathcal{C} (\cH k)^p
+ k^{-N}(hk)^m \mathscr{F}
\Big]
\left(\begin{aligned}
&\|u-w_{h,\cavity}\|_{H_k^{1}(\Omega_\cavity)}\\
&\|u-w_{h,\visible}\|_{H_k^{1}(\Omega_\visible)}\\
&\|u-w_{h,\invisible}\|_{H_k^{1}(\Omega_\invisible)}\\
&\|u-w_{h,\pml}\|_{H_k^{1}(\Omega_\pml)}
\end{aligned}
\right),
\end{aligned}
\end{equation}
\renewcommand{\jot}{3pt}
where the inequality in \eqref{e:simple} is understood component-wise.
\end{theorem}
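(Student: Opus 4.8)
\textbf{Proof strategy for Theorem \ref{t:R4}.} The plan is to reduce the estimate \eqref{e:simple} to the combination of two ingredients: (i) the abstract elliptic-projection machinery of \S\ref{s:R1} (Theorem \ref{t:abstractEllipticProjection}, Theorem \ref{t:abstractHelmholtz}, Corollary \ref{c:preasymptotic}), which controls the Galerkin error in terms of high- and low-frequency approximability, and (ii) a $k$-explicit version of the classical local-FEM-error results (the estimate \eqref{e:QOmodulo} and its sharpenings from \cite{AGS1, AGS2}), which distributes the errors among the regions $\Omega_\cavity, \Omega_\visible, \Omega_\invisible, \Omega_\pml$. The first ingredient gives the ``diagonal'' terms $(\cH k)^\newell$ on the right-hand side of \eqref{e:simple} (best approximation) and, after inserting the polynomial-approximation bounds from \S\ref{a:poly}, the factor $(\cH k)^p$ appearing in the middle term. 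The second ingredient is needed to produce the off-diagonal structure encoded in $\transferIntro$ and $\mathcal{C}$, i.e., to quantify how an error incurred in one region ``leaks'' into the others.

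First I would localise: using an affine partition of unity subordinate to the cover $\{\Omega_\star\}$ and the local-FEM-error estimate, one obtains, for each region $\Omega_\star'$, a bound of the schematic form
\begin{align*}
\|u-u_h\|_{H_k^{1-\newell}(\Omega_\star')}
&\leq C\Big( (h_\star k)^{\newell} \min_{w_h}\|u-w_h\|_{H^1_k(\Omega_\star)}
+ \|u-u_h\|_{(H^p_k(\widetilde\Omega_\star))^*}\Big),
\end{align*}
where the ``slush'' term is a low-frequency quantity supported on a slightly larger set $\widetilde\Omega_\star$. This is the $k$-explicit analogue of \eqref{e:QOmodulo}; its proof follows the standard Nitsche--Schatz argument (iterated cutoffs, inverse estimates, superapproximation) now carried out in $k$-weighted norms, exactly as in \cite{AGS1}. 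The new work, relative to \cite{AGS1}, is to estimate the low-frequency term: one writes $\|u-u_h\|_{(H^p_k(\widetilde\Omega_\star))^*}$ using the negative-norm estimate \eqref{e:lowfreqbound}/\eqref{e:lowGalerkinfinal} of Corollary \ref{c:preasymptotic} together with the solution-operator bounds \emph{localised with respect to the trapped set} --- i.e., Theorems \ref{l:measureAwayTrapping} and \ref{l:everythingawaytrapping} (and their PML analogues from \cite[Appendix C]{AGS2}, recalled in Table \ref{ta:resolve}). These are precisely what distinguish the cavity region (where one only has $\rho(k)$) from the visible region (where one gains $\sqrt{k\rho(k)}$) from the invisible region and PML (where one has the nontrapping bound $k$); this is the origin of the matrix $\mathcal{C}$. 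Assembling the local bounds over all four regions and bookkeeping which region's best-approximation error feeds into which region's final estimate produces the transfer matrix $\transferIntro$, whose $(h_\star k)^{2p}$ entries come from pairing the approximability factor $(h_\star k)^p$ with the dual-weighting factor $(h_\star k)^p$ hidden in the negative-norm estimate. The $k^{-N}(hk)^m\mathscr{F}$ term collects the $O(\hbar^\infty)$ remainders from the propagation/elliptic estimates, using the hypothesis $\sup\{k^{-N}\rho(k)\}<\infty$ away from $\mathcal{J}$ to absorb $\rho(k)$.

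The existence, uniqueness, and the threshold condition \eqref{e:meshConditions} come from the same mechanism as in Theorem \ref{t:abstractEllipticProjection}: the condition \eqref{e:ellipticProjectionThreshold} is verified once each $(h_\star k)^{2p}$ times the corresponding entry of $\mathcal{C}$ (equivalently, the relevant local resolvent bound) is sufficiently small, which is exactly \eqref{e:meshConditions}; the quasiuniformity-at-scale-$(\Upsilon+k)^{-1}$ hypothesis is used to ensure the $k$-weighted inverse inequalities and superapproximation bounds hold with $k$-independent constants on each element patch. The main obstacle, and the technically heaviest part, is establishing the localised-in-$K$ resolvent estimates for the \emph{PML} problem --- i.e., that Theorems \ref{l:measureAwayTrapping} and \ref{l:everythingawaytrapping} survive the complex scaling and the presence of the artificial boundary --- and then threading these through the iterated-cutoff local-error argument while keeping track of the six different regions and the exact powers of $h_\star k$; this is where \cite[Appendix C]{AGS2} does the real work, and here we would simply invoke it, restricting to the simpler assembled statement \eqref{e:simple} rather than the sharper graph-theoretic estimate of \cite[Theorem 3.11]{AGS2}.
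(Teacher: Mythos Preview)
Your high-level ingredients are right --- the proof does use an elliptic-projection-type argument and the localised resolvent bounds of Table~\ref{ta:resolve} --- but the way you propose to combine them has a genuine gap.

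You plan to first invoke the local-error estimate~\eqref{e:QOmodulo} to get a bound with a low-frequency ``slush'' term, and then bound that slush term by citing the \emph{global} negative-norm estimate~\eqref{e:lowGalerkinfinal} of Corollary~\ref{c:preasymptotic}. But~\eqref{e:lowGalerkinfinal} is a bound on $\|u-u_h\|_{(\cZ_{m+1})^*}$ in terms of the \emph{global} best-approximation error $\|(I-\Pi_h)u\|_{\cH}$, with a prefactor involving the \emph{global} $h$ and $\rho(k)$. It cannot produce the local quantities $h_\star$, the local best approximations $\|u-w_{h,\star}\|_{H^1_k(\Omega_\star)}$, or the matrix structure $\transferIntro\,\mathcal{C}(\cH k)^p$ on the right of~\eqref{e:simple}. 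Injecting the localised resolvent bounds of Table~\ref{ta:resolve} into a global duality argument that has already been closed does not recover this information.

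What the paper actually does (see \S\ref{s:sketch}) is \emph{localise the duality argument itself}. Starting from $\langle \chi_i(u-u_h),v\rangle = \langle P_\theta(u-u_h),\mathcal R^*\chi_i v\rangle$ and inserting the partition of unity $\sum_j\phi_j$ on both $u-u_h$ and $\mathcal R^*\chi_i v$ (see~\eqref{e:startDuality}) yields, after using pseudolocality of $S_k$ and of the elliptic projection $\Pi_{S_k}$ (i.e.~\eqref{e:sPseudoloc},~\eqref{e:piSPseudoloc} --- ingredients your outline omits entirely), a \emph{coupled system} of inequalities~\eqref{e:theFirstSystem}:
\[
\big(\|\chi_i(u-u_h)\|_{(H_k^{p-1})^*}\big)_i \;\le\; CB\,(\text{local best approx.}) + C W\,\big(\|\chi_j(u-u_h)\|_{(H_k^{p-1})^*}\big)_j,
\]
with $W_{ij}\sim (h_jk)^p\eta_p(j\to i)$ and $\eta_p(j\to i)\sim (h_jk)^p\|\chi_j\mathcal R^*\chi_i\|_{L^2\to L^2}$. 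The matrix $\transferIntro$ is not obtained by ``bookkeeping'' but by \emph{inverting} $(I-CW)$ via a Neumann series, and the convergence of this series (analysed as a sum over paths in the graph of Figure~\ref{f:graph1}) is exactly what forces the threshold~\eqref{e:meshConditions}. The entries of $\mathcal{C}$ then enter through $\|\chi_j\mathcal R^*\chi_i\|_{L^2\to L^2}$ via Table~\ref{ta:resolve}. Your sequential scheme (local bound $\to$ global slush estimate $\to$ assemble) never produces this coupling and so cannot yield~\eqref{e:simple}.
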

\bre From the estimate~\eqref{e:simple} and the interpretation of $\mathscr{T}$ as the propagation of Galerkin errors, the matrix $\mathcal{C}(\cH k)^p$ should be viewed as mapping best approximation errors to Galerkin errors -- 
we discuss this interpretation in \S\ref{s:interpret}. 
\ere

\begin{remark}
Numerical experiments illustrating the consequences of Theorem~\ref{t:R4} 
summarised in Table \ref{tab:regimes}
can be found in~\cite[\S2]{AGS2}.
\end{remark}

For a uniform mesh ($\,h_{\cavity}=h_{\visible}=h_{\invisible}=h_{\pml}=h\,$)~\eqref{e:simple} 
recovers the result of Theorem~\ref{t:R1}. Indeed, for a uniform mesh with $(hk)^{2p}\rho$ sufficiently small, all the elements of the matrix $\transferIntro$ are bounded by a constant, and all the elements of $\mathscr{C}(\cH k)^p$ are bounded by $\rho(k)(hk)^p $. 
However, Theorem~\ref{t:R4} provides much more information than Theorem~\ref{t:R1}:~it describes how the best approximation errors and local width in each region affect the Galerkin error in all other regions. 

Theorem~\ref{t:R4} is most interesting when $\rho(k)\gg k$, which is equivalent to the problem being trapping, i.e., $\cavity\neq \emptyset$ (see \cite{BoBuRa:10}, \cite[Theorem 7.1]{DyZw:19}). In particular, Theorem~\ref{t:R4} shows that in the trapping case there exist triangulations with $(hk)^{p}\rho(k)\gg 1$ whose finite-element solutions have guaranteed $k$-uniform quasioptimality. Even when $\cavity=\emptyset$, Theorem \ref{t:R4} gives new information including that one needs only a fixed number of points per wavelength in the PML.

To compare with the estimate in Theorem~\ref{t:R1} on relative error
(i.e., \eqref{e:rel_error}), we state the following corollary of Theorem~\ref{t:R4} which follows from standard piecewise-polynomial approximation estimates (see, e.g., \S\ref{app:A}).

\begin{corollary}
\label{c:relError}
Suppose $a$ is the sesqulinear form defining an $H^\infty$ Helmholtz problem approximated using a $C^\infty$ PML with $C^{1,1}$ truncation boundary. Let $k_0,N, \Upsilon$, $p\in \mathbb{N}\setminus\{0\}$, $\mathcal{J}\subset \mathbb{R}_+$ such that 
$$
\sup\big\{ k^{-N}\rho(k)\,:\, k\in (k_0,\infty)\setminus \mathcal{J}\big\}<\infty,
$$
and
let $\Oursubset{\star}$ be compactly contained in  $\Omega_{\star}$ with respect to the subspace topology of $\overline{\Omega}$, $\star\in\{ \cavity,\visible,\invisible,\pml\}$. 

There exist $c,C>0$ such that 
for all $k\in (k_0,\infty)\setminus \mathcal{J}$, all affine-conforming $C^{p+1}$ simplicial triangulations with constant $\Upsilon$, $\mathcal{T}$, that are quasiuniform at scale $(\Upsilon+k)^{-1}$ with constant $\Upsilon$ and satisfy~\eqref{e:meshConditions}
 all $w_{h,\star} \in V_{\mathcal{T}_{k}}^p$, with $\star\in\{ \cavity,\visible,\invisible,\pml\}$, and all $u\in H_k^1(\Omega)$,
the Galerkin approximation, $u_h\in \mathcal{P}_{\mathcal{T}}^p$, to $u$ exists, is unique, and satisfies, for $0\leq \newell \leq p$,

 \renewcommand{\jot}{1pt}
\begin{equation}
\begin{aligned}\label{e:relSimple}
&\left(\begin{aligned}
&\|u-u_h\|_{H_k^{1-\newell}(\Omega_\cavity')}\\
&\|u-u_h\|_{H_k^{1-\newell}(\Omega_\visible')}\\
&\|u-u_h\|_{H_k^{1-\newell}(\Omega_\invisible')}\\
&\|u-u_h\|_{H_k^{1-\newell}(\Omega_\pml')}
\end{aligned}
\right)\\
&\qquad
\leq C
\Big[(\cH k)^{\newell}+
\transferIntro \mathcal{C}(\cH k)^p
+ k^{-N}(hk)^m \mathscr{F}
\Big](\cH k)^{p}
\left(\begin{aligned}
&\|u\|_{H_k^{p+1}(\Omega_\cavity)}\\
&\|u\|_{H_k^{p+1}(\Omega_\visible)}\\
&\|u\|_{H_k^{p+1}(\Omega_\invisible)}\\
&\|u\|_{H_k^{p+1}(\Omega_\pml)}
\end{aligned}
\right).
\end{aligned}
\end{equation}
 \renewcommand{\jot}{3pt}
\end{corollary}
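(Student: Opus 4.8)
The plan is to derive Corollary~\ref{c:relError} directly from Theorem~\ref{t:R4} by estimating the components of the best-approximation error vector on the right-hand side of \eqref{e:simple} in terms of higher Sobolev norms of the exact solution. Since the vector $(w_{h,\star})_{\star}$ is arbitrary, we are free to choose, for each $\star \in \{\cavity,\visible,\invisible,\pml\}$, a near-optimal approximant $w_{h,\star} \in \mathcal{P}^p_{\mathcal{T}}$ of $u$ on the relevant region $\Omega_\star$, so that the best-approximation term is controlled by an interpolation-type bound.

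The key step is the following: using standard $h$-explicit piecewise-polynomial approximation (as recalled in \S\ref{app:A}; see also Theorem~\ref{t:approxHighLowReg}) applied on a triangulation that is quasiuniform at scale $(\Upsilon+k)^{-1}$ with constant $\Upsilon$ and with local meshwidth $h_\star$ near $\Omega_\star$, one has
\begin{equation*}
\min_{w_h \in \mathcal{P}^p_{\mathcal{T}}} \|u - w_h\|_{H^1_k(\Omega_\star)} \leq C (h_\star k)^p \|u\|_{H^{p+1}_k(\Omega_\star)}, \qquad \star \in \{\cavity,\visible,\invisible,\pml\}.
\end{equation*}
Here the $k$-weighting in the norms is exactly tuned so that interpolation of a function oscillating at frequency $\lesssim k$ incurs the factor $(h_\star k)^p$; the constant $C$ depends only on $p$, $d$, and $\Upsilon$ (via shape-regularity and the quasiuniformity scale), and in particular is independent of $k$. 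One must be slightly careful that $\Omega_\star$ (as opposed to $\Omega'_\star$) still has the property that the elements meeting it have width $\leq h_\star$ by the definition \eqref{e:meshWidths} of $h_\star$, and that the approximant can be taken globally in $\mathcal{P}^p_{\mathcal{T}}$ while its local error on $\Omega_\star$ is still controlled --- this is standard since interpolation operators are local.

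With these four bounds in hand, one substitutes $\|u - w_{h,\star}\|_{H^1_k(\Omega_\star)} \leq C(h_\star k)^p\|u\|_{H^{p+1}_k(\Omega_\star)}$ into \eqref{e:simple}. This amounts to right-multiplying the matrix bound in \eqref{e:simple} by the diagonal matrix $(\cH k)^p$ and the vector $(\|u\|_{H^{p+1}_k(\Omega_\star)})_\star$, which gives precisely \eqref{e:relSimple} after absorbing the constant $C$. The hypotheses of Corollary~\ref{c:relError} on $k$, $\mathcal{J}$, $\Upsilon$, $p$, $\mathcal{T}$, and the mesh condition \eqref{e:meshConditions} are identical to those of Theorem~\ref{t:R4}, so existence and uniqueness of $u_h$ and the validity of \eqref{e:simple} are immediate.

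I do not expect a genuine obstacle here: the proof is essentially bookkeeping, inserting an interpolation estimate into an already-established abstract bound. The only point requiring mild care is ensuring that the local best-approximation estimate holds with $k$-independent constant under the stated quasiuniformity-at-scale-$(\Upsilon+k)^{-1}$ hypothesis --- this is why that hypothesis appears in both statements --- and that the regions $\Omega_\star$ on the right-hand side of \eqref{e:relSimple} (rather than the shrunk regions $\Omega'_\star$) are the correct ones for the interpolation bound, which is consistent with the definition \eqref{e:meshWidths} of the local meshwidths $h_\star$.
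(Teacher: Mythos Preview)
Your proposal is correct and matches the paper's approach exactly: the paper states that Corollary~\ref{c:relError} ``follows from standard piecewise-polynomial approximation estimates (see, e.g., \S\ref{app:A})'' applied to Theorem~\ref{t:R4}, which is precisely what you do --- choose near-optimal $w_{h,\star}$ via the local polynomial approximation bound $\|u-w_{h,\star}\|_{H^1_k(\Omega_\star)}\leq C(h_\star k)^p\|u\|_{H^{p+1}_k(\Omega_\star)}$ and substitute into \eqref{e:simple}, amounting to right-multiplication by $(\cH k)^p$. Your additional remarks on the roles of quasiuniformity and the distinction between $\Omega_\star$ and $\Omega'_\star$ are sensible elaborations of details the paper leaves implicit.
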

When the data, $f$, is $k$-oscillatory in the sense of Lemma~\ref{l:osci}, so is the solution $u$, and in this case, $\|u\|_{H_k^{p+1}(U')}\leq C\|u\|_{H_k^1(U)}$ for $U'\Subset U$. Hence, one can use~\eqref{e:relSimple} to find triangulations with $(hk)^{2 p}\rho(k) \gg 1$ that nevertheless have guaranteed control on the relative error.

\bre[Improvements in Theorem 3.11~\cite{AGS2}]
Theorem 3.11 in~\cite{AGS2} is stronger than Theorem \ref{t:R4} in that it considers arbitrary covers of $\Omega$, and bounds the high ($\gg k$) and low ($\lesssim k$) frequencies of the Galerkin error separately.
Two situations in which a more complicated cover is advantageous are the following. 1) There are two or more cavities that are dynamically separated, i.e., for which there is no billiard trajectory whose closure insects both cavities. 2) One has a priori information about the data and/or solution and hence can obtain good control on the right-hand side of~\eqref{e:simple}. Even when $\cavity=\emptyset$, such information combined with Theorem \ref{t:R4} allows one to define triangulations with a priori improved accuracy in some regions, without the need to choose a small meshwidth everywhere.
\ere

\subsection{Discussion of the ideas behind Theorem~\ref{t:R4} and a sketch of the proof}

\subsubsection{The ideas behind Theorem~\ref{t:R4}}

The following two important phenomena motivate Theorem \ref{t:R4}.
\begin{enumerate}
\item \emph{The solution operator $\mathcal{R}$ encodes the billiard dynamics in $\Omega$.} In particular, for $\chi_1,\chi_2\in C^\infty(\Omega)$ the operator $\chi_1\mathcal{R}\chi_2$ behaves differently depending on the locations of $\supp \chi_j$; e.g., $\|\mathcal{R}\|\gg k$ when $\Omega_{\cavity}\neq \emptyset$, but if both $\chi_1$ and $\chi_2$ are away from $\Omega_{\cavity}$ then $\|\chi_1\mathcal{R}\chi_2\|_{L^2\to L^2}\lesssim k$.  
\item \emph{The Galerkin error propagates.} The best possible situation would be local quasioptimality i.e., there exists $C>0$ such that the Galerkin solution $u_h$ satisfies, for every $U\subset\Omega$,
\begin{equation}
\label{e:localQO}
\|u-u_h\|_{H^1_k(U)}\leq C\inf_{w_h\in V_{\mathcal{T}}^p}\|u-w_h\|_{H^1_k(U)}.
\end{equation}
In this case, since approximation of oscillatory functions by piecewise polynomials is well understood (see \cite{Ga:22} for lower bounds and Appendix~\ref{a:poly} for upper bounds), 
the properties of the data and behaviour of $\mathcal{R}$ would dictate the meshwidth in each region. 
Unfortunately~\eqref{e:localQO} cannot hold for general triangulations. Indeed, suppose that \eqref{e:localQO} holds and let $\phi,\phi_1,\phi_2\in C^\infty(\Omega)$ be such that $\supp \phi\subset \{\phi_1\equiv 1\}$, $\phi\neq 0$, and $\phi_1+\phi_2\equiv 1$ on $\Omega$. Then,
\begin{equation}
\label{e:errorMoving}
\begin{aligned}
\phi(u-u_h)&=\sum_{j=1}^2\phi \mathcal{R}\phi_j P_\theta(u-u_h).
\end{aligned}
\end{equation}
We now consider a situation where $\mc{T}$ has arbitrarily small elements on $\supp \phi_1=:\Omega_1$ so that, by~\eqref{e:localQO}, 
$$
\|u-u_h\|_{H^1_k(\Omega_1)}\leq C\inf_{w_h\in V_{\mathcal{T}}^p}\|u-w_h\|_{H^1_k(\Omega_1)}\ll 1 .
$$
In particular,
$$
\|\phi(u-u_h)\|_{H^1_k}+\|\mathcal{R}\phi_1P_\theta(u-u_h)\|_{H^1_k}\ll 1
$$
(by continuity of $P_\theta$ and $\mathcal{R}$ and locality of $P_\theta$).
Then,~\eqref{e:errorMoving} implies that
$$
\|  \phi \mathcal{R}\phi_2P_\theta(u-u_h)\|_{H^1_k}\ll 1,
$$
which cannot be true unless the meshwidth is also sufficiently small on $\Omega_2$ or $\phi \mathcal{R}\phi_2\approx 0$. By Item 1, the latter is not the case whenever $\supp \phi$ and $\supp \phi_2$ are connected by a billiard trajectory. (For a striking illustration of this propagation of error, see~\cite[Figure 3]{AvGaSp:24}.)

This argument indicates, not only that the Galerkin error propagates, but that the norm of the operator $\phi \mathcal{R}\phi_2$ determines the strength of propagation from $\supp \phi_2$ to $\supp \phi$. 
\end{enumerate}
Item 1 motivates varying the meshwidth from one location to another, but Item 2 shows that, to be effective, this strategy must 
take into account the global behaviour of billiard trajectories. 
In particular, by Item 2, the error in the cavity is \emph{not} just dictated by the meshwidth in the cavity
-- the meshwidth also needs to be sufficiently small away from the cavity to control the propagating error.

\subsubsection{Sketch of the proof of Theorem~\ref{t:R4}}
\label{s:sketch}

For simplicity, we consider here the bound \eqref{e:simple} with $m=p$ and ignore improvements that are possible in the overlaps between subdomains, in the PML region, and by splitting the frequencies of the Galerkin error into those $\gg k$ and $\lesssim k$. 

The proofs of Theorem~\ref{t:R4} is,  at heart, a localised version of the elliptic projection-type argument used to prove Theorem~\ref{t:abstractHelmholtz}, and we first recap this argument. The key insight 
is the existence of a self-adjoint smoothing operator $S_k$ so that $P_{S_k}:= P_\theta+S_k$ is coercive 
(uniformly in $k$) and for all $N$ there exists $C>0$ such that for $k\geq k_0$ 
\beq\label{e:Sreg}
\|S_k\|_{H_k^{-N}\to H_k^N}\leq C
\eeq
(see Theorem~\ref{t:abstractHelmholtz} for the definition of the operator $S_k$).
Since $P_{S_k}$ is coercive, by Lemma~\ref{l:cea} there is an \emph{elliptic projection} $\Pi_{S_k}:H_k^1\to \mathcal{P}_{\mathcal{T}}^p$ such that 
\beq\label{e:introPiSharp}
\big\langle P_{S_k} w_h, (I- \Pi_{S_k})u\big\rangle =0 \quad\tfa w_h \in \mathcal{P}_{\mathcal{T}}^p
\eeq
and there exists $C>0$ such that for all $k>k_0$
\beq\label{e:introPiSharpCea}
\|(I - \Pi_{S_k})v\|_{H^1_k} \leq C \inf_{w_h \in \mathcal{P}_{\mathcal{T}}^p} \|v - w_h\|_{H^1_k}
\eeq
(i.e., $\Pi_{S_k}$ is the adjoint Galerkin projection associated to $P_{S_k}$). Moreover, by an Aubin--Nitsche-type duality argument (see~\eqref{e:aSLowNorm} )
\beq
\label{e:lowNormPiSharp}
\|(I - \Pi_{S_k})v\|_{(H^{p-1}_k)^*} \leq C(hk)^p\inf_{w_h \in \mathcal{P}_{\mathcal{T}}^p} \|v - w_h\|_{H^1_k}.
\eeq

By~\eqref{e:introPiSharp} and Galerkin orthogonality~\eqref{e:galerkinOrthogonality}, for all $w_h\in \mathcal{P}_{\mathcal{T}}^p$, $v\in H_k^{p-1}$,
\begin{equation}
\label{e:basicEllipticProjection}
\begin{aligned}
\langle u-u_h,v\rangle 
&= \big\langle P_\theta(u-u_h) , \mathcal{R}^*v \big\rangle\\
&= \big\langle P_\theta(u-u_h), (I-\Pi_{S_k})\mathcal{R}^* v \big\rangle \\
&= \big\langle P_{S_k}(u-u_h), (I-\Pi_{S_k}) \mathcal{R}^* v \big\rangle - 
\big\langle S_k(u-u_h), (I-\Pi_{S_k}) \mathcal{R}^* v \big\rangle \\
&= \big\langle P_{S_k}(u-w_h), (I-\Pi_{S_k}) \mathcal{R}^*v\big\rangle - 
\big\langle S_k(u-u_h), (I-\Pi_{S_k}) \mathcal{R}^* v\big\rangle.
\end{aligned}
\end{equation}
By~\eqref{e:introPiSharpCea},~\eqref{e:lowNormPiSharp}, and the mapping properties $S_k:(H_k^{p-1})^*\to H_k^{p-1}$ and $P_{S_k}:H_k^1\to (H_k^{1})^*$,
\begin{equation}
\label{e:preasymptoticSketch1}
\begin{aligned}
&|\langle u-u_h,v\rangle|\\
&\quad\leq C\eta_{_{H_k^{p-1}\to H_k^1}}(\mathcal{P}_{\mathcal{T}}^p)\Big( \inf_{w_h\in \mathcal{P}_{\mathcal{T}}^p}\|u-w_h\|_{H_k^1}+(hk)^p\|u-u_h\|_{H_k^{-p+1}}\Big)\|v\|_{H_k^{p-1}},
\end{aligned}
\end{equation}
where $\eta_{_{H_k^{p-1}\to H_k^1}}(\mathcal{P}_{\mathcal{T}}^p)$ is defined by \eqref{e:eta}.
 By duality,~\eqref{e:preasymptoticSketch1} implies
\beq \label{e:theFirstSystemSimple}
\begin{gathered}
 \|u-u_h\|_{(H_k^{p-1})^*}\leq C \Big( b\inf_{w_h\in \mathcal{P}_{\mathcal{T}}^p}\|u-w_h\|_{H_k^1}+\omega\|u-u_h\|_{(H_k^{p-1})^*}\Big),\\
 b:=\eta_{_{H_k^{p-1}\to H_k^1}}(\mathcal{P}_{\mathcal{T}}^p),\qquad \omega:=(hk)^{p}\eta_{_{H_k^{p-1}\to H_k^1}}(\mathcal{P}_{\mathcal{T}}^p).
 \end{gathered}
\eeq 
By Lemma~\ref{l:adjointApproximability} and the fact that $\rho(k)\geq ck$,
 $$
\eta_{_{H_k^{p-1}\to H_k^1}}(\mathcal{P}_{\mathcal{T}}^p) \leq C(hk)^p\big(1+\|\mathcal{R}\|_{L^2\to L^2}\big)= C(hk)^p\big(1+\rho(k)\big)\leq C(hk)^p\rho(k).
 $$
Thus, from \eqref{e:theFirstSystemSimple}, when $(hk)^{2p}\rho(k)$ is sufficiently small, $(1-C\omega)^{-1}$ exists and is positive, and then
 \begin{equation}
 \label{e:martinIsHappy}
 \|u-u_h\|_{(H_k^{p-1})^*}
 \leq C(1-C\omega)^{-1}b\inf_{w_h\in \mathcal{P}_{\mathcal{T}}^p}\|u-w_h\|_{H_k^1}\leq C(hk)^p\rho(k)\inf_{w_h\in \mathcal{P}_{\mathcal{T}}^p}\|u-w_h\|_{H_k^1},
 \end{equation}
which is the preasymptotic estimate 
\eqref{e:lowfreqbound} for $n=p$.

We now sketch the localised version of the above argument, which is used to prove Theorem~\ref{t:R4}.

\paragraph{Pseudolocality for $S$ and $\Pi_{S_k}$.}
By Lemma~\ref{l:spatialPseudolocalFinal}
for $\chi,\psi\in \overline{C^\infty}(\Omega)$ with $\supp \chi \cap \supp \psi=\emptyset$, 
\begin{equation}
\label{e:sPseudoloc}
\|\chi S_k\psi\|_{(H_k^{N})^*
\to H_k^N}+\|\chi \mc{R}_S\psi\|_{
(H_k^{N})^*
\to H_k^N}=O(k^{-\infty}).
\end{equation}

We also need pseudolocality of $\Pi_{S_k}$; i.e., for $\chi,\psi\in \overline{C^\infty}(\Omega)$ with $\supp \chi \cap \supp \psi=\emptyset$,
\begin{equation}
\label{e:piSPseudoloc}
\|\chi(I -\Pi_{S_k})\psi u\|_{H_k^1}\leq Ck^{-N}\|(I-\Pi_{S_k})\psi u\|_{(H_k^{p})^*}   .
\end{equation}
The proof of~\eqref{e:piSPseudoloc} is given in~\cite[\S7]{AGS2} and we only review the steps of the argument here.

\noindent{\bf Step 1:} Show that if $\chi_+\in \overline{C^\infty}(\Omega)$ with $\supp \chi\cap\supp (1-\chi_+)=\emptyset$ and $\supp \chi_+\cap\supp \psi=\emptyset$, then
$$
\|\chi(I-\Pi_{S_k})\psi u\|_{H_k^1}\leq C k^{-N}\big(\|\chi_+(I-\Pi_{S_k})\psi u\|_{L^2}+\|(I-\Pi_{S_k})\psi u\|_{H_k^{-N}}\big)
$$
\cite[Lemma 7.4]{AGS2}. 
The proof of this fact follows by using coercivity of $P_{S_k}$, the definition of $\Pi_{S_k}$, superapproximation and inverse estimates in $\mathcal{P}_{\mathcal{T}}^p$,  pseudolocality of $S$, and the boundedness of $S:(H_k^{N})^*\to H_k^N$. 

\noindent{\bf Step 2:} Show that for $\chi,\psi\in \overline{C^\infty}(\Omega)$ with $\supp \chi\cap \supp \psi=\emptyset$, 
$$
\|\chi(I-\Pi_{S_k})\psi u\|_{L^2}\leq C\|(I-\Pi_{S_k})\psi u\|_{H_k^{-N}}
$$
\cite[Lemma 7.5]{AGS2}. 
The proof of this estimate uses a Schatz duality type argument for $P_{S_k}$ and a more sophisticated version of~\eqref{e:piSPseudoloc} that controls how $\|\chi (I-\Pi_{S_k})\psi u\|_{H_k^1}$ behaves when $k^{-1}\leq d(\supp \chi,\supp\psi)\ll 1$.

\paragraph{Sketch Proof of Theorem~\ref{t:R4}}

To localise the elliptic-projection argument, we introduce an open cover of $\Omega$,  $\{\Omega_j\}_{j=1}^\domainnumber$ and $\{\phi_j\}_{j=1}^\domainnumber\subset \overline{C^\infty}(\Omega)$ a partition of unity subordinate to this cover.
(In Theorem~\ref{t:R4}, $\domainnumber=4$ and $(\Omega_1,\Omega_2,\Omega_3,\Omega_4):=(\Omega_\cavity,\Omega_\visible,\Omega_\invisible,\Omega_\pml)$.)
Next, let $\chi_j\in \overline{C^\infty}(\Omega)$, $j=1,\dots,\domainnumber$ such that 
$$
\supp \chi_j \subset \Omega_j\cup\partial\Omega,\qquad \chi_j\equiv 1\text{ in a neighbourhood of }\supp \phi_j.
$$
Arguing as in~\eqref{e:basicEllipticProjection}, for all $w_{h,j}\in V_k$, $j=1,\ldots,\domainnumber$, and $v\in H_k^{p-1}$, we obtain
\begin{equation}
\label{e:startDuality}
\begin{aligned}
&\langle \chi_i(u-u_h) ,v\rangle \\ 
&= \big\langle P_\theta(u-u_h) , \mathcal{R}^* \chi_i v \big\rangle,\\
&= \big\langle P_{S_k}(u-u_h), (I-\Pi_{S_k}) \mathcal{R}^* \chi_iv \big\rangle - 
\big\langle S_k(u-u_h), (I-\Pi_{S_k}) \mathcal{R}^* \chi_iv \big\rangle, \\ 
&=\sum_{j=1}^{\domainnumber} \bigg(\big\langle P_{S_k}(u-w_{h,j}), (I-\Pi_{S_k}) \phi_j \mathcal{R}^* \chi_iv\big\rangle - 
\big\langle S_k(u-u_h), (I-\Pi_{S_k}) \phi_j \mathcal{R}^* \chi_i v\big\rangle \bigg),
\\ 
&=\sum_{j=1}^{\domainnumber} \bigg(\big\langle P_{S_k}\chi_j(u-w_{h,j}), \chi_j(I-\Pi_{S_k}) \phi_j \mathcal{R}^* \chi_iv\big\rangle - 
\big\langle S_k\chi_j(u-u_h), \chi_j(I-\Pi_{S_k}) \phi_j \mathcal{R}^* \chi_i v\big\rangle \\
&\qquad +Ck^{-N}\big(\|u-u_h\|_{(H_k^{N})^*}+\|u-w_{h,j}\|_{H_k^1}\big)\|(I-\Pi_{S_k})\phi_j\mathcal{R}^*\chi_iv\|_{(H_k^p)^*})\bigg),
\end{aligned}
\end{equation}
Next, we need a local analogue  of~\eqref{e:aSLowNorm}.
We claim that
\begin{equation}
\label{e:localLowNormPiSharp}
\begin{gathered}
\|\chi_j (I - \Pi_{S_k})v\|_{(H^{p-1}_k)^*} \leq C\big((h_jk)^p+k^{-N}(hk)^p\big)\inf_{w_h \in \mathcal{P}_{\mathcal{T}}^p} \|v - w_h\|_{H^1_k},\\ \text{ where } \,h_j:=\max_{\substack{K\in\mc{T}\\K\cap \Omega_j\neq\emptyset}}h_K.
\end{gathered}
\end{equation}

To prove~\eqref{e:localLowNormPiSharp}, let $\chi_j^+\in C_c^\infty(\Omega_j\cup \partial\Omega)$ with $\supp (1-\chi_j^+)\cap \supp \chi_j=\emptyset$. Then, by \eqref{e:introPiSharp}, pseudolocality of $\cR_S$ \eqref{e:sPseudoloc}, the fixed-regularity polynomial-approximation results from, e.g., Theorem \ref{t:approxHighLowReg}, and \eqref{e:introPiSharpCea},
\begin{align*}
&|\langle w,  \chi_j (I - \Pi_{S_k})v\rangle|=|\langle P_{S_k}\mathcal{R}_{S_k}\chi_j w,  (I - \Pi_{S_k})v\rangle|\\
&\leq|\langle P_{S_k}\chi_j^+\mathcal{R}_{S_k}w,  (I - \Pi_{S_k})v\rangle|+|\langle P_{S_k}(1-\chi_j^+)\mathcal{R}_{S_k}\chi_jw,(I-\Pi_{S_k})v\rangle|,\\
&=|\langle P_{S_k}(\chi_j^+\mathcal{R}_{S_k}w-w_{h}),  (I - \Pi_{S_k})v\rangle|+|\langle P_{S_k}((1-\chi_j^+)\mathcal{R}_{S_k}\chi_jw-w_{h,1}),(I-\Pi_{S_k})v\rangle|,\\
&\leq C\Big((h_j k)^{p} + k^{-N}(hk)^p\Big)\|(I-\Pi_{S_k})v\|_{H_k^1}\|w\|_{H_k^{p-1}},\\
&\leq C\Big((h_j k)^{p} + k^{-N}(hk)^p\Big)\inf_{w_h\in\mathcal{P}_{\mathcal{T}}^p}\|v-w_h\|_{H_k^1}\|w\|_{H_k^{p-1}},
\end{align*}
and hence~\eqref{e:localLowNormPiSharp} follows by duality.

By 
\eqref{e:startDuality}, \eqref{e:localLowNormPiSharp} and the mapping properties $S_k:(H_k^{p-1})^*\to H_k^{p-1}$ \eqref{e:Sreg} and $P_{S_k}: H_k^1\to (H_k^{1})^*$,
\begin{equation}
\label{e:preasymptoticSketch2}
\begin{gathered}
\begin{aligned}
&|\langle \chi_i(u-u_h),v\rangle|\\
&\leq C\sum_j\eta_p(j\to i) \Bigg(\inf_{w_{h,j}\in \mathcal{P}_{\mathcal{T}}^p}\big(\|\chi_j(u-w_{h,j})\|_{H_k^1}+k^{-N}(hk)^p\|u-w_{h,j}\|_{H_k^1}\big)\\
&\quad+\big((h_jk)^p+k^{-N}(hk)^p\big)\|\chi_j(u-u_h)\|_{(H_k^{p-1})^*}+k^{-N}(hk)^p\|u-u_h\|_{(H_k^N)^*}\Bigg)\|v\|_{H_k^{p-1}},
\end{aligned}\\
\text{ where } \quad\eta_p(j\to i):=\sup_{0\neq v\in H_{k}^{m-1}}\inf_{w_h\in \mathcal{P}_{\mathcal{T}}^p}\frac{ \|\chi_j\mathcal{R}^*\chi_iv-w_h\|_{H_k^1}}{\|v\|_{H_k^{p-1}}}
\end{gathered}
\end{equation}
(compare to \eqref{e:preasymptoticSketch1}). 
 By duality,~\eqref{e:preasymptoticSketch2} implies that
 \begin{align*}
& \|\chi_i(u-u_h)\|_{(H_k^{p-1})^*}\\
&\leq C\sum_j\eta_p(j\to i) \Bigg(\inf_{w_{h,j}\in \mathcal{P}_{\mathcal{T}}^p}\big(\|\chi_j(u-w_{h,j})\|_{H_k^1}+k^{-N}(hk)^p\|u-w_{h,j}\|_{H_k^1}\big)\\
&\qquad+\big((h_jk)^p+k^{-N}(hk)^p\big)\|\chi_j(u-u_h)\|_{(H_k^{p-1})^*}+k^{-N}(hk)^p\|u-u_h\|_{(H_k^N)^*}\Bigg).
 \end{align*}
 We then use a 
local version of the frequency-splitting argument used to prove Lemma 
\ref{l:adjointApproximability} 
 (see~\cite[Lemma 8.5]{AGS2}) to obtain
 $$
 \eta_p(j\to i) \leq C(h_jk)^p\|\chi_j\mathcal{R}^*\chi_i\|_{L^2\to L^2}+C1_{\{\Omega_i\cap \Omega_j\neq \emptyset\}}(h_{ij}k)^p,
 $$
 where $h_{ij}:=\min (h_i,h_j)$.
 In particular, this yields the system of inequalities
 \begin{align} \nonumber
&\big(\| \chi_i(u-u_h)\|_{(H_k^{p-1})^*}\big)_{i=1}^\domainnumber
\\ \nonumber
&\quad \leq CB \Big(\inf_{w_{h,j}\in \mathcal{P}_{\mathcal{T}}^p}\big(\|\chi_j(u-w_{h,j})\|_{H_k^1}+k^{-N}(hk)^p\|u-w_{h,j}\|_{H_k^1}\big)\Big)_{j=1}^\domainnumber\\
&\qquad +C\oldT\big(\|\chi_j(u-u_h)\|_{(H_k^{p-1})^*}\big)_{j=1}^\domainnumber,
\label{e:theFirstSystem}
\end{align}
 where 
 \begin{align*}
 B_{ij}&:=\eta_p(j\to i)\leq C(h_jk)^p\|\chi_j\mathcal{R}^*\chi_i\|_{L^2\to L^2}+C1_{\{\Omega_i\cap \Omega_j\neq \emptyset\}}(h_{ij}k)^p,\\
 \oldT_{ij}&:=(h_jk)^p\eta_p(j\to i)+k^{-N}(hk)^p\sum_{\ell}\eta_p(\ell\to i),\\
 &\leq C(h_jk)^{2p}\|\chi_j\mathcal{R}^*\chi_i\|_{L^2\to L^2}+C(h_jk)^p1_{\{\Omega_i\cap \Omega_j\neq \emptyset\}}(h_{ij}k)^p +Ck^{-N}(hk)^{2p}
 \end{align*}
 (compare to \eqref{e:theFirstSystemSimple}). 
 If
\begin{equation}
\label{e:conditionW}
\sum_{n=0}^\infty (C W)^n<\infty,
 \end{equation}
then $(I-C\oldT)^{-1}$ exists and has non-negative entries. Hence~\eqref{e:theFirstSystem} implies that
 $$
\|  \underline{u-u_h}\|_{H_k^{-p+1}}\leq C(I-C\oldT)^{-1}B \|  \underline{u-w_h}\|_{H_k^1}.
 $$
 (compare to~\eqref{e:martinIsHappy}).
 
To understand when $\sum_n (CW)^n$ converges, consider $W$ as the weighted adjacency matrix of a directed graph with $\domainnumber$ nodes representing $\{\Omega\}_{j=1}^{\domainnumber}$. Observe that the entry in the $i^{\text{th}}$ row and $j^{\text{th}}$ column of $(CW)^{\ell}$ is given by $C^\ell$ times the sum of the weights over all paths of length $\ell$ from $j$ to $i$ in this graph. Hence, the sum converges if for any $i$ and $j$ the sum of the weights of all paths from $j$ to $i$ multiplied by $C^{\text{path length}}$ is finite. Using elementary graph analysis this condition can be reduced to the requirement that all the sum of such weights for all non-self intersecting loops is less than 1 (see~\cite[Appendix B]{AGS2}).

In the setting of Theorem~\ref{t:R4}, $\domainnumber=4$ and $(\Omega_1,\Omega_2,\Omega_3,\Omega_4):=(\Omega_\cavity,\Omega_\visible,\Omega_\invisible,\Omega_\pml)$. For $k\notin \mc{J}$,~\cite[\S4, Appendix C]{AGS2} obtains the bounds of Table~\ref{ta:resolve} on $\psi \mathcal{R}^*\chi$ according to the support of $\psi$ and $\chi$. 
\begin{table}[h]
\begin{center}
\renewcommand{\arraystretch}{1.5}
\begin{tabular}{|c|c|c|c|c|}
\hlineb
$\supp \psi\Big\backslash \supp \chi$&$\Omega_{\cavity}$&$\Omega_{\visible}$&$\Omega_{\invisible}$&$\Omega_\pml$\\
\hlineb
$\Omega_{\cavity}$&$\rho(k)$&$\sqrt{k\rho(k)}$&$O(k^{-\infty})$&$O(k^{-\infty})$\\
\hlineb
$\Omega_{\visible}$&$\sqrt{k\rho(k)}$&$k$&$k$&$1$\\
\hlineb
$\Omega_{\invisible}$&$O(k^{-\infty})$&$k$&$k$&$1$\\
\hlineb
$\Omega_{\pml}$&$O(k^{-\infty})$&$1$&$1$&$1$\\
\hlineb
\end{tabular}
\renewcommand{\arraystretch}{1}
\caption{\label{ta:resolve}Bounds on $\|\psi \mathcal{R}^*\chi\|_{L^2\to L^2}$ (up to $k$-independent constants) proved in~\cite[\S4]{AGS2} for $k\notin \mc{J}$.
For the scattering problem, i.e., without PML truncation, 
the upper bounds by $\sqrt{k\rho(k)}$ are given by Theorem \ref{l:measureAwayTrapping}, and the upper bounds by $k$ are given by Theorem \ref{l:everythingawaytrapping}.
}
\end{center}
\end{table}
As a result, the graph corresponding to $W$ is the one in Figure~\ref{f:graph1}, and the requirement that the sum of weights on all non-self intersecting loops be less than 1 reduces to~\eqref{e:meshConditions}. 

To obtain Theorem~\ref{t:R4}, two further improvements to the analysis described above are necessary.
\begin{enumerate}
\item One must treat the PML more carefully to see that, effectively, nothing propagates in or out of the PML (see the differences between Figures~\ref{f:graph1} and~\ref{f:graph2}).
\item The analysis given above obtains estimates in the lowest available norm, $(H_k^{p-1})^*$, whereas Theorem~\ref{t:R4} obtains estimates in all norms up to $H_k^1$. Unfortunately, changing the space in which one takes $v$ in ~\eqref{e:preasymptoticSketch2} is not quite sufficient. Instead, one separates out the high frequencies of $u-u_h$ on each domain and treats them as an additional part of the decomposition of $u$. 
\end{enumerate}

\begin{figure}[htbp]
\begin{center}
\begin{tikzpicture}[->,>=stealth,shorten >=1pt,auto,node distance=7cm,semithick]

\begin{scope}[xscale=1.9,yscale=1.5]
  \node[draw, circle] (A) at (0, 0) {${\Omega_\cavity}$};
  \node[draw, circle] (B) at (2, 0) {${\Omega_\visible}$};
  \node[draw, circle] (C) at (4, 0) {${\Omega_\invisible}$};
  \node[draw, circle] (D) at (6, 0) {${\Omega_{\pml}}$};

  \draw[<-] (A) to[bend left] node[midway, above] {$({h_\cavity }k )^{2p}\sqrt{k {\rho}}$} (B);
  \draw[<-] (B) to[bend left] node[midway, below] {$({h_\visible} k )^{2p}\sqrt{k {\rho}}$} (A);
  \draw[<-] (B) to[bend left] node[midway, above] {$({h_\visible} k )^{2p}k $} (C);
  \draw[<-] (C) to[bend left] node[midway, below] {$({h_\invisible}k )^{2p}k $} (B);
  \draw[<-] (D) to[in=0,out=-135] (4,-1.3)node[ below] {$({h_\pml}k )^{2p}$}to[in = -45,out=180] (B);
  \draw[<-] (B) to[in=180, out=45] (4,1.3)node[ above] {$({h_\visible}k )^{2p}$}to[in=135,out=0] (D);
    \draw[<-] (C) to[bend left] node[midway, below] {$({h_\invisible}k )^{2p}$} (D);
  \draw[<-] (D) to[bend left] node[midway, below] {$(h_{\pml}k )^{2p}$} (C);
%
  \draw[<-] (A) to[loop above] node[midway, above] {$({h_\cavity }k )^{2p}{\rho}$} (A);
  \draw[<-] (B) to[loop above] node[midway, above] {$({h_\visible} k )^{2p}k $} (B);
  \draw[<-] (C) to[loop above] node[midway, above] {$({h_{\invisible}}k )^{2p}k $} (C);
  \draw[<-] (D) to[loop above] node[midway, above] {$(h_{\pml}k )^{2p}$} (D);
  \end{scope}
\end{tikzpicture}
\end{center}
\caption{\label{f:graph1} The graph showing propagation of errors for the decomposition into $\Omega_\cavity$, $\Omega_\visible$, $\Omega_\invisible$, and $\Omega_{\pml}$ in the simplified setup of Section~\ref{s:sketch}. Note that this can be improved in several ways using the analysis in~\cite[\S8]{AGS2}. The graph corresponding to Theorem~\ref{t:R4} is shown in Figure~\ref{f:graph2}.}
\end{figure}

\begin{figure}[htbp]
\begin{center}
\begin{tikzpicture}[->,>=stealth,shorten >=1pt,auto,node distance=7cm,semithick]

\begin{scope}[xscale=1.9,yscale=1.5]
  \node[draw, circle] (A) at (0, 0) {${\Omega_\cavity}$};
  \node[draw, circle] (B) at (2, 0) {${\Omega_\visible}$};
  \node[draw, circle] (C) at (4, 0) {${\Omega_\invisible}$};
  \node[draw, circle] (D) at (6, 0) {${\Omega_{\pml}}$};

  \draw[<-] (A) to[bend left] node[midway, above] {$({h_\cavity}k )^{2p}\sqrt{k {\rho}}$} (B);
  \draw[<-] (B) to[bend left] node[midway, below] {$({h_\visible} k )^{2p}\sqrt{k {\rho}}$} (A);
  \draw[<-] (B) to[bend left] node[midway, above] {$({h_\visible} k )^{2p}k $} (C);
  \draw[<-] (C) to[bend left] node[midway, below] {$({h_\invisible}k )^{2p}k $} (B);
  \draw[<-] (D) to[in=0,out=-135] (4,-1.3)node[ below] {$(h_{\visible,\pml}k )^{N}$}to[in = -45,out=180] (B);
  \draw[<-] (B) to[in=180, out=45] (4,1.3)node[ above] {$(h_{\visible,\pml}k )^{N}$}to[in=135,out=0] (D);
    \draw[<-] (C) to[bend left] node[midway, above] {\scriptsize{$(h_{\invisible,\pml}k )^{N}$}} (D);
  \draw[<-] (D) to[bend left] node[midway, below] {\scriptsize{$(h_{\invisible,\pml}k )^{N}$}} (C);
%
  \draw[<-] (A) to[loop above] node[midway, above] {$({h_\cavity}k )^{2p}{\rho}$} (A);
  \draw[<-] (B) to[loop above] node[midway, above] {$({h_\visible} k )^{2p}k $} (B);
  \draw[<-] (C) to[loop above] node[midway, above] {$({h_{I}}k )^{2p}k $} (C);
  \draw[<-] (D) to[loop above] node[midway, above] {$({h_{\pml}}k )^{N}$} (D);
  \end{scope}
\end{tikzpicture}
\end{center}
\caption{The graph showing propagation of errors for the decomposition into $\Omega_\cavity$, $\Omega_\visible$, $\Omega_\invisible$. Recall that $h_{\visible,\pml} = \min (h_\visible,h_\pml)$ and $h_{\invisible,\pml} = \min (h_\invisible,h_\pml)$.}
\label{f:graph2} 
\end{figure}

\subsubsection{Interpretation as error propagation}
\label{s:interpret}

To properly interpret the matrices appearing in~\eqref{e:simple}, we return to~\eqref{e:startDuality}, which (modulo $O(k^{-\infty})$ errors) is equivalent to
\begin{equation}
\label{e:euanSoImportant}
\chi_i(u-u_h)=\sum_{j=1}^{\domainnumber}  \chi_i R_k\phi_j \big((I-\Pi_{S_k})^*\chi_jP_{S_k}\chi_j(u-w_{h,j}) - 
 (I-\Pi_{S_k})^*\chi_j S_k\chi_j(u-u_h)\big).
\end{equation}
We are interested in $\|\chi_i(u-u_h)\|_{(H_k^{p-1})^*}$, which we think of as the low frequencies of $\chi_i(u-u_h)$; these low frequencies are captured by $S_k\chi_i(u-u_h)$. For purposes of this discussion, we assume $S_k$ commutes with $\chi_i R_k\phi_j$. This is not quite true, but (away from the PML), since
$$
\|S_k\chi_i R_k\phi_j\|_{(H_{k}^{p-1})^*\to L^2}\leq C\|\chi_i R_k\phi_j\|_{L^2\to L^2},
$$ 
 $S_k\chi_iR_k\phi_j$ acts like $\chi_iR_k\phi_j \mc{L}$ where $\mc{L}$ is a lowpass filter. \cite[Theorem 4.2]{AGS2} uses semiclassical ellipticity of the operator in the PML to show that near the PML there is no propagation; we therefore ignore the PML here. 

With these caveats,~\eqref{e:euanSoImportant} implies
\begin{align*}
S_k\chi_i(u-u_h)&=\sum_{j=1}^{\domainnumber}  \chi_i R_k\phi_j\Big(S_k\chi_j (I-\Pi_{S_k})^*\chi_jP_{S_k}\chi_j(u-w_{h,j})\\
&\qquad \qquad\qquad\qquad- 
 S_k\chi_j (I-\Pi_{S_k})^*\chi_j \tilde{S}_kS_k\chi_j(u-u_h)\Big).
\end{align*}
The operator $\chi_i R_k\phi_j$ has the effect of propagating between domains. The operator $(I-\Pi_{S_k})^*$ essentially takes the best approximation in $H_k^1$ norm and $S_k$ then returns only the frequency $\lesssim k$ components. This process is represented in the graph in Figure~\ref{f:error}. 

To find $S_k\chi_i(u-u_h)$ in terms of the local best approximations to $u$, one inserts $\chi_jP_{S_k} \chi_j(u-w_{h,j})$ at node 1 in Figure~\ref{f:error} and follows the cycle to node 4, producing the first approximation to $S_k\chi_i(u-u_h)$. One then continues around the cycle arbitrarily many times, adding $\chi_jP_{S_k} \chi_j(u-w_{h,j})$ in each cycle. This process converges under the condition~\eqref{e:conditionW} and the final result at node $4$ is $(S_k\chi_i(u-u_h))_i$. The $W$ and $B$ matrices in~\eqref{e:theFirstSystem} are respectively one full cycle from node 4 to node 4 and a path from node 1 to node 4 in Figure~\ref{f:error}. 

\begin{figure}[htbp]
\centering
\begin{tikzpicture}[>=stealth, node distance=3cm, every path/.style={->, thick}]
\def\diameter{1cm}
  
  \draw[white] (-6,0)rectangle(6,1);
\node (AA) at (0,4.5){$\chi_j P_{S_k}\chi_j(u-w_{h,j})$};
\node (BB) at (-4.5,0)[left]{$S_k\chi_i(u-u_h)$};
\node[circle,draw,minimum size=\diameter]  (A) at (0, 2.5) {1};
\node[circle,draw,minimum size=\diameter]  (B) at (2.5, 0) {2};
\node[circle,draw,minimum size=\diameter]  (C) at (0, -2.5) {3};
\node[circle,draw,minimum size=\diameter]  (D) at (-2.5, 0) {4};

   \path 
   (AA) edge [dashed]node[above, rotate=90]{add}(A)
  (A)edge [bend left] node[above,rotate=-45]{$\overset{(I-\Pi_{S_k})^*}{\text{\small Take BAE}}$}(B)
    (B)edge [bend left]node[below,rotate=45]{$\underset{S_k}{\text{low pass filter}}$}(C)
      (C)edge [bend left]node[below, rotate =-45] {$\underset{\chi_i \mathcal{R}\chi_j}{\text{propagate}}$}(D)
        (D)edge  [bend left]node[above,rotate=45]{$\overset{-S_k}{\text{low pass filter}}$}(A)
        (D)edge[dashed](BB);
\end{tikzpicture}
\caption{The graph showing the process of error propagation when determining the low frequencies of $u-u_h$ from the local best approximation errors. The arrows are labelled first with the type of operation (low pass filter etc.) and then with the operator whose action gives this effect. These operators are applied multiplicatively.}
\label{f:error}
\end{figure}

\section{The parallel overlapping Schwarz method with PMLs on the subdomain boundaries (\ref{R5})}
\label{s:R5}

\subsection{Definition of the Helmholtz Cartesian PML problem and the parallel overlapping Schwarz method}

\paragraph{Definition of the subdomains.}
Let the $d$-dimensional hyperrectangular domain $\Omega_{\rm int}$ be given by the Cartesian product
$$
\Omega_{\rm int} := \prod_{1 \leq \ell \leq d} (0, L_\ell).
$$
Let $\{\Omega_{{\rm int}, j}\}_{j=1}^N$ be an overlapping decomposition of $\Omega_{\rm int}$ in hyperrectangular subdomains:~
\beq\label{eq:Omegaintj}
\Omega_{{\rm int}, j}  := \prod_{1 \leq \ell \leq d} (a^j_\ell, b^j_\ell).
\eeq

We extend $\Omega_{\rm int}$ and each $\Omega_{{\rm int}, j}$, $1\leq j \leq N$, by adding a PML layer to each, to form the domains $\Omega$ and $\Omega_{j}$.
Namely, let $\width > 0$ and $\width_0 > 0$ denote respectively the PML width on $\Omega$ and the interior PML width and let
$$
\Omega:= \prod_{1 \leq \ell \leq d} (-\width, L_\ell+\width),
\quad
\Omega_{j}  := \prod_{1 \leq \ell \leq d} (a^j_\ell - \width^{j}_{\ell}, b^j_\ell + \width^{j}_{\ell}),
$$
where $\width^{j}_{\ell} = \width_0$ if the corresponding edge of $\Omega_{{\rm int},j}$ belongs to the interior of $\Omega$, and $\width$ otherwise; i.e., edges of subdomains that touch the boundary have the same PML width as $\Omega$ (i.e., $\width$), and interior subdomain edges have PML width $\width_0$ which can be different than $\width$.

\paragraph{The scaled operators.}

We now define standard Cartesian PMLs of width $\width$ at the boundary of $\Omega$ and width $\width_0$ at the boundary of each 
$\Omega_j$. For simplicity, we assume that the same PML scaling function is used inside every PML; however,  
this assumption can be easily removed, with, say, one function used in the PMLs of width $\width$ and another used in the PMLs of width $\width_0$, at the cost of introducing more notation.

Let $\fPML \in C^\infty(\mathbb R)$ (with subscript $\newtheta$ standing for ``scaling") be such that
\begin{align*}
\{ x:\fPML(x) = 0 \} = \{ x:\fPML'(x) = 0 \} = \{x: x \leq 0\},
\\
\fPML'(x) > 0 \tfor x>0, 
\, \tand\, \fPML''(x)= 0 \tfor x\geq \width_{\rm lin}
\end{align*}
for some $\width_{\rm lin}< \width$ (observe that $\fPML$ is then linear for $x\geq \width_{\rm lin}$). 
(This assumption is to avoid technical issues about propagation of singularities -- see~\cite[Remark 2.5]{GGGLS2}.)

For any $1\leq \ell \leq d$, 
we define the following scaling functions in the $\ell$-direction $g_{\ell}\in C^\infty(\mathbb R^d)$  by
\beqs
g_{\ell} (x_\ell) :=
\begin{cases}
\fPML(x_\ell - L_{\ell}) &\text{if }x_\ell \geq L_\ell, \\
0&\text{if }x_\ell \in (0, L_\ell), \\
- \fPML( - x_\ell) &\text{if }x_\ell \leq 0,
\end{cases}
\eeqs
and, similarly, the subdomain scaling functions in the $\ell$-direction $g_{\ell, j}\in C^\infty(\mathbb R^d)$ for $1 \leq j \leq N$ by
\beqs
g_{\ell, j} (x_\ell) :=
\begin{cases}
\fPML(x_\ell - b_{\ell}^j) &\text{if }x_\ell \geq b_{\ell}^j, \\
0&\text{if }x_\ell \in (a_{\ell}^j, b_{\ell}^j), \\
- \fPML( a_{\ell}^j - x_\ell) &\text{if }x_\ell \leq a_{\ell}^j.
\end{cases}
\eeqs
We now define the scaled operators $\DeltaPML$ and $\DeltaPMLj$ by
\beqs
\DeltaPML := \sum_{\ell=1}^d \Big(\frac{1}{1+ig_{\ell}'(x_\ell)} \partial_{x_\ell} \Big)^2,
\eeqs
and
\beq\label{eq:DeltaPMLj}
\DeltaPMLj := \sum_{\ell=1}^d \Big(\frac{1}{1+ig'_{\ell,j}(x_\ell)} \partial_{x_\ell} \Big)^2 \hspace{0.3cm}\tfa 1 \leq j \leq N.
\eeq
Given $n\in C^\infty(\Rea^d)$ that is strictly positive and such that $\supp(1-n) \subset \Omega_{\rm int}$, let 
\beq\label{eq:PPj}
P_{\newtheta} := - k^{-2} \DeltaPML - n, \qquad P_{\newtheta}^j := - k^{-2} \DeltaPMLj - n \; \tfa 1 \leq j \leq N.
\eeq
These operators are defined on $H^1(\Rea^d)$, but we consider $P_\newtheta$ as a operator $H^1(\Omega)\to H^{-1}(\Omega)(= (H^1_0(\Omega))^*)$, 
and $P^j_{\newtheta}$ as an operator $H^1(\Omega_j)\to H^{-1}(\Omega_j)(= (H^1_0(\Omega_j))^*)$. If Dirichlet boundary data in $H^{1/2}$ is prescribed on the corresponding boundaries, these operators are then invertible.

In the proofs of our main results, a key region is the following 
subset of $\Omega_j$: 
\beqs
\supp(P^j_\newtheta-P_\newtheta):=\Omega_j \cap \bigg(\bigcup_{\ell=1}^d \overline{\big\{ x \in \Rea^d\,:\, g_{\ell,j}(x_\ell) \neq g_{\ell}(x_\ell)\big\}}\bigg);
\eeqs
$\supp(P^j_\newtheta-P_\newtheta)$ is the part of the PML region of $\Omega_j$ where the PML in $\Omega_j$ differs from the PML in $\Omega$ (i.e., where $P^j_\newtheta \neq P_\newtheta$). 
Note that when $\Omega_j$ is an interior subdomain, $\supp(P^j_\newtheta-P_\newtheta) = \Omega_j\setminus\Omega_{\rm int, j}$.

\paragraph{The Helmholtz problem.}
Given $f\in H^{-1}(\Omega)$, 
find $u\in H^1_0(\Omega)$ such that 
\beq\label{eq:PDE}
P_{\newtheta} u=f.
\eeq
Theorem \ref{t:blackBoxErr} above proved that the solution of the radial PML exists, is unique, and is exponentially accurate for sufficiently large $k$. An analogous result for $P$ nontrapping is proved in \cite[Lemma 4.5 and Theorem 4.6]{GGGLS2}, except with exponential accuracy replaced by superalgebraic accuracy (i.e., $O(k^{-\infty})$).

The existence and uniqueness results for Cartesian PML at fixed $k$ analogous to those for radial PMLs discussed in Remark \ref{r:PMLfixedk} are \cite[Theorem 5.5]{KiPa:10} and \cite[Theorem 5.7]{BrPa:13}.

\paragraph{The partition of unity.}

Let $\{\chi_j\}_{j=1}^{N}$ be a partition of unity subordinate to 
$\{ \Omega_{j}\setminus \supp(P_\newtheta^j - P_\newtheta)\}_{j=1}^{N}$, i.e.,
 $\{\chi_j\}_{j=1}^{N}$ is a family of non-negative elements of $C^\infty(\mathbb R^d)$ such that
\begin{align}
&\tfa\,  x \in \mathbb R^d, \quad \sum_{j=1}^{N} \chi_j(x) = 1,\quad
\operatorname{supp}\chi_j \cap \Omega \subset \big(\Omega_{ j}\setminus {\supp(P_\newtheta^j - P_\newtheta)}\big)  \text{ for all $j$},\label{eq:PoU}
\end{align}
and, additionally, in a neighbourhood of $\partial\Omega$, each $\chi_j$ does not vary in the normal direction to the boundary of $\Omega$ (this last assumption is for technical reasons, and can easily be achieved in practice).

\paragraph{The parallel overlapping Schwarz method.}

Given $\parallelu^n \in H^1_0(\Omega)$, for $n\geq 0$ and any $1\leq j \leq N$, let $u_j^{n+1}\in H^1(\Omega_j)$ be the solution to
\begin{equation}\label{eq:localprob}
\begin{cases}
P_{\newtheta}^j
u_j^{n+1} = 
P_{\newtheta}^j (\parallelu^n|_{\Omega_j})
-(P_{\newtheta}\parallelu^n )|_{\Omega_j}
+ f|_{\Omega_j}\in H^{-1}(\Omega_j)\\
u_j^{n+1} = \parallelu^n \in H^{1/2}(\partial \Omega_j),
\end{cases}
\end{equation}
and then set
\beq\label{eq:parallel}
\parallelu^{n+1} := \sum_{j=1}^N \chi_j u_j^{n+1};
\eeq
this method is 
 equivalent to \eqref{e:SchwarzIntro} and its discrete-level analogue can be understood as a natural PML-variant of the  well-known restricted additive Schwarz (RAS) method; see \cite[\S8.1]{GGGLS2}, \cite[Lemma 1.6]{DoJoNa:15}.

 \subsection{Definition of $\mathcal{N}$}
 \label{s:Ncurly}
Recall that $\varphi_t$ is the Hamiltonian flow (see~\eqref{e:flow}).
\begin{definition}
\label{d:trajectory}
Given an interval, $I$, and $(x,\xi)\in \mathbb{R}^{2d}$ such that $p(x,\xi)=0$, define
\beq\label{eq:gamma_I}
\gamma_I(x,\xi):= \Big\{ \varphi_t(x,\xi), \; t \in I\Big\}.
\eeq 
When $I=\mathbb{R}$, we say $\gamma_I$ is a trajectory for $P$ and usually denote it by $\gamma$ with no subscript.
\end{definition}

We now define a set of cutoffs that are bigger than the $\chi_j$s. Let $\{\TTchi_j\}_{j=1}^N$ be such that, for each $j$, $\TTchi_j\in C^\infty(\Rea^d)$, $\supp \TTchi_j\cap \Omega \subset \Omega_j$, $\supp(1-\TTchi_j)\cap \supp \chi_j=\emptyset$, and $\supp\TTchi_j\cap \supp(P^j_\newtheta-P_\newtheta)=\emptyset$; for an illustration of 
$\{\Tchi_j\}$ and $\{\chi_j\}$ in the case of two subdomains, see Figure \ref{fig:pou}.

\begin{figure}[h!]
\begin{center}
\includegraphics[scale=0.55]{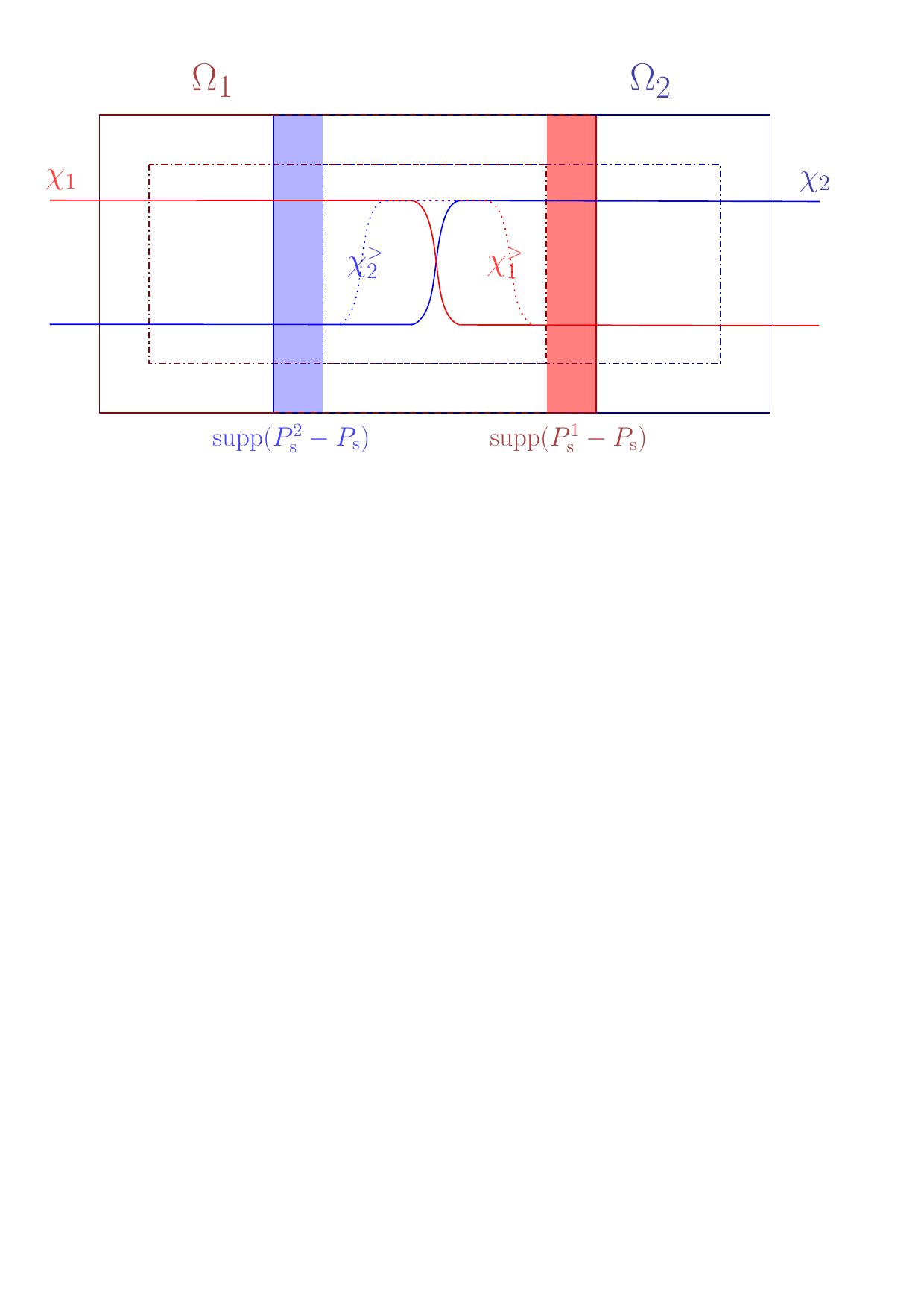}
\caption{The functions $\{\Tchi_j\}$ and $\{\chi_j\}$ for two subdomains}\label{fig:pou}
\end{center}
\end{figure}

Let $\mathcal W$ be the set of finite sequences of elements of $\{ 1, \ldots, N \}$ (i.e., the indices of subdomains) such that, for any $w = \{w_i\}_{1\leq i \leq n} \in \mathcal W$, $w_{i} \neq w_{i+1}$ for all $1\leq i \leq n$ (i.e., the $(i+1)$th component of $w$ corresponds to a different subdomain than the $i$th component). We call the elements of $\mathcal W$ \emph{words}.


\begin{definition}\label{def:follow}
A trajectory $\gamma$ for $P$ \emph{follows} a word $w \in \mathcal W$ of length $n\geq 2$ if 
\beqs
\gamma  = \prod_{2 \leq j \leq n} \gamma_j,
\eeqs
where the product stands for concatenation, 
$\gamma_j = \gamma_{(0, T_j]}(x_{j-1},\xi_{j-1})$ for some $T_j >0$ and $(x_j,\xi_j) \in \Rea^{2d}$ with
\ben
\item
for $1 \leq j \leq n-1$,
$x_j \in \operatorname{supp} \TTchi_{w_j} \cap \operatorname{supp} \big(P^{w_{j+1}}_{\rm s} - P_{\rm s} \big)$\label{i:toFollow},
\item For $2 \leq j \leq n$, define $(x_j(t),\xi_j(t)):=\gamma_j(t)$. Then $x_j(t)\in \overline{\Omega_{w_j}}$ for $t\in (0,T_j]$.
\item With
$(x_n, \xi_n) := \varphi_{ T_{n}}(x_{n-1},\xi_{n-1})$, $x_n\in  \operatorname{supp} \TTchi_{w_n}$.
\een
\end{definition}

Figure \ref{drw:word} illustrates a simple example of a trajectory following a word.

\begin{figure}[h!]
\begin{center}
\includegraphics[scale=0.7]{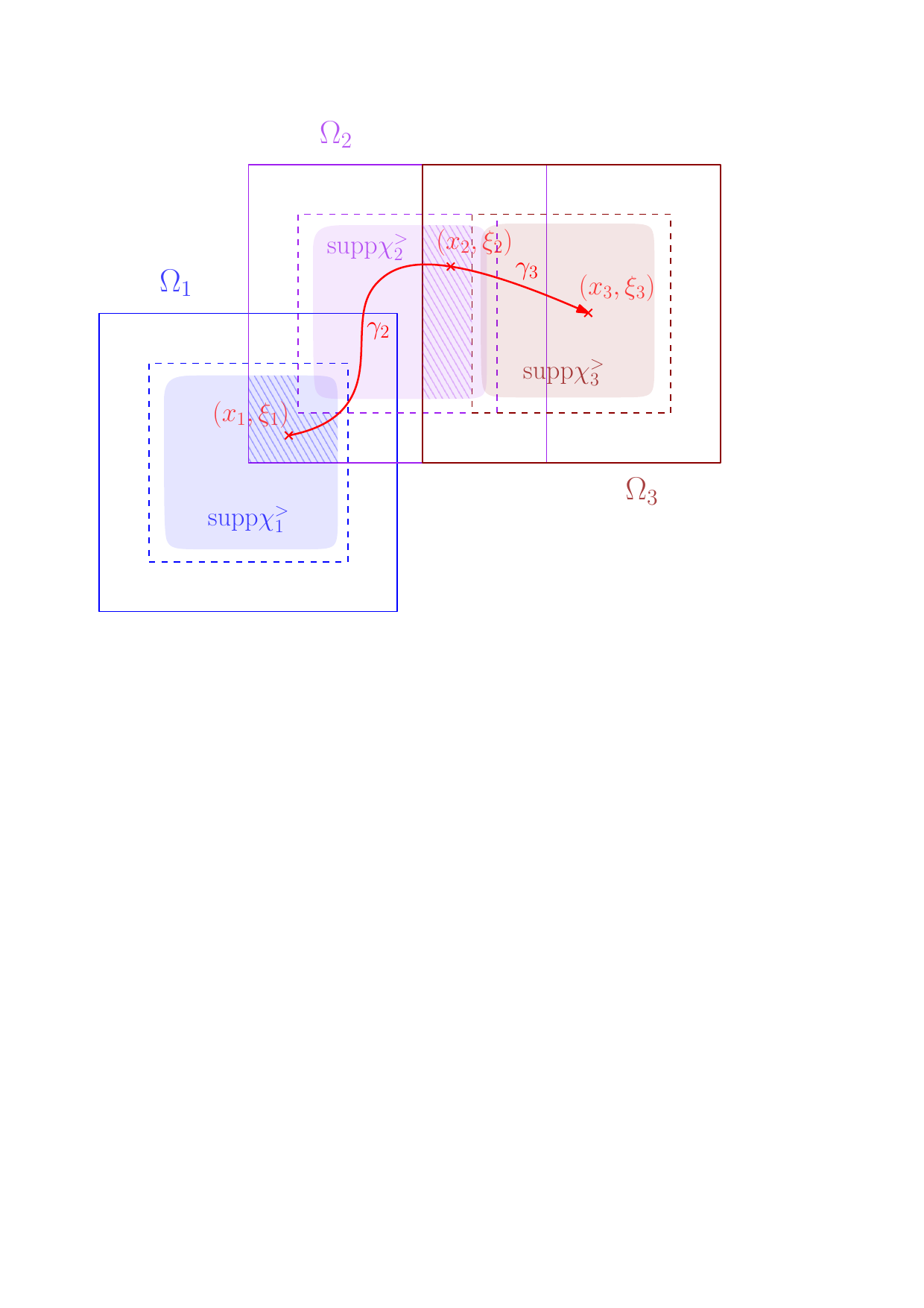}
\caption{A trajectory (in red) following the word $(1,2,3)$ (here $\Omega_1, \Omega_2, \Omega_3$ are interior subdomains of $\Omega$).
The blue hatched shading indicates the domain $\supp \TTchi_1 \cap \supp(P^2_\newtheta-P_\newtheta)$, and the purple hatched shading 
indicates the domain $\supp \TTchi_2 \cap \supp(P^3_\newtheta-P_\newtheta)$. 
(The points $(x_j,\xi_j)$, $j=1,2,3$, and sub-trajectories $\gamma_2,\gamma_3$ are used in the precise definition of  \emph{follow} in Definition \ref{def:follow} below.)
}\label{drw:word}
\end{center}
\end{figure}

The flow associated to $P$ only allows passage between certain sequences of subdomains; i.e., there are words that will not be followed by any trajectory. 
For example, if $n\equiv 1$, the trajectories of the flow are straight lines, and thus, with domains in Figure~\ref{drw:word} there is no trajectory following the word $(1,2,1)$. 
This motivates the following definition.

\begin{definition} \label{def:allowed}
A word $w\in \mathcal{W}$ is \emph{allowed} if there exists a trajectory for $P$ that follows $w$.
\end{definition}

\begin{definition} \label{def:Ncurly}
\beq\label{eq:mathcalN}
\mathcal N := \sup_{w \text{ allowed}} |w|.
\eeq
\end{definition}

Since the sets 
\beq\label{eq:follow_domains}
\supp \Tchi_{w_1} \cap \supp(P^{w_2}_s-P_s), 
\,\,
\ldots
\,\,, 
\,\,
\supp \Tchi_{w_{n-1}} \cap \supp(P^{w_{n}}_s-P_s),
\,\,
 \supp \Tchi_{w_n}.
\eeq
are compact and disjoint, the assumption that $P$ is nontrapping implies that the length of allowed words is bounded above, i.e., $\mathcal N < \infty$.

 \subsection{Error estimates for the parallel overlapping Schwarz method}

\begin{theorem}[Rigorous statement of \ref{R5}]
\label{t:R5}
Assume 
that $P$ is nontrapping.
Then there exists $k_0>0$ such that for all $M,s > 0$, there exists $C>0$ 
such that the following is true for any
 $f \in H^{-1}(\Omega)$ and $k \geq k_0$.
If $u$ is the solution to (\ref{eq:PDE}), and $\{\parallelu^n\}_{n \geq 0}$
is a sequence of iterates for the  parallel overlapping Schwarz method (defined by \eqref{eq:localprob} and \eqref{eq:parallel}), then 
$$
\Vert u - \parallelu^{\mathcal N} \Vert_{H^s_k(\Omega)} \leq C k^{-M} \Vert u - \parallelu^0 \Vert_{H^1_k(\Omega)}.
$$
\end{theorem}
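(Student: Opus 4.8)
\textbf{Proof proposal for Theorem \ref{t:R5}.}

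The plan is to track the \emph{semiclassical wavefront set} of the iteration error $e^n := u - u^n$ and to show that, after each iteration of the parallel Schwarz method, the wavefront set of $e^n$ shrinks in a way governed exactly by the ``following a word'' combinatorics of Definitions \ref{def:follow}--\ref{def:Ncurly}. First I would establish the basic error-propagation identity: subtracting \eqref{eq:localprob}--\eqref{eq:parallel} from $u = \sum_j \chi_j u$ and using that $u$ solves \eqref{eq:PDE}, one gets $e^{n+1} = \sum_j \chi_j \widetilde{e}^{\,n+1}_j$ where $\widetilde{e}^{\,n+1}_j$ solves $P_\newtheta^j \widetilde{e}^{\,n+1}_j = (P_\newtheta^j - P_\newtheta)(e^n|_{\Omega_j})$ with zero Dirichlet data on $\partial\Omega_j$; the right-hand side is supported in $\supp(P^j_\newtheta - P_\newtheta)$. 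Since $P$ is nontrapping and the Cartesian PML inherits good bounds (the analogue of Theorems \ref{t:blackBoxErr}--\ref{t:blackBoxResolve} for Cartesian PML, proved in \cite[Lemma 4.5, Theorem 4.6]{GGGLS2}), the local solution operators $(P_\newtheta^j)^{-1}$ are $\hbar$-tempered maps with norm $O(\hbar^{-1})$, so each $\widetilde{e}^{\,n+1}_j$ is $\hbar$-tempered and one can meaningfully speak of $\WF(\widetilde{e}^{\,n+1}_j)$ and $\WF(e^n)$.

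The heart of the argument is a one-step wavefront-propagation lemma: if $(x_0,\xi_0)\in \WF(e^{n+1})$ then there is an index $j$ with $(x_0,\xi_0)\in\supp\chi_j$ and $(x_0,\xi_0)\in\WF(\widetilde e^{\,n+1}_j)$; applying the elliptic and propagation corollaries (Corollary \ref{c:ellipticWavefront} and Corollary \ref{c:propagateWavefront}) to $P^j_\newtheta\widetilde e^{\,n+1}_j$, whose wavefront set lies over $\supp(P^j_\newtheta-P_\newtheta)$, one concludes $(x_0,\xi_0)$ lies on a $p$-trajectory segment that, running backward, first stays inside $\overline{\Omega_j}$ and then hits $\supp(P^{j}_\newtheta - P_\newtheta)\cap\WF(e^n)$ (the outgoing/absorbing structure of the subdomain PML is what forces the backward trajectory to exit through the physical part rather than getting trapped or absorbed prematurely --- here one uses the semiclassical-outgoing property, Lemma \ref{l:outgoing}, adapted to the Cartesian PML as in \cite{GGGLS2}). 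Encoding this: define $\Sigma^0 := \WF(e^0)$ and $\Sigma^{n+1} := \{(x_0,\xi_0) : \exists j,\ (x_0,\xi_0)\in\supp\TTchi_j,\ \exists\ \gamma_{(0,T]}\subset\overline{\Omega_j} \text{ ending at }(x_0,\xi_0)\text{ with starting point in }\supp(P^j_\newtheta-P_\newtheta)\cap\Sigma^n\}$; then the lemma gives $\WF(e^n)\subset\Sigma^n$, and unwinding the recursion, $\Sigma^n$ consists of points reachable by trajectories following words of length $\geq n$ in $\mathcal W$. By Definition \ref{def:Ncurly} and the compactness/disjointness of the sets \eqref{eq:follow_domains} together with nontrapping (which gives $\mathcal N<\infty$), no trajectory follows a word of length $> \mathcal N$, so $\Sigma^{\mathcal N+1}=\emptyset$, hence $\WF(e^{\mathcal N})=\emptyset$ after $\mathcal N$ steps --- actually one needs to be slightly careful: following a word of length $\mathcal N$ is possible, so $\WF(e^{\mathcal N -1})$ may be nonempty but $\WF(e^{\mathcal N})$ is empty. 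Combined with $\hbar$-temperedness, $\WF(e^{\mathcal N})=\emptyset$ upgrades (via a partition of unity in phase space and the definition of $\WF$, Definition \ref{d:wavefront}, applied on the compact region $\overline{\Omega}$) to the quantitative bound $\|e^{\mathcal N}\|_{H^s_k(\Omega)}\leq C k^{-M}\|e^0\|_{H^1_k(\Omega)}$ for all $s,M$, which is the claim.

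The main obstacle is the one-step propagation lemma on domains with boundaries and PML layers: the flow one must use is not the free Hamiltonian flow on $\Rea^d$ but the \emph{generalised broken bicharacteristic} flow (Remark on generalised broken bicharacteristics after \eqref{eq:a_flow}), and the subdomain operators $P^j_\newtheta$ are complex-scaled, so Corollary \ref{c:propagateWavefront} does not apply verbatim --- one needs the boundary-calculus version of propagation (as in the results quoted from \cite{AGS2} and developed in \cite{GGGLS2}) and the direction-of-propagation bookkeeping of Remark \ref{r:direction} to ensure the scaling forces propagation in the correct (forward-from-the-source) direction and that singularities entering the PML layer of $\Omega_j$ are absorbed rather than reflected back. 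A secondary technical point is that the ``bigger cutoffs'' $\TTchi_j$ versus $\chi_j$ must be inserted exactly where the lemma requires (the $\supp\TTchi_j\cap\supp(P^{w_{j+1}}_\newtheta-P_\newtheta)$ geometry in Definition \ref{def:follow}) so that the concatenation of segments glues into a genuine trajectory following a word; getting the supports to line up is where the careful setup of $\{\TTchi_j\}$ and the partition of unity \eqref{eq:PoU} pays off. Once the lemma is in hand, the combinatorial conclusion $\Sigma^{\mathcal N}=\emptyset$ and the passage from empty wavefront set to superalgebraic decay are routine.
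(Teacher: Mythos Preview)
Your approach is essentially the paper's. The paper packages the error propagation as an operator matrix $\mathbf{T}$ (Lemma \ref{lem:model_err_prop}) with entries $\chi_i\mathcal{T}_i\TTchi_j$, observes that the entries of $\mathbf{T}^m$ are sums of operator products $\mathscr{T}_w$ over words $w$ of length $m+1$, and then proves (Lemma \ref{lem:notallowed}, via Informal Lemmas \ref{lem:key_prop2} and \ref{lem:key_prop}) that $\|\mathscr{T}_w\|_{H^1_k\to H^s_k}=O(k^{-\infty})$ whenever $w$ is not allowed; since no word of length $\mathcal{N}+1$ is allowed, $\mathbf{T}^{\mathcal{N}}=O(k^{-\infty})$. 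The matrix formalism is just a bookkeeping device for exactly the recursion you wrote for $\Sigma^n$, and the paper's one-step Informal Lemma \ref{lem:key_prop} is your ``one-step wavefront-propagation lemma''.

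One correction: the Dirichlet data for $\widetilde e^{\,n+1}_j$ on $\partial\Omega_j$ is \emph{not} zero but $e^n$ (from \eqref{eq:localprob} the local iterate matches $u^n$ on $\partial\Omega_j$, so the local error matches $e^n$ there); it vanishes only on $\partial\Omega_j\cap\partial\Omega$. This is precisely why Informal Lemma \ref{lem:key_prop} allows arbitrary boundary data $g$ on $\partial\Omega_j\setminus\partial\Omega$ but only asserts control of $\WF(\chi v)$ for $\chi$ supported in $\Omega_j\cup\partial\Omega$, i.e., away from that portion of the boundary. Since the partition-of-unity functions $\chi_j$ satisfy $\supp\chi_j\cap\supp(P^j_\newtheta-P_\newtheta)=\emptyset$ (and hence are supported away from $\partial\Omega_j\setminus\partial\Omega$), the nonzero boundary data never contaminates $\WF(\chi_j\widetilde e^{\,n+1}_j)$ and your argument goes through with this fix. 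A related minor point: the mechanism preventing backward propagation into the subdomain PML is not Lemma \ref{l:outgoing} (which concerns the Sommerfeld condition) but the sign of $\Im\sigma_\hbar(P^j_\newtheta)$ forcing forward-only propagation (Remark \ref{r:direction}), together with nontrapping ensuring every backward trajectory from $\supp\chi_j$ reaches $\{p^j_\newtheta\neq 0\}$ where ellipticity applies.
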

\bre
The requirement that $k_0$ be sufficiently large in Theorem \ref{t:R5} is used to guarantee that the solution to all subdomain problems exists. Alternatively, one could allow $k_0>0$ arbitrary provided that we exclude an open neighborhood of the discrete set of $k$s where $P_s^j$ may not be invertible. 
\ere

 \subsection{Concrete values of $\mathcal{N}$ for strips and checkerboards when $n\equiv 1$}


For $N_\ell \in \mathbb{Z}^+$, $\ell=1,\ldots, d$, 
we say that $\{\Omega_{j}\}_{j=1}^N$ is a \emph{checkerboard} of size $N_1\times \ldots \times N_\ell$ if the following hold.

\bit
\item[(i)] 
For $\ell=1,\ldots, d$, there exist 
$$
0=y^\ell_0 < \ldots < y^\ell_{N_\ell}=L_\ell
$$
such that 
\beqs
\overline{\Omega_{\rm int}}= \bigcup_{j=1}^N \overline{ U_j}, \quad U_j \cap U_i =\emptyset \text{ if } j\neq i,
\eeqs
with each $U_j$ of the form $\prod_{\ell=1}^d (y_{m_\ell-1}^\ell, y_{m_{\ell}}^\ell)$ 
for some $m_\ell\in \{1,\ldots, N_\ell\}$, $\ell=1,\ldots, d$. 

\item[(ii)] The overlapping decomposition $\{\Omega_{{\rm int}, j}\}_{j=1}^N$ of $\Omega_{\rm int}$ comes from extending each $U_j$ 
in each coordinate direction (apart from at $\partial \Omega$).
\item[(iii)] The extensions in (ii) are such that, for all $j$, $\Omega_j$ overlaps only with $\Omega_{j'}$, where $\Omega_{j}$ and $\Omega_{j'}$ are extensions of adjacent nonoverlapping subdomains.
\eit

We say that  $\{\Omega_j\}$ is a $\mathfrak d$-checkerboard if $\mathfrak d := |\{ \ell, \; N_\ell \neq 1 \}|$ i.e., $\mathfrak d$ is the effective dimensionality of the checkerboard.

We say that $\{\Omega_{j}\}_{j=1}^N$ is a \emph{strip} if it is a $1$-checkerboard. To simplify notation, we then assume that the subdomains of a strip are ordered monotonically (i.e., so that $\Omega_i$ only overlaps $\Omega_{i-1}$ and $\Omega_{i+1}$).

Figure \ref{fig:AI} shows an example of the grid formed by the points $\{y_m^\ell\}$ that is used in the definition of a particular $2$-checkerboard of size $4\times 5$, along with a particular subdomain $\Omega_{{\rm int},j}$.

\begin{figure}[h!]
\begin{center}
\begin{tikzpicture}[scale=1]
\def \w{.2};
\def \j{2};
\def \i{1};
   \draw[fill=lightgray, lightgray] ({\i-\w},{(1+\j*(0.05))*(0.75)*\j-\w})  rectangle    ({\i+1+\w},{(1+(\j+1)*(0.05))*(0.75)*(\j+1)+\w});
   \node  at({\i+.5}, {((1+\j*(0.05))*(0.75)*\j+(1+(\j+1)*(0.05))*(0.75)*(\j+1) )/2}){\small{$\Omega_{{\rm int},j}$}};
    \foreach \y in {0,1,2,3,4,5} {
        \draw (0,{(1+\y*(0.05))*(0.75)*\y}) -- (4*1.07,{(1+\y*.05)*.75*\y});
    }

    \foreach \x in {0,1,2,3} {
        \draw (\x,0) -- (\x,{5*.75*(1+5*.05)});
    }

    \draw (4*1.07,0) -- (4*1.07,{5*.75*(1+5*.05)});

    \foreach \x [count=\xi] in {0,1,2,3,4,5} {
        \node[left] at (0,{.75*(\xi-1)*(1+(\xi-1)*.05)}) {$y^2_{{\x}}$};
    }
    \foreach \x [count=\xi] in {0,1,2,3} {
        \node[below] at (\xi-1,0) {$y^1_{{\x}}$};
    }
    \node[below] at (4*1.07,0) {$y^1_{{4}}$};
     
\end{tikzpicture}
\end{center}
\caption{The grid formed by the points $\{y_m^\ell\}$ that is used in the definition of a particular $2$-checkerboard of size $4\times 5$, and the subdomain $\Omega_{{\rm int}, j}$ formed by extending $(y_1^1,y_2^1)\times (y^2_2, y^2_3)$}
\label{fig:AI}
\end{figure}

\begin{lemma}[$\mathcal{N}$ for strips and checkerboards]\label{l:Ncurly}
Suppose that $n\equiv 1$.
If $\{\Omega_j\}$ is a $\mathfrak{d}$-checkerboard, then $\mathcal{N}= N_1+ \cdots + N_{\mathfrak{d}}- (\mathfrak{d}-1)$.
In particular, if $\{\Omega_j\}$ is a strip, then $\mathcal{N}=N$.
    \end{lemma}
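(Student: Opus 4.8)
The plan is to prove Lemma~\ref{l:Ncurly} by reducing the $\mathfrak{d}$-checkerboard case to a purely combinatorial statement about straight-line trajectories, since $n\equiv 1$ means the Hamiltonian flow $\varphi_t$ consists of straight lines traversed at constant speed (as computed after \eqref{e:flow}). First I would set up the combinatorics: a word $w=(w_1,\ldots,w_n)\in\mathcal{W}$ is allowed precisely when there is a straight line segment that, reading off which extended subdomains it passes through, visits $\Omega_{w_1},\ldots,\Omega_{w_n}$ in order, with each consecutive pair $w_j,w_{j+1}$ adjacent (by (iii) in the definition of a checkerboard, overlaps occur only between extensions of adjacent cells) and passing through the ``transfer region'' $\supp\TTchi_{w_j}\cap\supp(P^{w_{j+1}}_\newtheta-P_\newtheta)$, which lies in the overlap $\Omega_{w_j}\cap\Omega_{w_{j+1}}$. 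The key geometric fact is that a straight line in $\mathbb{R}^d$ meets the slabs $\{y^\ell_{m-1}<x_\ell<y^\ell_m\}$ in each coordinate direction $\ell$ in a monotone fashion: the index $m$ of the slab containing the moving point, as a function of $t$, is monotone (non-strictly) in each of the $\mathfrak{d}$ active directions. Hence the multi-index labelling the cell containing $\varphi_t(x,\xi)$ changes coordinate-by-coordinate monotonically, so the total number of distinct cells visited is at most $1 + \sum_{\ell:N_\ell\neq 1}(N_\ell-1) = N_1+\cdots+N_{\mathfrak d}-(\mathfrak d-1)$.

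Next I would prove the two inequalities. For the upper bound $\mathcal{N}\le N_1+\cdots+N_{\mathfrak d}-(\mathfrak d-1)$: given an allowed word $w$ of length $n$, a trajectory following it (Definition~\ref{def:follow}) is a single straight line $\gamma$, and the points $x_1,\ldots,x_{n-1}$ lie in distinct overlap regions of consecutive, distinct, adjacent extended subdomains; since the underlying nonoverlapping cells $U_{w_1},\ldots,U_{w_n}$ must then be pairwise distinct (a cell cannot be revisited by the monotone-in-each-coordinate argument, and consecutive words differ so the same cell is not listed twice in a row — and in fact monotonicity forbids any repetition), the number of cells visited, hence $n$, is bounded by the maximum number of cells a line can cross, which is the stated quantity. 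For the lower bound, I would exhibit an explicit line: take a line with direction vector having strictly positive components in all $\mathfrak{d}$ active directions and zero in the inactive ones, entering $U_{(1,\ldots,1)}$ and exiting $U_{(N_1,\ldots,N_{\mathfrak d})}$; by choosing the direction generic (so the line never passes through a corner where two coordinates jump simultaneously) it crosses exactly $N_1+\cdots+N_{\mathfrak d}-(\mathfrak d-1)$ cells, and one checks it passes through each intervening overlap region (which has nonempty interior) and can be arranged to meet each required $\supp\TTchi_{w_j}\cap\supp(P^{w_{j+1}}_\newtheta-P_\newtheta)$ — here I would use that these transfer regions contain open sets, so a small perturbation of the line keeps the crossing pattern while threading all of them. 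The strip case is the specialization $\mathfrak{d}=1$, giving $\mathcal{N}=N$; and I would additionally note the monotone ordering assumption on strip subdomains makes the ``distinct adjacent cells in order'' description immediate, so $\mathcal{N}=N$ both as an upper and lower bound.

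I expect the main obstacle to be the careful verification that an allowed word of length $n$ genuinely forces $n$ \emph{distinct} cells, rather than just $n$ distinct consecutive pairs — i.e., ruling out a trajectory that re-enters a previously visited cell after wandering through others. This is exactly where the straight-line structure (and hence $n\equiv 1$) is essential: one must argue that the vector-valued ``cell index'' function $t\mapsto (m_1(t),\ldots,m_d(t))$ along the line is monotone in each component, so that once a component increases it never returns, and therefore the sequence of cell multi-indices has no repeats. A secondary technical point is matching the analytic definition of \emph{follow} (with its cutoffs $\TTchi_{w_j}$, sub-trajectories $\gamma_j$ staying in $\overline{\Omega_{w_j}}$, and transfer points in $\supp(P^{w_{j+1}}_\newtheta-P_\newtheta)$) to this clean geometric picture; I would handle this by noting that for the checkerboard decomposition $\supp(P^j_\newtheta-P_\newtheta)=\Omega_j\setminus\Omega_{{\rm int},j}$ for interior subdomains (as remarked after that definition) so the transfer regions are precisely the nonempty-interior ``collars'' in the overlaps, making both the upper bound (any following trajectory visits distinct adjacent cells) and the lower bound (a generic line threads all collars) straightforward once the combinatorial crossing count is in hand.
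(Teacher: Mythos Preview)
The paper does not prove this lemma; it simply records that ``Lemma~\ref{l:Ncurly} is proved in \cite[\S7.3]{GGGLS2}.'' Your proposal is the natural argument and almost certainly coincides with the cited proof: since $n\equiv 1$ the trajectories are straight lines, the cell multi-index along a line is monotone in each of the $\mathfrak{d}$ active coordinates (because each coordinate of the line is an affine function of $t$), hence no cell can be revisited and the number of distinct cells is at most $1+\sum_{\ell}(N_\ell-1)$; a generic diagonal line attains this. Your identification of the main obstacle (ruling out revisits) and its resolution via coordinatewise monotonicity are exactly right, and the convexity of the hyperrectangular $\Omega_{w_j}$ automatically handles the requirement that each sub-segment $\gamma_j$ stays in $\overline{\Omega_{w_j}}$.
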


Lemma \ref{l:Ncurly} is proved in \cite[\S7.3]{GGGLS2}.

\subsection{Sketch of the proof of Theorem~\ref{t:R5}}

The proof of Theorem~\ref{t:R5} boils down to understanding how the error $u-u^n$ at one step influences the error $u-u^{n+1}$ at the next step. The two main steps in the proof of Theorem \ref{t:R5} are the following.
\begin{enumerate}
\item Write down the Helmholtz problems satisfied by the errors on each subdomain,  and hence obtain \emph{the physical error propagation matrix} $\mathbf{T}$ which maps the vector of errors on each subdomain at step $n$ to the same at step $n+1$ (see \eqref{eq:Monday1} below).
\item Write $\mathbf{T}^n$ in terms of the operator products
\beq\label{eq:Tw}
\mathscr T_{w} := 
(\mathbf T)_{w_{n},w_{n-1}} (\mathbf T)_{w_{n-1}, w_{n-2}} \cdots (\mathbf T)_{w_2,w_1}
\eeq
(corresponding to the error propagating from $\Omega_{w_i}$ to $\Omega_{w_{i+1}}$ for $i=1,\dots, n-1$)
and show using SCA that 
$\|\mathscr T_w\|_{H^1_k(\Omega)\to H^s_k(\Omega)} = O(k^{-\infty})$
 if $w$ is not allowed.
\end{enumerate}
We highlight that $\mathcal{N}$ appears because no words of length $>\mathcal{N}$ are allowed.

\paragraph{Step 1:~The error propagation matrix $\mathbf{T}$ for the parallel method.}\label{sec:idea_error}

For any $n \geq 1$ and $1 \leq j \leq N$,  we define the global and local errors
\beq\label{eq:errors}
\parallele ^n := u -  \parallelu^n \in H^1_0 (\Omega)\qquad\text{and } \qquad \parallelejn := u|_{\Omega_j} - \parallelujn \in H^1(\Omega_j).
\eeq
These definitions, the definition of the iterate $\parallelu^n$ \eqref{eq:parallel}, and the fact that $\{\chi_j\}_{j=1}^N$ is a partition of unity \eqref{eq:PoU} imply that
\beq\label{eq:error_decomp}
\parallele^n = \sum_{j=1}^N   \chi_j  \parallelejn.
\eeq
We introduce the notation 
\beq\label{eq:local_phys_error}
\parallelepsilonjn := \chi_j  \parallelejn
\quad\text{ so that }\quad
 \parallele^n = \sum_{j=1}^N  \parallelepsilonjn.
\eeq
We can interpret $\parallelepsilonjn =\chi_j \parallelejn$, after extension by zero from $\Omega_j$, as an element of $H^1_0(\Omega)$, 
since $\parallelejn$ is zero on $\partial \Omega_j\cap \partial \Omega$ and $\chi_j = 0$ on $\partial \Omega_j\setminus \partial \Omega$. 
We call $\parallelepsilonjn $ the \emph{local physical error}. We highlight that 
the fact that $\parallele^n$ is expressed in \eqref{eq:local_phys_error} in terms of $\parallelepsilonjn=\chi_j  \parallelejn$ and not the whole local error $\parallelejn$ is expected, since $\parallelejn$ contains contributions from the PML layers of $\Omega_j$ that are not PML layers of $\Omega$, and hence not part of the original PDE problem \eqref{eq:PDE} (and hence ``unphysical'').

For $1 \leq i  \leq N$ and $v \in H^1_0(\Omega)$ let $\mathcal T_{i} v \in H^1(\Omega_j)$ be the solution to
\beq\label{eq:Tjl}
\begin{cases}
P^i_{\newtheta} \mathcal T_{i} v = P^i_{\newtheta}(v|_{\Omega_i}) - (P_{\newtheta}v)|_{\Omega_i}\qquad \text{in }{\Omega_i}, \\
\mathcal T_{i} v =v \qquad \text{on }\partial\Omega_i.
\end{cases}
\eeq
Recall that $\{\Tchi_j\}_{j=1}^N$ are a set of cut-off functions that have bigger support that the $\chi_j$s. 

We define the \emph{physical error propagation matrix} $\mathbf T$ as
\beq\label{eq:bold_T}
\mathbf T := (\chi_i \mathcal T_{i}\Tchi_j)_{1\leq i,j \leq N} : (H^1_0(\Omega))^N \mapsto  (H^1(\Omega))^N
\eeq
and, for any $n\geq 0$, the physical errors vector ${\bparallelepsilon^n}\in (H^1_0(\Omega))^N$ as

$$
(\bparallelepsilon^n)_j :=\parallelepsilonjn, \, 1\leq j\leq N.
$$

\begin{lemma}\mythmname{Physical error propagation for the parallel method}\label{lem:model_err_prop}
The operator $\mathbf{T}$ defined in~\eqref{eq:bold_T} satisfies $\mathbf T:(H^1_0(\Omega))^N \mapsto  (H^1_0(\Omega))^N$ and 
\beq\label{eq:Monday1}
\bparallelepsilon^{n+1} = \mathbf T \bparallelepsilon^{n} \quad\tfa n\geq 0.
\eeq
\end{lemma}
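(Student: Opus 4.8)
The plan is to verify the two claims of Lemma~\ref{lem:model_err_prop} directly: first that the local physical errors $\parallelepsilonjn$ (extended by zero) really do lie in $H^1_0(\Omega)$, and second that they satisfy the recursion \eqref{eq:Monday1}. The first claim was already noted in the discussion preceding the lemma: since $\parallelejn = u|_{\Omega_j} - \parallelujn$ vanishes on $\partial\Omega_j \cap \partial\Omega$ (both $u$ and $\parallelujn$ satisfy the Dirichlet condition there, the latter by \eqref{eq:localprob} since $\parallelu^n \in H^1_0(\Omega)$ inductively) and $\chi_j = 0$ on $\partial\Omega_j \setminus \partial\Omega$ by \eqref{eq:PoU}, the product $\chi_j \parallelejn$ extends by zero to an $H^1_0(\Omega)$ function. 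The same reasoning, applied to $\mathcal{T}_i v$ and the cutoff $\chi_i$ via \eqref{eq:Tjl}, shows $\mathbf{T}$ maps $(H^1_0(\Omega))^N$ into itself.

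For the recursion, the key step is to identify the equation satisfied by the local error $\parallelejnpo = u|_{\Omega_j} - \parallelujnpo$. Subtracting \eqref{eq:localprob} from the restriction of \eqref{eq:PDE} to $\Omega_j$, I would compute
\begin{align*}
P^j_\newtheta \parallelejnpo
&= P^j_\newtheta(u|_{\Omega_j}) - P^j_\newtheta \parallelujnpo\\
&= P^j_\newtheta(u|_{\Omega_j}) - \Big( P^j_\newtheta(\parallelu^n|_{\Omega_j}) - (P_\newtheta \parallelu^n)|_{\Omega_j} + f|_{\Omega_j}\Big)\\
&= P^j_\newtheta\big((u-\parallelu^n)|_{\Omega_j}\big) - (P_\newtheta u - P_\newtheta\parallelu^n)|_{\Omega_j}\\
&= P^j_\newtheta(\parallele^n|_{\Omega_j}) - (P_\newtheta \parallele^n)|_{\Omega_j},
\end{align*}
using $f = P_\newtheta u$. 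Comparing with the definition \eqref{eq:Tjl} of $\mathcal{T}_j$, and noting that the boundary data of $\parallelejnpo$ on $\partial\Omega_j$ is $(u - \parallelu^n)|_{\partial\Omega_j} = \parallele^n|_{\partial\Omega_j}$, we get $\parallelejnpo = \mathcal{T}_j \parallele^n$. Multiplying by $\chi_j$ gives $\parallelepsilonjnpo = \chi_j \mathcal{T}_j \parallele^n = \chi_j \mathcal{T}_j \big(\sum_{\ell} \parallelepsilon_\ell^n\big)$ by \eqref{eq:local_phys_error}.

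It then remains to insert the cutoffs $\Tchi_\ell$: since $\parallelepsilon_\ell^n = \chi_\ell \parallelepsilon_\ell^n$ and $\supp(1-\Tchi_\ell)\cap\supp\chi_\ell = \emptyset$, we have $\parallelepsilon_\ell^n = \Tchi_\ell \parallelepsilon_\ell^n$, so $\parallelepsilonjnpo = \sum_\ell \chi_j \mathcal{T}_j \Tchi_\ell \parallelepsilon_\ell^n = \sum_\ell (\mathbf{T})_{j\ell}\parallelepsilon_\ell^n$, which is exactly $(\mathbf{T}\bparallelepsilon^n)_j$. The main obstacle here is essentially bookkeeping rather than analysis: one must be careful that all the restriction/extension-by-zero operations are legitimate (this is where the support conditions on the $\chi_j$ and $\Tchi_j$ and the Dirichlet conditions are used) and that the well-posedness of each subdomain problem \eqref{eq:localprob}, hence of $\mathcal{T}_j$, holds — which requires $k$ large enough (or $k$ outside the exceptional set) as in the hypotheses of Theorem~\ref{t:R5}, invertibility of $P^j_\newtheta$ with prescribed $H^{1/2}$ Dirichlet data being guaranteed there. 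No semiclassical machinery is needed for this lemma; it is purely the algebraic setup that feeds into Step~2 of the proof of Theorem~\ref{t:R5}.
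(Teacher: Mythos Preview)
Your proof is correct and is exactly the direct verification one would expect; the paper itself does not give a proof but refers to \cite[\S5.1]{GGGLS2}. One small slip: you write ``$\parallelepsilon_\ell^n = \chi_\ell \parallelepsilon_\ell^n$'', which is false as stated (it would force $\chi_\ell$ to equal $1$ on $\supp \parallelepsilon_\ell^n$); what you mean and need is simply that $\supp \parallelepsilon_\ell^n \subset \supp\chi_\ell$ (immediate from the definition $\parallelepsilon_\ell^n = \chi_\ell \paralleleelln$), which together with $\Tchi_\ell \equiv 1$ on $\supp\chi_\ell$ gives $\Tchi_\ell \parallelepsilon_\ell^n = \parallelepsilon_\ell^n$.
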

Lemma \ref{lem:model_err_prop} is proved in \cite[\S5.1]{GGGLS2}. 

\paragraph{Step 2:~Relating powers of $\mathbf{T}$ to trajectories of the flow defined by $P$.
}\label{sec:powers}

By \eqref{eq:Monday1}, $\bparallelepsilon^{n}= \mathbf{T}^n \bparallelepsilon^{0}$.
By the definition of $\mathbf T$ \eqref{eq:bold_T}, powers of $\mathbf T$ involve compositions of the maps $\chi_i \mathcal T_{i}\Tchi_j$, and these compositions 
applied to $\parallelepsilon^{0}$
can be interpreted as the error traveling from subdomain to subdomain through the iterations.

We make this more precise via the definition of the
composite map $\mathscr{T}_w$ \eqref{eq:Tw}, defined  for any $w \in \mathcal W$ of length $|w| :=n\geq 2$.
The presence of the cut offs in the definition of $(\mathbf T)_{i,j}$ \eqref{eq:bold_T} means that 
$\mathscr T_{w}$ is zero unless $\Omega_{w_j}$ and $\Omega_{w_{j+1}}$ overlap for all $j$. 



We make the following  remarks:
\bit
\item To see where the sets in \eqref{eq:follow_domains} (i.e., the sets in Item \ref{i:toFollow} of Definition \ref{def:follow}) 
come from, observe that, 
by the definition of $\mathcal T_i$ \eqref{eq:Tjl}, 
the functions $\mathcal T_{i}\Tchi_j v$ appearing in the action of $\mathbf T$ \eqref{eq:bold_T} satisfy 
a PDE on $\Omega_i$ with the operator $P_\newtheta^i$ 
and right-hand side supported in $\supp \Tchi_{j} \cap \supp(P^{i}_\newtheta-P_\newtheta)$. 
\item $\supp \chi_{w_i} \cap \supp(P^{w_i+1}_s-P_s)$ is the part of the PML of $\Omega_{w_{i+1}}$ that intersects $\supp \chi_{w_i}$; because $\chi_j$ is zero on the support of $P^j_\newtheta-P_\newtheta$, the sets in \eqref{eq:follow_domains} are disjoint.
\eit


The main goal of Step 2 is thus to prove the following lemma.
\ble
\label{lem:notallowed}
If $w\in \mathcal W$ is not allowed then, for any $s\geq 1$, 
\beqs
\|\mathscr T_w\|_{H^1_k(\Omega) \rightarrow H^s_k(\Omega)}= O(k^{-\infty}).
\eeqs
\ele

\bpf[Proof of Theorem \ref{t:R5} using Lemma \ref{lem:notallowed}]
By the definition of the error propagation matrix $\mathbf{T}$ \eqref{eq:bold_T} and the composite map $\mathscr{T}_w$ \eqref{eq:Tw}, for any $m\in \mathbb{Z}^+$, 
all entries of $\mathbf{T}^{m}$ are sums of operators $\mathscr{T}_w$ over words $w$ of size $m+1$. 
Since words of size $\mathcal{N}+1$ are not allowed by the definition of $\mathcal{N}$ \eqref{eq:mathcalN}, the result that $\|\mathbf{T}^{\mathcal N}\|_{(H^1_k(\Omega))^N \rightarrow (H^s_k(\Omega))^N} = O(k^{-\infty})$ follows from Lemma \ref{lem:notallowed}.
\epf

We now sketch the proof of Lemma \ref{lem:notallowed}.

\begin{informallemma}\label{lem:key_prop2}
For any $v\in H^1_0(\Omega)$, 
$$
\WF(\mathscr T_w v)\subset \big\{ \gamma_n(T_n)\,:\, \gamma \text{ follows }w\big\}.
$$ 
(where recall from Definition \ref{def:follow} that $\gamma_n(T_n)$ is the endpoint of the trajectory $\gamma_n$).

\end{informallemma}

Informal Lemma~\ref{lem:key_prop2} shows that if $w$ is not allowed then for any $v\in H_0^1(\Omega)$ $\WF(\mathscr{T}_wv)=\emptyset$. Combining this with the definition of wavefront set (Definition \ref{d:wavefront}) and using semiclassical ellipticity to handle very large frequencies (see \cite[Lemma B.8]{GGGLS2}) proves Lemma~\ref{lem:notallowed}.

Informal Lemma \ref{lem:key_prop2} is proved using the following propagation result.
\begin{informallemma}\label{lem:key_prop}
Let $v \in H( \Omega_j)$ be the solution to
$$
P^j_{\newtheta} v = f \hspace{0.3cm} \text{in }{\Omega_j}, \qquad
v = g1_{\partial\Omega_j\setminus \partial\Omega}\hspace{0.3cm} \text{on }\partial\Omega_j
$$
If $P$ is nontrapping, then, for $\chi \in \overline{C^\infty}(\Omega)$ with $\supp \chi \subset (\Omega_j\cup \partial\Omega)$,
\beq\label{e:informalWF1}
\WF(\chi v)\subset \supp \chi\cap\Big(\WF(f)\cup\bigcup_{t\geq 0} \varphi_t\big(\WF(f)\cap \{p_s^j=0\}\big)\Big).
\eeq
\end{informallemma}
Observe that \eqref{e:informalWF1} controls the wavefront set of $v$ away from the boundaries of $\Omega_j$ that are not also boundaries of $\Omega$.

Both Informal Lemmas \ref{lem:key_prop2} and \ref{lem:key_prop} are stated as informal lemmas because we have not dealt carefully with the boundaries $\partial\Omega$ and $\partial \Omega_j$, respectively. 
The rigorous statements of these results are \cite[Lemmas 4.1 and 6.2]{GGGLS2}, respectively. Our sketches below are similarly ignore these issues. 


\bpf[Sketch proof of Informal Lemma \ref{lem:key_prop2} using Informal Lemma \ref{lem:key_prop}]
Given $v\in H^1_0(\Omega)$, by the definitions of $(\mathbf T)_{i,j}$ \eqref{eq:bold_T} and $\mathcal{T}_{j}$ \eqref{eq:Tjl},
$(\mathbf T)_{w_{i+1}, w_i}v$ equals $\chi_{w_{i+1}}$ multiplied by a Helmholtz solution 
on $\Omega_{w_{i+1}}$ with data $(P_\newtheta^{w_{i+1}}-P_\newtheta)\Tchi_{w_i}v$. 
Therefore, by Lemma \ref{lem:key_prop2},
\begin{equation}
\begin{aligned}
&\WF\big((\mathbf{T})_{w_{i+1},w_i}v\big)\\
&\subset  \supp \chi_{w_{i+1}}\cap\\
&\Big(\WF\big((P_{\rm s}^{w_{i+1}}-P_{\rm s})\chi_{w_i}^>v\big)\cup\bigcup_{t\geq 0} \varphi_t\big(\WF\big((P_{\rm s}^{w_{i+1}}-P_{\rm s})\chi_{w_i}^>v\big)\cap \{p^{i+1}_{\rm s}=0\}\big)\Big);
\label{e:propagateDD}
\end{aligned}
\end{equation}
in applying 
Lemma~\ref{lem:key_prop2} we have used that $v=0$ on $\partial\Omega$ (so that the support condition in $g$ holds) and we need only estimate $\chi_{w_{i+1}}u$ for a Helmholtz solution $u$ because of the multiplication by $\chi_{i+1}$ in the definition of $\mathbf{T}$, \eqref{eq:bold_T}.
In other words, 
in the $k\to \infty$ limit, the mass of 
$(\mathbf T)_{w_{i+1}, w_{i}}v$ in phase space  comes only from $\supp \chi^>_{w_i} \cap \supp (P_\newtheta^{w_{i+1}}-P_\newtheta)$, i.e., the first domain in \eqref{eq:follow_domains}. 

Next, combining \eqref{e:propagateDD} with $i=1$ and $i=2$, the fact that 
$$
\bigcap_{i=1,2}\supp \chi_{w_{i+1}}
\cap
\supp (P_{\rm{s}}^{w_i}-P_{\rm s})\cap \supp \chi_{w_i}^>=\emptyset,
$$
 the definition of following a word (Definition \ref{def:follow}), and the fact that $\{p_s^j=0\}\subset \{p=0\}$
we see that 
$$
\WF\big(\mathbf{T}_{w_3,w_2}\mathbf{T}_{w_2,w_1}v\big)\subset \{p_{\rm s}=0\}\cap \big\{ \gamma_3(T_3)\,:\, \gamma_3 \text{ follows } w_1w_2w_3\big\}.
$$
In other words, 
in the $k\to \infty$ limit, the mass of 
$(\mathbf T)_{w_3, w_2}(\mathbf T)_{w_2, w_1}v$ in phase space comes only from the mass in 
$\supp \chi_{w_2} \cap \supp (P_\newtheta^{w_3}-P_\newtheta)$ that in turn came from
$\supp \chi_{w_1} \cap \supp (P_\newtheta^{w_2}-P_\newtheta)$.

The result then follows by repeating this argument for longer words.
\epf

\bpf[Sketch proof of Lemma \ref{lem:key_prop}]
By ellipticity (i.e., 
 Corollary \ref{c:ellipticWavefront} above),
\begin{equation}
\label{e:ellipticYAY}
\WF(v)\subset \{p_s^j=0\}\cup \WF(f).
\end{equation}
We now use propagation, almost in the form of Corollary~\ref{c:propagateWavefront}, except that since $P_s^j$ is not self-adjoint, but does have $\Im \sigma(P_s^j)\leq 0$ near $\sigma(P_s^j)=0$, one may only propagate forward in time
under the flow defined by 
$\Re \sigma(P_s^j)$ 
(see Remark \ref{r:direction}); i.e., Corollary \ref{c:propagateWavefront} holds with $T\geq 0$.

 The fact $P$ is nontrapping means that every trajectory of $p$ starting from $\supp \chi$ encounters $\{p_s^j\neq 0\}$ backward in time (see~\cite[Lemma 4.4]{GGGLS2}). It is enough to consider trajectories of the flow defined by $p$ rather than those of the flow defined by $\Re p_s^j$ since $H_{\Re p_s^j}=H_p$ on $\{p_s^j=0\}$ 
(see \cite[Lemma 4.2]{GGGLS2}) and~\eqref{e:ellipticYAY} allows to understand $\WF(\chi v)\cap\{p_s^j\neq 0\}$. 

 Therefore applying Corollary~\ref{c:propagateWavefront} 
with $T\geq 0$ (as discussed above), 
we see that if
$$(x_0,\xi_0)\in \{p_s^j=0\}\cap \Big(\WF(f)\cup\bigcup_{t\geq 0} \varphi_t\big(\WF(f)\cap \{p_s^j=0\}\big)\Big)^c,
$$
then there is $T>0$ such that $\varphi_{-T}(x_0,\xi_0)\in \{p_s^j\neq 0\}$ and
$$
\Big(\bigcup_{t=0}^T\varphi_{-t}(x_0,\xi_0)\Big)\cap \WF(f)=\emptyset.
$$
Hence, since $\varphi_{-T}(x_0,\xi_0)\notin \WF(v)$, we obtain $(x_0,\xi_0)\notin \WF( v)$ and hence also $(x_0,\xi_0)\notin \WF(\chi v)$. 
\epf

\appendix

\section{$h$- and $p$-explicit polynomial approximation}
\label{a:poly}

There is a large literature on using interpolants (i.e., polynomials defined via point values of a function) to produce both $H^1$-conforming and non-conforming approximations by piecewise polynomials; see, e.g., \cite[Chapter 4]{BrSc:08}, 
\cite[Chapter 4]{SaSc:11}, \cite{ApMe:17},
\cite[Part III]{ErGu:21}.
However, it is well known that designing interpolants that behave well in the large polynomial-degree limit can be subtle (due to, e.g., Runge's phenomenon) and thus
there has been much work
on obtaining $p$-explicit approximants that are not necessarily interpolants -- see the discussion and references in \S\ref{s:hpliterature}. 
However, the authors are unaware of a single source collecting all the ingredients for the proofs of these $p$-explicit approximation results.

The main goal of this section is therefore to give a self-contained treatment of $p$-and $h$-explicit polynomial approximation in $L^2$-based Sobolev spaces. 
The main results are given in Theorems~\ref{t:approxHighLowReg} and~\ref{t:approxHighLowRegL2} below, and cover 
approximation 
by arbitrary-degree piecewise polynomials 
of \emph{either} fixed-regularity functions 
\emph{or} high-regularity functions.
Furthermore, our results for the high-regularity functions, giving precise Sobolev space estimates, appear to be new (see the discussion in \S\ref{s:hpliterature}).






\subsection{Definition of shape-regular sequences of triangulations}

\begin{definition}[Triangulation]
A finite collection of closed sets $\mathcal{T}$ is a \emph{triangulation} of $\Omega\subset \mathbb{R}^d$ if
\begin{itemize}
\item[(i)] for each $T \in \mathcal{T}$, $T$ is closed, $\mathring{T}$ is nonempty and connected,
\item[(ii)] $\overline{\Omega}= \cup_{T\in \mathcal{T}}T$
\item[(iii)] if $T_1, T_2\in \mathcal{T}$ and $\mathring{T_1}\cap \mathring{T_2} \neq \emptyset$ then $T_1=T_2$.
\item[(iv)] for each $T \in \mathcal{T}$, $T$ has Lipschitz boundary and 
$$
\partial T=\bigcup_{j=1}^d \bigcup_{i=1}^{N_{j,d}}F_{j,i}(T)
$$
with $F_{j,i}$ a smooth, open, embedded submanifold of dimension $d-j$ such that $\partial F_{j,i}=\partial\overline{F_{j,i}}$,
\item[(v)] if $T_1,T_2\in\mathcal{T}$ and $F_{j_1,i_1}(T_1)\cap F_{j_2,i_2}(T_2)\neq \emptyset$, then $F_{j_1,i_1}(T_1)=F_{j_2,i_2}(T_2)$. 
\end{itemize}
We sometimes use the word mesh as a synonym for triangulation.
\end{definition}


\begin{definition}[$m$-simplex]
Let $m>0$. A set $K\Subset \mathbb{R}^d$ is an \emph{$m$-simplex} if there are $\{x_j\}_{j=0}^m\subset \mathbb{R}^d$ such that
$$
K=\text{convex hull}\big(\{ x_j\}_{j=0}^m\}\big).
$$
$K$ is an \emph{open $m$-simplex} if $K=(\widetilde{K})^\circ$ where $\widetilde{K}$ is an $m$ simplex  and $K\neq \emptyset$. 
\end{definition}

\begin{definition}[Reference element and element maps]
$\widehat{T}\Subset \mathbb{R}^d$ is a reference element for a triangulation $\mathcal{T}$ if $\operatorname{diam}(\widehat{T})=1$ and there exist a family of bi-Lipschitz maps 
$\{\mapF_{T}\}_{T\in \mathcal{T}}$ such that $T = \mapF_T( \widehat{T})$ for all $T\in \mathcal{T}$. $\mathcal{T}$ is \emph{simplicial} if $\widehat{T}$ is a $d$ simplex.
\end{definition}

We use the following measure of how star-shaped an open subset of $\mathbb{R}^d$ is. 
\begin{definition}[Chunkiness parameter]
Let $\Omega\subset \mathbb{R}^d$ and define
$$
\rho_{\max}(\Omega):=\sup \big\{ \rho>0\,:\, \Omega\text{ is star-shaped with respect to a ball of radius }\rho\big\}.
$$
The \emph{chunkiness parameter of $\Omega$} is defined by
$$
\gamma(\Omega):=\frac{\operatorname{diam}(\Omega)}{\rho_{\max}(\Omega)}.
$$
\end{definition}
(Observe that $\gamma(\Omega)=2$ when $\Omega$ is a ball.)

\begin{definition}[$C^r$ triangulation]
\label{d:Crtriang}
A triangulation is $C^r$ with constant $\Upsilon>0$ if $\gamma(\widehat{T})\leq \Upsilon$ and for each $T\in\mathcal{T}$ there is $h_T>0$ such that the  element map $\mapF_T$ can be written as $R_T \circ A_T$ where $A_T$ is an affine map and 
\begin{align}
&\| \partial A_T\|_{L^\infty} \le \Upsilon h_T, \quad 
\| (\partial A_T)^{-1}\|_{L^\infty}\leq \Upsilon h^{-1}_T,\quad \| (\partial R_T)^{-1}\|_{L^\infty}\leq \Upsilon,\nonumber\\
&
\| \partial^\alpha R_T\|_{L^\infty}\leq \Upsilon^{1+|\alpha|} \alpha! \quad\tfa\, |\alpha|\leq r.\label{e:mapsHighReg}
\end{align}
Let $h(\mathcal{T}):=\sup_{T\in\mathcal{T}}h_T$ be the \emph{width of the triangulation $\mathcal{T}$}. Furthermore, a triangulation is $C^\omega$ if~\eqref{e:mapsHighReg} holds for all $|\alpha|<\infty$.
\end{definition}

\bre 
Given a piecewise $C^{r}$ domain, a $C^r$ triangulation is constructed in \cite[\S6]{Be:89}, building on the 2-d results of \cite{Sc:73, Zl:73} and the isoparametric elements in general dimension of \cite{Le:86}.
\ere

\bre[Shape regularity]
A collection of triangulations, $\mathscr{T}$, is \emph{shape regular} in the sense of \cite[Definition 11.2]{ErGu:21}, \cite[Definition 4.4.13]{BrSc:08} if
\beqs
\sup_{\mathcal{T}\in\mathscr{T}}\sup_{T\in \mathcal{T}}\frac{h_T}{r_{\max}(T)}<\infty,
\eeqs
where
\beqs
r_{\max}(T):=\sup \big\{r>0\,:\, \text{there is }x_0\in T\text{ with }B(x_0,r)\subset T\big\}.
\eeqs
Given $\Upsilon>0$, the collection of $C^1$ triangulations with this $\Upsilon$ is then shape regular. Indeed, the bounds on $\partial \mapF_T$ and $\partial (\mapF_T)^{-1}$ from \eqref{e:mapsHighReg} 
and the fact that $\operatorname{diam}(\widehat{T})=1$
imply that
$$
\Upsilon^{-2}h_T\leq \operatorname{diam}(T)\leq \Upsilon^2 h_T,
$$
and 
$$
\Upsilon^{-2}h_T r_{\max}(\widehat{T})\leq r_{\max}(T)\leq \Upsilon^2 h_T r_{\max}(\widehat{T})
$$
and hence, since $\rho_{\max}(\widehat{T})\leq r_{\max}(\widehat{T})$,
$$
\frac{h_T}{r_{\max}(T)}\leq \frac{\Upsilon^2}{r_{\max}(\widehat{T})}\leq \frac{\Upsilon^2}{\rho_{\max}(\widehat{T})}=\Upsilon^2\gamma(\widehat{T}).
$$
\ere

\begin{definition}[Affine-conforming triangulation]
\label{d:AffineConforming}
A $C^r$ triangulation (in the sense of Definition \ref{d:Crtriang}) is \emph{affine conforming} if whenever $F_{j_1,i_1}(T_1)\cap F_{j_2,i_2}(T_2)\neq \emptyset$ there is an affine isomorphism $\kappa:\mapF_{T_1}^{-1}(F_{j_1,i_1}(T_1))\to \mapF_{T_2}^{-1}(F_{j_2,i_2}(T_2))$  such that 
\beq\label{e:isomorphism}
\mapF_{T_1}|_{
\overline{
\mapF_{T_1}^{-1}(F_{j_1,i_1}(T_1))
}}
=\mapF_{T_2}|_{
\overline{
\mapF_{T_2}^{-1}(F_{j_2,i_2}(T_2))
}}\circ \kappa.
\eeq
\end{definition}


We use the adjective \emph{conforming} in Definitions \ref{d:AffineConforming} 
because the additional property~\eqref{e:isomorphism} 
is used to generate an $H^1$-conforming polynomial approximant in
\ref{t:approxHighLowReg} below.

\bre
Without a condition on the element maps such as Definitions \ref{d:AffineConforming}, 
the space of mapped polynomials on two adjacent mesh elements 
may not even intersect non-trivially when restricted to a common face. If the intersection is trivial, there cannot be any non-trivial, globally-$H^1$ mapped polynomials. 

Conditions such as Definitions \ref{d:AffineConforming} 
are therefore common in the literature. 
For example, the Lagrange $\mathbb{P}_{k,d}$ elements in \cite{ErGu:21} are affine conforming (in the sense of Definition \ref{d:AffineConforming}) by 
 \cite[Definition 8.1 and Exercise 20.1]{ErGu:21}.
\ere


Given a triangulation $\mathcal{T}$ with element maps $\{\mapF_T\}_{T\in\mathcal{T}}$ and reference element $\widehat{T}$, the corresponding spaces of mapped polynomials are defined by
\beq\label{e:polySpace}
\mathcal{P}^p(\mathcal{T},\{\mapF_T\}):= \big\{ 
v\in H^1(\Omega) \, :\, \text{ for all }  T \in \cT : v|_T\circ \mapF_T \in \mathbb{P}^p(\widehat{T})
\big\},
\eeq
where $\mathbb{P}^p(\widehat{T})$ denotes the space of polynomials of degree $\leq p$ on the reference element. We often abbreviate $\mathcal{P}^p(\mathcal{T},\{\mapF_T\})$ to $\mathcal{P}^p_{\mathcal{T}}$.

\begin{definition}\label{d:triangulationPreserving}
An operator $\mathcal{I}:C^\infty(\Omega)\to L^2(\Omega)$ is \emph{triangulation preserving} if for each   $T\in\mathcal{T}$, 
$$
u|_{T}=0\quad  \Rightarrow\quad (\mathcal{I}u)|_{T}=0,
$$
and for any codimension $1$ boundary face $F_{1,i}$ of $T$, 
$$
u|_{\overline{F_{1,i}}}=0\quad  \Rightarrow\quad (\mathcal{I}u)|_{\overline{F_{1,i}}}=0.
$$
\end{definition}

\subsection{Statement of the polynomial-approximation results}\label{s:polystate}

To state results about approximation of high-regularity functions by arbitrary degree polynomials, we need to choose carefully the constants in the high regularity Sobolev spaces. 
For an open set $\Omega\subset \mathbb{R}^d$, we define
\begin{equation}
\label{e:highPSobolev}
\begin{aligned}
\|u\|_{H_k^p(\Omega)}^2&:= \sum_{\ell=0}^p \frac{1}{(\ell!)^2}|u|_{H_k^\ell(\Omega)}^2\\
|u|_{H_k^{p}(\Omega)}^2&:=\sum_{\substack{|\alpha|=p\\\alpha\in\mathbb{N}^d}}\frac{p!}{\alpha!}\|(k^{-1}D_{x_1})^{\alpha_1}(k^{-1}D_{x_2})^{\alpha_2}\dots (k^{-1}D_{x_d})^{\alpha_d}u\|_{L^2(\Omega)}^2.
\end{aligned}
\end{equation}
We also denote $\|\cdot\|_{H^p(\Omega)}:=\|\cdot \|_{H_1^p(\Omega)}$. The rationale for these choices is given in \S\ref{s:norms} below.


\begin{theorem}[Approximation in $H^1_k$]
\label{t:approxHighLowReg}
Given $\Upsilon>0$ and $t>d/2$ there exists $C>0$ such that for all $p\geq \max(t-1,1)$, all 
affine-conforming $C^{p+1}$ simplicial triangulations with constant $\Upsilon$, and all partitions of $\mathcal{T}$ into $\mathcal{T}_1\sqcup\mathcal{T}_2$, 
there exists a triangulation-preserving operator 
$\mathcal{C}^p_{\mathcal{T}}:H^1(\Omega)\cap (\oplus_{T\in\mathcal{T}} H^t(T))\to \mathcal{P}^p_{\mathcal{T}}\cap H^1(\Omega)$ such that for all $0\leq s\leq 1$ and $u\in H^1(\Omega)\cap (\oplus_{T\in\mathcal{T}} H^t(T))$  
\begin{align*}
\|(I-\mathcal{C}_{\mathcal{T}}^p)u\|_{ H_k^s(\Omega_{\mathcal{T}_1})}&\leq C\Big(\frac{h(\mathcal{T})k}{p}\Big)^{t-s}k^{-t}\Big(\sum_{T\in \mathcal{T}_1}\|u\|_{H^t(T)}^2\Big)^{\frac{1}{2}},\\
\|(I-\mathcal{C}^p_{\mathcal{T}})u\|_{H_k^s(\Omega_{\mathcal{T}_2})}&\leq (Ch(\mathcal{T})k)^{p+1-s}k^{-p-1}\Big(\sum_{T\in \mathcal{T}_2}\|u\|_{H^{p+1}(T)}^2\Big)^{\frac{1}{2}},
\end{align*}
where 
$$
\Omega_{\mathcal{T}_i}:=\bigcup_{T\in\mathcal{T}_i}T,
$$
and for $u\in\mathcal{P}_{\mathcal{T}}^p\cap H^1(\Omega)$, $\mathcal{C}_{\mathcal{T}}^pu=u$. 
In particular,
\begin{align*}
&\|(I-\Pi^{H^1}_{\mathcal{P}_{\mathcal{T}}^p})u\|_{H_k^s(\Omega)}\leq C\Big(\frac{h(\mathcal{T})k}{p}\Big)^{t-s}k^{-t}\Big(\sum_{T\in \mathcal{T}_1}\|u\|_{H^t(T)}^2\Big)^{\frac{1}{2}}\\
&\hspace{4.5cm}+(Ch(\mathcal{T})k)^{p+1-s}k^{-p-1}\Big(\sum_{T\in \mathcal{T}_2}\|u\|_{H^{p+1}(T)}^2\Big)^{\frac{1}{2}},
\end{align*}
where $\Pi^{H^1}_{\mathcal{P}_{\mathcal{T}}^p}$ is the $H_k^1$-orthogonal projector onto $\mathcal{P}_{\mathcal{T}}^p\cap H^1$. 
\end{theorem}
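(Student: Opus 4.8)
The plan is to reduce everything to a single approximation problem on the fixed reference simplex $\widehat T$, solve that once and for all, and then transport the result element by element via the maps $\mapF_T$, keeping careful track of all dependence on $h_T$, $p$, and $k$. Since the two displayed estimates live on $\Omega_{\mathcal{T}_1}$ and $\Omega_{\mathcal{T}_2}$ and are purely element-local, it suffices to produce a reference-element operator $\widehat\Pi^p$ that comes with \emph{two} bounds --- a ``fixed-regularity, $p$-explicit'' one and a ``high-regularity, explicit-constant'' one --- and then to build $\mathcal{C}^p_{\mathcal{T}}$ from the local operators in an $H^1$-conforming, triangulation-preserving way. The estimates of the theorem will then follow by applying the first reference bound on the elements of $\mathcal{T}_1$, the second on the elements of $\mathcal{T}_2$, and summing in $\ell^2$; the ``in particular'' statement then follows from $\Pi^{H^1}_{\mathcal{P}^p_{\mathcal{T}}}$ being the $H^1_k$-best approximation (for $s=1$) plus a duality argument (for $s<1$).

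First I would construct $\widehat\Pi^p:H^t(\widehat T)\to\mathbb{P}^p(\widehat T)$ (for $t>d/2$ and $p\ge\max(t-1,1)$) hierarchically through the skeleton of $\widehat T$: fix the vertex values --- legitimate since $t>d/2$ gives $H^t(\widehat T)\hookrightarrow C^0(\overline{\widehat T})$ --- then correct successively along each edge, each two-dimensional face, and so on up to the interior, with each correction chosen to vanish on all previously treated lower-dimensional faces. Such constructions are standard in the $hp$-approximation literature (averaged Taylor/Bramble--Hilbert polynomials on each face combined with Jacobi-polynomial expansions); the point to verify is that one can arrange, \emph{simultaneously}, the fixed-regularity bound $\|u-\widehat\Pi^p u\|_{H^s(\widehat T)}\le C p^{-(t-s)}\|u\|_{H^t(\widehat T)}$ for $0\le s\le\min(1,t)$ (the Babu\v{s}ka--Suri-type rate, realised by an operator rather than an infimum) \emph{and} the high-regularity bound $\|u-\widehat\Pi^p u\|_{H^s(\widehat T)}\le C^{p+1}\|u\|_{H^{p+1}(\widehat T)}$ with a constant growing only geometrically in $p$, the latter obtained from the Bramble--Hilbert lemma applied to degree-$p$ averaged Taylor polynomials with the Dupont--Scott constants tracked explicitly. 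Because each step uses only data on the face being treated, the resulting operator is triangulation-preserving in the sense of Definition~\ref{d:triangulationPreserving} and reproduces $\mathbb{P}^p$.

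Next I would transport $\widehat\Pi^p$ to a physical element $T$ by conjugating with $\mapF_T$, estimating via the factorisation $\mapF_T=R_T\circ A_T$. The affine part $A_T$, with $\|\partial A_T\|\le\Upsilon h_T$ and $\|(\partial A_T)^{-1}\|\le\Upsilon h_T^{-1}$, produces the expected powers of $h_T$ under the chain rule for Sobolev norms; the curved part $R_T$, whose derivatives obey $\|\partial^\alpha R_T\|\le\Upsilon^{1+|\alpha|}\alpha!$ and $\|(\partial R_T)^{-1}\|\le\Upsilon$, is handled by the multivariate Fa\`a di Bruno formula --- the factorial growth in \eqref{e:mapsHighReg} is exactly what is needed so that composing a degree-$p$ polynomial with $R_T$ and differentiating up to $p+1$ times costs only a further geometric-in-$p$ constant. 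The affine-conformity hypothesis \eqref{e:isomorphism} is what makes the gluing work: on a shared face $F_{j_1,i_1}(T_1)=F_{j_2,i_2}(T_2)$ the two element maps restrict to the same map up to an affine isomorphism of reference faces, so the face-based pieces of $\widehat\Pi^p$ deliver identical traces from both sides and $\mathcal{C}^p_{\mathcal{T}}u$ is globally $H^1$. Finally I would insert the $k$-weighting: in $\|\cdot\|_{H^s_k}$ the error's top-order ($\ell=s$) contribution dominates and carries a factor $k^{-s}$ against the unweighted $H^s$-seminorm, which combines with the spatial rates $(h_T/p)^{t-s}$ and $C^{p+1}h_T^{p+1-s}$ to give $(h_Tk/p)^{t-s}k^{-t}\|u\|_{H^t(T)}$ on $\mathcal{T}_1$-elements and $(Ch_Tk)^{p+1-s}k^{-p-1}\|u\|_{H^{p+1}(T)}$ on $\mathcal{T}_2$-elements; summing over $\mathcal{T}_1$ and $\mathcal{T}_2$ and using $h_T\le h(\mathcal{T})$ yields the two displayed estimates, and $\mathcal{C}^p_{\mathcal{T}}u=u$ on $\mathcal{P}^p_{\mathcal{T}}\cap H^1$ follows from polynomial reproduction on the reference element.

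For the ``in particular'' statement, the case $s=1$ is immediate since $\Pi^{H^1}_{\mathcal{P}^p_{\mathcal{T}}}$ is the $H^1_k$-orthogonal projector, so $\|(I-\Pi^{H^1}_{\mathcal{P}^p_{\mathcal{T}}})u\|_{H^1_k}\le\|(I-\mathcal{C}^p_{\mathcal{T}})u\|_{H^1_k}$; for $0\le s<1$ one combines this with a standard Aubin--Nitsche duality argument against the $H^1_k$-inner product, which represents a coercive second-order elliptic problem on the Lipschitz polytope $\Omega$ with the usual elliptic-regularity shift, supplying the remaining factor $(h(\mathcal{T})k/p)^{1-s}$. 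The hard part will be the first step: producing reference-element operators that simultaneously achieve the sharp $p$-explicit fixed-regularity rate \emph{and} the explicit-constant high-regularity rate across the whole scale $H^s$, $0\le s\le 1$, while remaining face-hierarchical, together with the bookkeeping needed to carry those explicit constants through the curved maps $R_T$ without spoiling the geometric-in-$p$ growth. It is precisely these precise Sobolev-space estimates for high-regularity functions that are new here, so the bulk of the work sits in the self-contained development in this appendix.
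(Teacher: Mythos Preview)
Your overall architecture --- reduce to the reference simplex, prove the two rates there, transport via $\mapF_T=R_T\circ A_T$ with Fa\`a di Bruno controlling $R_T$ (this is exactly Lemma~\ref{l:analyticpullback}), and glue via affine-conformity --- matches the paper. The scaling bookkeeping and the face-hierarchical philosophy are also right.

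The substantive difference is in how the two rates are realised on the reference element. You propose a \emph{single} operator $\widehat\Pi^p$ satisfying both the Babu\v{s}ka--Suri rate $\|u-\widehat\Pi^p u\|_{H^s}\le Cp^{-(t-s)}\|u\|_{H^t}$ and the explicit-constant rate $\|u-\widehat\Pi^p u\|_{H^s}\le C^{p+1}\|u\|_{H^{p+1}}$. The paper does \emph{not} do this, and it is not clear that one operator achieves both: the $p^{-(t-s)}$ improvement comes from the Legendre spectral construction (Proposition~\ref{p:lowregularityApproximation}), while the geometric-in-$p$ high-regularity constant comes from averaged Taylor polynomials (Proposition~\ref{p:pExplicit1}); neither obviously delivers the other bound. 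Instead, the paper builds \emph{two} reference operators $\mathcal{C}_1^p$ and $\mathcal{C}_2^p$ (from $\mathcal{L}^p$ and $\mathcal{I}^p$ respectively via Proposition~\ref{p:toBoundaryCompatible}) and then defines $\mathcal{C}^p_{\mathcal{T}}$ piecewise by \eqref{e:CTp}, using $\mathcal{C}_1^p$ on $\mathcal{T}_1$-elements and $\mathcal{C}_2^p$ on $\mathcal{T}_2$-elements. The reason this still glues to a global $H^1$ function --- even across a $\mathcal{T}_1/\mathcal{T}_2$ interface --- is the key content of Proposition~\ref{p:toBoundaryCompatible}: the boundary-compatible modification produces \emph{universal} face operators $Q_{j,i}$ (equation~\eqref{e:faceOperator}) that depend only on $u|_{F_{j,i}}$ and not on which interior approximant $\mathcal{J}^{p-1}$ one started from, so $\mathcal{C}_1^p v|_{\partial\widehat T}=\mathcal{C}_2^p v|_{\partial\widehat T}$ (equation~\eqref{e:lastDay77}). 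Your proposal either needs to justify the single-operator claim or adopt this two-operator mechanism; as written, the ``the point to verify is that one can arrange, simultaneously\ldots'' is precisely the step that the paper circumvents rather than proves.
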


We emphasise that by taking \emph{either} $\mathcal{T}_2=\emptyset$ \emph{or} $\mathcal{T}_1=\emptyset$, one obtains from Theorem \ref{t:approxHighLowReg} separate results on approximating \emph{either} fixed-regularity functions \emph{or} high-regularity functions, respectively. However, Theorem \ref{t:approxHighLowReg} is the stronger result that, given any partition of the triangulation, 
there exists a single $H^1$-conforming and triangulation-preserving operator $\mathcal{C}^p_{\mathcal{T}}$ achieving the fixed-regularity approximation on the first part of the triangulation and the high-regularity approximation on the second part.

Next, we record the non-conforming analogue of Theorem~\ref{t:approxHighLowReg}.
\begin{theorem}[Approximation in $L^2$]
\label{t:approxHighLowRegL2}
Given $\Upsilon>0$ and $t\geq 0$, there exists $C>0$ such that for all $p\geq 0$ and all $C^{p+1}$-triangulations with constant $\Upsilon$, there exists a triangulation-preserving operator 
$\mathcal{C}^p_{\mathcal{T}}: (\oplus_{T\in\mathcal{T}} H^t(T))\to \mathcal{P}^p_{\mathcal{T}}$ such that for all $u\in (\oplus_{T\in\mathcal{T}} H^t(T))$,  
\begin{align*}
\|(I-\mathcal{C}_{\mathcal{T}}^p)u\|_{L^2(\Omega_{\mathcal{T}_1})}&\leq C\Big(\frac{h(\mathcal{T})k}{p}\Big)^{t}k^{-t}\Big(\sum_{T\in \mathcal{T}_1}\|u\|_{H^t(T)}^2\Big)^{\frac{1}{2}},\\
\|(I-\mathcal{C}^p_{\mathcal{T}})u\|_{L^2(\Omega_{\mathcal{T}_2})}&\leq (Ch(\mathcal{T})k)^{p+1}k^{-p-1}\Big(\sum_{T\in \mathcal{T}_2}\|u\|_{H^{p+1}(T)}^2\Big)^{\frac{1}{2}},
\end{align*}
where 
$$
\Omega_{\mathcal{T}_i}:=\bigcup_{T\in\mathcal{T}_i}T
$$
and for $u\in\mathcal{P}_{\mathcal{T}}^p\cap H^1(\Omega)$, $\mathcal{C}_{\mathcal{T}}^pu=u$. 
In particular,
\begin{align*}
&\|(I-\Pi^{L^2}_{\mathcal{P}_{\mathcal{T}}^p})u\|_{H_k^s(\Omega)}\\
&\quad\leq C\Big(\frac{h(\mathcal{T})k}{p}\Big)^{t}k^{-t}\Big(\sum_{T\in \mathcal{T}_1}\|u\|_{H^t(T)}^2\Big)^{\frac{1}{2}}+(Ch(\mathcal{T})k)^{p+1}k^{-p-1}\Big(\sum_{T\in \mathcal{T}_2}\|u\|_{H^{p+1}(T)}^2\Big)^{\frac{1}{2}},
\end{align*}
where $\Pi^{L^2}_{\mathcal{P}_{\mathcal{T}}^p}$ is the $L^2$-orthogonal projector onto $\mathcal{P}_{\mathcal{T}}^p$.
\end{theorem}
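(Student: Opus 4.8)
The plan is to prove Theorem~\ref{t:approxHighLowRegL2} directly by reducing to a $p$-explicit approximation estimate on a single reference element and then assembling the approximant element by element; since approximation in $L^2$ by piecewise polynomials imposes no continuity across element interfaces, the assembly is trivial and, in particular, one does not need the affine-conforming hypothesis that is required for the $H^1$-conforming operator of Theorem~\ref{t:approxHighLowReg}. The reference-element engine is the following. On the reference simplex $\widehat T$ (which has $\diam\widehat T=1$ and chunkiness parameter bounded by $\Upsilon$) I would construct, for each $p\ge 0$, a linear operator $\widehat{\mathcal C}^p$ onto $\mathbb P^p(\widehat T)$ — for instance by truncating the expansion of a function in an orthogonal polynomial basis on the simplex (the Dubiner/Koornwinder basis built from collapsed tensor-product coordinates), composed if necessary with a Bramble--Hilbert-type averaged-Taylor operator to handle $t<1$. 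Two estimates are needed: a \emph{fixed-regularity} bound $\|(I-\widehat{\mathcal C}^p)\hat u\|_{L^2(\widehat T)}\le C\,p^{-t}\|\hat u\|_{H^t(\widehat T)}$ for every $t\ge 0$, and a \emph{high-regularity} bound $\|(I-\widehat{\mathcal C}^p)\hat u\|_{L^2(\widehat T)}\le C^{p+1}\|\hat u\|_{H^{p+1}(\widehat T)}$, where $\|\cdot\|_{H^{p+1}(\widehat T)}$ carries the precise combinatorial weights $\tfrac1{(\ell!)^2}$ and $\tfrac{p!}{\alpha!}$ of \eqref{e:highPSobolev}. The first follows from one-dimensional Jackson-type estimates for Legendre/Gegenbauer expansions, tensorised through the collapsed-coordinate map; the second is the delicate ingredient, obtained by tracking the constants through the one-dimensional expansions together with the multinomial identity $\sum_{|\alpha|=\ell}\tfrac1{\alpha!}=\tfrac{d^\ell}{\ell!}$ used in \eqref{e:consequencemulti}. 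One also needs the trivial reproduction property $\widehat{\mathcal C}^p\hat u=\hat u$ for $\hat u\in\mathbb P^p(\widehat T)$, and (for triangulation-preservation below) that $\widehat{\mathcal C}^p$ be hierarchical, i.e.\ its trace on any face of $\widehat T$ depends only on the trace of $\hat u$ there; the Dubiner/Koornwinder construction, which splits into vertex/edge/$\dots$/interior (bubble) modes, provides this.

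To transfer to a general element, write $\mapF_T=R_T\circ A_T$ as in Definition~\ref{d:Crtriang} and set $(\mathcal C^p_{\mathcal T}u)|_T:=\big(\widehat{\mathcal C}^p(u|_T\circ\mapF_T)\big)\circ\mapF_T^{-1}$, which restricted to $T$ is a mapped polynomial of degree $\le p$ and equals $u$ on $T$ when $u|_T$ is already a mapped polynomial. Pulling the $k$-weighted physical seminorms $\|\cdot\|_{H^t(T)}$ and $\|\cdot\|_{H^{p+1}(T)}$ back to $\widehat T$, the affine part contributes powers of $h_T$ through $\|\partial A_T\|_{L^\infty}\le\Upsilon h_T$ and $\|(\partial A_T)^{-1}\|_{L^\infty}\le\Upsilon h_T^{-1}$; combining with the $k$-scaling in the norms produces exactly the factors $(h(\mathcal T)k/p)^t k^{-t}$ and $(Ch(\mathcal T)k)^{p+1}k^{-p-1}$. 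The curved part $R_T$ is handled by a Fa\`a di Bruno / multivariate chain-rule estimate: the hypotheses $\|\partial^\alpha R_T\|_{L^\infty}\le\Upsilon^{1+|\alpha|}\alpha!$ for $|\alpha|\le p+1$ (i.e.\ \eqref{e:mapsHighReg}) and $\|(\partial R_T)^{-1}\|_{L^\infty}\le\Upsilon$ imply that composition with $R_T^{\pm1}$ maps $H^m$ into $H^m$ with norm bounded by $C^m$ uniformly over $\mathcal T$ and over $m\le p+1$. This is precisely where the $C^{p+1}$-regularity of the triangulation enters, and the factorial weights in \eqref{e:highPSobolev} are chosen so that this estimate closes with a merely geometric (not super-exponential) dependence on $m$.

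Because there is no inter-element constraint, $\mathcal C^p_{\mathcal T}u$ is well defined element by element as an element of the (broken) polynomial space $\mathcal P^p_{\mathcal T}$. It is triangulation-preserving in the sense of Definition~\ref{d:triangulationPreserving}: $u|_T=0$ forces $(\mathcal C^p_{\mathcal T}u)|_T=\widehat{\mathcal C}^p(0)=0$, and if $u$ vanishes on a codimension-one face $F$ of $T$ then, by the hierarchical property of $\widehat{\mathcal C}^p$, the restriction of the approximant to $F$ is the face operator applied to the (vanishing) trace, hence zero. Summing the element-wise estimates over $T\in\mathcal T_1$ and over $T\in\mathcal T_2$ gives the two displayed bounds on $\Omega_{\mathcal T_1}$ and $\Omega_{\mathcal T_2}$. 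Finally, the best-approximation property of the $L^2$-orthogonal projector gives $\|(I-\Pi^{L^2}_{\mathcal P^p_{\mathcal T}})u\|_{L^2(\Omega)}\le\|(I-\mathcal C^p_{\mathcal T})u\|_{L^2(\Omega)}$, and since $\|\cdot\|_{L^2(\Omega)}^2=\|\cdot\|_{L^2(\Omega_{\mathcal T_1})}^2+\|\cdot\|_{L^2(\Omega_{\mathcal T_2})}^2$ up to a null set, the ``in particular'' assertion follows.

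The crux of the argument is the high-regularity reference-element estimate with the exact combinatorial constants of \eqref{e:highPSobolev}: the bound $\|(I-\widehat{\mathcal C}^p)\hat u\|_{L^2(\widehat T)}\le C^{p+1}\|\hat u\|_{H^{p+1}(\widehat T)}$ must hold with $C$ independent of $p$, which forces one to carry the one-dimensional spectral-approximation constants, the Jacobian of the collapsed-coordinate map, and the multinomial weights simultaneously; coupled to it is the verification that the Fa\`a di Bruno estimate for composition with the curved maps $R_T$ does not spoil the $(Ch(\mathcal T)k)^{p+1}$ factor. By comparison, the fixed-regularity estimate, the affine rescaling, and the element-by-element assembly are routine.
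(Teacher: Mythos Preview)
Your outline is structurally sound and matches the paper's approach at the top level: define the operator element by element via pullback to the reference simplex, handle the curved maps $R_T$ by a Fa\`a di Bruno estimate (this is exactly Lemma~\ref{l:analyticpullback} in the paper), and assemble with no inter-element constraint. The paper's one-sentence proof sketch confirms precisely the simplification you identify relative to Theorem~\ref{t:approxHighLowReg}.

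The substantive difference lies in the reference-element engine. You propose a \emph{single} operator $\widehat{\mathcal C}^p$, built from the Dubiner/Koornwinder hierarchy, that simultaneously delivers the fixed-regularity $p^{-t}$ rate and the high-regularity $C^{p+1}$ bound. The paper instead uses \emph{two} separate and simpler constructions: on elements $T\in\mathcal T_1$ it pulls back the Legendre-projection operator $\mathcal L^p$ of Proposition~\ref{p:lowregularityApproximation}, while on $T\in\mathcal T_2$ it pulls back the averaged-Taylor operator $\mathcal I^p$ of Proposition~\ref{p:pExplicit1}. The payoff of the paper's split is that each estimate becomes nearly immediate --- in particular, the high-regularity bound with $p$-independent base constant $C$ drops out of the explicit integral remainder for the averaged Taylor polynomial (Lemma~\ref{l:averagedTaylor}) combined with the Riesz-potential estimate (Lemma~\ref{l:riesz}), and the multinomial weight $p!/\alpha!$ in \eqref{e:highPSobolev} appears naturally via \eqref{e:consequencemulti}. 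Your route through collapsed-coordinate Jacobi expansions requires tracking these constants through a map with degenerate Jacobian; you rightly flag this as the crux, and it is exactly the step the paper's two-operator strategy sidesteps.

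A minor remark: the face-vanishing clause in Definition~\ref{d:triangulationPreserving} is not obviously satisfied by the paper's bare $\mathcal L^p$ and $\mathcal I^p$ either (neither is built to respect traces on subsimplices), so your appeal to the hierarchical structure of the Dubiner basis actually buys something extra here --- though in the applications downstream the property that matters is element-locality.
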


\subsection{Relationship to existing results in the literature}\label{s:hpliterature}






In dimensions 2 and 3, 
fixed-regularity results similar to those in 
Theorems
\ref{t:approxHighLowReg} and \ref{t:approxHighLowRegL2}
appear in  
\cite[Appendix B]{MeSa:10}; earlier related work includes \cite{BaSu:87, 
BaCrMaPi:91, Mu:97, GuZh:09}; see the discussion in \cite[\S3.4]{ApMe:17}.
Compared to~\cite{MeSa:10}, our presentation stresses the precise assumptions on the triangulation used
to convert approximants on reference elements into $H^1$-conforming approximants on the triangulation of the full domain, $\Omega$. Furthermore, we give a detailed proof of the construction of such an $H^1$-conforming approximant on all of $\Omega$ starting from 
an appropriate approximant on the reference element (having a modification of the ``element-by-element construction" in the sense of  \cite[Definition 5.3]{MeSa:10}; see Definition \ref{d:boundaryCompatible}, Proposition \ref{p:toBoundaryCompatible},  \eqref{e:whatIsQij}, and Remark \ref{r:elementByElement} below).

We are not aware of the high-regularity estimates in Theorems~\ref{t:approxHighLowReg} 
and \ref{t:approxHighLowRegL2} in the existing literature. While there are approximation results in spaces of analytic functions ~\cite{Me:02,MeSa:10,MeSa:20,BeChMe:25}, these do not lead to a norm bound on $I-\Pi_{\mathcal{P}_h^p}$ between Sobolev spaces (as in the high-regularity results in Theorems~\ref{t:approxHighLowReg} 
and \ref{t:approxHighLowRegL2}). 

This appendix presents an almost-self-contained proof of Theorems~\ref{t:approxHighLowReg} and~\ref{t:approxHighLowRegL2}. We 
use without proof only
basic properties of averaged Taylor polynomials
from~\cite[Chapter 4]{BrSc:08} (precisely~\cite[Corollary 4.2.18, and Propositions 4.1.17 and 4.2.8]{BrSc:08}). 

\subsection{Outline of the proofs}
In \S\ref{s:norms}, we motivate the choice of Sobolev norms in~\eqref{e:highPSobolev} and show that they behave well under pull-back by analytic maps. Then, in \S\ref{s:polyLipschitz}, we give two types of $p$-explicit results about approximation by polynomials in open subsets of $\mathbb{R}^d$ with Lipschitz boundaries. First, we estimate the error in approximating high-regularity functions by high-degree polynomials (coming from averaged Taylor expansions), following~\cite{BrSc:08} but keeping careful track of dependence on the polynomial degree and regularity. Second, we estimate the error in approximating fixed-regularity functions by high-degree Legendre polynomials (in 1-d, such results go back 
at least to \cite{CaQu:82}; see \cite{BaSu:94, BeMa:97, Sc:98} and the references therein). 
These estimates can then be used to construct good polynomial approximations of a function on a single element but they cannot be directly used to construct $H^1$-conforming approximations by piecewise polynomials on all of $\Omega$. In \S\ref{s:boundaryCompatible}, we consider polynomial approximation on simplices and modify the polynomial approximants constructed in \S\ref{s:polyLipschitz} so that they can be extended to $H^1$-conforming piecewise polynomials on the full triangulation. To do this, we use ideas from~\cite{MeSa:10}, which, in turn, uses ideas from~\cite{Mu:97} and~\cite{GuZh:09}. 

\subsection{$H_k^p$ norms}
\label{s:norms}
To determine the correct choice of $H_k^p$ norm, we return to $\mathbb{R}^d$, where the semiclassical Fourier transform (defined by \eqref{e:Fourier}) gives a natural choice of $H_k^p$ seminorm:
$$
|u|_{H_k^{p}(\mathbb{R}^d)}^2:=\frac{1}{(2\pi)^d}\big\||\xi|^{p}\mathcal{F}_k(u)(\xi)\big\|^2_{L^2(\mathbb{R}^d)}.
$$
We now write this in terms of $L^2$ norms of derivatives. 
\begin{align*}
|u|_{H_k^{p}(\mathbb{R}^d)}^2&=\frac{1}{(2\pi^d)}\int |\xi|^{2p}|\mathcal{F}_k(u)(\xi)|^2d\xi\\
&=\frac{1}{(2\pi^d)}\int \sum_{\substack{|\alpha|=p\\\alpha\in\mathbb{N}^d}}\frac{p!}{\alpha!}\xi_1^{2\alpha_1}\dots\xi_d^{2\alpha_d} |\mathcal{F}_k(u)(\xi)|^2d\xi\\
&= \sum_{\substack{|\alpha|=p\\\alpha\in\mathbb{N}^d}}\frac{p!}{\alpha!}\|(k^{-1}D_{x_1})^{\alpha_1}(k^{-1}D_{x_2})^{\alpha_2}\dots (k^{-1}D_{x_d})^{\alpha_d}u\|_{L^2(\mathbb{R}^d)}^2.
\end{align*}
Therefore, on an open set $\Omega\subset \mathbb{R}^d$, it is natural to define $|u|_{H_k^p(\Omega)}$ as in~\eqref{e:highPSobolev}.


Now, we need to determine the appropriate way to combine the $H_k^p$ seminorms into a norm. It is natural to consider a norm that contains a multiple of each $H_k^\ell$ seminorm, $\ell=0,\dots,p$ such that for analytic functions (whose $\alpha$th derivative goes like $C^{|\alpha|} |\alpha|!$) each term in the sum has roughly the same size and whose total size grows at most exponentially in $p$. 
The Sobolev norm in~\eqref{e:highPSobolev} then achieves these goals. 

To make sure our choice of norm is appropriate, we check two important properties. First, we estimate the norm of the derivative in $H_k^p$. 
\begin{lemma}
Let $\Omega \subset \mathbb{R}^d$. Then, for all $p \geq 0$,
$$
\|k^{-1}\nabla u\|_{H_k^{p}(\Omega)}\leq (p+1)\|u\|_{H_k^{p+1}(\Omega)}
$$
\end{lemma}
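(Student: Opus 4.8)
The plan is to prove the inequality by a direct computation that unwinds the definition \eqref{e:highPSobolev} and uses only elementary multi-index identities; no approximation theory is needed. Throughout I would read $\|k^{-1}\nabla u\|_{H^p_k(\Omega)}^2$ componentwise, i.e.\ as $\sum_{i=1}^d \|k^{-1}\partial_{x_i}u\|^2_{H^p_k(\Omega)}$, and use $D:=-i\partial$ (passing between $\partial$ and $D$ costs only unimodular factors, irrelevant for $L^2$ norms).

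First I would expand both sides in terms of $L^2$ norms of derivatives. Since $(k^{-1}D)^\beta(k^{-1}\partial_{x_i}u)=(k^{-1}D)^{\beta+e_i}u$ up to a unimodular factor, the definition of the seminorm gives $|k^{-1}\partial_{x_i}u|^2_{H^\ell_k(\Omega)} = \sum_{|\beta|=\ell}\frac{\ell!}{\beta!}\|(k^{-1}D)^{\beta+e_i}u\|^2_{L^2(\Omega)}$, and hence, summing over $i$ and over $0\le \ell\le p$ with the weights $1/(\ell!)^2$,
\[
\|k^{-1}\nabla u\|^2_{H^p_k(\Omega)} = \sum_{\ell=0}^p\frac{1}{\ell!}\sum_{i=1}^d\sum_{|\beta|=\ell}\frac{1}{\beta!}\big\|(k^{-1}D)^{\beta+e_i}u\big\|^2_{L^2(\Omega)}.
\]

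The key step is to regroup the inner double sum by the multi-index $\gamma:=\beta+e_i$, which ranges over $|\gamma|=\ell+1$. For fixed $\gamma$, the pairs $(\beta,i)$ with $\beta+e_i=\gamma$ are exactly those with $\gamma_i\ge 1$ and $\beta=\gamma-e_i$, and then $\beta!=(\gamma-e_i)!=\gamma!/\gamma_i$. Using $\sum_{i:\gamma_i\ge 1}\gamma_i=\sum_{i=1}^d\gamma_i=|\gamma|=\ell+1$, the inner double sum equals $\sum_{|\gamma|=\ell+1}\frac{\ell+1}{\gamma!}\|(k^{-1}D)^\gamma u\|^2_{L^2(\Omega)}$, which by the definition of the seminorm is $\frac{\ell+1}{(\ell+1)!}|u|^2_{H^{\ell+1}_k(\Omega)}=\frac{1}{\ell!}|u|^2_{H^{\ell+1}_k(\Omega)}$. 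Hence $\|k^{-1}\nabla u\|^2_{H^p_k(\Omega)}=\sum_{\ell=0}^p\frac{1}{(\ell!)^2}|u|^2_{H^{\ell+1}_k(\Omega)}$, and reindexing $m=\ell+1$ gives $\sum_{m=1}^{p+1}\frac{m^2}{(m!)^2}|u|^2_{H^m_k(\Omega)}$.

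To finish, I would bound $m^2\le (p+1)^2$ on the range $1\le m\le p+1$ and compare with $\|u\|^2_{H^{p+1}_k(\Omega)}=\sum_{m=0}^{p+1}\frac{1}{(m!)^2}|u|^2_{H^m_k(\Omega)}$, obtaining $\|k^{-1}\nabla u\|^2_{H^p_k(\Omega)}\le (p+1)^2\|u\|^2_{H^{p+1}_k(\Omega)}$; taking square roots gives the claim. There is no genuine obstacle here — the only point that needs care is the combinatorial bookkeeping in the regrouping step, where the identity $\sum_{i:\gamma_i\ge 1}\gamma_i=|\gamma|$ is precisely what produces the clean constant $\ell+1$, and hence ultimately the sharp-looking factor $(p+1)$ rather than a dimension-dependent one; this is exactly the kind of exactness in the constants that motivated the choice of norm in \eqref{e:highPSobolev}.
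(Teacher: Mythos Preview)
Your proof is correct and follows essentially the same approach as the paper: expand the norm via \eqref{e:highPSobolev}, regroup the sum over $(\beta,i)$ by the combined multi-index $\gamma=\beta+e_i$, and bound $m^2\le (p+1)^2$. Your exposition is in fact slightly cleaner, since you make the combinatorial identity $\sum_{i:\gamma_i\ge 1}\gamma_i=|\gamma|$ explicit and observe that the regrouping step is an equality rather than an inequality.
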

\begin{proof}
By the definition \eqref{e:highPSobolev},
\begin{align*}
&\|k^{-1}\nabla u\|_{H_k^{p}(\Omega)}^2\\
&=\sum_{\ell=0}^p\sum_{\substack{|\alpha|=\ell\\\alpha\in\mathbb{N}^d}}\sum_{j=1}^d\frac{1}{\ell!\alpha!}\|(k^{-1}D_{x_1})^{\alpha_1}(k^{-1}D_{x_2})^{\alpha_2}\dots (k^{-1}D_{x_d})^{\alpha_d}(k^{-1}D_{x_j})u\|_{L^2(\Omega)}^2\\
&\leq\sum_{\ell=0}^p\sum_{\substack{|\alpha|=\ell+1\\\alpha\in\mathbb{N}^d}}\frac{(\ell+1)^2 }{(\ell+1)!\alpha!}\|(k^{-1}D_{x_1})^{\alpha_1}(k^{-1}D_{x_2})^{\alpha_2}\dots (k^{-1}D_{x_d})^{\alpha_d}u\|_{L^2(\Omega)}^2\\
&\leq \sum_{\ell=0}^{p+1}\sum_{\substack{|\alpha|=\ell\\\alpha\in\mathbb{N}^d}}\frac{\ell^2}{(\ell!)^2}|u|_{H_k^{\ell}}^2\leq (p+1)^2\|u\|_{H_k^{p+1}(\Omega)}^2.
\end{align*}
\end{proof}

Next, we show that the $H^p$ norm defined by \eqref{e:highPSobolev} behaves well under pull-backs with analytic maps. 

\begin{lemma}\label{l:analyticpullback}
Let  $d\geq 1$, $C_0>0$. Then there is $C_1>0$ such that for all $t\geq 0$, $U,V\subset \mathbb{R}^d$ open, $\gamma:U\to V$ a bijection satisfying
$$
\|\partial^\alpha \gamma\|_{L^\infty}\leq C_0^{|\alpha|}\alpha!\, \tfor\alpha\in\mathbb{N}^d, |\alpha|\leq \max(p,1),\,\tand\, \|(\partial\gamma)^{-1}\|_{L^\infty}\leq C_0,
$$
and $u\in H^t(V)$,
$$
\frac{1}{t!}|\gamma^*u |_{H^t(U)}\leq C_1^t\|u\|_{H^t(V)},
$$
where $\gamma^*u:=u\circ \gamma$ denotes the pull-back operator.
\end{lemma}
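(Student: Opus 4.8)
```latex
\textbf{Plan of proof.} The plan is to reduce everything to the Fa\`a di Bruno formula for the derivatives of a composition, applied coordinate-by-coordinate, and then to carefully sum the resulting terms against the combinatorial weights $p!/\alpha!$ appearing in the definition \eqref{e:highPSobolev} of $|\cdot|_{H^t}$. First I would fix $t\geq 1$ (the case $t=0$ being trivial, since then $|\gamma^* u|_{H^0(U)}= \|u\circ\gamma\|_{L^2(U)}\leq C_0^{d/2}\|u\|_{L^2(V)}$ by the change-of-variables formula, using that the Jacobian of $\gamma^{-1}$ is bounded by $\|(\partial\gamma)^{-1}\|_{L^\infty}^d\leq C_0^d$). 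For $|\beta|=t$, the multivariate Fa\`a di Bruno formula expresses $\partial^\beta(u\circ\gamma)(x)$ as a sum over partitions: schematically, $\partial^\beta(u\circ\gamma) = \sum (\partial^\mu u)\circ\gamma \cdot \prod (\partial^{\nu} \gamma)$, where the inner product of factors $\partial^\nu\gamma$ has total order $t$ and $|\mu|\leq t$; the number of such terms, together with their multinomial coefficients, is controlled by the Bell-number/partition bookkeeping, which is bounded by $C^t t!/\mu!$-type quantities when the derivatives of $\gamma$ satisfy the analytic bound $\|\partial^\nu\gamma\|_{L^\infty}\leq C_0^{|\nu|}\nu!$.

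Second, I would organise the estimate as follows. Taking $L^2(U)$ norms, applying the change of variables $x=\gamma^{-1}(y)$ (costing a factor $C_0^{d/2}$ as above), and then using the analytic bounds on $\partial^\nu\gamma$ together with $|\nu|!\leq \prod \nu_i!\cdot(\text{multinomial count})$, each term in $|\partial^\beta(u\circ\gamma)|_{L^2(U)}$ is bounded by $C^t$ times a factor $\|\partial^\mu u\|_{L^2(V)}$ with a combinatorial weight. The key algebraic point is then to sum over all $\beta$ with $|\beta|=t$, weighted by $p!/\beta! = t!/\beta!$ (here $p=t$ in the seminorm), and show the total is $\leq C_1^{2t}(t!)^2 \|u\|_{H^t(V)}^2$. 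The summation over partitions and over the multi-index $\mu$ with $|\mu|\leq t$ produces the lower-order seminorms $|u|_{H^\ell(V)}$, $0\leq\ell\leq t$, precisely with the weights $1/(\ell!)^2$ built into $\|u\|_{H^t(V)}^2$ in \eqref{e:highPSobolev} — this is exactly why that norm was chosen — so after dividing by $t!$ one lands on $\frac{1}{t!}|\gamma^*u|_{H^t(U)}\leq C_1^t\|u\|_{H^t(V)}$.

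The main obstacle will be the precise combinatorial accounting: showing that the sum over all Fa\`a di Bruno partition terms of the products $\prod C_0^{|\nu|}\nu!$, weighted by the Fa\`a di Bruno multiplicities and by $t!/\beta!$, collapses to a bound of the form $C_1^t$ times $t!$ times $\big(\sum_{\ell\le t}\frac{1}{(\ell!)^2}|u|_{H^\ell(V)}^2\big)^{1/2}$. I would handle this by grouping the partition terms according to $\mu=$ (the multi-index hitting $u$) and $\ell=|\mu|$; for each fixed $\ell$, the number of partitions of a $t$-index into $\ell$ nonempty blocks is a Stirling-number-type quantity bounded by $C^t t!/\ell!$, and the product of the analytic constants contributes another $C_0^t$, while the $\nu!$ factors combine with $t!/\beta!$ to give at most a further geometric factor. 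It may be cleanest to first prove the scalar ($d=1$) case, where the classical Fa\`a di Bruno formula and the identity $\sum_{\text{partitions of }t\text{ into }\ell\text{ parts}} 1 = S(t,\ell)$ (Stirling numbers of the second kind, with $\sum_\ell S(t,\ell)x^\ell$ the Bell polynomials) make the bookkeeping transparent, and then to reduce the multivariate case to it by bounding partial derivatives in a single direction after composing with coordinate projections. Throughout, one uses $\|(\partial\gamma)^{-1}\|_{L^\infty}\le C_0$ only once, at the change-of-variables step, and the analytic bound on $\partial^\alpha\gamma$ for $|\alpha|\le\max(p,1)=\max(t,1)$ exactly for the finitely many $\nu$ appearing (all with $|\nu|\le t$).
```
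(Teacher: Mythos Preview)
Your approach is essentially the same as the paper's: apply the multivariate Fa\`a di Bruno formula, use the analytic bounds on $\partial^\alpha\gamma$ to strip off the $\gamma$-derivatives, change variables to pass from $L^2(U)$ to $L^2(V)$, and reduce everything to a purely combinatorial bound of the form $b_t\le C^t$. The paper does exactly this, citing the explicit multivariate Fa\`a di Bruno formula of Constantine--Savits and then handling the combinatorics directly in arbitrary dimension: it rewrites the sum over multi-index tuples $(\mu_1,\dots,\mu_s)$ as a sum over their lengths $r_i=|\mu_i|$ (using the multinomial identity to evaluate the inner sum exactly), interprets $\sum_s\sum m!/r!$ as counting ordered $m$-tuples of nonzero multi-indices summing to $\nu$, bounds that count by a product of binomial coefficients, and finishes with the AM--GM inequality plus $\sup_{x>0}(1+x)^{1/x}<\infty$. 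Your Stirling-number bookkeeping is a reasonable alternative for organising the same sum, though you would still need the multivariate version rather than set-partition Stirling numbers.

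One warning: your proposed shortcut of proving the $d=1$ case first and then ``reducing the multivariate case to it by bounding partial derivatives in a single direction after composing with coordinate projections'' does not work. A general bijection $\gamma:U\to V$ in $\mathbb{R}^d$ mixes coordinates, so $\partial_{x_j}(u\circ\gamma)$ involves all of $\partial_1 u,\dots,\partial_d u$ evaluated at $\gamma(x)$; there is no way to decouple into one-dimensional compositions. You have to work with the genuine multivariate Fa\`a di Bruno formula from the start, as the paper does.
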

\begin{proof}
We start from the Faa di Bruno formula~\cite[Theorem 2.1]{CoSa:96}. Let $\mu,\nu\in\mathbb{N}^d$. We say $\mu\prec \nu$ if one of the following holds
\begin{align*}
&(i) \,\,|\mu|<|\nu|,\qquad\text{or} \\
&(ii)\text{ there is }1\leq r\leq d,\text{ such that }\mu_i=\nu_i,\,i=1\dots,r-1\text{ and }\mu_r<\nu_r.
\end{align*}
Then, defining
$$
p_s(\nu,\lambda):=\left\{ \begin{aligned}(\mu_1,\dots,\mu_s;\ell_1,\dots,\ell_s)\in(\mathbb{N}^d)^{2s}\,:\, |&\mu_i|>0,\,0\prec \ell_1\prec\dots\prec \ell_s,\\
& \sum_{i=1}^s\mu_i=\lambda,\,\sum_{i=1}^s|\mu_i|\ell_i=\nu\end{aligned}\right\},
$$
we have
\begin{align*}
&\frac{1}{(t!)^2}|u\circ \gamma|_{H^t(U)}^2\\
&=\sum_{|\nu|=t}\int_U\Big(\sum_{m=1}^t \sum_{|\lambda|=m}(\partial^\lambda u)\circ \gamma\sum_{s=1}^t\sum_{p_s(\nu,\lambda)}\frac{\nu!}{\sqrt{\nu!t!}}\prod_{j=1}^s\frac{[\partial^{\ell_j}\gamma]^{\mu_j}}{\mu_j![\ell_j!]^{|\mu_j|}}\Big)^2\\
&\leq C_0^{2t}\sum_{|\nu|=t}\int_U\Big(\sum_{m=1}^t \sum_{|\lambda|=m}\frac{1}{m!}(\partial^\lambda u)\circ\gamma\sum_{s=1}^t\sum_{p_s(\nu,\lambda)}\frac{\nu!m!}{\sqrt{\nu! t!}}\prod_{j=1}^s\frac{1}{\mu_j!}\Big)^2\\
&\leq C_0^{2t}C_1\|u\|_{H^t(V)}^2 \sum_{|\nu|=t}\sum_{m=1}^t \sum_{|\lambda|=m}\frac{\lambda!}{m!}\Big(\sum_{s=1}^t\sum_{p_s(\nu,\lambda)}\frac{\nu!m!}{\sqrt{\nu! t!}}\prod_{j=1}^s\frac{1}{\mu_j!}\Big)^2.
\end{align*}
Therefore, we claim that there is $C>0$ such that
\begin{equation}
b_{t}:= \sum_{|\nu|=t}\sum_{m=1}^t \sum_{|\lambda|=m}\frac{\lambda!}{m!}\Big(\sum_{s=1}^t\sum_{p_s(\nu,\lambda)}\frac{\nu!m!}{\sqrt{\nu! t!}}\prod_{j=1}^s\frac{1}{\mu_j!}\Big)^2\leq C^t.
\label{e:bpClaim}
\end{equation}
To do this, we convert the sum over multiindices $\mu$ to a sum over only their lengths.
Let $r\in\mathbb{N}^s$ with $|r|=|\lambda|$ and consider the following. 
$$
a_{r,\lambda}:=\sum_{\substack{\mu_1+\dots+\mu_s=\lambda\\ |\mu_i|=r_i}}\prod_{j=1}^s\frac{1}{\mu_j!}.
$$
By the multinomial theorem,
$$
\sum_{|\mu|=r}\frac{x^\mu}{\mu!}=\frac{(x_1+\dots +x_d)^r}{r!}.
$$
Thus
\begin{align*}
\sum_{|\lambda|=|r|} a_{r,\lambda} x^\lambda 
=\sum_{
\substack{
\mu_1,\ldots,\mu_s
\\
|\mu_i|=r_i
}}\prod_{j=1}^s \frac{ x^{\mu_j}}{\mu_j!}
=
\prod_{j=1}^s \sum_{|\mu|=r_j}\frac{x^\mu}{\mu!}
&=\prod_{j=1}^s \frac{(x_1 + \dots+ x_d)^{r_j}}{r_j!} \\
&= \frac{(x_1+\dots+x_d)^{|r|}}{r!} \\
&= \frac{1}{ r!}\sum_{|\lambda|=|r|} \frac{|r|! x^\lambda}{\lambda!}.
\end{align*}
Therefore, $a_{r,\lambda}$ is given by the coefficient of $x^\lambda$ in the right hand side:
$$
a_{r,\lambda}=\frac{|\lambda|!}{\lambda!r!}.
$$
Hence, defining 
$$
q_s(\nu,\lambda):=\left\{ \begin{aligned}(r;\ell_1,\dots,\ell_s)\in\mathbb{N}^s\times (\mathbb{N}^d)^{s}\,:\, &r_i>0,\,0\prec \ell_1\prec\dots\prec \ell_s,\\
& |r|=|\lambda|,\,\sum_{i=1}^sr_i\ell_i=\nu\end{aligned}\right\},
$$
we have
\begin{equation}
\label{e:itIsB}
b_{t}= \sum_{|\nu|=t}\sum_{m=1}^t \sum_{|\lambda|=m}\frac{\nu!m!}{t!\lambda!}\Big(\sum_{s=1}^t\sum_{q_s(\nu,\lambda)}\frac{m!}{r!}\Big)^2.
\end{equation}
Observe that for fixed $\{\ell_i\}_{i=1}^s$ and $r$, $m!/r!$ is the number of ordered $m$-tuples $(w_1,\dots,w_m)\in(\mathbb{N}^d\setminus\{0\})^{m}$ in which exactly $r_i$ entries are equal to $\ell_i$.  Therefore
\begin{equation}
\label{e:toCombinatorics}
\sum_{s=1}^t\sum_{q_s(\nu,\lambda)}\frac{m!}{r!}\leq \#\Big\{ (w_1,\dots,w_m)\in(\mathbb{N}^d\setminus\{0\})^{m}\,:\, \sum_i w_i=\nu\Big\}.
\end{equation}
Now, 
\begin{equation}
\label{e:groupCount}
 \#\Big\{ (w_1,\dots,w_m)\in(\mathbb{N}^d\setminus\{0\})^{m}\,:\, \sum_i w_i=\nu\Big\}\leq \prod_{j=1}^d\frac{(\nu_j+m-1)!}{(m-1)!\nu_j!},
\end{equation}
since for the $j^{th}$ coordinate of $\nu$ we break $\nu_j$ into $m$ non-empty groups.
Finally, using the arithmetic mean, geometric mean inequality: 
$$
\Big(\prod_{i=1}^da_i\Big)^{\frac{1}{d}}\leq \frac{1}{d}\sum_{i=1}^da_i
\quad\tfor a_i\geq 0,
$$
we obtain
\begin{align*}
\prod_{j=1}^d\frac{(\nu_j+m-1)!}{(m-1)!\nu_j!}&\leq \prod_{j=1}^d\frac{(\nu_j+m-1)^{m-1}}{(m-1)!}\\
&\leq \frac{(\frac{|\nu|}{d}+m-1)^{d(m-1)}}{[(m-1)!]^d}\leq C^{d(m-1)}\Big(\frac{|\nu|}{d(m-1)}+1\Big)^{d(m-1)}.
\end{align*}
Now
$$
\sup_{x\in(0,\infty)}\big(1+x\big)^{\frac{1}{x}}<\infty
$$
so that
\begin{equation}
\label{e:upperEstimate}
\prod_{j=1}^d\frac{(\nu_j+m-1)!}{(m-1)!\nu_j!}\leq C^{d(m-1)}C^{|\nu|}.
\end{equation}
The combination of~\eqref{e:itIsB}, \eqref{e:toCombinatorics}, \eqref{e:groupCount} and~\eqref{e:upperEstimate} implies that
$$
b_t\leq  \sum_{|\nu|=t}\sum_{m=1}^t \sum_{|\lambda|=m}\frac{\nu!m!}{t!\lambda!}C^{t}.
$$
Now
\begin{equation*}
\sum_{|\lambda|=m}\frac{m!}{\lambda !} = d^m \quad\tand\quad
\sum_{|\nu|=t}\frac{\nu!}{t!} \leq 
\#\big\{ \nu : |\nu|=t\big\}
 \leq (t+1)^{d-1}
\end{equation*}
(by, respectively, the multinomial theorem and the fact that $\nu !\leq |\nu|!$).
Thus $b_t \leq C^t$,
from which~\eqref{e:bpClaim}, and hence the result, follows.
\end{proof}

\subsection{Polynomial approximation on subsets of $\mathbb{R}^d$}
\label{s:polyLipschitz}


The following result shows that one can approximate high-regularity functions very accurately by piecewise polynomials of high degree.

\begin{proposition}
\label{p:pExplicit1}
Given $d\in\mathbb{N}$ there is $C>0$ such that the following holds. Let $\Omega \Subset \mathbb{R}^d$ with $\gamma(\Omega)<\infty$. Then there are  $\mathcal{I}^{p-1}:H_k^{p}(\Omega)\to \mathbb{P}^{p-1}$, $p\geq 1$ such that for $|\alpha|\leq p$,  $D^{\alpha}\mathcal{I}^{p-1}=\mathcal{I}^{p-1-|\alpha|}D^\alpha$ and for  $\ell\in \{0,1,\dots,p\}$, $u\in H_k^{p}$,
\begin{align}\label{e:approxOmega}
\|(I-\mathcal{I}^{p-1})u\|^2_{H_k^\ell(\Omega)}\leq C\sum_{j=0}^\ell \frac{(k\operatorname{diam}(\Omega))^{2p-2j}}{[(p-j)!j!]^2}(\gamma+1)^{2d}d^{p-j}|u|^2_{H_k^p(\Omega)}.
\end{align}
Moreover, if $u\in \mathbb{P}^{p-1}$, then $\mathcal{I}^{p-1}u=u$.
\end{proposition}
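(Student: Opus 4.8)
The plan is to take $\mathcal{I}^{p-1}$ to be the degree-$(p-1)$ averaged Taylor polynomial of $u$ (the operator denoted $Q^{p}$ in \cite[Chapter 4]{BrSc:08}): fixing a ball $B\Subset\Omega$ with radius comparable to $\rho_{\max}(\Omega)$ with respect to which $\Omega$ is star-shaped, and a cutoff $\phi\in C^\infty_c(B)$ with $\int\phi=1$, set
\[
\mathcal{I}^{p-1}u(x):=\sum_{|\alpha|\le p-1}\frac{1}{\alpha!}\int_B D^\alpha u(z)\,(x-z)^\alpha\,\phi(z)\,dz.
\]
By \cite[Proposition 4.1.17]{BrSc:08} and its immediate consequences, $\mathcal{I}^{p-1}$ is well defined on $H^p_k(\Omega)$, fixes every element of $\mathbb{P}^{p-1}$, and satisfies the commutation identity $D^\alpha\mathcal{I}^{p-1}=\mathcal{I}^{p-1-|\alpha|}D^\alpha$ for $|\alpha|\le p$ (with the convention $\mathcal{I}^{-1}=0$, which in particular forces $\mathcal{I}^{p-1}u\in\mathbb{P}^{p-1}$); these facts already give all the structural claims of the proposition. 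Since $k^{-1}\partial$ differs from $\partial$ only by a scalar, the same commutation holds for the $k$-scaled derivatives, so it suffices to prove \eqref{e:approxOmega} with $k=1$ and then replace $\diam(\Omega)$ by $k\,\diam(\Omega)$ at the very end.

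First I would establish the basic $L^2$ remainder bound, made explicit in $m$: writing $R^m:=I-\mathcal{I}^{m-1}$, the integral representation and kernel estimate for the averaged-Taylor remainder (\cite[Propositions 4.1.17 and 4.2.8]{BrSc:08}) give
\[
R^m w(x)=m\sum_{|\alpha|=m}\frac{1}{\alpha!}\int_\Omega \kappa_\alpha(x,z)\,(x-z)^\alpha\,D^\alpha w(z)\,dz,
\]
where $|\kappa_\alpha(x,z)|\le C(d)\,(\gamma(\Omega)+1)^d\,|x-z|^{-d}$ and $\kappa_\alpha$ is supported in $\{|x-z|\le\diam(\Omega)\}$. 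The crucial point is to bound $\int_\Omega|\kappa_\alpha(x,z)(x-z)^\alpha|\,dz$ not by the crude $C\diam(\Omega)^m$ but by passing to polar coordinates centred at $x$: the radial integral $\int_0^{\diam(\Omega)}r^{m-1}\,dr=\diam(\Omega)^m/m$ produces a factor $1/m$ that exactly cancels the $m$ in front of the sum, while the angular factor $\int_{S^{d-1}}|\omega^\alpha|\,dS(\omega)$ is bounded by $|S^{d-1}|$ uniformly in $\alpha$. The same bound holds for the integral in $x$, so the Schur test applied term by term, followed by Cauchy--Schwarz with weight $1/\alpha!$, the multinomial identity $\sum_{|\alpha|=m}1/\alpha!=d^m/m!$ (see \eqref{e:consequencemulti}), and $\sum_{|\alpha|=m}\tfrac{1}{\alpha!}\|D^\alpha w\|_{L^2}^2=\tfrac{1}{m!}|w|_{H^m(\Omega)}^2$, yields
\[
\|R^m w\|_{L^2(\Omega)}\le C(d)\,(\gamma(\Omega)+1)^d\,\frac{d^{m/2}}{m!}\,\diam(\Omega)^m\,|w|_{H^m(\Omega)}.
\]

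With this in hand I would assemble the $H^\ell_k$ estimate. For $0\le j\le\ell$ and $|\gamma|=j$, the commutation identity gives $(k^{-1}\partial)^\gamma R^p u=R^{p-j}\big((k^{-1}\partial)^\gamma u\big)$; applying the previous display (rescaled, so with $\diam(\Omega)$ replaced by $k\,\diam(\Omega)$ and $\partial$ by $k^{-1}\partial$) to $w=(k^{-1}\partial)^\gamma u$, squaring, and summing over $|\gamma|=j$ with the weight $j!/\gamma!$ that defines $|\cdot|_{H^j_k}$, one is left with the double sum $\sum_{|\gamma|=j}\sum_{|\alpha|=p-j}\tfrac{j!}{\gamma!\,\alpha!}\|(k^{-1}\partial)^{\alpha+\gamma}u\|_{L^2}^2$. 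Reindexing by $\beta=\alpha+\gamma$ (with $|\beta|=p$) and invoking the Vandermonde identity $\sum_{\gamma\le\beta,\,|\gamma|=j}\binom{\beta}{\gamma}=\binom{|\beta|}{j}$, the coefficient of $\|(k^{-1}\partial)^\beta u\|_{L^2}^2$ collapses to $\tfrac{p!}{\beta!\,(p-j)!}$, so the double sum equals $\tfrac{1}{(p-j)!}|u|_{H^p_k(\Omega)}^2$. This gives $|R^pu|_{H^j_k(\Omega)}^2\le C(d)^2(\gamma(\Omega)+1)^{2d}\,\tfrac{(k\diam(\Omega))^{2(p-j)}d^{p-j}}{((p-j)!)^2}\,|u|_{H^p_k(\Omega)}^2$, and summing $\tfrac{1}{(j!)^2}$ times this over $j=0,\dots,\ell$ produces exactly \eqref{e:approxOmega}; the polynomial-reproduction and structural claims were already settled. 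The main obstacle is the $p$-explicit remainder bound: obtaining the sharp $d^{m/2}/m!$ factor (rather than one carrying an extra power of $m$) hinges on (i) having the precise integral form and kernel estimate of the averaged-Taylor remainder, with the $(\gamma(\Omega)+1)^d$ dependence tracked through the change of variables that puts $\phi$ on the rays emanating from $x$, and (ii) the exact cancellation of the combinatorial prefactor $m$ against the $1/m$ from the radial integral; everything else — the commutation identity, the two combinatorial identities, and the rescaling in $k$ — is routine bookkeeping.
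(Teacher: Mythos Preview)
Your proposal is correct and follows essentially the same route as the paper: averaged Taylor polynomial, integral remainder with the $(\gamma+1)^d|x-z|^{-d}$ kernel bound, Schur test producing the crucial $1/m$ from the radial integral to cancel the prefactor $m$, Cauchy--Schwarz with the multinomial identity for the $d^{m/2}/m!$ factor, then the commutation identity and a combinatorial reduction to $|u|_{H^p_k}^2$. The only difference is packaging: the paper isolates the Riesz-potential Schur bound as a separate lemma and, in the final combinatorial step, phrases the reduction via the sup of the multinomial coefficient $\binom{\alpha+\beta}{\alpha}\le\binom{p}{\ell}$, whereas you use the multivariate Vandermonde identity to get the equality directly---your version is in fact the cleaner justification of that step.
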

To prove Proposition~\ref{p:pExplicit1}, we follow~\cite[Chapter 4]{BrSc:08}.

\begin{lemma}\mythmname{\cite[Corollary 4.2.18 and Propositions 4.1.17 and 4.2.8]{BrSc:08}}
\label{l:averagedTaylor}
Given $d\in\mathbb{N}$ there is $C>0$ such that the following holds. For all $\Omega \Subset \mathbb{R}^d$, with $\gamma(\Omega)<\infty$, there is a ball $B$ such that $\Omega$ is star-shaped with respect to $B$, and there are $\mathcal{I}^{p-1}:H_k^{p}(\Omega)\to \mathbb{P}^{p-1}$, $p\geq 0$ such that for $|\alpha|\leq p-1$,  $D^{\alpha}\mathcal{I}^{p-1}=\mathcal{I}^{p-1-|\alpha|}D^\alpha$ and for $u\in H_k^{p}(\Omega)$,
$$
(u-\mathcal{I}^{p-1}u)(x)=p\sum_{|\alpha|=p}\int_{C_x}k_{\alpha}(x,z)D^\alpha u(z)dz,
$$
where  $k_{\alpha}(x,z)=\frac{1}{\alpha!}(x-z)^\alpha k(x,z)$,
$$
|k(x,z)|\leq C(\gamma(\Omega)+1)^d|x-z|^{-d},
$$
and $C_x$ is the convex hull of $\{x\}\cup B$. 
\end{lemma}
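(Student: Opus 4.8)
The plan is to realize $\mathcal{I}^{p-1}$ as the \emph{averaged Taylor polynomial} of degree $p-1$ and to reproduce the construction of \cite[\S\S4.1--4.2]{BrSc:08}, keeping explicit track of the degree $p$ and of the chunkiness parameter $\gamma(\Omega)$; the parameter $k$ plays no role, since for fixed $k$ the space $H^p_k(\Omega)$ coincides (as a set) with $H^p(\Omega)$ and the assertion involves no $k$-weights. \textbf{Setup.} Since $\gamma(\Omega)<\infty$ we have $\rho_{\max}(\Omega)>0$; fix $\rho:=\tfrac12\rho_{\max}(\Omega)$ and a ball $B=B(x_0,\rho)$ such that $\Omega$ is star-shaped with respect to $B$, so that $C_x$ (the convex hull of $\{x\}\cup B$) lies in $\overline{\Omega}$ for every $x\in\Omega$. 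Fix once and for all $\psi\in C_c^\infty(B(0,1))$ with $\int\psi=1$ and set $\phi(y):=\rho^{-d}\psi((y-x_0)/\rho)$, so that $\phi\in C_c^\infty(B)$, $\int_B\phi=1$ and $\|\phi\|_{L^\infty}\le C_d\rho^{-d}$, with $C_d$ depending only on $d$. Writing $T^{p-1}_yu(x):=\sum_{|\beta|\le p-1}\frac{D^\beta u(y)}{\beta!}(x-y)^\beta$ for the Taylor polynomial of $u$ about $y$, put $\mathcal{I}^{p-1}u(x):=\int_B\phi(y)\,T^{p-1}_yu(x)\,dy$. Since $\Omega$ is bounded, $H^p(\Omega)\hookrightarrow W^{p-1,1}(\Omega)$, so this integral is well defined and $\mathcal{I}^{p-1}u\in\mathbb{P}^{p-1}$; moreover $\mathcal{I}^{p-1}u=u$ for $u\in\mathbb{P}^{p-1}$ because then $T^{p-1}_yu\equiv u$. (For $p=0$ the statement is vacuous: $\mathcal{I}^{-1}=0$ and the sum in the remainder formula is empty; we assume $p\ge1$ below.)

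\textbf{Commutation.} Differentiating the polynomial $T^{p-1}_yu(x)$ in $x$ and re-indexing the sum by $\gamma:=\beta-\alpha$ gives, for $|\alpha|\le p-1$, the identity $\partial_x^\alpha T^{p-1}_yu(x)=T^{p-1-|\alpha|}_y(D^\alpha u)(x)$; integrating this against $\phi(y)$ — legitimate since the integrand is a polynomial in $x$ with coefficients in $L^1_y(B)$ — yields $D^\alpha\mathcal{I}^{p-1}u=\mathcal{I}^{p-1-|\alpha|}(D^\alpha u)$.

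\textbf{Remainder representation.} For $u\in C^\infty(\overline{\Omega})$ and $x,y\in\Omega$, the one-dimensional Taylor theorem with integral remainder applied to $s\mapsto u(y+s(x-y))$ on $[0,1]$ (the segment lies in $\overline{\Omega}$) gives
\[
u(x)=T^{p-1}_yu(x)+p\sum_{|\alpha|=p}\frac{(x-y)^\alpha}{\alpha!}\int_0^1(1-s)^{p-1}D^\alpha u\big(y+s(x-y)\big)\,ds .
\]
Multiplying by $\phi(y)$, integrating over $B$ and using $\int_B\phi=1$ produces $u(x)-\mathcal{I}^{p-1}u(x)$ on the left. In the remainder perform the change of variables $z=(1-s)y+sx$: for fixed $s\in[0,1)$ this maps $B$ onto $(1-s)B+sx$ with $dy=(1-s)^{-d}\,dz$ and $x-y=(x-z)/(1-s)$, and as $(y,s)$ ranges over $B\times[0,1)$ the point $z$ sweeps out exactly $C_x$. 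Interchanging the $s$- and $z$-integrals (Tonelli, after inserting $|D^\alpha u|$) gives $p\sum_{|\alpha|=p}\int_{C_x}k_\alpha(x,z)D^\alpha u(z)\,dz$ with $k_\alpha(x,z)=\frac1{\alpha!}(x-z)^\alpha k(x,z)$ and
\[
k(x,z)=\int_{\{s\in[0,1)\,:\,(z-sx)/(1-s)\in B\}}\phi\!\left(\frac{z-sx}{1-s}\right)\frac{ds}{(1-s)^{1+d}} .
\]
Since $\Omega$ is Lipschitz, $C^\infty(\overline{\Omega})$ is dense in $H^p(\Omega)$, and both $\mathcal{I}^{p-1}$ and the map $u\mapsto\int_{C_x}k_\alpha(x,\cdot)D^\alpha u$ are bounded on $H^p(\Omega)$ (the latter by the kernel bound below), so the identity extends to all $u\in H^p(\Omega)$.

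\textbf{Kernel bound and main obstacle.} Substituting $r=(1-s)^{-1}$ rewrites $k$ as $k(x,z)=\int_1^\infty\phi\big(x+r(z-x)\big)\,r^{d-1}\,\mathbf{1}_{\{x+r(z-x)\in B\}}\,dr$. On the region of integration: (i) since $x+r(z-x)\in B\subset\Omega$ and $x\in\Omega$, $r|z-x|=|r(z-x)|\le\rho+\diam(\Omega)\le 2\diam(\Omega)$, so $r^{d-1}\le C_d(\diam(\Omega)/|z-x|)^{d-1}$; (ii) the admissible $r$ form an interval whose image under $r\mapsto x+r(z-x)$ is a chord of $B$, of length $\le2\rho$, hence the interval has length $\le2\rho/|z-x|$; (iii) $\|\phi\|_{L^\infty}\le C_d\rho^{-d}$. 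Multiplying,
\[
|k(x,z)|\le C_d\,\rho^{-d}\Big(\frac{\diam(\Omega)}{|z-x|}\Big)^{d-1}\frac{\rho}{|z-x|}=C_d\Big(\frac{\diam(\Omega)}{\rho}\Big)^{d-1}|z-x|^{-d}\le C_d\,(\gamma(\Omega)+1)^d\,|z-x|^{-d},
\]
using $\diam(\Omega)/\rho=2\gamma(\Omega)$; this is the asserted bound with $C$ depending only on $d$. The delicate point is the change of variables $z=(1-s)y+sx$ and the subsequent interchange of the order of integration: one must correctly identify the $z$-domain as the cone $C_x$ and, for fixed $z$, the exact set of $s$ contributing to $k(x,z)$, and then extract the precise power of $\gamma(\Omega)$ from the chord-length estimate in (ii) together with the dimensional scaling of $\phi$. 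The remaining items — well-posedness of $\mathcal{I}^{p-1}$, the commutation relation, and the density argument — are routine.
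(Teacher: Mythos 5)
Your proof is correct and follows exactly the route the paper intends: the paper's own "proof" is a one-line sketch deferring to the averaged-Taylor-polynomial construction of Brenner--Scott, and you have simply written out that construction in full (definition of $\mathcal{I}^{p-1}$ by averaging $T^{p-1}_y u$ against a mollifier on $B$, the commutation identity, the integral remainder, the change of variables $z=(1-s)y+sx$, and the chord-length/scaling argument giving the $(\gamma(\Omega)+1)^d$ kernel bound). The only cosmetic blemish is the $p=0$ edge case, where the stated remainder formula degenerates; this is an artefact of the statement rather than of your argument, which correctly restricts to $p\geq 1$.
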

\begin{proof}[Sketch of proof]
The result 
follows by averaging the Taylor polynomial over a ball and writing the integral form of the remainder. 
\end{proof}

\begin{lemma}
\label{l:riesz}
Let $\Omega\subset \mathbb{R}^d$ and for $m>0$ define the operator 
$$
T_mu(x):=\int_\Omega |x-z|^{m-d}u(z)dz.
$$
For all $m>0$,
$$
\|T_mu\|_{L^2(\Omega)}\leq \frac{(\operatorname{diam}(\Omega))^m}{m}\|u\|_{L^2(\Omega)}.
$$
\end{lemma}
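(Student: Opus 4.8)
The plan is to bound the $L^2(\Omega)$ norm of $T_m u$ by a direct application of the Cauchy--Schwarz inequality in the form of Schur's test (i.e., bounding an integral operator by controlling the $L^1$ norm of its kernel in each variable separately). First I would write, for $x\in\Omega$,
\[
|T_m u(x)| \le \int_\Omega |x-z|^{m-d}\,|u(z)|\,dz,
\]
and split the kernel $|x-z|^{m-d} = |x-z|^{(m-d)/2}\cdot |x-z|^{(m-d)/2}$, so that by Cauchy--Schwarz
\[
|T_m u(x)|^2 \le \Big(\int_\Omega |x-z|^{m-d}\,dz\Big)\Big(\int_\Omega |x-z|^{m-d}\,|u(z)|^2\,dz\Big).
\]

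Next I would estimate the first factor uniformly in $x$: since $\Omega$ has diameter $\operatorname{diam}(\Omega)$, for any fixed $x\in\Omega$ we have $\Omega\subset B(x,\operatorname{diam}(\Omega))$, and since $m-d > -d$ the integrand is locally integrable, so
\[
\int_\Omega |x-z|^{m-d}\,dz \le \int_{B(x,\operatorname{diam}(\Omega))} |x-z|^{m-d}\,dz = \omega_{d-1}\int_0^{\operatorname{diam}(\Omega)} r^{m-d} r^{d-1}\,dr = \omega_{d-1}\,\frac{(\operatorname{diam}(\Omega))^m}{m},
\]
where $\omega_{d-1}$ is the surface area of the unit sphere in $\mathbb{R}^d$. (Here one must be slightly careful about the dimensional constant; to match the clean statement in the lemma one should work with the normalization in which the spherical-shell volume element is accounted for, or simply absorb $\omega_{d-1}$ — I would double-check against the intended constant, but the key point is that the first factor is bounded by $C_d (\operatorname{diam}(\Omega))^m/m$ with $C_d$ dimensional, and the stated lemma corresponds to taking the constant to be $1$, presumably by a convention on the measure or because the excerpt's constant is the relevant one after the earlier normalizations.) Then I would integrate the displayed bound for $|T_m u(x)|^2$ over $x\in\Omega$, apply Tonelli's theorem to swap the order of integration in the second factor, and use the same radial estimate again to bound $\int_\Omega |x-z|^{m-d}\,dx \le (\operatorname{diam}(\Omega))^m/m$ for each fixed $z$, obtaining
\[
\|T_m u\|_{L^2(\Omega)}^2 \le \frac{(\operatorname{diam}(\Omega))^m}{m}\int_\Omega\!\int_\Omega |x-z|^{m-d}\,|u(z)|^2\,dz\,dx \le \Big(\frac{(\operatorname{diam}(\Omega))^m}{m}\Big)^2 \|u\|_{L^2(\Omega)}^2,
\]
and taking square roots gives the claim.

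There is no real obstacle here — the only point requiring mild care is the exact value of the dimensional constant in the radial integral $\int_0^{R} r^{m-1}\,dr = R^m/m$ versus $\int_{B(x,R)}|x-z|^{m-d}\,dz$, which carries the factor $\omega_{d-1}$; I would reconcile this with whatever normalization the paper uses so that the constant is genuinely $1$ as stated (or, if the constant is meant to be dimensional, adjust the statement accordingly). The symmetry of the kernel $|x-z|^{m-d}$ in $x$ and $z$ is what makes Schur's test give exactly the square of the one-variable bound, which is the clean form recorded in the lemma.
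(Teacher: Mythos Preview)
Your proof is correct and is exactly the paper's approach: the paper also invokes the Schur test and bounds $\sup_{x\in\Omega}\int_\Omega |x-z|^{m-d}\,dz$ by the radial integral $\int_0^{\operatorname{diam}(\Omega)} r^{m-1}\,dr = (\operatorname{diam}(\Omega))^m/m$. Your worry about the missing factor $\omega_{d-1}$ is well-founded --- the paper's proof silently drops it as well, so the stated constant is off by this dimensional factor, but it is immaterial since the lemma is only used to feed into estimates carrying an unspecified constant $C$ depending on $d$.
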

\begin{proof}
The result follows from the Schur test for boundedness (see, e.g., \cite[Lemma 18.1.12]{Ho:85}) 
since 
\begin{align*}
\sup_{x\in\Omega}\int_{\Omega} |x-z|^{m-n}dz&=\sup_{z\in\Omega}\int_{\Omega} |x-z|^{m-d}dx\\
&\leq \int_0^{\operatorname{diam}(\Omega)}r^{m-d+d-1}dr\leq \frac{\big(\operatorname{diam}(\Omega)\big)^m}{m}.
\end{align*}
\end{proof}

\begin{proof}[Proof of Proposition~\ref{p:pExplicit1}]
We start with the case $m=0$. Then, by Lemma~\ref{l:averagedTaylor}, there is $\mathcal{I}^{p-1}$ with the required properties such that 
\begin{align*}
|u-\mathcal{I}^{p-1}u|(x)&\leq Cp(\gamma+1)^d\sum_{|\alpha|=p}\int_{\Omega}\frac{1}{\alpha!}|(x-z)^\alpha| |x-z|^{-d}|D^\alpha u(z)|dz\\
&\leq Cp\frac{1}{p!}(\gamma+1)^d\sum_{|\alpha|=p}\frac{p!}{\alpha!} \int_{\Omega} |x-z|^{p-d}|D^\alpha u(z)|dz.
\end{align*}
(The fact that for $u\in\mathbb{P}^{p-1}$, $\mathcal{I}^{p-1}u=u$ follows from the equality $D^\alpha \mathcal{I}^{p-1}=\mathcal{I}^{p-1-|\alpha|}D^{\alpha}$. )
By Lemma~\ref{l:riesz} and the triangle inequality, the Cauchy-Schwarz inequality, \eqref{e:consequencemulti}, and the definition 
of $|\cdot|_{H^p_k(\Omega)}$ \eqref{e:highPSobolev}, 
\begin{align}\nonumber
\|u-\mathcal{I}^{p-1}u\|_{L^2(\Omega)}&\leq C\frac{(\operatorname{diam}(\Omega))^p}{p!}(\gamma+1)^d\sum_{|\alpha|=p}\frac{p!}{\alpha!} \|D^\alpha u\|_{L^2(\Omega)}\\ \nonumber
&\leq C\frac{(\operatorname{diam}(\Omega))^p}{p!}(\gamma+1)^d\Big(\sum_{|\alpha|=p}\frac{p!}{\alpha!}\Big)^{\frac{1}{2}}\Big(\sum_{|\alpha|=p} \frac{p!}{\alpha!}\|D^\alpha u\|^2_{L^2(\Omega)}\Big)^{\frac{1}{2}}\\
&= C\frac{(k\operatorname{diam}(\Omega))^p}{p!}(\gamma+1)^dd^{\frac{p}{2}}|u|_{H_k^p(\Omega)}.
\label{e:clocksChange1}
\end{align}
By the definition 
of $|\cdot|_{H^\ell_k(\Omega)}$ \eqref{e:highPSobolev}, the property $D^{\alpha}\mathcal{I}^{p-1}=\mathcal{I}^{p-1-|\alpha|}D^\alpha$, \eqref{e:clocksChange1}, and 
\eqref{e:highPSobolev} again,
\begin{align*}
&|u-\mathcal{I}^{p-1}u|_{H_k^{\ell}(\Omega)}^2=\sum_{|\alpha|=\ell}\frac{\ell!}{\alpha!}\|k^{-|\alpha|}D^\alpha u-k^{-|\alpha|}D^\alpha \mathcal{I}^{p-1}u\|^2_{L^2(\Omega)}\\
&=\sum_{|\alpha|=\ell}\frac{\ell!}{\alpha!}\|k^{-|\alpha|}D^\alpha u-\mathcal{I}^{p-1-|\alpha|}(k^{-|\alpha|}D^\alpha u)\|^2_{L^2(\Omega)},\\
&\leq C\frac{(k\operatorname{diam}(\Omega))^{2p-2\ell}}{[(p-\ell)!]^2}(\gamma+1)^{2d}d^{p-\ell}\sum_{|\alpha|=\ell}\frac{\ell!}{\alpha!}|k^{-|\alpha|}D^\alpha u|^2_{H_k^{p-|\alpha|}(\Omega)},\\
&\leq C\frac{(k\operatorname{diam}(\Omega))^{2p-2\ell}}{[(p-\ell)!]^2}(\gamma+1)^{2d}d^{p-\ell}\\
&\qquad \sum_{|\alpha|=\ell}\frac{\ell!}{\alpha!}
\sum_{\substack{|\beta|=p-\ell\\\beta\in\mathbb{N}^d}}\frac{(p-\ell)!}{\beta!}\|(k^{-1}D_{x_1})^{\beta_1}\dots (k^{-1}D_{x_d})^{\beta_d}(k^{-1}D)^\alpha u\|_{L^2(\Omega)}^2,\\
&\leq C\frac{(k\operatorname{diam}(\Omega))^{2p-2\ell}}{[(p-\ell)!]^2}(\gamma+1)^{2d}d^{p-\ell}|u|_{H_k^p(\Omega)}^2,
\end{align*}
where in the last line we used that 
$$
\sup_{\substack{|\alpha|=\ell\\|\beta|=p-\ell}}\frac{(\alpha+\beta)!}{\alpha!\beta!}=\frac{p!}{\ell! (p-\ell)!}
$$
(interpreting the left-hand side combinatorially, we see that the number on the left is less than the number of ways of choosing a group of $\ell$ objects from a group of $p$ objects, and this number is obviously achieved when $\alpha$ and $\beta$ have only one non-zero entry).
Therefore, by \eqref{e:highPSobolev},
\begin{align*}
\|u-\mathcal{I}^{p-1}u\|_{H_k^{\ell}(\Omega)}^2&=\sum_{j=0}^\ell\frac{1}{(j!)^2}|u-\mathcal{I}^{p-1}u|_{H_k^j(\Omega)}^2\\
&\leq \sum_{j=0}^\ell C\frac{(k\operatorname{diam}(\Omega))^{2p-2j}}{[(p-j)!j!]^2}(\gamma+1)^{2d}d^{p-j}|u|_{H_k^p(\Omega)}^2,
\end{align*}
which is the result \eqref{e:approxOmega}.
\end{proof}

In Proposition \ref{p:pExplicit1}, the regularity of $u$ is tied to the degree of the polynomials; i.e., Proposition \ref{p:pExplicit1} applies to $u\in H^m$ with $m$ at least the polynomial degree $p$. We now
show that the error when
approximating a \emph{fixed-regularity} function by polynomials decreases with the polynomial degree. 
The main idea is to use the spectral theory of a self-adjoint elliptic operator whose eigenfunctions are polynomials to approximate a function with fixed regularity. In this case, we use the Legendre polynomials:~eigenfunctions of the operator $-\partial_x(1-x^2)\partial_x$ on $[-1,1]$.
\begin{proposition}
\label{p:lowregularityApproximation}
Let $\Omega\Subset \mathbb{R}^d$ open with Lipschitz boundary and $m \geq \ell\geq 0$. Then there is $C>0$ such that for all  $p\geq m$, there is $\mathcal{L}^{p-1}:H_k^m(\Omega) \to \mathbb{P}^{p-1}$ such that for all $u\in H_k^m(\Omega)$, 
\beq\label{e:Legendre1}
\| (I-\mathcal{L}^{p-1})u\|_{H_k^\ell(\Omega)}\leq C\frac{(\operatorname{diam}(\Omega)k)^{m-\ell}}{p^{m-\ell}}|u|_{H_k^m(\Omega)},
\eeq
and for $u\in\mathbb{P}^{p-1}$, $\mathcal{L}^{p-1}u=u$. 
\end{proposition}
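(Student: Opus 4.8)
The plan is to reduce to a 1-d statement about Legendre expansions via a tensor-product construction, combined with a Sobolev extension operator to handle the general Lipschitz domain. First I would record the classical 1-d fact: with $\{P_j\}_{j\geq 0}$ the Legendre polynomials on $[-1,1]$ (eigenfunctions of $L:=-\partial_x(1-x^2)\partial_x$ with eigenvalues $j(j+1)$), the truncated expansion $\pi_{p-1}v:=\sum_{j=0}^{p-1}\langle v,P_j\rangle_{L^2}\|P_j\|^{-2}_{L^2}P_j$ satisfies, for $0\le \ell\le m$,
\beq\label{e:legendre1d}
\|v-\pi_{p-1}v\|_{H^\ell(-1,1)}\le C\, p^{\ell-m}\,|v|_{H^m(-1,1)},\qquad p\ge m,
\eeq
which follows from writing $v=\sum_j c_j P_j$, using that $H^m$-regularity of $v$ controls $\sum_j (j(j+1))^{m}|c_j|^2\|P_j\|_{L^2}^2$ (via integration by parts against powers of $L$ and the equivalence of $\|L^{m/2}\cdot\|_{L^2}$-type norms with Sobolev norms up to weights in $1-x^2$ — see \cite{CaQu:82,BaSu:94,BeMa:97}), and then comparing the tails. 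The $k$-weighting is harmless: rescaling $[-1,1]$ to an interval of length $\operatorname{diam}(\Omega)$ and tracking factors of $k^{-1}$ on each derivative turns \eqref{e:legendre1d} into the claimed $(\operatorname{diam}(\Omega)k/p)^{m-\ell}$ dependence.

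Next I would bootstrap to $d$ dimensions on a cube $Q=(-1,1)^d$ by taking $\mathcal{L}^{p-1}_Q:=\pi_{p-1}\otimes\cdots\otimes\pi_{p-1}$, the tensor product of 1-d truncations in each coordinate. A standard telescoping argument — writing $I-\mathcal L^{p-1}_Q=\sum_{r=1}^d (I\otimes\cdots\otimes(I-\pi_{p-1})\otimes\pi_{p-1}\otimes\cdots)$ and applying \eqref{e:legendre1d} in the $r$th slot while using $\|\pi_{p-1}\|_{L^2\to L^2}\le 1$ in the others — gives \eqref{e:Legendre1} with $Q$ in place of $\Omega$, and the projection property $\mathcal L^{p-1}_Q u=u$ for $u\in\mathbb P^{p-1}$ is immediate since each $\pi_{p-1}$ fixes polynomials of degree $\le p-1$ in its variable (note $\mathbb P^{p-1}\subset$ the tensor-product polynomial space). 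For a general Lipschitz $\Omega$, fix a cube $Q\supset\Omega$ and a bounded Stein-type extension operator $E:H_k^m(\Omega)\to H_k^m(Q)$ with $\|E\|\le C$ uniformly (the $k$-weighting only rescales; for the $k$-explicit version one can use a dilated cube or absorb the $k$-dependence into $\operatorname{diam}$). Then set $\mathcal L^{p-1}u:=(\mathcal L^{p-1}_Q Eu)|_\Omega$; boundedness of $E$ and the cube estimate yield \eqref{e:Legendre1}. The projection property on $\Omega$ requires a little care: for $u\in\mathbb P^{p-1}(\Omega)$ one extends $u$ as the same polynomial (which lies in $H_k^m(Q)$), so $\mathcal L^{p-1}_Q Eu=Eu$ and restricting back gives $u$; since a bounded linear operator agreeing with $E$ on the (finite-dimensional) polynomial subspace can always be arranged, this is not an obstruction.

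The main obstacle is the sharp 1-d estimate \eqref{e:legendre1d} with the \emph{correct} power $p^{\ell-m}$ and with the right-hand side involving only the top seminorm $|v|_{H^m}$ rather than the full norm $\|v\|_{H^m}$. The subtlety is that Legendre coefficients are naturally controlled by weighted Sobolev norms (the weights $(1-x^2)$ appearing from the Jacobi operator), so one must either invoke the known optimal results of \cite{BaSu:94,BeMa:97,Sc:98} as a black box, or reprove the weighted-to-unweighted comparison; getting a clean seminorm-only bound (which is what feeds into the tensor-product telescoping without losing powers of $p$) is the technical heart. A secondary, more bookkeeping-type issue is ensuring all constants are independent of $k$ and $p$ simultaneously, which is handled by the rescaling observation above and by noting $p\ge m$ is assumed so no negative powers of $p$ are hidden in lower-order terms.
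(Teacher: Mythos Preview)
There is a real gap: your tensor-product operator $\mathcal{L}^{p-1}_Q=\pi_{p-1}\otimes\cdots\otimes\pi_{p-1}$ lands in the space of polynomials of degree $\le p-1$ in \emph{each} variable, not in $\mathbb{P}^{p-1}$ (total degree $\le p-1$) as the proposition requires. You correctly note that $\mathbb{P}^{p-1}$ is contained in the tensor-product space, which secures the projection property, but the range is still wrong. The obvious repair --- take $\pi_{\lfloor(p-1)/d\rfloor}$ in each slot so that the range lies in $\mathbb{P}^{p-1}$ --- destroys the projection property on all of $\mathbb{P}^{p-1}$ (e.g.\ $x_1^{p-1}$ is no longer fixed), and composing your operator with a further $L^2$-projection onto $\mathbb{P}^{p-1}$ would require a $p$-uniform $H^\ell$-bound on that projection, which is precisely the missing ingredient. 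A secondary gap: the telescoping with only $\|\pi_{p-1}\|_{L^2\to L^2}\le 1$ in the other slots handles $\ell=0$ but not $\ell\ge 1$, where $p$-uniform $H^s\to H^s$ boundedness of $\pi_{p-1}$ is needed.

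The paper sidesteps both issues by working directly with the $d$-dimensional Jacobi operator $P=-\sum_i\partial_{x_i}(1-x_i^2)\partial_{x_i}$ on $[-1,1]^d$. Its eigenfunctions are the tensor-Legendre products $p_{\vec n}=\prod_j p_{n_j}(x_j)$, and the key observation is that $\mathbb{P}^{p-1}=\operatorname{span}\{p_{\vec n}:|\vec n|\le p-1\}$ exactly (since $\deg p_{\vec n}=|\vec n|$), so the $L^2$-orthogonal projector $\Pi_{p-1}$ onto $\mathbb{P}^{p-1}$ commutes with $P$ and satisfies $1_{(-\infty,cp^2]}(P)\Pi_{p-1}=1_{(-\infty,cp^2]}(P)$. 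After a cutoff-extension $\chi E$ localising to $\Omega\Subset(-1,1)^d$, the bound follows from functional calculus --- $(P+1)^{(\ell-m)/2}$ restricted to the spectral window $(cp^2,\infty)$ gives the factor $p^{\ell-m}$ --- together with local elliptic regularity for $P$. The seminorm on the right is obtained afterwards by subtracting a polynomial $\mathcal{Z}u\in\mathbb{P}^{m-1}$ with vanishing averaged derivatives and invoking Poincar\'e, rather than by sharpening the 1-d input as you propose.
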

\begin{proof}

We start by consider $\Omega$ with $\operatorname{diam}(\Omega)=1$. Then, without loss of generality, we can assume $\Omega\Subset [-1,1]^d$. 
By the now-classical result of~\cite{Ca:61}, there is $E:H^m(\Omega)\to H^m(\mathbb{R}^d)$ such that 
$$
\|Eu\|_{H^m(\mathbb{R}^d)}\leq C\|u\|_{H^m(\Omega)},\qquad (Eu)|_{\Omega}=u.
$$
Let $\chi \in C_c^\infty((-1,1)^d)$ with $\supp (1-\chi)\cap \Omega=\emptyset$. Define the sesquilinear form $Q:H^1([-1,1]^d)\times H^1([-1,1]^d)\to \mathbb{C}$ by 
$$
Q(u,v)=\sum_{i=1}^d \big\langle (1-x_i^2)\partial_{x_i}u,\partial_{x_i}v\big\rangle.
$$
Then, for all $u,v \in H^1$,
$$
0\leq Q(u,u),\qquad |Q(u,v)|\leq C\|u\|_{H^1}\|v\|_{H^1},
$$
and hence the Friedrichs extension of $Q$, which we denote by $P:L^2([-1,1]^d)\to L^2([-1,1]^d)$, is self adjoint with domain
$$
\mathcal{D}(P):=\big\{ u\in H^1\,:\, |Q(u,v)|\leq C_u\|v\|_{L^2}\, \text{for all }v\in H^1\big\}. 
$$
Observe that $H^2([-1,1]^d)\subset \mathcal{D}(P)$ and, for $u\in H^2$, 
$$
Pu= -\sum_{i=1}^d\partial_{x_i}(1-x_i^2)\partial_{x_i}u.
$$

Let $p_{\lambda_j}:[-1,1]\to \mathbb{R}$ be an orthonormal basis of eigenfunctions for $P$ when $d=1$; i.e., satisfying
$$
-\partial_x(1-x^2)\partial_xp_{\lambda_j}-\lambda_jp_{\lambda_j}=0.
$$ 
We claim that $\lambda_j= j(j+1)$ and $p_{\lambda_j}$ is a polynomial of degree $j$. To see this observe that $\mathbb{P}^\ell$ is invariant under $P$ for every $\ell$ and hence $P$ has $\ell+1$ linearly independent eigenfunctions in $\mathbb{P}^{\ell}$ for every $\ell$. This implies that for every $n$ there is $j$ such that $p_{\lambda_j}$ is a polynomial of degree exactly $n$. Since the set of polynomials is dense in $L^2([-1,1])$ we then obtain that for every $j$  there is $n_j$ such that $p_{\lambda_j}$ is a polynomial of degree $n_j$. To compute the eigenvalue observe that, since $p_{\lambda_j}(x)=\sum_{\ell=0}^{n_j}b_\ell x^\ell$ with $b_{n_j}\neq 0$
$$
0=(-\partial_{x}(1-x^2)\partial_xp_{\lambda_j}-\lambda_j p_{\lambda_j})= \sum_{\ell=0}^{n_j} a_\ell x^\ell,
$$
where
$$
a_{n_j}x^{n_j}=\partial_xx^2\partial_x b_{n_j}x^{n_j} -\lambda_jb_{n_j}x^{n_j}=(n_j(n_j+1)-\lambda_j) b_{n_j}x^{n_j}.
$$
Hence $\lambda_j=n_j(n_j+1)$. We now now relabel these eigenfunctions by their degree, and recall that, in fact, 
$p_n:[-1,1]\to\mathbb{R}$, $n=0,1,\dots,$ are the Legendre polynomials.

For $\vec{n}\in\mathbb{N}^d$, define $p_{\vec{n}}:[-1,1]^d\to \mathbb{R}$ by
$$
p_{\vec{n}}(x):=\prod_{j=1}^dp_{\vec{n}_j}(x_j).
$$
By the definition of the Legendre polynomials,
$$
\Big(P-\sum_{j=1}^d \vec{n}_j(\vec{n}_j+1)\Big)p_{\vec{n}}= 0,
$$
and hence $\{p_{\vec{n}}\}_{\vec{n}\in\mathbb{N}^d}$ forms an orthonormal basis of eigenfunctions of $P$ for $L^2([-1,1]^d)$. 

In particular, there is $c>0$ such that, with $\Pi_{p-1}:L^2([-1,1]^d)\to \mathbb{P}^{p-1}$  the orthogonal projector,  $[\Pi_{p-1},P]=0$ and, since
\beqs
c_d \Big( \sum_j \vec{n}_j\Big)^2
\leq \sum_j \vec{n}_j(\vec{n}_j+1)
\eeqs
for some $c_d>0$,
$$
1_{(-\infty, cp^2]}(P)\Pi_{p-1}=1_{(-\infty, cp^2]}(P). 
$$
Now set $\widetilde{\mathcal{L}}^{p-1}:= 1_{\Omega}\Pi_{p-1}\chi E$. 
Observe that, since $1_\Omega E= I$ and $1_\Omega\chi= 1_\Omega$,
\begin{align*}
(I-\widetilde{\mathcal{L}}^{p-1})&= 1_{\Omega} (I-\Pi_{p-1})\chi E\\
&=1_{\Omega} (P+1)^{-m/2}1_{(cp^2,\infty)}(P)(I-\Pi_{p-1})(P+1)^{m/2} \chi E.
\end{align*}
Now, by local elliptic regularity, 
$$
\|1_{\Omega}(P+1)^{-t/2}\|_{L^2([-1,1]^d)\to H^{t}(\Omega)}\leq C_t,
$$
so that 
\begin{align*}
&\|(I-\widetilde{\mathcal{L}}^{p-1})\|_{H^\ell(\Omega)\to H^m(\Omega)}\\
&\leq C_{m-\ell}\|(P+1)^{(m-\ell)/2}1_{(cp^2,\infty)}(P)\|_{L^2\to L^2}\|(P+1)^{\ell/2} \chi E\|_{H^\ell(\Omega)\to L^2}\\
&\leq  C\langle p\rangle^{-\ell+m}.
\end{align*}

To define $\mathcal{L}^{p-1}$, we need to construct an operator so that $(I-\mathcal{L}^{p-1})u$ is controlled by the $H^m$ seminorm of $u$ (as in  \eqref{e:Legendre1}). For this, let $\mathcal{Z}:H^m(\Omega)\to \mathbb{P}_{m-1}$ be the unique operator such that 
\begin{equation}
\label{e:average0}
\int_{\Omega} \partial^\alpha (u-\mathcal{Z}u)=0,\qquad |\alpha|\leq m-1,
\end{equation}
and set 
$$
\mathcal{L}^{p-1}u= \widetilde{\mathcal{L}}^{p-1}(u-\mathcal{Z}u)+\mathcal{Z}u
$$
Then,
$$
\|u-\mathcal{L}^{p-1}u\|_{H^\ell(\Omega)}=\|u-\mathcal{Z}u- \widetilde{\mathcal{L}}^{p-1}(u-\mathcal{Z}u)\|_{H^\ell}\leq C\langle p\rangle^{-\ell+m}\|u-\mathcal{Z}u\|_{H^m}.
$$
Using the Poincar\'e inequality together with~\eqref{e:average0} repeatedly, we obtain
$$
\|u-\mathcal{L}^{p-1}u\|_{H^\ell(\Omega)}\leq C\langle p\rangle^{-\ell+m} |u-\mathcal{Z}u|_{H^m(\Omega)}= C\langle p\rangle^{-\ell+m}|u|_{H^m(\Omega)},
$$
and the result \eqref{e:Legendre1} then follows by scaling.

The fact that $\mathcal{L}^{p-1}u=u$ for $u\in\mathbb{P}^{p-1}$ follows from the fact that $u=\tilde{\mathcal{L}}u$ if $u\in\mathbb{P}^{p-1}$.
\end{proof}

\subsection{Boundary compatible approximations}	

\label{s:boundaryCompatible}
We now show that given a polynomial approximant on a simplex with control on a sufficiently-high Sobolev norm of the error (more than $d/2$ derivatives in $L^2$) one can modify the polynomial approximant to one that is \emph{boundary compatible} in the sense of Definition~\ref{d:boundaryCompatible} below. Boundary-compatible polynomial approximants can then be extended to conforming piecewise-polynomial approximants on $C^1$ triangulations (in the sense of Definition \ref{d:Crtriang}); this is done in \S\ref{s:proofBoundaryCompatible}.

\subsubsection{Simplices and boundary compatibility} 

\begin{definition}
Let $K\subset \mathbb{R}^d$ be an open simplex. The boundary of $K$ can be decomposed as 
$$
\bigcup_{j=1}^d \bigcup_{i=1}^{N_{j,d}}F_{j,i}
$$
where $F_{j,i}$ is an open simplex in a $(d-j)$-dimensional hyperplane for $j<d$,$F_{d,i}=\{x_i\}$, for some $x_i\in\mathbb{R}^d$, and $F_{j,i}\cap F_{\ell,m}=\emptyset$ for $(j,i)\neq (\ell,m)$. We call $F_{j,i}$ \emph{a face of codimension $j$ for $K$. } For convenience, we set $F_{0,1}=K$, and $N_{0,1}=1$.
\end{definition}

\begin{definition}
\label{d:boundaryCompatible}
 Let $K\subset \mathbb{R}^d$ be a simplex. An operator $T:C^\infty(K)\to C^\infty(K)$ is \emph{boundary compatible} if for all faces $F$
\begin{equation}
\label{e:boundaryCompatible1}
u|_{\overline{F}}=0\qquad \Rightarrow\qquad (Tu)|_{\overline{F}}=0
\end{equation}
and for any pair of faces of $K$, $F_1$ and $F_2$, and any affine isomorphism $\kappa:\overline{F_1}\to \overline{F_2}$, 
\begin{equation}
\label{e:boundaryCompatible2}
\big[T(u|_{\overline{F_2}}\circ \kappa)\big]\circ \kappa^{-1}=Tu|_{\overline{F_2}}.
\end{equation}
(Note that~\eqref{e:boundaryCompatible2} makes sense since, by~\eqref{e:boundaryCompatible1}, $T$ defines a map from $C^\infty(F)$ to itself.)
\end{definition}

The goal of this subsection (\S\ref{s:boundaryCompatible}) is to establish the following proposition which shows that one can modify polynomial approximants to boundary-compatible polynomial approximants. 

\begin{proposition}
\label{p:toBoundaryCompatible}
Let $K\subset \mathbb{R}^d$ be an open $d$-simplex, then for each face of $K$, $F_{j,i}$, there are operators $Q^{p-1}_{j,i}:H^{1/2}(F_{j,i})\to \mathbb{P}^{p-1}(F_{j,i})$, $p=2,3,\dots$ such that  for all $m> \frac{d}{2}$, $m\geq 1$ there is $C>0$ such that if $\ell\geq m$, $\mathcal{J}^{p-1}:H_p^\ell(K)\to \mathbb{P}^{p-1}$, and $(I-\mathcal{J}^{p-1}):H_p^\ell(K)\to H_p^m(K)$, then there is a boundary compatible operator $\mathcal{C}^{p-1}:H^\ell(K)\to \mathbb{P}^{p-1}$ satisfying 
$$
\|(I-\mathcal{C}^{p-1})u\|_{H_p^1(K)}\leq C\|(I-\mathcal{J}^{p-1})u\|_{H_p^m(K)}
$$
and
\begin{equation}
\label{e:faceOperator}
(\mathcal{C}^{p-1}u)|_{F_{j,i}}=Q^{p-1}_{j,i}(u|_{F_{j,i}}).
\end{equation}
In addition, if for all $u\in\mathbb{P}^{p-1}$, $\mathcal{J}^{p-1}u=u$, then for all $u\in\mathbb{P}^{p-1}$, $\mathcal{C}^{p-1}u=u$. 
\end{proposition}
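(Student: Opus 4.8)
The plan is to build $\mathcal{C}^{p-1}$ by induction on the dimension of the faces, starting from the vertices (codimension $d$) and working up to the full simplex $K$ (codimension $0$), at each stage correcting the polynomial $\mathcal{J}^{p-1}u$ by a polynomial supported near the current skeleton so that the resulting approximant agrees on each face $F_{j,i}$ with a fixed operator $Q^{p-1}_{j,i}$ depending only on the trace $u|_{F_{j,i}}$. The operators $Q^{p-1}_{j,i}$ will themselves be defined recursively: on a vertex, $Q^{p-1}$ is evaluation (which is well-defined on $H^m$ traces since $m>d/2$, hence on $H^{m-j/2}$ traces on $F_{j,i}$ by the trace theorem, and $m-j/2\geq m-d/2 >0$ is enough Sobolev regularity on the $(d-j)$-dimensional face whenever $d-j<2(m-j/2)$, i.e.\ $d<2m$, which holds); on a higher-dimensional face one applies the lower-dimensional construction on the boundary of that face and then extends into its interior. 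The affine-compatibility requirement~\eqref{e:boundaryCompatible2} is handled by making every choice in the construction (the averaged-Taylor ball, the cutoffs, the extension operators) \emph{intrinsic} to the face — i.e.\ defined on a single reference simplex of each dimension and transported by affine maps — so that relabeling faces by an affine isomorphism commutes with the construction by fiat.

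First I would fix, for each $r\in\{0,\dots,d\}$, a reference $r$-simplex $\widehat K_r$ and a family of cutoff functions $\{\psi_{r,i}\}$ forming a partition of unity on $\widehat K_r$ subordinate to neighborhoods of the codimension-$1$ faces, chosen so that each $\psi_{r,i}$ is independent of the coordinate normal to its face near that face; these will let me ``glue'' a face correction into the interior without disturbing the other faces. Next, given the input approximant $q_0:=\mathcal{J}^{p-1}u$, I would on each vertex $F_{d,i}$ set the desired value to be $u(x_i)$ (the evaluation operator $Q^{p-1}_{d,i}$) and subtract off from $q_0$ a correction polynomial — built using a partition-of-unity weighting that localizes near the vertex and whose polynomial degree is controlled — so that the new polynomial $q_d$ attains the prescribed vertex values. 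The error introduced, $q_0-q_d$, is estimated via the averaged-Taylor/Riesz-potential machinery of Proposition~\ref{p:pExplicit1} together with the inverse estimates for polynomials on simplices (relating $\|\cdot\|_{H^1_p}$ of a degree-$(p-1)$ polynomial to lower norms with a loss of $p^2$ which is absorbed by the weighted-norm bookkeeping, exactly as in the $hp$-literature \cite{MeSa:10}). Having matched the vertices, I would pass to edges: on each edge $F_{d-1,i}$ the trace $u|_{F_{d-1,i}}$ lives in $H^{m-1/2}$ of a $1$-dimensional simplex, I apply the already-constructed $1$-dimensional operator $Q^{p-1}$ to that trace to get the target polynomial on the edge (which, crucially, agrees with the vertex values already fixed, because $Q^{p-1}$ on an interval was built to be boundary compatible with evaluation at its endpoints), and correct $q_d$ by a polynomial localized near the edge skeleton to match it; iterating through codimension $d-2, d-3, \dots, 0$ yields $\mathcal{C}^{p-1}:=q_0$ after all corrections, with~\eqref{e:faceOperator} holding by construction and~\eqref{e:boundaryCompatible1}, \eqref{e:boundaryCompatible2} holding because each $Q^{p-1}_{j,i}$ depends only on the face trace and was transported affinely. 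The bound $\|(I-\mathcal{C}^{p-1})u\|_{H^1_p(K)}\leq C\|(I-\mathcal{J}^{p-1})u\|_{H^m_p(K)}$ follows by summing the $d$ stages of corrections, each of which is estimated by a trace inequality (costing $H^1_p(K)\to H^{1-1/2}_p(\partial K)$, and so on down the skeleton) applied to $(I-\mathcal{J}^{p-1})u$ and then re-extended, with the accumulated powers of $p$ from inverse estimates exactly cancelled by the $k^{-1}$-weighting (here with $k=p$) built into $\|\cdot\|_{H^\ell_p}$.

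The main obstacle I expect is controlling the \emph{loss in the Sobolev exponent} compatibly with the polynomial degree: each descent to a lower-dimensional face via the trace theorem costs half a derivative, so to reach the vertices one needs $>d/2$ derivatives to start (this is the hypothesis $m>d/2$), but one must simultaneously verify that the correction polynomials genuinely have degree $\leq p-1$ and that the inverse-estimate losses of order $p^2$ per stage telescope against the weighted norms without degrading the final $H^1_p$ estimate — this is precisely the delicate $hp$-bookkeeping, and it is the step where one must be careful that the constant $C$ is independent of $p$. A secondary technical point is ensuring the intertwining of $Q^{p-1}$ operators across faces of different codimension that share a boundary is \emph{exact} (not merely approximate), which forces the inductive definition of $Q^{p-1}_{j,i}$ to reuse the \emph{same} reference-simplex construction at each dimension rather than being chosen independently; once that is set up, boundary compatibility is automatic and the identity $\mathcal{C}^{p-1}u=u$ for $u\in\mathbb{P}^{p-1}$ follows because every correction polynomial vanishes when the input is already in $\mathbb{P}^{p-1}$ (using $\mathcal{J}^{p-1}u=u$ and that each $Q^{p-1}_{j,i}$ reproduces polynomial traces).
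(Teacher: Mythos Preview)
Your overall inductive skeleton (vertices $\to$ edges $\to\cdots\to$ full simplex, correcting at each stage) matches the paper's approach. However, there are two genuine gaps.

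\textbf{Affine compatibility does not come ``by fiat''.} Your proposed mechanism for \eqref{e:boundaryCompatible2} --- define $Q^{p-1}$ on a single reference $r$-simplex $\widehat K_r$ and transport by affine maps --- does not work as stated. The affine isomorphisms $\widehat K_r\to F_{j,i}$ are not unique: they differ by the affine automorphism group of $\widehat K_r$, which is the full symmetric group $S_{r+1}$ permuting the vertices. So if you fix parametrisations $\kappa_1:\widehat K_r\to F_1$ and $\kappa_2:\widehat K_r\to F_2$ and define $Q$ on each face by pull-back/push-forward, then for an \emph{arbitrary} affine isomorphism $\kappa:F_1\to F_2$ the identity \eqref{e:boundaryCompatible2} would force $\kappa=\kappa_2\circ\kappa_1^{-1}$, which is false in general. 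The paper explicitly flags this issue (Remark~\ref{r:elementByElement}) and resolves it by \emph{averaging} the $H_p^{1/2}$-best approximation over all affine isomorphisms between codimension-$r$ faces (the set $\mathcal A_{r,i}$ in \eqref{e:optimalPolyExtend}). Your scheme can be salvaged by making the reference operator $\widehat Q$ invariant under $S_{r+1}$, but that amounts to the same averaging and you have not said so.

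\textbf{Cutoffs destroy polynomials.} Your corrections are described as ``built using a partition-of-unity weighting that localizes near the vertex'' with fixed smooth cutoffs $\psi_{r,i}$. Multiplying a polynomial by a non-polynomial cutoff leaves $\mathbb P^{p-1}$, so the corrected approximant would no longer be a polynomial. The paper never uses cutoffs in the correction; instead it uses polynomial extension operators $E_{j,i}:\mathbb P^{p-1}(F_{j,i})\cap H^1_0(F_{j,i})\to\mathbb P^{p-1}(K)$ (Lemma~\ref{l:extend}), which preserve the polynomial degree exactly and come with the precise $p$-dependence $\|E_{j,i}q|_{F_{r,l}}\|_{H_p^1}\leq C p^{(r-j)/2}\|q\|_{H_p^{1/2}}$ needed to close the induction \eqref{e:induction}. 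Your appeal to ``inverse estimates with losses of order $p^2$ per stage that telescope'' is not the mechanism that works here; the half-powers of $p$ coming from the extension operators are what balance the trace losses, and this requires the Legendre/associated-Legendre spectral construction in Lemma~\ref{l:polynomialsWithBoundaries} (polynomial approximation in $H^1_0$), which you have not invoked.
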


\begin{remark}\label{r:elementByElement}
Proposition \ref{p:toBoundaryCompatible} is one important place where our construction of approximants on reference elements differs from that in \cite{MeSa:10}. Indeed, we construct approximants satisfying~\eqref{e:boundaryCompatible2}, with this property used in our construction of globally $H^1$-conforming piecewise polynomial approximants (in \S\ref{s:theEndPoly}). In contrast, \cite{MeSa:10} construct approximants having an ``element-by-element construction''
in the sense of~\cite[Definition 5.3]{MeSa:10}. Such approximants do not have the property~\eqref{e:boundaryCompatible2} because they involve, for example,
\begin{equation}
\label{e:minimizeToday}
\operatorname{argmin}\big\{p\|u-q\|_{L^2(f)}+\|u-q\|_{H^1(f)}\big\}
\end{equation}
(see \cite[Equation 5.5]{MeSa:10}).
In general,
\begin{multline*}
\operatorname{argmin}\big\{ p\|u-q\|_{L^2(f)}+\|u-q\|_{H^1(f)}\,:\, q\in \mathbb{P}^{p-1}\big\}\\
\neq (A^{-1})^*\operatorname{argmin}\big\{p\|A^*u-q\|_{L^2(A^{-1}(f))}+\|A^*u-q\|_{H^1(A^{-1}(f))}\,:\, q\in \mathbb{P}^{p-1}\big\}
\end{multline*}
for an affine isomorphism $A$. (One can see this, e.g., in one dimension from the fact that $\|u\|_{L^2}$ and $|u|_{H^1}$ scale differently.)
To deal with this issue, we average minimisations similar to \eqref{e:minimizeToday} over all possible boundary faces (see~\eqref{e:optimalPolyExtend} below).
\end{remark}

We prove Proposition~\ref{p:toBoundaryCompatible} in \S\ref{s:proofBoundaryCompatible}, but we discuss the ideas used in the proof here.

\paragraph{Idea of the  proof of Proposition~\ref{p:toBoundaryCompatible}.}

To modify $\mathcal{J}^{p-1}$ into a boundary-compatible operator, we follow the steps from~\cite[Appendix B]{MeSa:10}. 
Let $e_d=(I-\mathcal{J}^{p-1})u$.
For $j=d,d-1,\dots, 1$, 
let $g_j$ be a polynomial of degree $p$ such that on each codimension $j$ face of $K$, $F_{j,i}$, $g_j|_{F_{j,i}}=:\tilde{T}_{j,i}(e_j|_{F_{j,i}})$, for some operator $T_{j,i}$, is a good polynomial approximant of $e_j|_{F_{j,i}}$ with zero boundary conditions on $\partial F_{j,i}$ (in the case $j=d$, there are no boundary conditions and the polynomial $g_d$ matches $e_d$ exactly at each vertex), and such that 
\beq\label{e:theEnd1}
\tilde{T}_{j,i}u=(\kappa^{-1})^*\big(\tilde{T}_{j,i'}(\kappa^* u)\big)
\eeq
for any affine isomorphism, $\kappa$ that sends $F_{j,i'}$ to $F_{j,i}$ (compare to \eqref{e:boundaryCompatible2}). 
Set $e_{j-1}:=e_j-g_j$. Then, the desired boundary compatible operator $\mathcal{C}^{p-1}=u-e_0$.

The key ingredient in this procedure is the construction of $g_j$. To achieve this, we take the average of the best $H^{1/2}$ approximations of $\kappa^*(e_j|_{F_{j,i}})$ (among polynomials with zero boundary conditions) where $\kappa$ ranges over all affine isomorphisms mapping $F_{j,i'}$ to $F_{j,i}$ for some $i'$ (strictly speaking, pulled back to $F_{j,i}$ using $(\kappa^{-1})^*$) and extend it to all of $K$ with good enough $H^1$ estimates. This is done in \S\ref{s:extension} following ideas from~\cite{Mu:97,BeDaMa:95} to find a $p$-independent polynomial extension operator that is bounded from $H^{1/2}$ of a codimension 1 boundary face to $H^1$ of a simplex (see Lemma~\ref{l:extendA}). Using an induction procedure together with some ideas from~\cite[Appendix B]{MeSa:10}, we use this extension to construct a $p$-dependent extension operator from any boundary face into $H^1$ of all lower codimension boundary faces having  improved estimates as a function of $p$ (see Lemma~\ref{l:extend}). One of the key elements of this last step is a polynomial approximation lemma that respects zero boundary conditions (see Lemma~\ref{l:polynomialsWithBoundaries}).

\subsubsection{Polynomial approximation in simplices respecting boundary conditions}
\label{s:boundaryConditionPoly}
Before constructing boundary compatible polynomial approximation operators (via Proposition~\ref{p:toBoundaryCompatible}), we need one further lemma on polynomial approximation specifically in simplices. This lemma shows that, one can both accurately approximate $H^2$ functions (in $H^1$) with high degree polynomials \emph{and} respect boundary conditions. The proof of the lemma is similar to that of Proposition~\ref{p:lowregularityApproximation}, but we use the associate Legendre polynomials of order two; eigenfunctions of $-\partial_x(1-x^2)\partial_x +4(1-x^2)^{-1}$ on $[-1,1]$ when we need to preserve boundary data.
\begin{lemma}
\label{l:polynomialsWithBoundaries}
Suppose that $K\subset \mathbb{R}^{d}$ is an open $d$ simplex and $0\leq m\leq d$. Then there is $C>0$ such that for all $p\geq 1$ there is $\mathcal{A}^{p-1}:\tilde{H}^1_{m,0}(K)\to \mathbb{P}^{p-1}\cap \tilde{H}^1_{m,0}(K)$ satisfying
$$
\|(I-\mathcal{A}^{p-1})u\|_{H^1(K)}\leq Cp^{-1}\|u\|_{H^2(K)},
$$
where 
$$
\tilde{H}_{m,0}^1(K):=\big\{ u\in H^1(K)\,:\, u|_{F_{1,j}}=0,\quad j=0,\dots,m\big\}.
$$
\end{lemma}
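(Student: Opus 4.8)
The plan is to adapt the proof of Proposition~\ref{p:lowregularityApproximation} to the setting of a simplex while additionally enforcing homogeneous Dirichlet conditions on the faces $F_{1,0},\dots,F_{1,m}$. The key point is to replace the Legendre polynomials (eigenfunctions of $-\partial_x(1-x^2)\partial_x$) with the associated Legendre functions of order two, i.e., the polynomial-weighted eigenfunctions $(1-x^2)P_n^{(2)}(x)$ of the Sturm--Liouville operator $-\partial_x(1-x^2)\partial_x + 4(1-x^2)^{-1}$ on $[-1,1]$; these eigenfunctions automatically vanish at $x=\pm 1$, which is what encodes the boundary condition. First I would recall that these eigenfunctions are of the form $(1-x^2)q_n(x)$ with $q_n$ a polynomial of degree $n$, that the corresponding eigenvalues grow like $n^2$, and that they form an orthogonal basis of the relevant weighted $L^2$ space; these facts can be proved exactly as in the proof of Proposition~\ref{p:lowregularityApproximation} (the operator preserves each $\mathbb{P}^\ell$ after factoring out the weight, so one counts eigenfunctions by degree and computes the leading eigenvalue coefficient).

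Next I would set up the tensorised version: on $[-1,1]^d$, for the coordinates $x_1,\dots,x_m$ along which we need to preserve the zero boundary condition we use the order-two weighted eigenfunctions, and for the remaining coordinates $x_{m+1},\dots,x_d$ we use the ordinary Legendre polynomials. Taking products gives an orthonormal basis of eigenfunctions of a self-adjoint operator $\widetilde P$ built as a sum of one-dimensional operators, with eigenvalues comparable to $\sum_j n_j^2$. Letting $\Pi_{p-1}$ be the $L^2$-orthogonal projector onto $\mathbb{P}^{p-1}$, one has $[\Pi_{p-1},\widetilde P]=0$ and a spectral-gap statement of the form $1_{(-\infty,cp^2]}(\widetilde P)\Pi_{p-1}=1_{(-\infty,cp^2]}(\widetilde P)$, exactly analogous to the Legendre case. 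Then, writing $I-\Pi_{p-1}=(\widetilde P+1)^{-1/2}1_{(cp^2,\infty)}(\widetilde P)(I-\Pi_{p-1})(\widetilde P+1)^{1/2}$ and using local elliptic regularity for $\widetilde P$ (the operator is uniformly elliptic in the interior, with degeneracy only at the boundary faces, where the boundary condition is preserved), one obtains the $H^1\to H^2$ estimate with the $p^{-1}$ factor. To get from $[-1,1]^d$ to the simplex $K$ one uses an extension operator from $K$ to a surrounding cube that respects the vanishing traces on $F_{1,0},\dots,F_{1,m}$ -- this is standard (e.g.\ a Calder\'on-type extension combined with an even-reflection argument in the coordinates not hitting those faces), and one then cuts off and projects as before.

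The main obstacle is the boundary-preserving extension step: one needs an extension operator $E:\widetilde H^1_{m,0}(K)\to H^2(\text{cube})$ (or at least into $H^2$ of a neighbourhood) that is bounded uniformly and such that the extended function still vanishes on the relevant hyperplanes, so that after projecting by $1_K\,\Pi_{p-1}\,\chi\,E$ the resulting polynomial lies in $\mathbb{P}^{p-1}\cap\widetilde H^1_{m,0}(K)$. Since the faces $F_{1,0},\dots,F_{1,m}$ all pass through a common vertex, one can arrange coordinates so that these faces lie in coordinate hyperplanes (or half-hyperplanes), and then extension by odd reflection across those hyperplanes preserves the zero trace while controlling $H^2$ norms; the remaining directions are handled by a standard Stein/Calder\'on extension. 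One must also check that $\Pi_{p-1}$ composed with this reflection-extension genuinely lands back in functions vanishing on those faces -- this follows because odd functions in a variable $x_j$ restricted to a polynomial space remain odd, hence vanish at $x_j=0$. Finally, as in Proposition~\ref{p:lowregularityApproximation}, one post-composes with an averaging correction $\mathcal Z$ to replace the full $H^2$ norm by the $H^2$ seminorm when needed, and verifies $\mathcal A^{p-1}u=u$ for $u\in\mathbb{P}^{p-1}\cap\widetilde H^1_{m,0}(K)$ from the fact that $\Pi_{p-1}$ fixes such polynomials; the statement as written only asks for the full $H^2$ norm on the right-hand side, so the $\mathcal Z$ step can in fact be omitted here.
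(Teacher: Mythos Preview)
Your proposal has the right ingredient --- replacing Legendre polynomials with associated Legendre functions of order two to encode Dirichlet conditions --- but the extension step from the simplex to a cube has a genuine gap.

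The geometric claim that ``the faces $F_{1,0},\dots,F_{1,m}$ all pass through a common vertex'' fails for $m=d$: a $d$-simplex has $d+1$ faces and each vertex is opposite exactly one face, so no $d+1$ faces share a vertex. Hence for $m=d$ (the case $\tilde H^1_{d,0}(K)=H^1_0(K)$) you cannot map the constrained faces to coordinate hyperplanes and the odd-reflection construction is unavailable. Even for $m<d$, the two mechanisms you invoke --- associated Legendre functions (which vanish at $x_j=\pm 1$) and odd reflection (which enforces vanishing at $x_j=0$) --- act on \emph{different} hyperplanes and you have not explained how they combine; moreover, reflecting a simplex across $m+1$ coordinate hyperplanes yields a union of $2^{m+1}$ simplices, not a box, and a subsequent Calder\'on extension generically destroys the odd parity needed for the projection to land in $\tilde H^1_{m,0}(K)$.

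The paper's resolution is to realise the simplex as the \emph{order simplex} $K=\{-1<x_1<\dots<x_d<1\}$, whose faces are $\{x_1=-1\}$, the diagonals $\{x_j=x_{j+1}\}$, and $\{x_d=1\}$. The key device is \emph{antisymmetrisation under the permutation group $S_m$} acting on the first $m$ coordinates: the $m!$ permuted copies of the order $m$-simplex tile $[-1,1]^m$, and antisymmetric functions automatically vanish on all diagonals $\{x_i=x_j\}$ with $i,j\leq m$. Combined with the associated Legendre basis in $x_1,\dots,x_m$ (vanishing at $x_j=\pm 1$), this shows that eigenfunctions of the relevant operator on $\tilde K$ extend to polynomial eigenfunctions on $[-1,1]^d$, and the spectral argument of Proposition~\ref{p:lowregularityApproximation} then goes through. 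This antisymmetrisation replaces your odd-reflection step and works uniformly for all $0\leq m\leq d$.
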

\begin{proof}
The proof will be similar to that of Proposition~\ref{p:lowregularityApproximation} except that we change the operator $P$ to impose boundary conditions. To do this, we use the reflection symmetry of the operator
$$
-\partial_x(1-x^2)\partial_x +\frac{4}{1-x^2}
$$
(which has polynomial eigenfunctions on $[-1,1]$) to find an operator with certain Dirichlet boundary conditions on the faces of a simplex that still has polynomial eigenfunctions.

Without loss of generality, we can assume that 
$$
K=\big\{ x\in\mathbb{R}^d\,:\, -1<x_1<x_2<\dots<x_d<1\}
$$
and 
$$
F_{1,0}\subset \{ x_1=-1\},\quad F_{1,j}\subset \{x_{j-1}<x_j\},\, j=1,\dots, d-1,\,\, F_{1,d}\subset \{ x_d=1\}.
$$
Define also
$$
\tilde{K}:=\big\{ x\in\mathbb{R}^{m}\,:\, -1<x_1<x_2<\dots<x_m<1\big\}\times (-1,1)^{d-m}.
$$
Let $\tilde{E}:C^\infty(K)\to C^\infty(\tilde{K})$ such that $1_{K}\tilde{E}=I$,
and $\tilde{E}:H^s(K)\to H^s(\tilde{K})$ is bounded for all $s\geq 0$. 

For $\sigma:\{1,\dots,m\}\to \{1,\dots,m\}$ a permutation, define 
$$
\tilde{K}^\sigma:=\{ x\in\mathbb{R}^m\,:\, -1<x_{\sigma(1)}<x_{\sigma(2)}<\dots<x_{\sigma(m)}<1\}\times (-1,1)^{d-m},
$$
and 
$$
\iota_\sigma(x_1,\dots, x_d)=(x_{\sigma(1)},\dots, x_{\sigma(m)},x_{m+1},\dots,x_d).
$$
Let $\operatorname{sgn}(\sigma)$ denote the signature of a permutation.
Let
\begin{align*}
H_m^1(\tilde{K}):=\big\{ u\in H^1(\tilde{K})\,:\, u|_{x_1=-1}=0,\, u|_{x_{j-1}=x_j}=0,\, j=1,\dots,m\big\}\\
H_m^1([-1,1]^d):=\big\{ u\in H^1([-1,1]^d)\,:\, u|_{x_j=\pm 1}=0,\,j=1,\dots,m\big\}.
\end{align*}
We then define an extension operator
$E:H_{m,0}^1(K)\to H_{m,0}^1([-1,1]^d)$ by
$$
(Eu)(x)=(-1)^{\operatorname{sgn}(\sigma)}\tilde{E}u(\iota_{\sigma}^{-1}(x)),\qquad x\in \overline{\tilde{K}^{\sigma}}.
$$
Note also that $E:H_{m,0}^1(K)\cap H^2(K)\to H_{m,0}^1([-1,1]^d)\cap H^2([-1,1]^d)$. 

We now define two self-adjoint operators, one on $L^2(\tilde{K})$ and one on $L^2([-1,1]^d)$. Define the sesquilinear forms $Q:H^1_{m,0}([-1,1]^d)\times H^1_{m,0}([-1,1]^d)\to \mathbb{C}$ and $Q^{\tilde{K}}:H^1_{m,0}(\tilde{K})\times H^1_{m,0}(\tilde{K})\to \mathbb{C}$ by
\begin{align*}
Q(u,v)&:=\sum_{i=1}^d \langle (1-x_i^2)\partial_{x_i}u,\partial_{x_i}v\rangle_{L^2([-1,1]^d)} +4\sum_{i=1}^m\langle\tfrac{1}{(1-x_i^2)}u,v\rangle_{L^2([-1,1]^d)}\\
Q^{\tilde{K}}(u,v)&:=\sum_{i=1}^d \langle (1-x_i^2)\partial_{x_i}u,\partial_{x_i}v\rangle_{L^2(\tilde{K})} +4\sum_{i=1}^m\langle\tfrac{1}{(1-x_i^2)}u,v\rangle_{L^2(\tilde{K})}.
\end{align*}
Then, using the Hardy inequality,
\beq\label{e:Hardy}
\int_0^a \Big|\frac{1}{t}\int_{x}^{x+t}f(s)ds\Big|^2dt\leq 4\int_0^a|f(x+t)|^2dt,
\eeq
for the upper bounds,
\begin{gather*}
0\leq Q(u,u),\qquad |Q(u,v)|\leq C\|u\|_{H^1([-1,1]^d)}\|v\|_{H^1([-1,1]^d)},\\ 0\leq Q^{\tilde{K}}(u,u),\qquad |Q^{\tilde{K}}(u,v)|\leq C\|u\|_{H^1(\tilde{K})}\|v\|_{H^1(\tilde{K})}
\end{gather*}
and hence, the Friedrichs extensions of $Q$ and $Q^{\tilde{K}}$, respectively $P:L^2([-1,1]^d)\to L^2([-1,1]^d)$ and $P^{\tilde{K}}:L^2(\tilde{K})\to L^2(\tilde{K})$, are self adjoint with domains
\begin{align*}
\mathcal{D}(P)&:=
\big\{ u\in H^1_{m,0}([-1,1]^d)\,:\, |Q(u,v)|\leq C_u\|v\|_{L^2},\, \text{for all }v\in H_{m,0}^1([-1,1]^d)\big\}\\
\mathcal{D}(P^{\tilde{K}})&:=\big\{ u\in H^1_{m,0}(K)\,:\, |Q^{\tilde{K}}(u,v)|\leq C_u\|v\|_{L^2},\, \text{for all }v\in H_{m,0}^1(\tilde{K})\big\}.
\end{align*}

Notice that $H^2([-1,1]^d)\cap H_{m,0}^1([-1,1]^d)\subset \mathcal{D}(P)$ and, for $u\in H^2\cap H_{m,0}^1([-1,1]^d$, 
$$
Pu= -\sum_{i=1}^d\partial_{x_i}(1-x_i^2)\partial_{x_i}u+\sum_{i=1}^m\tfrac{4}{(1-x_i^2)}u.
$$
Let $p_{n+2}^{(2)}:[-1,1]\to \mathbb{R}$, $n=0,1,\dots$ be the associated Legendre polynomials of order $2$ (see, e.g., \cite[\S14.2(ii)]{Di:14}) normalized such that 
$$
\langle p_{n+2}^{(2)}, p_{l+2}^{(2)}\rangle_{L^2([-1,1])}=\delta_{nl},
$$
and $p_{n}:[-1,1]\to \mathbb{R}$, $n=0,1,\dots$ be the Legendre polynomials normalized such that 
$$
\langle p_{n}, p_{l}\rangle_{L^2([-1,1])}=\delta_{nl}.
$$
Then for $\vec{n}\in\mathbb{N}^d$, define $p_{\vec{n}}:[-1,1]^d\to \mathbb{R}$ by
$$
p_{\vec{n}}(x):=\prod_{i=m+1}^d p_{\vec{n}_i}(x_i)\prod_{j=1}^mp_{\vec{n}_j+2}^{(2)}(x_j),
$$
so that, by the definition of $p_{n+2}^{(2)}$ and $p_n$,
$$
\Big(P-\sum_{j=1}^m (\vec{n}_j+2)(\vec{n}_j+3)-\sum_{i=1}^{m+1}(\vec{n}_i)(\vec{n}_i+1)\Big)p_{\vec{n}}= 0.
$$
We claim that the associated Legendre polynomials of order 2 are complete. Indeed, observe that by~\cite[(14.7.8)]{Di:14}, 
$$
p_{n+2}^{(2)}=(1-x^2)\partial_x^2p_{n+2},
$$
and hence, if $u\in L^2$ is orthogonal to $p_{n+2}^{(2)}$ for all $n$, then,
$$
\langle u, (1-x^2)\partial_x^2p_{n+2}\rangle_{[-1,1]}=0 \quad\tfa n.
$$
But, since $p_n$ is a polynomial of degree $n$, $\operatorname{span}\{\partial_x^2p_{n+2}\}_{n=0}^\infty$ contains all polynomials and hence is dense in $L^2$. In particular, this implies that $(1-x^2)u=0$ and hence $u=0$.

Since both the Legendre polynomials and the associated Legendre polynomials of order 2 are complete, $\{p_{\vec{n}}\}_{\vec{n}\in\mathbb{N}^d}$ forms an orthonormal basis of eigenfunctions of $P$ for $L^2([-1,1]^d)$.

Now, suppose that $P^{\tilde{K}}u=\lambda u$. By the definitions of $E, Q,$ and $Q^{\tilde K}$, $PEu=\lambda Eu$ and hence $Eu$ is an eigenfunction of $P$ with eigenvalue $\lambda$.  This implies that
(very similar to in the proof of 
Proposition 
\ref{p:lowregularityApproximation}) there is $c>0$ such that, with $\Pi_{p-1}:L^2(\tilde{K})\to \mathbb{P}^{p-1}\cap H_0^1(\tilde{K})$ be the orthogonal projector,  $[\Pi_{p-1},P^{\tilde{K}}]=0$ and
$$
1_{(-\infty, cp^2]}(P^{\tilde{K}})\Pi_{p-1}=1_{(-\infty, cp^2]}(P^{\tilde{K}}). 
$$
Let $\mathcal{A}^{p-1}:=1_{K}\Pi_{p-1}\tilde{E}$. Then
\begin{align*}
(I-\mathcal{A}^{p-1})&= 1_K(I-\Pi_{p-1})\tilde{E}\\
&= 1_K(P^{\tilde{K}}+1)^{-1}1_{(cp^2,\infty)}(P^K)(I-\Pi_{p-1})(P^K+1)\tilde{E}.
\end{align*}
 Now, $(P^{\tilde{K}}+1)^{-1/2}:L^2(\tilde{K})\to H_{m,0}^1(\tilde{K})$, $(P^{\tilde{K}}+1):H_{m,0}^1(\tilde{K})\cap H^2(\tilde{K})\to L^2$ and 
 $$
  \|(P^{\tilde{K}}+1)^{-1/2}1_{(cp^2,\infty)}\|_{L^2\to L^2}\leq c\langle p\rangle^{-1}. 
 $$
Therefore, 
\begin{align*}
&\|(I-\mathcal{A}^{p-1})\|_{H^2(K)\cap H_{m,0}^1(K)\to H_{m,0}^1(K)}\\
&\leq \|(P^K+1)^{-1/2}1_{(cp^2,\infty)}(P)\|_{L^2\to L^2}\|(P^K+1) \chi E\|_{H^2(K)\cap H_{m,0}^1(K)\to L^2}\\
&\leq  C\langle p\rangle^{-1}.
\end{align*}
\end{proof}

\subsubsection{Extension operators}
\label{s:extension}
To extend polynomials effectively from faces into a simplex $K$, we need to construct extension operators that are bounded from $H^{1/2}$ to $H^1$, respect traces, and preserve the class of polynomials of a fixed degree. 

We start by adapting the arguments from~\cite[Section 2]{Mu:97} to construct a $p$-independent extension operator.
Let $Q=(0,1)^d$,
$Q_I:=(0,1)^d\times (0,\frac{1}{2})$, $\chi \in L^\infty(Q)$ with $\int \chi =1$ and define the operator 
$E_\chi:C^\infty(Q)\to L^\infty(Q_I)$ by
$$
(E_{\chi} u)(x,t):=\int u\big( (1-t)x+ty\big)\chi(y)dy.
$$

\begin{lemma}
\label{l:firstBounds}
Let $d\geq 1$. Then there is $C>0$ such that for all $\chi\in L^\infty$ with $\int \chi=1$ and $u\in C^\infty(\overline{Q})$, 
\begin{gather}
\|E_{\chi} u\|_{L^2(Q_I)}\leq C\|\chi\|_{L^\infty}\min_i \|x_i^{\frac{1}{2}}u\|_{L^2(Q)}.\label{e:goodL2}
\end{gather}
\end{lemma}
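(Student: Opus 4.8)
\textbf{Proof plan for Lemma~\ref{l:firstBounds}.}
The plan is to establish the bound \eqref{e:goodL2} directly from the definition of $E_\chi$ by a change of variables, exploiting the dilation structure $(x,t)\mapsto (1-t)x+ty$. First I would fix an index $i\in\{1,\dots,d\}$ (the one realising $\min_i\|x_i^{1/2}u\|_{L^2(Q)}$ will be chosen at the end, but the estimate is uniform in $i$) and write, for $(x,t)\in Q_I$,
\beqs
(E_\chi u)(x,t)=\int_{Q} u\big((1-t)x+ty\big)\chi(y)\,dy.
\eeqs
By the Cauchy--Schwarz inequality in $y$, using $\int|\chi|\le \|\chi\|_{L^\infty}$ (since $\chi$ is supported in $Q$ of unit measure) and $\int\chi=1$,
\beqs
|(E_\chi u)(x,t)|^2\le \|\chi\|_{L^\infty}\int_{Q}\big|u\big((1-t)x+ty\big)\big|^2\,dy.
\eeqs

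Next I would integrate this inequality over $(x,t)\in Q_I$ and interchange the order of integration (Tonelli), so that for each fixed $y\in Q$ and $t\in(0,\tfrac12)$ I must bound
\beqs
\int_{(0,1)^d}\big|u\big((1-t)x+ty\big)\big|^2\,dx.
\eeqs
The key step is the substitution $z=(1-t)x+ty$ in the variable $x$; this has Jacobian $(1-t)^{-d}$, and since $t<\tfrac12$ we have $(1-t)^{-d}\le 2^d$, a constant. The image of $(0,1)^d$ under $x\mapsto (1-t)x+ty$ is a scaled, translated cube contained in $(0,1)^d=Q$; crucially, in the $i$th coordinate the image lies in the interval of points $z_i$ with $z_i\ge ty_i\ge 0$, but more usefully $z_i \le (1-t)+ty_i\le 1$, and one checks $z_i \ge t y_i$, so on the image $z_i$ ranges over a subinterval whose left endpoint is $\ge ty_i$. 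The upshot is that $\mathbf 1_{\text{image}}(z)\le \mathbf 1_{Q}(z)$, and we may bound the integrand by introducing the weight $z_i$: since on the image $z_i$ can be small only when $ty_i$ is small, a more careful bookkeeping (splitting according to whether $z_i\ge c$ or not, or simply noting $1\le z_i/(ty_i)$ is not uniform but $1 \le z_i \cdot (\text{something integrable in } t)$) produces a factor that is integrable in $t$ over $(0,\tfrac12)$ after one further integration. Concretely, I expect to arrive at
\beqs
\int_{(0,1)^d}\big|u\big((1-t)x+ty\big)\big|^2\,dx \le C\int_{Q}|u(z)|^2\,\frac{dz}{z_i}\cdot(\text{harmless factor}),
\eeqs
and then the remaining integrations over $y\in Q$ and $t\in(0,\tfrac12)$ contribute only constants, giving $\|E_\chi u\|_{L^2(Q_I)}^2\le C\|\chi\|_{L^\infty}^2\|z_i^{1/2}u\|_{L^2(Q)}^2$. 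Taking the minimum over $i$ and square-rooting yields \eqref{e:goodL2}.

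The main obstacle I anticipate is handling the degeneracy of the change of variables as $t\to 0^+$: the map $x\mapsto (1-t)x+ty$ degenerates to the identity, so the naive Jacobian estimate is fine, but the subtlety is showing that the \emph{weighted} $L^2$ norm $\|x_i^{1/2}u\|_{L^2(Q)}$ (rather than the full $\|u\|_{L^2(Q)}$) suffices on the right-hand side. This requires tracking how the support of the image cube in the $i$th coordinate shrinks toward $\{z_i=ty_i\}$ and arguing that the portion of $u$ seen near $\{z_i=0\}$ is controlled by the weight $z_i$ together with an extra $t$-integration. I would make this rigorous by the one-dimensional estimate: for fixed values of the other coordinates, $\int_0^{1-t+ty_i}|u(z_i,\cdot)|^2\,dz_i$ with the substitution $z_i=(1-t)x_i+ty_i$ gives $(1-t)\int_0^1|u((1-t)x_i+ty_i,\cdot)|^2 dx_i$, and then one integrates in $t$ over $(0,1/2)$ and swaps order, using that $\int_0^{1/2}\mathbf 1_{\{(1-t)x_i+ty_i = z_i\}}\,\tfrac{dt}{\cdots}$ produces exactly a factor $\sim z_i^{-1}$. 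This is the routine-but-delicate computation I would not grind through in the plan, but it is the crux of why the weight $x_i^{1/2}$ (and not just $1$) is the correct one.
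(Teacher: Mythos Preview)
Your overall shape is right — change variables, swap integrations, track the image of the cube — but the opening Cauchy--Schwarz in $y$ is too crude and costs you a logarithm. Concretely, after your step
\[
|(E_\chi u)(x,t)|^2\le \|\chi\|_{L^\infty}^2\int_{Q}\big|u\big((1-t)x+ty\big)\big|^2\,dy,
\]
integrating over $(x,t,y)$, substituting $z=(1-t)x+ty$ in $x$, and swapping to put $z$ outermost, the inner $(y,t)$-integral of the indicator $\mathbf 1_{\{z_i\in(ty_i,(1-t)+ty_i)\}}$ is
\[
\int_0^1\int_0^{1/2}\mathbf 1_{\{t<z_i/y_i\}}\,dt\,dy_i\sim\int_0^1\min\!\big(z_i/y_i,\tfrac12\big)\,dy_i\sim z_i\log(1/z_i),
\]
not $z_i$. (Test with $d=1$, $u=\mathbf 1_{(0,\varepsilon)}$: the Cauchy--Schwarz route gives $\sim\varepsilon^2\log(1/\varepsilon)$ while $\|x_1^{1/2}u\|_{L^2}^2=\varepsilon^2/2$.) So the ``routine-but-delicate computation'' you flag does not in fact close, and no amount of bookkeeping on the image cube recovers the missing factor.

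The paper avoids this by \emph{not} squaring early. It keeps the linear average
\[
|E_\chi u(x,t)|\le \|\chi\|_{L^\infty}\,t^{-d}\!\int |u(w)|\,\mathbf 1_{tQ}\big(w-(1-t)x\big)\,dw,
\]
recognises in the $i$th coordinate the structure $t^{-1}\int_{z_i}^{z_i+t}(\cdots)\,dw_i$, and applies the Hardy inequality
$\int_0^a\big|t^{-1}\int_x^{x+t}f\big|^2\,dt\le 4\int_0^a|f(x+t)|^2\,dt$
in $t$ (this is exactly what produces the weight $z_i$ without a log), while in the remaining $d-1$ coordinates it controls the average by the Hardy--Littlewood maximal function, which is bounded on $L^2$. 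The moral: the $t$-dependent averaging must be handled by Hardy's inequality \emph{before} squaring; Cauchy--Schwarz in $y$ destroys precisely the cancellation Hardy exploits.
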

\begin{proof}
To prove~\eqref{e:goodL2}, we change variables so that \begin{align*}
|E_{\chi}u(x,t)|&= \Big|t^{-d}\int u(w)\chi\big(t^{-1}(w-(1-t)x)\big)dw\Big|\\
&\leq \|\chi \|_{L^\infty}t^{-d}\int |u(w)|1_{tQ}(w-(1-t)x)dw.
\end{align*}
We prove~\eqref{e:goodL2} with $i=1$; the other estimates come from a permutation of coordinates.
Then, by the Hardy inequality \eqref{e:Hardy}
in the seventh line and the $L^2$ boundedness of the Hardy-Littlewood maximal function 
$$
M_{d-1}f(z):= \sup_{t>0}\Big|\frac{1}{t^{d-1}}\int_{tQ_{d-1}}f(z-w)dw\Big|
$$
in the eighth line, 
\begin{align*}
&\|F_{\Omega}u\|^2_{L^2(Q_I)}\\
&\leq \int_0^{\frac{1}{2}}\int_{Q}\Big|t^{-d}\|\chi \|_{L^\infty}\int |u(w)|1_{tQ}\big(w-(1-t)x\big)dw\Big|^2dxdt\\
&=\int_0^{\frac{1}{2}}\int_{(1-t)Q}(1-t)^{-d}\Big|t^{-d}\|\chi \|_{L^\infty}\int |u(w)|1_{tQ}(w-z)dw\Big|^2dzdt\\
&=2^d\|\chi \|_{L^\infty}^2\int_{Q}\int_0^{\min(\min_i(1-z_i),\frac{1}{2})}\Big|t^{-1}\int_{z_1}^{z_1+t}t^{-d+1}\int |u(w)|1_{tQ_{d-1}}(w'-z')dw'dw_1\Big|^2dtdz\\
&=2^d\|\chi \|_{L^\infty}^2\int_{Q}\int_0^{\min(\min_i(1-z_i),\frac{1}{2})}\Big|t^{-1}\int_{z_1}^{z_1+t}\int M_{d-1}|u(w_1,\bullet)|(z')dw_1\Big|^2dtdz\\
&=2^{d+2}\|\chi \|_{L^\infty}^2\int_{Q}\int_0^{\min(\min_i(1-z_i),\frac{1}{2})}\Big|\int M_{d-1}|u(z_1+t,\bullet)|(z')\Big|^2dtdz\\\
&= 2^{d+2}\|\chi \|_{L^\infty}^2\int_0^{1}\int_{0}^{\min(1-z_1,\frac{1}{2})}\int_{Q_{d-1}}\Big|\int M_{d-1}|u(z_1+t,\bullet)|(z')\Big|^2dz'dtdz_1\\
&\leq C2^{d+2}\|\chi \|_{L^\infty}^2\int_0^{1}\int_{0}^{\min(1-z_1,\frac{1}{2})}\|u(z_1+t,\bullet)\|_{L^2_{z'}}^2dtdz_1\\
&=C2^{d+2}\|\chi \|_{L^\infty}^2\int_0^{1/2}\int_{z_1}^{\min(1,\frac{1}{2}+z_1)}\|u(s,\bullet)\|_{L^2_{z'}}^2dsdz_1\\
&\leq C2^{d+2}\|\chi \|_{L^\infty}^2\int_0^1\int_{0}^s\|u(s,\bullet)\|_{L^2_{z'}}^2dz_1ds\leq C2^{d+2}\|\chi \|_{L^\infty}^2\|x_1^{\frac{1}{2}}u\|_{L^2}^2.
\end{align*}
\end{proof}

\begin{lemma}
Let $\Omega \subset Q$ be convex and $\chi \in C^1_{\comp}(\Omega)$. Then there is $C>0$ such that
\begin{equation}
\label{e:basicH1}
\big\|E_{\chi} u\big\|_{H^1(\Omega\times (0,1/2)}\leq C\|u\|_{H^{1/2}(\Omega)}
\end{equation}
and, for any $1\leq j\leq {d-1}$,
\begin{equation}
\label{e:conjugatedH1}
\Big\|\prod_{i=1}^{j}x_i\Big[E_{\chi}u(\bullet) \prod_{i=1}^{j}(\bullet)_{i}^{-1}\Big](x,t)\Big\|_{H^1(\Omega \times (0,1/2))}\leq C\Big(\sum_{i=1}^j\|x_i^{-1/2}u\|_{L^2(\Omega)}+\|u\|_{H^{1/2}(\Omega)}\Big). 
\end{equation}
\end{lemma}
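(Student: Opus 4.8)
The plan is to prove the two inequalities for $E_\chi u$ by combining the $L^2$ bound from Lemma~\ref{l:firstBounds} with interpolation and scaling arguments, treating the spatial derivatives and the $t$-derivative separately. First I would observe that $E_\chi u(x,t)$ is obtained from $u$ by an averaging over a scaled copy of $\supp\chi$, so that differentiating $E_\chi u$ in $x$ or $t$ produces either (i) a derivative falling on $u$, which introduces a factor $(1-t)\leq 1$ and again has the form $E_{\chi}(\nabla u)$ (or $E_{\tilde\chi} u$ with $\tilde\chi$ involving first derivatives of $\chi$, coming from the $t$-derivative), or (ii) a derivative falling on $\chi$, which, after undoing the change of variables, produces $t^{-1}E_{\partial\chi}u$. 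The key point is that the $L^2$ estimate \eqref{e:goodL2} applies to $E_{\partial\chi}u$ as well, and the factor $t^{-1}$ is exactly compensated by the weight $x_i^{1/2}$ (or, in the conjugated estimate, by the weights $x_i^{-1/2}$) together with the fact that the integration in $t$ runs over $(0,1/2)$ and $t \lesssim x_i$ on the relevant region; this is the mechanism already visible in the chain of inequalities proving \eqref{e:goodL2}.

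Concretely, for \eqref{e:basicH1} I would: (a) bound $\|E_\chi u\|_{L^2}$ using \eqref{e:goodL2} with $\min_i\|x_i^{1/2}u\|_{L^2(Q)} \leq \|u\|_{L^2(\Omega)} \leq \|u\|_{H^{1/2}(\Omega)}$; (b) bound $\|\nabla_x E_\chi u\|_{L^2}$ by writing $\partial_{x_\ell}E_\chi u = (1-t)E_\chi(\partial_{x_\ell}u) + t^{-1}E_{\partial_\ell\chi}u$ (the second term after the substitution $w=(1-t)x+ty$), applying \eqref{e:goodL2} to each piece, and then using the standard fact that the ``$t^{-1}$ against $x^{1/2}$'' combination is controlled by $\|u\|_{H^{1/2}}$ — this last fact is itself a weighted Hardy/trace estimate, essentially the statement that the Gagliardo $H^{1/2}$ seminorm dominates $\|t^{-1/2}(u(\cdot)-\text{average})\|$ in the extended variable; (c) bound $\|\partial_t E_\chi u\|_{L^2}$ in the same way, noting $\partial_t E_\chi u = E_\chi\big((y-x)\cdot\nabla u\big) \chi$-averaged $+\,t^{-1}E_{\tilde\chi}u$ for a $\tilde\chi$ built from $\chi$ and $\nabla\chi$, and that $|y-x|$ is bounded on $Q$. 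Summing (a)--(c) gives \eqref{e:basicH1}. For the conjugated estimate \eqref{e:conjugatedH1}, the same computation applies after commuting the multiplication operator $\prod_{i\le j}x_i$ past $E_\chi$ and the division by $\prod_{i\le j}(\cdot)_i^{-1}$ inside; the extra terms produced by the commutator are of the form $x_i\cdot x_i^{-1}\cdot(\text{lower order})$ and, after the $t^{-1}$ bookkeeping, are exactly what forces the appearance of the weights $\|x_i^{-1/2}u\|_{L^2(\Omega)}$ on the right-hand side (rather than just $\|u\|_{H^{1/2}}$). I would carry this out coordinate by coordinate for $1\le i\le j$, using convexity of $\Omega$ and $\supp\chi\Subset\Omega$ to ensure all the substitutions stay inside $Q$.

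The main obstacle I expect is the careful treatment of the weighted-Hardy step, i.e., showing rigorously that the terms of the form $t^{-1}E_{\partial\chi}u$ (and their conjugated analogues) are bounded by $\|u\|_{H^{1/2}(\Omega)}$, respectively by $\|x_i^{-1/2}u\|_{L^2(\Omega)}$. This requires tracking precisely how the support of $\chi$ (which is compactly contained in $\Omega$) interacts with the boundary faces $\{x_i=0\}$, and using that $E_{\partial\chi}u$ vanishes unless $t \gtrsim d(x,\supp\chi)$, which is where the extra negative power of $x_i$ comes in. The cleanest route is probably to first establish, as an intermediate lemma, a one-dimensional weighted trace inequality of the form $\int_0^{1/2} t^{-1}\big|\int_{x}^{x+t} f\big|^2\,dt \lesssim \|f\|_{L^2}^2$ together with its $H^{1/2}$ refinement, and then lift it to $d$ dimensions via the maximal-function argument already used in the proof of \eqref{e:goodL2}; the conjugation by $\prod_{i\le j}x_i$ then only changes the weights and not the structure of the argument. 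Everything else is routine bookkeeping of the substitution $w=(1-t)x+ty$, the chain rule, and the triangle inequality.
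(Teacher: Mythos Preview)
Your derivative formula $\partial_{x_\ell}E_\chi u = (1-t)E_\chi(\partial_{x_\ell}u) + t^{-1}E_{\partial_\ell\chi}u$ is off: these two expressions are \emph{equal} (integrate by parts in the $y$-variable), not summands. In the coordinates $z=(1-t)x$ used in the paper the $z$-derivative is the single term $\partial_{z_i}E_\chi u = -t^{-d-1}\int u(w)(\partial_i\chi)(t^{-1}(w-z))\,dw$. More seriously, your plan to reach $\|u\|_{H^{1/2}}$ by applying \eqref{e:goodL2} to the derivative and then a weighted Hardy step does not work. If you put the derivative on $u$ you are left with $\|x_i^{1/2}\partial_\ell u\|_{L^2}$, which is not dominated by $\|u\|_{H^{1/2}}$ (rescale a bump); if you leave it on $\chi$, \eqref{e:goodL2} controls $\|E_{\partial\chi}u\|_{L^2}$ but not $\|t^{-1}E_{\partial\chi}u\|_{L^2}$. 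Your proposed intermediate inequality $\int_0^{1/2}t^{-1}\big|\int_x^{x+t}f\big|^2\,dt\lesssim\|f\|_{L^2}^2$ is false as written (take $f\equiv 1$ near $x$), and ``$t\lesssim x_i$ on the relevant region'' is not what the support condition actually gives.

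You do mention ``$u(\cdot)-\text{average}$'' and the Gagliardo seminorm in passing, and that is exactly the missing ingredient --- but it is the \emph{centre} of the proof, not a side lemma. Because $\int\partial_i\chi=0$, one can subtract $u(z)$ inside the integral for free, writing $\partial_{z_i}E_\chi u = t^{-d-1}\int(u(w)-u(z))(\partial_i\chi)(t^{-1}(w-z))\,dw$. Since $|w-z|\sim t$ on the support, Cauchy--Schwarz against a factor $|w-z|^{(d+1)/2}$ followed by Fubini in $(t,w,z)$ produces directly $\iint|u(w)-u(z)|^2/|w-z|^{d+1}\,dw\,dz=[u]_{H^{1/2}}^2$; this is the paper's display \eqref{e:itsALongRoad}, and the $t$-derivative is handled the same way. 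For \eqref{e:conjugatedH1} the same subtraction controls one piece (the paper's term $II_a$), but two further terms must be isolated: one where an undifferentiated $w_i^{-1}$ sits on the kernel (term $I$, bounded by $\|x_i^{-1/2}u\|_{L^2}$ via \eqref{e:goodL2}) and one involving $u(z)\big(\prod_i z_i/w_i-1\big)$ (term $II_b$, bounded by the same weighted norms after a short logarithmic estimate). Your ``commute and bookkeep'' description does not identify these pieces or explain why each is bounded.
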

\begin{proof}
We begin by proving~\eqref{e:basicH1} and first change variables to $z=(1-t)x$. (Since $0\leq t\leq\frac{1}{2}$, this is a diffeomorphism.)
By~\eqref{e:goodL2}, it remains to estimate $\partial_{z_i}E_{\chi} u$ and $\partial_t E_{\chi} u$ in $L^2$. Changing variables as in Lemma~\ref{l:firstBounds}, and setting $z=(1-t)x$,  we have 
\begin{align*}
\partial_{z_i}E_{\chi}&=t^{-d-1}\int u(w)\partial_{z_i}\chi(t^{-1}(w-z))dw\\
&=t^{-d-1}\int (u(w)-u(z))(\partial_{z_i}\chi(t^{-1}(w-z))dw\\
&=t^{-\frac{d+1}{2}}\int \frac{u(w)-u(z)}{|z-w|^{\frac{d+1}{2}}}t^{-\frac{d+1}{2}}|w-z|^{\frac{d+1}{2}}\partial_{z_i}\chi(t^{-1}(w-z))dw.
\end{align*}
Hence, letting $\tilde{\chi}\in C_c^\infty(\Omega)$ with $\supp (1-\tilde{\chi})\cap \supp \chi=\emptyset$ and using the Cauchy-Schwarz inequality,
\begin{equation}
\label{e:itsALongRoad}
\begin{aligned}
&\|\partial_{z_i}E_{\chi} u\|_{L^2}^2\\
&\leq \int_0^{\frac{1}{2}}\int t^{-d-1}\Big|\int \frac{u(w)-u(z)}{|z-w|^{\frac{d+1}{2}}}t^{-\frac{d+1}{2}}|w-z|^{\frac{d+1}{2}}\partial_{z_i}\chi(t^{-1}(w-z))dw\Big|^2dz dt\\
&\leq \int_0^{\frac{1}{2}}\int t^{-d-1}
\bigg(
\int \frac{|u(w)-u(z)|^2}{|z-w|^{d+1}} \tilde{\chi}(t^{-1}(w-z))dw\\
&
\hspace{4cm}\cdot\int t^{-d+1}|w-z|^{d+1}|\partial_{z_i}\chi(t^{-1}(w-z))|^2dw\bigg) dz dt\\
&\leq C\int_0^{\frac{1}{2}}\int t^{-1}\int \frac{|u(w)-u(z)|^2}{|z-w|^{d+1}} \tilde{\chi}(t^{-1}(w-z))dw dz dt\\
&\leq C\int \int \frac{|u(w)-u(z)|^2}{|z-w|^{d+1}} \int_{c|z-w|}^{C\min(|z-w|,\frac{1}{2})}t^{-1}dt dw dz \\
&\leq C\int \int \frac{|u(w)-u(z)|^2}{|z-w|^{d+1}} dw dz \leq C\|u\|_{H^{1/2}}^2
\end{aligned}
\end{equation}
by the definition of the Slobodeckij seminorm (see, e.g., \cite[Equation 3.18]{Mc:00}).
Next, consider 
\begin{align*}
&\partial_t E_{\chi} u(z,t)\\
&=  \partial_t(E_{\chi} u(z,t)-u(z))\\
&=-t^{-d-1}\int (u(w)-u(z))\Big(d \chi(t^{-1}(w-z)) -\sum_{i}t^{-1}(w_i-z_i)\partial_{z_i}\chi(t^{-1}(w-z)\Big)dw.
\end{align*}
Now, since $\|s_i \partial_{z_i}\chi(s)\|_{L^\infty}<\infty$ and $\supp \chi\subset \Omega$, the proof 
that $\|\partial_t E_\chi u\|\leq C \| u\|_{H^{1/2}}$ follows from the same argument leading to~\eqref{e:itsALongRoad}.

We now prove~\eqref{e:conjugatedH1}; let
$$
v(x,t):=\prod_{i=1}^{j}x_i\Big[E_{\chi} u(\bullet)\prod_{i=1}^{j}(\bullet)_{i}^{-1}\Big](x,t)
$$
and first observe that for $x\in Q$, 
$$
|v(x,t)|\leq E_{\chi}(|u|)(x,t).
$$
Hence, by~\eqref{e:goodL2},
$$
\|v\|_{L^2(Q_I)}\leq \|E_{\chi}(|u|)\|_{L^2(Q_I)}\leq C\min_i\|x_i^{1/2}u\|_{L^2(Q)}.
$$
Thus, it remains to estimate the derivatives of $v$.

First, we estimate $\partial_{z_\ell}v$ for $1\leq \ell \leq j$. In fact, up to a permutation of indices, it is enough to estimate $\partial_{z_1}v$. 
\begin{align*}
\partial_{z_1}v&=\frac{1}{[(1-t)t]^d}\int u(w)\prod_{i=2}^j\frac{ z_i}{w_i}\Big(w_1^{-1}\chi(t^{-1}(w-z))+ \frac{z_1}{w_1}t^{-1}\partial_{z_1}\chi(t^{-1}(w-z)))\Big)dw\\
&=:I+II.
\end{align*}
For $I$, on $\supp \chi(t^{-1}(w-z))$, $w_i\geq z_i$, and hence
$$
|I|\leq \big|E_{\chi} |u(\bullet_1)^{-1}|(z,t)\big|.
$$
Hence,~\eqref{e:goodL2} implies
$$
\|I\|_{L^2}\leq C\|x_1^{-1/2}u\|_{L^2}.
$$
For $II$, 
\begin{align*}
II&=\frac{1}{[(1-t)t]^d}\int \Big[(u(w)-u(z))\prod_{i=1}^j\frac{z_i}{w_i}\\
&\hspace{4cm}+u(z)\Big(\prod_{i=1}^j\frac{z_i}{w_i}-1\Big)\Big]t^{-1}\partial_{z_1}\chi(t^{-1}(w-z))dw\\
&=:II_a+II_b.
\end{align*}
Since $|\frac{z_i}{w_i}|\leq 1$ on the support of the integrand, $II_a$ can be treated as in~\eqref{e:itsALongRoad} to obtain
$$
\|II_a\|_{L^2}\leq C\|u\|_{H^{1/2}}.
$$

For $II_b$, observe that the integrand is supported on $z_\ell\leq w_\ell\leq z_\ell+t$ for $1\leq \ell \leq d$ and hence
\begin{align*}
\int \Big|\frac{z_i}{w_i}-1\Big|t^{-1}|\partial_{z_1}\chi(t^{-1}(w-z))|&\leq  Ct^{d-2}\int_{z_i}^{z_i+t}1-\frac{z_i}{w_i}dw_i\\
&= Ct^{d-2}\big(t-z_i\log (1+\frac{t}{z_i})\big)\\
&\leq Ct^{d-2}\min (t^2z_i^{-1},t).
\end{align*}
Therefore, since 
\begin{align*}
\Big|\prod_{i=1}^j\frac{z_i}{w_i}-1\Big|&\leq  \sum_{\ell=1}^{j-1}\Big|\Big(\frac{z_\ell}{w_\ell}-1\Big)\prod_{i=\ell+1}^j\frac{z_i}{w_i}\Big| +\Big|\frac{z_j}{w_j}-1\Big|\\
&\leq \sum_{i=1}^j\Big|\frac{z_j}{w_j}-1\Big|,
\end{align*}
we have
$$
\|II_b\|_{L^2}^2\leq C\sum_{i=1}^j\int |u(z)|^2\int_0^{\frac{1}{2}}\min (z_i^{-2},t^{-2})dtdz\leq C\sum_{i=1}^j\|z_i^{-1/2}u\|_{L^2}^2.
$$
The combination of the bounds on $I$, $II_a$, and $II_b$ implies that
$$
\|\partial_{z_1}v\|_{L^2}\leq C\sum_{i=1}^j\|z_i^{-1/2}u\|_{L^2}+\|u\|_{H^{1/2}}.
$$
Now, for $\ell>j$, we have only terms of the form $II$ and hence the bounds there imply 
$$
\|\partial_{z_\ell}v\|_{L^2}\leq C\sum_{i=1}^j\|z_i^{-1/2}u\|_{L^2}+\|u\|_{H^{1/2}}.
$$
Finally, 
\begin{align*}
&\partial_t[(1-t)^{-d}v]\\
&=\partial_t \Big(t^{-d}\int \Big(u(w)\prod_{i=1}^j\frac{z_i}{w_i}-u(z)\Big)\chi(t^{-1}(w-z))dw\\
&= \int \Bigg((u(w)-u(z))\prod_{i=1}^j\frac{z_i}{w_i}+u(z)\Big(\prod_{i=1}^j\frac{z_i}{w_i}-1\Big)\Bigg)\partial_t (t^{-d}\chi(t^{-1}(w-z)))dw,
\end{align*}
and each term can be estimated as above (as in the estimate for $\partial_t E_{\chi}$, we use that the term arising from differentiating $\chi(t^{-1}(w-z))$ in $t$ has an extra power of $|z-w|$.)

\end{proof}

\begin{lemma}[Extension operators: version 1]
\label{l:extendA}
 Let $K\subset \mathbb{R}^d$ be an open $d$-simplex. Then there is an extension operator $E:H^{1/2}(\partial K)\to H^1(K)$ and $C>0$ such that for all $p\geq 0$, $E:\mathbb{P}^{p}(\partial K)\to \mathbb{P}^{p}(K)$ and
 \begin{gather*}
 E|_{\partial K}=I,\qquad
 \|E u\|_{H^1(K)}\leq C\|u\|_{H^{1/2}(\partial K)},
 \end{gather*}
 where 
 \begin{align*}
 \|u\|_{H^{s}(\partial K)}:=\inf\Big\{ \sum_{i}\|u|_{F_{1,i}}\|_{H^{s}(F_{1,i})}\,:\, U\in H^{s+1/2}(K)\,:\,U|_{\partial K}=u\Big\}.
 \end{align*}
\end{lemma}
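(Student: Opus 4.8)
The plan is to build the extension operator $E: H^{1/2}(\partial K)\to H^1(K)$ on a simplex $K$ by a partition-of-unity argument, gluing together the local extension operators $E_\chi$ constructed in the preceding lemmas. First I would reduce to the reference simplex: since the element maps in Definition \ref{d:Crtriang} are bi-Lipschitz (indeed affine for a $C^1$ simplicial triangulation), and since polynomials of degree $\leq p$ are preserved under affine pullback, it suffices to construct $E$ on a single fixed open $d$-simplex $\widehat K$; the bounds then transfer with a constant depending only on $\Upsilon$. Working on $\widehat K$, I would pick, for each codimension-$1$ face $F_{1,i}$, an open box $Q_i\cong(0,1)^{d-1}\times(0,1/2)$ sitting inside $\widehat K$ with one face identified with a sub-box of $F_{1,i}$, together with a cutoff $\chi_i\in C^1_c$ supported appropriately, so that the operator $E_{\chi_i}$ from the previous two lemmas provides an extension from (a piece of) $F_{1,i}$ into a neighbourhood of that face inside $\widehat K$, with the $H^{1/2}\to H^1$ bound \eqref{e:basicH1} (and the weighted variant \eqref{e:conjugatedH1} to handle the corners and lower-dimensional faces where the $x_i$-weights degenerate).

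Next I would assemble these local extensions using a smooth partition of unity $\{\phi_i\}$ on $\overline{\widehat K}$ subordinate to the boxes $\{Q_i\}$ together with one interior patch, defining $Eu := \sum_i \phi_i\, E_{\chi_i}(u|_{F_{1,i}})$ plus an interior correction that kills the residual trace mismatch on lower-codimension faces. The trace property $E u|_{\partial\widehat K}=u$ follows from $E_{\chi_i}|_{F_{1,i}}=\mathrm{id}$ and $\sum_i\phi_i\equiv 1$ near the corresponding face. The $H^1$ bound follows from the boundedness of each $E_{\chi_i}$ in \eqref{e:basicH1}, the fact that multiplication by $\phi_i\in C^1$ is bounded on $H^1$, and the weighted estimates \eqref{e:conjugatedH1} near the lower-dimensional strata where the naive extension would blow up like a negative power of the distance to a face. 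The polynomial-preservation property $E:\mathbb{P}^p(\partial K)\to\mathbb{P}^p(K)$ is the crucial structural point: the operator $E_\chi u(x,t)=\int u((1-t)x+ty)\chi(y)\,dy$ sends a polynomial of degree $p$ in the argument to a polynomial of degree $p$ in $(x,t)$, since $(1-t)x+ty$ is affine in $(x,t,y)$ and integration in $y$ against $\chi$ preserves the total degree in $(x,t)$; multiplication by the $\phi_i$ would destroy this, so one must arrange that, on each face, exactly one $\phi_i$ equals $1$ in a neighbourhood, or more carefully use the averaging-over-affine-isomorphisms device (as in \eqref{e:optimalPolyExtend}/Remark \ref{r:elementByElement}) so that the final $E$ restricted to polynomial boundary data is again a polynomial. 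Here I would follow the construction of \cite{Mu:97} (adapted in \S\ref{s:extension}) which produces a genuinely polynomial-preserving, $p$-independent extension.

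The main obstacle I expect is precisely the corner/edge behaviour: a single $E_\chi$ only extends cleanly from the interior of one face and degenerates near the $(d-2)$- and lower-dimensional strata of $\partial K$, where the weights $x_i$ in Lemma \ref{l:firstBounds} vanish. Controlling the superposition $\sum_i\phi_i E_{\chi_i}(u|_{F_{1,i}})$ near these strata requires the weighted bound \eqref{e:conjugatedH1}, and one must check that the compatibility of the boundary data across adjacent faces (encoded in the fact that $u\in H^{1/2}(\partial K)$ arises as a trace of a single $H^1$ function on $K$, which is how $\|\cdot\|_{H^{1/2}(\partial K)}$ is defined) is exactly what makes the weighted terms $\|x_i^{-1/2}u\|_{L^2}$ finite and controlled. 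Simultaneously maintaining the polynomial-preservation property while doing this gluing is delicate and is the technical heart of the argument; everything else (the scaling to a general simplex, the $H^1$ continuity of multiplication by a $C^1$ function, the trace identity) is routine.
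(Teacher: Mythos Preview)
Your partition-of-unity approach has a genuine gap: it cannot preserve polynomials, and you correctly identify this but your proposed fixes do not work. Having exactly one $\phi_i\equiv 1$ near each face is impossible where faces meet, and the averaging-over-affine-isomorphisms device from \eqref{e:optimalPolyExtend} belongs to a different part of the construction (it produces boundary compatibility, not a polynomial-preserving extension). The construction here is not a gluing at all.

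The paper's argument is an \emph{inductive algebraic correction}. Fix the standard simplex and a base extension $E_0$ from a single face (built from $E_\chi$ via an affine change of variables). Suppose $E_j$ already matches $u$ on faces $F_{1,1},\dots,F_{1,j+1}$. Then the mismatch $E_j u - u$ vanishes on those $j+1$ faces, so after an affine map $\Phi_j$ sending them to $\{x_1=0\},\dots,\{x_{j+1}=0\}$ one can divide it by $x_1\cdots x_{j+1}$ and still have a smooth function (a polynomial of degree $p-j-1$ when $u\in\mathbb P^p$). Extend that quotient from the next face with $E_0$, multiply back by $x_1\cdots x_{j+1}$, pull back, and subtract:
\[
E_{j+1}u := E_j u - \Phi_j^*\Big(x_1\cdots x_{j+1}\, E_0\big[\big((\Phi_j^{-1})^*(E_j u - u)/(x_1\cdots x_{j+1})\big)\big|_{F_0}\big]\Big).
\]
This correction vanishes on the faces already matched (because of the prefactor $x_1\cdots x_{j+1}$), equals the mismatch on the new face, and is polynomial of degree $\le p$ by the division-and-multiplication. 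The $H^{1/2}\to H^1$ bound for the correction is exactly where the weighted estimate \eqref{e:conjugatedH1} enters: it controls $\|x_1\cdots x_{j+1} E_0[w/(x_1\cdots x_{j+1})]\|_{H^1}$ by $\sum_i\|x_i^{-1/2}w\|_{L^2}+\|w\|_{H^{1/2}}$, and finiteness of the weighted $L^2$ norms for $w=(\Phi_j^{-1})^*(E_j u - u)$ follows from its vanishing on $\{x_i=0\}$ together with the $H^{1/2}(\partial K)$ norm of $u$. So your intuition about where \eqref{e:conjugatedH1} is needed is right, but the mechanism is division by linear forms, not cutoffs.
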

\begin{proof}
Without loss of generality, we can assume that 
$$
K=\{x\in\mathbb{R}^d\,:\,0<x_i,\, \textstyle{\sum_{i=1}^{d}} x_i <1\}
$$
and 
$$
F_0:=F_{1,1}=\{ x\in \mathbb{R}^{d-1}\,:\, x_d=0,\, 0<x_i,\, \textstyle{\sum_{i=1}^{d-1}}x_i<1\}
$$
There is an invertible affine map taking $\tilde{\Phi}:\overline{K}\to \overline{K}$ so that 
$$
\tilde{\Phi}(F_0)=
\{ x\in \mathbb{R}^{d-1}\,:\, \, 0<x_i,\, \textstyle{\sum_{i=1}^{d}x_i}=1\}=:F.$$

Fix $\chi \in C_c^1(F)$ and for $u\in C^\infty(\overline{F})$ set $\tilde{E}u(x',x_d):= [E_{\chi}u]((1- \frac{1}{2}x_d)^{-1}x',  \frac{1}{2}x_d)$ and observe that 
$$
K\subset \big\{ (x', x_d)\,: \, (1-\tfrac{1}{2} x_d)^{-1}x'\in F,\quad 0< x_d<1\big\}.
$$
Then, define 
$$
E_0:= \tilde{\Phi}^*\tilde{E}(\tilde{\Phi}^{-1})^*,
$$
where, for a map $g:\overline{K}\to \overline{K}$, we denote by $g^*$ the pull back i.e., $g^*u(x):=u(g(x))$.  
Then, for all $p\geq 0$, $E_0:H^{1/2}(F_{1,1})\to H^1(K)$, $E_0:\mathbb{P}^p(F_{1,1})\to \mathbb{P}^p(K)$, and
\begin{gather*}
\|E_0u\|_{H^1(K)}\leq C\|u\|_{H^{1/2}(F_{1,1})},\qquad
E_0|_{F_{1,1}}=I.
\end{gather*}

Suppose by induction that for some $0\leq j<d$ we have constructed $E_j$ such that 
\begin{gather}
E_j:H^{1}(\partial K)\cap C^\infty(\partial K)\to C^\infty(\overline{K})\label{e:smoothExtend}\\
E_j:\mathbb{P}^{p}(\partial K)\to \mathbb{P}^p(K),\,\text{ for all }p\geq 0\label{e:polynomialExtend}\\
\|E_{j}u\|_{H^1(K)}\leq C\|u\|_{H^{\frac{1}{2}}(\partial K)},\label{e:mappingExtend}\\
E_{j}|_{\cup_{1\leq i\leq j+1}F_{1,i}}=I.\label{e:restrictionExtend}
\end{gather}
Here, we write $C^\infty(\partial K)$ for the restriction to $\partial K$ of $C^\infty (\overline{K})$. 

We now construct $E_{j+1}$ satisfying~\eqref{e:smoothExtend} to~\eqref{e:restrictionExtend} with $j$ replaced by $j+1$.

Let $\Phi_j:\overline{K}\to \overline{K}$ be an invertible affine map such that 
$$
\Phi_j(F_{1,j})=F_0,\qquad \Phi_j(F_{1,i})\subset \{x_i=0\},\, i=1,\dots,j+1.
$$
Then, define
$$
E_{j+1}u:=E_j u-  \Phi_j^*\Big(x_1\dots x_{j+1}E_0\Big[ \big(((\Phi_j^{-1})^*(E_ju-u))/x_1\dots x_{j+1}\big)|_{F_0}\Big]\Big)
$$

We first show~\eqref{e:smoothExtend} for $E_{j+1}$ i.e., that $E_{j+1}: C^\infty(\partial K)\to C^\infty(\overline{K})$. Indeed, observe that for $u\in C^\infty(\partial K)$, $E_ju\in C^\infty(\overline{K})$ and $(\Phi_j^{-1})^*E_ju|_{x_i=0}=(\Phi_j^{-1})^*u|_{x_i=0}$ for $i=1,\dots,j+1$. Hence $x_1^{-1}\dots x_{j+1}^{-1}(\Phi_j^{-1})^*(E_ju-u)\in C^\infty(\partial K)$ which implies $E_{j+1}u\in C^\infty(\overline{K})$.

Next, we show~\eqref{e:polynomialExtend} for $E_{j+1}$ i.e., that $E_{j+1}:\mathbb{P}^p(\partial K)\to \mathbb{P}^p(K)$. Indeed,  observe that for $u\in \mathbb{P}^p(\partial K)$, $E_ju\in \mathbb{P}^p(K)$ and $(\Phi_j^{-1})^*(E_ju-u)|_{x_i=0}=0$ for $i=1,\dots,j+1$. Hence $x_1^{-1}\dots x_{j+1}^{-1}(\Phi_j^{-1})^*(E_ju-u)|_{\partial K}\in \mathbb{P}^{p-j-1}$ which implies $E_{j+1}u\in \mathbb{P}^{p}(K)$ since $E_0:\mathbb{P}^{p-j-1}(F_{0})\to \mathbb{P}^{p-j-1}(K)$.

Next, we claim that~\eqref{e:mappingExtend} and~\eqref{e:restrictionExtend} hold; i.e.
\begin{gather}
\|E_{j+1}u\|_{H^1(K)}\leq C\|u\|_{H^{\frac{1}{2}}(\partial K)},\label{e:purpleEstimates}\\
E_{j+1}|_{F_{1,i}}=I,\quad 1\leq i\leq j+2\label{e:purpleEquality}.
\end{gather}

To see~\eqref{e:purpleEstimates}, first observe that by~\eqref{e:conjugatedH1},
\begin{align*}
\Big\|\Phi_j^*\big(x_1\dots x_{j+1}E_0\big[ \big((w)/x_1\dots x_{j+1}\big)&|_{F_0}\big]\big)\Big\|_{H^1(K)}\\
&\leq C\Big(\sum_{i=0}^{j+1}\|x_i^{-1/2}w\|_{L^2(F_0)}+\|w\|_{H^{1/2}(F_0)}\Big).
\end{align*}
Therefore, to prove~\eqref{e:purpleEstimates}, we need to check that 
\begin{equation}
\label{e:indigoInequality}
\sum_{i=0}^{j+1}\|x_i^{-1/2}(\Phi_j^{-1})^*(E_ju-u)\|_{L^2(F_0)}+\|(\Phi_j^{-1})^*(E_ju-u)\|_{H^{1/2}(F_0)}\leq C\|u\|_{H^{1/2}(\partial K)}
.
\end{equation}

For this, let $u\in H^{1/2}(\partial K)$ and let $u_n\in  C^\infty(\partial K)$ with $u_n\overset{H^{1/2}}{\to} u$. The boundedness of $E_j$ implies that $(\Phi_j^{-1})^*E_ju_n\to (\Phi_j^{-1})^*u$ in $H^{1}(K)$. 

Then, $(\Phi_j^{-1})^*E_ju_n\in C^\infty(K)$ and $(\Phi_j^{-1})^*(E_ju_n-u_n)|_{\{x_i=0\}\cap \partial K}=0$ for $1\leq i\leq j+1$. Moreover,
$$
\|(\Phi_j^{-1})^*E_j(u_n-u)\|_{H^{1/2}(\partial K)}\leq C\|(\Phi_j^{-1})^*E_j(u_n-u)\|_{H^1(K)}\leq C\|u_n-u\|_{H^{1/2}(\partial K)}.
$$
In particular, $[(\Phi_j^{-1})^*(E_ju_n-u_n)]|_{F_0}$ converges to $[(\Phi_j^{-1})^*(E_ju-u)]|_{F_0}$ in $H^{1/2}(\partial K)$. 

Next, observe that for $w\in H^1(\partial K)$ with $w\in H_0^1(F_0)$, ,
\begin{align*}
|w|^2_{H^{1/2}(\partial K)}&\geq c\int_{F_0}\int_{\Phi_j(F_{1,i})}\frac{|w(x)|^2}{|x_i^2+|x'-y'|^2+y_d^2|^{(d+1)/2}}dy_ddy'dx'dx_i\\
&\geq c\|x_i^{-1/2}w\|^2_{L^2}.
\end{align*}
Since $(\Phi_j^{-1})^*(E_ju_n-u_n)|_{\{x_i=0\}}=0$,  $i=1,\dots,j+1$, and $u_n\in H^1(\partial K)$, this implies
\begin{align*}
&\sum_{i=0}^{j+1}\|x_i^{-1/2}(\Phi_j^{-1})^*[E_j(u_n-u_m)-u_n+u_m\|_{L^2(F_0)}\\
&\qquad\qquad\leq C\|(\Phi_j^{-1})^*[E_j(u_n-u_m)-u_n+u_m]\|_{H^{1/2}(F_0)},
\end{align*}

This implies that for $i=1,\dots,j+1$, $x_i^{-1/2}(\Phi_j^{-1})^*E_ju\in L^2$ and 
$$
\sum_{i=0}^{j+1}\|x_i^{-1/2}(\Phi_j^{-1})^*E_ju\|_{L^2(F_0)}+\|(\Phi_j^{-1})^*E_ju\|_{H^{1/2}(F_0)}\leq C\|u\|_{H^{1/2}(\partial K)},
$$
which is~\eqref{e:indigoInequality} and hence we have proved~\eqref{e:purpleEstimates}.

The equalities~\eqref{e:purpleEquality} follow easily for $u\in C^\infty(\partial K)$ and follow for $u\in H^{1/2}(\partial K)$ by density.
The proof is now complete by induction.
\end{proof}

\begin{lemma}[Extension operators: version 2]
\label{l:extend}
 Let $K\subset \mathbb{R}^d$ be an open $d$-simplex. Then there is $C>0$ such that for all $p\geq 0$, there are extension operators $E_{j,i}:\mathbb{P}^{p}(F_{j,i})\cap H_0^1(F_{j,i})\to \mathbb{P}^{p}(K)$, $1\leq j<d$, and $E_{d,i}:\mathbb{C}\to \mathbb{P}^p(K)$ such that 
 \begin{gather*}
 E_{j,i}|_{\overline{F_{j,i}}}=I,\qquad E_{j,i}|_{\overline{F_{j,m}}}=0,\,\, m\neq i,\\
 E_{j,i}|_{\overline{F_{r,m}}}=0,\,\,\overline{F_{r,m}}\cap \overline{F_{j,i}}=\emptyset
 \\
\|E_{j,i}|_{F_{r,l}}q\|_{H_p^{1}(F_{r,l})}\leq C p^{\frac{r-j}{2}}\| q\|_{H_{p}^{1/2}(F_{j,i})},\quad 0\leq r<j.
 \end{gather*}
 where 
 \begin{align*}
 \|u\|_{H_{p}^{1}(F)}:=\|u\|_{L^2}+\langle p\rangle ^{-1}|u|_{H^{1}}.
 \end{align*}
\end{lemma}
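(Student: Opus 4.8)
\textbf{Proof proposal for Lemma~\ref{l:extend}.}

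The plan is to build the extension operators $E_{j,i}$ by downward induction on the codimension $j$, combining the $p$-independent extension operator $E$ of Lemma~\ref{l:extendA} with the boundary-respecting polynomial approximation of Lemma~\ref{l:polynomialsWithBoundaries}, so that at each codimension step we gain a power of $p$. First I would fix, for each face $F_{j,i}$, a pair of transverse ``opposite'' codimension-one subfaces containing $F_{j,i}$ in their closures and set up the affine coordinates in which $E$ from Lemma~\ref{l:extendA} acts; since $E$ preserves $\mathbb{P}^p$ and is bounded $H^{1/2}(\partial\widehat K)\to H^1(\widehat K)$ uniformly in $p$, it gives a $p$-independent extension from a codimension-one face into $K$ that respects traces. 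The issue is that a raw iterated application of $E$ — extending from $F_{j,i}$ up through codimension $j-1$, $j-2$, \dots, $0$ — does not by itself produce the factor $p^{(r-j)/2}$; that gain has to be extracted by interposing, at each codimension level, the operator $\mathcal{A}^{p-1}$ of Lemma~\ref{l:polynomialsWithBoundaries} (which respects the vanishing boundary conditions on the relevant subfaces and converts an $H^1$ bound into an $H^2$ bound at the cost of $p^{-1}$) together with an inverse estimate $\|v\|_{H^2(F)}\le C p^{2}\|v\|_{H^1(F)}$ valid on polynomials. The bookkeeping is: extending one codimension costs a factor $p^{1/2}$ in the $H^1_p$-norm, so going from codimension $j$ down to codimension $r$ costs $p^{(j-r)/2}$ — but we want the \emph{reverse} direction of inequality, namely control of the restriction to a \emph{lower} codimension face by the $H^{1/2}_p$ norm on $F_{j,i}$, and here the half-powers of $p$ accumulate as $p^{(r-j)/2}$ because $r<j$ makes the exponent negative, i.e. restriction to higher-dimensional faces \emph{improves} the estimate. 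I would make this precise by proving the inductive statement: there exist $E_{j,i}$ with the stated support properties such that $\|E_{j,i}q\|_{H^1_p(F_{r,l})}\le C p^{(r-j)/2}\|q\|_{H^{1/2}_p(F_{j,i})}$ for all $0\le r<j$, simultaneously for all sub- and super-faces.

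The inductive step would be organised as in the proof of Lemma~\ref{l:extendA}: given the operator at codimension $j$, to build the operator at codimension $j-1$ (or rather to verify the estimate one codimension lower) I take an element $q\in\mathbb{P}^p(F_{j,i})\cap H^1_0(F_{j,i})$, first apply $E$ to push it into the codimension-$(j-1)$ face $F_{j-1,l}$ containing $F_{j,i}$ with the $p$-independent bound, then correct on that face using $\mathcal{A}^{p-1}$ to keep the result polynomial, in $H^1_0(F_{j-1,l})$, and with an $H^1\to H^2$ trade; the product $x_1\cdots x_{j}$-type cutoff (exactly as in the definition of $E_{j+1}$ in Lemma~\ref{l:extendA}) ensures that the correction vanishes on all the ``wrong'' subfaces, so the support conditions $E_{j,i}|_{\overline{F_{j,m}}}=0$ for $m\ne i$ and $E_{j,i}|_{\overline{F_{r,m}}}=0$ for $\overline{F_{r,m}}\cap\overline{F_{j,i}}=\emptyset$ are preserved. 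Crucially, the scaling of the weighted norm $\|u\|_{H^1_p(F)}=\|u\|_{L^2}+\langle p\rangle^{-1}|u|_{H^1}$ is chosen precisely so that $\|E q\|_{H^1_p}\lesssim p^{1/2}\|q\|_{H^{1/2}_p}$ on one codimension step (the half-power coming from the trace/lifting mismatch between $H^{1/2}$ and $H^1$ combined with the $p^{-1}$ weight), and iterating $j-r$ times gives $p^{(j-r)/2}$; reading the inequality in the direction ``restrict to $F_{r,l}$'' flips the role of source and target and yields $p^{(r-j)/2}$ as required.

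The main obstacle I anticipate is controlling the weighted-norm constants uniformly in $p$ through the iteration: one must check that the polynomial-approximation operator $\mathcal{A}^{p-1}$ of Lemma~\ref{l:polynomialsWithBoundaries} and the $p$-independent extension $E$ of Lemma~\ref{l:extendA} compose cleanly, i.e. that applying $E$ does not destroy the zero-boundary-condition structure that $\mathcal{A}^{p-1}$ requires, and that the inverse estimate $\|v\|_{H^2(F)}\le Cp^2\|v\|_{H^1(F)}$ on $\mathbb{P}^p(F)$ is available with a constant depending only on the (fixed) simplex shape, not on $p$ — this last point is standard (it follows from a scaling argument on the reference simplex together with equivalence of norms on the finite-dimensional space $\mathbb{P}^p$, but the $p$-dependence of the equivalence constant is exactly $p^2$ for the $H^1\to H^2$ inverse estimate, which is what makes the half-power bookkeeping close). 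A secondary technical point is that the ``$\langle p\rangle$'' rather than ``$p$'' in the definition of $H^1_p$ must be carried along to handle the $p=0$ case, and that the $d,i$ (vertex) extensions $E_{d,i}:\mathbb{C}\to\mathbb{P}^p(K)$ are the base case of the induction — here one simply takes a fixed affine function equal to $1$ at the vertex $F_{d,i}$ and $0$ at all others, composed with the relevant iterate, and the estimate is trivial since the source space is one-dimensional. Once the induction is set up with the right inductive hypothesis, the remaining work is routine tracking of constants.
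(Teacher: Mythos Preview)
Your proposal has a genuine gap in the mechanism that produces the $p^{-1/2}$ gain per codimension step. You claim that $\|Eq\|_{H^1_p}\lesssim p^{1/2}\|q\|_{H^{1/2}_p}$ for one extension step and then that ``reading the inequality in the direction restrict to $F_{r,l}$ flips the role of source and target'' to give $p^{(r-j)/2}$. This is backwards and the flip argument is not valid: the required estimate is a \emph{gain} $\|E_m u\|_{H^1_p}\le C\langle p\rangle^{-1/2}\|u\|_{H^{1/2}_p}$ per step, and it must be proved directly for the extension, not recovered by duality from a loss. The difficulty is the $L^2$ part of the $H^1_p$ norm: the $p$-independent operator $E$ of Lemma~\ref{l:extendA} satisfies $\|Eu\|_{H^1}\le C\|u\|_{H^{1/2}}$ uniformly in $p$, so $\langle p\rangle^{-1}|Eu|_{H^1}$ is already small enough, but $\|Eu\|_{L^2}$ has no reason to be $O(p^{-1/2})$. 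Your proposed fix---interposing $\mathcal{A}^{p-1}$ (which actually gives $\|(I-\mathcal{A}^{p-1})u\|_{H^1}\le Cp^{-1}\|u\|_{H^2}$, not an ``$H^1\to H^2$ trade'') together with an inverse estimate $\|v\|_{H^2}\le Cp^2\|v\|_{H^1}$---does not address this $L^2$ term, and in any case combining a $p^{-1}$ gain with a $p^{2}$ loss is a net loss.

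The paper's proof supplies the missing idea: it modifies $E$ by a boundary-layer cutoff at scale $p^{-1}$, setting
\[
E_m^{K_m}:=E+\Pi^{\mathbb{P}^p}_{H_0^1(K_m)}(\psi_{p,K_m}-1)E,\qquad \psi_{p,K_m}(x)=\psi\big(\langle p\rangle\, d(x,\partial K_m)\big).
\]
The cutoff localises the extension to a strip of width $\sim p^{-1}$, which is what forces $\|\psi_{p,K_m}Eu\|_{L^2}\le C\langle p\rangle^{-1/2}\|u\|_{H^{1/2}_p}$ (via a Hardy-type argument). The correction by the $H^1$-projection onto $\mathbb{P}^p\cap H_0^1$ keeps the result polynomial with the correct trace, and its $L^2$ size is controlled using Lemma~\ref{l:polynomialsWithBoundaries} through the bound $\|(I-\Pi^{\mathbb{P}^p}_{H_0^1})\|_{H^2\cap H_0^1\to H_0^1}\le C\langle p\rangle^{-1}$ and a duality step. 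The induction is then on the ambient dimension $d$, reducing to this single one-step extension estimate. Without the thin-layer cutoff, there is no source for the $p^{-1/2}$.
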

\begin{proof}
We claim that it is enough to show that for an open $m+1$ simplex, $K_m\subset \mathbb{R}^m$, there there is $E_m:\mathbb{P}^{p}(\partial K_m)\to \mathbb{P}^p(K_m)$ such that 
\begin{equation}
\label{e:pExtendClaim1}
E^{K_m}_m|_{\partial K_m}=I,
\end{equation}
and 
\begin{equation}
\label{e:pExtendClaim2}
\|E^{K_m}_m u\|_{H_p^1(K_m)}\leq C \langle p\rangle^{-1/2}\|u\|_{H_p^{1/2}(\partial K)}.
\end{equation}
Indeed, suppose such an operator exists for any $K_m$. 

When $d=1$, $E^{K_m}_m$ is the required extension. Therefore, we can assume by induction that the statement of the lemma holds for some $d\geq 1$.

Let $K\subset \mathbb{R}^{d+1}$ be an open $d+1$ simplex. Then, by the inductive hypothesis, for any $j>1$, and any $i$, and any $F_{1,l}$ such that $F_{j,i}\subset \overline{F}_{1,l}$, there is an extension
$$
E_{j,i}^{l}:\mathbb{P}^p(F_{j,i})\cap H_0^1(F_{j,i})\to \mathbb{P}^p(F_{1,l})
$$
such that 
\begin{gather*}
E_{j,i}^l|_{F_{j,i}}=I,\qquad E_{j,i}^l|_{F_{j,m}}=0,\quad m\neq i, F_{j,m}\subset \overline{F}_{1,l},\,\\
E_{j,i}^l|_{F_{r,m}}=0,\,\, F_{r,m}\subset \overline{F}_{1,l}\,\text{ and }\,\overline{F_{r,m}}\cap \overline{F_{j,i}}=\emptyset,
\end{gather*}
and, for $F_{r,m}\subset \overline{F_{1,l}}$, 
$$
\|E_{j,i}|_{F_{r,m}}u\|_{H_p^1(F_{r,m})}\leq Cp^{\frac{r-j}{2}}\|u\|_{H_{p}^{1/2}(F_{j,i})}.
$$
Notice that for $u\in H_{0}^{1}(F_{j,i})$, defining $v$ on $\partial K$ by
$$
v:=\begin{cases} E_{j,i}^lu,&F_{j,i}\subset \overline{F_{1,l}}\\
0,&F_{j,i}\cap \overline{F_{1,l}}=\emptyset,
\end{cases}
$$
we have $v\in H^1(\partial K)$ since it is in $H^1$ on each boundary face and is continuous across codimension-two boundary faces.

Fix $l$ such that $F_{j,i}\subset \overline{F_{1,l}}$. Then, by our claims~\eqref{e:pExtendClaim1} and~\eqref{e:pExtendClaim2}, there is $E_{d+1}^{K}:\mathbb{P}^{p}(\partial K)\to \mathbb{P}^p(K)$ such that 
$$
E_{d+1}^{K}|_{\overline{F_{1,l}}}=I,\quad (E_{d+1}^Kv)|_{F_{r,m}}=0\,\text{ if }r=j,\,m\neq i\text{ or }\overline{F_{r,m}}\cap \overline{F_{j,i}}=\emptyset,
$$
and
$$
\|E_{d+1}^Kv\|_{H_p^1(K)}\leq C\langle p\rangle ^{-\frac{1}{2}}\|v\|_{H_p^{1/2}(\partial K)}\leq C\langle p\rangle ^{-\frac{1}{2}}\|v\|_{H_p^1(\partial K)}\leq \langle p\rangle^{-\frac{j}{2}}\|u\|_{H_{p}^{1/2}},
$$
which provides the required extension for $j>1$. 

For $j=1$, and $u\in H_{0}^{1}(F_{1,i})$, we define $\tilde{u}\in H^{1}(\partial K)$ by
$$
\tilde{u}|_{\overline{F_{1,l}}}=\begin{cases} u&l=i\\0&\text{else}\end{cases}.
$$
Then, define 
$$
E_{1,i}u=E_d^{K}\tilde{u}
$$
and the required estimates follow from~\eqref{e:pExtendClaim1} and~\eqref{e:pExtendClaim2}. 

It therefore remains to prove that $E_m^{K_m}$ exists. Without loss of generality, we can assume that
$$
K_m=\Big\{ x\in \mathbb{R}^m\,:\, 0<x_i,\, \sum_{i=1}^mx_i<1\Big\}.
$$
Let $\psi\in C_c^\infty(\mathbb{R};[0,1])$ with $0\notin \supp (1-\psi)$, and define $\psi_{p,K_m}(x):=\psi(\langle p\rangle d(x,\partial K_m))$. Let $E$ be the extension operator constructed in Lemma~\ref{l:extendA} and define
$$
E_m^{K_m}:= E+\Pi_{H_0^1(K_m)}^{\mathbb{P}^p}(\psi_{p,K_m}-1)E,
$$
where $\Pi^{\mathbb{P}^p}_{H_{0}^1(K_m)}$ is the orthogonal projector with respect to the $H^1$ norm onto $\mathbb{P}^p(K_m)\cap H_{0}^1(K_m)$.

It is now enough to show that
\begin{equation}
\label{e:H1BoundEm}
\|E_mu\|_{H_p^1}\leq C\langle p\rangle^{-1/2}\|u\|_{H_p^{1/2}},
\end{equation}

To see this, we first estimate the $H^1$ norm 
\begin{align*}
\langle p\rangle^{-1}\|E_m^{K_m}u\|_{H^1(K_m)}&\leq 2\langle p\rangle^{-1}\|Eu\|_{H^1(K_m)}+\|\psi_{p,K_m}Eu\|_{H_p^1}\\
&\leq C\langle p\rangle^{-1}\|Eu\|_{H^{1/2}}+\langle p\rangle^{-1}\|\psi_{p,K_m}Eu\|_{H^1}.
\end{align*}
Thus, we need to show that 
\begin{equation}
\label{e:H1Ema}
\langle p\rangle^{-1}\|\psi_{p,K_m}Eu\|_{H^1}\leq C\langle p\rangle^{-1/2}\|u\|_{H_p^{1/2}}
\end{equation}
to obtain
\begin{equation}
\label{e:derivativeBoundEm}
\langle p\rangle^{-1}\|E_m^{K_m}u\|_{H^1(K_m)}\leq C\langle p\rangle^{-1/2}\|u\|_{H_{p}^{1/2}}.
\end{equation}

To see~\eqref{e:H1Ema}, we make a Lipschitz change of coordinates so that $d(x,\partial K_m)$ becomes $x_d$ (note that if $\gamma$ is a Lipschitz bijection, then $\|u\circ \gamma \|_{H^1}\leq C\|u\|_{H^1}$, $\|u\circ \gamma \|_{L^2}\leq C\|u\|_{L^2}$ and hence it is enough to estimate in the new coordinates) and hence 
$$
\psi_{p,K_m}Eu(x_d,x')=\chi(\langle p\rangle x_d )\frac{1}{x_d}\int_0^{x_d} \partial_{x_d}(Eu)(x',x_d)ds+ \chi(\langle p\rangle x_d)Eu(x',0).
$$
Therefore, by the Hardy inequality \eqref{e:Hardy},
\begin{equation}
\label{e:L2BoundsWeighted}
\begin{aligned}
\|\psi(\langle p\rangle x_d)Eu\|_{L^2}&\leq \|\psi(\langle p\rangle x_d)x_d\|_{L^\infty} \|Eu\|_{H^1}+C\langle p\rangle^{-1/2}\|u\|_{L^2}\\
&\leq C \langle p\rangle ^{-1}\|Eu\|_{H^1}+C\langle p\rangle ^{-1/2}\|u\|_{L^2}\\
&\leq C \langle p\rangle ^{-1/2}\big(\langle p\rangle ^{-1/2}\|u\|_{H^{1/2}}+\|u\|_{L^2}\big)\\
&\leq C\langle p\rangle ^{-1/2}\|u\|_{H_p^{1/2}}.
\end{aligned}
\end{equation}
Next, 
\begin{align*}
\langle p\rangle^{-1}\|\nabla \psi(\langle p\rangle x_d)Eu\|_{L^2}&\leq \|\psi'(\langle p\rangle x_d)Eu\|_{L^2}+\langle p\rangle^{-1}\|Eu\|_{H^1}\\
&\leq C\langle p-1\rangle ^{-\frac{1}{2}}\|u\|_{H_p^{1/2}}+C\langle p\rangle^{-1}\|u\|_{H^{1/2}}\\
&\leq C\langle p \rangle^{-1/2}\|u\|_{H_p^{1/2}},
\end{align*}
and hence we have proved~\eqref{e:H1Ema} and, in particular,~\eqref{e:derivativeBoundEm}. 

For the $L^2$ bound, by \eqref{e:derivativeBoundEm}, \eqref{e:L2BoundsWeighted}, and the self-adjointness of $(I-\Pi_{\tilde{H}_{0}^1}^{\mathbb{P}^p})$ on $H^1$, 
\begin{align*}
&\|E_m^{K_m}u\|_{L^2}\\
&\leq \|\psi_{p,K_m}Eu\|_{L^2}+\|I-\Pi_{H_0^1}^{\mathbb{P}^p})(\psi_{p,K_m}-1)Eu\|_{L^2}\\
&\leq C\langle p\rangle^{-1/2}\|u\|_{H_p^{1/2}}+ \|(I-\Pi_{\tilde{H}_0^1}^{\mathbb{P}^p})\|_{H_{1,0}^1\to L^2}\|(\psi_{p,K_m}-1)Eu\|_{H^1}\\
&\leq C\langle p\rangle^{-1/2}\|u\|_{H_p^{1/2}}+C\|(I-\Pi_{H_0^1}^{\mathbb{P}^p})\|_{H^2\cap H^1_{0}\to H_{0}^1}\Big(\langle p\rangle^{1/2}\|u\|_{H_p^{1/2}}+\|u\|_{H^{1/2}}\Big).
\end{align*}
Now, Lemma~\ref{l:polynomialsWithBoundaries} implies that 
$$
\|(I-\Pi_{\tilde{H}_0^1}^{\mathbb{P}^p})\|_{H^2\cap H^1_{1,0}\to H_{1,0}^1}\leq C\langle p\rangle^{-1},
$$
and hence 
\begin{align*}
\|E_m^{K_m}u\|_{L^2}&\leq C\langle p\rangle^{-1/2}\|u\|_{H_p^{1/2}}+C\langle p\rangle^{-1/2}\Big(\|u\|_{H_p^{1/2}}+\langle p\rangle^{-1/2}\|u\|_{H^{1/2}}\Big)\\
&\leq C\|u\|_{H_{p}^{1/2}}.
\end{align*}
Combining this with~\eqref{e:derivativeBoundEm} implies~\eqref{e:H1BoundEm} and hence completes the proof.
\end{proof}

\subsection{Proof of Proposition~\ref{p:toBoundaryCompatible}}
\label{s:proofBoundaryCompatible}
\begin{proof}[Proof of Proposition~\ref{p:toBoundaryCompatible}]
We follow the steps outlined in the paragraph ``Idea of the  proof of Proposition~\ref{p:toBoundaryCompatible}" (just after the statement of Proposition \ref{p:toBoundaryCompatible}).

First observe that it is enough to consider
$$
K= \Big\{ x\in\mathbb{R}^d\,: 0< \sum_{i=1}^d x_i< 1,\, x_i>0\Big\}.
$$
For $0<r< d$, define 
$$
\mathcal{A}_{r,i}:= \big\{A:F_{r,i}\to F_{r,i'}\,:\, A\text{ is an affine isomorphism},\, 1\leq i'\leq N_{r,d}\big\},
$$
and let
\begin{equation}
\label{e:optimalPolyExtend}
\begin{gathered}
(\tilde{T}_{r,i}u)(x):=q(x)\, \text{ for }x\in F_{r,i}, \\
q:=\frac{1}{|\mathcal{A}_{r,i}|}\sum_{A\in\mathcal{A}_{r,i}}(A^{-1})^* \mathcal{M}_{A^{-1}(F_{r,i})}\Big(A^*u, H_0^1( A^{-1}(F_{r,i}))\Big),\\
\text{ where } \,\,\mathcal{M}_F(u,V):=\operatorname{argmin}\big\{ \|u-q\|_{H_{p}^{1/2}(F)}\,:\, q\in V\cap \mathbb{P}^{p-1}\big\},
\end{gathered}
\end{equation}
and set $(\tilde{T}_{d,i}u):=u|_{F_{d,i}}\in\mathbb{C}$. Then, define
$$
T_{r,i}:=E_{r,i}\tilde{T}_{r,i},
$$
where $E_{r,i}$ is the extension from Lemma~\ref{l:extend}.
Let
\beq\label{e:star}
e_d:= (I-\mathcal{J}^{p-1})u,\qquad e_j:= e_{j+1}-\sum_{i=1}^{N_{j+1,d}}T_{j+1,i}e_{j+1},\quad j=0,\dots, d-1
\eeq
(i.e., $\sum_{i=1}^{N_{j+1,d}}T_{j+1,i}e_{j+1}$ is the function $g_{j+1}$ in the discussion of the ideas in the proof above).
We then let
$$
\mathcal{C}^{p-1}u:=u-e_0.
$$
Note first that $\mathcal{C}^{p-1}u\in\mathbb{P}^{p-1}$ since the image of all the $T_{r,i}$ maps is contained in $\mathbb{P}^{p-1}$.
We now define recursively $Q_{d,i}(u|_{F_{d,i}}):=u|_{F_{d,i}}$ and for $1\leq j<d$, 
\begin{multline}
    \label{e:whatIsQij}
{\tempIT}Q_{j,i}v:=\frac{1}{|\mathcal{A}_{j,i}|}\sum_{A\in\mathcal{A}_{j,i}}\\
(A^{-1})^*\mathcal{M}_{A^{-1}(F_{j,i})}\Big(A^*v, \{q\in C^\infty \,:\,q|_{A^{-1}(F_{j+1,\ell})}=A^*Q_{j+1,\ell}v\text{ if }F_{j+1,\ell}\subset\partial F_{j,i}\big\}\Big)\Big).
\end{multline}
We claim that~\eqref{e:faceOperator} holds with this $Q_{j,i}$, so that, in particular,~\eqref{e:boundaryCompatible1} holds. 

To prove this, we claim that all $1\leq j\leq d$ and all $i$
\begin{equation}
\label{e:polyInductionClaim}
e_{j-1}|_{F_{j,i}}=u|_{F_{j,i}}-Q_{j,i}(u|_{\overline{F_{j,i}}}).
\end{equation}
Once this is proved, 
$$
\mathcal{C}^{p-1}u|_{F_{j,i}}=(u-e_0)|_{F_{j,i}}=Q_{j,i}(u|_{\overline{F_{j,i}}}),
$$
and hence~\eqref{e:faceOperator} holds.

To prove~\eqref{e:polyInductionClaim}, first observe that, by the definition of $e_{d-1}$ and the fact $T_{d,i}e_{d,i}|_{F_{d,j}}=e_{d,i}\delta_{ij}$
for all $i$ and $j$,
$$
e_{d-1}|_{F_{d,i}}=0=u|_{F_{d,i}}-Q_{d,i}(u|_{F_{d,i}}). 
$$
Suppose by induction that for some $0<j\leq d-1$, all $i$, and all $r =d,d-1,\dots, j+1$, $e_{j}|_{F_{r,i}}=u-Q_{r,i}u$. 
Then, by \eqref{e:star},
\beq\label{e:rainy1}
e_{j-1}=e_{j}-\sum_{i=1}^{N_{j,d}}T_{j,i}e_{j}
\eeq
and $T_{j,i}e_j|_{F_{r,i}}=0$ for $j<r\leq d$ 
(since $T_{j,i}e_j|_{F_{j,i}}$ has zero boundary data by the definition of $\tilde{T}_{j,i}$ and $T_{j,i}|_{\overline{F_{j,i'}}}=0$ for $i\neq i'$
by the definition of $E_{j,i}$)
and hence
$$
e_{j-1}|_{F_{r,i}}=e_j|_{F_{r,i}},\qquad j<r\leq d.
$$
Therefore, to prove~\eqref{e:polyInductionClaim} it remains to check that $e_{j-1}|_{F_{j,i}}=u-Q_{j,i}u$. By \eqref{e:rainy1} and \eqref{e:optimalPolyExtend},
\begin{align*}
&e_{j-1}|_{F_{j,i}}\\
&=e_{j}|_{F_{j,i}}-\frac{1}{|\mathcal{A}_{j,i}|}\sum_{A\in\mathcal{A}_{j,i}}(A^{-1})^*\mathcal{M}_{A^{-1}(F_{j,i})}\Big(A^*e_j,H_0^1(A^{-1}(F_{j,i}))\Big)\\
%
&=e_{j}|_{F_{j,i}}\\
&-\frac{1}{|\mathcal{A}_{j,i}|}\sum_{A\in\mathcal{A}_{j,i}}(A^{-1})^*\operatorname{argmin}\Big\{\\
&\qquad\qquad \|A^*(e_{j}-u)+A^*u-q\|_{H_p^{1/2}(A^{-1}(F_{j,i}))}\,:\,q\in\mathbb{P}^{p-1}\cap H_0^1(A^{-1}(F_{j,i}))\Big\}.
\end{align*}
Since $e_j-u\in\mathbb{P}^{p-1}$, we have 
\begin{align*}
&e_{j-1}|_{F_{j,i}}\\
&=e_{j}|_{F_{j,i}}+u-e_j|_{F_{j,i}}\\
&-\frac{1}{|\mathcal{A}_{j,i}|}\sum_{A\in\mathcal{A}_{j,i}}(A^{-1})^*\mathcal{M}_{A^{-1}(F_{j,i})}\Big(A^*u,\{ q\in C^\infty\,:\, (q+A^*(e_j-u))|_{\partial A^{-1}(F_{j,i})}=0\}\Big)\\
&=u-Q_{j,i}(u|_{\overline{F_{j,i}}}),
\end{align*}
where we have used \eqref{e:polyInductionClaim} in the last step.

Now, to check~\eqref{e:boundaryCompatible2}, observe that for $\kappa:F_{r,i_2}\to F_{r,i_1}$, an affine isomorphism, 
$$
\mathcal{A}_{r,i_2}=\big\{ A\circ\kappa\,:\, A\in\mathcal{A}_{r,i_1}\big\}.
$$
Hence,~\eqref{e:boundaryCompatible2} with $T$ replaced by $Q_{j,i}$ follows directly from the definition of $Q_{j,i}$. 

Notice that if $\mathcal{J}^{p-1}u=u$, then $e_d=0$ and hence $e_0=0$. Thus $\mathcal{C}^{p-1}u=u$. Therefore, if $\mathcal{J}^{p-1}u=u$ for all $u\in\mathbb{P}^{p-1}$ then $\mathcal{C}^{p-1}u=u$ for all $u\in\mathbb{P}^{p-1}$.

Finally, to prove estimates on $\|e_0\|_{H_p^j(k)}=\|(I-\mathcal{C}^{p-1})u\|_{H_p^j(K)}$, we start by observing that
\beq\label{e:lastDay1}
\|e_d\|_{H_p^j}
=\|(I-\mathcal{J}^{p-1})u\|_{H_p^j(K)}
\eeq
by the definition of $e_d$ \eqref{e:star}. First note that, since $j>\frac{d}{2}$, for $0\leq l\leq d$, and all $i$,
$$
p^{-\frac{l}{2}}\|e_d|_{F_{l,i}}\|_{H_p^{j-\frac{l}{2}}}\leq C\|e_d\|^{1-\frac{l}{2j}}_{L^2}
\|e_d\|^{\frac{l}{2j}}_{H^{j}_p}.
 $$


We now proceed by induction.
Suppose for some $0<n\leq d$ 
\begin{equation}
\label{e:induction}
\|e_n\|_{H_p^1(K)}+\sum_{\substack{0<l\leq n\\i}}p^{\frac{-l}{2}}\|e_{n}|_{F_{l,i}}\|_{H_{p}^{1/2}}\leq C\|e_d\|_{L^2}^{1-\frac{d}{2j}}\|e_d\|_{H_p^j}^{\frac{d}{2j}}.
\end{equation}
By Lemma \ref{l:extend}, for each $i$,
\begin{align*}
\sum_{l<n}p^{\frac{-l}{2}}\|(T_{n,i}e_n)|_{F_{l,i}}\|_{H_p^{1}(F_{l,i})}&\leq C p^{-\frac{n}{2}}\|\tilde{T}_{n,i}e_n\|_{H_{00,p}^{1/2}(F_{n,i})}\\
 &\leq Cp^{-\frac{n}{2}}\|e_n|_{F_{n,i}}\|_{H_{p}^{1/2}}
\leq C\|e_d\|_{L^2}^{1-d/(2j)}\|e_d\|_{H_p^j}^{\frac{d}{2j}}.
\end{align*}
Therefore, by \eqref{e:star} and \eqref{e:induction},
$$
\|e_{n-1}\|_{H^1_p(K)}\leq \|e_{n}\|_{H^1_p(K)}+C\|e_d\|_{L^2}^{1-\frac{d}{2j}}\|e_d\|_{H_p^j}^{\frac{d}{2j}}\leq C\|e_d\|_{L^2}^{1-\frac{d}{2j}}\|e_d\|_{H_p^j}^{\frac{d}{2j}},
$$
and for $1<l\leq n-1$,
$$
p^{-\frac{l}{2}}\|e_{n-1}\|_{H^{1/2}_{p}(F_{l,i})}\leq p^{-\frac{l}{2}}\|e_{n}\|_{H^{1/2}_{p}(F_{l,i})}+C\|e_d\|_{L^2}^{1-\frac{d}{2j}}\|e_d\|_{H_p^j}^{\frac{d}{2j}}\leq C\|e_d\|_{L^2}^{1-\frac{d}{2j}}\|e_d\|_{H_p^j}^{\frac{d}{2j}}.
$$
Hence,~\eqref{e:induction} holds with $n$ replaced by $n-1$ and the induction is complete. Since 
$\|e_d\|_{L^2}^{1-\frac{d}{2j}}\|e_d\|_{H_p^j}^{\frac{d}{2j}}\leq 
\|e_d\|_{H_p^j}$,
the bound on
$\|e_0\|_{H_p^1(K)}=\|(I-\mathcal{C}^{p-1})u\|_{H_p^1(K)}$ follows from \eqref{e:lastDay1}.
\end{proof}

\subsection{Proof of Theorems~\ref{t:approxHighLowReg} and \ref{t:approxHighLowRegL2}}\label{s:theEndPoly}

We now combine Proposition~\ref{p:toBoundaryCompatible} with Propositions~\ref{p:lowregularityApproximation} and~\ref{p:pExplicit1} to prove Theorem~\ref{t:approxHighLowReg} 
(with Proposition \ref{p:lowregularityApproximation} used for the fixed-regularity approximation, and Proposition \ref{p:pExplicit1} used for the high-regularity approximation).

\begin{proof}[Proof of Theorem~\ref{t:approxHighLowReg}]
Let $\widehat{T}$ be the reference element for $\mathcal{T}$, $p\geq \max(t-1,1)$, $\mathcal{L}^{p}:H^t(\widehat{T})\to \mathbb{P}^p$ be the operator constructed in Proposition~\ref{p:lowregularityApproximation}, and $\mathcal{I}^p$ be the operator constructed in Proposition \ref{p:pExplicit1} with $\Omega=\widehat{T}$.

Then, by Proposition~\ref{p:toBoundaryCompatible}, for each $T\in\mathcal{T}$,  there are boundary compatible operators ${\tempIT}\mathcal{C}_i^{p}:H^\ell(\widehat{T})\to \mathbb{P}^{p}$, $i=1,2$ such that, by \eqref{e:Legendre1} (with $k=1$ and $\ell=0,1,\ldots,j$),
\begin{align}\label{e:run1}
\|(I-{\tempIT}\mathcal{C}_1^p)v\|_{H_p^1(\widehat{T})}\leq C\|(I-\mathcal{L}^p)v\|_{H_p^j(\widehat{T})}\leq \frac{C}{p^{t}}|v|_{H^t(\widehat{T})},
\end{align}
by \eqref{e:approxOmega} (with $k=1$ and $\ell=0,1,\ldots,j$),
\begin{align}
\|(I-{\tempIT}\mathcal{C}_2^p)v\|_{H_p^1(\widehat{T})}\leq C\|(I-\mathcal{I}^p)v\|_{H_p^j(\widehat{T})}\leq \frac{C^{p+1}}{p^{p+1}}|v|_{H^{p+1}(\widehat{T})},
\label{e:run2}
\end{align}
and 
\beq\label{e:lastDay77}
{\tempIT}\mathcal{C}_1^p v|_{\partial\widehat{T}}={\tempIT}\mathcal{C}_2^p v|_{\partial\widehat{T}}.
\eeq

Then, define $\mathcal{C}^p_{\mathcal{T}}:C^0(\Omega)\cap \oplus_{T\in\mathcal{T}}H_k^t(T)\to \mathcal{P}_{\mathcal{T}}^p$ by
\beq\label{e:CTp}
\mapF_T^*(\mathcal{C}^p_{\mathcal{T}}u)|_{T}:= \begin{cases}{\tempIT}\mathcal{C}_1^p(\mapF_T^*u|_{T}),&T\in\mathcal{T}_1\\
{\tempIT}\mathcal{C}_2^p(\mapF_T^*u|_{T}),&T\in\mathcal{T}_2.
\end{cases}
\eeq
Observe that $\mathcal{C}_{\mathcal{T}}^p$ is triangulation preserving (in the sense of Definition \ref{d:triangulationPreserving}) since $\mathcal{C}_j^p$, $j=1,2,$ are boundary compatible. Furthermore, since $\mathcal{T}$ is a $C^{p+1}$ simplicial triangulation, by \eqref{e:run1}
and Lemma \ref{l:analyticpullback}, for $\ell=0,1$
\begin{align*}
\sum_{T\in\mathcal{T}_1}k^{-2\ell}|(I-\mathcal{C}_{\mathcal{T}}^p)u|^2_{H_{p}^\ell(T)}&\leq C\sum_{T\in\mathcal{T}_1} h_{T}^{d-2\ell}k^{-2\ell}\|(I-{\tempIT}\mathcal{C}^{p}_1)\mapF_T^*u\|^2_{H_{p}^\ell(\widehat{T})}\\
&\leq \sum_{T\in\mathcal{T}_1}\frac{C}{p^{t-\ell}}k^{-2\ell}h_{T}^{d-2\ell}|A_T^*R_T^*u|^2_{H^t(\widehat{T})}\\
&\leq \frac{C}{p^{2(t-\ell)}}\sum_{T\in\mathcal{T}_1}h_{T}^{2(t-\ell)}k^{-2\ell}|R_T^*u|^2_{H^t(A_{T}^{-1}(\widehat{T}))}\\
&\leq \frac{C}{p^{2(t-\ell)}}\sum_{T\in\mathcal{T}_1}h_{T}^{2(t-\ell)}k^{-2\ell}\|u\|^2_{H^t(T)}\\
&\leq \frac{C}{p^{2(t-\ell)}}\sum_{T\in\mathcal{T}_1}(h_{T}k)^{2(t-\ell)}k^{-t}\|u\|^2_{H^t(T)}.
\end{align*}
and, by \eqref{e:run2}
and Lemma \ref{l:analyticpullback} (applied with $t=p+1$), for $\ell=0,1$
\begin{align*}
\sum_{T\in\mathcal{T}_2}k^{-2\ell}|(I-\mathcal{C}_{\mathcal{T}}^p)u|^2_{H_{p}^\ell(T)}&\leq C\sum_{T\in\mathcal{T}_2} h_{T}^{d-2\ell}k^{-2\ell}|(I-{\tempIT}\mathcal{C}_{2}^{p})\mapF_T^*u|^2_{H_{p}^{p+1}(\widehat{T})}\\
&\leq \sum_{T\in\mathcal{T}_2}\frac{C^{2(p+1)}}{p^{2(p+1)}}k^{-2\ell}h_{T}^{d-2\ell}|A_T^*R_T^*u|^2_{H^{p+1}(\widehat{T})}\\
&\leq \frac{C^{2(p+1)}}{p^{2(p+1)}}\sum_{T\in\mathcal{T}_2}h_{T}^{2(p+1-\ell)}k^{-2\ell}|R_T^*u|^2_{H^{p+1}(A_{T}^{-1}(\widehat{T}))}\\
&\leq C^{2(p+1)}\sum_{T\in\mathcal{T}_2}(h_{T}k)^{2(p+1-\ell)}k^{-2(p+1)}\|u\|^2_{H^{p+1}(T)}.
\end{align*}
Therefore, to establish the result we only need 
to show that $\mathcal{C}_{\mathcal{T}}^pu\in H^1(\Omega)$. 

It is enough to check that $\mathcal{C}_{\mathcal{T}}^pu\in C^0(\Omega)$. It is clear that $\mathcal{C}_{\mathcal{T}}^p\in \oplus_{T\in \mathcal{T}}\overline{C^0}(T)$. Therefore, we need only check that if $x\in T_1\cap T_2$, then 
\beq\label{e:lastDay2}
[\mathcal{C}^p_{\ell_1}(\mapF_{T_1}^*u)](\mapF_{T_1}^{-1}(x))=[\mathcal{C}^p_{\ell_2}(\mapF_{T_2}^*u)](\mapF_{T_2}^{-1}(x)),
\eeq
where $T_1\in\mathcal{T}_{\ell_1}, T_2\in \mathcal{T}_{\ell_2}$. 
By the definition of a triangulation, if $x\in T_1\cap T_2$, then $x\in \partial T_1\cap \partial T_2$. In particular, since $\mathcal{T}$ is affine-conforming (in the sense of Definition \ref{d:AffineConforming}) there are $(j_1,i_1)$ and $(j_2,i_2)$ such that $x\in F_{j_1,i_1}(T_1)\cap F_{j_2,i_2}(T_2)$ and hence, $F_{j_1,i_1}(T_1)=F_{j_2,i_2}(T_2)$ and there is an affine isomorphism $\kappa:
\overline{\mapF_{T_1}^{-1}(F_{j_1,i_1}(T_1))}\to \overline{\mapF_{T_2}^{-1}(F_{j_2,i_2}(T_2))}$ such that  
\beq\label{e:lastLastDay1}
\mapF_{T_1}|_{\overline{\mapF_{T_1}^{-1}(F_{j_1,i_1}(T_1))}}=\mapF_{T_2}|_{\overline{\mapF_{T_2^{-1}(F_{j_2,i_2}(T_2)}}}\circ \kappa.
\eeq
Since $u\in C^0$,
\beq\label{e:lastDay3}
(\mapF_{T_1}^*u)|_{\overline{\mapF_{T_1}^{-1}(F_{j_1,i_1})}}
=
\Big((\mapF_{T_2}^*u)|_{\overline{\mapF_{T_2}^{-1}(F_{j_2,i_2})}}\Big)\circ \kappa.
\eeq
By \eqref{e:lastDay3}, \eqref{e:lastLastDay1}, 
\eqref{e:boundaryCompatible2} with $T$ replaced by $\mathcal{C}^p_{\ell_1}$ (which holds since 
$\mathcal{C}^p_{\ell}, \ell=1,2$, are boundary compatible), 
and \eqref{e:lastDay77}, for $T_1\in\mathcal{T}_{\ell_1}$ and $T_2\in\mathcal{T}_{\ell_2}$,
\begin{align*}
\mathcal{C}^p_{\ell_1}(\mapF_{T_1}^*u)(\mapF_{T_1}^{-1}(x))&=\mathcal{C}^p_{\ell_1}(\kappa^*\mapF_{T_2}^*u)(\mapF_{T_1}^{-1}(x))\\
&=\mathcal{C}^p_{\ell_1}(\kappa^*\mapF_{T_2}^*u)(
\kappa^{-1}\circ \mapF_{T_2}^{-1}(x))\\
&=\mathcal{C}^p_{\ell_1}(\mapF_{T_2}^*u)(\mapF_{T_2}^{-1}(x))\\
&=\mathcal{C}^p_{\ell_2}(\mapF_{T_2}^*u)(\mapF_{T_2}^{-1}(x)),
\end{align*}
as claimed in \eqref{e:lastDay2}.
\end{proof}

The proof of Theorem~\ref{t:approxHighLowRegL2} is simpler than that of Theorem~\ref{t:approxHighLowReg}; indeed, one can directly use $\mathcal{L}^p$ or $\mathcal{I}^p$ on each element $\mathcal{T}$ and therefore we do not require the triangulation to be affine conforming.

\section{Definition of the 
standard boundary-integral operators}\label{app:A}

In this section we show how scattering of a plane-wave by an obstacle with zero Dirichlet or Neumann boundary conditions can be reformulated as a BIE involving the operators $A_k, A'_k$ \eqref{e:DBIEs} (for the Dirichlet problem) and $\Breg, \Breg'$ \eqref{e:NBIEs} (for the Neumann problem).
The proof that the general Dirichlet and Neumann problems (i.e., with arbitrary boundary data) can be reformulated via these BIEs is very similar; see, e.g., \cite[\S2.6]{ChGrLaSp:12}.


Let $\Omega_- \subset \Rea^d$, $d\geq 2$ be a bounded open set such that its open complement $\Omega_+ :=\Rea^d \setminus \overline{\Omega_- }$ is connected.
Let $\Gamma_-:= \partial \Omega_- $. The results in \S\ref{s:R3} require that $\Gamma$ is $C^\infty$, but the results in this appendix hold when $\Gamma$ is Lipschitz.
Let $\nu$ be the outward-pointing unit normal vector to $\Omega_- $, and let $\gamma^\pm$ and $\partial^{\pm}_\nu$ denote the Dirichlet and Neumann traces, respectively, on $\Gamma_-$ from $\Omega^{\pm}$.

\begin{definition}[Plane-wave sound-soft/-hard scattering problems]\label{def:scat}
Given $k>0$ and the incident plane wave $u^I(x):= \exp(\ri k x\cdot a)$ for $a\in\Rea^d$ with $|a|_2=1$
find the total field $u\in H^1_{\rm loc}(\Omega_+)$ satisfying
$\Delta u + k^2 u =0$ in $\Omega_+$, 
\beqs
\text{ either }\gamma^+ u = 0 \text{ (sound-soft) } \,\,\text{ or }\,\, \partial_\nu^+u =0  \text{ (sound-hard) }\,\,\ton\Gamma,
\eeqs
and, where \(u^S := u - u^I\) is the scattered field,
\beq\label{e:src}
\dfrac{\partial u^S }{\partial r} -\ri ku^S = o \left(\frac{1}{r^{(d-1)/2}}\right)  \text{ as }r:=|x|\rightarrow \infty, \text{ uniformly in $x/r$}.
\eeq
\end{definition}

The results proving existence and uniqueness of the Helmholtz scattering problem \eqref{e:edp} also show that 
the solutions of the sound-soft and sound-hard plane-wave scattering problems exist and are unique; see, e.g., \cite[Theorem 2.12 and Corollary 2.13]{ChGrLaSp:12}.

Let $\Phi_k(x,y)$ be the fundamental solution of the Helmholtz equation defined by
\beq\label{e:fund}
\Phi_k(x,y):= \frac{\ri}{4}\left(\frac{k}{2\pi |x-y|}\right)^{(d-2)/2}H_{(d-2)/2}^{(1)}\big(k|x-y|\big)= \left\{\begin{array}{cc}
  \displaystyle{\frac{\ri}{4}H_0^{(1)}\big(k|x-y|\big)}, & d=2, \\
  \displaystyle{\frac{\re^{\ri k |x-y|}}{4\pi |x-y|}}, & d=3,
     \end{array}\right.
\eeq
where $H^{(1)}_\nu$ denotes the Hankel function of the first kind of order $\nu$.
The single- and double-layer potentials, $\mc{S}\ell$ and $\mc{D}\ell$ respectively, are defined for $k\in \mathbb{C}$, $\phi\in L^2(\Gamma)$, and $x \in \Rea^d\setminus \Gamma$ by
\begin{align}\label{e:SLP}
    \mc{S}\ell \varphi (x) = \int_{\Gamma} \Phi_k (x,y) \varphi (y) d s (y) \quad \tand\quad
    \mc{D}\ell \varphi (x) = \int_{\Gamma} \dfrac{\partial \Phi_k (x,y)}{\partial \nu(y)} \varphi (y) d s (y).
\end{align}
The standard single-layer, adjoint-double-layer, double-layer, and hypersingular operators are defined for $k\in \mathbb{C}$, $\phi\in L^2(\Gamma_-)$, $\psi\in H^1(\Gamma_-)$, and $x\in \Gamma_-$ by
\begin{align}\label{e:SD'}
&V_k \phi(x) := \int_{\Gamma_-} \Phi_k(x,y) \phi(y)\,d s(y), \qquad
\DL_k' \phi(x) := \int_{\Gamma_-} \frac{\partial \Phi_k(x,y)}{\partial \nu(x)}  \phi(y)\,d s(y),\\
&\DL_k \phi(x) := \int_{\Gamma_-} \frac{\partial \Phi_k(x,y)}{\partial \nu(y)}  \phi(y)\,d s(y),
\quad
H_k \psi(x) := \frac{\partial}{\partial_{\nu(x)}} \int_{\Gamma_-} \frac{\partial \Phi_k(x,y)}{\partial \nu(y)}  \psi(y)\,d s(y).
\label{e:DH}
\end{align}
We use the notation $V_k$ for the single-layer, instead of $S_k$ (used, in particular, in \cite{GaRaSp:25}), to avoid a notational clash with the smoothing operator in \S\ref{s:R1}. Similarly, we use the notation $\DL_k$, $\DL_k'$ for the double-layer and its adjoint, instead of $D_k$, $D_k'$, to avoid a notational clash with the operator $D:= -\ri \partial$ used in the rest of the paper.

\begin{theorem}\label{thm:BIEs}

(i) If $u$ is solution of the sound-soft scattering problem of Definition \ref{def:scat}, then
\beq\label{e:Ddirect}
A_k' \partial_\nu^+ u = \partial_\nu^+ u^I - \ri u^I \quad\text{ and } \quad u=u^I-\mc{S}\ell(\partial_\nu^+ u).
\eeq

(ii) If $v\in L^2(\Gamma_-)$ is the solution to
\beq\label{e:Dindirect}
A_k v = -\gamma^+ u^I,
\quad\text{ then } \quad
u:= u^I + (\mc{D}\ell -\ri \mc{S}\ell)v
\eeq
is the solution of the sound-soft scattering problem of Definition \ref{def:scat}.

(iii) If $u$ is solution of the sound-hard scattering problem of Definition \ref{def:scat}, then
\beq\label{e:Ndirect}
\Breg \gamma^+ u = \ri \gamma^+ u^I - S_{\ri k} \partial^+_\nu u^I  \quad\text{ and } \quad u=u^I+\mc{D}\ell(\gamma^+ u).
\eeq

(iv) If $v\in L^2(\Gamma_-)$ is the solution to
\beqs
\Breg' v = -\partial_\nu^+ u^I,
\quad\text{ then } \quad
u:= u^I + (\mc{D}\ell S_{\ri k} -\ri  \mc{S}\ell)v
\eeqs
is the solution of the sound-hard scattering problem of Definition \ref{def:scat}.
\end{theorem}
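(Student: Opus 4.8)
The plan is to prove Theorem~\ref{thm:BIEs} by the classical approach of deriving potential-theoretic representation formulas, applying the jump relations of the layer potentials, and then invoking uniqueness of the scattering problems (Definition~\ref{def:scat}) to go from the BIE solution back to the PDE solution. I would treat parts (i)--(ii) (the Dirichlet/sound-soft case) in full detail and then indicate that parts (iii)--(iv) (the Neumann/sound-hard case) follow by the analogous argument, with the single extra ingredient being the mapping $S_{\ri k}: H^{-1}(\Gamma_-) \to L^2(\Gamma_-)$ used to compose with the hypersingular operator $H_k$ so as to land in $L^2(\Gamma_-)$.

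For part (i): starting from Green's representation theorem applied to the scattered field $u^S = u - u^I$ in $\Omega_+$ (using the Sommerfeld radiation condition \eqref{e:src} to kill the contribution from the sphere at infinity) gives $u^S = -\mathcal{S}\ell(\partial_\nu^+ u^S) + \mathcal{D}\ell(\gamma^+ u^S)$ in $\Omega_+$; applying the representation theorem for $u^I$ in $\Omega_-$ (where $u^I$ solves the Helmholtz equation) gives $0 = -\mathcal{S}\ell(\partial_\nu^- u^I) + \mathcal{D}\ell(\gamma^- u^I)$ away from $\overline{\Omega_-}$. Subtracting (and using $\gamma^+ u^I = \gamma^- u^I =: \gamma u^I$, $\partial_\nu^+ u^I = \partial_\nu^- u^I =: \partial_\nu u^I$ since $u^I$ is smooth across $\Gamma_-$, together with the sound-soft condition $\gamma^+ u = 0$, hence $\gamma^+ u^S = -\gamma u^I$) yields the representation $u = u^I - \mathcal{S}\ell(\partial_\nu^+ u)$ in $\Omega_+$, which is the second identity in \eqref{e:Ddirect}. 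Then I take the Neumann trace $\partial_\nu^+$ of this representation and apply the jump relation $\partial_\nu^+ \mathcal{S}\ell \phi = (-\tfrac12 I + \DL_k')\phi$ for $\phi\in L^2(\Gamma_-)$ (valid on Lipschitz $\Gamma_-$; see, e.g., \cite[Theorems 2.15 and 2.21]{ChGrLaSp:12}), obtaining $\partial_\nu^+ u = \partial_\nu^+ u^I - (-\tfrac12 I + \DL_k')(\partial_\nu^+ u)$; rearranging and adding a multiple of the sound-soft identity $0 = -\ri k V_k (\partial_\nu^+ u) - \ri(\gamma^+ u^I + \mathcal{S}\ell \partial_\nu^+ u|_{\Gamma_-})$... — more directly, rearranging gives $(\tfrac12 I + \DL_k')(\partial_\nu^+ u) = \partial_\nu^+ u^I$, and then subtracting $\ri k V_k(\partial_\nu^+ u)$ from both sides, using $V_k(\partial_\nu^+ u)=\gamma^+ u^I$ (from the trace of the representation formula and $\gamma^+ u = 0$), produces $A_k'(\partial_\nu^+ u) = \partial_\nu^+ u^I - \ri k\gamma^+ u^I = \partial_\nu^+ u^I - \ri u^I$ (since $\gamma^+ u^I = u^I|_{\Gamma_-}$). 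I should double-check the exact bookkeeping of the factor $\ri k$ versus $\ri$ against the definition \eqref{e:DBIEs}; this is the one place where a sign/constant slip is easy.

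For part (ii): given $v\in L^2(\Gamma_-)$ solving $A_k v = -\gamma^+ u^I$, define $u := u^I + (\mathcal{D}\ell - \ri \mathcal{S}\ell)v$. Then $u^S := u - u^I = (\mathcal{D}\ell - \ri\mathcal{S}\ell)v$ satisfies the Helmholtz equation in $\Omega_+$ (layer potentials with $L^2$ density are Helmholtz solutions off $\Gamma_-$) and the radiation condition \eqref{e:src} (both $\Phi_k$ and its normal derivative are outgoing). Taking the interior-from-exterior Dirichlet trace and using the jump relations $\gamma^+\mathcal{D}\ell v = (\tfrac12 I + \DL_k)v$ and $\gamma^+\mathcal{S}\ell v = V_k v$ gives $\gamma^+ u = \gamma^+ u^I + (\tfrac12 I + \DL_k - \ri k V_k)v = \gamma^+ u^I + A_k v = 0$ — again modulo checking the $\ri$ versus $\ri k$ normalization against \eqref{e:DBIEs}/\eqref{e:Dindirect}. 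So $u$ solves the sound-soft scattering problem, and by uniqueness (cited after Definition~\ref{def:scat}) it is \emph{the} solution. The main obstacle, such as it is, is not conceptual but careful: tracking the precise constants ($\ri$ vs.\ $\ri k$, the factor $\tfrac12$, orientation of $\nu$) so that the combined-potential operators produced match exactly the definitions \eqref{e:DBIEs} and \eqref{e:NBIEs}, and making sure all jump relations are invoked in their $L^2$-on-Lipschitz form. For (iii)--(iv) the only genuinely new point is that the natural combined field for the Neumann problem uses $\gamma^+ u$ as density, whose normal derivative brings in $H_k$; composing with the regularizing operator $S_{\ri k}$ (which is smoothing of order $1$, hence $H^{-1}(\Gamma_-)\to L^2(\Gamma_-)$) is precisely what makes $\Breg$, $\Breg'$ bounded on $L^2(\Gamma_-)$, and the representation formula $u = u^I + \mathcal{D}\ell(\gamma^+ u)$ together with the jump relations $\gamma^+\mathcal{D}\ell$, $\partial_\nu^+\mathcal{D}\ell = -H_k$ and $\partial_\nu^+\mathcal{S}\ell = -\tfrac12 I + \DL_k'$ then yields \eqref{e:Ndirect} and part (iv) by the same rearrangement-plus-uniqueness pattern.
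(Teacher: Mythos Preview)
Your approach is correct and is exactly the classical argument (Green's representation plus jump relations plus uniqueness) that the cited references carry out; the paper itself does not give a proof but simply points to \cite[Theorem~2.46, Equations~2.70--2.72]{ChGrLaSp:12} for (i)--(ii) and \cite[Equations~1.6 and~1.8]{GaMaSp:21N} for (iii)--(iv). Your sketch is precisely what those references do, so there is nothing substantively different to compare.

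Your caution about the $\ri$ versus $\ri k$ bookkeeping is well placed: with $A_k' = \tfrac12 I + \DL_k' - \ri k V_k$ as in \eqref{e:DBIEs}, the computation you outline gives $A_k'(\partial_\nu^+ u) = \partial_\nu^+ u^I - \ri k\,\gamma^+ u^I$, and correspondingly the indirect ansatz in (ii) should be $u^I + (\mc{D}\ell - \ri k\,\mc{S}\ell)v$ for the trace to reproduce $A_k v$. The statement as written omits the factor $k$; this is a typo in the statement, not an error in your reasoning. One minor correction to your jump-relation list for part (iv): the normal derivative of the double-layer potential has no jump, and with the paper's sign conventions $\partial_\nu^\pm \mc{D}\ell\phi = H_k\phi$ (no minus sign), so adjust that when you write it up.
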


\bpf[References for the proof]
Part (i) is proved in, e.g., \cite[Theorem 2.46]{ChGrLaSp:12}. Part (ii) is proved in, e.g., \cite[Equations 2.70-2.72]{ChGrLaSp:12}. Part (iii) is proved in, e.g., \cite[Equation 1.6]{GaMaSp:21N}. Part (iv) is proved in, e.g., \cite[Equation 1.8]{GaMaSp:21N}.
\epf

\ble\label{lem:inversebound}
If $\Gamma$ is $C^1$ then
\begin{align*}
\N{A_k'}_{L^2(\Gamma_-)\to L^2(\Gamma_-)} \geq 1/2,\qquad &\N{(A_k')^{-1}}_{L^2(\Gamma_-)\to L^2(\Gamma_-)} \geq 2,\\
\N{\Breg}_{L^2(\Gamma_-)\to L^2(\Gamma_-)} \geq \left(\frac{1}{2}+\frac{1}{4}\right),
\qquad&\N{(\Breg)^{-1}}_{L^2(\Gamma_-)\to L^2(\Gamma_-)} \geq \left(\frac{1}{2}+\frac{1}{4}\right)^{-1}.
\end{align*}
\ele

\bpf
The results for $A_k'$ are proved in \cite[Lemma 4.1]{ChGrLaLi:09} using that $A_k' - (1/2)I$ is compact when $\Gamma$ is $C^1$. The results for $\Breg$ are proved in an analogous way, using that $\Breg - ( \ri/2 -1/4)I$ is compact by, e.g., \cite[Proof of Theorem 2.2]{GaMaSp:21N}.
\epf

\section{Precise statement of Informal Theorem \ref{t:rules}}

\begin{theorem}[Properties of semiclassical pseudodifferential operators]\label{t:rules2}

\ 

\begin{enumerate}
\item (Quantisation.) 
The  \emph{quantisation} map $\Op:S^m(\mathbb{R}^{2d})\to \Psi^{m}_\hbar(\Rea^d)$ 
defined by \eqref{e:quant2} 
is such that $\Op(a)$ 
is properly supported, $\Op(1)=I$, and $[\Op(a)u](x)=a(x)u(x)$ when $a$ is independent of $\xi$.
\item (Principal symbol.)
There is 
a \emph{principal symbol map} $\sigma_\hbar:\Psi^{m}_\hbar(\Rea^d)\to S^m(\mathbb{R}^d)
/ \hbar S^{m-1}(\mathbb{R}^d)
$ such that 
$$\sigma_\hbar\circ \Op = I 
\quad\text{ mod }\quad \hbar S^{m-1}(\mathbb{R}^d).$$
If $A\in \Psi^m_\hbar(\mathbb{R}^d)$ and $\hbar^{-1}\sigma_{\hbar}(A)\in S^{m-1}$, then $\hbar^{-1} A\in \Psi^{m-1}_\hbar(\mathbb{R}^d)$. 
\item (Boundedness in Sobolev norms.)
Given $a\in S^m(\mathbb{R}^{2d})$,
for all $s$, there is $C_s>0$ such that \beq\label{e:appbound1}
\|\Op(a)\|_{H_k^s(\Rea^d)\to H_k^{s-m}(\Rea^d)}\leq C_s
\eeq
(i.e., the quantisation of a symbol of order $m$ decreases Sobolev regularity by $m$). 
Furthermore, for $a\in S^0(\mathbb{R}^{2d})$, there is $C>0$ such that 
\beq\label{e:appbound2}
\|\Op(a)\|_{L^2(\Rea^d)\to L^2(\Rea^d)}\leq \sup_{(x,\xi)\in \Rea^{2d}}|a(x,\xi)|+C\hbar.
\eeq
\item (Composition.) For $a\in S^{m_1}(\mathbb{R}^{2d})$, $b\in S^{m_2}(\mathbb{R}^{2d})$, $\Op(a)\Op(b)\in \Psi^{m_1+m_2}_\hbar(\mathbb{R}^d)$ and 
$$\sigma_\hbar(\Op(a)\Op(b))-ab\in \hbar S^{m_1+m_2-1}(\mathbb{R}^{2d}).$$
Furthermore, there is $c \in S^{m_1+m_2}(\Rea^{2d})$ 
with 
$\supp c \subset \supp a \cap \supp b$ 
such that $\Op(a)\Op(b)=\Op(c)+O(\hbar^\infty)_{\Psi_{\hbar}^{-\infty}}$.

\item (Adjoints.) For $a\in S^{m}(\mathbb{R}^d)$, $(\Op(a))^*\in \Psi^m_\hbar(\mathbb{R}^d)$ and $$\sigma((\Op(a))^*)-\bar{a}\in \hbar S^{m-1}(\mathbb{R}^{2d}).$$
\item (Commutators.) For $a\in S^{m_1}(\mathbb{R}^{2d})$ and $b\in S^{m_2}(\mathbb{R}^{2d})$, $i\hbar^{-1} [\Op(a),\Op(b)]\in \Psi^{m_1+m_2-1}_\hbar(\mathbb{R}^d)$ and 
$$
\sigma_{\hbar}\big(i\hbar^{-1} [\Op(a),\Op(b)]\big)-\{ a,b\}\in \hbar S^{m_1+m_2-2},
$$
where $\{a,b\}=\langle \partial_{\xi}a,\partial_x b\rangle -\langle \partial_{\xi}b,\partial_x a\rangle.$
\item (Sharp G\aa rding inequality.) If $a\in S^m(\mathbb{R}^{2d})$, $w\in S^{m/2}(\mathbb{R}^{2d};\mathbb{R})$ and $\Re a\geq w^2$, then
\begin{equation*}
\Re \langle \Op(a)u,u\rangle\geq \|\Op(w)u\|_{L^2(\Rea^d)}^2-C\hbar\|u\|_{H_k^{(m-1)/2}(\mathbb{R}^d)}^2. 
\end{equation*}
\end{enumerate}
\end{theorem}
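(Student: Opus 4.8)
\textbf{Proof proposal for Theorem \ref{t:rules2}.}

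The plan is to not re-prove these facts from scratch, but rather to reduce each item to the corresponding statement in one of the standard references, keeping careful track of the conventions (the quantisation map \eqref{e:quant2}, the Kohn--Nirenberg symbol class $S^m$, and the semiclassical Sobolev spaces $H^s_k$ of \S\ref{s:Sobolev}). The three references I would use are \cite{DiSj:99}, \cite{Zw:12}, and \cite[Appendix E]{DyZw:19}; the first deals with the ``standard'' (non-properly-supported) quantisation $\widetilde{\Op}$ of \eqref{e:quant}, the last gives the manifold version and the calculus directly in the form we want. The key preliminary observation, which I would state and use repeatedly, is that $\Op(a) = \widetilde{\Op}(a) + O(\hbar^\infty)_{\Psi_\hbar^{-\infty}}$ by \eqref{e:residual}, so that all \emph{symbolic} statements (principal symbol, composition, adjoints, commutators) transfer verbatim from the $\widetilde{\Op}$-calculus, while the residual $O(\hbar^\infty)_{\Psi_\hbar^{-\infty}}$ term is harmless since it maps $H^{-N}_k \to H^N_k$ with norm $O(\hbar^N)$ for every $N$. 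The properly-supported property of $\Op(a)$, needed in Item 1, is immediate from the cutoff $\psi(|x-y|)$ in \eqref{e:quant2}.

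The steps, in order: (1) Item 1 (quantisation): properly-supported-ness from the $\psi$ cutoff; $\Op(1)=I$ and the multiplication formula for $\xi$-independent symbols from a direct computation with \eqref{e:quant2} (the $\psi$ cutoff integrates to the identity after the $\xi$-integration produces a delta). (2) Item 2 (principal symbol): define $\sigma_\hbar$ as the class of the symbol in $S^m/\hbar S^{m-1}$; existence of such a map on $\widetilde\Psi^m_\hbar$ is \cite[Theorem 4.14 and \S9.3]{Zw:12} or \cite[Proposition E.14]{DyZw:19}, and it descends to $\Psi^m_\hbar$ because the residual class lies in $\bigcap_N \hbar^N \Psi^{-N}_\hbar$; the ``if $\hbar^{-1}\sigma_\hbar(A)\in S^{m-1}$ then $\hbar^{-1}A\in\Psi^{m-1}_\hbar$'' statement is an induction: subtract off $\hbar\,\Op(\hbar^{-1}\sigma_\hbar(A))$, reducing the order of the symbol, and iterate, using Borel summation to produce a genuine symbol whose quantisation matches $\hbar^{-1}A$ modulo $O(\hbar^\infty)_{\Psi_\hbar^{-\infty}}$. (3) Item 3 (boundedness): \eqref{e:appbound1} is the Calder\'on--Vaillancourt theorem in semiclassical form, \cite[Theorem 5.1 and 8.10]{Zw:12} (or \cite[Proposition E.19, E.24]{DyZw:19}); the sharper $L^2$ bound \eqref{e:appbound2} with the explicit $\sup|a| + C\hbar$ follows by writing $\sup|a|^2 - |a|^2 = w^2$ for a suitable $S^0$ symbol (modulo a harmless positive constant) and applying the sharp G\aa rding inequality of Item 7 to $\Op(\sup|a|^2 - |a|^2)$, or directly from \cite[Theorem 13.13]{Zw:12}. (4) Items 4--6 (composition, adjoint, commutator): these are the semiclassical symbol calculus \cite[Theorems 4.11--4.12, 4.18, 9.5]{Zw:12}, \cite[Propositions E.17, E.18]{DyZw:19}; the support property in Item 4 follows from the stationary-phase/oscillatory-integral form of the composition (the amplitude of $\widetilde{\Op}(a)\widetilde{\Op}(b)$ is supported in $\supp a \cap \supp b$ up to $O(\hbar^\infty)$, cf.\ \cite[Theorem 8.13]{Zw:12}), then convert back to $\Op$. (5) Item 7 (sharp G\aa rding): this is \cite[Theorem 9.11]{Zw:12} (or \cite[Proposition E.23]{DyZw:19}) in the form $\Re\langle \widetilde{\Op}(a)u,u\rangle \geq -C\hbar\|u\|^2_{H^{m-1/2}_k}$ when $\Re a\geq 0$; applying it to $a - w^2$ (which satisfies $\Re(a-w^2)\geq 0$, is in $S^m$ since $w^2\in S^m$) and using $\langle\Op(w^2)u,u\rangle = \|\Op(w)u\|^2 + O(\hbar)$ (by Items 4 and 5) gives the stated inequality.

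The main obstacle, such as it is, is bookkeeping rather than mathematics: one must check that the conventions in \cite{DiSj:99, Zw:12, DyZw:19} (which variously use the Weyl quantisation, the standard quantisation $\widetilde{\Op}$, and sometimes $h$ vs $\hbar=k^{-1}$ scalings, and define Sobolev norms by Fourier transform rather than by \eqref{e:norm2}) really do yield, after the passage from $\widetilde{\Op}$ to $\Op$ via \eqref{e:residual} and after the norm-equivalence \eqref{eq:Hhnorm}, exactly the statements above with the orders and residual classes as claimed; in particular the $S^m$ (Kohn--Nirenberg) class is the one used throughout those references for this calculus, so no extra work is needed there. There is no genuinely hard step: everything is a translation of known results, which is precisely why the excerpt presents Theorem \ref{t:rules2} as a ``black box'' and relegates it to an appendix.
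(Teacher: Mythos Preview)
Your proposal is correct and takes essentially the same approach as the paper: both treat Theorem~\ref{t:rules2} as a collection of references to \cite{Zw:12} and \cite[Appendix~E]{DyZw:19}, with the passage from $\widetilde{\Op}$ to $\Op$ handled by \eqref{e:residual}. Your account is in fact somewhat more detailed than the paper's (e.g., you sketch the reduction of \eqref{e:appbound2} to sharp G\aa rding and the iteration for the second statement in Item~2), whereas the paper simply lists the relevant theorem numbers for each item.
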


\bpf[References for the proof]
Part 1 follows from the definition \eqref{e:quant2}.

Part 2 is proved in \cite[Proposition E.16]{DyZw:19}, \cite[\S9.3.3]{Zw:12}. (Note that \cite[Appendix E]{DyZw:19} consider a restricted class of symbols -- so called \emph{polyhomogeneous symbols}; this ensures that the principal symbol is a function, rather than an equivalence class, but this distinction is not important for the proof of Theorem \ref{t:rules2}.)

For Part 3, 
the bound \eqref{e:appbound1} is reduced to the proof of $L^2 \to L^2$ boundedness for elements of $\Psi^0_\hbar$ in \cite[Proposition E.19]{DyZw:19}. 
The $L^2\to L^2$ bound
\eqref{e:appbound2}
is proved in \cite[Theorem 13.13]{Zw:12}, with a weaker bound (still sufficient to imply \eqref{e:appbound1}) proved in \cite[Theorem 18.1.11]{Ho:85}.

Parts 4 and 5 are proved in \cite[Theorems 9.5 and 4.14]{Zw:12} (see also \cite[Proposition E.8]{DyZw:19}).

Part 6 is \cite[E.1.24]{DyZw:19}, and follows as 
a consequence of the composition result in 
\cite[Theorem 9.5]{Zw:12}/\cite[Proposition E.8]{DyZw:19}.

Part 7 is proved in \cite[Theorem 9.11]{Zw:12}.
\epf

\section*{Acknowledgements}

This article collects research done in collaboration with 
Martin Averseng (CNRS, Angers),
Dean Baskin (Texas A\&M), 
Th\'eophile Chaumont-Frelet (INRIA, Lille),
Shihua Gong (CUHK, Shenzhen), 
Ivan Graham (University of Bath),
David Lafontaine (CNRS, Toulouse),
Manas Rachh (ITT Bombay), and 
Jared Wunsch (Northwestern University) over the last 10 years, and it is a pleasure to acknowledge these collaborations here.

Furthermore, the authors thank Martin Averseng, Th\'eophile Chaumont-Frelet, 
 Simon Chandler-Wilde (University of Reading), 
   David Lafontaine, 
   Markus Melenk (TU Wien), 
   Mostafa Meliani (University of Bath), 
 Mar{\'i}a Ignacia Fierro-Piccardo (University of Bath),  Siavash Sadeghi (University of Reading),
  Alastair Spence (University of Bath), and Maciej Zworski (UC Berkeley) for discussions about drafts of this article.

During the writing of this article, JG was 
supported by EPSRC grants EP/V001760/1 and EP/V051636/1 and Leverhulme Research Project Grant RPG-2023-325, and JG and ES were both supported by the ERC synergy grant ``PSINumScat" 101167139.

\clearpage 
\addcontentsline{toc}{section}{References}
\bibliography{biblio_combined_sncwadditions}
\label{lastpage}
\end{document}